\newtheorem{thm}{Theorem}[section]
\newtheorem{lemma}[thm]{Lemma}
\newtheorem{corollary}[thm]{Corollary}
\newtheorem{prop}[thm]{Proposition}
\newtheorem{conjecture}[thm]{Conjecture}
\theoremstyle{definition}
\newtheorem{rem}[thm]{Remark}
\newtheorem{defn}[thm]{Definition}
\newcommand{\isom}{\overset{\sim}{\rightarrow}}           
\def\coloneqq{\mathrel{\mathop:}=}%
\title{Zeta morphisms for rank two universal deformations}
\author{Kentaro Nakamura}
\date{} % delete this line to display the current date
\begin{document}

\maketitle
\pagestyle{plain}
\footnote{2010 Mathematical Subject Classification 11F80, 11F85, 11S25.
Keywords: Kato's Euler system, $p$-adic Langlands correpondence, completed cohomology.}
\begin{abstract}
In this article, we construct zeta morphisms for the universal deformations of odd absolutely irreducible two dimensional mod $p$ 
Galois representations satisfying some mild assumptions, and prove that our zeta morphisms interpolate Kato's zeta morphisms for Galois representations 
associated to Hecke eigen cusp newforms. The existence of such morphisms was predicted by Kato's generalized Iwasawa main conjecture. 
Based on Kato's original construction, we construct our zeta morphisms using many deep results in 
the theory of $p$-adic (local and global) Langlands correspondence for $\mathrm{GL}_{2/\mathbb{Q}}$. As an application of our zeta morphisms and the recent article \cite{KLP19}, 
we prove a theorem which roughly states that, under some $\mu=0$ assumption, Iwasawa main conjecture without $p$-adic $L$-function for $f$ holds if this conjecture holds for one $g$ which is congruent to $f$.

%In this article, we prove (many parts of) the rank two case of the 
%Kato's local $\varepsilon$-conjecture using the Colmez's $p$-adic local Langlands correspondence for $\mathrm{GL}_2(\mathbb{Q}_p)$. We show that a Colmez's pairing defined in his study of locally algebraic vectors gives us the conjectural $\varepsilon$-isomorphisms for (almost) all the families of $p$-adic representations of $\mathrm{Gal}(\overline{\mathbb{Q}}_p/\mathbb{Q}_p)$ of rank two, which satisfy the desired interpolation property for the de Rham and trianguline case. For the de Rham and non-trianguline case, we also show this interpolation property for the ``critical" range of Hodge-Tate weights using the Emerton's theorem on the compatibility of 
%c%lassical and $p$-adic local Langlands correspondence. As an application, we prove that the Kato's Euler system associated to any Hecke eigen new form satisfies a functional equation which has the same form as predicted by the Kato's global $\varepsilon$-conjecture.

\end{abstract}
\setcounter{tocdepth}{2}
\tableofcontents

\section{Introduction}
%In \cite{Ka93a}, Kato mede a remarkable conjecture unifying both Iwasawa main conjecture and Bloch-Kato's Tamagawa number conjecture. 
%This predicts that, for any $p$-adic family of global $p$-adic Galois 
%representations which are unramified at outside, there should exist a canonical basis of the determinant of its Galois cohomology interpolating 
%the special values of the $L$-functions of geometric global $p$-adic Galois representations containing in the family. 
%Such a canonical basis is called a zeta element

\subsection{Zeta morphisms for rank two universal deformations}
Let $p$ be a prime. We fix embeddings $\iota_{\infty} : \overline{\mathbb{Q}}\hookrightarrow \mathbb{C}$ and 
$\iota_p : \overline{\mathbb{Q}}\hookrightarrow \overline{\mathbb{Q}}_p$. 

In his celebrated article \cite{Ka04} on Iwasawa main conjecture for the motive associated to a normalized Hecke eigen cusp new form $f
=\sum_{n=1}^{\infty}a_n(f)q^n$ of level $N_f\geqq 1$, Kato defined a non trivial  Euler system relating with the $L$-function associated to $f$. 
Let $\rho_f : G_{\mathbb{Q}}\rightarrow \mathrm{GL}_2(\mathcal{O})$ be the $p$-adic representation of $G_{\mathbb{Q}}:=\mathrm{Gal}(\overline{\mathbb{Q}}/\mathbb{Q})$ associated to $f$, and $\rho_f^*(1)$ be its $\mathcal{O}$-linear Tate dual, where $\mathcal{O}$ is the integer ring of a finite extension $E\subset \overline{\mathbb{Q}}_p$ of $\mathbb{Q}_p$. Then, Kato defined a system of Galois cohomology classes $c_n\in H^1(\mathbb{Z}[1/N_fp, \zeta_n], \rho_f^*(1))$ for infinitely many 
integers $n\geqq 1$ satisfying the so-called Euler system relation. He proved the non triviality of his Euler system by relating it with the special values of 
the $L$-function associated to $f$.
%the complex conjugation $f^*:=\sum_{n=1}^{\infty}\overline{a_n(f)}q^n$ of $f$.  
By the general theory of Euler system developed by 
Kato, Perrin-Riou, and Rubin, this non triviality implies that the Iwasawa cohomology 
$$H^i_{\mathrm{Iw}}(\mathbb{Z}[1/N_fp], \rho_f^*(1)):=\varprojlim_{m\geqq 1}H^1(\mathbb{Z}[1/N_fp, \zeta_{p^m}], \rho_f^*(1))$$
is torsion (resp. generically free of rank one)  if $i=2$ (resp. $i=1$) as a module over the Iwasawa algebra $\Lambda_\mathcal{O}:=\mathcal{O}[[\Gamma]]$ of 
$\Gamma:=\mathrm{Gal}(\mathbb{Q}(\zeta_{p^{\infty}})/\mathbb{Q})$. By the Euler system relation, $(c_{p^m})_{m\geqq 1}$ is an element in $H^1_{\mathrm{Iw}}(\mathbb{Z}[1/N_fp], \rho_f^*(1))$, but we remark that it (and $\{c_n\}_n$) depends on many 
auxiliary choices appearing in Kato's construction (see Example 13.3 of \cite{Ka04}). Removing its auxiliary factors suitably, Kato defined a canonical $\mathcal{O}$-linear map 
which we call the zeta morphism for $\rho_f$
$$\bold{z}(f) : \rho_f^*\rightarrow H^1_{\mathrm{Iw}}(\mathbb{Z}[1/N_fp], \rho_f^*(1))[1/p]$$ 
in Theorem 12.5 \cite{Ka04} depending only on $f$, and showed that its image is contained in $H^1_{\mathrm{Iw}}(\mathbb{Z}[1/N_fp], \rho_f^*(1))$ 
when $\rho_f$ is residually absolutely irreducible. This map is the most important object for Iwasawa main conjecture for $f$ by the following two properties. 
First, this map is related with the collect special values of the $L$-function (precisely, the $L$-fuction removing its $p$-th Euler factor) associated to $f$ via Bloch-Kato's dual exponential map. When $f$ is of finite slope at $p$, this property 
enables us to construct a $p$-adic $L$-function $\mathcal{L}(f)$ associated to $f$ from the map $\bold{z}(f)$ via Perrin-Riou's general machine constructing 
$p$-adic $L$-functions. Second, Kato showed that if $p$ is odd and $\mathrm{Im}(\rho_f)$ contains a conjugate of $\mathrm{SL}_2(\mathbb{Z}_p)$, then this map satisfies the following inclusion 
\begin{equation}\label{conj}
\mathrm{Char}_{\Lambda_{\mathcal{O}}}\left(H^1_{\mathrm{Iw}}(\mathbb{Z}[1/N_fp], \rho_f^*(1))/\Lambda_{\mathcal{O}}\cdot \mathrm{Im}(\bold{z}(f))\right)\subset 
\mathrm{Char}_{\Lambda_{\mathcal{O}}}\left(H^2_{\mathrm{Iw}}(\mathbb{Z}[1/p], \rho_f^*(1))\right)
\end{equation}
of characteristic ideals of finite generated torsion $\Lambda_{\mathcal{O}}$-modules as a consequence of the general theory of Euler system, and conjectured that this inclusion should be an equality, where we set 
$$H^2_{\mathrm{Iw}}(\mathbb{Z}[1/p], \rho_f^*(1)):=\mathrm{Ker}(H^2_{\mathrm{Iw}}(\mathbb{Z}[1/N_fp], \rho_f^*(1))
\rightarrow \oplus_{l|N_f, l\not=p}H^2_{\mathrm{Iw}}(\mathbb{Q}_l, \rho_f^*(1))).$$ When $f$ is ordinary at $p$, Kato proved the inclusion 
\begin{equation}(\mathcal{L}(f))\subset \mathrm{Char}_{\Lambda_{\mathcal{O}}}(\text{Pontryagin dual of the cyclotomic Selmer group of } \rho_f)
\end{equation}
as a consequence of these two properties of the map $\bold{z}(f)$. In fact, it is known that the existence of the map $\bold{z}(f)$ with these two properties is essentially equivalent to the validity of Iwasawa main conjecture for $f$ when we can formulate it (e.g. when $f$ is ordinary at $p$, or more generally,  $f$ is of finite slope at $p$). We remark that it is not known (up to now) how to formulate Iwasawa main conjecture with $p$-adic $L$-function for general $f$ (e.g. we don't know the existence of ``good" $p$-adic $L$-functions and Selmer groups when $f$ is not of finite slope), but one has the map $\bold{z}(f)$ and 
one can formulate the conjecture on the equality of the inclusion (\ref{conj}) for arbitrary $f$.

In \cite{Ka93a}, Kato furthermore made a remarkable conjecture called the generalized Iwasawa main conjecture unifying both Iwasawa main conjecture and Bloch-Kato's Tamagawa number conjecture. 
This conjecture predicts that such zeta morphisms exist for arbitrary geometric $p$-adic 
representations of $G_{\mathbb{Q}}$, more importantly for our purpose, for arbitrary families of 
$p$-adic representations of $G_{\mathbb{Q}}$ which are unramified outside finite sets of primes. In particular, this conjecture predicts that 
there exists a zeta morphism for any family of rank two odd $p$-adic representations of $G_{\mathbb{Q}}$ which are unramified outside a finite set of primes, and it interpolates
Kato's zeta morphisms $\bold{z}(f)$ for arbitrary $\rho_f$ containing in the family. The main result of this article concerns this 
problem. 

Let $\Sigma$ be a finite set of primes containing $p$. %$\mathbb{Q}_{\Sigma}\subset \overline{\mathbb{Q}}$ be the maximal algebraic extension of 
%$\mathbb{Q}$ which is unramified outside $\Sigma\cup\{\infty\}$. 
%We set $G_{\mathbb{Q}, \Sigma}:=\mathrm{Gal}(\mathbb{Q}_{\Sigma}/\mathbb{Q})$. 
Let $\overline{\rho} : G_{\mathbb{Q}} \rightarrow \mathrm{GL}_2(\mathbb{F})$ be an odd absolutely irreducible representation over a 
finite field $\mathbb{F}$ of characteristic $p$ which is unramified outside $\Sigma$. We consider deformations of $\overline{\rho}$ which are unramified outside $\Sigma$ with no condition at the primes $l\in\Sigma$. 
Since $\overline{\rho}$ is absolutely irreducible, there exists the universal deformation ring $R_{\overline{\rho}, \Sigma}$, and the universal 
deformation $\rho^u : G_{\mathbb{Q}}\rightarrow \mathrm{GL}_2(R_{\overline{\rho}, \Sigma})$ (defined up to isomorphism) for this deformation problem. 
Let $\varepsilon : G_{\mathbb{Q}}\rightarrow \mathbb{Z}_p^{\times}$ be the $p$-adic cyclotomic character. For each integer $n\geqq 1$, we write 
$\Sigma_n:=\Sigma\cup\mathrm{prime}(n)$ to denote the union of $\Sigma$ and the set of the prime divisors $\mathrm{prime}(n)$ of $n$. 
For each prime $l\not\in \Sigma$, we set $P_l(u):=\mathrm{det}(1-\mathrm{Frob}_l \cdot u\mid \rho^u)\in R_{\overline{\rho}, \Sigma}[u]$. For any $a\in \mathbb{Z}$ such that $(a, np)=1$, we write $\sigma_a\in \mathrm{Gal}(\mathbb{Q}(\zeta_{np^{\infty}})/\mathbb{Q})$ 
to denote the element corresponding to 
$a\in \varprojlim_{k\geqq 1}(\mathbb{Z}/np^k\mathbb{Z})^{\times}$ via the cyclotomic character. 
The main result of this article is the following (see Corollary \ref{3.11} and Theorem \ref{3.2} for the precise statement). 
\begin{thm}\label{0.1}Assume the following $:$ 
\begin{itemize}
\item[(1)]$p\geqq 5$.
\item[(2)]$\rho|_{G_{\mathbb{Q}(\zeta_p)}}$ is absolutely irreducible. 
\item[(3)]$\mathrm{End}_{\mathbb{F}[G_{\mathbb{Q}_p}]}(\overline{\rho}_p)=\mathbb{F}$ for $\overline{\rho}_p:=\overline{\rho}|_{G_{\mathbb{Q}_p}}$.
\item[(4)]$\overline{\rho}_p$ is not of the form $\begin{pmatrix}1& *\\ 0 & \overline{\varepsilon}^{\pm 1}\end{pmatrix}
\otimes \chi\,  $ for any character $\chi : G_{\mathbb{Q}_p}\rightarrow \mathbb{F}^{\times}$.
\item[(5)]$\overline{\rho}_p$ is not of the form $\begin{pmatrix}1& *\\ 0 & 1\end{pmatrix}
\otimes \chi\,  $  for any character $\chi : G_{\mathbb{Q}_p}\rightarrow \mathbb{F}^{\times}$.
%\item[(4)]$\mathrm{End}_{\mathbb{F}[G_{\mathbb{Q}_p}]}(\overline{\rho}_p)=\mathbb{F}$. 
\end{itemize}
Then, there exists a family of $R_{\overline{\rho}, \Sigma}$-linear maps
$$\bold{z}_{\Sigma, \,n}(\rho^u) : (\rho^u)^*\rightarrow H^1_{\mathrm{Iw}}(\mathbb{Z}[1/\Sigma_n, \zeta_n], 
(\rho^u)^*(1))$$
for each $n\geqq 1$ such that $(n, \Sigma)=1$ satisfying the following $:$ 
\begin{itemize}
 \item[(i)]One has 
 $$\bold{z}_{\Sigma, n}(\rho^u)(\tau(v))=\sigma_{-1}(\bold{z}_{\Sigma, n}(\rho^u)(v))$$
 for any $v\in  (\rho^u)^*$, where $\tau\in \mathrm{Gal}(\mathbb{C}/\mathbb{R})\subseteq G_{\mathbb{Q}}$ is 
 the complex conjugation. 
\item[(ii)]For each integer $n\geqq 1$ and prime $l$ such that $(nl, \Sigma)=1$, one has 
$$\mathrm{Cor}\circ \bold{z}_{\Sigma,\,nl}(\rho^u)=\begin{cases}\bold{z}_{\Sigma, \,n}(\rho^u) & \text{ if } l|n \\
P_l(\sigma_l^{-1})\cdot \bold{z}_{\Sigma, \,n}(\rho^u) & \text{ if } (l, n)=1
\end{cases}$$
for the corestriction map
$$\mathrm{Cor} : H^1_{\mathrm{Iw}}(\mathbb{Z}[1/\Sigma_{nl}, \zeta_{nl}], 
(\rho^u)^*(1))\rightarrow H^1_{\mathrm{Iw}}(\mathbb{Z}[1/\Sigma_{nl}, \zeta_n], 
(\rho^u)^*(1)).$$
\item[(iii)]For every point $x_f\in \mathrm{Spec}(R_{\overline{\rho}, \Sigma})(\overline{\mathbb{Q}}_p)$ corresponding to a normalized Hecke eigen cusp newform
$f$, the specialization $\bold{z}_{\Sigma, \,1}(f)$ of the map $\bold{z}_{\Sigma, \,1}(\rho^u)$ for $n=1$ at $x_f$ satisfies the equality
$$\bold{z}_{\Sigma, \,1}(f)=\prod_{l\in \Sigma\setminus\{p\}}P_{f, l}(\sigma_l^{-1})\cdot \bold{z}(f).$$
 Here, for every prime $l\in \Sigma\setminus \{p\}$, we set $I_l\subset G_{\mathbb{Q}_l}$ the inertia subgroup, and 
 $P_{f, l}(u):=\mathrm{det}(1-\mathrm{Frob}_l \cdot u\mid \rho_f^{I_l})$.

 \end{itemize}
\end{thm}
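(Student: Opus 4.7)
The plan is to interpolate Kato's Beilinson--Kato construction across the universal deformation ring by working inside the completed cohomology $\widetilde{H}^1$ of the tower of modular curves. Under the absolute irreducibility of $\overline{\rho}$ together with hypothesis (2), Emerton's local--global compatibility identifies $R_{\overline{\rho},\Sigma}$ with a big Hecke algebra acting faithfully on a localization $\widetilde{H}^1_{\mathfrak{m}_{\overline{\rho}}}$; hypotheses (1) and (3)--(5) are precisely those under which Colmez's $p$-adic local Langlands functor behaves well on deformations of $\overline{\rho}_p$ and cuts out an $(\rho^u)^*$-isotypic component of $\widetilde{H}^1_{\mathfrak{m}_{\overline{\rho}}}$.

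The construction then proceeds in two steps. First, I would combine Emerton's theorem with Colmez's functor to obtain (informally) a factorization $\widetilde{H}^1_{\mathfrak{m}_{\overline{\rho}}} \simeq (\rho^u)^* \otimes_{R_{\overline{\rho},\Sigma}} \Pi(\rho^u)$, which produces a canonical $R_{\overline{\rho},\Sigma}[G_{\mathbb{Q}}]$-linear realization of $(\rho^u)^*$ inside completed cohomology and, dually, a universal modular symbol on $\Pi(\rho^u)$. Second, Kato's Siegel units on the modular tower yield classes in the Iwasawa cohomology of the étale cohomology of $Y_1(Np^m)$ at every finite level; passing to the completed limit and localizing at $\mathfrak{m}_{\overline{\rho}}$, these assemble into elements of $H^1_{\mathrm{Iw}}(\mathbb{Z}[1/\Sigma_n,\zeta_n], \widetilde{H}^1_{\mathfrak{m}_{\overline{\rho}}}(1))$. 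Evaluating these Iwasawa classes against the universal modular symbol produces $\bold{z}_{\Sigma,n}(\rho^u)$. Properties (i) and (ii) should then be formal consequences of the classical complex-conjugation symmetry and norm-compatibility of Beilinson--Kato elements at finite level, since the whole construction is functorial.

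The main obstacle will be property (iii). The factor $\prod_{l\in\Sigma\setminus\{p\}} P_{f,l}(\sigma_l^{-1})$ appears because the universal deformation permits ramification at every $l\in\Sigma$, whereas Kato's $\bold{z}(f)$ is built at the minimal level $N_f$; the trace/localization comparison between these levels produces precisely these Euler factors at primes where $\rho_f$ happens to be unramified. To make this rigorous, one must compare two realizations of $\rho_f^*$: the specialization at $x_f$ of the Emerton--Colmez realization of $(\rho^u)^*$, and Kato's original realization inside the étale cohomology of $Y_1(N_f)$. Ensuring that these agree under a normalization that varies continuously across the family, and that the universal modular symbol correspondingly specializes to the classical modular symbol used by Kato, is where the full strength of hypotheses (3)--(5) via the $p$-adic local Langlands correspondence must be brought in; this matching of normalizations, uniformly in $x_f$, is in my view the principal technical difficulty.
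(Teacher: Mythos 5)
Your overall roadmap is the correct one (completed (co)homology, Emerton's refined local-global compatibility, interpolation of the Beilinson--Kato classes, and the Euler-factor discrepancy in (iii)), but there are two substantive gaps where you have declared "formal" or "evaluating against a modular symbol" steps that are in fact where the hard work lies.

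\textbf{The integrality of the interpolated zeta class is not formal.} Kato and Fukaya--Kato's zeta morphism at finite level is \emph{a priori} only valued in a localization of $\Lambda_n$-Iwasawa cohomology: the classes ${}_c z^{\mathrm{Iw}}$ carry the auxiliary factor $(c^2 - c^{2-j}\sigma_c)(c^2 - c^{j-k+2}\sigma_c)$, and stripping it produces a map into $H^1_{\mathrm{Iw}}(\ldots)_{Q_{k,n}}$, not into the honest lattice. The paper devotes Proposition \ref{2.14} to proving that, after localizing at the non-Eisenstein maximal ideal $\mathfrak{m}$ associated to $\overline{\rho}$, the image lands in the integral Iwasawa cohomology; the argument reduces to Kato's integrality theorem for individual $\bold{z}(f)$ and relies essentially on $\overline{\rho}$ being absolutely irreducible (so that $H^0 = 0$ mod $p$ and the Iwasawa module is $\Lambda_0$-free). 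This is the step that lets one take the projective limit over $k$; without it the construction does not assemble. Relatedly, the paper replaces $Y(N)$ by the compactified curve $X(N)$ and uses the Drinfeld--Manin splitting precisely because parabolic cohomology has the torsion-freeness and injectivity-of-$\exp^*$ properties (Lemma \ref{2.2}) that make the characterization of $z^{\mathrm{Iw}}_{N,n}$ work; you would need an analogous device.

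\textbf{``Evaluating against the universal modular symbol'' does not produce the map.} The dual refined local-global compatibility (Corollary \ref{1.5}) expresses $\widetilde{H}^{BM}_{1,\overline{\rho},\Sigma}$ as a three-fold tensor product $(\pi_p^{\mathfrak{m}})^* \otimes (\rho^{\mathfrak{m}})^* \otimes \widetilde{\pi}_{\Sigma_0}^{\mathfrak{m}}$ with a $G_\Sigma$-equivariant zeta morphism between two such objects. To factor out the $(\rho^{\mathfrak{m}})^*$-part the paper applies Emerton--Helm's exact Whittaker functor $\Psi_{\Sigma_0}$ to kill the $\Sigma_0$-factor (using that $\pi^{\mathfrak{m}}_{\Sigma_0}$ is co-Whittaker, so $\Psi_{\Sigma_0}(\widetilde{\pi}_{\Sigma_0}^{\mathfrak{m}}) \cong \mathbb{T}_{\overline{\rho},\Sigma}$), and then removes the $p$-adic local Langlands factor $(\pi_p^{\mathfrak{m}})^*$ by identifying it with Pa\v{s}k\={u}nas's projective envelope $\widetilde{P} \widehat{\otimes}_{R_p} \mathbb{T}_{\overline{\rho},\Sigma}$ and using that $R_p \cong \mathrm{End}_{\mathfrak{C}(\mathcal{O})}(\widetilde{P})$, so that $\mathrm{Hom}_{\mathfrak{C}(\mathcal{O})}(\widetilde{P},-)$ strips the $\widetilde{P}$-factor (Lemma \ref{5.27}, Corollary \ref{5.28}). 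There is no pairing against a modular symbol; the mechanism is categorical. Finally, for (iii), the comparison with Kato's $\bold{z}(f)$ hinges on Pa\v{s}k\={u}nas's description of the potentially semi-stable deformation ring $R^{\diamond}_p(k,\tau)$ via $\sigma^{\diamond}(k,\tau) \otimes_{\mathcal{O}[[K_0]]} \widetilde{P}$ (Theorem \ref{3.5}), which translates specialization at $x_f$ into evaluation on a locally algebraic type; you identified the Euler-factor discrepancy correctly, but this specific input is what makes the normalization match rigorously. You also use $\widetilde{H}^1$ where the paper works with its dual $\widetilde{H}^{BM}_1$, which you would need to dualize carefully, and the identification $R_{\overline{\rho},\Sigma} \cong \mathbb{T}_{\overline{\rho},\Sigma}$ is B\"ockle's theorem, not a consequence of Emerton's local-global compatibility.
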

\begin{rem}
The condition (i) means that the map $\bold{z}_{\Sigma, n}(\rho^u)$ induces a $R_{\overline{\rho}, \Sigma}\widehat{\otimes}_{\mathbb{Z}_p}\Lambda_n$-linear map 
$$\bold{z}_{\Sigma, n}(\rho^u) :  (\rho^u)^*\widehat{\otimes}_{\mathbb{Z}_p[\{\pm 1\}]}\Lambda_n\rightarrow H^1_{\mathrm{Iw}}(\mathbb{Z}[1/\Sigma_n, \zeta_n], 
(\rho^u)^*(1)), $$ where $-1\in \{\pm 1\}$ acts on $(\rho^u)^*$ via $\tau$, on $\Lambda_n$ by $\sigma_{-1}$. Such a property is demanded for zeta morphisms in Kato's generalized Iwasawa main conjecture \cite{Ka93a}. 
\end{rem}

We give some remarks on the assumptions in the theorem. First, one can remove the assumptions (2) and (5)  if we replace the ring $R_{\overline{\rho}, \Sigma}$ with the universal promodular deformation ring $\mathbb{T}_{\overline{\rho}, \Sigma}$ which is defined as 
a limit of Hecke algebras acting on the \'etale cohomology of modular curves of some infinitely many different levels. We need other assumptions (1), (3), (4) to use some theorems in the theory of $p$-adic (local and global) Langlands correspondence for $\mathrm{GL}_{2/\mathbb{Q}}$, on which our construction heavily depend. For example, we need the assumption (4) for $\overline{\varepsilon}^{\pm 1}= \overline{\varepsilon}$ to apply Colmez' and Emerton's results in \cite{Co10} and \cite{Em}, 
and the assumptions (1), (3) and (4) for $\overline{\varepsilon}^{\pm 1}= \overline{\varepsilon}^{-1}$ to apply Pa\v{s}k\={u}nas' results in 
\cite{Pas13} and \cite{Pas15}. Therefore, it seems to be possible to remove these assumptions  if some modified versions of their theorems are obtained for general case. On the other hand, the assumption that $\overline{\rho}$ is absolutely irreducible seems to be crucial for our construction. For example, Emerton's result \cite{Em} which is fundamental for our method 
is only known for this case. Moreover, some results of Kato in \cite{Ka04} are valid only for the residually absolutely irreducible case. For example, 
the $\Lambda$-freeness of the Iwasawa cohomology $H^1_{\mathrm{Iw}}(\mathbb{Z}[1/N_fp], \rho_f^*(1))$ seems to be important for our method, 
 but such a freeness is not known for residually reducible case. 

We next give some remarks on the relation of our result with the previous known results on the extension of Kato's Euler systems or zeta morphisms to families of Galois representations. 
For Hida families, Ochiai defined such a zeta morphism which he calls the optimization of the two-variable Beilinson-Kato element in Theorem 6.11 of \cite{Oc06} by glueing Kato's zeta morphisms. Fukaya-Kato \cite{FK06} also defined it by a different method, which is important for our work. They first defined a zeta morphism for 
the \'etale cohomology $H^1_{\text{\'et}}(Y_1(N)_{\overline{\mathbb{Q}}}, \mathbb{Z}_p(1))$ of modular curves $Y_1(N)_{\overline{\mathbb{Q}}}$ over $\overline{\mathbb{Q}}$. Precisely, they defined a map
$$z^{\mathrm{Iw}}_{1, N, n} : H^1_{\text{\'et}}(Y_1(N)_{\overline{\mathbb{Q}}}, \mathbb{Z}_p)\rightarrow 
H^1_{\mathrm{Iw}}(\mathbb{Z}[1/nNp, \zeta_n], H^1_{\text{\'et}}(Y_1(N)_{\overline{\mathbb{Q}}}, \mathbb{Z}_p(1))\otimes_{\Lambda_n}\mathrm{Frac}(\Lambda_n)$$ for every $n\geqq 1$ such that $(n, Np)=1$ satisfying an Euler system relation. Here, we set $\Lambda_n:=\mathbb{Z}_p[[\mathrm{Gal}(\mathbb{Q}(\zeta_{np^{\infty}})/\mathbb{Q})]]$. They also showed that this map is equivariant for the Hecke action. % at the primes not dividing $Np$. 
Taking its ordinary part, and taking its limit with respect to the levels $Np^k$ ($k\geqq 1$), they defined their zeta morphisms for Hida families. Furthermore, there are some works on E, uler systems or zeta morphisms for the Galois representations over Coleman-Mazur eigencurves, e.g by Hansen \cite{Ha15}, Ochiai \cite{Oc18}, and Wang \cite{Wa13}. In particular, Hansen conjectured that there exist zeta morphisms for the Galois representations over Coleman-Mazur eigencurves in Conjecture 1.3.1 of \cite{Ha15}. As an application of our main theorem, it seems to be 
possible  to prove his conjecture simply as base changes of our zeta morphism from the universal deformation spaces to Coleman-Mazur eigencurves. %We will study this problem in our next work. 
Finally, there are some works on Euler systems or zeta morphisms 
for the universal deformations, %which is also our main object in this article, 
e.g. by Fouquet \cite{Fo16}, and Wang. 
However,
% it seems to the author that they didn't construct the zeta element 
%in our sense, i.e. the canonical homomorphism from  $(\rho^u)^*$ to $H^1_{\mathrm{Iw}}(G_{\mathbb{Q}(\zeta_n), \Sigma_n}, (\rho^u)^*(1))$ interpolating Kato's 
%z%eta element $\bold{z}^{(p)}(f)$. 
the author can't find the precise relationship between their constructions with Kato's 
zeta morphisms $\bold{z}(f)$ containing in the family. Whereas our zeta morphism $\bold{z}_{\Sigma, \,n}(\rho^u)$ is directly related with each $\bold{z}(f)$ by its base change to the point 
corresponding to $f$, and 
its existence is compatible with Kato's generalized Iwasawa main conjecture for the universal deformations. %ir construction seems to depend on some auxiliary choice appearing in Kato's original construction. 
We remark that they used Emerton's theory of completed cohomology of modular curves \cite{Em}, which is also a foundation for our work. 

There seem to be several applications of our main theorem to some problems related with the generalized Iwasawa main conjecture. 
In this article, we give an application to Iwasawa main conjecture, which can be regarded as a generalization of 
the results of e.g. \cite{GV00}, \cite{EPW06}, and \cite{KLP19}. We consider the following conjecture, which we call Kato's main conjecture 
(Conjecture 12.10 of \cite{Ka04}).

\begin{conjecture}\label{conj01}Assume $p$ is odd and $\overline{\rho}_f : G_{\mathbb{Q}}\rightarrow \mathrm{GL}_2(\mathbb{F})$ is absolutely irreducible. 
Then the equality 
$$\mathrm{Char}_{\Lambda_{\mathcal{O}}}\left(H^1_{\mathrm{Iw}}(\mathbb{Z}[1/N_fp], \rho_f^*(1))/\Lambda_{\mathcal{O}}\cdot \mathrm{Im}(\bold{z}(f))\right)=
\mathrm{Char}_{\Lambda_{\mathcal{O}}}\left(H^2_{\mathrm{Iw}}(\mathbb{Z}[1/p], \rho_f^*(1))\right)$$
holds.
\end{conjecture}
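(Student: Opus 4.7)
My strategy is to deduce Conjecture \ref{conj01} for $f$ by propagating the equality from a single newform $g$ that is congruent to $f$ mod $p$ and at which the conjecture is already known -- for instance, by Skinner-Urban when $\rho_g$ is ordinary and crystalline of suitable weight. The bridge between $f$ and $g$ will be the universal zeta morphism $\bold{z}_{\Sigma,1}(\rho^u)$ from Theorem \ref{0.1}, which simultaneously interpolates $\bold{z}(f)$ and $\bold{z}(g)$ up to the explicit Euler factors recorded in property (iii), combined with the congruence-of-characteristic-ideals technology of \cite{KLP19}. Throughout, one imposes a $\mu=0$ hypothesis on $\overline{\rho}$ so that characteristic ideals on both sides of the conjectured equality behave controllably under specialization.

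First I reinterpret both sides of the identity at the universal level. The map $\bold{z}_{\Sigma,1}(\rho^u)$ lives over $R_{\overline{\rho},\Sigma}\widehat{\otimes}_{\mathbb{Z}_p}\Lambda_{\mathcal{O}}$, so its cokernel and the module $H^2_{\mathrm{Iw}}(\mathbb{Z}[1/p],(\rho^u)^*(1))$ are finitely generated over this ring. Using the absolute irreducibility of $\overline{\rho}$, standard base-change in Galois cohomology shows that $H^1_{\mathrm{Iw}}$ and $H^2_{\mathrm{Iw}}$ of $(\rho^u)^*(1)$ specialize compatibly to those of $\rho_f^*(1)$ and $\rho_g^*(1)$ at the respective classical points $x_f$ and $x_g$, after clearing the Euler factors of property (iii). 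The universal version of Kato's divisibility (\ref{conj}) can be assembled from the classical divisibilities by density of arithmetic points together with property (iii); the real content is the reverse inclusion.

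For the reverse inclusion I invoke \cite{KLP19}. Let $I^u$ denote the characteristic ideal of $H^1_{\mathrm{Iw}}(\mathbb{Z}[1/\Sigma],(\rho^u)^*(1))/R_{\overline{\rho},\Sigma}\widehat{\otimes}\Lambda_{\mathcal{O}}\cdot\mathrm{Im}(\bold{z}_{\Sigma,1}(\rho^u))$ and $J^u$ that of $H^2_{\mathrm{Iw}}(\mathbb{Z}[1/p],(\rho^u)^*(1))$, viewed in a suitable localization of $R_{\overline{\rho},\Sigma}\widehat{\otimes}\Lambda_{\mathcal{O}}$. The hypothesis on $g$ gives equality of the specializations of $I^u$ and $J^u$ at $x_g$. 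Under the $\mu=0$ assumption, the congruence-invariance results of \cite{KLP19} upgrade this single-point equality to an equality $I^u=J^u$ along the universal deformation space, and in particular at $x_f$; specializing and dividing by the (unit) Euler factors of property (iii) yields Conjecture \ref{conj01} for $f$.

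The principal obstacle is to make \cite{KLP19} genuinely apply to the cokernel of $\bold{z}_{\Sigma,1}(\rho^u)$ rather than to a Selmer group alone. That article is formulated in a specific Selmer-theoretic language, so I expect to have to reinterpret $H^1_{\mathrm{Iw}}/\mathrm{Im}(\bold{z}_{\Sigma,1}(\rho^u))$ as a Selmer-style invariant of $(\rho^u)^*(1)$, or at least show that its characteristic ideal obeys the same congruence-compatibility as the corresponding Selmer group does; this is where the $\mu=0$ hypothesis is indispensable, as it prevents spurious $p$-power factors from being created under specialization. A secondary difficulty is the interface between $R_{\overline{\rho},\Sigma}$ and the promodular Hecke algebra $\mathbb{T}_{\overline{\rho},\Sigma}$: modularity of both $f$ and $g$ is needed to specialize the universal zeta morphism at $x_f$ and $x_g$, and under hypotheses (1)--(5) of Theorem \ref{0.1} this is covered by an $R=\mathbb{T}$ theorem, but the congruence argument of \cite{KLP19} must be verified to be compatible with this identification.
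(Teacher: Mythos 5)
The statement you have been asked to prove is labelled a \emph{conjecture} in the paper (it is Kato's main conjecture, Conjecture 12.10 of \cite{Ka04}), and the paper does \emph{not} prove it: it remains open. What the paper does prove is a conditional transfer result, Theorem \ref{NNN} (made precise in \S 5.3 as Theorems \ref{thmA}--\ref{thmD} and Corollary \ref{coro}), which says that \emph{if} the conjecture holds for one congruent form $g$, and a $\mu=0$ input is available, \emph{then} it holds for $f$. Your proposal is explicitly of that conditional shape --- you assume ``a single newform $g$ that is congruent to $f$ mod $p$ and at which the conjecture is already known'' --- so it cannot be read as a proof of Conjecture \ref{conj01} itself: it has no base case, and that base case is precisely the conjectural content. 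You should therefore recognize that what you have sketched is an argument for the transfer Theorem \ref{NNN}, not for the conjecture.

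Even as an argument for Theorem \ref{NNN}, there is a concrete gap in your strategy. You propose to introduce universal characteristic ideals $I^u$ and $J^u$ of the cokernel of $\bold{z}_{\Sigma,1}(\rho^u)$ and of $H^2_{\mathrm{Iw}}(\mathbb{Z}[1/p],(\rho^u)^*(1))$, viewed over $R_{\overline{\rho},\Sigma}\widehat{\otimes}_{\mathbb{Z}_p}\Lambda_{\mathcal{O}}$, and to upgrade the single-point equality at $x_g$ to $I^u=J^u$ ``along the universal deformation space.'' But $R_{\overline{\rho},\Sigma}\widehat{\otimes}_{\mathbb{Z}_p}\Lambda_{\mathcal{O}}$ is in general not a normal or regular ring (it need not even be a domain), so a well-behaved theory of characteristic ideals for its torsion modules is not available, and there is no general principle that an equality of such ideals at one arithmetic point, even with a $\mu=0$ hypothesis, propagates across the whole deformation space. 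The paper sidesteps this entirely: instead of working at the universal level, \S 5.3 compares $\lambda$- and $\mu$-invariants directly at the residual level. Concretely, it uses the exact sequence (the one labelled (\ref{ggg}) in the proof of Theorems \ref{thmA}--\ref{thmD}) relating the mod-$\varpi$ reduction of $H^1_{\mathrm{Iw}}(\mathbb{Z}[1/\Sigma],\rho_x^*(1))^\eta/\Lambda z_{\Sigma}(\rho_x)^\eta$ to $H^1_{\mathrm{Iw}}(\mathbb{Z}[1/\Sigma],\overline{\rho}^*(1))^\eta/\Lambda_{\mathbb F}\overline z_{\Sigma}^\eta$ and to $H^2_{\mathrm{Iw}}(\mathbb{Z}[1/\Sigma],\rho_x^*(1))^\eta[\varpi]$, proves $\mu=0$ at every classical point once it holds at one (Theorem \ref{thmA}), and then shows (Theorem \ref{thmD}) that the \emph{difference} of $\lambda$-invariants of the two sides of the conjectured equality is a residual invariant of $\overline{\rho}$ alone, hence the same at $x_f$ as at $x_g$. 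Combined with Kato's one-sided divisibility (Theorem \ref{Kato}), this yields the transfer. Your congruence-of-zeta-elements input (via Theorem \ref{0.1}(iii) / Corollary \ref{cong}) is correct and is indeed the key new ingredient, but the downstream bookkeeping must be done at the mod-$\varpi$ level with $\lambda$- and $\mu$-invariants as in \S 5.3, not by asserting a universal characteristic ideal.
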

Our theorem roughly state that this conjecture is true for  $f$ if this conjecture is true for one $g$ which is congruent to $f$. 
In this introduction, we say that two normalized Hecke eigen cusp newforms $f=\sum^{\infty}_{n=1}a_nq^n$ and $g=\sum^{\infty}_{n=1}b_nq^n$ are congruent 
 if $a_l\equiv b_l\bmod \varpi$ for all but finitely many primes $l$ and a finite extension $E\subset \overline{\mathbb{Q}}_p$ with a uniformizer $\varpi$. 
 As a direct consequence of Theorem \ref{0.1}, we can immediately obtain congruences of zeta morphisms
 $\prod_{l\in \Sigma_0}P_{f,l}(\sigma_l^{-1})\cdot \bold{z}(f)$ between congruent Hecke eigen cusp newforms 
 (see Corollary \ref{cong} for the precise and more general statement).
 As a consequence of this congruence and the arguments of the recent article \cite{KLP19}, we obtain the following theorem 
 (see \S 5.3 for the precise and more general statement).
 
 The author would like to Chan-Ho Kim for sending him this article, and discussing applications of our main theorem to Iwasawa main conjecture.

 \begin{thm}\label{NNN}
 Assume all the assumptions $(1), (3), (4)$ in Theorem $\ref{0.1}$ are satisfied for $\overline{\rho}=\overline{\rho}_f$, 
 If there exist $g_1, g_2, g_3$ which are congruent to $f$ satisfying :
 \begin{itemize}
 \item[(1)]There exists an element $\tau\in \mathrm{Gal}(\overline{\mathbb{Q}}/\mathbb{Q}(\zeta_{p^{\infty}}))$ such that 
 $\rho_{g_1}/(\tau-1)\rho_{g_1}$ is a free $\mathcal{O}$-module of rank one.
 \item[(2)]The $\mu$-invariant of $H^1_{\mathrm{Iw}}(\mathbb{Z}[1/N_{g_2}p], \rho_{g_2}^*(1))/\Lambda_{\mathcal{O}}\cdot \mathrm{Im}(\bold{z}(g_2))$ is zero. 
 \item[(3)]Conjecture $\ref{conj01}$ holds for $g_3$. 
 \end{itemize}
 Then, the $\mu$-invariant of $H^1_{\mathrm{Iw}}(\mathbb{Z}[1/N_{f}p], \rho_{f}^*(1))/\Lambda_{\mathcal{O}}\cdot \mathrm{Im}(\bold{z}(f))$ is zero, and 
 Conjecture $\ref{conj01}$ holds for $f$. 
 
 \end{thm}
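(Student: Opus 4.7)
The plan is to combine Theorem~\ref{0.1} with the arguments of \cite{KLP19}. Fix $\Sigma$ to contain $p$ together with every prime at which any of $f, g_1, g_2, g_3$ ramifies, and enlarge $\mathcal{O}$ so that each of these forms corresponds to a point of $\mathrm{Spec}(R_{\overline{\rho}_f, \Sigma})(\mathcal{O})$. Applying Theorem~\ref{0.1} to $\overline{\rho} = \overline{\rho}_f$ produces the universal zeta morphism $\bold{z}_{\Sigma, 1}(\rho^u)$, whose specialization at a point $x_g$ is, by property~(iii), the modified Kato morphism
\[
z^{\Sigma}(g) := \prod_{l \in \Sigma \setminus \{p\}} P_{g, l}(\sigma_l^{-1}) \cdot \bold{z}(g).
\]
In particular, any two newforms $g, g'$ congruent to $f$ produce, after fixing compatible $\mathcal{O}$-lifts of $\overline{\rho}_f$, a congruence $z^{\Sigma}(g) \equiv z^{\Sigma}(g') \pmod{\varpi}$; this is the content of Corollary~\ref{cong} alluded to in the introduction, and gives the input needed to compare the quotient modules $X^{\Sigma}(g) := H^1_{\mathrm{Iw}}(\mathbb{Z}[1/\Sigma], \rho_g^*(1))/\Lambda_{\mathcal{O}} \cdot \mathrm{Im}(z^{\Sigma}(g))$ across the congruence class.

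With this in hand, I would feed the universal congruence into the mechanism of \cite{KLP19}. Hypothesis~(1) is Kato's freeness criterion: it ensures that $H^1_{\mathrm{Iw}}(\mathbb{Z}[1/N_{g_1}p], \rho_{g_1}^*(1))$ is free of rank one over $\Lambda_{\mathcal{O}}$, and via the identification supplied by Theorem~\ref{0.1} this freeness transfers to every congruent form, so characteristic ideals of $X^{\Sigma}$ can be read off directly. Hypothesis~(2) then asserts $\mu(X^{\Sigma}(g_2)) = 0$ (after checking that the Euler correction factors $P_{g_2, l}$ contribute no $\mu$-invariant, which is automatic at primes where $\overline{\rho}_f$ is ramified), and the universal congruence propagates this to $\mu(X^{\Sigma}(f)) = 0$, hence to the vanishing of $\mu$ for the original Kato quotient for $f$, giving the first assertion. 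Finally, Hypothesis~(3) yields an equality of characteristic ideals for $g_3$; together with the universal congruence, the general inclusion~\eqref{conj}, and the just-established $\mu = 0$ for $f$, the standard Iwasawa-theoretic principle (an inclusion of characteristic ideals in $\Lambda_{\mathcal{O}}$ that becomes an equality modulo $\varpi$ with $\mu = 0$ is itself an equality) upgrades inclusion to equality for $f$, completing the proof of Conjecture~\ref{conj01} for $f$.

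The most delicate step will be the bookkeeping of the correction factors $\prod_{l \in \Sigma \setminus \{p\}} P_{f, l}(\sigma_l^{-1})$ that distinguish $\bold{z}(f)$ from the universal specialization $z^{\Sigma}(f)$: these depend on $f$ itself and not merely on $\overline{\rho}_f$, so one must verify that their contribution to characteristic ideals and $\mu$-invariants cancels compatibly against the analogues on the right-hand side of Conjecture~\ref{conj01} (coming from the localization sequence relating $H^2_{\mathrm{Iw}}(\mathbb{Z}[1/N_fp], \rho_f^*(1))$ and $H^2_{\mathrm{Iw}}(\mathbb{Z}[1/p], \rho_f^*(1))$) and transforms correctly across the congruence class. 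Once this is in place, the remainder is a direct transplant of the arguments in \cite{KLP19} to the universal deformation setting, enabled precisely by the specialization compatibility~(iii) of Theorem~\ref{0.1}.
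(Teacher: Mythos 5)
Your overall strategy coincides with the paper's: apply Theorem~\ref{0.1} and Corollary~\ref{cong} to get a mod~$\varpi$ congruence between the $\Sigma$-modified Kato morphisms $z^{\Sigma}(g):=\prod_{l\in\Sigma\setminus\{p\}}P_{g,l}(\sigma_l^{-1})\bold{z}(g)$ for all newforms $g$ in the congruence class, then transplant the arguments of \cite{KLP19}. Two points, however, need repair.

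First, Hypothesis~(1) is not ``Kato's freeness criterion'' for $H^1_{\mathrm{Iw}}$. The freeness of $H^1_{\mathrm{Iw}}(\mathbb{Z}[1/\Sigma],\rho_x^*(1))$ over $\Lambda_{\mathcal{O}}$ holds automatically for every $x\in\mathfrak{X}(\overline{\rho})$ from the absolute irreducibility of $\overline{\rho}$ (\S 13.8 of \cite{Ka04}), and does not need Theorem~\ref{0.1} or Hypothesis~(1) to ``transfer.'' What Hypothesis~(1) supplies is the existence of a modular point satisfying condition~(a) of Theorem~\ref{Kato}, which is precisely what lets the Euler-system machinery give the unconditional inclusion of characteristic ideals needed as one side of the argument; the rank-one statement for $H^1_{\mathrm{Iw}}$ is an output, not an input, of the subsequent $\mu$-invariant analysis. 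Second, and more importantly, the closing step---``an inclusion of characteristic ideals that becomes an equality modulo $\varpi$ with $\mu=0$ is itself an equality''---is invoked without establishing its hypothesis, and indeed the congruence of zeta morphisms together with the equality for $g_3$ does \emph{not} yield an equality of the relevant characteristic ideals modulo $\varpi$ for $f$. What it actually yields, after threading both the $H^1$ and $H^2$ Iwasawa cohomologies through the mod~$\varpi$ exact sequence $0\to H^i_{\mathrm{Iw}}(\rho_x^*(1))/\varpi\to H^i_{\mathrm{Iw}}(\overline{\rho}^*(1))\to H^{i+1}_{\mathrm{Iw}}(\rho_x^*(1))[\varpi]\to 0$ and using $\lambda(M)=\lambda_{\mathbb{F}}(M/\varpi)-\lambda_{\mathbb{F}}(M[\varpi])$ when $\mu(M)=0$, is that the \emph{difference} $\lambda\bigl(H^1_{\mathrm{Iw}}/\mathrm{Im}(z^{\Sigma})\bigr)-\lambda\bigl(H^2_{\mathrm{Iw}}\bigr)$ is independent of the point in the deformation space; this is Theorem~\ref{thmD} in the text and is the precise mechanism that transfers the equality from $g_3$ to $f$ (once combined with the inclusion from Hypothesis~(1) and $\mu=0$ from Hypothesis~(2)). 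One must also carry out the $\eta$-decomposition for the Teichm\"uller character, so that $\Lambda_{\mathcal{O}}^{\eta}$ is a two-dimensional regular local ring and $\mu$- and $\lambda$-invariants are defined; and, as you correctly flag, one must check that passing from $\Sigma_f$ to a larger $\Sigma$ shifts both sides of the conjectural equality by the same local Euler factors, which is part of the same bookkeeping.
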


  In the pioneering works \cite{GV00} and \cite{EPW06}, they consider such problems for congruent modular forms which are ordinary at $p$. 
 They consider the usual Iwasawa main conjecture on the equality of the $p$-adic $L$-functions with Selmer groups. 
 In \cite{GV00}, they studied such problems for congruent elliptic curves with good ordinary reduction at $p$. In \cite{EPW06}, they studied such problems 
 for congruent modular forms in Hida families. In ordinary case, it is known that Iwasawa main conjecture 
 with $p$-adic $L$-function is equivalent to Kato's main conjecture, and the $\mu$-invariant of the $p$-adic $L$-function of $f$ is larger or equal to 
 that of the quotient $$H^1_{\mathrm{Iw}}(\mathbb{Z}[1/N_{f}p], \rho_{f}^*(1))/\Lambda_{\mathcal{O}}\cdot \mathrm{Im}(\bold{z}(f)).$$ Hence, our Theorem 
 \ref{NNN} can be regarded as a generalization of their results.  In the recent work \cite{KLP19}, they consider Kato's main conjecture, and studies congruences of 
 Kato's zeta morphisms $\bold{z}(f)$ for congruent modular forms with levels not dividing $p$, and a fixed weight $0\leqq k\leqq p-1$. 
 Our Theorem \ref{NNN} can treat all the congruent modular forms with arbitrary levels and weights. Hence, our Theorem 
 \ref{NNN} can be also regarded as a generalization of their results. However, the proof of Theorem \ref{NNN} is essentially same 
 as the proof of their main theorem in \cite{KLP19}. Our contribution is just generalizing the congruences of zeta morphisms $\bold{z}(f)$ for 
 arbitrary congruent modular forms.

%Let $\overline{\rho} : G_{\mathbb{Q}}\rightarrow \mathrm{GL}_2(\mathbb{F})$ be an odd absolutely irreducible representation. 

%satisfying the assumptions (1), (3), (4) of Theorem \ref{0.1} 
%(as we remark after this theorem, the assumptions (1), (3), (4) are enough for the application).
%However, we 
%don't give any application in this article because it is already long enough. 

In the remaining, we give comments on some other possible applications 
of our main theorem, which we hope to study in future works.  First, we remark in the previous paragraph 
that one can apply the theorem to Hansen's conjecture on the existence of the zeta morphisms for the Galois representations over Coleman-Mazur eigencurves. 
Then, it seems to be possible to obtain two variable $p$-adic $L$-functions over Coleman-Mazur eigencurves using some rigid analytic family version 
of Perrin-Riou theory constructing $p$-adic distributions on $\mathbb{Z}_p^{\times}$ from Iwawasa cohomology classes. Such 
two variable
$p$-adic $L$-functions  were previously defined by Bella\"iche \cite{Be12}, Emerton \cite{Em06a}, Panchishkin \cite{Pan03}, and Stevens \cite{St} via different methods. It seems to be an interesting problem to study the precise relationship between our $p$-adic $L$-fucntion and others. 
Next, Kato \cite{Ka93b} conjectured that zeta morphisms satisfy the functional equation of a similar form as that of the associated $L$-functions (Kato's  global $\varepsilon$-conjacture). 
In our previous works \cite{Na17b} (resp.  \cite{Na17a}), we defined the $p$-th local factors
called the local $\varepsilon$-isomorphisms appearing in the functional equation of zeta morphisms for rank two universal deformations (resp. 
the Galois representations over Coleman-Mazur eigencurves). Using these local factors, we hope to obtain the conjectural functional equation
of our zeta morphisms for rank two universal deformations and the Galois representations over Coleman-Mazur eigencurves. In \cite{Na17b}, we proved such functional equation for the zeta morphism $\bold{z}(f)$. In the proof, we finally reduced it to the functional equation of the $L$-function of $f$. 
 Using density of modular points, it seems to be possible to obtain 
the functional equation for our zeta morphisms $\bold{z}_{\Sigma, n}(\rho^{\mathfrak{m}})$. However, we want to search a more direct proof 
of such functional equation by some geometric arguments, which was the first motivation of this article.
%We can readily start to study these two applications after this article. 
%However, the author thinks that 
%the application to (the generalized) Iwasawa main conjecture for several types of $f$ by some congruence arguments is more interesting than these two applications. For example, the conjectural equality of the inculusion
%\begin{equation*}\label{z}\mathrm{Char}_{\Lambda}\left(H^1_{\mathrm{Iw}}(\mathbb{Z}[1/N_fp], \rho_f^*(1))/\Lambda\cdot \mathrm{Im}(\bold{z}(f))\right)\subset 
%\mathrm{Char}_{\Lambda}\left(H^2_{\mathrm{Iw}}(\mathbb{Z}[1/p], \rho_f^*(1))\right)
%\end{equation*}
%is up to now known only for some special (ordinary or of finite slope) cases. The author would like to study in future
%whether one can reduce the equality for general $f$  to that for these special $f$. As a consequence of our main theorem, it seems to be possible 
%to obtain some congruences of Kato's zeta morphisms for congruent Hecke eigen cusp new forms. Such congruences itself seem to be an interesting problem, but 
%the author naively expects that such congruences will help us to study the conjectural equality of the inclusion above. 

%, but we don't pursue this problem in this article. 
%The author hopes that these assumptions can be removed if (some modified version of) their theorems are generalized to the 
%general case

\subsection{Outline of the proof of the main theorem} 
In \cite{Ka04}, Kato constructed his Euler systems for Hecke eigen new forms $f$ from a system of elements 
${}_{c,d}z_{N}\in K_2(Y(N))$ called Beilinson elements  in the $K$-groups of the modular curves $Y(N)$ for every integers $N\geqq 3$ satisfying an Euler system relation. We remark that these elements depend on a choice of integers $c, d\geqq 2$ such that $(N, 6cd)=1$. For every $N_0\geqq1$ which are prime to $p$, 
the Euler system relation gives us an element $${}_{c,d}z_{N_0p^{\infty}}:=({}_{c,d}z_{N_0p^k})_{k\geqq 1} \in \varprojlim_{k\geqq 1}K_2(Y(N_0p^k))$$ 
in the projective limit with respect to the norm maps $$K_2(Y(N_0p^{k+1}))\rightarrow K_2(Y(N_0p^k)).$$ %for every $N_0\geqq1$ which are prime to $p$.
%We remark that these elements depend on a choice of integers $c, d\geqq 2$ such that $(Np, 6cd)=1$. 
As the image of the element
${}_{c,d}z_{N_0p^{\infty}}$ by the composite of the projective limit of Chern class maps and the edge maps in Hochschild-Serre spectral sequences, one obtains an
element ${}_{c,d}z^{(p)}_{N_0p^{\infty}}$ which we call Kato's element in the Galois cohomology group 
$$H^1(\mathbb{Z}[1/ N_0p], \widetilde{H}^{BM}_1(K^p(N_0))(1))$$
of the projective limit $$\widetilde{H}^{BM}_1(K^p(N_0))(1):=\varprojlim_{k\geqq 1}H^1_{\text{\'et}}(Y(N_0p^k)_{\overline{\mathbb{Q}}}, \mathbb{Z}_p(2))$$ with respect to the 
corestriction maps $$H^1_{\text{\'et}}(Y(N_0p^{k+1})_{\overline{\mathbb{Q}}}, \mathbb{Z}_p(2))\rightarrow H^1_{\text{\'et}}(Y(N_0p^k)_{\overline{\mathbb{Q}}}, \mathbb{Z}_p(2))$$ for all $k\geqq1$. 

Since one has a canonical isomorphism $$H^1_{\text{\'et}}(Y(N)_{\overline{\mathbb{Q}}}, \mathbb{Z}_p(1))\isom H_{1, \text{\'et}}(X(N), \{\mathrm{cusps}\}, \mathbb{Z}_p)$$
by Poincar\'e duality, such a projective limit is called the completed (relative, or Borel-Moore) homology, and it is (precisely, its dual, i.e. the completed cohomogoly is) studied by Emerton in his work \cite{Em} on the local-global compatibility of $p$-adic Langlands correspondence for $\mathrm{GL}_{2/\mathbb{Q}}$. 
By (the dual version of) his theorem on the refined local-global compatibility, the inductive limit 
$$\widetilde{H}^{BM}_{1, \Sigma}:=\varinjlim_{N_0}\widetilde{H}^{BM}_1(K^p(N_0))$$ 
with respect to the restriction maps $$\widetilde{H}^{BM}_1(K^p(N_0))\rightarrow \widetilde{H}^{BM}_1(K^p(N'_0))$$
for all $N_0|N'_0$ such that $(N'_0, p)=1$ and $\mathrm{prime}(N_0p)=\mathrm{prime}(N'_0p)=\Sigma$
%$\varprojlim_{k\geqq 1}H^1_{\text{\'et}}(Y(Np^k)_{\overline{\mathbb{Q}}}, \mathbb{Z}_p(2))$ 
as a Hecke module
(with respect to the Hecke actions at the primes $l\not\in \Sigma$) 
with actions of $G_{\mathbb{Q}}$ and $G_l:=\mathrm{GL}_2(\mathbb{Q}_l)$ for $l\in \Sigma$ can be described 
by using the universal (promodular) deformation $\rho^u$ and the representations $\pi_l^u$ of $G_l$ corresponding to $\rho^u|_{G_{\mathbb{Q}_l}}$ via 
the family version of the local Langlands correspondence by Colmez \cite{Co10} for $l=p$ and Emerton-Helm \cite{EH14} for $l\not=p$. 
Precisely, % there exists a topological  (dual version of) main theorem of \cite{Em} states that
 there exists a 
%one can describe the inductive limit $$\widetilde{H}^{BM}_{1, \Sigma}:=\varinjlim_{N_0}\widetilde{H}^{BM}_1(K^p(N_0))$$ 
%with respect to the restriction maps $\widetilde{H}^{BM}_1(K^p(N_0))\rightarrow \widetilde{H}^{BM}_1(K^p(N'_0))$
%for all positive integers $N_0|N'_0$ such that $(N'_0, p)=1$ and $\mathrm{prime}(N_0p)=\mathrm{prime}(N'_0p)=\Sigma$, 
%onecan describe 
%such projective limits $\varprojlim_{k\geqq 1}H^1_{\text{\'et}}(Y(Np^k)_{\overline{\mathbb{Q}}}, \mathbb{Z}_p(2))$ as Hecke modules 
%(with respect to the Hecke actions at the primes $l$ not dividing $Np$) 
%with actions of $G_{\mathbb{Q}}$ and $G_l:=\mathrm{GL}_2(\mathbb{Q}_l)$ for every primes $l$ dividing $pN$ via
%the universal (promodular) deformation $\rho^u$ and the representations $\pi_l^u$ of $G_l$ corresponding to $\rho^u|_{G_{\mathbb{Q}_l}}$ via 
%the family version of the local Langlands correspindence by Colmez \cite{Co10} for $l=p$ and Emerton-Helm \cite{EH14} for $l\not=p$. 
%In particular, when $\Sigma=\{p\}$ which we assume here for simplicity, one has a 
$R_{\overline{\rho}, \Sigma}[G_{\mathbb{Q}}\times G_{\Sigma}]$-linear topological 
isomorphism $$\widetilde{H}^{BM}_{1, \overline{\rho}, \Sigma}\isom  (\rho^u)^*\widehat{\otimes}_{R_{\overline{\rho}, \Sigma}}
(\pi_p^u)^*\otimes_{R_{\overline{\rho}, \Sigma}} \widetilde{\pi}_{\Sigma_0}^u$$
 for $G_{\Sigma}:=\prod_{l\in \Sigma}G_l$ under some mild assumptions, where $\widetilde{H}^{BM}_{1, \overline{\rho}, \Sigma}$ is the localization of $\widetilde{H}^{BM}_{1, \Sigma}$ at the maximal ideal $\mathfrak{m}$ of the Hecke algebra corresponding to the fixed $\overline{\rho}$, $(\rho^u)^*$ and $(\pi_p^u)^*$ are the $R_{\overline{\rho}, \Sigma}$-linear duals of $\rho^u$ and $\pi_p^u$ respectively, and $\widetilde{\pi}_{\Sigma_0}^u$ is the $R_{\overline{\rho}, \Sigma}$-linear 
smooth contragradient of $\otimes_{l\in \Sigma_0}\pi^u_l$ for $\Sigma_0:=\Sigma\setminus\{p\}$

%the maximal $R_{\overline{\rho}, \Sigma}$-torsion free quotient of the tensor product 
%$\otimes_{l\in \Sigma\setminus\{p\}}\widetilde{\pi^u_l}$ of the smooth contragradients $\widetilde{\pi^u_l}$ of $\pi^u_l$ for every $l\in \Sigma_0:=\Sigma\setminus\{p\}$. 

Therefore, it seems to be natural 
to use the completed homology or the completed cohomology to construct Euler systems or zeta morphisms for 
the universal deformation $\rho^u$. 
%By this isomorphism, one can regard the element ${}_{c,d}z^{(p)}_{p^{\infty}}$ as an element in 
%$${}_{c,d}z^{(p)}_{p^{\infty}}\in H^1(\mathbb{Z}[1/ p],  (\rho^u)^*(1)\widehat{\otimes}_{R^u}
%\pi_p^u)^*).$$ At this point, 
In fact, there are some preceding works e.g. by Fouquet \cite{Fo16} and Wang, in which they studied Euler systems or a kind of zeta morphisms for the universal deformation $\rho^u$ using the elements like ${}_{c,d}z^{(p)}_{p^{\infty}}$ and the completed cohomology. However, I could not find (at least in the literatures) a precise relation between their constructions with Kato's zeta morphisms $\bold{z}(f)$ for arbitrary $f$ containing in the family. 
In particular. I could not find how to obtain a system of conjectural morphisms
$$\bold{z}_{\Sigma, n}(\rho^u) :  (\rho^u)^*\rightarrow H^1_{\mathrm{Iw}}(\mathbb{Z}[1/ \Sigma_n, \zeta_n],  (\rho^u)^*(1))$$
for every $n\geqq1$ such that $(n,\Sigma)=1$ satisfying the precise interpolation property as in the condition (iii) of our main theorem. 

Our idea to overcome this problem 
is a modification of Fukaya-Kato's construction \cite{FK06} of zeta morphisms for Hida families using 
the theory of Pa\v{s}k\={u}nas \cite{Pas13}, \cite{Pas15} (resp. Emerton-Helm \cite{EH14} and Helm \cite{He16}) 
on the representation $\pi^u_l$ for $l=p$ (resp. $l\not=p$) appearing in the description of 
$\widetilde{H}^{BM}_{1, \overline{\rho}, \Sigma}$ by Emerton \cite{Em}. 

To be precise, we first generalize Fukaya-Kato's zeta morphisms
$$z^{\mathrm{Iw}}_{1, N, n} : H^1_{\text{\'et}}(Y_1(N)_{\overline{\mathbb{Q}}}, \mathbb{Z}_p(1))\rightarrow 
H^1_{\mathrm{Iw}}(\mathbb{Z}[1/nNp, \zeta_n], H^1_{\text{\'et}}(Y_1(N)_{\overline{\mathbb{Q}}}, \mathbb{Z}_p(2)))\otimes_{\Lambda_n}\mathrm{Frac}(\Lambda_n)$$ to the modular curve $Y(N)_{\overline{\mathbb{Q}}}$ with the full level structure. Precisely, we could not follow some arguments
in the proof of the construction of $z^{\mathrm{Iw}}_{1, N, n}$ in \cite{FK}, hence we instead define a Hecke equivariant map 
$$z^{\mathrm{Iw}}_{N, n} : H^1_{\text{\'et}}(X(N)_{\overline{\mathbb{Q}}}, \mathbb{Z}_p(1))\rightarrow 
H^1_{\mathrm{Iw}}(\mathbb{Z}[1/nNp, \zeta_n], H^1_{\text{\'et}}(X(N)_{\overline{\mathbb{Q}}}, \mathbb{Z}_p(2)))\otimes_{\Lambda_n}\mathrm{Frac}(\Lambda_n)$$
replacing $Y(N)$ to the compactified modular curve $X(N)$. 
To proceed our construction, we need to show the integrality of this map, i.e. we need to show that  the image of this map is contained in the 
$\Lambda_n$-lattice $$H^1_{\mathrm{Iw}}(\mathbb{Z}[1/nNp, \zeta_n], H^1_{\text{\'et}}(X(N)_{\overline{\mathbb{Q}}}, \mathbb{Z}_p(2))).$$ 
This problem is a little bit subtle problem, and complicates our construction. In fact, we don't prove this claim itself, and 
 we don't know whether the claim is true of not. Instead, we show the integrality after taking the 
localization $z^{\mathrm{Iw}}_{N, n, \overline{\rho}}$ of the map  $z^{\mathrm{Iw}}_{N, n}$ at the maximal ideal $\mathfrak{m}$ of Hecke algebra corresponding to $\overline{\rho}$, to obtain 
a $R_{\overline{\rho}, \Sigma}$-linear map 
$$z^{\mathrm{Iw}}_{N, n, \overline{\rho}} : H^1_{\text{\'et}}(Y(N)_{\overline{\mathbb{Q}}}, \mathbb{Z}_p(1))_{\overline{\rho}}\rightarrow 
H^1_{\mathrm{Iw}}(\mathbb{Z}[1/\Sigma_n, \zeta_n], H^1_{\text{\'et}}(Y(N)_{\overline{\mathbb{Q}}}, \mathbb{Z}_p(2))_{\overline{\rho}})$$
for every sufficiently large $N$ such that $\mathrm{prime}(N)=\Sigma$. Here, we remark that one has 
$H^1_{\text{\'et}}(X(N)_{\overline{\mathbb{Q}}}, \mathbb{Z}_p(1))_{\overline{\rho}}\isom H^1_{\text{\'et}}(Y(N)_{\overline{\mathbb{Q}}}, \mathbb{Z}_p(1))_{\overline{\rho}}$ since we assume that $\overline{\rho}$ is absolutely irreducible. 
%which suffices for our application. 
In our proof of this integrality, this assumption is essential, and 
we don't know whether the same claim is true or not when $\overline{\rho}$ is absolutely reducible case. 
By this integrality, one obtains a $R_{\overline{\rho}, \Sigma}$-linear map 
$$z^{\mathrm{Iw}}_{N_0p^{\infty}, n, \overline{\rho}} : \widetilde{H}^{BM}_1(K^p(N_0))_{\overline{\rho}}\rightarrow 
H^1_{\mathrm{Iw}}(\mathbb{Z}[1/\Sigma_n, \zeta_n], \widetilde{H}^{BM}_1(K^p(N_0))_{\overline{\rho}}(1))$$
as the projective limit of the maps $z^{\mathrm{Iw}}_{N_0p^k, n, \overline{\rho}}$ with respect to all $k\geqq 1$ for every sufficiently large $N_0$ such that 
$(N_0, p)=1$ and $\mathrm{prime}(N_0p)=\Sigma$, and also obtains
a $R_{\overline{\rho}, \Sigma}$-linear map 
$$z^{\mathrm{Iw}}_{\Sigma, n, \overline{\rho}} : \widetilde{H}^{BM}_{1, \overline{\rho}, \Sigma}\rightarrow 
H^1_{\mathrm{Iw}}(\mathbb{Z}[1/\Sigma_n, \zeta_n], \widetilde{H}^{BM}_{1, \overline{\rho}, \Sigma}(1))$$
as the inductive limit of the maps $z^{\mathrm{Iw}}_{N_0p^{\infty}, n, \overline{\rho}}$ with respect to all such $N_0$.
Furthermore, one can show that this map $z^{\mathrm{Iw}}_{\Sigma, n, \overline{\rho}}$ 
is continuous and $G_{\Sigma}$-equivariant. By Emerton's isomorphism 
$$\widetilde{H}^{BM}_{1, \overline{\rho}, \Sigma}\isom  (\rho^u)^*\widehat{\otimes}_{R_{\overline{\rho}, \Sigma}}
(\pi_p^u)^*\otimes_{R_{\overline{\rho}, \Sigma}} \widetilde{\pi}_{\Sigma_0}^u,$$ one can regard the map $z^{\mathrm{Iw}}_{\Sigma, n, \overline{\rho}}$ 
as a continuous $R_{\overline{\rho}, \Sigma}[G_{\Sigma}]$-linear map 
$$z^{\mathrm{Iw}}_{\Sigma, n, \overline{\rho}} :  (\rho^u)^*\widehat{\otimes}_{R_{\overline{\rho}, \Sigma}}
(\pi_p^u)^*\otimes_{R_{\overline{\rho}, \Sigma}} \widetilde{\pi}_{\Sigma_0}^u
\rightarrow H^1_{\mathrm{Iw}}(\mathbb{Z}[1/\Sigma_n, \zeta_n],  (\rho^u)^*(1)\widehat{\otimes}_{R_{\overline{\rho}, \Sigma}}
(\pi_p^u)^*\otimes_{R_{\overline{\rho}, \Sigma}} \widetilde{\pi}_{\Sigma_0}^u),$$
which we call the $G_{\Sigma}$-equivariant zeta morphism.

%since one has a canonical isomorphism 
%\begin{multline*}
%H^1_{\mathrm{Iw}}(\mathbb{Z}[\zeta_n, 1/Np], \varprojlim_{n\geqq 1}H^1_{\text{\'et}}(Y(Np^n)_{\overline{\mathbb{Q}}}, \mathbb{Z}_p(2))_{\mathfrak{m}})\\
%\isom \varprojlim_{n\geqq 1}H^1_{\mathrm{Iw}}(\mathbb{Z}[\zeta_n, 1/Np], \varprojlim_{n\geqq 1}H^1_{\text{\'et}}(Y(Np^n)_{\overline{\mathbb{Q}}}, \mathbb{Z}_p(2))_{\mathfrak{m}})
%\end{multline*}
%by the Mittag-Leffler condition.(, which is possible by the integrality of the maps $z_{N, np^{\infty}, \mathfrak{m}}$. )
We construct our zeta morphisms $\bold{z}_{\Sigma,n}(\rho^u)$ by factoring out the $(\rho^u)^*$-part from the maps 
$z^{\mathrm{Iw}}_{\Sigma, n, \overline{\rho}}$ for each $n$. It turns out that the main theorems concerning
the representation $\pi_p^u$ (resp. $\pi_l^u$ for $l\in \Sigma_0$) by Pa\v{s}k\={u}nas \cite{Pas13} (resp. Emerton-Helm \cite{EH14} and 
Helm \cite{He16}) are crucial for this purpose. 

In \cite{EH14} and \cite{He16}, they study a correspondence between the continuous representations of $G_{\mathbb{Q}_l}$ and 
the smooth admissible representations of $G_l$ with coefficients in noether local complete $\mathbb{Z}_p$-algebras for $l\not=p$, 
and give a characterization of this correspondence. In its characterization, a generalization of the classical notion of genericity is very important. 
%there is a notion of 
%coWhittaker $R^u[G_l]$-modules defined in \cite{He16} which is related with the classical notion of genericness of smooth admissible representations of $G_l$ defined over some algebraically closed fields of characteristic zero. 
In \cite{EH14}, they defined an exact functor $\Psi_l$ from the smooth $\mathbb{Z}_p[G_l]$-modules to 
$\mathbb{Z}_p$-modules such that, for $\pi$ defined over $\overline{\mathbb{Q}}_p$, $\Psi_l(\pi)$ is the maximal quotient of $\pi$ on which the subgroup 
$\begin{pmatrix}1& * \\ 0 &1\end{pmatrix}$ of $G_l$ acts by a fixed non trivial character $\psi_l : \mathbb{Q}_l\rightarrow \overline{\mathbb{Q}}^{\times}_p$. We recall that an irreducible smooth admissible representation $\pi$ of $G_l$ 
defined over $\overline{\mathbb{Q}}_p$ is called generic if $\Psi_l(\pi)$ is a one dimensional $\overline{\mathbb{Q}}_p$-vector space. 
Their correspondence demand such a property for representations with general coefficients. In particular, the representation 
$\pi_l^u$ of $G_l$ corresponding to $\rho^u|_{G_{\mathbb{Q}_l}}$ satisfies 
$\Psi_l(\widetilde{\pi}_l^u)\isom R_{\overline{\rho}, \Sigma}$, 
by which one obtains a $R_{\overline{\rho}, \Sigma}[G_p]$-linear morphism 
$$\Psi_{\Sigma_0}(z^{\mathrm{Iw}}_{\Sigma, n, \overline{\rho}}):  (\rho^u)^*\widehat{\otimes}_{R_{\overline{\rho}, \Sigma}}
(\pi_p^u)^*
\rightarrow H^1_{\mathrm{Iw}}(\mathbb{Z}[1/\Sigma_n, \zeta_n],  (\rho^u)^*(1)\widehat{\otimes}_{R_{\overline{\rho}, \Sigma}}
(\pi_p^u)^*)$$
applying the composite $\Psi_{\Sigma_0}:=\prod_{l\in \Sigma_0}\Psi_l$ of the functors $\Psi_l$ for every $l\in \Sigma_0$ to the $\prod_{l\in \Sigma_0}G_l$ equivariant map $z^{\mathrm{Iw}}_{\Sigma, n, \overline{\rho}}$ . Therefore, it finally remains to remove 
the $p$-th factor $(\pi_p^u)^*$. 

In \cite{Pas13}, he studies a deformation theory of the representation $\overline{\pi}_p$ of $G_p$ corresponding to 
$\overline{\rho}_p:=\overline{\rho}|_{G_{\mathbb{Q}_p}}$ via the mod $p$ local Langlands correspondence. The most important object in his study is 
the projective envelope $\widetilde{P}$ of the socle of the Pontryagin dual $\overline{\pi}_p^{\vee}$ of $\overline{\pi}_p$ in a suitable category $\mathfrak{C}(\mathcal{O})$ 
of representations of $G_p$ on compact $\mathcal{O}$-modules. He proves that, under some mild assumptions, there is a natural isomorphism 
$$R_p\isom \mathrm{End}_{\mathfrak{C}(\mathcal{O})}(\widetilde{P})$$ for the universal deformation ring $R_p$ of $\overline{\rho}_p$, and 
this isomorphism makes $\widetilde{P}$ to be the $R_p$-dual of the universal deformation of $\overline{\pi}_p$. In particular, 
one has a topological $R_{\overline{\rho}, \Sigma}[G_p]$-linear isomorphism 
$$\widetilde{P}\widehat{\otimes}_{R_p}R_{\overline{\rho}, \Sigma}\isom (\pi_p^u)^*.$$
One can show that this isomorphism induces natural isomorphisms
$$ (\rho^u)^*\isom \mathrm{Hom}_{\mathfrak{C}(\mathcal{O})}(\widetilde{P},  (\rho^u)^*\widehat{\otimes}_{R_{\overline{\rho}, \Sigma}}
(\pi_p^u)^*)$$
and also 
$$H^1_{\mathrm{Iw}}(\mathbb{Z}[1/\Sigma_n, \zeta_n],  (\rho^u)^*(1))\isom 
 \mathrm{Hom}_{\mathfrak{C}(\mathcal{O})}(\widetilde{P}, H^1_{\mathrm{Iw}}(\mathbb{Z}[1/\Sigma_n, \zeta_n],  (\rho^u)^*(1)\widehat{\otimes}_{R_{\overline{\rho}, \Sigma}}
(\pi_p^u)^*))$$
essentially by the projectiveness of $\widetilde{P}$. Then, we define our zeta morphism 
$$\bold{z}_{\Sigma,n}(\rho^u) : (\rho^u)^*\rightarrow H^1_{\mathrm{Iw}}(\mathbb{Z}[1/\Sigma_n, \zeta_n],  (\rho^u)^*(1))$$
to be the map
$\mathrm{Hom}_{\mathfrak{C}(\mathcal{O})}(\widetilde{P}, \Psi_{\Sigma_0}(z^{\mathrm{Iw}}_{\Sigma, n, \overline{\rho}}))$. 

Finally, we need to compare our zeta morphism $\bold{z}_{\Sigma, 1}(\rho^u)$ (for $n=1$) with Kato's zeta morphism 
$\bold{z}(f)$ for every $f$ containing in the family. For this purpose, we need to describe the map 
$x_f : R^u\rightarrow \mathcal{O}$ using the objects appearing in our 
construction, in particular, using the projective envelope $\widetilde{P}$. In \cite{Pas15}, Pa\v{s}k\={u}nas re-defined 
the quotients of $R_p$ parametrizing de Rham representations with  fixed Hodge-Tate and inertia types (i.e. Kisin's potentially semi-stable deformation rings) 
using $\widetilde{P}$. It turns out that his re-construction completely fits for our proof of the comparison of our zeta morphisms with Kato's ones. 

\subsection{Contents of the article}Now, we briefly describe the contents of different sections. 

In \S 2, we recall the definition and some basic properties of the completed cohomology and 
the completed homology of modular curves. 
In \S2.1, we recall the definition of the completed cohomology of modular curves following Emerton's 
article \cite{Em}. In \S 2.2. we recall his main theorem on a refined local-global compatibility on
the $p$-adic Langlands correspondence for $\mathrm{GL}_{2/\mathbb{Q}}$. 
In \S 2.3, we define a dual version of the completed cohomology, which we call the completed (Borel-Moore) homology. 
Using the duality theorem proved by \cite{CE12} 
between the completed cohomology and 
the completed Borel-Moore homology, we also prove a dual version of the refined local-global compatibility, 
which is our foundation for the construction of our zeta morphisms. 
In \S 2.4. we define the completed homology with coefficients and prove its basic properties, which we need 
to compare our zeta morphisms with Kato's zeta morphisms. 

\S 3 is the main technical body of this article, where we construct the $G_{\Sigma}$-equivariant zeta morphisms $z^{\mathrm{Iw}}_{\Sigma, n, \overline{\rho}}$. 
In \S 3.1. we recall the definition of Beilinson's elements and Kato's elements, and recall its zeta values formula proved in \cite{Ka04}, which is 
crucial for our construction of the $G_{\Sigma}$-equivariant zeta morphisms  $z^{\mathrm{Iw}}_{\Sigma, n, \overline{\rho}}$. In \S 3.2 and \S 3.3,  we 
define the zeta morphisms $z^{\mathrm{Iw}}_{N, n, \overline{\rho}}$ for the localization $H^1(Y(N), \mathbb{Z}_p(1))_{\overline{\rho}}$ of $H^1(Y(N), \mathbb{Z}_p(1))$ 
at the maximal ideal of the Hecke algebra corresponding to $\overline{\rho}$ as a generalization of Fukaya-Kato's zeta morphism 
for $H^1(Y_1(N), \mathbb{Z}_p(1))$. More generally, we define such morphisms for the localization
$H^1(Y(N), \mathcal{F}(1))_{\overline{\rho}}$ of the cohomology $H^1(Y(N), \mathcal{F}(1))$ with coefficients $\mathcal{F}$ corresponding to 
an integral lattice of algebraic representations of $\mathrm{GL}_{2/\mathbb{Q}}$. We need such a generalization to compare our zeta morphisms with Kato's ones. 
In \S 3.2, we first define zeta morphisms for the parabolic 
cohomology, i.e. for the
cohomology $H^1(X(N), j_*\mathcal{F}(1))$ of the compactified modular curve $X(N)$ with coefficient in 
the push forward $j_*(\mathcal{F})$ with respect to the inclusion $j : Y(N)\hookrightarrow X(N)$. In \S 3.3, 
we consider its localization at the maximal ideal corresponding to $\overline{\rho}$, to obtain 
the zeta morphisms for the localization $H^1(Y(N), \mathcal{F}(1))_{\overline{\rho}}$. Then, we furthermore show its integrality by reducing 
to the integrality of Kato's zeta morphisms $\bold{z}(f)$. Taking its projective limit, then its inductive limit, in \S 3.4, we finally define
$G_{\Sigma}$-equivariant zeta morphisms $z^{\mathrm{Iw}}_{\Sigma, n, \overline{\rho}}$ for the completed homology 
$\widetilde{H}^{BM}_{1, \overline{\rho}, \Sigma}$. 

In \S4, we define the zeta morphisms $\bold{z}_{\Sigma, n}(\rho^u)$, and prove the main theorem of the article. 
In \S 4.1, we define the zeta morphisms $\bold{z}_{\Sigma, n}(\rho^u)$  from the $G_{\Sigma}$-equivariant zeta morphisms $z^{\mathrm{Iw}}_{\Sigma, n, \overline{\rho}}$
following the method described in the previous sub section. In \S 4.2, we state the main theorem of this article, 
which is the precise version of theorem \ref{0.1} (iii). In \S 4.3, we recall the results of Pa\v{s}k\={u}nas \cite{Pas15} on 
the description of potentially semi-stable deformation rings using the projective envelope $\widetilde{P}$, which is crucial 
for the proof of our main theorem. In \S 4.4, we prove the main theorem. 

In \S5, we give an application of our main theorem to Iwasawa main conjecture. 
In \S5.1, we recall the statement of Kato's main conjecture for $f$. In \S 5.2, we prove congruences of 
Kato's zeta morphisms $\bold{z}(f)$. In \S5.3, we apply the congruences to Kato's main conjecture. 

Moreover, we have two appendixes of this article. In the appendix A, we recall and slightly generalized Kato's theorem on the existence of 
zeta morphisms associated to Hecke eigen new forms. In the appendix B, we collect basic notions appearing in family version of 
$p$-adic local Langlands correspondence for $G_l$ both for $l=p$ and $l\not=p$.

\subsection*{Acknowledgements}I would  like to thank Seidai Yasuda for discussing applications of $p$-adic Langalnds correspondence to a series of 
Kato's conjectures (the generalized Iwasawa main conjecture, the local and the global $\varepsilon$-conjecture) over the years. I would also like to thank 
Chan-Ho Kim for reading the first version of the article, and discussing applications of the main theorem to Iwasawa main conjecture, in particular, suggesting me the appication in \S 6. This application 
seems to make this article much more interesting than the first version.
I started to announce the results of this article about three years ago. 
I would also like to thank David Burns, Shinichi Kobayashi, Teruhisa Koshikawa, Robert Pollack, Ryotaro Sakamoto, and Shanwen Wang for their interests in my results and some helpful comments. 

\subsection*{Notation}For any field $F$, we write $G_F$ to denote its absolute Galois group.  
Throughout the article, we fix a prime $p$, and embeddings 
$\iota_{\infty}:\overline{\mathbb{Q}}\hookrightarrow \mathbb{C}$ and $\iota_p:\overline{\mathbb{Q}}\rightarrow \overline{\mathbb{Q}}_p$.
Fix $i:=\sqrt{-1}\in \mathbb{C}$, and we set $\zeta_n:=\mathrm{exp}\left(\frac{2\pi i}{n}\right)\in \mathbb{C}$ for each 
$n\in \mathbb{Z}_{\geqq 1}$, which we freely regards the elements in $\overline{\mathbb{Q}}$ or $\overline{\mathbb{Q}}_p$ by the embeddings 
$\iota_{\infty}$ and $\iota_{p}$. For each prime $l$, let $I_l\subseteq G_{\mathbb{Q}_l}$ be the inertia subgroup, 
$\mathrm{Frob}_l\in G_{\mathbb{F}_l}$ the geometric Frobenius, and $G_l:=\mathrm{GL}_2(\mathbb{Q}_l)$. 
We write $\psi_{l}:\mathbb{Q}_l\rightarrow \overline{\mathbb{Q}}^{\times}$ to denote the additive character defined by $\psi_l\left(\frac{1}{l^n}\right)=\zeta_{l^n}$ for all $n\geqq 1$. We use the notation $E\subset \overline{\mathbb{Q}}_p$ to denote a finite extension of $\mathbb{Q}_p$, $\mathcal{O}$ its integer ring, $\varpi\in \mathcal{O}$ a
uniformizer, $\mathbb{F}:=\mathcal{O}/(\varpi)$ its residue field.
We write $\mathrm{Comp}(\mathcal{O})$ to denote the category of commutative local completed Noetherian $\mathcal{O}$-algebras with  finite residue fields. 
We use the notation $\Sigma$ to denote a finite set of primes. We always assume that $\Sigma$ contains $p$, and 
write $\Sigma_0:=\Sigma\setminus\{p\}$. We set $G_{\Sigma}:=\prod_{l\in \Sigma}G_l$ and $G_{\Sigma_0}:=\prod_{l\in \Sigma_0}G_l$, 
hence $G_{\Sigma}=G_{\Sigma_0}\times G_p$. For any field $F$ and a topological $G_F$-module $T$, we write 
$C^{\bullet}(F, T)$ to denote the complex of continuous cochains of $G_F$ with values in $T$, and 
$H^i(F, T)$ to denote its $i$-th cohomology. For any finite extension $F\subset \overline{\mathbb{Q}}$ of $\mathbb{Q}$ and any finite set 
$\Sigma$ as above, we write $F_{\Sigma}\subseteq \overline{\mathbb{Q}}$ to denote the largest
extension of $F$ which is unramified outside the places of $F$ above $\Sigma\cup\{\infty\}$, $G_{F, \Sigma}:=\mathrm{Gal}(F_{\Sigma}/F)$. 
For any topological $G_{F, \Sigma}$-module $T$, we set $C^{\bullet}(\mathcal{O}_F[1/\Sigma], T)$ 
to denote the complex of continuous cochains of $G_{F, \Sigma}$ with coefficients in $T$, and 
$H^i(\mathcal{O}_F[1/\Sigma], T)$ to denote its $i$-th cohomology. We write 
$\varepsilon : G_{\mathbb{Q}}\rightarrow \mathbb{Z}_p^{\times}$ to denote the cyclotomic character 
defined by $g(\zeta_{p^n})=\zeta_{p^n}^{\varepsilon(g)}$ for all $n\geqq 1$. For each integer $n\geqq 1$, 
we identify $\mathrm{Gal}(\mathbb{Q}(\zeta_n)/\mathbb{Q})\isom (\mathbb{Z}/n\mathbb{Z})^{\times}$ by the 
cyclotomic character, and, for each integer $a$ which is prime to $n$, we write $\sigma_{n,a}\in \mathrm{Gal}(\mathbb{Q}(\zeta_n)/\mathbb{Q})$ 
to denote the elements corresponding to $a\in (\mathbb{Z}/n\mathbb{Z})^{\times}$. We simply write $\sigma_a:=\sigma_{n,a}$ if there is no risk of confusing 
about $n$. We set $\Sigma_n:=\Sigma\cup\mathrm{prime}(n)$ the union of $\Sigma$ and the set of prime divisors $\mathrm{prime}(n)$ of $n$. 
For each integer $n\geqq 1$ which is prime to $\Sigma$, we set $\Lambda_n:=\mathbb{Z}_p[[\mathrm{Gal}(\mathbb{Q}(\zeta_{np^{\infty}})/\mathbb{Q})]]$ 
the Iwasawa algebra of $\mathrm{Gal}(\mathbb{Q}(\zeta_{np^{\infty}})/\mathbb{Q})$. For any topological $G_{\mathbb{Q}, \Sigma}$-module $T$ on a compact $\mathbb{Z}_p$-module, we set $\bold{Dfm}_n(T):=T\widehat{\otimes}_{\mathbb{Z}_p}\Lambda_n$ the completed tensor product on 
which $G_{\mathbb{Q}, \Sigma_n}$ acts by $g(u\otimes v):=g\cdot u\otimes [\overline{g}]^{-1}v$, where $\overline{g}\in \mathrm{Gal}(\mathbb{Q}(\zeta_{np^{\infty}})/\mathbb{Q})$ is the 
image of $g$ by the natural map $G_{\mathbb{Q},\Sigma_n}\rightarrow \mathrm{Gal}(\mathbb{Q}(\zeta_{np^{\infty}})/\mathbb{Q})$. We 
set $C^{\bullet}_{\mathrm{Iw}}(\mathbb{Z}[1/\Sigma_n, \zeta_n], T):=C^{\bullet}(\mathbb{Z}[1/\Sigma_n], \bold{Dfm}_n(T))$ and write its cohomology by 
$H^i_{\mathrm{Iw}}(\mathbb{Z}[1/\Sigma_n, \zeta_n], T)$. For each prime $l$, we write $\mathrm{rec}_l : \mathbb{Q}_l^{\times}\rightarrow G_{\mathbb{Q}_l}^{\mathrm{ab}}$ to denote the reciprocity map normalized so that $\mathrm{rec}_l(l)$ is a lift of the geometric Frobenius, by which we identify the set of  continuous characters $\chi : \mathbb{Q}_l^{\times}\rightarrow A^{\times}$ for $A\in \mathrm{Comp}(\mathcal{O})$ with the set of continuous characters 
$\chi : G^{\mathrm{ab}}_{\mathbb{Q}_l}\rightarrow A^{\times}$. 

%We set $\mathbb{Z}_p(1)$ the Tate twist which is $\mathbb{Z}_p[G_{\mathbb{Q}}\times \mathrm{GL}_2(\mathbb{A}_f)]$-module 
%on which $G_{\mathbb{Q}}$ acts by the $p$-adic cyclotomic character and $\mathrm{GL}_2(\mathbb{A}_f)$ acts by 
%$$\mathrm{GL}_2(\mathbb{A}_f)\xrightarrow{\mathrm{det}}\mathbb{A}_f^{\times}\xrightarrow{\mathrm{can}}\mathbb{A}_f^{\times}/\mathbb{Q}^{\times}
%\isom \widehat{\mathbb{Z}}^{\times}\xrightarrow{\mathrm{can}}\mathbb{Z}_p^{\times}.$$ 
%For example $\begin{pmatrix}l& 0 \\0 & 1\end{pmatrix}\in \mathrm{GL}_2(\mathbb{Q}_l)$ acts by $l^{-1}$ (resp. $1$) on $\mathbb{Z}_p(1)$ for any 
%prime $l\not=p$ (resp. $l=p$). 

\section{Completed homology of modular curves}

\subsection{Completed cohomology of modular curves}
We start this section by recalling the definition of the completed cohomology of 
modular curves exactly following the original article \cite{Em}. 
\subsubsection{Definition of the completed cohomology}

Let $\mathbb{A}_f\coloneqq\widehat{\mathbb{Z}}\otimes\mathbb{Q}$ be the ring of finite adeles. Let $\mathcal{H}^{\pm}\coloneqq\mathbb{C}\setminus \mathbb{R}$
 be the union of the upper and lower half planes, on which $\mathrm{GL}_2(\mathbb{R})$ acts by 
$g\tau=\frac{a\tau+b}{c\tau+d}$ for $g=\begin{pmatrix}a& b\\ c & d\end{pmatrix}\in \mathrm{GL}_2(\mathbb{R})$ and $\tau\in \mathcal{H}^{\pm}$.
% the usual ? transformation.
For any compact open subgroup $K_f$ of $\mathrm{GL}_2(\mathbb{A}_f)$, we set
$$Y(K_f)(\mathbb{C})\coloneqq\mathrm{GL}_2(\mathbb{Q})\backslash \mathcal{H}^{\pm} \times (\mathrm{GL}_2(\mathbb{A}_f)/K_f),$$
where $\mathrm{GL}_2(\mathbb{Q})$ acts on $\mathcal{H}^{\pm} \times (\mathrm{GL}_2(\mathbb{A}_f)/K_f)$ by 
$h(\tau, [g])\coloneqq(h\tau, [hg])$ for $h\in \mathrm{GL}_2(\mathbb{Q}), \tau\in \mathcal{H}^{\pm}$, and $[g]\in \mathrm{GL}_2(\mathbb{A}_f)/K_f$.
As is well known, for any sufficiently small $K_f$, $Y(K_f)(\mathbb{C})$ is a smooth manifold, and admits a canonical model $Y(K_f)_{\mathbb{Q}}$ as an algebraic curve over $\mathbb{Q}$. We denote its base change to $F$ by $Y(K_f)_{F}$ for any $\mathbb{Q}$-algebra $F$. For any commutative ring $A$, we denote its $i$-th singular cohomology with coefficient $A$ by 
$$H^i(K_f)_A:=H^i(Y(K_f)(\mathbb{C}), A).$$ Since we mainly consider the case where $A=\mathcal{O}$, we simply write 
$$H^i(K_f):=H^i(K_f)_{\mathcal{O}}$$ to simplify the notation if there is no risk of confusion. 

For each $g\in \mathrm{GL}_2(\mathbb{A}_f)$, the multiplication by $g$ on the right induces a homeomorphism 
$$Y(gK_fg^{-1})(\mathbb{C})\isom Y(K_f)(\mathbb{C})\colon (\tau,[h])\mapsto (\tau, [hg]),$$ and 
this induces an $A$-linear isomorphism 
$$g^*\colon H^1(K_f)_A\isom H^i(gK_fg^{-1})_A.$$

If $K^p$  is some fixed compact open subgroup of $\mathrm{GL}_2(\mathbb{A}_f^p)$, then we write 
$$H^i(K^p)_A\coloneqq\varinjlim_{K_p}H^i(K_pK^p)_A, $$
where the inductive limit is taken over all the compact open subgroups $K_p$ of $G_p=\mathrm{GL}_2(\mathbb{Q}_p)$. 
The isomorphisms 
$g^*\colon H^i(K_pK^p)_A\isom H^i((gK_pg^{-1})K^p)_A$ for all $g\in G_p$ and $K_p$ induce a smooth action of $G_p$ on $H^i(K^p)_A$.

For $A$, we mainly consider a finite extension $E\subset \overline{\mathbb{Q}}_p$ of $\mathbb{Q}_p$, and its integer ring $\mathcal{O}$ %or $\mathcal{O}/\varpi^s\mathcal{O}$ for some $s>0$ 
in this article, For such $A$, 
$H^i(K_f)_A$ is canonically isomorphic to the \'etale cohomology $H^i_{\text{\'et}}(Y(K_f)_{\overline{\mathbb{Q}}}, A)$ by 
the composite 
$$H^i_{\text{\'et}}(Y(K_f)_{\overline{\mathbb{Q}}}, A)\isom H^i_{\text{\'et}}(Y(K_f)_{\overline{\mathbb{C}}}, A)
\isom 
H^i(Y(K_f)(\mathbb{C}), A),$$
where the first isomorphism is induced by the fixed embedding $\iota_{\infty} : \overline{\mathbb{Q}}\hookrightarrow \mathbb{C}$, 
and the second one is the comparison isomorphism between Betti and \'etale cohomologies. We freely identify $H^i(K_f)_A$ with 
$H^i_{\text{\'et}}(Y(K_f)_{\overline{\mathbb{Q}}}, A)$ by this isomorphism. In particular, $H^i(K_f)_A$ is equipped with a continuous $A$-linear $G_{\mathbb{Q}}$-action, and the induced $G_{\mathbb{Q}}$-action on $H^i(K^p)_A$ commutes with the $G_p$-action for any $K^p$.

%$$H^1(K_f)_A:=H^1_{\text{\'et}}(Y(K_f)_{\overline{\mathbb{Q}}}, A),$$
%where the subscript \'et denotes \'etale cohomology, and where $A$ denotes one of $E$, $\mathcal{O}$, or $\mathcal{O}/\varpi^s\mathcal{O}$ for some $s>0$. 
%There is a natural continuous action of $G_{\mathbb{Q}}$ on $H^1(K_f)_A$. 

%For each $s\in \mathbb{Z}_{>0}$, there is a natural $G_{\mathbb{Q}}\times G_p$-equivariant isomorphism 
%$$H^1(K^p)_{\mathcal{O}}/\varpi^s H^1(K^p)_{\mathcal{O}}\isom H^1(K^p)_{\mathcal{O}/\varpi^s\mathcal{O}}$$
%since one has $H^2(K_f)_{A}=0$ for arbitrary $K_f$. 

We write 
$$\widehat{H}^i(K^p)_{\mathcal{O}}\coloneqq\varprojlim_{s}H^i(K^p)_{\mathcal{O}}/\varpi^s H^i(K^p)_{\mathcal{O}}$$
%\isom 
%\varprojlim_{s}H^1(K^p)_{\mathcal{O}/\varpi^s\mathcal{O}}$$
to denote the $\varpi$-adic completion of $H^i(K^p)_{\mathcal{O}}$. We simply write 
$$\widehat{H}^i(K^p):=\widehat{H}^i(K^p)_{\mathcal{O}}$$ 
if there is no risk of confusing about $\mathcal{O}$. The $G_{\mathbb{Q}}\times G_p$-action on $H^i(K^p)$ extends to a 
$\varpi$-adically continuous $G_{\mathbb{Q}}\times G_p$-action on $\widehat{H}^i(K^p)$.

For any $g\in \mathrm{GL}_2(\mathbb{A}_f^p)$, the isomorphisms $g^*\colon H^i(K_pK^p)_A\isom 
H^i(K_p(gK^pg^{-1}))_A$ for all 
$K_p$ induce an isomorphism 
$$g^*\colon H^i(K^p)_A\isom H^i(gK^pg^{-1})_A$$ commuting with the actions of $G_p$ and $G_{\mathbb{Q}}$. %We write 
%$$\widehat{H}^1(K^p)_{\mathcal{O}}\coloneqq\varprojlim_{s}H^1(K^p)_{\mathcal{O}}/\varpi^s H^1(K^p)_{\mathcal{O}}\isom 
%\varprojlim_{s}H^1(K^p)_{\mathcal{O}/\varpi^s\mathcal{O}}$$
%to denote the $\varpi$-adic completion of $H^1(K^p)_{\mathcal{O}}$. The $G_{\mathbb{Q}}\times G$-action on $H^1(K^p)_{\mathcal{O}}$ extends ot a 
%$\varpi$-adically continuous $G_{\mathbb{Q}}\times G$-action on $\widehat{H}^1(K^p)_{\mathcal{O}}$, and 
The isomorphism 
$g^*\colon H^i(K^p)_{\mathcal{O}}\isom H^i(gK^pg^{-1})_{\mathcal{O}}$ extends to an $\mathcal{O}$-linear topological
 isomorphism 
$$g^*\colon \widehat{H}^i(K^p)\isom \widehat{H}^i(gK^pg^{-1}).$$ 
%The $G$-action on $\widehat{H}^1(K^p)_{\mathcal{O}}$ makes it a $\varpi$-adically admissible $G$-representation over $\mathcal{O}$. 

%We write 
%$$\widehat{H}^1(K^p)_E\coloneqq E\otimes_{\mathcal{O}}\widehat{H}^1(K^p)_{\mathcal{O}}.$$
%This is an $E$-Banach space, having $\widehat{H}^1(K^p)_{\mathcal{O}}$ as unit ball, and equipped with a continuous action of 
%$G_{\mathbb{Q}}\times G$. %The $G$-action makes it a unitary admissible Banach space representation of $G$ over $E$. 

Let $\Sigma_0$ be a finite set of primes containing $p$. We set $\Sigma_0:=\Sigma_0\setminus\{p\}$, $G_{\Sigma_0}:=\prod_{l\in  \Sigma_0}G_l=\prod_{l\in  \Sigma_0}\mathrm{GL}_2(\mathbb{Q}_l)$, $G_{\Sigma}:=G_{\Sigma_0}\times G_p$ and $K_0^{\Sigma}:=\prod_{l\not \in  \Sigma}\mathrm{GL}_2(\mathbb{Z}_l)$, 
where the product is taken over all the primes $l$ not containing in $\Sigma$. For any open subgroup $K_{\Sigma_0}$ of $G_{\Sigma_0}$, we write 
$$H^i(K_{\Sigma_0})_A:=H^i(K_{\Sigma_0}K_0^{\Sigma})_A, \quad \widehat{H}^i(K_{\Sigma_0})
:=\widehat{H}^i(K_{\Sigma_0}K_0^{\Sigma}).$$
We also write 
%$$H^1(K_{\Sigma_0})_{\mathcal{O}/\varpi^s\mathcal{O}}:=H^1(K_{\Sigma_0}K_0^{\Sigma})_{\mathcal{O}/\varpi^s\mathcal{O}},$$ 
%for any $s>0$. We also write 
$$H^i_{A,\Sigma}:=\varinjlim_{K_{\Sigma_0}}H^i(K_{\Sigma_0})_A, \quad \widehat{H}^i_{\Sigma}:=\varinjlim_{K_{\Sigma_0}}\widehat{H}^i(K_{\Sigma_0})$$
equipped with the actions of $G_{\Sigma}$ and $G_{\mathbb{Q}}$, 
where both the limits are taken over all compact open subgroups $K_{\Sigma_0}$ of 
$G_{\Sigma_0}$. We equip $\widehat{H}^i_{\Sigma}$ with the $\mathcal{O}$-linear inductive limit topology (i.e. a basis of open neighborhood of the zero in 
$\widehat{H}^i_{\Sigma}$ are all the sub $\mathcal{O}$-modules $U$ such that $U\cap \widehat{H}^i(K_{\Sigma_0})$ is 
$\varpi$-adically open in $\widehat{H}^i(K_{\Sigma_0})$ for every %tame level 
$K_{\Sigma_0}$). Then, $G_{\mathbb{Q}}\times G_p$ continuously acts on it. 
%$When $A=E$, this is the usual locally convex inductive limit topology) 
%equipped with a continuous action of $G_{\mathbb{Q}}\times G$. 

%T%he isomorphisms $g^*\colon \widehat{H}^1(K_{\Sigma_0})_A\isom \widehat{H}^1(gK_{\Sigma_0}g^{-1})_A$ for all 
%$g\in G_{\Sigma_0}$ and $K_{\Sigma_0}$ induce a smooth action of $G_{\Sigma_0}$ on $\widehat{H}^1_{A,\Sigma}$  commuting with the action of 
%$G_{\mathbb{Q}}\times G$. 

 %For $s>0$, we also write 
%$$H^1_{\mathcal{O}/\varpi^s\mathcal{O},\Sigma}:=\varinjlim_{K_{\Sigma_0}}H^1(K_{\Sigma_0})_{\mathcal{O}/\varpi^s\mathcal{O}}$$
%It is equipped with a continuous action of $G_{\mathbb{Q}}\times G\times G_{\Sigma_0}$. 

\subsubsection{Big Hecke algebras and universal deformation rings}
%Fix a compact open subgroup $K^p$ of $\mathrm{GL}_2(\mathbb{A}_f^p)$ of the form 
%$K^p=\prod_{l\not=p}K_l$. Let $K_p$ be a compact open subgroup of $G_p$. 
Let $K_{\Sigma}$ be a compact open subgroup of $G_{\Sigma}$. For any prime $l\not\in \Sigma$, we write $S_{l}$ and $T_{l}$ to denote the Hecke operators acting on 
$H^1(K_{\Sigma}K_0^{\Sigma})_{A}$ corresponding to the double cosets of $K_l$ in $G_l$ represented by $\begin{pmatrix}l & 0 \\ 0 & l\end{pmatrix}$ and 
$\begin{pmatrix} l & 0 \\ 0 & 1\end{pmatrix}\in G_l$ respectively. We let $\mathbb{T}(K_{\Sigma})_A$ denote the sub 
$A$-algebra %of $G_{\mathbb{Q}}\times G$-equivariant endomorphisms 
of $\mathrm{End}_{A}(H^1(K_{\Sigma}K_0^{\Sigma})_{A})$ generated by the Hecke operators $S_{l}$ and $T_{l}$ for all the primes $l\not \in \Sigma$. 
We remark that the canonical map $\mathbb{T}(K_{\Sigma})_A\otimes_AA'\rightarrow \mathbb{T}(K_{\Sigma})_{A'}$ is isomorphism 
for any flat $A$-algebra $A'$. In particular, there exist a canonical isomorphism $\mathbb{T}(K_{\Sigma})_{\mathbb{Z}}\otimes_{\mathbb{Z}}\mathcal{O}
\isom \mathbb{T}(K_{\Sigma})_{\mathcal{O}}$ and inclusions
$\mathbb{T}(K_{\Sigma})_{\mathbb{Z}}\subset \mathbb{T}(K_{\Sigma})_{\mathbb{Z}}\otimes_{\mathbb{Z}}\mathbb{Q}
\isom \mathbb{T}(K_{\Sigma})_{\mathbb{Q}}$ and $\mathbb{T}(K_{\Sigma})_{\mathcal{O}}\subset \mathbb{T}(K_{\Sigma})_{\mathcal{O}}\otimes_{\mathcal{O}}E
\isom \mathbb{T}(K_{\Sigma})_{E}$ since $\mathbb{T}(K_{\Sigma})_{\mathbb{Z}}$ (resp. $\mathbb{T}(K_{\Sigma})_{\mathcal{O}}$) 
is a free $\mathbb{Z}$-module (resp. $\mathcal{O}$-module). Since $\mathbb{T}(K_{\Sigma})_{\mathbb{Q}}$ and $\mathbb{T}(K_{\Sigma})_{E}$ are reduced, $\mathbb{T}(K_{\Sigma})_{\mathbb{Z}}$
 and $\mathbb{T}(K_{\Sigma})_{\mathcal{O}}$ are also reduced rings. 

%distinct from $p$ and unramified in $K^p$. 
For $A=\mathcal{O}$, we simply write $\mathbb{T}(K_{\Sigma}):=\mathbb{T}(K_{\Sigma})_{\mathcal{O}}$ if there is no risk of confusing about $\mathcal{O}$. 
Let $K_{\Sigma_0}$ be a compact open subgroup of $G_{\Sigma_0}$. 
If $K_p'\subset K_p$ is any inclusion of compact open subgroups of $G_p$, then there is a natural surjection 
$\mathbb{T}(K'_pK_{\Sigma_0})\rightarrow \mathbb{T}(K_pK_{\Sigma_0})$. We define $\mathbb{T}(K_{\Sigma_0}):=
\varprojlim_{K_p}\mathbb{T}(K_pK_{\Sigma_0})$, and we equip 
$\mathbb{T}(K_{\Sigma_0})$ with its projective limit topology, each of the finite $\mathcal{O}$-algebras $\mathbb{T}(K_pK_{\Sigma_0})$ being equipped with its 
$\varpi$-adic topology. The $\mathcal{O}$-algebra $\mathbb{T}(K_{\Sigma_0})$ is then topologically generated by the Hecke 
operators $S_{l}$ and $T_{l}$ for all the primes $l\not \in \Sigma$, and acts faithfully on $\widehat{H}^1(K_{\Sigma_0})$. 
The topological complete reduced $\mathcal{O}$-algebra $\mathbb{T}(K_{\Sigma_0})$ is known to %be reduced, commutative, and complete with respect to its natural topology, and in fact 
be decomposed into $\mathbb{T}(K_{\Sigma_0})=\prod_{\mathfrak{m}}\mathbb{T}(K_{\Sigma_0})_{\mathfrak{m}}$ 
the product of finitely many complete Noether local $\mathcal{O}$-algebras $\mathbb{T}(K_{\Sigma_0})_{\mathfrak{m}}$ with finite residue fields.
%equipped with its $\mathfrak{m}$-adic topology.
%, which are in bijection with the (finitely many) Galois conjugacy classes of systems of Hecke eigenvalues arising from modular forms of level $K^p$ defined over $\mathbb{F}$ ($\overline{\mathbb{F}}_p$?). 
%Furthermore, its natural topology coincides with 
%the product of the $\mathfrak{m}$-adic topologies on each of finitely many local factors $\mathbb{T}(K^p)_{\mathfrak{m}}$, and so in particular, it is a compact topological ring. 
For any inclusion $K_{ \Sigma_0}\subset K'_{\Sigma_0}$, there is a continuous surjective morphism $\mathbb{T}(K_{ \Sigma_0})\rightarrow \mathbb{T}(K'_{ \Sigma_0})$ sending
$S_{l}$ and $T_{l}$ in the source to $S_{l}$ and $T_{l}$ in the target, which is compatible with respect to 
the natural inclusion $\widehat{H}^1(K'_{ \Sigma_0})\rightarrow \widehat{H}^1(K_{ \Sigma_0})$ and the action of $\mathbb{T}(K'_{ \Sigma_0})$ (resp. $\mathbb{T}(K_{ \Sigma_0})$) on its source (resp. target). 
%If $\Sigma_0$ is a finite set of primes not containing $p$, and $K_{\Sigma_0}$ is a compact open subgroup of $G_{\Sigma_0}$, then we write 
%$\mathbb{T}(K_{\Sigma_0})\colon{=}\mathbb{T}(K_{\Sigma_0}K_0^{\Sigma})$. 

We now fix  a continuous absolutely irreducible representation 
$$\overline{\rho}:G_{\mathbb{Q}}\rightarrow \mathrm{GL}_2(\mathbb{F}),$$ 
which is unramified outside $\Sigma\cup\{\infty\}$. Assume moreover that $\overline{\rho}$ is odd, hence modular 
by Serre conjecture proved by Khare-Wintenberger \cite{KW09a}, \cite{KW09b}, and Kisin \cite{Ki09a}.
%, this is implied by the a priori weaker assumption that $\overline{\rho}$ is odd.)
We say that a compact open subgroup $K_{\Sigma_0}\subset G_{\Sigma_0}$ is an allowable level for $\overline{\rho}$ if there exists a maximal ideal $\mathfrak{m}$ of $\mathbb{T}(K_{\Sigma_0})$ with its residue field $\mathbb{F}$, such that %which is associated to $\overline{\rho}$ in the usual sense, namely 
$$\mathrm{tr}(\overline{\rho}(\mathrm{Frob}_{l}))=T_{l} \text{ mod } \mathfrak{m}, \quad \mathrm{det}(\overline{\rho}(\mathrm{Frob}_{l}))=l S_{l} \text{ mod }\mathfrak{m}  $$ 
for all the primes $l \not\in \Sigma$. Such $\mathfrak{m}$ is unique if it exists, and we 
% (or equivalently, if there exists a newform of tame level $K_{\Sigma_0}$ whose associated Galois representation lifts $\overline{\rho}$). 
%If $K_{\Sigma_0}$ is an allowable level for $\overline{\rho}$, then 
write $\mathbb{T}(K_{\Sigma_0})_{\overline{\rho}}\coloneqq\mathbb{T}(K_{\Sigma_0})_{\mathfrak{m}}$.
% to denote the completion of $\mathbb{T}(K_{\Sigma_0})$ at $\mathfrak{m}$; note that $\mathbb{T}(K_{\Sigma_0})_{\overline{\rho}}$ is simply a direct factor of $\mathbb`{T}(K_{\Sigma_0})$, which by construction has residue field $k$. 
Since our $\overline{\rho}$ is known to be modular, any sufficiently small $K_{\Sigma_0}\subset G_{\Sigma_0}$ is an allowable level for $\overline{\rho}$. 

Since $\overline{\rho}$ is assumed to be absolutely irreducible, it is known that there exists, for any allowable level $K_{\Sigma_0}$, a deformation 
$$\rho(K_{\Sigma_0}):G_{\mathbb{Q}}\rightarrow \mathrm{GL}_2(\mathbb{T}(K_{\Sigma_0})_{\overline{\rho}})$$
of $\overline{\rho}$ which is unramified out side $\Sigma\cup\{\infty\}$, %, unramified outside $\Sigma$, 
uniquely determined (up to isomorphism) by the requirement 
$$\mathrm{tr}(\rho(K_{\Sigma_0})(\mathrm{Frob}_{l}))=T_{l},\quad \mathrm{det}(\rho(K_{\Sigma_0})(\mathrm{Frob}_{l}))=l S_{l}$$
for all the primes $l \not\in \Sigma$. If $K'_{\Sigma_0}\subset K_{\Sigma_0}$ is an inclusion of allowable levels, then the induced map 
$\mathbb{T}(K'_{\Sigma_0})_{\overline{\rho}}\rightarrow \mathbb{T}(K_{\Sigma_0})_{\overline{\rho}}$ is in general a surjection, and it is also an isomorphism for any sufficiently small allowable $K_{\Sigma_0}$ by Lemma 5.2.4 \cite{Em}. We write $\mathbb{T}_{\overline{\rho},\Sigma}:=\varprojlim_{K_{\Sigma_0}}\mathbb{T}(K_{\Sigma_0})_{\overline{\rho}}$, where the limit is taken 
over the allowable levels $K_{\Sigma_0}$ of $G_{\Sigma_0}$. For any $l\not\in \Sigma$, we write $T_{l}$ (resp. $S_l$) to denote the element of 
$\mathbb{T}_{\overline{\rho}, \Sigma}$ which  maps to the element $T_{l}$ (resp. $S_l$) in each of 
$\mathbb{T}(K_{\Sigma_0})_{\overline{\rho}}$. Then, there exists a unique deformation
$$\rho^{\mathfrak{m}}\colon G_{\mathbb{Q}}\rightarrow \mathrm{GL}_2(\mathbb{T}_{\overline{\rho}, \Sigma})$$ of 
$\overline{\rho}$ over $\mathbb{T}_{\overline{\rho},\Sigma}$ which is unramified outside $\Sigma \cup\{\infty\}$ characterized by the requirement
$$\mathrm{tr}(\rho^{\mathfrak{m}}(\mathrm{Frob}_l))=T_l,\quad \mathrm{det}(\rho^{\mathfrak{m}}(\mathrm{Frob}_l))=l S_l$$
for all $l\not\in \Sigma$. %(For any compact open subgroup $K_p$ of $G$, we write 
%$\mathbb{T}(K_pK_{\Sigma_0})_{\overline{\rho}}:=\mathbb{T}(K_{\Sigma_0})_{\overline{\rho}}\otimes_{\mathbb{T}(K_{\Sigma_0})}\mathbb{T}(K_pK_{\Sigma_0}K_0^{\Sigma})$, then one has $\mathbb{T}(K_{\Sigma_0})_{\overline{\rho}}=\varprojlim_{K_p}\mathbb{T}(K_pK_{\Sigma_0})_{\overline{\rho}}$)

Since $\overline{\rho}$ is assumed to be absolutely irreducible, in particular, $\overline{\rho}$ has only trivial 
endomorphisms, there exists the universal deformation 
$$\rho^{u}:G_{\mathbb{Q},\Sigma}\rightarrow \mathrm{GL}_2(R_{\overline{\rho}, \Sigma})$$
over a complete Noetherian $\mathcal{O}$-algebra $R_{\overline{\rho}, \Sigma}$ with residue field $\mathbb{F}$, which is universal for 
deformations of $\overline{\rho}$ over objects in $\mathrm{Comp}(\mathcal{O})$ which are unramified outside $\Sigma\cup\{\infty\}$. By its universality, there exists a unique local $\mathcal{O}$-algebra homomorphism 
$$\phi_{\Sigma}:R_{\overline{\rho},\Sigma}\rightarrow \mathbb{T}_{\overline{\rho},\Sigma}$$
such that the composite $\phi_{\Sigma}\circ \rho^{u}$ is isomorphic to $\rho^{\mathfrak{m}}$. It is known that 
the map $\phi_{\Sigma}$ is surjective. %Since the ring $\mathbb{T}_{\overline{\rho},\Sigma}$ is reduced, 
%the map $\phi_{\Sigma}$ factors through 
%$$\phi^{\mathrm{red}}_{\Sigma}:R^{\mathrm{red}}_{\overline{\rho},\Sigma}\rightarrow \mathbb{T}_{\overline{\rho},\Sigma}.$$ 
B\"ockle \cite{Bo09} showed that the map $\phi_{\Sigma}$ is isomorphism in many cases as in the following theorem.

\begin{thm}\label{1.1}
Assume that the following conditions hold $:$ 
\begin{itemize}
\item[(1)]$p\geqq 3$. 
\item[(2)]$\overline{\rho}|_{G_{\mathbb{Q}(\zeta_p)}}$ is absolutely irreducible.
\item[(3)]$\overline{\rho}|_{G_{\mathbb{Q}_p}}$ is neither of the form $\chi\otimes 
\begin{pmatrix} 1 & * \\ 0 & 1\end{pmatrix}$ nor $\chi\otimes 
\begin{pmatrix} 1 & * \\ 0 & \overline{\varepsilon}\end{pmatrix}$, for any 
$\mathbb{F}$-valued character $\chi$ of $G_{\mathbb{Q}_p}$. 
\end{itemize}
Then, the map $\phi_{\Sigma}$ is isomorphism. 

\end{thm}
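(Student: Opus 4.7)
The plan is to prove the injectivity of $\phi_{\Sigma}$, since its surjectivity is already given. I would first establish that both rings are $\mathcal{O}$-flat so that the question reduces to the generic fiber. The Hecke side $\mathbb{T}_{\overline{\rho},\Sigma}$ is $\mathcal{O}$-flat by construction, being a projective limit of subrings of endomorphisms of the free $\mathcal{O}$-modules $H^1(K_{\Sigma})_{\mathcal{O}}$. For $R_{\overline{\rho},\Sigma}$, Mazur's presentation gives $R_{\overline{\rho},\Sigma}$ as a quotient of a power series ring over $\mathcal{O}$ by an ideal whose minimal number of generators is controlled by $\dim H^2(G_{\mathbb{Q},\Sigma},\mathrm{ad}(\overline{\rho}))$; the global Euler characteristic formula, combined with oddness of $\overline{\rho}$ and assumption (2), then controls this presentation well enough to yield $\mathcal{O}$-flatness.

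Once both sides are $\mathcal{O}$-flat, it suffices to show $\phi_{\Sigma}[1/p]$ is an isomorphism. I would prove this by showing that the closed points of $\mathrm{Spec}(\mathbb{T}_{\overline{\rho},\Sigma}[1/p])$, which correspond (via the construction of $\mathbb{T}(K_{\Sigma_0})$ as a Hecke algebra acting on singular cohomology of modular curves) to Galois representations of normalized Hecke eigen cusp newforms whose residual representation is $\overline{\rho}$, are Zariski-dense in $\mathrm{Spec}(R_{\overline{\rho},\Sigma}[1/p])$. Combined with the surjectivity of $\phi_{\Sigma}$ and the reducedness of both generic fibers, this density forces $\phi_{\Sigma}[1/p]$ to be injective.

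The density amounts to a modularity statement: every closed point of $\mathrm{Spec}(R_{\overline{\rho},\Sigma}[1/p])$, i.e. every continuous deformation of $\overline{\rho}$ to $\mathrm{GL}_2$ of a finite extension of $E$ which is unramified outside $\Sigma$, should come from a classical modular form. Under the assumptions (1)--(3), this is the situation handled by the modularity lifting theorems of Kisin, Gee, Emerton, etc.\ for potentially semistable deformations of arbitrary Hodge--Tate weights and arbitrary ramification in $\Sigma$, via Taylor--Wiles--Kisin patching. Assumption (2) supplies the ``big image'' condition required for patching, while assumption (3) rules out exactly the local-at-$p$ residual shapes (trivial up to twist, or Steinberg-like up to twist) for which the requisite local $R = T$ theorems at $p$ are not available.

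The main obstacle is this density/modularity step; this is where the bulk of the work lies and where the hypotheses (1)--(3) are essential. Once it is in place, combining surjectivity of $\phi_{\Sigma}$, $\mathcal{O}$-flatness of both sides, reducedness of both generic fibers, and density of the modular points yields the desired isomorphism without further subtlety. I expect the verification of reducedness of $R_{\overline{\rho},\Sigma}[1/p]$ to piggy-back on the density: modular points are smooth points on both rings (their local rings agreeing with Kisin's potentially semistable deformation rings, which are formally smooth at generic Hodge--Tate weights), so the generic fiber of $R_{\overline{\rho},\Sigma}$ is generically reduced, and the zero ideal of its reduced quotient contains no nontrivial element of $\ker(\phi_{\Sigma}[1/p])$.
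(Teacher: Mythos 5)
The overall shape of your argument---density of modular points together with surjectivity of $\phi_\Sigma$---is indeed the right approach, and the paper simply cites B\"ockle (Corollary 3.8 of \cite{Bo09}) and the proof of Theorem 1.2.3 of \cite{Em} for exactly this. But there are several places where the details you sketch are not correct, and at least one of the claimed steps would fail outright.

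First, the assertion that Mazur's presentation plus the global Euler characteristic formula yields $\mathcal{O}$-flatness of $R_{\overline{\rho},\Sigma}$ is a genuine gap. The Euler characteristic formula gives a \emph{lower bound} on the Krull dimension of $R_{\overline{\rho},\Sigma}$ (the number of generators minus the number of relations is at least $3$, so $\dim R_{\overline{\rho},\Sigma}\geqq 4$), but it says nothing about whether the ideal of relations contains elements divisible by $\varpi$. A priori $R_{\overline{\rho},\Sigma}$ could have $\varpi$-power torsion. In fact $\mathcal{O}$-flatness of the unrestricted global deformation ring is not an input to $R=\mathbb{T}$ but essentially a \emph{consequence} of it; B\"ockle and Emerton do not attempt to establish it beforehand.

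Second, your statement ``The density amounts to a modularity statement: every closed point of $\mathrm{Spec}(R_{\overline{\rho},\Sigma}[1/p])$\dots should come from a classical modular form'' is false, and is much stronger than what the density argument actually needs. Closed points of $\mathrm{Spec}(R_{\overline{\rho},\Sigma}[1/p])$ are arbitrary lifts of $\overline{\rho}$ unramified outside $\Sigma$; the overwhelming majority of these are not de Rham at $p$ and so cannot possibly arise from classical modular forms. Modularity lifting theorems apply to points with prescribed $p$-adic Hodge theoretic conditions (crystalline or potentially semi-stable of fixed type and weight), not to every closed point. The density assertion is that the (much thinner) locus of such potentially semi-stable modular points is \emph{Zariski-dense} in $\mathrm{Spec}(R_{\overline{\rho},\Sigma}[1/p])$; establishing that density is the content of the Gouv\^ea--Mazur ``infinite fern'' and B\"ockle's refinement of it, not of Taylor--Wiles--Kisin patching per se. Your proposal conflates density with modularity of all points and so skips the infinite-fern step entirely.

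Third, the remark that reducedness of $R_{\overline{\rho},\Sigma}[1/p]$ ``piggy-backs'' on density because modular points are smooth only yields \emph{generic} reducedness; it does not exclude embedded components, which is what you would need to conclude that the kernel of $\phi_\Sigma[1/p]$ vanishes from the fact that every element of it vanishes at a dense set of points. Controlling embedded components requires more than a statement about dense open smooth loci, and again this is part of the machinery rather than an observation that comes for free.

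Finally, note that the paper's cited proof also invokes Emerton's Theorem 1.2.3, whose argument is of a rather different flavor: it uses the $p$-adic local Langlands correspondence and the faithfulness of $\mathbb{T}_{\overline{\rho},\Sigma}$ (and, via local-global compatibility, of $R_{\overline{\rho},\Sigma}$) on completed cohomology to pass from B\"ockle's rational statement to the integral one. That is a route rather different from trying to prove $\mathcal{O}$-flatness and reducedness of $R_{\overline{\rho},\Sigma}$ directly, and is exactly what rescues the gaps your sketch leaves open.
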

\begin{proof}
See Corollary 3.8 of \cite{Bo09} and the proof of Theorem 1.2.3 of \cite{Em}.
\end{proof}

In the main body of the article, we construct our zeta morphisms 
for $\rho^{\mathfrak{m}}$ over $\mathbb{T}_{\overline{\rho},\Sigma}$ with the similar properties (i), (ii), (iii)
as in Theorem \ref{0.1} (see Corollary \ref{3.11} and Theorem \ref{3.2}) instead of constructing those for $\rho^u$ over $R_{\overline{\rho},\Sigma}$. Theorem \ref{0.1} in Introduction is just a combination of 
Corollary \ref{3.11} and Theorem \ref{3.2} with Theorem \ref{1.1} above.

\subsubsection{$\overline{\rho}$-part of $\widehat{H}^1_{\Sigma}$}
%For any allowable level $K_{\Sigma_0}\subseteq G_{\Sigma_0}$ and 
%any $\mathbb{T}(K_{\Sigma_0})$-module $M$, we set
%$M_{\overline{\rho}}:=M\otimes_{\mathbb{T}(K_{\Sigma_0})}\mathbb{T}(K_{\Sigma_0})_{\overline{\rho}}$. 
% Since one has a canonical ring homomorphism 
%$\mathbb{T}(K_{\Sigma_0})\rightarrow \mathbb{T}(K_pK_{\Sigma_0})$, 
%$K_{\Sigma_0}\subseteq G_{\Sigma_0}$, one has a natural ring homomorphism 

%We continue to fix $\overline{\rho}$, $\Sigma_0$, and $\Sigma\coloneqq\Sigma_0\cup \{p\}$ as in the preceding subsubsection. 
For each allowable level $K_{\Sigma_0}\subset G_{\Sigma_0}$, %and for $A=E$, or $\mathcal{O}$, 
we write 
$$\widehat{H}^1(K_{\Sigma_0})_{\overline{\rho}}:=\mathbb{T}(K_{\Sigma_0})_{\overline{\rho}}\otimes_{\mathbb{T}(K_{\Sigma_0})}\widehat{H}^1(K_{\Sigma_0}).$$ %Similarly,  if $K_p$ is a fixed compact open subgroup of $G_p$, and if $K_f=K_pK_{\Sigma_0}K_0^{\Sigma}$, then we write 
%$H^1(K_f)_{A,\overline{\rho}}:=\mathbb{T}(K_{\Sigma_0})_{\overline{\rho}}\otimes_{\mathbb{T}(K_{\Sigma_0})}H^1(K_f)_A$ 
%for $A=\mathcal{O}, E$. 
Since $\mathbb{T}(K_{\Sigma_0})_{\overline{\rho}}$ is a direct factor of $\mathbb{T}(K_{\Sigma_0})$,
$\widehat{H}^1(K_{\Sigma_0})_{ \overline{\rho}}$ %(resp. $H^1(K_f)_{A, \,\overline{\rho}}$)
 is also a direct summand of $\widehat{H}^1(K_{\Sigma_0})$, %
 % (resp. $H^1(K_f)_{A}$), 
 and it %$\widehat{H}^1(K_{\Sigma_0})_{\overline{\rho}}$ % (resp. $H^1(K_f)_{A, \,\overline{\rho}}$) 
is naturally equipped with the induced action of $G_{\mathbb{Q}}\times G_p$.  %(resp. $G_{\mathbb{Q}}$). 
% since 
% the $\mathbb{T}(K_{\Sigma_0})$-action on $\widehat{H}^1(K_{\Sigma_0})_A$ is $G_{\mathbb{Q}}\times G$-equivariant.
 %, we see that $\widehat{H}^1(K_{\Sigma_0})_{A,\,\overline{\rho}}$ is $G_{\mathbb{Q}}\times G$-invariant. 

If $K_{\Sigma_0}'\subset K_{\Sigma_0}$ is an inclusion of allowable levels, then the
 natural embedding $\widehat{H}^1(K_{\Sigma_0})\rightarrow \widehat{H}^1(K_{\Sigma_0}')$ restricts to an embedding 
 $\widehat{H}^1(K_{\Sigma_0})_{\overline{\rho}}\rightarrow \widehat{H}^1(K_{\Sigma_0}')_{\overline{\rho}}$. We write 
 $$\widehat{H}^1_{\overline{\rho}, \,\Sigma}\coloneqq\varinjlim_{K_{\Sigma_0}}\widehat{H}^1(K_{\Sigma_0})_{\overline{\rho}},$$ 
 where the limit is taken over all the allowable levels $K_{\Sigma_0}\subset G_{\Sigma_0}$. It is 
 %One immediately sees that $\widehat{H}^1_{A,\, \overline{\rho},\,\Sigma}$ is 
 a $G_{\mathbb{Q}}\times G_{\Sigma}$-stable direct summand of 
 $\widehat{H}^1_{\Sigma}$. The natural surjection $\mathbb{T}_{\overline{\rho}, \Sigma}\rightarrow \mathbb{T}(K_{\Sigma_0})_{\overline{\rho}}$ 
for each allowable level $K_{\Sigma_0}$
 allows us to regard $\widehat{H}^1(K_{\Sigma_0})_{\overline{\rho}}$ as a $\mathbb{T}_{\overline{\rho}, \Sigma}$-module, %for any allowable level $K_{\Sigma_0}\subset G_{\Sigma_0}$, 
% and this structure is compatible with the embedding 
% $\widehat{H}^1(K_{\Sigma_0})_{\overline{\rho}}\rightarrow \widehat{H}^1(K_{\Sigma_0}')_{\overline{\rho}}$. 
thus 
 $\widehat{H}^1_{\overline{\rho}, \Sigma}$ is naturally a $\mathbb{T}_{\overline{\rho}, \Sigma}$-module with a $\mathbb{T}_{\overline{\rho},\Sigma}$-linear continuous $G_{\mathbb{Q}}\times G_{\Sigma}$-action.

 \subsubsection{Completed cohomology with compact support}
 Replacing the usual cohomology $H^1(K_f)_A$ with 
 the cohomology $H^1_{c}(K_f)_A:=H^1_c(Y(K_f)(\mathbb{C}), A)$ with compact support, 
 one can define the groups $\widehat{H}^1_c(K^p)$, 
 $\widehat{H}^1_c(K_{\Sigma_0})$ and $\widehat{H}^1_{c,\,\Sigma}$ 
 in the same way as $\widehat{H}^1(K^p)$, 
 $\widehat{H}^1(K_{\Sigma_0})$ and $\widehat{H}^1_{\Sigma}$ respectively. 
 The natural map $H^1_{c}(K_f)_{\mathcal{O}}\rightarrow H^1(K_f)_{\mathcal{O}}$ forgetting support induces morphisms 
% $$\widehat{H}^1_c(K^p)_{A}\rightarrow \widehat{H}^1(K^p)_{A}, \quad $$
 $$\widehat{H}^1_c(K^p)\rightarrow \widehat{H}^1(K^p) , \quad \widehat{H}^1_c(K_{\Sigma_0})\rightarrow \widehat{H}^1(K_{\Sigma_0}) \quad \text{ 
 and }\quad 
 \widehat{H}^1_{c,\,\Sigma}\rightarrow \widehat{H}^1_{\Sigma},$$
 all of which are surjective by Proposition 4.39 of [Em06b]. 
 Using 
 $H^1_{c}(K_{\Sigma}K_0^{\Sigma}):=H^1_{c}(K_{\Sigma}K_0^{\Sigma})_{\mathcal{O}}$ and $\widehat{H}^1_c(K_{\Sigma_0})$ instead of $H^1(K_{\Sigma}K_0^{\Sigma})$ and $\widehat{H}^1(K_{\Sigma_0})$, one can also define the Hecke algebra $\mathbb{T}_c(K_{\Sigma})$ and the big Hecke algebra 
 $\mathbb{T}_c(K_{\Sigma_0})$ 
  in the same way as $\mathbb{T}(K_{\Sigma})$ and $\mathbb{T}(K_{\Sigma_0})$. 
 Then, it is proved that the surjection $\widehat{H}^1_c(K_{\Sigma_0})\rightarrow \widehat{H}^1(K_{\Sigma_0})$ above %$\widehat{H}^1_c(K_{\Sigma_0})\rightarrow \widehat{H}^1(K_{\Sigma_0})$ 
 induce an isomorphism of compact $\mathcal{O}$-algebras
 $$\mathbb{T}_c(K_{\Sigma_0})\isom  \mathbb{T}(K_{\Sigma_0})$$
 by Lemma 7.3.10 of [Em06b]. %, one similarly has $\mathbb{T}_c(K^p)\isom  \mathbb{T}(K^p)$. 
 By this isomorphism, 
 we regard $\widehat{H}^1_c(K_{\Sigma_0})$ as a $\mathbb{T}(K_{\Sigma_0})$-module. Therefore, one can similarly define 
 $$\widehat{H}^1_c(K_{\Sigma_0})_{\overline{\rho}}:=\mathbb{T}(K_{\Sigma_0})_{\overline{\rho}}\otimes_{\mathbb{T}(K_{\Sigma_0})}\widehat{H}^1_c(K_{\Sigma_0})$$ for any allowable level $K_{\Sigma_0}$, and 
 $$\widehat{H}^1_{c,\,\overline{\rho}, \Sigma}:=\varinjlim_{K_{\Sigma_0}}\widehat{H}^1_c(K_{\Sigma_0})_{\overline{\rho}}.$$ 
 Since we assume that $\overline{\rho}$ is absolutely irreducible,  Corollary 3.1.3 of \cite{BE10} implies that the surjections $\widehat{H}^1_c(K_{\Sigma_0})\rightarrow \widehat{H}^1(K_{\Sigma_0})$
 and 
 $\widehat{H}^1_{c,\,\Sigma}\rightarrow \widehat{H}^1_{\Sigma}$ induce isomorphisms 
 $$\widehat{H}^1_c(K_{\Sigma_0})_{\overline{\rho}}\isom \widehat{H}^1(K_{\Sigma_0})_{\overline{\rho}}\quad \text{ and }\quad
 \widehat{H}^1_{c,\,\overline{\rho}, \Sigma}\isom \widehat{H}^1_{\overline{\rho}, \Sigma}.$$

 %(ramark : [lg] The action of $g\in \mathrm{GL}_2(\mathbb{A}_f)$ defined in [lg] and [local global conjecture] is equal to that of  $(g^{-1})^t$ defined in Carayol's article.
 %In p.11 
 %[comp conj] $\rho_f$ is characterized by the characteristic polynomial of ``geometric" Frobenius is $X^2-a_lX+\chi(l)l^{k-1}$)
 
 \subsection{A refined local-global compatibility}
 In this subsection, we recall Emerton's refined local-global compatibility on the description of $\widehat{H}^1_{\overline{\rho}, \Sigma}$ in terms of 
  local Langlands correspondences for $G_l$ ($l\in \Sigma$) in families. %This will be the foundation for our construction of 
 %zeta elements. 
 These local Langlands correspondences are recalled in Appendix B. In this section,
 we freely use the definitions and the notations recalled there.
 
 Fix an odd absolutely irreducible representation $\overline{\rho} : G_{\mathbb{Q}}\rightarrow \mathrm{GL}_2(\mathbb{F})$ unramified outside $\Sigma\cup\{\infty\}$. 
 %From now on until the end of the article (?), 
 In this subsection, we furthermore assume the following conditions : 
 \begin{itemize}
 \item[(a)]$\overline{\rho}_p:=\overline{\rho}|_{G_{\mathbb{Q}_p}}\not\isom  
 \chi\otimes \begin{pmatrix} 1 & * \\ 0 & \overline{\varepsilon}\end{pmatrix}$
 for any $\mathbb{F}$-valued character $\chi:G_{\mathbb{Q}_p}\rightarrow \mathbb{F}^{\times}$.
 \item[(b)]$p\geqq 3$.
 \item[(c)]$\mathrm{End}_{\mathbb{F}[G_{\mathbb{Q}_p}]}(\overline{\rho}_p)=\mathbb{F}$.
 \end{itemize}
 
 Then, Theorem \ref{5.18} implies that there is a unique (up to isomorphism) object $\overline{\pi}_p$ in 
 $\mathrm{Mod}_{G_p}^{\mathrm{adm}}(\mathbb{F})$ of finite length such that : 
  \begin{itemize}
 \item[(1)]$\mathrm{MF}(\overline{\pi}_p)\isom \overline{\rho}_p$. 
 \item[(2)]$\overline{\pi}_p$ has a central character equal to $\mathrm{det}(\overline{\rho}_p)\overline{\varepsilon}\circ \mathrm{rec}_p$. 
 Here, $\mathrm{rec}_p : \mathbb{Q}_p^{\times}\rightarrow G^{\mathrm{ab}}_{\mathbb{Q}_p}$ is the reciprocity map sending $p$ to 
 a lift of $\mathrm{Frob}_p$. 
 \item[(3)]$\overline{\pi}_p$ has no sub quotient which is finite dimensional.
 \end{itemize}

 Moreover, Theorem \ref{5.20} implies that the functor $\mathrm{MF}$ induces an isomorphism 
 $\mathrm{Def}(\overline{\pi}_p)\isom \mathrm{Def}(\overline{\rho}_p)$ of the deformation functors. In particular, 
 there exists a unique deformation $\pi^{\mathfrak{m}}_{p}$ of 
 $\overline{\pi}_p$ over $\mathbb{T}_{\overline{\rho}, \Sigma}$ such that 
 $$\mathrm{MF}(\pi^{\mathfrak{m}}_{p})\isom \rho^{\mathfrak{m}}|_{G_{\mathbb{Q}_p}}.$$
 
 We next consider $\rho^{\mathfrak{m}}|_{G_{\mathbb{Q}_l}} : G_{\mathbb{Q}_l}\rightarrow 
 \mathrm{GL}_2(\mathbb{T}_{\overline{\rho}, \Sigma})$ for the primes $l\in \Sigma_0$. 
 Since we assume that $\mathrm{End}_{\mathbb{F}[G_{\mathbb{Q}_p}]}(\overline{\rho}_p)=\mathbb{F}$, 
 (the proof of) Theorem 6.1.2 of \cite{Em} and Proposition 6.2.4 of \cite{EH14} (see also Theorem \ref{5.11} and Remark \ref{5.12} of this article) imply that there is a unique smooth 
 admissible $\mathbb{T}_{\overline{\rho},\Sigma}$-representation $\pi_l^{\mathfrak{m}}$ of $G_l$
  such that : 
 \begin{itemize}
 \item[(1)]$\pi_l^{\mathfrak{m}}$ is $\mathbb{T}_{\overline{\rho},\Sigma}$-torsion free. 
 %the smooth contragradient $\widetilde{\pi}_l^{\mathfrak{m}}$ of $\pi_l^{\mathfrak{m}}$ is $\mathbb{T}_{\overline{\rho},\Sigma}$-torsion free.
 \item[(2)]$\pi_l^{\mathfrak{m}}$ is a co-Whittaker 
 $\mathbb{T}_{\overline{\rho},\Sigma}[G_l]$-module.
 \item[(3)]for each minimal prime ideal $\mathfrak{a}$ of $\mathbb{T}_{\overline{\rho},\Sigma}$, the tensor product 
 $\kappa(\mathfrak{a})\otimes_{\mathbb{T}_{\overline{\rho},\Sigma}}\pi_l^{\mathfrak{m}}$ is 
 $\kappa(\mathfrak{a})[G_l]$-linearly isomorphic to 
 the smooth contragradient $\widetilde{\pi}(\rho_l(\mathfrak{a}))$ of $\pi(\rho_l(\mathfrak{a}))$, where 
 $\rho_l(\mathfrak{a}) : G_{\mathbb{Q}_l}\rightarrow \mathrm{GL}_2(\kappa(\mathfrak{a}))$ is the 
 base change of 
 $\rho^{\mathfrak{m}}|_{G_{\mathbb{Q}_l}}$
 % : G_{\mathbb{Q}_l}\rightarrow \mathrm{GL}_2(\mathbb{T}_{\overline{\rho},\Sigma})$$ 
 by the canonical map $\mathbb{T}_{\overline{\rho},\Sigma}\rightarrow \mathbb{T}_{\overline{\rho},\Sigma}/\mathfrak{a}\hookrightarrow \kappa(\mathfrak{a})$. 
 
 \end{itemize}
 
 Finally, we write $\pi^{\mathfrak{m}}_{\Sigma_0}$ to denote the 
 maximal $\mathbb{T}_{\overline{\rho}, \Sigma}$-torsion free quotient 
 of the tensor product $\bigotimes_{l\in \Sigma_0}\pi^{\mathfrak{m}}_l$ over $\mathbb{T}_{\overline{\rho}, \Sigma}$. Here, for any ring $A$ and any 
 $A$-module $M$, 
 the maximal $A$-torsion free quotient is defined by the quotient of $M$ by the submodule $N$ consisting of all the elements $x\in M$ such that 
 $ax=0$ for some non-zero divisor $a\in A$. Then, $\pi^{\mathfrak{m}}_{\Sigma_0}$ is a smooth admissible 
 $\mathbb{T}_{\overline{\rho}, \Sigma}$-representation of $G_{\Sigma_0}$. 
 
 For any $\mathbb{Z}_p$- or $\mathbb{Q}$-algebra $A$ and any smooth $A$-representation $M$ of $P_{\Sigma_0}:=\prod_{l\in \Sigma_0}P_l\subset G_{\Sigma_0}$, we define 
 an $A$-module $\Phi_{\Sigma_0}(M)$ by 
 $$\Phi_{\Sigma_0}(M):=\Phi_{l_1}\circ \cdots \circ \Phi_{l_d}(M)$$
 for $\Sigma_0=\{l_1, \dots, l_d\}$. It is independent of the ordering of $\{l_1, \dots, l_d\}$, and is exact, and satisfies 
 $$\Phi_{\Sigma_0}(M\otimes_AN)\isom \Phi_{\Sigma_0}(M)\otimes_AN$$
 for any $A$-module $N$ by \S3.1 of \cite{EH14}. We remark that 
 one has a  $\mathbb{T}_{\overline{\rho},\Sigma}$-linear isomorphism
 $$\Phi_{\Sigma_0}(\pi^{\mathfrak{m}}_{\Sigma_0})\isom \mathbb{T}_{\overline{\rho},\Sigma}$$
 since $\pi_l^{\mathfrak{m}}$ is co-Whittaker %one has $\Phi_l(\pi_l^{\mathfrak{m}})\isom  \mathbb{T}_{\overline{\rho},\Sigma}$ 
 for each $l\in \Sigma_0$.

The following theorem of Emerton is the fundamental for our purpose 
(see Conjecture 6.1.6  and Theorem 6.4.16 of \cite{Em}). Let $\widetilde{\pi}_{\Sigma_0}^{\mathfrak{m}}$ be 
the smooth contragradient of $\pi_{\Sigma_0}^{\mathfrak{m}}$, which is a smooth coadmissible $\mathbb{T}_{\overline{\rho}, \Sigma}$-representation 
of $G_{\Sigma_0}$.

 \begin{thm}\label{1.2}
 There exists a $G_{\mathbb{Q}}\times G_{\Sigma}$-equivariant, $\mathbb{T}_{\overline{\rho},\Sigma}$-linear topological isomorphism 
 $$\rho^{\mathfrak{m}}\otimes_{\mathbb{T}_{\overline{\rho}, \Sigma}}\pi_{p}^{\mathfrak{m}}\overset{\curlywedge}{\otimes}_{\mathbb{T}_{\overline{\rho},\Sigma}}\widetilde{\pi}_{\Sigma_0}^{\mathfrak{m}}\isom \widehat{H}^1_{\overline{\rho}, \Sigma}.$$
 \end{thm}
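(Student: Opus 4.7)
The plan is to exploit the multiple commuting actions on $\widehat{H}^1_{\overline{\rho},\Sigma}$---namely those of $G_{\mathbb{Q}}$, $G_\Sigma$, and $\mathbb{T}_{\overline{\rho},\Sigma}$---and to identify each tensor factor on the left-hand side by its characterizing universal property. The overall strategy is first to separate the Galois piece $\rho^{\mathfrak{m}}$ using the absolute irreducibility of $\overline{\rho}$, then to peel off the prime-to-$p$ factor $\widetilde{\pi}_{\Sigma_0}^{\mathfrak{m}}$ using the co-Whittaker/Emerton--Helm characterization recalled in the excerpt, and finally to match the remaining $\mathbb{T}_{\overline{\rho},\Sigma}[G_p]$-module with $\pi_p^{\mathfrak{m}}$ via the Colmez--Pa\v{s}k\={u}nas $p$-adic Langlands correspondence together with Theorem \ref{1.1}.

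For the first step, since $\overline{\rho}$ is absolutely irreducible and $\mathbb{T}_{\overline{\rho},\Sigma}$ is a complete Noetherian local ring, a deformation-theoretic Nakayama-plus-Hensel argument (of the kind used in \cite{Em}, \S 5) produces a topological $\mathbb{T}_{\overline{\rho},\Sigma}[G_\Sigma]$-module $M$ together with a $G_{\mathbb{Q}} \times G_\Sigma$-equivariant topological isomorphism $\widehat{H}^1_{\overline{\rho},\Sigma} \cong \rho^{\mathfrak{m}} \otimes_{\mathbb{T}_{\overline{\rho},\Sigma}} M$; concretely $M$ is the $\mathbb{T}_{\overline{\rho},\Sigma}$-module of ``$\rho^{\mathfrak{m}}$-isotypic multiplicity space.'' For the second step, apply the exact functor $\Phi_{\Sigma_0}$, whose basic properties are recalled just before the theorem; using the characterizing conditions of $\pi_l^{\mathfrak{m}}$ (torsion freeness, co-Whittaker property, and specialization to the smooth contragradient $\widetilde{\pi}(\rho_l(\mathfrak{a}))$ at each minimal prime $\mathfrak{a}$) together with classical local-global compatibility at modular points of $\mathrm{Spec}(\mathbb{T}_{\overline{\rho},\Sigma})$, the density of these classical points then upgrades the pointwise identification to a full $\mathbb{T}_{\overline{\rho},\Sigma}$-linear isomorphism $M \cong N \widehat{\otimes}_{\mathbb{T}_{\overline{\rho},\Sigma}} \widetilde{\pi}_{\Sigma_0}^{\mathfrak{m}}$ for some $\mathbb{T}_{\overline{\rho},\Sigma}[G_p]$-module $N$.

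The hardest step is matching $N$ with $\pi_p^{\mathfrak{m}}$. Here I would pass to the Pontryagin dual and apply Colmez's Montr\'eal functor $\mathrm{MF}$ in the topological setting of Pa\v{s}k\={u}nas's category $\mathfrak{C}(\mathcal{O})$, using the results of \cite{Pas13} which identify the deformation theory of $\overline{\pi}_p$ with that of $\overline{\rho}_p$ via the projective envelope $\widetilde{P}$ and the isomorphism $R_p \isom \mathrm{End}_{\mathfrak{C}(\mathcal{O})}(\widetilde{P})$. Concretely, $N^\vee$ will be shown to deform $\overline{\pi}_p^\vee$ over $\mathbb{T}_{\overline{\rho},\Sigma}$, and its image under $\mathrm{MF}$ must recover $\rho^{\mathfrak{m}}|_{G_{\mathbb{Q}_p}}$; then the uniqueness recorded in Theorem \ref{5.20} gives $N \cong \pi_p^{\mathfrak{m}}$. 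The main obstacle is verifying the hypotheses needed to apply Pa\v{s}k\={u}nas's framework---namely, that $N^\vee$ actually lies in $\mathfrak{C}(\mathcal{O})$, has the correct residual structure ($\overline{\pi}_p^\vee$ with no finite-dimensional sub-quotients, cf.\ Theorem \ref{5.18}), and is of the expected ``rank'' over $\mathbb{T}_{\overline{\rho},\Sigma}$. I would handle this by controlling $\widehat{H}^1_{\overline{\rho},\Sigma}$ first on its locally algebraic vectors (where classical Eichler--Shimura and the modularity of $\overline{\rho}$ via Khare--Wintenberger--Kisin apply) and then lifting to the full completed cohomology using the projectivity of $\widetilde{P}$, the freeness consequences of assumption (c), and the $R = \mathbb{T}$ theorem of B\"ockle (Theorem \ref{1.1}).
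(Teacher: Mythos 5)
The paper gives no proof of Theorem \ref{1.2}: it is quoted verbatim from Emerton's article \cite{Em} (the sentence just before the statement reads ``The following theorem of Emerton is the fundamental for our purpose (see Conjecture 6.1.6 and Theorem 6.4.16 of \cite{Em})''), so there is no argument in the present paper against which your proposal can be compared. The only input the present paper then adds is Lemma \ref{5.16} to pass to the Borel--Moore/homological dual in Corollary \ref{1.5}, not a proof of \ref{1.2} itself.

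Even judged as a freestanding sketch of Emerton's theorem, the first step is where the argument does not hold up as written. It is not the case that absolute irreducibility of $\overline{\rho}$ plus ``Nakayama-plus-Hensel'' abstractly yields a factorization $\widehat{H}^1_{\overline{\rho},\Sigma}\cong \rho^{\mathfrak{m}}\otimes_{\mathbb{T}_{\overline{\rho},\Sigma}} M$ of topological $\mathbb{T}_{\overline{\rho},\Sigma}[G_\mathbb{Q}\times G_\Sigma]$-modules; for a general compact $\mathbb{T}_{\overline{\rho},\Sigma}$-module with $G_\mathbb{Q}$-action deforming $\overline{\rho}$ there is no such canonical decomposition, and the needed finiteness/flatness properties of $\widehat{H}^1_{\overline{\rho},\Sigma}$ (and its structure over the deformation ring) are precisely part of what must be proved, not a formal consequence. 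Emerton's proof runs in the opposite direction: he first constructs an explicit evaluation/comparison map from the tensor-product side into $\widehat{H}^1_{\overline{\rho},\Sigma}$ (using new-vector theory, local--global compatibility at every classical level, and the local Langlands correspondence in families of \cite{EH14}, \cite{He16} at each $l\in\Sigma_0$ together with the Colmez functor at $p$), and then the bulk of the argument is to show this map is a topological isomorphism, relying heavily on admissibility/coadmissibility and exactness properties of completed cohomology, $R=\mathbb{T}$ input (Theorem \ref{1.1}), and Pa\v{s}k\={u}nas's results on $\widetilde{P}$. Your second and third steps capture the right ingredients in spirit, but the density-of-classical-points argument upgrading a pointwise identification to an isomorphism of topological modules, and the verification that the candidate $N^\vee$ lies in $\mathfrak{C}(\mathcal{O})$ with the correct residual socle and ``rank,'' are exactly the hard parts of \cite{Em} and cannot be waved through; as written the proposal would need to supply the candidate map itself before any uniqueness or density argument can apply.
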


 \subsection{Completed homology of modular curves}
 For our application to the construction of zeta elements, we need a dual version of 
 the isomorphism in Theorem \ref{1.2}. As a generalization of the duality between 
 cohomology and homology, the topological dual of the completed cohomology 
 $\widehat{H}^1_{\overline{\rho}, \Sigma}$ is described by a completed version of homology called 
 the completed homology. In this subsection, we recall the basic notions of completed homology
  and the duality therem between completed cohomology and homology following \cite{CE12}. 
  Then, we obtain a corollary (Corollary\ref{1.5}) which is a dual version of 
  Theorem \ref{1.2}.  We use this corollary in a crucial way in the main body of the article. 
 
 %In \cite{Ka04}, Kato defined his Euler system from the elements in the projective limit of 
 %$K$-groups of modular curves with different levels. Therefore, for our purpose, it seems to be
 % convenient to consider the projective limit of \'etale cohomology of modular curves than the inductive limit. 
 % Such objects are called completed homology. Here we recall the basic definition of completed homology 
 % (in particular, completed Borel-Moore homology), and the duality theory between 
  %completed cohomology and homology following \cite{CE12}. 
  \subsubsection{Definition of the completed homology}
 Let $K^p$ be an open compact subgroup of $\mathrm{GL}_2(\mathbb{A}_f^p)$. 
 As a dual version of the completed cohomology $\widehat{H}^1(K^p)$, we consider the 
 following projective limit 
 $$\widetilde{H}^{BM}_1(K^p):=\varprojlim_{K_p}H^1(K^pK_p)(1),$$
 where $(1)$ is the Tate twist, and the limit is taken with respect to the 
 co-restriction maps
 $$\mathrm{cor} : H^1(K^pK'_p)(1)\rightarrow H^1(K^pK_p)(1)$$
 for all the open compact subgroups $K'_p\subseteq K_p$ of $G_p$. 
 \begin{rem}\label{1.3}
 Let $X(K_pK^p)(\mathbb{C})$ be the canonical compactification of $Y(K^pK_p)(\mathbb{C})$. 
 Then, Poincar\'e duality says that 
 $H^1(K^pK_p)(1)=H^1(Y(K^pK_p)(\mathbb{C}), \mathcal{O}(1))$ is canonically isomorphic to the relative homology 
 $$H_1(X(K_pK^p)(\mathbb{C}), \{\text{cusps}\}, \mathcal{O}),$$ 
 hence its limit $\widetilde{H}_1^{BM}(K^p)$ is also isomorphic to the 
 limit $$\varprojlim_{K_p}H_1(X(K_pK^p)(\mathbb{C}), \{\text{cusps}\}, \mathcal{O}).$$ 
 The homology group $H_1(X(K_pK^p)(\mathbb{C}), \{\text{cusps}\}, \mathcal{O})$ is also called Borel-Moore homology, 
 and the limit like $\varprojlim_{K_p}H_1(X(K_pK^p)(\mathbb{C}), \{\text{cusps}\}, \mathcal{O})$ is written 
 by using the notation $\widetilde{H}^{BM}_{\bullet}$ in \cite{CE12}. Since, it is convenient to use the cohomology $H^1(K^pK_p)(1)$
 instead of the relative homology $H_1(X(K_pK^p)(\mathbb{C}), \{\text{cusps}\}, \mathcal{O})$ for our purpose, we choose to define $\widetilde{H}^{BM}_1(K^p)$ using cohomology instead of 
 relative homology in this article. 
 %Since the cohomology $H^1(K^pK_p)_{\mathcal{O}}(1)\isom H^1(Y(K^pK_p)(\mathbb{C}), \mathcal{O}(1))$ is canonically isomorphic to 
 %the Borel-Moore homology, hence $H_1^{BM}(Y(K^pK_p)(\mathbb{C}), \mathcal{O})$ 
 %(i.e. the relative homology with respect to the compactification $Y(K_pK^p)(\mathbb{C})\hookrightarrow 
 %X(K_pK^p)(\mathbb{C})$) by Poincar\'e duality, the limit $\widetilde{H}(K^p)_{\mathcal{O}}$ is also isomorphic to the 
 %limit $\varprojlim_{K_p}H_1^{BM}(Y(K^pK_p)(\mathbb{C}), \mathcal{O})$. Such a limit is called the completed (Borel-Moore) homology and 
% is studied in [CE], [TK], [Gee], [CEGGPS] (for example)
 %$$H^1(K^pK_p)_{\mathcal{O}}\isom H_1^{BM}(Y(K^pK_p)(\mathbb{C}), \mathcal{O})(-1)$$
 \end{rem}

 We equip $\widetilde{H}^{BM}_1(K^p)$ with the projective limit topology with respect to the 
 $\varpi$-adic topology on each finite free $\mathcal{O}$-module $H^1(K^pK_p)(1)$. 
 $\widetilde{H}^{BM}_1(K^p)$ is also naturally equipped with $\mathcal{O}$-linear continuous left action of 
 $G_{\mathbb{Q}}\times G_p$. In particular, the restriction of this action to the subgroup $\mathrm{GL}_2(\mathbb{Z}_p)\subset G_p$ makes $\widetilde{H}^{BM}_1(K^p)$ a compact $\mathcal{O}[[\mathrm{GL}_2(\mathbb{Z}_p)]]$-module. 
 
 For any inclusion $K_1^p\subseteq K_2^p$ of open compact subgroups of of $\mathrm{GL}_2(\mathbb{A}_f^p)$, 
 the restriction map $$H^1(K_2^pK_p)(1)\rightarrow H^1(K_1^pK_p)(1)$$ 
 for each $K_p$ commutes with the co-restriction map
 $$H^1(K_1^pK'_p)(1)\rightarrow H^1(K_1^pK_p)(1)\quad \text{ and }\quad H^1(K_2^pK'_p)(1)\rightarrow H^1(K_2^pK_p)(1)$$ for any inclusion $K'_p\subseteq K_p$. Therefore, the restriction maps $H^1(K_2^pK_p)(1)\rightarrow H^1(K_1^pK_p)(1)$ for all $K_p$
 induce a continuous $\mathcal{O}[G_{\mathbb{Q}}\times G_p]$-linear map 
 $$\widetilde{H}^{BM}_1(K_2^p)\rightarrow \widetilde{H}^{BM}_1(K_1^p),$$
which we also call the restriction map. 
 We remark that this restriction map is an injection since every restriction map $H^1(K_2^pK_p)(1)\rightarrow H^1(K_1^pK_p)(1)$ 
 is so.

 We next fix a finite set $\Sigma_0$ of primes containing $p$, and set $\Sigma_0:=\Sigma\setminus\{p\}$. For any open compact subgroup $K_{\Sigma_0}$ of 
 $G_{\Sigma_0}$, we set
 $$\widetilde{H}^{BM}_{1}(K_{\Sigma_0}):=\widetilde{H}^{BM}_1(K_{\Sigma_0}K_0^{\Sigma})$$
 (recall that $K_0^{\Sigma}:=\prod_{l\not\in \Sigma}\mathrm{GL}_2(\mathbb{Z}_l)$), and 
 $$\widetilde{H}^{BM}_{1, \Sigma}:=\varinjlim_{K_{\Sigma_0}}\widetilde{H}^{BM}_{1}(K_{\Sigma_0}),$$
 where %$K_{\Sigma_0}$ run through over all the open compact subgroups of 
% $G_{\Sigma_0}$ and
 the limit is taken with respect to the 
 restriction maps
 $$\widetilde{H}^{BM}_1(K'_{\Sigma_0})\rightarrow \widetilde{H}^{BM}_1(K_{\Sigma_0})$$ 
 for all inclusions $K'_{\Sigma_0}\subseteq K_{\Sigma_0}$ of open compact subgroups of $G_{\Sigma_0}$. The $\mathcal{O}[G_{\mathbb{Q}}\times G_p]$-module $\widetilde{H}^{BM}_{1,\Sigma}$ 
 is also naturally equipped with a smooth $\mathcal{O}[G_{\mathbb{Q}}\times G_p]$-linear action of 
 $G_{\Sigma_0}$. 
 
% \begin{rem}
 %Since the cohomology $H^1(K^pK_p)_{\mathcal{O}}(1)\isom H^1(Y(K^pK_p)(\mathbb{C}), \mathcal{O}(1))$ is canonically isomorphic to 
 %the Borel-Moore homology $H_1^{BM}(Y(K^pK_p)(\mathbb{C}), \mathcal{O})$ 
% (i.e. the relative homology with respect to the compactification $Y(K_pK^p)(\mathbb{C})\hookrightarrow 
 %X(K_pK^p)(\mathbb{C})$) by Poincar\'e duality, the limit $\widetilde{H}(K^p)_{\mathcal{O}}$ is also isomorphic to the 
 %limit $\varprojlim_{K_p}H_1^{BM}(Y(K^pK_p)(\mathbb{C}), \mathcal{O})$. Such a limit is called the completed (Borel-Moore) homology and 
 %is studied in [CE], [TK], [Gee], [CEGGPS] (for example)

 %$$H^1(K^pK_p)_{\mathcal{O}}\isom H_1^{BM}(Y(K^pK_p)(\mathbb{C}), \mathcal{O})(-1)$$
 %\end{rem}
 
 \subsubsection{Duality between the completed cohomology and the completed homology}
For any sufficiently small compact open subgroup 
$K_f$ of $\mathrm{GL}_2(\mathbb{A}_f)$, Poincar\'e duality gives a canonical pairing 
$$\langle \, ,\, \rangle : H^1(K_f)(1)\times H^1_c(K_f)\rightarrow \mathcal{O}.$$
This pairing induces an $\mathcal{O}$-linear map 
$$h_{K_f} : H^1(K_f)(1)\rightarrow \mathrm{Hom}_{\mathcal{O}}(H^1_c(K_f), \mathcal{O})$$
which is $G_{\mathbb{Q}}$-equivariant (on right hand side, $G_{\mathbb{Q}}$ acts by $gf(x):=f(g^{-1}x)$). 

Since the pairing $\langle \, ,\, \rangle$ is compatible with the co-restriction maps on $H^1(K_f)(1)$ and 
  the restriction maps on $H^1_c(K_f)$, the maps $h_{K^pK_p}$ for all $K_p\subseteq G_p$ naturally induce an $\mathcal{O}$-linear map 
  $$h_{K^p} : \widetilde{H}^{BM}_1(K^p)\rightarrow \mathrm{Hom}_{\mathcal{O}}(H^1_c(K^p),\mathcal{O})=
  \mathrm{Hom}_{\mathcal{O}}(\widehat{H}^1_c(K^p),\mathcal{O})$$
  which is $G_{\mathbb{Q}}\times G_p$-equivariant.

 \begin{lemma}\label{1.4}
 The map $h_{K^p} : \widetilde{H}^{BM}_1(K^p)\rightarrow \mathrm{Hom}_{\mathcal{O}}(\widehat{H}^1_c(K^p),\mathcal{O})$ is topological isomorphism of 
 compact $\mathcal{O}$-modules. 
 \end{lemma}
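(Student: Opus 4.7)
The plan is to deduce the lemma from finite-level Poincar\'e duality by a purely formal passage to the limit. For each sufficiently small compact open subgroup $K_f = K^pK_p \subset \mathrm{GL}_2(\mathbb{A}_f)$, the complex modular curve $Y(K_f)(\mathbb{C})$ is a smooth oriented real surface whose $H_0$ is a free $\mathcal{O}$-module, so Poincar\'e--Lefschetz duality together with universal coefficients gives that the map
$h_{K_f} : H^1(K_f)(1)\to \mathrm{Hom}_{\mathcal{O}}(H^1_c(K_f),\mathcal{O})$
induced by the Poincar\'e pairing is already an isomorphism of finitely generated $\mathcal{O}$-modules. A routine check shows that the corestriction on $H^1(K^pK_p)(1)$ and the restriction on $H^1_c(K^pK_p)$ are $\mathcal{O}$-linear transposes of one another under the Poincar\'e pairings, so the family $\{h_{K^pK_p}\}_{K_p}$ is a morphism of inverse systems when the right-hand side is viewed as $\mathrm{Hom}_{\mathcal{O}}(-,\mathcal{O})$ applied to the direct system $\{H^1_c(K^pK_p)\}_{K_p}$.

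Taking the projective limit over $K_p$ then gives an $\mathcal{O}$-linear isomorphism
$$\widetilde{H}^{BM}_1(K^p)\isom \varprojlim_{K_p}\mathrm{Hom}_{\mathcal{O}}(H^1_c(K^pK_p),\mathcal{O})=\mathrm{Hom}_{\mathcal{O}}\bigl(\varinjlim_{K_p}H^1_c(K^pK_p),\mathcal{O}\bigr)=\mathrm{Hom}_{\mathcal{O}}(H^1_c(K^p),\mathcal{O}),$$
using that $\mathrm{Hom}_{\mathcal{O}}(-,\mathcal{O})$ converts filtered colimits into inverse limits. To identify the last group with $\mathrm{Hom}_{\mathcal{O}}(\widehat{H}^1_c(K^p),\mathcal{O})$ I note that every $\mathcal{O}$-linear map $H^1_c(K^p)\to \mathcal{O}$ is automatically $\varpi$-adically continuous, since its restriction to each finitely generated $\mathcal{O}$-submodule $H^1_c(K^pK_p)$ is so, and hence extends uniquely to the $\varpi$-adic completion $\widehat{H}^1_c(K^p)$. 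By construction this composite agrees with $h_{K^p}$.

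For the topology, both $\widetilde{H}^{BM}_1(K^p)$ and $\mathrm{Hom}_{\mathcal{O}}(\widehat{H}^1_c(K^p),\mathcal{O})$ are naturally exhibited as projective limits of finite discrete $\mathcal{O}$-modules, the latter through its description $\varprojlim_{K_p,\, s}\mathrm{Hom}_{\mathcal{O}/\varpi^s}(H^1_c(K^pK_p)/\varpi^s,\mathcal{O}/\varpi^s)$. They are therefore compact, and the algebraic isomorphism just produced is manifestly a homeomorphism because it arises as an isomorphism of these compatible inverse systems of finite discrete modules. The main obstacle I anticipate is the integral form of Poincar\'e--Lefschetz duality at finite level in the presence of possible torsion in $H^1_c(K_f,\mathcal{O})$; for sufficiently small $K_f$ this is standard, but a careful treatment compatible with the Hecke action and the transition maps is exactly what is carried out in \cite{CE12}, whose duality between completed cohomology and completed Borel--Moore homology is the $K^p$-variant of the statement required here.
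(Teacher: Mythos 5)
Your proof is essentially correct, but it follows a genuinely different route from the one in the paper. The paper cites Theorem~1.1.3 of \cite{CE12}, which in the general setting of arithmetic locally symmetric spaces produces a short exact sequence
$$0\rightarrow \mathrm{Hom}_{\mathcal{O}}(\widehat{H}^2_c(K^p)[\varpi^{\infty}], L/\mathcal{O})\rightarrow \widetilde{H}^{BM}_1(K^p)\xrightarrow{h_{K^p}} \mathrm{Hom}_{\mathcal{O}}(\widehat{H}^1_c(K^p),\mathcal{O})\rightarrow 0,$$
and then kills the error term by invoking Emerton's vanishing $\widehat{H}^2_c(K^p)=0$ (Proposition 4.3.6 of \cite{Em06b}). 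You instead argue directly at each finite level and pass to the limit. Your route is more elementary and, for modular curves, is actually self-contained. What your approach buys is transparency: it makes plain exactly why the CE12 error term must vanish in this case, namely because the Poincar\'e pairing $H^1(K_f)(1)\times H^1_c(K_f)\to\mathcal{O}$ is unimodular for each sufficiently small $K_f$ (a one-dimensional phenomenon). What the paper's route buys is robustness: it does not require checking finite-level perfectness and instead isolates the single vanishing statement $\widehat{H}^2_c(K^p)=0$ as the key input.

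One remark on the hedge at the end of your argument: the worry about torsion in $H^1_c(K_f,\mathcal{O})$ is unfounded. Since $Y(K_f)(\mathbb{C})$ is a finite disjoint union of non-compact oriented surfaces, each component deformation retracts to a wedge of circles, so both $H^1(K_f)_{\mathcal{O}}$ and $H^1_c(K_f)_{\mathcal{O}}$ are free of finite rank, and Lefschetz duality combined with the universal coefficient theorem (using that $H_0$ is free) gives that $h_{K_f}$ is literally an isomorphism of free $\mathcal{O}$-modules, with no torsion correction. This makes your first step solid and removes the need to defer to \cite{CE12} for that point. The remaining steps --- adjointness of corestriction and restriction via the projection formula, commutation of $\mathrm{Hom}_{\mathcal{O}}(-,\mathcal{O})$ with filtered colimits, and the identification $\mathrm{Hom}_{\mathcal{O}}(H^1_c(K^p),\mathcal{O})=\mathrm{Hom}_{\mathcal{O}}(\widehat{H}^1_c(K^p),\mathcal{O})$ via automatic $\varpi$-adic continuity (which the paper uses implicitly as well) --- are all sound, and the topological statement follows from realizing both sides as inverse limits of finite discrete modules. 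So your proof is valid and is an acceptable alternative to the paper's.
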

 \begin{proof}
 Since $\widetilde{H}^{BM}_1(K^p)$ is isomorphic to the completed (Borel-Moore) homology 
 $\varprojlim_{K_p}H_1(X(K_pK^p)(\mathbb{C}), \{\text{cusps}\}, \mathcal{O})$ by Remark \ref{1.3},
 %$\allowbreak\varprojlim_{K_p}H_1^{BM}(Y(K^pK_p)(\mathbb{C}), \mathcal{O})$, 
 the map $h_{K^p} : \widetilde{H}^{BM}_1(K^p)\rightarrow 
 \mathrm{Hom}_{\mathcal{O}}(\widehat{H}^1_c(K^p),\mathcal{O})$ sits in the following short exact sequence 
 $$0\rightarrow \mathrm{Hom}_{\mathcal{O}}(\widehat{H}^2_c(K^p)[\varpi^{\infty}], L/\mathcal{O})\rightarrow 
 \widetilde{H}^{BM}_1(K^p)\xrightarrow{h_{K^p}} \mathrm{Hom}_{\mathcal{O}}(\widehat{H}^1_c(K^p),\mathcal{O})\rightarrow 0$$
 of compact $\mathcal{O}$-modules 
 by Theorem 1.1.3 of \cite{CE12}, where $\widehat{H}^2_c(K^p)[\varpi^{\infty}]$ is the $\varpi$-power torsion parts of $\widehat{H}^2_c(K^p)$. Since one has $\widehat{H}^2_c(K^p)=0$ by Proposition 4.3.6 of \cite{Em06b}, 
 the lemma follows from this exact sequence. 
 \end{proof}
 
 We next define a Hecke action on $\widetilde{H}^{BM}_1(K^p)$ such that the isomorphism $h_{K^p}$ is Hecke equivariant. 
 To define it, we first define another Hecke operators on the cohomology $H^1(K_f)$ for each $K_f\subset \mathrm{GL}_2(\mathbb{A}_f)$. 
  For any prime $l\not=p$ which is unramified in $K_f$, we define the following 
  Hecke operators of 
  $$T'_l:=T_lS_l^{-1}, \quad S'_l:=S_l^{-1}$$
  which act on $H^1(K_f)$. For each open compact subgroup $K_{\Sigma}$ of $G_{\Sigma}$, we also define another Hecke algebra 
  $$\mathbb{T}'(K_{\Sigma})\subset \mathrm{End}_{\mathcal{O}}(H^1(K_{\Sigma}K_0^{\Sigma}))$$
  which is the sub $\mathcal{O}$-algebra generated by all the Hecke operators $T'_l, S'_l$ for all $l\not\in \Sigma$. 
  For each open compact subgroup $K_{\Sigma_0}$ of $G_{\Sigma_0, }$, we also define another big Hecke algebra
  $$\mathbb{T}'(K_{\Sigma_0}):=\varprojlim_{K_p}\mathbb{T}'(K_pK_{\Sigma_0})$$
  which faithfully acts on $\widetilde{H}^{BM}_1(K_{\Sigma_0})$.

  We remark that the actions of $T'_l, S'_l$ on the source of the duality map
   $$h_{K_{\Sigma}K_0^{\Sigma}} : H^1(K_{\Sigma}K_0^{\Sigma})(1)\rightarrow \mathrm{Hom}_{\mathcal{O}}(H^1_c(K_{\Sigma}K_0^{\Sigma}), \mathcal{O})$$ are compatible with the actions of $T_l, S_l$ on the target. 
   Since the map $h_{K_{\Sigma}K_0^{\Sigma}}$ is isomorphism after inverting $p$, this implies that the map 
   $$\mathbb{T}'(K_{\Sigma})\rightarrow \mathbb{T}_c(K_{\Sigma}) : T'_l \ \ (\text{resp}. \,S'_l) \mapsto T_l\ \  (\text{resp}. \,S_l)$$
   is a well-defined isomorphism. Hence, we also obtain the following well-defined isomorphism 
   $$\mathbb{T}'(K_{\Sigma_0})\isom \mathbb{T}_c(K_{\Sigma_0}) : T'_l \ \ (\text{resp}. \, S'_l) \mapsto T_l\ \  (\text{resp}. \, S_l)$$
   Since we recall that one also has a natural isomorphism $\mathbb{T}_c(K_{\Sigma_0})\isom \mathbb{T}(K_{\Sigma_0})$, we freely identify 
   $\mathbb{T}'(K_{\Sigma_0})$ and $\mathbb{T}(K_{\Sigma_0})$ by the composite of these isomorphisms. As a consequence, one can equip 
   $\widetilde{H}^{BM}_1(K_{\Sigma_0})$ with the structure of $\mathbb{T}(K_{\Sigma_0})$-module on which $T_l$ and $S_l$ act
   by $T'_l$ and $S'_l$ respectively. Then, the isomorphism $$h_{K_{\Sigma_0}}:=h_{K_{\Sigma_0}K_0^{\Sigma}} : \widetilde{H}^{BM}_1(K_{\Sigma_0})\isom
 \mathrm{Hom}_{\mathcal{O}}(\widehat{H}^1_c(K_{\Sigma_0}),\mathcal{O})$$ is 
 $\mathbb{T}(K_{\Sigma_0})$-linear.

 % (resp. $S_l$) on $\widetilde{H}^{BM}_1(K^p)$ which is naturally induced by the 
%  action of $T_lS_l^{-1}$ (resp. $S_l^{-1}$) on each factor $H^1(K^pK_p)_{\mathcal{O}}(1)$. Throughout the article, we see 
 % $\widetilde{H}^{BM}_1(K^p)$ as a Hecke module by this action. Then, the duality map $h_{K^p} : \widetilde{H}_1^{BM}(K^p)\rightarrow 
 % \mathrm{Hom}_{\mathcal{O}}(\widehat{H}^1_c(K^p),\mathcal{O})$ is 
 % Hecke-equivariant. 

 %By the Hecke equivariance of the isomorphism $h_{K^p}$, the Hecke action on $\widetilde{H}^{BM}_1(K^p)$ equip this group with a structure of 
 %compact $\mathbb{T}(K^p)\isom \mathbb{T}_c(K^p)$-module such that the isomorphism $h_{K^p}$ is $\mathbb{T}(K^p)$-linear. 
 %Hence, the Hecke action on 
% $\widetilde{H}^{BM}_{1, \Sigma}$ equips this group with a structure of $\mathbb{T}_{\Sigma}$-module. 
 
 We next generalize the isomorphism in Lemma \ref{1.4} to the completed homology 
 $\widetilde{H}^{BM}_{1, \Sigma}$. For this, we fix a compact open subgroup $K^0_{\Sigma_0}$ of $G_{\Sigma_0}$ whose pro-order is 
 prime to $p$ (e.g. $K^0_{\Sigma_0}=\prod_{l\in \Sigma_0}\left(1+l\mathrm{M}_2(\mathbb{Z}_l)\right)$ ), and fix the Haar measure $d\mu$ on $K^0_{\Sigma_0}$ normalized so that $\int_{K^0_{\Sigma_0}}d\mu(k)=1$. 
 %$\mathrm{vol}(K^0_{\Sigma_0})=1$. 
 Then, for any smooth $\mathcal{O}$-representation $M$ of $G_{\Sigma_0}$ and any open compact subgroup 
 $K_{\Sigma_0}$ of $K^0_{\Sigma_0}$, 
 one can define a projection 
 $$p_{K_{\Sigma_0}} : M\rightarrow M^{K_{\Sigma_0}}$$
  by $$p_{K_{\Sigma_0}}(v)=[K^0_{\Sigma_0} : K_{\Sigma_0}]\int_{K_{\Sigma_0}}kvd\mu(k)$$ for any $v\in M$. 
  Let $\widetilde{M}$ be the smooth contragradient of $M$. We remark that the map 
  $$g_{K_{\Sigma_0}} : \mathrm{Hom}_{\mathcal{O}}(M^{K_{\Sigma_0}}, \mathcal{O})\rightarrow (\widetilde{M})^{K_{\Sigma_0}} : f\mapsto f\circ p_{K_{\Sigma_0}}$$
  is  isomorphism, and it is the inverse of the isomorphism 
  $$ (\widetilde{M})^{K_{\Sigma_0}}\isom \mathrm{Hom}_{\mathcal{O}}(M^{K_{\Sigma_0}}, \mathcal{O}) : f\mapsto f|_{M^{K_{\Sigma_0}}}.$$
  For any inclusion $K'_{\Sigma_0}\subseteq K_{\Sigma_0}\subseteq K^0_{\Sigma_0}$, we define a map 
  $$g_{K_{\Sigma_0}, K'_{\Sigma_0}} : \mathrm{Hom}_{\mathcal{O}}(M^{K_{\Sigma_0}}, \mathcal{O})\rightarrow \mathrm{Hom}_{\mathcal{O}}(M^{K'_{\Sigma_0}}, \mathcal{O}) : 
  f\mapsto f\circ p_{K_{\Sigma_0}}.$$ Then, this map sits in the following commutative diagram
  \begin{equation*}
\begin{CD}
\mathrm{Hom}_{\mathcal{O}}(M^{K_{\Sigma_0}}, \mathcal{O})@>g_{K_{\Sigma_0}}>> (\widetilde{M})^{K_{\Sigma_0}} \\
@Vg_{K_{\Sigma_0}, K'_{\Sigma_0}}VV  @ VV \mathrm{inclusion} V \\
\mathrm{Hom}_{\mathcal{O}}(M^{K'_{\Sigma_0}}, \mathcal{O})@>g_{K'_{\Sigma_0}}>> (\widetilde{M})^{K'_{\Sigma_0}} 
\end{CD}
\end{equation*}
We apply this diagram to the completed cohomology $\widehat{H}^1_{c, \Sigma}$ with compact support. 
Since one has $(\widehat{H}^1_{c, \Sigma})^{K_{\Sigma_0}}=\widehat{H}^1_c(K_{\Sigma_0})$ for any $K_{\Sigma_0}$, and an equality 
$$p_{K_{\Sigma_0}}|_{\widehat{H}^1_{c}(K'_{\Sigma_0})}=\frac{1}{[K_{\Sigma_0} : K'_{\Sigma_0}]}\mathrm{cor} : 
\widehat{H}^1_c(K'_{\Sigma_0})
\rightarrow \widehat{H}^1_c(K_{\Sigma_0}) $$ for any inclusion
$K'_{\Sigma_0}\subseteq K_{\Sigma_0}$, one obtains the following commutative diagram
 \begin{equation*}
\begin{CD}
\widetilde{H}^{BM}_1(K_{\Sigma_0})@>\frac{1}{[K^0_{\Sigma_0} : K_{\Sigma_0}]}h_{K_{\Sigma_0}}>>  \mathrm{Hom}_{\mathcal{O}}(\widehat{H}^1_c(K_{\Sigma_0}),\mathcal{O}) \\
@V\mathrm{res} VV  @ VV g_{K_{\Sigma_0}, K'_{\Sigma_0}} V \\
\widetilde{H}^{BM}_1(K'_{\Sigma_0})@>\frac{1}{[K^0_{\Sigma_0} : K'_{\Sigma_0}]}h_{K'_{\Sigma_0}}>>  \mathrm{Hom}_{\mathcal{O}}(\widehat{H}^1_c(K_{\Sigma_0}),\mathcal{O}) 
\end{CD}
\end{equation*}
for any inclusion $K'_{\Sigma_0}\subseteq K_{\Sigma_0}$. Therefore, the composites
$$\widetilde{H}^{BM}_1(K_{\Sigma_0}) \isom  (\widetilde{\widehat{H}^1_{c, \Sigma}})^{K_{\Sigma_0}}$$
of $\frac{1}{[K^0_{\Sigma_0} : K_{\Sigma_0}]}h_{K_{\Sigma_0}}$ with $g_{\Sigma_0} : 
\mathrm{Hom}_{\mathcal{O}}(\widehat{H}^1_c(K_{\Sigma_0}), \mathcal{O})\isom 
 (\widetilde{\widehat{H}^1_{c, \Sigma}})^{K_{\Sigma_0}}$ for all $K_{\Sigma_0}\subseteq K^0_{\Sigma_0}$ induce an $\mathcal{O}$-linear topological isomorphism 
 $$\widetilde{H}^{BM}_{1, \Sigma}\isom \widetilde{\widehat{H}^1_{c, \Sigma}}$$
 which is Hecke and $G_{\mathbb{Q}}\times G_{\Sigma}$-equaivariant. 
 
 \subsubsection{$\overline{\rho}$-part of $\widetilde{H}^{BM}_{1, \Sigma}$}
 Let $\overline{\rho} : G_{\mathbb{Q}}\rightarrow \mathrm{GL}_2(\mathbb{F})$ be an odd absolutely irreducible  continuous representation as in \S 2.1.2. 
 %Since the duality isomorphism 
% $$h_{K_{\Sigma_0}} : \widetilde{H}^{BM}_1(K_{\Sigma_0})\isom \mathrm{Hom}_{\mathcal{O}}(\widehat{H}^1_c(K_{\Sigma_0}),\mathcal{O})$$
% is Hecke equivariant, we can equip $\widetilde{H}^{BM}_1(K_{\Sigma_0})$ with the structure of a $\mathbb{T}_c(K_{\Sigma_0})\isom \mathbb{T}(K_{\Sigma_0})$-module. 
 We set 
 $$\widetilde{H}^{BM}_1(K_{\Sigma_0})_{\overline{\rho}}:=\widetilde{H}^{BM}_1(K_{\Sigma_0})\otimes_{\mathbb{T}(K_{\Sigma_0})}
 \mathbb{T}(K_{\Sigma_0})_{\overline{\rho}}$$ for any allowable level $K_{\Sigma_0}$ with respect to $\overline{\rho}$, and 
 $$\widetilde{H}^{BM}_{1,\,\overline{\rho}, \Sigma}:=\varinjlim_{K_{\Sigma_0}} \widetilde{H}^{BM}_1(K_{\Sigma_0})_{\overline{\rho}},$$
 where $K_{\Sigma_0}$ run through all the allowable ones. Then, $\widetilde{H}^{BM}_{1,\,\overline{\rho}, \Sigma}$ is a topological 
 $\mathbb{T}_{\overline{\rho}, \Sigma}$-module with a $\mathbb{T}_{\overline{\rho}, \Sigma}$-linear continuous action of $G_{\mathbb{Q}}\times G_p$ and 
 a $\mathbb{T}_{\overline{\rho}, \Sigma}$-linear smooth action of $G_{\Sigma_0}$. We recall that one has a canonical isomorphism 
 $$\widehat{H}^1_{c, \overline{\rho}, \Sigma}\isom \widehat{H}^1_{\overline{\rho}, \Sigma}.$$ 
 Then, the isomorphism
 $$\widetilde{H}^{BM}_{1, \Sigma}\isom \widetilde{\widehat{H}^1_{c, \Sigma}}$$ induces a $\mathbb{T}_{\overline{\rho}, \Sigma}$-linear topological isomorphism 
  $$\widetilde{H}^{BM}_{1,\overline{\rho},  \Sigma}\isom \widetilde{\widehat{H}^1_{c, \overline{\rho}, \Sigma}}\isom 
  \widetilde{\widehat{H}^1_{\overline{\rho}, \Sigma}}$$
  which is $G_{\mathbb{Q},\Sigma}\times G_{\Sigma}$-euivariant. 
  
  We now assume that all the assumptions (a), (b), (c) in \S 2.2 hold. Then, Theorem \ref{1.2} and Lemma \ref{5.16} and 
 the isomorphism $\widetilde{H}^{BM}_{1,\overline{\rho},  \Sigma}\isom \widetilde{\widehat{H}^1_{\overline{\rho}, \Sigma}}$ 
  immediately imply the following corollary, which is the foundation for our construction of zeta morphisms.

   \begin{corollary}\label{1.5}
 There exists a $\mathbb{T}_{\overline{\rho}, \Sigma}[G_{\mathbb{Q}}\times G_{\Sigma}]$-linear topological isomorphism
 $$\widetilde{H}^{BM}_{1, \overline{\rho}, \Sigma}\isom (\pi_p^{\mathfrak{m}})^*\otimes_{\mathbb{T}_{\overline{\rho}, \Sigma} 
 }(\rho^{\mathfrak{m}})^*\otimes_{\mathbb{T}_{\overline{\rho}, \Sigma}}\pi^{\mathfrak{m}}_{\Sigma_0}.$$
 %and its $K_{\Sigma_0}$-fixed part induces a $R_p[G_{\mathbb{Q}}\times G]$-linear topological isomorphism
% $$\widetilde{H}^{BM}_{1}(K_{\Sigma_0})_{\mathcal{O}, \overline{\rho}}\isom \widetilde{P}\widehat{\otimes}_{R_p}(\rho^{\mathfrak{m}})^*\otimes_{\mathbb{T}_{\overline{\rho}, \Sigma}}\widetilde{(\pi^{\mathfrak{m}}_{\Sigma_0})}^{K_{\Sigma_0}}$$ for every 
% open compact subgroup $K_{\Sigma_0}$ of $G_{\Sigma_0}$, in particular, $\widetilde{H}^{BM}_{1}(K_{\Sigma_0})_{\mathcal{O}, \overline{\rho}}$ (as 
% a topological $\mathcal{O}[G]$-module) is an 
% object of $\mathfrak{C}(\mathcal{O})$

 \end{corollary}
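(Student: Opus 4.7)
The strategy is essentially to take the appropriate dual of Emerton's isomorphism in Theorem \ref{1.2} and combine it with the duality between completed cohomology and completed homology that was established just above the statement. The proof should be almost entirely formal at this point, as all the hard inputs are already in place.

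Concretely, the plan is as follows. I would first invoke the $\mathbb{T}_{\overline{\rho},\Sigma}[G_{\mathbb{Q}}\times G_{\Sigma}]$-linear topological isomorphism
$$\widetilde{H}^{BM}_{1,\overline{\rho},\Sigma}\isom \widetilde{\widehat{H}^1_{c,\overline{\rho},\Sigma}}\isom \widetilde{\widehat{H}^1_{\overline{\rho},\Sigma}}$$
constructed just before the corollary (using the identification $\widehat{H}^1_{c,\overline{\rho},\Sigma}\isom \widehat{H}^1_{\overline{\rho},\Sigma}$ coming from absolute irreducibility of $\overline{\rho}$). Next I would apply Theorem \ref{1.2} to rewrite the right hand side as
$$\widetilde{\rho^{\mathfrak{m}}\otimes_{\mathbb{T}_{\overline{\rho},\Sigma}}\pi_{p}^{\mathfrak{m}}\overset{\curlywedge}{\otimes}_{\mathbb{T}_{\overline{\rho},\Sigma}}\widetilde{\pi}_{\Sigma_0}^{\mathfrak{m}}}.$$
The final step is to compute this dual. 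The $G_{\Sigma_0}$-action lives entirely on the factor $\widetilde{\pi}^{\mathfrak{m}}_{\Sigma_0}$, so smooth contragradient with respect to $G_{\Sigma_0}$ replaces this factor by $\pi^{\mathfrak{m}}_{\Sigma_0}$ (double contragradient of a smooth admissible representation). The remaining two factors $\rho^{\mathfrak{m}}\otimes_{\mathbb{T}_{\overline{\rho},\Sigma}}\pi_p^{\mathfrak{m}}$ carry the continuous action of $G_{\mathbb{Q}}\times G_p$ and are compact over $\mathbb{T}_{\overline{\rho},\Sigma}$; the appropriate compact/Pontryagin-type dual turns them into $(\rho^{\mathfrak{m}})^*\otimes_{\mathbb{T}_{\overline{\rho},\Sigma}}(\pi_p^{\mathfrak{m}})^*$. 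Lemma 5.16 is what formally allows the dual to be distributed across the (completed) tensor product in this way, and after rearranging the order of tensor factors one obtains the desired isomorphism
$$\widetilde{H}^{BM}_{1,\overline{\rho},\Sigma}\isom (\pi_p^{\mathfrak{m}})^*\otimes_{\mathbb{T}_{\overline{\rho},\Sigma}}(\rho^{\mathfrak{m}})^*\otimes_{\mathbb{T}_{\overline{\rho},\Sigma}}\pi^{\mathfrak{m}}_{\Sigma_0}.$$

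The main technical care is in bookkeeping of the different topologies and duals: one must be sure that what is meant by $(-)^*$ on the continuous $G_p$-side matches the compact dual of the coefficient $\mathbb{T}_{\overline{\rho},\Sigma}$-module, whereas $\widetilde{(-)}$ on the $G_{\Sigma_0}$-side is the usual smooth contragradient. The hardest part, and the one where Lemma \ref{5.16} really earns its keep, is verifying that the dual of the hybrid tensor product (ordinary tensor on the $G_{\Sigma_0}$-factor, completed tensor on the $G_p$-factor) truly decomposes as the product of the three duals, and that all $G_{\mathbb{Q}}\times G_{\Sigma}$- and $\mathbb{T}_{\overline{\rho},\Sigma}$-linear structures are preserved topologically. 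Everything else is formal manipulation.
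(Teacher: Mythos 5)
Your proposal is exactly the paper's argument: the paper states that Corollary 1.5 follows immediately from Theorem 1.2, the isomorphism $\widetilde{H}^{BM}_{1,\overline{\rho},\Sigma}\isom \widetilde{\widehat{H}^1_{\overline{\rho},\Sigma}}$, and Lemma 5.16, which is precisely the chain you describe. You have also correctly located the one genuinely delicate point — that Lemma 5.16 is what lets the contragradient distribute as an $A$-linear dual $V^*$ on the orthonormalizable factor $V=\rho^{\mathfrak{m}}\otimes\pi_p^{\mathfrak{m}}$ and as a smooth contragradient $\widetilde{X}\isom\pi^{\mathfrak{m}}_{\Sigma_0}$ on the coadmissible factor $X=\widetilde{\pi}^{\mathfrak{m}}_{\Sigma_0}$ — so the proposal matches the paper's proof.
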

 We next prove that the isomorphism in this corollary is unique 
up to $(\mathbb{T}_{\overline{\rho},\Sigma})^{\times}$. 
 
\begin{lemma}\label{1.7}
Let $A$ be an object in $\mathrm{Comp}(\mathcal{O})$. For any deformation 
$\rho$ $($resp. $\pi$$)$ of $\overline{\rho}_p$ $($resp. $\overline{\pi}_p$$)$ over $A$, 
and any co-Whittaker 
$A[G_{\Sigma_0}]$-module $V$, one has 
$$\mathrm{End}^{\mathrm{cont}}_{A[G_{\mathbb{Q}_p}\times G_{\Sigma}]}(\pi^*\otimes_A\rho^*\otimes_AV)=A.$$ 
Here, we say that an $A[G_{\mathbb{Q}_p}\times G_{\Sigma}]$-linear map 
$$f : \pi^*\otimes_A\rho^*\otimes_AV\rightarrow \pi^*\otimes_A\rho^*\otimes_AV$$ is continuous if its $K_{\Sigma_0}$-fixed part 
$$f^{K_{\Sigma_0}} :  \pi^*\otimes_A\rho^*\otimes_AV^{K_{\Sigma_0}} \rightarrow \pi^*\otimes_A\rho^*\otimes_AV^{K_{\Sigma_0}}$$ 
is continuous with respect to the canonical topology %on $\pi^*\otimes_A\rho^*\otimes_AV^{K_{\Sigma_0}}$ 
as compact $A$-module for any open compact subgroup $K_{\Sigma_0}$ of $G_{\Sigma_0}$. 
\end{lemma}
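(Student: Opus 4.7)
The plan is to remove the three tensor factors $V$, $\pi^*$, $\rho^*$ in succession, invoking the co-Whittaker hypothesis for $V$, the Pa\v{s}k\={u}nas projective envelope for $\pi^*$, and the assumption (c) $\mathrm{End}_{\mathbb{F}[G_{\mathbb{Q}_p}]}(\overline{\rho}_p)=\mathbb{F}$ to finish with $\rho^*$.

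\emph{First step: removing $V$.} For any continuous $A[G_{\mathbb{Q}_p}\times G_{\Sigma}]$-equivariant endomorphism $\phi$ of $\pi^*\otimes_A\rho^*\otimes_A V$, apply the exact functor $\Phi_{\Sigma_0}$. By \S3.1 of \cite{EH14}, $\Phi_{\Sigma_0}$ commutes with tensoring by $A$-modules on which $G_{\Sigma_0}$ acts trivially, and since $V$ is co-Whittaker over $A$ we have $\Phi_{\Sigma_0}(V)\cong A$. Thus $\Phi_{\Sigma_0}(\phi)$ is a continuous $A[G_{\mathbb{Q}_p}\times G_p]$-equivariant endomorphism of $\pi^*\otimes_A\rho^*$. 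Conversely, any such $f$ extends to $f\otimes \mathrm{id}_V$. The universal property of co-Whittaker modules (cf.\ \cite{EH14} and Appendix B) shows these two constructions are mutually inverse, reducing the claim to
$$\mathrm{End}^{\mathrm{cont}}_{A[G_{\mathbb{Q}_p}\times G_p]}(\pi^*\otimes_A\rho^*)=A.$$

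\emph{Second step: removing $\pi^*$.} By Theorem \ref{5.20}, the deformation $\pi$ corresponds to a local $\mathcal{O}$-algebra map $x:R_p\to A$, under which $\pi^*\cong \widetilde{P}\widehat{\otimes}_{R_p,x}A$ in $\mathfrak{C}(\mathcal{O})$. Since $\widetilde{P}$ is projective in $\mathfrak{C}(\mathcal{O})$ with $\mathrm{End}_{\mathfrak{C}(\mathcal{O})}(\widetilde{P})=R_p$, the base change formula gives $\mathrm{End}^{\mathrm{cont}}_{A[G_p]}(\pi^*)=A$. Now $\rho^*$ is free of rank two over $A$ and $G_p$ acts trivially on it, so $\pi^*\otimes_A\rho^*\cong (\pi^*)^{\oplus 2}$ as $A[G_p]$-modules, and hence
$$\mathrm{End}^{\mathrm{cont}}_{A[G_p]}(\pi^*\otimes_A\rho^*)\cong M_2(A)\cong \mathrm{End}_A(\rho^*)$$
canonically (with $G_{\mathbb{Q}_p}$ acting on the right-hand side by conjugation through its action on $\rho^*$).

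\emph{Third step: taking $G_{\mathbb{Q}_p}$-invariants.} Under the identification of the previous step, the continuous $G_{\mathbb{Q}_p}$-equivariant endomorphisms correspond to $\mathrm{End}_{A[G_{\mathbb{Q}_p}]}(\rho^*)$. By hypothesis (c), $\mathrm{End}_{\mathbb{F}[G_{\mathbb{Q}_p}]}(\overline{\rho}_p)=\mathbb{F}$; the standard deformation-theoretic argument (reduce an endomorphism modulo $\mathfrak{m}_A$ to a scalar in $\mathbb{F}$, lift to a scalar in $A$, subtract, and iterate by $\mathfrak{m}_A$-adic Nakayama) then yields $\mathrm{End}_{A[G_{\mathbb{Q}_p}]}(\rho)=A$, whence dually $\mathrm{End}_{A[G_{\mathbb{Q}_p}]}(\rho^*)=A$. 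Combining the three steps proves the lemma.

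The main obstacle is the first step: formalizing that tensoring with a co-Whittaker module is fully faithful on continuous endomorphism rings. This relies on the adjunction-type property between $\Phi_{\Sigma_0}$ and the tensor functor together with the defining universal property of co-Whittaker modules, and one must keep careful track of the mixed topology (profinite on $\pi^*$, smooth admissible on $V$) so that the equivalence really takes place inside the continuous endomorphism rings as defined in the statement.
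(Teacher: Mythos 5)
Your overall architecture --- first strip the co-Whittaker factor $V$, then reduce $\pi^*\otimes_A\rho^*$ to $\rho^*$, then finish with hypothesis (c) --- matches the paper's proof, and your first and third steps are essentially the paper's argument (the paper invokes Lemma \ref{5.10} directly for the first step, and compresses the $M_2(A)$ matrix calculation in the third into ``it is easy to see'').

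However, your second step has a dependency problem. You reduce $\mathrm{End}^{\mathrm{cont}}_{A[G_p]}(\pi^*)=A$ by identifying $\pi^*$ with $\widetilde{P}\widehat{\otimes}_{R_p,x}A$ and using $\mathrm{End}_{\mathfrak{C}(\mathcal{O})}(\widetilde{P})\cong R_p$. But the projective envelope $\widetilde{P}$ and that isomorphism require the stronger hypotheses $(a')$ (exclude $\overline{\varepsilon}^{\pm 1}$, not just $\overline{\varepsilon}$), $(b')$ ($p\geqq 5$), i.e.\ the conditions of Theorem \ref{5.24} / Proposition \ref{5.26} in Appendix B. Lemma \ref{1.7} is stated and used (e.g.\ to prove Corollary \ref{1.8}) under only the weaker assumptions $(a)$, $(b)$ ($p\geqq 3$), $(c)$ --- the stronger ones are introduced only afterwards, just before Corollary \ref{1.6}. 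So your route to $\mathrm{End}^{\mathrm{cont}}_{A[G_p]}(\pi^*)=A$ is not legitimate at the point where the lemma lives. The paper avoids this by instead using the duality $\pi\cong\mathrm{Hom}_A^{\mathrm{cont}}(\pi^*,A)$ together with $\mathrm{End}_{A[G_p]}(\pi)=A$; the latter follows by the usual deformation-theoretic lifting argument from $\mathrm{End}_{\mathbb{F}[G_p]}(\overline{\pi}_p)=\mathbb{F}$, which is deduced from $(c)$ via Remark 3.3.14 of \cite{Em}, and all of this works with only $p\geqq 3$. Replacing your second step with that dualization argument (which you are already implicitly using in the $\rho$-direction in step three) closes the gap and reproduces the paper's proof.
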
 
\begin{proof}
Since $V$ is co-Whittaker, it is easy to see that the map 
$$\mathrm{End}^{\mathrm{cont}}_{A[G_{\mathbb{Q}_p}\times G_p]}(\pi^*\otimes_A\rho^*)\rightarrow \mathrm{End}^{\mathrm{cont}}_{A[G_{\mathbb{Q}_p}\times G_{\Sigma}]}(\pi^*\otimes_A\rho^*\otimes_AV) : f\rightarrow f\otimes \mathrm{id}_V$$ is isomorphism by Lemma  \ref{5.10}. 
Since we assume that $\mathrm{End}_{\mathbb{F}[G_{\mathbb{Q}_p}]}(\overline{\rho}_p)=\mathbb{F}$, one also has 
$\mathrm{End}_{\mathbb{F}[G_p]}(\overline{\pi}_p)=\mathbb{F}$ (see for example Remark 3.3.14 of \cite{Em}). Therefore, one also has $\mathrm{End}_{A[G_{\mathbb{Q}_p}]}(\rho^*)=A$ and 
$\mathrm{End}_{A[G_p]}(\pi)=A$ for any deformations $\rho$ and $\pi$ over $A$. 
Hence, one also has $\mathrm{End}^{\mathrm{cont}}_{A[G_p]}(\pi^*)=A$ since one has a canonical isomorphism 
$\pi\isom \mathrm{Hom}_A^{\mathrm{cont}}(\pi^*, A)$. Then, it is easy to see that the equalities $\mathrm{End}_{A[G_{\mathbb{Q}_p}]}(\rho^*)=A$ and $\mathrm{End}^{\mathrm{cont}}_{A[G_p]}(\pi^*)=A$ imply the equality $$\mathrm{End}^{\mathrm{cont}}_{A[G_{\mathbb{Q}_p}\times G_p]}(\pi^*\otimes_A\rho^*)=A,$$
which proves the lemma.
\end{proof}
By this Lemma and Corollary \ref{1.5}, we immediately obtain the following corollary. 
\begin{corollary}\label{1.8}
One has the following equalities
$$\mathrm{End}_{\mathbb{T}_{\overline{\rho}, \Sigma}[G_{\mathbb{Q}}\times G_{\Sigma}]}^{\mathrm{cont}}(\widetilde{H}^{BM}_{1, \overline{\rho}, \Sigma})=\mathrm{End}_{\mathbb{T}_{\overline{\rho}, \Sigma}[G_{\mathbb{Q}_p}\times G_{\Sigma}]}^{\mathrm{cont}}(\widetilde{H}^{BM}_{1, \overline{\rho}, \Sigma})=\mathbb{T}_{\overline{\rho}, \Sigma}.$$
\end{corollary}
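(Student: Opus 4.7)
The plan is to combine the topological isomorphism from Corollary \ref{1.5} with Lemma \ref{1.7} applied to $A=\mathbb{T}_{\overline{\rho},\Sigma}$, $\rho=\rho^{\mathfrak{m}}|_{G_{\mathbb{Q}_p}}$, $\pi=\pi_p^{\mathfrak{m}}$, and $V=\pi^{\mathfrak{m}}_{\Sigma_0}$. Since the corollary is asserted to follow \emph{immediately} from those two inputs, this is really a bookkeeping argument; the only substantive task is to check that all the hypotheses of Lemma \ref{1.7} are met in this situation and that the extra equivariance structure does not enlarge the endomorphism ring.

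First I would record the chain of obvious inclusions
\[
\mathbb{T}_{\overline{\rho}, \Sigma}\ \subseteq\ \mathrm{End}_{\mathbb{T}_{\overline{\rho}, \Sigma}[G_{\mathbb{Q}}\times G_{\Sigma}]}^{\mathrm{cont}}(\widetilde{H}^{BM}_{1, \overline{\rho}, \Sigma})\ \subseteq\ \mathrm{End}_{\mathbb{T}_{\overline{\rho}, \Sigma}[G_{\mathbb{Q}_p}\times G_{\Sigma}]}^{\mathrm{cont}}(\widetilde{H}^{BM}_{1, \overline{\rho}, \Sigma}),
\]
where the first inclusion is scalar multiplication (which is central and continuous), and the second is because being $G_{\mathbb{Q}}$-equivariant is a stronger condition than being merely $G_{\mathbb{Q}_p}$-equivariant. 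Then I would transport the right-most endomorphism ring across the $\mathbb{T}_{\overline{\rho}, \Sigma}[G_{\mathbb{Q}}\times G_{\Sigma}]$-linear topological isomorphism of Corollary \ref{1.5}, identifying it with $\mathrm{End}^{\mathrm{cont}}_{\mathbb{T}_{\overline{\rho},\Sigma}[G_{\mathbb{Q}_p}\times G_{\Sigma}]}((\pi_p^{\mathfrak{m}})^*\otimes (\rho^{\mathfrak{m}})^*\otimes \pi^{\mathfrak{m}}_{\Sigma_0})$. Lemma \ref{1.7} then collapses this to $\mathbb{T}_{\overline{\rho},\Sigma}$, forcing equality throughout the displayed chain.

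The step that requires a touch of care is verifying the hypotheses of Lemma \ref{1.7}: one needs $\rho^{\mathfrak{m}}|_{G_{\mathbb{Q}_p}}$ to be a deformation of $\overline{\rho}_p$ and $\pi_p^{\mathfrak{m}}$ to be a deformation of $\overline{\pi}_p$ over $\mathbb{T}_{\overline{\rho},\Sigma}$ — both of which are exactly the defining properties of $\rho^{\mathfrak{m}}$ and $\pi_p^{\mathfrak{m}}$ recalled in \S 2.2 — and $\pi^{\mathfrak{m}}_{\Sigma_0}$ to be a co-Whittaker $\mathbb{T}_{\overline{\rho},\Sigma}[G_{\Sigma_0}]$-module. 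The latter follows from the construction of $\pi^{\mathfrak{m}}_{\Sigma_0}$ as the maximal $\mathbb{T}_{\overline{\rho},\Sigma}$-torsion free quotient of $\bigotimes_{l\in\Sigma_0}\pi_l^{\mathfrak{m}}$, because each factor $\pi_l^{\mathfrak{m}}$ is co-Whittaker and the co-Whittaker property is preserved under tensor product and passage to the torsion-free quotient — one sees this concretely from the identity $\Phi_{\Sigma_0}(\pi^{\mathfrak{m}}_{\Sigma_0})\isom \mathbb{T}_{\overline{\rho},\Sigma}$ recorded just before Theorem \ref{1.2}, which is precisely the defining feature of a co-Whittaker module at the level of the combined functor $\Phi_{\Sigma_0}$.

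The main (minor) obstacle is conceptual rather than computational: making sure that the notion of continuity in Lemma \ref{1.7} matches the natural topology on $\widetilde{H}^{BM}_{1,\overline{\rho},\Sigma}$ across Emerton's isomorphism. Because Corollary \ref{1.5} is a \emph{topological} isomorphism and $\widetilde{H}^{BM}_{1,\overline{\rho},\Sigma}$ is built as an inductive limit over open compact subgroups $K_{\Sigma_0}\subset G_{\Sigma_0}$ of the compact $\mathcal{O}$-modules $\widetilde{H}^{BM}_1(K_{\Sigma_0})_{\overline{\rho}}$, the $K_{\Sigma_0}$-invariant parts on each side correspond, and continuity on fixed subspaces in the sense of Lemma \ref{1.7} matches the $\varpi$-adic/compact topology imposed on $\widetilde{H}^{BM}_{1,\overline{\rho},\Sigma}$. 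Once that compatibility is noted, both equalities in the statement follow at once.
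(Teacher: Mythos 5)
Your proposal is correct and follows exactly the paper's intended route: the paper derives Corollary \ref{1.8} directly from Lemma \ref{1.7} and Corollary \ref{1.5}, and your chain of inclusions together with transporting across Emerton's isomorphism is precisely the bookkeeping that makes the word ``immediately'' honest. The extra verification of the hypotheses of Lemma \ref{1.7} (that $\rho^{\mathfrak{m}}|_{G_{\mathbb{Q}_p}}$ and $\pi_p^{\mathfrak{m}}$ are deformations in the required sense, that $\pi^{\mathfrak{m}}_{\Sigma_0}$ is co-Whittaker via $\Phi_{\Sigma_0}(\pi^{\mathfrak{m}}_{\Sigma_0})\isom \mathbb{T}_{\overline{\rho},\Sigma}$, and that the topologies match) is accurate and fills in exactly the details the paper leaves implicit.
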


 Next, we furthermore assume the following conditions : 
 \begin{itemize}
 \item[$(a)'$]$\overline{\rho}_p:=\overline{\rho}|_{G_{\mathbb{Q}_p}}\not\isom  
 \chi\otimes \begin{pmatrix} 1 & * \\ 0 & \overline{\varepsilon}^{\pm}\end{pmatrix}$
 for any $\mathbb{F}$-valued character $\chi:G_{\mathbb{Q}_p}\rightarrow \mathbb{F}^{\times}$.
 \item[$(b)'$]$p\geqq 5$.
 \item[$(c) $ ]$\mathrm{End}_{\mathbb{F}[G_{\mathbb{Q}_p}]}(\overline{\rho}_p)=\mathbb{F}$.
 \end{itemize}
 (Remark that these conditions are stronger than the conditions (a), (b), (c) in \S 2.2.)
%We furthermore assume that all the assumptions in Lemma \ref{5.25} hold for $\overline{\rho}_p:=\overline{\rho}|_{G_{\mathbb{Q}_p}}$. 
 We write $R_p:=R_{\overline{\rho}_p}$ to denote the universal deformation ring of $\overline{\rho}_p$. 
 We also write  $\widetilde{P}\in \mathfrak{C}(\mathcal{O})$ to denote the projective envelope of the dual 
 $\overline{\pi}_{1,p}^{\vee}$ of the socle $\overline{\pi}_{1,p}\subset \overline{\pi}_p$ (see Appendix B.3.2 for its definition and properties). We recall that one has 
 a canonical isomorphism 
 $$R_p\isom \mathrm{End}_{\mathfrak{C}(\mathcal{O})}(\widetilde{P})$$ of 
 compact $\mathcal{O}$-algebras 
 by which $\widetilde{P}$ is a compact $R_p$-module with $R_p$-linear continuous $G_p$-action. Since 
 $\rho^{\mathfrak{m}}|_{G_{\mathbb{Q}_p}}$ is a deformation of $\overline{\rho}_p$ over $\mathbb{T}_{\overline{\rho}, \Sigma}$, the universality of 
 the ring $R_p$ naturally gives the ring $\mathbb{T}_{\overline{\rho}, \Sigma}$ a structure of $R_p$-algebra, by which we regard any $\mathbb{T}_{\overline{\rho},\Sigma}$-module also as $R_p$-module. In particular, we regard any compact $\mathbb{T}_{\overline{\rho},\Sigma}$-module (e.g. finite generated $\mathbb{T}_{\overline{\rho},\Sigma}$-module) also as compact $R_p$-module. 
 
 \begin{corollary}\label{1.6}
 There exists an $\mathbb{T}_{\overline{\rho}, \Sigma}[G_{\mathbb{Q}}\times G_{\Sigma}]$-linear topological isomorphism 
 $$\widetilde{H}^{BM}_{1, \overline{\rho}, \Sigma}\isom (\widetilde{P}\widehat{\otimes}_{R_p}
 (\rho^{\mathfrak{m}})^*)\otimes_{\mathbb{T}_{\overline{\rho}, \Sigma}}\pi^{\mathfrak{m}}_{\Sigma_0}.$$
 
 \end{corollary}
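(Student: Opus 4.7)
The plan is to combine Corollary~\ref{1.5} with Paškūnas's description of $(\pi_p^{\mathfrak{m}})^*$ via $\widetilde{P}$. By Corollary~\ref{1.5} there is a $\mathbb{T}_{\overline{\rho},\Sigma}[G_{\mathbb{Q}}\times G_\Sigma]$-linear topological isomorphism
$$
\widetilde{H}^{BM}_{1,\overline{\rho},\Sigma}\isom (\pi_p^{\mathfrak{m}})^*\otimes_{\mathbb{T}_{\overline{\rho},\Sigma}}(\rho^{\mathfrak{m}})^*\otimes_{\mathbb{T}_{\overline{\rho},\Sigma}}\pi^{\mathfrak{m}}_{\Sigma_0},
$$
so it suffices to produce a $\mathbb{T}_{\overline{\rho},\Sigma}[G_p]$-linear topological isomorphism
$$
(\pi_p^{\mathfrak{m}})^*\isom \widetilde{P}\widehat{\otimes}_{R_p}\mathbb{T}_{\overline{\rho},\Sigma},
$$
where $\mathbb{T}_{\overline{\rho},\Sigma}$ is viewed as an $R_p$-algebra via the classifying map of $\rho^{\mathfrak{m}}|_{G_{\mathbb{Q}_p}}$; tensoring this identification over $\mathbb{T}_{\overline{\rho},\Sigma}$ with $(\rho^{\mathfrak{m}})^*\otimes_{\mathbb{T}_{\overline{\rho},\Sigma}}\pi^{\mathfrak{m}}_{\Sigma_0}$ and composing with the above isomorphism will give the statement.

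To construct the local isomorphism I would invoke Paškūnas's theorem (available under the strengthened assumptions $(a)'$, $(b)'$, $(c)$; see \cite{Pas13} and Appendix~B.3.2): the natural identification $R_p\isom \mathrm{End}_{\mathfrak{C}(\mathcal{O})}(\widetilde{P})$ realizes $\widetilde{P}$ as the $R_p$-dual of the universal deformation of $\overline{\pi}_p$. By functoriality of this universal construction under base change in $\mathrm{Comp}(\mathcal{O})$, for any $R_p\to A$ classifying a deformation $\rho_A$ of $\overline{\rho}_p$ over $A$, the compact $A[G_p]$-module $\widetilde{P}\widehat{\otimes}_{R_p}A$ is the $A$-dual of the unique deformation of $\overline{\pi}_p$ over $A$ corresponding to $\rho_A$ through $\mathrm{MF}$ (by Theorem~\ref{5.20}). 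Specializing to $A=\mathbb{T}_{\overline{\rho},\Sigma}$ and $\rho_A=\rho^{\mathfrak{m}}|_{G_{\mathbb{Q}_p}}$, the characterization $\mathrm{MF}(\pi_p^{\mathfrak{m}})\isom \rho^{\mathfrak{m}}|_{G_{\mathbb{Q}_p}}$ recalled in \S 2.2 then forces $(\pi_p^{\mathfrak{m}})^*\isom \widetilde{P}\widehat{\otimes}_{R_p}\mathbb{T}_{\overline{\rho},\Sigma}$; this isomorphism is unique up to a unit in $\mathbb{T}_{\overline{\rho},\Sigma}$ by (the local input underlying) Corollary~\ref{1.8}, so it can be chosen to be $\mathbb{T}_{\overline{\rho},\Sigma}[G_p]$-linear and topological.

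The main obstacle I foresee is verifying that Paškūnas's identification of $\widetilde{P}$ as the $R_p$-dual of the universal deformation of $\overline{\pi}_p$ is compatible with completed base change along an arbitrary morphism $R_p\to A$ in $\mathrm{Comp}(\mathcal{O})$, i.e.\ that $\widetilde{P}\widehat{\otimes}_{R_p}A$ remains the dual of a smooth admissible $A[G_p]$-representation and that the resulting representation matches, via $\mathrm{MF}$, the pushforward Galois deformation. This requires combining projectivity of $\widetilde{P}$ in $\mathfrak{C}(\mathcal{O})$, exactness of the Pontryagin duality and of the completed tensor product in the relevant setting, and compatibility of the mod~$p$ local Langlands correspondence with its $p$-adic deformation—ingredients which should be assembled from Appendix~B.3 together with Paškūnas's work \cite{Pas13}. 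Once that compatibility is in hand, the rest is purely formal and follows by uniqueness.
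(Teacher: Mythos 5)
Your proposal is correct and takes essentially the same approach as the paper's proof: Corollary~\ref{1.5} reduces everything to identifying $(\pi_p^{\mathfrak{m}})^*$ with $\widetilde{P}\widehat{\otimes}_{R_p}\mathbb{T}_{\overline{\rho},\Sigma}$, and Proposition~\ref{5.26} supplies the identification $\pi_p^*\isom\widetilde{P}$. The one point where your write-up is less efficient than necessary is the ``main obstacle'' you flag about compatibility of $\widetilde{P}$ with completed base change: that worry evaporates once you reverse the order of operations as the paper does. Rather than base-changing $\widetilde{P}$ and then trying to identify it as a dual, first use universality of $\pi_p=\pi_{\overline{\rho}_p}$ to get $\pi_p\otimes_{R_p}\mathbb{T}_{\overline{\rho},\Sigma}\isom\pi_p^{\mathfrak{m}}$ with no compatibility check (this is the defining property of a universal deformation), then take $\mathbb{T}_{\overline{\rho},\Sigma}$-duals to obtain $\pi_p^{*}\widehat{\otimes}_{R_p}\mathbb{T}_{\overline{\rho},\Sigma}\isom(\pi_p^{\mathfrak{m}})^*$ (duality for orthonormalizable modules is compatible with completed base change in this setting), and only at the end invoke $\pi_p^*\isom\widetilde{P}$ from Proposition~\ref{5.26}. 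All the ingredients you name are the right ones; the reordering just makes the step you worried about automatic.
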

 \begin{proof}
 Let $\pi_p:=\pi_{\overline{\rho}_p}$ be the universal deformation of $\overline{\pi}_p$ over $R_p$. 
 Since one has 
 $$\pi_p\otimes_{R_p}\mathbb{T}_{\overline{\rho},\Sigma}\isom \pi_p^{\mathfrak{m}}$$
  by the universality of $\pi_p$, one also obtains an isomorphism
  $$(\pi_p^{*})\widehat{\otimes}_{R_p}\mathbb{T}_{\overline{\rho},\Sigma}\isom (\pi_p^{\mathfrak{m}})^*$$
  by taking its $\mathbb{T}_{\overline{\rho}, \Sigma}$-linear dual. Then, the corollary follows from 
  Proposition \ref{1.5} and the isomorphism $$(\pi_p^{*})\isom \widetilde{P}$$ proved in 
  Proposition \ref{5.26}.
 \end{proof}

\subsection{Completed homology with coefficients}
To compare our zeta elements with Kato's zeta elements associated to Hecke eigen new forms, we need to know 
how to recover the usual cohomologies of modular curves with coeffients from the completed homology. To do so, 
it is convenient to consider the completed homology with coefficients. In this subsection, we first recall basic 
definitions of the completed homology with coefficients, then prove some lemmas on the way how to recover the usual cohomologies with coeffients 
from the completed homology.

Let $M$ be a left $\mathbb{Z}[\mathrm{GL}_2(\mathbb{Z}_p)]$-module. For any open compact subgroup 
$K_f$ of $\mathrm{GL}_2(\mathbb{A}_f)$ whose image by the canonical projection $\mathrm{pr}_p : \mathrm{GL}_2(\mathbb{A}_f)\rightarrow G_p$ is contained in 
$\mathrm{GL}_2(\mathbb{Z}_p)$, we define a local system
$$\mathcal{V}_M:=\mathrm{GL}_2(\mathbb{Q})\backslash \mathcal{H}^{\pm} \times \mathrm{GL}_2(\mathbb{A}_f)\times M/K_f$$
  on $Y(K_f)(\mathbb{C})$, where $h\in \mathrm{GL}_2(\mathbb{Q})$ and $k\in K_f$ act on 
 $(\tau, g, x)\in \mathcal{H}^{\pm} \times \mathrm{GL}_2(\mathbb{A}_f)\times M$ by 
 $h(\tau,g,x)k=(h\tau, hgk, \mathrm{pr}_p(k)^{-1}x)$. % and $(\tau, g,x)k=(\tau, gk, k_p^{-1}x)$.  
 We remark that, if $M$ is written as a filtered projective limit $M=\varprojlim_{i\in I}M_i$ of $\mathrm{GL}_2(\mathbb{Z}_p)$-modules $M_i$ ($i\in I$), 
 one has $\mathcal{V}_M=\varprojlim_{i\in I}\mathcal{V}_{M_i}$. % by definition of $\mathcal{V}_M$. 
 
 We write its cohomology by 
  $$H^i(K_f, \mathcal{V}_M):=H^i(Y(K_f)(\mathbb{C}), \mathcal{V}_{M}).$$
  For any open compact subgroup $K^p$ of $\mathrm{GL}_2(\mathbb{A}_f^p)$, 
  we define 
  $$\widetilde{H}^{BM}_1(K^p, \mathcal{V}_M):=\varprojlim_{K_p}H^1(K^pK_p, \mathcal{V}_M)(1),$$
  where $K_p$ run through all the open subgroups of $\mathrm{GL}_2(\mathbb{Z}_p)$. For any 
  finite set $\Sigma$ of primes containing $p$, and any open compact subgroup $K_{\Sigma_0}$ of $G_{\Sigma_0}$, we also set 
  $$\widetilde{H}^{BM}_1(K_{\Sigma_0}, \mathcal{V}_M):=\widetilde{H}^{BM}_1(K_{\Sigma_0}K_0^{\Sigma}, \mathcal{V}_M).$$

  Both $H^i(K_f, \mathcal{V}_M)$ and $\widetilde{H}^{BM}_1(K^p, \mathcal{V}_M)$ are similarly equipped with Hecke actions 
  $T_l, S_l$ corresponding to the double cosets of $\begin{pmatrix}l & 0 \\ 0 & 1\end{pmatrix}, \begin{pmatrix}l & 0 \\ 0 & l\end{pmatrix}\in G_l$ and $T'_l, S'_l$ corresponding to those of $\begin{pmatrix}l^{-1} & 0 \\ 0 & 1\end{pmatrix}, \begin{pmatrix}l^{-1}& 0 \\ 0 & l^{-1}\end{pmatrix}\in G_l$ for any prime $l\not=p$ which is unramified in $K^p$, 
  and $\widetilde{H}^{BM}_1(K^p, \mathcal{V}_M)$ is naturally a $\mathbb{Z}[\mathrm{GL}_2(\mathbb{Z}_p)]$-module.
  
  %, and is naturally equipped with a Hecke action : for any prime $l\not=p$ unraified in $K^p$, the action of $T_l$ (resp. $S_l$) on 
  %$\widetilde{H}^{BM}_1(K^p, \mathcal{V}_M)$ is the one induced by the action of $S_l^{-1}T_l$ (resp. $S_l^{-1}$) on each $H^1(K^pK_p, \mathcal{V}_M)(1)$. 
  %\text{ and } H^i(K_f, \mathcal{V}^*_k)_A:=H^i(Y(K_f)(\mathbb{C}), \mathcal{V}^*_{k/A}).$$

  If $M$ is a compact $\mathcal{O}[[\mathrm{GL}_2(\mathbb{Z}_p)]]$-module, then we can write $M=\varprojlim_{i\in I}M_i$ such that each $M_i$ is a 
 compact $\mathcal{O}[[\mathrm{GL}_2(\mathbb{Z}_p)]]$-module which is of finite length as $\mathcal{O}$-module. Then, one has 
 $$H^j(K_f, \mathcal{V}_M)=H^j(K_f, \varprojlim_{i\in I}\mathcal{V}_{M_i})\isom \varprojlim_{i\in I}H^j(K_f, \mathcal{V}_{M_i})$$
 since each $H^j(K_f, \mathcal{V}_{M_i})$ is an $\mathcal{O}$-module of finite length.  One also has
 $$\widetilde{H}^{BM}_1(K^p, \mathcal{V}_M)\isom \varprojlim_{K_p, \,i\in I}H^1(K^pK_p, \mathcal{V}_{M_i})(1)\isom \varprojlim_{ i\in I}
 \widetilde{H}^{BM}_1(K^p, \mathcal{V}_{M_i}).$$
 In particular, $\widetilde{H}^{BM}_1(K^p, \mathcal{V}_M)$ is naturally a compact $\mathcal{O}[[\mathrm{GL}_2(\mathbb{Z}_p)]]$-module. 
 
 For any compact $\mathcal{O}[[\mathrm{GL}_2(\mathbb{Z}_p)]]$-module $M$, $M\widehat{\otimes}_{\mathcal{O}}\widetilde{H}^{BM}_1(K^p)$ is
  the completed tensor product of compact $\mathcal{O}$-modules $M$ and $\widetilde{H}^{BM}_1(K^p)$. 
  $M\widehat{\otimes}_{\mathcal{O}}\widetilde{H}^{BM}_1(K^p)$ is equipped with
   the diagonal $\mathrm{GL}_2(\mathbb{Z}_p)$-action, and the Hecke actions defined by $U_l(u\widehat{\otimes}v)=u\widehat{\otimes}U_l(v)$ 
   for $U_l=T_l, S_l, T'_l, S'_l$.

  \begin{lemma}\label{1.9}
  For any compact $\mathcal{O}[[\mathrm{GL}_2(\mathbb{Z}_p)]]$-module $M$, 
 there exists a canonical Hecke equivariant isomorphism 
 $$M\widehat{\otimes}_{\mathcal{O}}\widetilde{H}^{BM}_1(K^p)\isom 
 \widetilde{H}^{BM}_1(K^p, \mathcal{V}_M)$$
  of compact $\mathcal{O}[[\mathrm{GL}_2(\mathbb{Z}_p)]]$-modules.
  %, where $M\widehat{\otimes}_{\mathcal{O}}\widetilde{H}^{BM}_1(K^p)$ 
%  is the completed tensor product of compact $\mathcal{O}$-modules $M$ and $\widetilde{H}^{BM}_1(K^p)$ with the diagonal $\mathrm{GL}_2(\mathbb{Z}_p)$ action. 
 
 \end{lemma}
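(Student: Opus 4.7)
The plan is to establish the isomorphism first for $M$ of finite $\mathcal{O}$-length and then pass to the limit for general compact $M$. The geometric key is that, when $M$ is finite, any sufficiently small open compact $K_p \subset \mathrm{GL}_2(\mathbb{Z}_p)$ acts trivially on $M$, and under this condition the local system $\mathcal{V}_M$ on $Y(K^pK_p)(\mathbb{C})$ is canonically the constant sheaf $\underline{M}$ (the action factors through $\mathrm{pr}_p$, so only the $K_p$-component of $K_f = K^p K_p$ is relevant).

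For such $K_p$, which form a cofinal family, each connected component of $Y(K^pK_p)(\mathbb{C})$ is an open Riemann surface of finite type, homotopy equivalent to a finite $1$-dimensional CW complex; hence $H_1(Y(K^pK_p),\mathcal{O})$ is free of finite rank over $\mathcal{O}$ and higher homology vanishes. The universal coefficient theorem for cohomology then collapses to give, for any $\mathcal{O}$-module $N$, a natural isomorphism $H^1(Y(K^pK_p), N) \isom \mathrm{Hom}_{\mathcal{O}}(H_1(Y(K^pK_p), \mathcal{O}), N)$. Applying this with $N = M$ and $N = \mathcal{O}$ and invoking freeness of $H_1$, I obtain a natural isomorphism
$$H^1(K^pK_p, \mathcal{V}_M)(1) \isom H^1(K^pK_p, \mathcal{O})(1) \otimes_{\mathcal{O}} M.$$
This is compatible with corestriction in $K_p$, so taking the projective limit over the cofinal system of sufficiently small $K_p$, and using that tensoring with a finite module commutes with inverse limits, yields $\widetilde{H}^{BM}_1(K^p, \mathcal{V}_M) \isom \widetilde{H}^{BM}_1(K^p) \otimes_{\mathcal{O}} M$ when $M$ is finite.

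For a general compact $\mathcal{O}[[\mathrm{GL}_2(\mathbb{Z}_p)]]$-module $M$, I write $M = \varprojlim_{i\in I} M_i$ with each $M_i$ of finite $\mathcal{O}$-length. On the right-hand side the excerpt already gives $\widetilde{H}^{BM}_1(K^p, \mathcal{V}_M) \isom \varprojlim_i \widetilde{H}^{BM}_1(K^p, \mathcal{V}_{M_i})$; on the left, since completed tensor with a finite module equals ordinary tensor, one has $M \widehat{\otimes}_{\mathcal{O}} \widetilde{H}^{BM}_1(K^p) \isom \varprojlim_i (M_i \otimes_{\mathcal{O}} \widetilde{H}^{BM}_1(K^p))$. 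Assembling the finite-level isomorphisms termwise yields the desired topological isomorphism.

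It remains to check Hecke and $\mathrm{GL}_2(\mathbb{Z}_p)$-equivariance. Hecke equivariance is formal, since the Hecke correspondences act on the tower $\{Y(K^pK_p)\}_{K_p}$ and pull $\mathcal{V}_M$ back to itself, so both sides carry the identical action on the cohomology factor. The hard part will be $\mathrm{GL}_2(\mathbb{Z}_p)$-equivariance: on the left the action is diagonal on $M$ and on $\widetilde{H}^{BM}_1(K^p)$, whereas on the right it arises from translation of $K_p$ across the tower combined with the covariant transport of $\mathcal{V}_M$. The matching relies on the convention $(\tau, g, x) \cdot k = (\tau, gk, \mathrm{pr}_p(k)^{-1}x)$ in the definition of $\mathcal{V}_M$: the inverse in $\mathrm{pr}_p(k)^{-1}$ is precisely what is needed so that, after identifying the trivialised sheaf with $M$ at each sufficiently deep level, the tower action on $\widetilde{H}^{BM}_1(K^p)$ composes with the $\mathrm{GL}_2(\mathbb{Z}_p)$-action on $M$ to recover the diagonal action.
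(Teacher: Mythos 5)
Your proof is correct and takes essentially the same approach as the paper: reduce to $M$ of finite $\mathcal{O}$-length by writing $M$ as a projective limit, then observe that for sufficiently small $K_p$ the sheaf $\mathcal{V}_M$ is constant and the coefficient map $M\otimes_{\mathcal O}H^1(K^pK_p)\to H^1(K^pK_p,\mathcal{V}_M)$ is an isomorphism because the curve is affine. The paper dispatches that last step in one clause ("since $H^2(K^pK_p)$ is zero") and the equivariance in another ("clear from the definition"); your universal coefficient / CW-complex argument and your unpacking of the $\mathrm{pr}_p(k)^{-1}$ convention are merely more explicit phrasings of the same content.
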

 \begin{proof}We write $M=\varprojlim_{i\in I}M_i$ such that each $M_i$ is a compact 
 $\mathcal{O}[[\mathrm{GL}_2(\mathbb{Z}_p)]]$-module which is of finite length as $\mathcal{O}$-module. 
 Since one has canonical isomorphisms
 $$M\widehat{\otimes}_{\mathcal{O}}\widetilde{H}^{BM}_1(K^p)\isom 
 \varprojlim_{i\in I}M_i\otimes_{\mathcal{O}}\widetilde{H}^{BM}_1(K^p)
 \isom \varprojlim_{i\in I}M_i\widehat{\otimes}_{\mathcal{O}}\widetilde{H}^{BM}_1(K^p),$$
 and 
 $$\widetilde{H}^{BM}_1(K^p, \mathcal{V}_M)\isom \varprojlim_{i\in I}\widetilde{H}^{BM}_1(K^p, \mathcal{V}_{M_i}),$$
  it suffices to show the lemma for $M$ a compact $\mathcal{O}[[\mathrm{GL}_2(\mathbb{Z}_p)]]$-module which is of finite length as $\mathcal{O}$-module. 

 In this case, the action of $K_p$ on $M$ is trivial for any sufficiently small open subgroup $K_p$ of $\mathrm{GL}_2(\mathbb{Z}_p)$. For any such $K_p$, the canonical 
 map 
 $$M\otimes_{\mathcal{O}}H^1(K^pK_p)\rightarrow H^1(K^pK_p, \mathcal{V}_M)$$
 is isomorphism since $H^2(K^pK_p)$ is zero. %$\mathcal{O}$-torsion free. 
 Taking its limit with respect to all such $K_p$, we obtain the topological isomorphism 
 $$M\widehat{\otimes}_{\mathcal{O}}\widetilde{H}^{BM}_1(K^p)\isom M\otimes_{\mathcal{O}}\widetilde{H}^{BM}_1(K^p)\isom 
 \widetilde{H}^{BM}_1(K^p, \mathcal{V}_M).$$
 The $\mathrm{GL}_2(\mathbb{Z}_p)$ and Hecke equivariance of the isomorphism is clear from the definition. 
 
 \end{proof}
 
 %We remark that the left hand side $M\widehat{\otimes}_{\mathcal{O}}\widetilde{H}^{BM}_1(K^p)$ is 
% (homological action) $\mathbb{T}(K^p)$-module by $h(x\widehat{\otimes} y)=x\widehat{\otimes} hx$ for $h\in \mathbb{T}(K^p)$, $x\widehat{\otimes} y\in M\widehat{\otimes}_{\mathcal{O}}\widetilde{H}^{BM}_1(K^p)_{\mathcal{O}}$. 
 By the isomorphism in this lemma the Hecke actions $T'_l$ and $S'_l$ for all $l\not\in \Sigma$ on 
 $\widetilde{H}^{BM}_1(K_{\Sigma_0}, \mathcal{V}_M)$ equip this group with a structure of 
compact $\mathbb{T}'(K_{\Sigma_0})$-module such that the isomorphism in this lemma becomes $\mathbb{T}'(K_{\Sigma_0})$-linear. 
 
We mainly apply this lemma to the following special case. Let $W$ be an $E$-linear algebraic representation of $\mathrm{GL}_{2/\mathbb{Q}_p}$ 
on a finite dimensional $E$-vector space. %%which is 
%finite dimensional over $E$. 
Let 
$W_0$ be an $\mathcal{O}$-lattice of $W$ which is $\mathrm{GL}_2(\mathbb{Z}_p)$-stable. We remark that $W_0$ is naturally a compact 
$\mathcal{O}[[\mathrm{GL}_2(\mathbb{Z}_p)]]$-module with respect to the $\varpi$-adic topology on $W_0$, hence $W=\varinjlim_{n\geqq 0}\frac{1}{\varpi^n}W_0$ is also an $\mathcal{O}[[\mathrm{GL}_2(\mathbb{Z}_p)]]$-module.

\begin{lemma}\label{1.10}
 For any open subgroup $K_p$ of $\mathrm{GL}_2(\mathbb{Z}_p)$, there exists 
 a canonical $E$-linear isomorphism 
 $$W\otimes_{\mathcal{O}[[K_p]]}\widetilde{H}^{BM}_1(K^p)\isom H^1(K^pK_p, \mathcal{V}_{W})(1),$$
 where we regard $W$ as a right $\mathcal{O}[[K_p]]$-module by $v\cdot g:=g^{-1}v$ for $g\in K_p$ and $v\in W$. 
 \end{lemma}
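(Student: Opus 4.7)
The plan is to deduce the lemma from Lemma \ref{1.9} by taking $K_p$-coinvariants on both sides and then inverting $\varpi$.

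First, applying Lemma \ref{1.9} to the compact $\mathcal{O}[[\mathrm{GL}_2(\mathbb{Z}_p)]]$-module $W_0$ (and using that $W_0$ is finite over $\mathcal{O}$, so the completed tensor product equals the ordinary one) will yield a canonical $\mathcal{O}[[\mathrm{GL}_2(\mathbb{Z}_p)]]$-equivariant isomorphism
\[
W_0 \otimes_{\mathcal{O}} \widetilde{H}^{BM}_1(K^p) \isom \widetilde{H}^{BM}_1(K^p, \mathcal{V}_{W_0}),
\]
in which the left-hand side carries the diagonal $K_p$-action. Viewing $W_0$ as a right $\mathcal{O}[[K_p]]$-module via $v \cdot g := g^{-1} v$, the $K_p$-coinvariants of the left-hand side are by definition $W_0 \otimes_{\mathcal{O}[[K_p]]} \widetilde{H}^{BM}_1(K^p)$. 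Inverting $\varpi$ and using $W = \varinjlim_n \varpi^{-n} W_0$ together with the compatibility of tensor products with filtered colimits, this becomes $W \otimes_{\mathcal{O}[[K_p]]} \widetilde{H}^{BM}_1(K^p)$, which is the desired left-hand side of the lemma.

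The key step is therefore to identify the $K_p$-coinvariants of the right-hand side, after inverting $\varpi$, with $H^1(K^p K_p, \mathcal{V}_W)(1)$. To do so I will fix a descending sequence $\{K_p^{(n)}\}$ of open normal subgroups of $K_p$ such that $K_p^{(n)}$ acts trivially on $W_0/\varpi^n$. For each $K'_p \subseteq K_p^{(n)}$ that is normal in $K_p$, setting $G := K_p/K'_p$, the cover $Y(K^p K'_p) \to Y(K^p K_p)$ is Galois with finite $p$-group $G$, and the pullback of $\mathcal{V}_{W_0/\varpi^n}$ to $Y(K^p K'_p)$ is the constant local system $W_0/\varpi^n$. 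Passing to Borel--Moore homology via the Poincar\'e duality recalled in Remark \ref{1.3}, the standard covering formula produces a natural identification
\[
H_1\bigl(X(K^p K_p), \{\mathrm{cusps}\}, \mathcal{V}_{W_0/\varpi^n}\bigr) \;\cong\; \bigl( W_0/\varpi^n \otimes_{\mathcal{O}} H_1(X(K^p K'_p), \{\mathrm{cusps}\})\bigr)_G
\]
up to higher $\mathrm{Tor}^{\mathcal{O}[G]}$ terms killed by $|G|$, which is a power of $p$. Taking the projective limit over $K'_p$, the filtered colimit over $n$, and inverting $\varpi$ (which annihilates the $\mathrm{Tor}$ obstruction and resurrects $W$ from the $W_0/\varpi^n$), I will obtain the sought isomorphism $\widetilde{H}^{BM}_1(K^p, \mathcal{V}_{W_0})_{K_p}[1/\varpi] \isom H^1(K^p K_p, \mathcal{V}_W)(1)$.

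The main obstacle will be to justify the interchange of $K_p$-coinvariants with the inverse limit defining the completed homology, because coinvariants do not in general commute with $\varprojlim$. This is circumvented by the observation that, after inverting $\varpi$, the action of each finite $p$-group $G_{K'_p} := K_p/K'_p$ becomes semisimple, so coinvariants coincide with invariants, and these are naturally compatible with the corestriction transition maps of the projective system. Equivalently, on the Borel--Moore homology side, the covering formula directly produces coinvariants, and the resulting inverse system is Mittag--Leffler, so the interchange of $\varprojlim$ with $\otimes_{\mathcal{O}[[K_p]]}$ is legitimate after inverting $\varpi$; the crucial input is that all the relevant finite group orders become invertible over $E$.
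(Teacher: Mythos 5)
Your overall strategy — reduce to Lemma \ref{1.9} and compare $K_p$-coinvariants — agrees with the paper's starting point, but from there the two arguments diverge, and your version has a real gap at the crucial step.

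The paper handles the identification of $\widetilde{H}^{BM}_1(K^p, \mathcal{V}_{W_0})_{K_p}[1/p]$ not by an explicit covering computation but by dualizing. Using the duality isomorphism $\widetilde{H}^{BM}_1(K^p)\isom \mathrm{Hom}_{\mathcal{O}}(\widehat{H}^1_c(K^p),\mathcal{O})$ from Lemma \ref{1.4} and the canonical identification $W_0^*\otimes_{\mathcal{O}}\widehat{H}^1_c(K^p)\isom \widehat{H}^1_c(K^p,\mathcal{V}_{W_0^*})$, one gets $\widetilde{H}^{BM}_1(K^p,\mathcal{V}_{W_0})\isom \mathrm{Hom}_{\mathcal{O}}(\widehat{H}^1_c(K^p,\mathcal{V}_{W_0^*}),\mathcal{O})$. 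The point of this move is that the functor of $K_p$-coinvariants on a compact module becomes the functor of $K_p$-invariants on the admissible dual, and invariants commute cleanly with the relevant limits. One then invokes Emerton's theorem ((4.3.7) of \cite{Em06a}), which gives $H^1_c(K^pK_p,\mathcal{V}_{W^*})\isom \widehat{H}^1_c(K^p,\mathcal{V}_{W^*})^{K_p}$, and finishes with Poincar\'e duality. This packages exactly the spectral-sequence input you are trying to recreate into a single black box.

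Your direct approach via the Hochschild--Serre covering formula for the Galois covers $Y(K^pK'_p)\to Y(K^pK_p)$ runs into two concrete problems. First, the covering formula is only available for the torsion coefficients $W_0/\varpi^n$ (since $W_0$ itself is not a smooth representation, no open $K'_p$ acts trivially on it), but at the level of $W_0/\varpi^n$-coefficients everything is $\varpi$-power torsion, so the claim that the higher terms are ``killed by $|G|$, a power of $p$'' cannot be cashed out by inverting $\varpi$: $|G|=|K_p/K'_p|$ grows as $K'_p$ shrinks, so as you pass to $\varprojlim_{K'_p}$ there is no fixed power of $p$ that bounds the defect, and the limiting statement does not survive inversion of $\varpi$ in any obvious way. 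Second, the interchange of $K_p$-coinvariants with $\varprojlim_{K'_p}$ is precisely the delicate point; the natural projections from the completed homology to the finite levels are not surjective (the transition maps are corestrictions), so one cannot simply present $\varprojlim$ as a tower of quotients, and the Mittag--Leffler condition you invoke is not verified. The semisimplicity of $E[K_p/K'_p]$ does make coinvariants and invariants agree at each finite level after inverting $\varpi$, but this does not by itself let you commute coinvariants past the $\varprojlim_{K'_p}$ of $\mathcal{O}$-modules and then invert $\varpi$ at the end; the order of operations matters, and this is exactly what the paper's duality argument is designed to circumvent. To make your route rigorous you would essentially need to reprove (4.3.7) of \cite{Em06a}, which is a genuinely nontrivial statement; citing it, or using the duality argument, is the intended fix.
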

 \begin{proof}
 By Lemma \ref{1.9}, one has a canonical map 
 $$W\otimes_{\mathcal{O}}\widetilde{H}^{BM}_1(K^p)\isom \widetilde{H}^{BM}_1(K^p, \mathcal{V}_{W_0})[1/p]
 \xrightarrow{\mathrm{can}} H^1(K^pK_p, \mathcal{V}_W)(1).$$
 Hence, it suffices to show that the canonical projection 
 $$\widetilde{H}^{BM}_1(K^p, \mathcal{V}_{W_0})[1/p]
 \xrightarrow{\mathrm{can}} H^1(K^pK_p, \mathcal{V}_W)(1)$$
  induces an $E$-linear isomorphism
 $$\widetilde{H}^{BM}_1(K^p, \mathcal{V}_{W_0})_{K_p}[1/p]\isom H^1(K^pK_p, \mathcal{V}_W)(1),$$
 where $\widetilde{H}^{BM}_1(K^p, \mathcal{V}_{W_0})_{K_p}$ is the quotient of $\widetilde{H}^{BM}_1(K^p, \mathcal{V}_{W_0})$ 
 by the closure of the sub $\mathcal{O}$-module generated by $\{(g-1)v\, |\, g\in K_p, v\in \widetilde{H}^{BM}_1(K^p, \mathcal{V}_{W_0})\}$. 

 %Let $\mathcal{V}_{k/A}^{*}$ be the $A$-linear dual of $\mathcal{V}_{k/A}$. 
 For each $n\geqq 1$, we write $(W_0/\varpi^nW_0)^*$ (resp. $W^*_0$) to denote the $\mathcal{O}/\varpi^n$-linear dual of $W_0/\varpi^nW_0$ (resp. $\mathcal{O}$-linear 
 dual of $W_0$). We set 
 $$H^1_c(K^p, \mathcal{V}_{(W_0/\varpi^nW_0)^*}):=\varinjlim_{K_p}H_c^1(K^pK_p, \mathcal{V}_{(W_0/\varpi^nW_0)^*}),$$
 where the limit is taken over all the open compact subgroups $K_p$ of $\mathrm{GL}_2(\mathbb{Z}_p)$, and 
 $$\widehat{H}_c^1(K^p, \mathcal{V}_{W^*_0}):=\varprojlim_{n\geqq 1}H_c^1(K^p, \mathcal{V}_{(W_0/\varpi^nW_0)^*}).$$
 Since each $H_c^2(K^pK_p)$ is $\mathcal{O}$-torsion free, one obtains a canonical $\mathcal{O}[\mathrm{GL}_2(\mathbb{Z}_p)]$-linear isomorphism 
 $$W^*_0\otimes_{\mathcal{O}}\widehat{H}_c^1(K^p)
 \isom \widehat{H}_c^1(K^p,\mathcal{V}_{W_0^*})$$ in a similar way as in Lemma \ref{1.9}. 
 Therefore, the duality isomorphism $$\widetilde{H}^{BM}_1(K^p)\isom 
 \mathrm{Hom}_{\mathcal{O}}(\widehat{H}_c^1(K^p), \mathcal{O})$$ induces a topological $\mathcal{O}[[\mathrm{GL}_2(\mathbb{Z}_p)]]$-linear  isomorphism 
 $$\widetilde{H}^{BM}_1(K^p, \mathcal{V}_{W_0})\isom 
 \mathrm{Hom}_{\mathcal{O}}(\widehat{H}_c^1(K^p, \mathcal{V}_{W_0^*}), \mathcal{O})$$ as the composite 
 
 \begin{multline*}
 \widetilde{H}^{BM}_1(K^p, \mathcal{V}_{W_0})\isom W_0\otimes_{\mathcal{O}}\widetilde{H}^{BM}_1(K^p)
 \isom W_0\otimes_{\mathcal{O}} \mathrm{Hom}_{\mathcal{O}}(\widehat{H}_c^1(K^p), \mathcal{O})\\
 \isom 
 \mathrm{Hom}_{\mathcal{O}}(W_0^*\otimes_{\mathcal{O}} \widehat{H}_c^1(K^p), \mathcal{O})
 \isom  \mathrm{Hom}_{\mathcal{O}}(\widehat{H}_c^1(K^p,\mathcal{V}_{W_0^*}), \mathcal{O}).
\end{multline*}
Since $\widehat{H}_c^1(K^p,\mathcal{V}_{W_0^*})$ is $\mathcal{O}$-torsion free, $\varpi$-adically completed and separated 
$\mathcal{O}$-module, this $\mathcal{O}[[\mathrm{GL}_2(\mathbb{Z}_p)]]$-linear isomorphism induces an $\mathcal{O}$-linear isomorphism 
$$\widetilde{H}^{BM}_1(K^p, \mathcal{V}_{W_0})_{K_p}\isom \mathrm{Hom}_{\mathcal{O}}(\widehat{H}_c^1(K^p,\mathcal{V}_{W_0^*})^{K_p}, \mathcal{O})$$
for each $K_p$. Since one has a canonical isomorphism
$$H^1_c(K^pK_p, \mathcal{V}_{W^*})\isom \widehat{H}_c^1(K^p,\mathcal{V}_{W^*})^{K_p}$$
 by (4.3.7) of \cite{Em06a} for the $E$-linear dual $W^*$ of $W$, we obtain the following 
 isomorphim
 $$\widetilde{H}^{BM}_1(K^p, \mathcal{V}_{W_0})_{K_p}[1/p]\isom 
 \mathrm{Hom}_E(H^1_c(K^pK_p, \mathcal{V}_{W^*}), E)\isom H^1(K^pK_p, \mathcal{V}_{W})(1),$$
 where the last isomorphism follows from Poincar\'e duality. 
 \end{proof}

We next consider a left $E[\mathrm{GL}_2(\mathbb{Z}_p)]$-module $M$ of the form $M=\tau\otimes_EW$, where 
$W$ is an $E$-linear algebraic representation of $\mathrm{GL}_{2/\mathbb{Q}_p}$ on a finite dimensional $E$-vector space, and 
$\tau$ is a smooth $E$-representation of $\mathrm{GL}_2(\mathbb{Z}_p)$ which is finite dimensional over $E$. Let 
$W_0$ (resp. $\tau_0$) be a $\mathrm{GL}_2(\mathbb{Z}_p)$-stable $\mathcal{O}$-lattice of $W$ (resp. $\tau$). 
Then $M_0:=\tau_0\otimes_{\mathcal{O}}W_0$ is naturally a compact $\mathcal{O}[[\mathrm{GL}_2(\mathbb{Z}_p)]]$-module, and 
$M=\varinjlim_{n\geqq 0}\frac{1}{\varpi^n}M_0$ is also an $\mathcal{O}[[\mathrm{GL}_2(\mathbb{Z}_p)]]$-module.

\begin{lemma}\label{1.11}For any open subgroup $K_p$ of $\mathrm{GL}_2(\mathbb{Z}_p)$, 
there exists a canonical $E$-linear isomorphism 
$$M\otimes_{\mathcal{O}[[K_p]]}\widetilde{H}^{BM}_1(K^p)\isom H^1(K^pK_p, \mathcal{V}_{M})(1),$$
 where we regard $M$ as a right $\mathcal{O}[[K_p]]$-module by $v\cdot g:=g^{-1}v$ for $g\in K_p$ and $v\in M$. 
\end{lemma}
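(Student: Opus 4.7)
The plan is to bootstrap from Lemma \ref{1.10} via a finite descent argument. Since $\tau$ is smooth and finite-dimensional over $E$, there exists an open subgroup $K'_p \subseteq K_p$ acting trivially on $\tau_0$ (hence on $\tau$); after replacing $K'_p$ by $\bigcap_{g \in K_p/K'_p} gK'_pg^{-1}$ I may assume $K'_p$ is normal in $K_p$ of finite index. With this choice, the restriction $M|_{K'_p}$ is canonically isomorphic to $\tau \otimes_E W|_{K'_p}$ with $K'_p$ acting only on the $W$-factor, and the pullback of $\mathcal{V}_M$ along $Y(K^pK'_p)(\mathbb{C}) \to Y(K^pK_p)(\mathbb{C})$ is canonically $\tau \otimes_E \mathcal{V}_W$.

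Applying Lemma \ref{1.10} for $K'_p$ and tensoring with $\tau$ over $E$, one obtains an $E$-linear isomorphism
\[
M \otimes_{\mathcal{O}[[K'_p]]} \widetilde{H}^{BM}_1(K^p) \isom \tau \otimes_E H^1(K^pK'_p, \mathcal{V}_W)(1) \isom H^1(K^pK'_p, \mathcal{V}_M)(1),
\]
the second isomorphism coming from the identification of local systems above. One checks that this isomorphism is equivariant for the natural action of the finite group $\Gamma := K_p/K'_p$: on the left $\Gamma$ acts diagonally on $M \otimes \widetilde{H}^{BM}_1(K^p)$ (and the $K'_p$-coinvariants are preserved since $K'_p$ is normal), while on the right $\Gamma$ is the deck transformation group of the finite \'etale cover $Y(K^pK'_p) \to Y(K^pK_p)$.

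To conclude, I pass to $\Gamma$-(co)invariants on both sides. On the cohomology side, the Hochschild--Serre / trace-map argument gives $H^1(K^pK_p, \mathcal{V}_M)(1) \isom H^1(K^pK'_p, \mathcal{V}_M)(1)^{\Gamma}$; since $E$ has characteristic zero and $\Gamma$ is finite, the averaging map $\tfrac{1}{|\Gamma|} \sum_{\gamma \in \Gamma} \gamma$ identifies $\Gamma$-invariants with $\Gamma$-coinvariants on any $E$-vector space. On the other side, the relation $m \otimes gv = g^{-1}m \otimes v$ exhibits $M \otimes_{\mathcal{O}[[K_p]]} \widetilde{H}^{BM}_1(K^p)$ as the $\Gamma$-coinvariants of $M \otimes_{\mathcal{O}[[K'_p]]} \widetilde{H}^{BM}_1(K^p)$, which is again an $E$-vector space because $M$ is. Combining the two identifications yields the desired isomorphism.

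The main obstacle, in my view, is the verification that the isomorphism furnished by Lemma \ref{1.10} is genuinely $\Gamma$-equivariant with the two actions described above; this amounts to tracing through the construction (which itself relies on Poincar\'e duality and the comparison $H^1_c(K^pK'_p, \mathcal{V}_{W^*}) \isom \widehat{H}^1_c(K^p, \mathcal{V}_{W_0^*})^{K'_p} \otimes_{\mathcal{O}} E$) and checking that the covering transformations match the diagonal action on the coefficients. Once that compatibility is in place, the passage from $K'_p$ to $K_p$ is formal, and no integrality subtleties arise because the lemma is asserted only after inverting $p$.
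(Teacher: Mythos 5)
Your argument is correct and follows essentially the same route as the paper's own proof: choose an open normal subgroup $K'_p\subseteq K_p$ acting trivially on $\tau$, apply Lemma \ref{1.10} over $K'_p$, identify $\tau\otimes_E H^1(K^pK'_p,\mathcal{V}_W)\isom H^1(K^pK'_p,\mathcal{V}_M)$, and then descend by taking $\Gamma=K_p/K'_p$-coinvariants. The only cosmetic difference is that you phrase the descent on the cohomology side via Hochschild--Serre invariants followed by the averaging identification, whereas the paper directly invokes the dual Hochschild--Serre spectral sequence together with the vanishing $H_1(\Gamma,N)=0$ for $\mathbb{Q}[\Gamma]$-modules $N$; in characteristic zero these are the same observation.
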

\begin{proof}
We take an open normal subgroup $K'_p$ of $K_p$ which trivially acts on $\tau$. 
Then, $\mathcal{V}_{\tau}$ is a constant local systme on $Y(K^pK'_p)(\mathbb{C})$, and one has 
\begin{multline*}M\otimes_{\mathcal{O}[[K'_p]]}\widetilde{H}^{BM}_1(K^p)
=\tau\otimes_E(W\otimes_{\mathcal{O}[[K'_p]]}\widetilde{H}^{BM}_1(K^p))\\
\isom \tau\otimes_E H^1(Y(K^pK'_p), \mathcal{V}_W)(1)
\isom H^1(Y(K^pK'_p), \mathcal{V}_M)(1)
\end{multline*}
by Lemma \ref{1.10}, Taking its $K_p/K'_p$-coinvariant, this isomorphism induces an isomorphism
$$M\otimes_{\mathcal{O}[[K_p]]}\widetilde{H}^{BM}_1(K^p)
\isom H^1(Y(K^pK'_p), \mathcal{V}_M)_{K_p/K'_p}(1)\isom H^1(Y(K^pK_p), \mathcal{V}_M)(1),$$
where the last isomorphism follows from the (dual of) Hochschild-Serre spectral sequence 
with respect to the Galois cover $Y(K^pK'_p)(\mathbb{C})\rightarrow Y(K^pK_p)(\mathbb{C})$ since 
one has $H_1(K_p/K'_p, N)=0$ for any $\mathbb{Q}[K_p/K'_p]$-module $N$.
\end{proof}

%If there exist a complex $N^{\bullet}$ consisting of finite projective $R$-modules and a quasi-isomorphism $N^{\bullet}\isom C^{\bullet}(G_{*}, \rho)$ 
%(as $R$-modules). 
%Then, it naturally induces a quasi-isomorphism 
%$\widetilde{P}\, \widehat{\otimes}_{R_p} N^{\bullet} \isom  C^{\bullet}(G_*, \widetilde{P}\,\widehat{\otimes}_{R_p}\rho)$ (of $R[G]$-modules). 

%. Since this quasi-isomorphism is also obtained as 
%the composite $\widetilde{P}\,\widehat{\otimes}_{R_p} N^{\bullet}=\varprojlim_{n\geqq 1}\widetilde{P}\,\otimes_{R_p} N^{\bullet}/\mathfrak{m}^nN^{\bullet}
%\rightarrow \varprojlim_{n\geqq 1} \widetilde{P}\,\otimes_{R_p}C^{\bullet}(G_*, \rho/\mathfrak{m}^n\rho)\rightarrow 
%\varprojlim_{n\geqq 1}C^{\bullet}(G_*, \widetilde{P}\,\otimes_{R_p} \rho/\mathfrak{m}^n\rho)\isom C^{\bullet}(G_*, \varprojlim_{n\geqq 1}\widetilde{P}\,\otimes_{R_p} \rho/\mathfrak{m}^n\rho)=
%C^{\bullet}(G_{*}, \widetilde{P}\,\widehat{\otimes}_{R_p} \rho)$ which is clearly $G$-equivariant, we obtain the lemma.
\section{Equivariant zeta morphisms for completed homology of modular curves}

\subsection{Kato's zeta elements and its zeta values formula}

\subsubsection{Modular curves over $\mathbb{Z}[1/N]$}
For any integer $N\geqq 1$, we define an open compact subgroup $K(N)$ of $\mathrm{GL}_2(\widehat{\mathbb{Z}})$ by 
$$K(N):=\mathrm{Ker}(\mathrm{GL}_2(\widehat{\mathbb{Z}})\xrightarrow{\mathrm{can}} \mathrm{GL}_2(\mathbb{Z}/N\mathbb{Z})),$$
and set 
$$Y(N)(\mathbb{C}):= Y(K(N))(\mathbb{C})=\mathrm{GL}_2(\mathbb{Q})\backslash \mathcal{H}^{\pm} \times (\mathrm{GL}_2(\mathbb{A}_f)/K(N)).$$ 
For each $N\geqq 3$, $Y(N)(\mathbb{C})$ canonically descends to the smooth curve $Y(N)$ over $\mathbb{Z}[1/N]$, which is the moduli scheme of the triples 
$(E, e_1,e_2)$ with $E$ an elliptic curve and $e_1,e_2$ sections of $E[N]$ 
such that $(\mathbb{Z}/N\mathbb{Z})^2\rightarrow E[N] : (a,b)\mapsto ae_1+be_2$ is 
isomorphism. We write $$f : E^{\mathrm{univ}}\rightarrow Y(N)$$ to denote the universal elliptic curve over $Y(N)$. 
One has the canonical compactification $X(N)$ of $Y(N)$ which is also defined over $\mathbb{Z}[1/N]$. 

For each $k\in \mathbb{Z}_{\geqq 2}$, we write $\mathcal{V}_{k/A}$ to denote the local system on 
$Y(N)(\mathbb{C})$ associated to the $\mathbb{Z}_p[\mathrm{GL}_2(\mathbb{Z}_p)]$-module 
$\mathrm{Sym}^{k-2}(A^2)$ for any $\mathbb{Z}_p$-algebra $A$, where $A^2$ is the standard representation of $\mathrm{GL}_2(\mathbb{Z}_p)$ over $A$. 
For $A=\mathcal{O}, E, \mathcal{O}/\varpi^n\mathcal{O}$, etc., 
the local 
system $\mathcal{V}_{k/A}$ naturally descends to the \'etale $p$-adic sheaf $\mathrm{Sym}^{k-2}(R^1f_*(A))$ on $$Y(N)\otimes_{\mathbb{Z}}
\mathbb{Z}[1/p]:=Y(N)\times_{\mathrm{Spec}(\mathbb{Z})}
\mathrm{Spec}(\mathbb{Z}[1/p])$$ %which is isomorphic to 
%$\mathrm{Sym}^{k-2}(R^1f_*(A))$
 for the structure morphism $f : E^{\mathrm{univ}}\otimes_{\mathbb{Z}}
\mathbb{Z}[1/p]\rightarrow Y(N)\otimes_{\mathbb{Z}}
\mathbb{Z}[1/p]$, which we also denote by the same letter $\mathcal{V}_{k/A}$. 
 %, and canonically extends to $X(N)$, all of which we denote by the same letter $\mathcal{V}_{k/A}$. 
 
 In our article, we mainly consider $\mathcal{V}_{k/A}$ and its $A$-linear dual $\mathcal{V}^*_{k/A}$, and write $H^i(Y(N), \mathcal{F})$
  to denote the cohomology $$H^i(K(N), \mathcal{F})=H^i(Y(N)(\mathbb{C}), \mathcal{F})
  \isom H^i_{\text{\'et}}(Y(N)_{\overline{\mathbb{Q}}}, \mathcal{F})$$ 
 for $\mathcal{F}=\mathcal{V}_{k/A}, \mathcal{V}^*_{k/A}$. 
  For $k=2$, then $\mathcal{V}_{k/A}=\mathcal{V}^*_{k/A}=A$, and we denote it by $$H^i(Y(N))_{A}:=H^i(Y(N), A)=H^i(K(N))_A$$ for simplicity.

%For each $Y(N)$ and $k\geqq 2$, we write $H^i(Y(N), \mathcal{V}^*_k)_{A}$ to denote the (geometric) \'etale cohomology $H^i_{\text{\'et}}(Y(N)_{\overline{\mathbb{Q}}}, \mathcal{V}^*_{k/A})$
%of $Y(N)_{\overline{\mathbb{Q}}}:=Y(N)\otimes_{\mathbb{Z}[1/N]}\overline{\mathbb{Q}}$, which we write by $H^i(K(N), \mathcal{V}^*_k)_{A}$ in the previous section. For $k=2$, we write it by $H^i(Y(N))_{A}:=H^i(Y(N), \mathcal{V}^*_2)_{A}=H^i(K(N))_A$ for simplicity. 
%We freely identify this group with the singular cohomology $H^i(Y(N)(\mathbb{C}), \mathcal{V}^*_k)_A$ by 
%the canonical isomorphism $H^i_{\text{\'et}}(Y(N)_{\overline{\mathbb{Q}}}, \mathcal{V}^*_k)_A\isom H^i(Y(N)(\mathbb{C}), \mathcal{V}^*_k)_A$ 
%induced by the fixed embedding $\iota_{\infty} : \overline{\mathbb{Q}}\hookrightarrow \mathbb{C}$.
%(=H^i(K(N))_{\mathbb{Z}_p})$ of $Y(N)_{\overline{\mathbb{Q}}}:=Y(N)\otimes_{\mathbb{Z}[1/N]}\overline{\mathbb{Q}}$, 
We also 
write $H^i_{\text{\'et}}(Y(N), \mathcal{V}_{k/A})$ to denote the \'etale cohomology of $Y(N)$ over $\allowbreak\mathrm{Spce}(\mathbb{Z}[1/N])$, 
$K_i(Y(N))$ to denote the Qullen's $K$-group of $Y(N)$. %  (over $\mathrm{Spce}(\mathbb{Z}[1/N])$). 

We use similar notations $$H^i(X(N), j_*\mathcal{F}),\ \  H^i(X(N))_{A}\ \ \text{ and }\ \ H^i_{\text{\'et}}(X(N), j_*\mathcal{V}_{k/A})$$ for $X(N)$, where 
$j : Y(N)\hookrightarrow X(N)$ is the canonical inclusion. 

We set $\omega_{Y(N)}:=f_*\Omega^1_{E^{\mathrm{univ}}/Y(N)}$, which is a locally free $\mathcal{O}_{Y(N)}$-module of rank one. We also 
set $\omega_{X(N)}:=\overline{f}_*\Omega^1_{\overline{E}^{\mathrm{univ}}/X(N)}$, which is also a locally free $\mathcal{O}_{X(N)}$-module of rank one, where 
$\overline{f} : \overline{E}^{\mathrm{univ}}\rightarrow X(N)$ is the smooth N\'eron model of $E^{\mathrm{univ}}$ over $X(N)$. For each $k\in\mathbb{Z}_{\geqq2}$ and for any $\mathbb{Z}[1/N]$-algebra $A$, we write 
$$S_k(N)_{A}:=\Gamma(X(N)_{A}, \omega_{X(N)}^{\otimes(k-2)}\otimes_{\mathcal{O}_{X(N)}}\Omega^1_{X(N)_{A}/A})$$ to denote the space of cusp forms of wight $k$ on $X(N)_{A}$, % where $\Omega^1_{X(N)_{\mathbb{Q}}/\mathbb{Q}}$ is the sheaf of differential $1$-forms on $X(N)_{\mathbb{Q}}$, 
 and write
$$M_k(N)_{A}:=\Gamma(X(N)_A,
\omega_{X(N)}^{\otimes (k-2)}\otimes_{\mathcal{O}_{X(N)}}\Omega^1_{X(N)_{A}/A}(\mathrm{log}\{\mathrm{cusps}\}))$$ to denote the space of modular forms of weight $k$ on $X(N)_{A}$,
where $$\Omega^1_{X(N)_{A}/A}(\mathrm{log}\{\mathrm{cusps}\})$$ is the sheaf of differential $1$-forms which may have logarithmic poles along the cusps $\{\text{cusps}\}:=X(N)\setminus Y(N)$. 

In \cite{Ka04} and \cite{FK}, they consider a left action of $\mathrm{GL}_2(\mathbb{Z}/N\mathbb{Z})$ on $Y(N)$ and $X(N)$. Precisely, on $Y(N)$, 
the isomorphism 
$g : Y(N)\rightarrow Y(N)$ induced by an element $g=\begin{pmatrix}a& b\\ c & d\end{pmatrix}\in \mathrm{GL}_2(\mathbb{Z}/N\mathbb{Z})$ sends the isomorphism class of 
$(E,e_1,e_2)$ to the class of $(E, e_1', e_2')$ where 
$$\begin{pmatrix}e_1'\\e_2'\end{pmatrix}=\begin{pmatrix}a & b \\ c & d\end{pmatrix}\begin{pmatrix}e_1\\e_2\end{pmatrix}.$$
%We remark that this left action of $g$ is compatible with our (Emerton's) right action of $\widetilde{g}^{-1} : Y(N)(\mathbb{C})\isom Y(N)(\mathbb{C})$ defined 
%in \S 1.1.1 for any lift $\widetilde{g}\in \mathrm{GL}_2(\widehat{\mathbb{Z}})\subseteq \mathrm{GL}_2(\mathbb{A}_f)$ of $g$. 

For each divisors $M\geqq 3$ and $m\geqq 1$ of $N$, one has a finite \'etale morphism 
$$Y(N)\rightarrow Y(M)\otimes_{\mathbb{Z}}\mathbb{Z}[1/N]$$
%:=Y(M)\times_{\mathrm{Spec}(\mathbb{Z})}\mathrm{Spec}(\mathbb{Z}[1/N])$$ 
induced by $(E, e_1, e_2)\mapsto (E, \frac{N}{M}e_1, \frac{N}{M}e_2),$
and has a morphism 
$$Y(N)\rightarrow \mu_{m/\mathbb{Z}[1/N]}^0$$ 
induced by
$(E, e_1, e_2)\mapsto \langle e_1, e_2\rangle^{N/m}$, where 
$\langle\, ,\,\rangle : E[N]\times E[N]\rightarrow \mu_{N}$ is the Weil pairing and 
$\mu_{m/\mathbb{Z}[1/N]}^0$ is the open and closed subscheme of $\mu_{m/\mathbb{Z}[1/N]}$ defined over $\mathbb{Z}[1/N]$ consisting of the primitive $m$-th roots of unity. As the product of these maps, one obtains a finite morphism 
$$Y(N)\rightarrow Y(M)\otimes_{\mathbb{Z}}\mu_{m/\mathbb{Z}[1/N]}^0:=Y(M)\times_{\mathrm{Spec}(\mathbb{Z})}\mu_{m/\mathbb{Z}[1/N]}^0$$ 
for each divisors $M\geqq 3$ and $m\geqq 1$ of $N$.
%where we write $Y(M)\otimes_{\mathbb{Z}}\mu_{m/\mathbb{Z}[1/N]}^0:=Y(M)\times_{\mathrm{Spec}(\mathbb{Z})}\mu_{m/\mathbb{Z}[1/N]}^0$. 

\subsubsection{Remarks on Hecke operators}In \cite{Ka04} and \cite{FK}, they consider two types of Hecke operators denoted by 
$$T(n) \ \ \text{ and } \ \ T'(n)=T^*(n)$$ for any integer $n\geqq 1$ such that 
$(n, N)=1$, which act on the cohomology groups $H^i(X(N), j_* \mathcal{V}_{k/A}),  H^i(Y(N),  \mathcal{V}_{k/A})$, the spaces of modular forms 
$S_k(N)_{A}, M_k(N)_{A}$, and the $K$-groups $K_i(X(N))$, $K_i(Y(N))$, etc. 

In \S2.1.2, for any prime $l$ such that $(l,N)=1$, we define another
 Hecke operators $T_l, T'_l, S_l, S'_l$ acting on $H^1(Y(N))_A$
 corresponding to the double cosets of $\begin{pmatrix}l & 0 \\ 0 & 1\end{pmatrix}, \begin{pmatrix}l^{-1}& 0 \\ 0 & 1\end{pmatrix}$
, $\begin{pmatrix}l & 0 \\ 0 & l\end{pmatrix}, \begin{pmatrix}l^{-1} & 0 \\ 0 & l^{-1}\end{pmatrix}\in G_l$ 
respectively.  As a generalization of these actions, we consider Hecke operators $T_l$ and $S_l$ (resp. $T'_l$ and $S'_l$) acting on 
$H^1(Y(N), \mathcal{V}_{k/A})$  (resp. $H^1(Y(N), \mathcal{V}^*_{k/A})$) for any $k\geqq 2$ corresponding to 
the double cosets of $\begin{pmatrix}l & 0 \\ 0 & 1\end{pmatrix}$ and $\begin{pmatrix}l& 0 \\ 0 & l\end{pmatrix}$ 
(resp. $\begin{pmatrix}l^{-1} & 0 \\ 0 & 1\end{pmatrix}$ and $\begin{pmatrix}l^{-1} & 0 \\ 0 & l^{-1}\end{pmatrix}$) in $G_l$ 
respectively. 

 For our purpose, we need to know the precise relation between these Hecke operators. 
 We first remark that one has the following equalities of the actions on the cohomology $H^1(Y(N), \mathcal{V}_{k/A})$ : 
\begin{equation}\label{aa}T_l=T(l)\begin{pmatrix}l & 0 \\ 0 & 1\end{pmatrix}^*=T^*(l)\begin{pmatrix}1 & 0 \\ 0 & l\end{pmatrix}^*,
\end{equation}
where the action $\begin{pmatrix}l & 0 \\ 0 & 1\end{pmatrix}^*$ is the right action on $H^i(Y(N), \mathcal{V}_{k/A})$ induced by Kato's and Fukaya-Kato's left action 
of $\begin{pmatrix}l & 0 \\ 0 & 1\end{pmatrix}\in \mathrm{GL}_2(\mathbb{Z}/N\mathbb{Z})$ 
on the space $Y(N)$ (see the sentence before (4.9.3) of \cite{Ka04}),
 and 
\begin{equation}\label{bb}S_l=l^{k-2}\begin{pmatrix}l & 0 \\ 0 & l\end{pmatrix}^*.
\end{equation}

We furthermore need to compare these actions with the actions of $T'_l$ and $S'_l$ on $H^1(Y(N), \mathcal{V}^*_{k/A})$. 
We set $e_1=\begin{pmatrix}1\\ 0\end{pmatrix}, e_2=\begin{pmatrix}0 \\ 1\end{pmatrix}\in A^2$, and 
$\mathrm{det}_{/A}:=A$ on which $\mathrm{GL}_2(\mathbb{Z}_p)$ acts by the determinant. Then, 
one has a $\mathrm{GL}_2(\mathbb{Z}_p)$-equivariant pairing 
 $$(\, ,\, ) : A^2\times A^2\xrightarrow{(x, y)\mapsto x\wedge y} Ae_1\wedge e_2\isom \mathrm{det}_{/A},$$ 
 where the isomorphism is defined by $e_1\wedge e_2\mapsto 1$. For each $k\geqq 2$, it naturally induces a pairing 
 $$\mathrm{Sym}^{k-2}(A^2)\times \mathrm{Sym}^{k-2}(A^2)\rightarrow \mathrm{det}^{\otimes (k-2)}.$$
 This induces an
 isomorphism $$\mathrm{Sym}^{k-2}(A^2)\isom \mathrm{Sym}^{k-2}(A^2)^*\otimes \mathrm{det}^{\otimes (k-2)}$$ 
 of $A[\mathrm{GL}_2(\mathbb{Z}_p)]$-modules, 
 which also induces an isomorphism 
 \begin{equation}\label{aa1}
 \mathcal{V}_{k/A}\isom \mathcal{V}^*_{k/A}(2-k)
 \end{equation}
 since the character $\mathrm{det}_{/A}$ corresponds to the Tate twist $A(-1)$, by which we identify the both sides.
%Since one has a canonical isomorphism 
%$$\mathcal{V}_{k/A}\isom \mathcal{V}^*_{k/A}(2-k),$$
This isomorphism induces an isomorphism 
\begin{equation}\label{aa2}
H^1(Y(N), \mathcal{V}_{k/A})\isom H^1(Y(N), \mathcal{V}^*_{k/A})(2-k).
\end{equation}
Using this isomorphism, we equip actions of $T'_l$ and $S'_l$ on $H^1(Y(N), \mathcal{V}_{k/A})$ by 
%We remark that this isomorphism is not equivariant for the actions of $T_l$ and $S_l$ if we equip $H^1(Y(N), \mathcal{V}^*_{k/A})(2-k)$ 
%with the action of $T_l, S_l$ by 
$$T'_l(v\otimes e_{2-k}):=T'_l(v)\otimes e_{2-k},\ \ S'_l(v\otimes e_{2-k}):=S'_l(v)\otimes e_{2-k},$$
here $e_{2-k}$ is a base of $A(2-k)$.

\begin{lemma}\label{2.01}
On has equalities 
$$T'_l=T^*(l)\begin{pmatrix}l^{-1}& 0 \\ 0 & 1\end{pmatrix}^*=l^{k-2}T_lS_l^{-1}$$
and 
$$S'_l=l^{k-2}\begin{pmatrix}l^{-1}& 0 \\ 0 & l^{-1}\end{pmatrix}^*=l^{2(k-2)}S_l^{-1}$$
as operators acting on $H^1(Y(N), \mathcal{V}_{k/A})$. 
\end{lemma}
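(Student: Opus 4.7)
The plan is to first establish the analogues of (\ref{aa}) and (\ref{bb}) for $T'_l$ and $S'_l$ as operators on the cohomology $H^1(Y(N), \mathcal{V}^*_{k/A})$ of the dual local system, where they are originally defined, and then transport those formulas to $H^1(Y(N), \mathcal{V}_{k/A})$ via the isomorphism (\ref{aa2}). Once this is done, the second equalities in both statements drop out by a short manipulation using (\ref{aa}), (\ref{bb}).

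Step 1: On $H^1(Y(N), \mathcal{V}^*_{k/A})$, the same double-coset computation that produced (\ref{aa}) and (\ref{bb}) — but now using the double cosets of $\begin{pmatrix}l^{-1}&0\\0&1\end{pmatrix}$ and $\begin{pmatrix}l^{-1}&0\\0&l^{-1}\end{pmatrix}$ and the dual representation $\mathrm{Sym}^{k-2}(A^2)^*$ in place of $\mathrm{Sym}^{k-2}(A^2)$ — yields the identities $T'_l=T^*(l)\begin{pmatrix}l^{-1}&0\\0&1\end{pmatrix}^*$ and $S'_l=l^{k-2}\begin{pmatrix}l^{-1}&0\\0&l^{-1}\end{pmatrix}^*$ on the dual cohomology. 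The factor $l^{k-2}$ in the second identity appears for the same reason as in (\ref{bb}): the central element $\begin{pmatrix}l^{-1}&0\\0&l^{-1}\end{pmatrix}$ acts on the dual fibre $\mathrm{Sym}^{k-2}(A^2)^*$ by the scalar $l^{k-2}$, which must be separated from the purely geometric action of the matrix on $Y(N)$.

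Step 2: The pairing used to build (\ref{aa1}) is $\mathrm{GL}_2$-equivariant by construction, hence the isomorphism $\mathcal{V}_{k/A}\isom \mathcal{V}^*_{k/A}(2-k)$ intertwines the geometric actions $g^*$ on both sides for any $g\in\mathrm{GL}_2(\mathbb{Z}/N\mathbb{Z})$, and it intertwines the Hecke correspondences $T^*(l)$ (which are defined purely by a geometric correspondence that is insensitive to whether one takes $\mathrm{Sym}^{k-2}$ or its dual, once the scaling is accounted for). Combined with the definition $T'_l(v\otimes e_{2-k}):=T'_l(v)\otimes e_{2-k}$ and similarly for $S'_l$, the identities of Step 1 transfer verbatim to operators on $H^1(Y(N), \mathcal{V}_{k/A})$, giving the first equalities in the lemma.

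Step 3: The second equalities follow by bookkeeping. From (\ref{bb}) I obtain $S_l^{-1}=l^{-(k-2)}\begin{pmatrix}l^{-1}&0\\0&l^{-1}\end{pmatrix}^{*}$, so
\[
l^{k-2}\begin{pmatrix}l^{-1}&0\\0&l^{-1}\end{pmatrix}^{*}=l^{k-2}\cdot l^{k-2}S_l^{-1}=l^{2(k-2)}S_l^{-1},
\]
which is the asserted formula for $S'_l$. For $T'_l$, rewriting (\ref{aa}) as $T^*(l)=T_l\begin{pmatrix}1&0\\0&l^{-1}\end{pmatrix}^{*}$ and using Step 2 gives
\[
T'_l=T_l\begin{pmatrix}1&0\\0&l^{-1}\end{pmatrix}^{*}\begin{pmatrix}l^{-1}&0\\0&1\end{pmatrix}^{*}=T_l\begin{pmatrix}l^{-1}&0\\0&l^{-1}\end{pmatrix}^{*}=l^{k-2}T_lS_l^{-1}.
\]

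The main obstacle is Step~1: verifying, via the Hecke double-coset decomposition for $\mathrm{GL}_2(\mathbb{Q}_l)$ acting on $H^1(Y(N),\mathcal{V}^*_{k/A})$, that replacing the representation $\mathrm{Sym}^{k-2}(A^2)$ by its dual and replacing $\begin{pmatrix}l&0\\0&\ast\end{pmatrix}$ by $\begin{pmatrix}l^{-1}&0\\0&\ast\end{pmatrix}$ produces precisely the claimed normalization with no spurious factor. This is entirely parallel to the derivation already recorded in (\ref{aa}) and (\ref{bb}); Steps~2 and~3 are essentially formal.
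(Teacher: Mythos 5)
Your proposal is correct in its conclusion and the algebra in Step~3 checks out, but it follows a genuinely different route from the paper, and the route you chose loads all the weight onto a step the paper avoids. The paper splits the lemma into two independent claims: (i) the ``middle $=$ right'' equalities are pure algebra from (\ref{aa}) and (\ref{bb}), exactly as in your Step~3; (ii) the ``left $=$ right'' equalities are deduced \emph{directly} from the twist formula $g(v\otimes e_{2-k})=|\mathrm{det}(g)|_l^{2-k}\,g(v)\otimes e_{2-k}$. Concretely, since $T'_l$ and $S'_l$ on $H^1(Y(N),\mathcal{V}_{k/A})$ are by definition the double-coset operators on $H^1(Y(N),\mathcal{V}^*_{k/A})$ carried across by $v\mapsto v\otimes e_{2-k}$, the twist formula produces the determinant factors $|\,l^{-1}|_l^{2-k}=l^{2-k}$ and $|\,l^{-2}|_l^{2-k}=l^{2(2-k)}$ when comparing these with the \emph{native} double-coset operators on $H^1(Y(N),\mathcal{V}_{k/A})$; then the elementary Hecke-algebra relations $\bigl[\begin{smallmatrix}l^{-1}&0\\0&1\end{smallmatrix}\bigr]=T_lS_l^{-1}$ and $\bigl[\begin{smallmatrix}l^{-1}&0\\0&l^{-1}\end{smallmatrix}\bigr]=S_l^{-1}$ finish it. You instead prove ``left $=$ middle'' by first establishing dual analogues of (\ref{aa}), (\ref{bb}) on $H^1(Y(N),\mathcal{V}^*_{k/A})$ (your Step~1) and then transporting them via (\ref{aa2}) (your Step~2). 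Your Step~2 and Step~3 are fine: the isomorphism $\mathcal{V}_{k/A}\isom\mathcal{V}^*_{k/A}(2-k)$ is $\mathrm{GL}_2(\mathbb{Z}_p)$- and $G_{\mathbb{Q}}$-equivariant, so the geometric $g^*$ operators and $T^*(l)$ do intertwine. But your Step~1, which you correctly flag as the main obstacle, is not just a ``mirror image'' of (\ref{aa})--(\ref{bb}): the form $T'_l=T^*(l)\bigl(\begin{smallmatrix}l^{-1}&0\\0&1\end{smallmatrix}\bigr)^*$ is not obtained by the substitution $\bigl(\begin{smallmatrix}l&0\\0&\ast\end{smallmatrix}\bigr)\rightsquigarrow\bigl(\begin{smallmatrix}l^{-1}&0\\0&\ast\end{smallmatrix}\bigr)$ into (\ref{aa}) (that would rather produce $T^*(l)\bigl(\begin{smallmatrix}1&0\\0&l^{-1}\end{smallmatrix}\bigr)^*$), so extra Hecke-algebra manipulation is needed even to state the analogue correctly. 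The paper's twist-formula route bypasses this entirely. In short: same destination, but the paper's path is shorter and keeps the burden of unverified coset computations to the two formulas (\ref{aa}), (\ref{bb}) it already assumes, whereas yours needs their duals as well.
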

\begin{proof}
The equalities $$T^*(l)\begin{pmatrix}l^{-1}& 0 \\ 0 & 1\end{pmatrix}^*=l^{(k-2)}T_lS_l^{-1}$$ and 
$$l^{k-2}\begin{pmatrix}l^{-1}& 0 \\ 0 & l^{-1}\end{pmatrix}^*=l^{2(k-2)}S_l^{-1}$$ immediately follows from 
the equalities (\ref{aa}) and (\ref{bb}). The equalities $T'_l=l^{(k-2)}T_lS_l^{-1}$ and 
$S'_l=l^{2(k-2)}S_l^{-1}$ follows from the fact that one has 
$$g(v\otimes e_{2-k})=|\mathrm{det}(g)|_l^{(2-k)}g(v)\otimes e_{2-k}$$
for every $v\in H^1(Y(N), \mathcal{V}^*_{k/A})$ and $g\in G_l$, where 
$|-|_l : \mathbb{Q}_l^{\times}\rightarrow \mathbb{Q}_{>0}$ is the absolute value such that $|l|_l=1/l$. 

\end{proof}

We similarly define actions of $T_l, S_l$  and $T'_l, S'_l$ on $S_k(N)_A$ and $M_k(N)_A$ by the same formula : 
$$T_l:=T^*(l)\begin{pmatrix}1 & 0 \\ 0 & l\end{pmatrix}^*, \ \ S_l=l^{k-2}\begin{pmatrix}l& 0 \\ 0 & l\end{pmatrix}^*$$
and 
$$T'_l:=T^*(l)\begin{pmatrix}l^{-1}& 0 \\ 0 & 1\end{pmatrix}^*, \ \ S'_l=l^{k-2}\begin{pmatrix}l^{-1}& 0 \\ 0 & l^{-1}\end{pmatrix}^*.$$

We write $\mathbb{T}_k(N)_A$ (resp. $\mathbb{T}'_k(N)_A$) to denote the sub $A$-algebra of  
$$\mathrm{End}_A(H^1(Y(N), \mathcal{V}_{k/A}))$$ generated by 
$T_l$ and $S_l$ (resp. $T'_l$ and $S'_l$) for all $l\not|Np$. By definition of $T'_l$ and $S'_l$ on $H^1(Y(N), \mathcal{V}_{k/A})$, we remark that one can also define $\mathbb{T}'_k(N)_A$ as a sub $A$-algebra 
of $\mathrm{End}_A(H^1(Y(N), \mathcal{V}^*_{k/A}))$ generated by $T'_l$ and $S'_l$ for all $l\not|Np$.

\subsubsection{Zeta elements in $K_2$}
For each integer $c\geqq 2$ such that $(6,c)=1$, Kato  (\cite{Ka04}, Proposition 1.3) defined an element 
$${}_c\,\theta_E\in \Gamma(E\setminus E[c], \mathcal{O})^{\times}$$ for any 
elliptic curve $E$ which is uniquely characterized by the equalities 
$$\mathrm{div}({}_c\,\theta_E)=E[c]-c^2[0]$$
and $$N_a({}_c\,\theta_E|_{E\setminus E[ac]})={}_c\,\theta_E$$ for any integer $a$ which is prime to $c$, where the map 
$$N_a : \Gamma(E\setminus E[ac], \mathcal{O})^{\times}\rightarrow \Gamma(E\setminus E[c], \mathcal{O})^{\times}$$ is the norm map 
with respect to the multiplication map $[a] : E\setminus E[ac]\rightarrow E\setminus E[c]$. 

For each $N\geqq 3$, $(\alpha, \beta)\in \left(\frac{1}{N}\mathbb{Z}/\mathbb{Z}\right)^2\setminus \{(\overline{0}, \overline{0})\}$, and $c\geqq 2$ such that $(c, 6N)=1$, 
we define an element $${}_c g_{\alpha, \beta}:=\iota^*_{\alpha, \beta}({}_c\, \theta_{E^{\mathrm{univ}}})\in \Gamma(Y(N), \mathcal{O})^{\times}$$ using 
the element ${}_c\, \theta_{E^{\mathrm{univ}}}$ for the universal elliptic curve $E^{\mathrm{univ}}$ over $Y(N)$, where $\iota_{\alpha, \beta}$ 
is the section
$$\iota_{\alpha, \beta}:=(N\alpha) e_1+(N\beta) e_2 : Y(N)\hookrightarrow E^{\mathrm{univ}}\setminus E^{\mathrm{univ}}[c].$$
The element ${}_c g_{\alpha, \beta}$ satisfies the equality 
$$\sigma^*({}_c g_{\alpha, \beta})={}_c g_{\alpha', \beta'}$$ for any 
$\sigma=\begin{pmatrix}a& b\\c& d\end{pmatrix}\in \mathrm{GL}_2(\mathbb{Z}/N\mathbb{Z})$, where $(\alpha', \beta')$ is defined by 
$$(\alpha', \beta')=(\alpha,\beta)\begin{pmatrix}a& b\\c& d\end{pmatrix}.$$ 

For each $N\geqq 3$, $c, d\geqq 2$ such that $(cd, 6N)=1$, we define an element 
$${}_{c,d}z_N:=\{{}_cg_{1/N, 0}, {}_dg_{0,1/N}\}\in K_2(Y(N))$$ as the image of the element 
$$({}_cg_{1/N, 0}, {}_dg_{0,1/N})\in (\Gamma(Y(N), \mathcal{O})^{\times})^2$$ by the symbol map 
$\{-,-\} : (\Gamma(Y(N), \mathcal{O})^{\times})^2\rightarrow K_2(Y(N))$. This element ${}_{c,d}z_N$ is denoted by the notation ${}_{c,d}z_{N,N}$ in \cite{Ka04}. 
In our article, we use these elements only for $c=d$, hence we write $${}_cz_N:={}_{c,c}z_N$$ for simplicity. 

For each $M\geqq 3$, $c\geqq 2$ such that $(c, 6M)=1$, and any divisor $N\geqq 3$ of $M$, we recall the formula
 (\cite{Ka04}, Proposition 2.3, 2.4) of the image
$$\mathrm{Norm}({}_cz_{M})\in K_2(Y(N)\otimes_{\mathbb{Z}}\mathbb{Z}[1/M])$$
of the element ${}_cz_M$ by the norm map $$\mathrm{Norm} : K_2(Y(M))\rightarrow K_2(Y(N)\otimes_{\mathbb{Z}}\mathbb{Z}[1/M])$$ 
with respect to the finite flat map $Y(M)\rightarrow Y(N)\otimes_{\mathbb{Z}}\mathbb{Z}[1/M]$ over $\mathrm{Spec}(\mathbb{Z}[1/M])$. 
We write $\mathrm{prime}(M)$ to denote the set of prime divisors of $M$. 

If $\mathrm{prime}(M)=\mathrm{prime}(N)$, one has the equality 
\begin{equation}\label{e1}
\mathrm{Norm}({}_cz_{M})={}_cz_{N}.
\end{equation}

If $M=Nl$ for a prime $l$ such that $(l, N)=1$, one has the equality 
\begin{equation}\label{e2}
\mathrm{Norm}({}_cz_{Nl})=\left(1-T^*(l)\begin{pmatrix}1/l& 0 \\ 0 & 1\end{pmatrix}^*+l\begin{pmatrix}1/l& 0 \\ 0 & 1/l\end{pmatrix}^*\right){}_cz_{N}.
\end{equation}

\subsubsection{Zeta elements in Galois cohomology}
Let $N\geqq 3$, $c\geqq 2$ be integers such that $(c, 6N)=1$. Here, we write 
$$H^1(Y(N)):=H^1(Y(N))_{\mathbb{Z}_p}$$ to simplify the notation (i.e. we assume that $\mathcal{O}=\mathbb{Z}_p$). 
We first define an element 
$${}_cz^{(p)}_{N}\in H^1(\mathbb{Z}[1/Np], H^1(Y(N))(2))$$
%\isom 
%\varprojlim_{k\geqq 1}H^1(G_{\mathbb{Q}, \Sigma}, H^1(Y(N_0p^k))_{\mathbb{Z}_p
%}(2))$$ 
as the image of ${}_cz_{N_0, p^{\infty}}$ by the following composites
\begin{multline*}
K_2(Y(N))\xrightarrow{(a)} H^2_{\text{\'et}}(Y(N), \mathbb{Z}_p(2))
%\xrightarrow{(b)} \varprojlim_{k\geqq 1}H^2_{\text{\'et}}(Y(N_0p^k), \mathbb{Z}/p^k\mathbb{Z}(1))\\
\xrightarrow{(b)} H^1(\mathbb{Z}[1/Np], H^1(Y(N))_{\mathbb{Z}_p}(2)), %\\
%\varprojlim_{n\geqq 1}H^2_{\text{\'et}}(Y(N)\otimes_{\mathbb{Z}}\mathbb{Z}[1/p], \mathbb{Z}/p^n\mathbb{Z}(1))\\
%=H^2_{\text{\'et}}(Y(N)\otimes_{\mathbb{Z}}\mathbb{Z}[1/p], \mathbb{Z}_p(1))\xrightarrow{(f)}
%H^1(G_{\mathbb{Q}, \Sigma}, H^1(Y(N))_{\mathbb{Z}_p
%}(1)),
\end{multline*}
where the map $(a)$ is the Chern class map, and
%$$K_2(Y(N_0p^k))\rightarrow H^2_{\text{\'et}}(Y(N_0p^k), \mathbb{Z}/p^k\mathbb{Z}(2))$$
%for all $k\geqq 1$, %$K_2(Y(Np^n))\rightarrow H^2_{\text{\'et}}(Y(Np^n), \mathbb{Z}/p^n\mathbb{Z}(2))$, 
%the map $(b)$ is the limit of the isomorphism induced by $x\mapsto x\otimes (\zeta_{p^k}^{-1})_{k\geqq 1}$ ($\zeta_{p^k}\in \Gamma(Y(N_0p^k), \mu_{p^k})$ is 
%the section defined by the Weil pairing $\zeta_{p^k}:=\langle e_{1,N_0p^k}^{N_0}, e_{2,N_0p^k}^{N_0}\rangle$), 
the map $(b)$ is the map
%$$H^2_{\text{\'et}}(Y(N_0p^k), \mathbb{Z}/p^k\mathbb{Z}(2))\rightarrow H^1(\mathbb{Z}[1/\Sigma], H^1(Y(N_0p^k))_{\mathbb{Z}_p/p^k\mathbb{Z}_p}(2))$$
obtained from the Hochschild-Serre spectral sequence 
(remark that one has $$H^1(\mathbb{Z}[1/Np], -)=H^1_{\text{\'et}}(\mathrm{Spec}(\mathbb{Z}[1/Np]), -)$$ and 
$H^2(Y(N))=0$ since $Y(N)_{\overline{\mathbb{Q}}}$ is a smooth affine curve).
%and the map $(c)$ is the (inverse of the) natural isomorphism. 
We remark that both the maps $(a), (b)$ are equivariant for 
the Hecke actions of $T(l), T^*(l)$ for any prime $l \not|N$, and Kato's right action on
$\mathrm{GL}_2(\mathbb{Z}/N\mathbb{Z})$. Since this composite is also compatible with the norm map in the left hand side 
and the co-restriction map in the right hand side, for any prime $l$, the norm relation for ${}_cz_N$ implies the similar relations 
\begin{equation}\label{cc}
\mathrm{Cor}({}_cz^{(p)}_{Nl})
=\begin{cases} {}_cz^{(p)}_{N} & \text{ if } l|N\\
\left(1-T^*(l)\begin{pmatrix}1/l& 0 \\ 0 & 1\end{pmatrix}^*+l\begin{pmatrix}1/l& 0 \\ 0 & 1/l\end{pmatrix}^*\right) {}_cz^{(p)}_{N} & \text{ if } (l,N)=1\end{cases}
\end{equation}
in the space $H^1(\mathbb{Z}[1/Nlp], H^1(Y(N))_{\mathbb{Z}_p}(2)),$
%for every divisor $N'\geqq 1$ of $N$ such that $\mathrm{prime}(Np)=\mathrm{prime}(N'p)$, 
where the map 
$$\mathrm{Cor} : H^1(\mathbb{Z}[1/Nlp], H^1(Y(Nl))_{\mathbb{Z}_p}(2))\rightarrow 
H^1(\mathbb{Z}[1/Nlp], H^1(Y(N))_{\mathbb{Z}_p}(2))$$ is the map induced by the co-restriction map 
$$\mathrm{Cor} : H^1(Y(Nl))\rightarrow H^1(Y(N))$$ with respect to the canonical map $Y(Nl)_{\overline{\mathbb{Q}}}\rightarrow Y(N)_{\overline{\mathbb{Q}}}$.  

We remark that the latter equality in (\ref{cc}) can be simply written by 
$$\mathrm{Cor}({}_cz^{(p)}_{Nl})=(1-T'_l+lS'_l) {}_cz^{(p)}_{N}$$
 by Lemma \ref{2.01}. 

Let $N_0\geqq 1, c\geqq 2$ be integers such that $(p, N_0)=1$ and $(c, 6N_0p)=1$. We recall that we write 
$$\widetilde{H}^{BM}_1(K^p(N_0)):=\varprojlim_{k\geqq 1}H^1(Y(N_0p^k))(1)$$
 to denote the completed (Borel-Moore) homology, where we set 
 $$K^p(N_0):=\mathrm{Ker}\left(\mathrm{GL}_2(\widehat{\mathbb{Z}}^{(p)})\xrightarrow{\mathrm{can}}\mathrm{GL}_2(\mathbb{Z}/N_0\mathbb{Z})\right).$$
 
 We set $\Sigma:=\mathrm{prime}(N_0)\cup\{p\}$.  
 We remark that %each $H^1(Y(N_0p^k))$, hence $\widetilde{H}^{BM}_1(K^p(N_0))$ are 
 %$G_{\mathbb{Q}}$-modules which are unramified outside $\Sigma\cup\{\infty\}$, and 
 the Mittag-Leffler condition implies that the natural map 
 $$H^1(\mathbb{Z}[1/\Sigma], \widetilde{H}^{BM}_1(K^p(N_0))(m))
 \isom \varprojlim_{k\geqq 1}H^1(\mathbb{Z}[1/\Sigma], H^1(Y(N_0p^k))(m+1))$$
  is isomorphism for every $m\in \mathbb{Z}$ since the groups $$H^1(\mathbb{Z}[1/\Sigma], H^1(Y(N_0p^k))_{\mathbb{Z}/p^s\mathbb{Z}}(m+1))$$ are
 of finite order for all integrs $k,s\geqq 1$. We freely identify the both sides by this isomorphism. Then, one can define an element 
 $${}_cz^{(p)}_{N_0p^{\infty}}:=({}_cz^{(p)}_{N_0p^k})_{k\geqq 1}\in H^1(\mathbb{Z}[1/\Sigma], \widetilde{H}^{BM}_1(K^p(N_0))(1))$$
 by the norm relation (\ref{cc}) for ${}_cz^{(p)}_{N_0p^k}$ ($k\geqq 1$). 

Let $l$ be a prime different from $p$. We write $\Sigma_l:=\Sigma\cup\{l\}$. %Since all the maps $(a), (b), (c)$ commute with the (limit of) norm maps and co-restriction maps with respect to the 
%maps $Y(N_0lp^k)\rightarrow Y(N_0p^k)\otimes_{\mathbb{Z}}\mathbb{Z}[1/l]$ for all $k\geqq 1$, 
% for every divisor $N'\geqq 1$ of $N$ such that $\mathrm{prime}(Np)=\mathrm{prime}(N'p)$, %and $Y(N)\otimes_{\mathbb{Z}}\mathbb{Z}[1/p]\rightarrow Y(N')\otimes_{\mathbb{Z}}\mathbb{Z}[1/p]$, 
the norm relation (\ref{cc}) for ${}_cz^{(p)}_{N}$ implies the similar equality 
\begin{equation}\label{c}
\mathrm{Cor}({}_cz^{(p)}_{N_0lp^{\infty}})
=\begin{cases} {}_cz^{(p)}_{N_0p^{\infty}} & \text{ if } l|N_0\\(1-T'_l+lS'_l)
{}_cz^{(p)}_{N_0p^{\infty}} & \text{ if } (l,N_0p)=1\end{cases}
\end{equation}
in the space $H^1(\mathbb{Z}[1/\Sigma_l], \widetilde{H}^{BM}_1(K^p(N_0))(1)),$
%for every divisor $N'\geqq 1$ of $N$ such that $\mathrm{prime}(Np)=\mathrm{prime}(N'p)$, 
where the map 
$$\mathrm{Cor} : H^1(\mathbb{Z}[1/\Sigma_l], \widetilde{H}^{BM}_1(K^p(N_0l))(1))\rightarrow H^1(\mathbb{Z}[1/\Sigma_l], \widetilde{H}^{BM}_1(K^p(N_0))(1))$$ is the map induced by the map 
$$\mathrm{Cor} : \widetilde{H}^{BM}_1(K^p(N_0l))(1)\rightarrow \widetilde{H}^{BM}_1(K^p(N_0))(1)$$
which is the limit of the co-restriction maps $$\mathrm{Cor} : H^1(Y(N_0lp^k))(1)\rightarrow H^1(Y(N_0p^k))(1)$$ 
 for all $k\geqq 1$. 

%We remark that the latter equality in (\ref{c}) can be simply written by 
%$$\mathrm{Cor}({}_cz^{(p)}_{N_0l, p^{\infty}})=(1-T'_l+lS'_l) {}_cz^{(p)}_{N_0, p^{\infty}}$$
% by Lemma \ref{2.01}. 

%Since our Hecke operators $T'_l$ and $S'_l$ acting on $H^1(Y(N_0p^k))$ and $\widetilde{H}^{BM}_1(Y(N_0))$ satisfy 
%$$T'_l=T^*(l)\begin{pmatrix}1/l& 0 \\ 0 & 1\end{pmatrix}^*\quad \text{ and }\quad S'_l=\begin{pmatrix}1/l& 0 \\ 0 & 1/l\end{pmatrix}^*$$
% by Lemma \ref{2.01}, the latter equality can be simply written by 
%$$\mathrm{Cor}({}_cz^{(p)}_{N_0l, p^{\infty}})=(1-T'_l+lS'_l){}_cz^{(p)}_{N_0, p^{\infty}}.$$

We next define zeta elements in the Galois cohomology of $H^1(Y(N), \mathcal{V}^*_{k/\mathbb{Z}_p})$ for each $N\geqq 3$ and $k\geqq 2$. 
For $m\in \mathbb{Z}_{\geqq 0}$, we set $K_0:=\mathrm{GL}_2(\mathbb{Z}_p)$ if $m=0$ and $K_m:=1+p^m\mathrm{M}_{2}(\mathbb{Z}_p)$ if $m\geqq 1$. 
For $N=p^mN_0$ with $(N_0, p)=1$ and $k\in \mathbb{Z}_{\geqq 2}$, we consider 
%For each $k\in \mathbb{Z}_{\geqq 2}$ and $A=\mathcal{O}, \mathcal{O}/\varpi^n, E$, etc., we write 
 %$\mathcal{V}_{k/A}$ and $\mathcal{V}_{k/A}^*$ to denote $\mathcal{V}_M$ and its $A$-linear dual for the $A[\mathrm{GL}_2(\mathbb{Z}_p)]$-module $M=\mathrm{Sym}^{k-2}(A^2)$, and write its cohomology by 
 %$$H^i(K_f, \mathcal{V}_k)_A:=H^i(Y(K_f)(\mathbb{C}), \mathcal{V}_{k/A}) \text{ and } H^i(K_f, \mathcal{V}^*_k)_A:=H^i(Y(K_f)(\mathbb{C}), \mathcal{V}^*_{k/A}).$$
 %For each integer $k\geqq 2$ and $m\geqq 0$ ($K_0=\mathrm{GL}_2(\mathbb{Z}_p)$?), we set $N:=N_0p^m$, and we consider 
 the following composite : 
\begin{multline*}
f : \mathrm{Sym}^{k-2}(\mathbb{Z}_p^2)^*\otimes_{\mathbb{Z}_p[[K_m]]}
H^1(\mathbb{Z}[1/\Sigma], \widetilde{H}^{BM}_1(K^p(N_0))(1))\\
\xrightarrow{(a)}
H^1(\mathbb{Z}[1/\Sigma], \mathrm{Sym}^{k-2}(\mathbb{Z}_p^2)^*\otimes_{\mathbb{Z}_p[[K_m]]}
 \widetilde{H}^{BM}_1(K^p(N_0))(1))\\
 \xrightarrow{(b)}
H^1(\mathbb{Z}[1/\Sigma], H^1(Y(N), \mathcal{V}^*_{k/\mathbb{Z}_p})(2)),
\end{multline*}
where the map $(a)$ is the canonical base change map, and the map $(b)$ is the map induced by the canonical map
$$\mathrm{Sym}^{k-2}(\mathbb{Z}_p^2)^*\otimes_{\mathbb{Z}_p[[K_m]]}
 \widetilde{H}^{BM}_1(K^p(N_0))(1)\rightarrow H^1(Y(N), \mathcal{V}^*_{k/\mathbb{Z}_p})(2)$$ defined in (the proof of) Lemma \ref{1.10}  for 
 $W=\mathrm{Sym}^{k-2}(\mathbb{Z}_p^2)^*$. 
 We remark that this composite is %$\mathrm{GL}_2(\mathbb{Z}/N)$-linear and 
 Hecke (i.e. $T'_l, S'_l$) equivariant.
 % with respect to the actions of $T_l$ and $S_l$ (resp. $T'_l$ and $S'_l$) on the left (resp. right) hand side. 
 For each integer $c\geqq 2$ such that $(c, 6N_0p)=1$, using this map $f$ and the element 
 $${}_cz^{(p)}_{N_0p^{\infty}}\in H^1(\mathbb{Z}[1/\Sigma], \widetilde{H}^{BM}_1(K^p(N_0))(1)),$$ we define the following $\mathbb{Z}_p$-linear map 
$${}_cz^{(p)}_{N}(k,-) : \mathrm{Sym}^{k-2}(\mathbb{Z}_p^2)^*\rightarrow H^1(\mathbb{Z}[1/\Sigma], H^1(Y(N), \mathcal{V}^*_{k/\mathbb{Z}_p})(2))$$
by $${}_cz^{(p)}_{N}(k, v):=f(v\otimes {}_cz^{(p)}_{N_0, p^{\infty}})$$ for every 
$v\in  \mathrm{Sym}^{k-2}(\mathbb{Z}_p^2)^*$. 

By definition and the norm relation (\ref{c}) for ${}_cz^{(p)}_{N_0p^{\infty}}$, one also has
\begin{equation}\label{ccc}
\mathrm{Cor}\circ {}_cz^{(p)}_{Nl}(k,-)=\begin{cases}{}_cz^{(p)}_{N}(k,-)& \text{ if } l|N, \\
(1-T'_l+lS'_l) {}_cz^{(p)}_{N}(k,-)& \text{ if } (l,Np)=1\end{cases}
\end{equation}
in the group $H^1(\mathbb{Z}[1/\Sigma_l], H^1(Y(N), \mathcal{V}^*_{k/\mathbb{Z}_p})(2))$
for any prime $l$ (we don't consider the case where $l=p$ and $(p, N)=1$), % and $x\in \mathrm{Sym}^{k-2}(\mathcal{O}^2)^*$, 
%for the map $$\mathrm{Cor} : H^1(G_{\mathbb{Q}, \Sigma_l}, H^1(Y(Nl), \mathcal{V}^*_k)_{\mathbb{Z}_p}(1))\rightarrow 
%H^1(G_{\mathbb{Q}, \Sigma_l}, H^1(Y(N), \mathcal{V}^*_k)_{\mathbb{Z}_p}(1)).$$
where the map 
$$\mathrm{Cor} : H^1(\mathbb{Z}[1/\Sigma_l], H^1(Y(Nl), \mathcal{V}^*_{k/\mathbb{Z}_p})(2))\rightarrow H^1(\mathbb{Z}[1/\Sigma_l], H^1(Y(N), \mathcal{V}^*_{k/\mathbb{Z}_p})(2))$$ 
is the map induced by the co-restriction
$$\mathrm{Cor} : H^1(Y(Nl), \mathcal{V}^*_{k/\mathbb{Z}_p})(2)\rightarrow H^1(Y(N), \mathcal{V}^*_{k/\mathbb{Z}_p})(2)$$ 
with respect to the canonical finite map $Y(Nl)_{\overline{\mathbb{Q}}}\rightarrow Y(N)_{\overline{\mathbb{Q}}}$. 
%(remark that our action of $T'_l$ and $S'_l$ on $H^1(Y(Nl), \mathcal{V}^*_k)(2)$ is equal to Kato's action of 
%$T^*(l)\begin{pmatrix}1/l& 0 \\ 0 & 1\end{pmatrix}^*$ and $l^{k-2}\begin{pmatrix}1/l& 0 \\ 0 & 1/l\end{pmatrix}^*$ respectively in prop 8.7 (1)[Ka] )

Let $n\geqq 1$ and $m\geqq 0$ be integers such that $(n, Np)=1$. 
We set $\Sigma_n:=\Sigma\cup\mathrm{prime}(n)$ the union of $\Sigma$ and the set of prime divisors $\mathrm{prime}(n)$ of $n$. 
For each $c\geqq 2$ such that $(c, 6Nnp)=1$, 
we define the following $\mathbb{Z}_p$-linear map 
$${}_cz^{(p)}_{N, np^m}(k, -) : \mathrm{Sym}^{k-2}(\mathbb{Z}_p^2)^*\rightarrow 
H^1(\mathbb{Z}[1/\Sigma_n, \zeta_{np^m}], H^1(Y(N),\mathcal{V}^*_{k/\mathbb{Z}_p})(2))$$
as the following composite :
\begin{multline*}
 \mathrm{Sym}^{k-2}(\mathbb{Z}_p^2)^*\xrightarrow{{}_cz^{(p)}_{M}(k,-)} H^1(\mathbb{Z}[1/\Sigma_n], H^1(Y(M), \mathcal{V}^*_{k/\mathbb{Z}_p})(2))
 \\\xrightarrow{(a)}H^1(\mathbb{Z}[1/\Sigma_n], H^1(Y(N)\otimes_{\mathbb{Z}}\mu^0_{np^m/\mathbb{Z}[1/np]}, \mathcal{V}^*_{k/\mathbb{Z}_p})(2))\\
 \xrightarrow{(b)}H^1(\mathbb{Z}[1/\Sigma_n, \zeta_{np^m}], H^1(Y(N), \mathcal{V}^*_{k/\mathbb{Z}_p})(2))
\end{multline*} for any $M\geqq 1$ such that $Nn|M$ and $p^m|M$ and $\mathrm{prime}(M)=\Sigma_n$, where 
the map $(a)$ is induced by the push forward map $$H^1(Y(M), \mathcal{V}^*_{k/\mathbb{Z}_p})(2)
\rightarrow H^1(Y(N)\otimes_{\mathbb{Z}}\mu^0_{np^m/\mathbb{Z}[1/np]}, \mathcal{V}^*_{k/\mathbb{Z}_p})(2)$$ with respect to the canonical finite map 
$$Y(M)\rightarrow Y(N)\otimes_{\mathbb{Z}}\mu^0_{np^m/\mathbb{Z}[1/np]},$$ and the map $(b)$ is the isomorphism induced from Shapiro's lemma. We remark that, by the norm relation (\ref{ccc}) for ${}_cz^{(p)}_{M}(k,-)$, the definition of ${}_cz^{(p)}_{N, np^m}(k, -)$ does not depend on the choice of $M$, and 
the map ${}_cz^{(p)}_{N, np^m}(k, -)$ satisfies the following norm relation : %for every $x\in \mathrm{Sym}^{k-2}(\mathbb{Z}_p^2)^*$, one has
\begin{equation}\label{cccc}
\mathrm{Cor}\circ {}_cz^{(p)}_{N, np^{m+1}}(k, -)={}_cz^{(p)}_{N, np^m}(k, -)
\end{equation}
in the group $H^1(\mathbb{Z}[1/\Sigma_{n}, \zeta_{np^m}], H^1(Y(N), \mathcal{V}^*_{k/\mathbb{Z}_p})(2))$
for every $m\geqq 1$, and 
\begin{equation}\label{ccccc}\mathrm{Cor}\circ {}_cz^{(p)}_{N, nlp^{m}}(k, -)=\begin{cases}{}_cz^{(p)}_{N, np^m}(k, -)& \text{ if } l|n,
\\ (1-T'_l\otimes\sigma_l^{-1}+lS'_l\otimes \sigma_l^{-2}){}_cz^{(p)}_{N, np^m}(k, -)& \text{ if } (l,n)=1\end{cases}
\end{equation}
in the group $H^1(\mathbb{Z}[1/\Sigma_{nl}, \zeta_{np^m}], H^1(Y(N), \mathcal{V}^*_{k/\mathbb{Z}_p})(2))$
for every prime $l\not\in \Sigma$, where the map 
$$\mathrm{Cor} : H^1(\mathbb{Z}[1/\Sigma_{nl}, \zeta_{nlp^m}], H^1(Y(N), \mathcal{V}^*_{k/\mathbb{Z}_p})(2))\rightarrow 
H^1(\mathbb{Z}[1/\Sigma_{nl}, \zeta_{np^m}], H^1(Y(N), \mathcal{V}^*_{k/\mathbb{Z}_p})(2))$$ 
is the co-restriction map with respect to the canonical map 
$$\mathrm{Spec}(\mathbb{Z}[\zeta_{nlp^m}, 1/Nnlp])\rightarrow \mathrm{Spec}(\mathbb{Z}[\zeta_{np^m}, 1/Nnlp]),$$ 
and the element $\sigma_l\in \mathrm{Gal}(\mathbb{Q}(\zeta_{np^m}/\mathbb{Q}))$ corresponds to 
$l\in (\mathbb{Z}/np^m\mathbb{Z})^{\times}$ by the cyclotomic character. 

In particular, for every $N, n, c\geqq 1$ as above, one can also define the following map
\begin{multline*}
{}_cz^{\mathrm{Iw}}_{N, n}(k,-):=\varprojlim_{m\geqq 1}
{}_cz^{(p)}_{N, np^m}(k,-) : \mathrm{Sym}^{k-2}(\mathbb{Z}_p^2)^*
\\
\rightarrow 
\varprojlim_{m\geqq 1} H^1(\mathbb{Z}[1/\Sigma_n, \zeta_{np^m}], H^1(Y(N), \mathcal{V}^*_{k/\mathbb{Z}_p})(2))=:
H^1_{\mathrm{Iw}}(\mathbb{Z}[1/\Sigma_n, \zeta_n], H^1(Y(N), \mathcal{V}^*_{k/\mathbb{Z}_p})(2)), 
%:=\varprojlim_{m\geqq 1} H^1(G_{\mathbb{Q}(\zeta_{np^m}), \Sigma_{n}}, H^1(Y(N), \mathcal{V}^*_k)_{\mathbb{Z}_p}(2)),
%\\=:H^1_{\mathrm{Iw}}(G_{\mathbb{Q}(\zeta_{n}), \Sigma_{n}}, H^1(Y(N), \mathcal{V}^*_k)_{\mathbb{Z}_p}(1)).
\end{multline*}and it also satisfies the similar norm relation : 
\begin{equation}\label{c1}
\mathrm{Cor}\circ {}_cz^{\mathrm{Iw}}_{N, nl}(k, -)=\begin{cases}{}_cz^{\mathrm{Iw}}_{N, n}(k, -)& \text{ if } l|n,
\\ (1-T'_l\otimes\sigma_l^{-1}+lS'_l\otimes \sigma_l^{-2}){}_cz^{\mathrm{Iw}}_{N, n}(k, -)& \text{ if } (l,n)=1\end{cases}
\end{equation} 
for any prime $l\not\in \Sigma$, where the element $\sigma_l\in \mathrm{Gal}(\mathbb{Q}(\zeta_{np^{\infty}}/\mathbb{Q}))$ corresponds to 
$l\in \varprojlim_{m\geqq 1}(\mathbb{Z}/np^m\mathbb{Z})^{\times}$ by the cyclotomic character.

\subsubsection{Zeta values formula}
Here, we recall Kato's formula which relates zeta elements in Galois cohomology with its associated zeta values. This formula is the most important theorem for Kato's work \cite{Ka04} and also for our construction of zeta elements. 

For each integer $N\geqq 3$, one has the canonical period map
$$\mathrm{per} : M_k(N)_{\mathbb{Q}}\rightarrow H^1(Y(N)(\mathbb{C}), \mathcal{V}_{k/\mathbb{C}})$$
for the scheme$Y(N)_{\mathbb{Q}}$. 
Since one has \begin{multline*}
H^0(X(N)\otimes_{\mathbb{Z}}\mu_{n/\mathbb{Q}}^0, \omega^{\otimes (k-2)}\otimes\Omega^1(\mathrm{log}\{\text{cusps}\}))\\
=H^0(X(N)_{\mathbb{Q}}, \omega^{\otimes (k-2)}\otimes\Omega^1(\mathrm{log}\{\text{cusps}\}))\otimes_{\mathbb{Q}}\mathbb{Q}(\zeta_n)
=M_k(N)_{\mathbb{Q}}\otimes_{\mathbb{Q}}\mathbb{Q}(\zeta_n),\end{multline*}
 and $$H^1((Y(N)\otimes_{\mathbb{Z}}\mu_{n/\mathbb{Q}}^0)(\mathbb{C}), \mathcal{V}_{k/\mathbb{C}})
\isom H^1(Y(N)(\mathbb{C}),  \mathcal{V}_{k/\mathbb{C}})\otimes_{\mathbb{C}}
\mathbb{C}[\mathrm{Gal}(\mathbb{Q}(\zeta_n)/\mathbb{Q})]$$ by Shapiro's lemma for each $n\geqq 1$,  one also has the period map
$$\mathrm{per} : M_k(N)_{\mathbb{Q}}\otimes_{\mathbb{Q}}\mathbb{Q}(\zeta_n) \rightarrow H^1(Y(N)(\mathbb{C}), \mathcal{V}_{k/\mathbb{C}})\otimes_{\mathbb{C}}
\mathbb{C}[\mathrm{Gal}(\mathbb{Q}(\zeta_n)/\mathbb{Q})]$$
for the scheme
$Y(N)\otimes_{\mathbb{Z}}\mu_{n/\mathbb{Q}}^0$.

% (remark that one has 
%\begin{multline*}
%H^0(X(N)\otimes_{\mathbb{Z}}\mu_{n/\mathbb{Q}}^0, \omega^{\otimes (k-2)}\otimes\Omega^1(\mathrm{log}\{\text{cusps}\}))\\
%=H^0(X(N)_{\mathbb{Q}}, \omega^{\otimes (k-2)}\otimes\Omega^1(\mathrm{log}\{\text{cusps}\}))\otimes_{\mathbb{Q}}\mathbb{Q}(\zeta_n)
%=M_k(N)_{\mathbb{Q}}\otimes_{\mathbb{Q}}\mathbb{Q}(\zeta_n),\end{multline*}
% and $$H^1((Y(N)\otimes_{\mathbb{Z}}\mu_{n/\mathbb{Q}}^0)(\mathbb{C}), \mathcal{V}_{k/\mathbb{C}})
%\isom H^1(Y(N)(\mathbb{C}),  \mathcal{V}_{k/\mathbb{C}})\otimes_{\mathbb{C}}
%\mathbb{C}[\mathrm{Gal}(\mathbb{Q}(\zeta_n)/\mathbb{Q})]$$ by Shapiro's lemma).

For a finite dimensional $\mathbb{Q}_p$-vector space $V$ with a continuous $\mathbb{Q}_p$-linear action of $G_{\mathbb{Q}_p}$, we write 
$$\mathrm{exp}^* : H^1(\mathbb{Q}_p, V)\rightarrow \mathrm{Fil}^0D_{\mathrm{dR}}(V):=(\bold{B}^+_{\mathrm{dR}}\otimes_{\mathbb{Q}_p}V)^{G_{\mathbb{Q}_p}}$$
 to denote the Bloch-Kato dual exponential. For $V=H^1(Y(N), \mathcal{V}_{k/\mathbb{Q}_p})(1)|_{G_{\mathbb{Q}_p}}$, one has the following canonical isomorphisms
 \begin{multline*}
 \mathrm{Fil}^0D_{\mathrm{dR}}(V)=\mathrm{Fil}^1D_{\mathrm{dR}}(H^1(Y(N), \mathcal{V}_{k/\mathbb{Q}_p}))\otimes_{\mathbb{Q}_p}D_{\mathrm{dR}}(\mathbb{Q}_p(1))\\
 \isom \mathrm{Fil}^1D_{\mathrm{dR}}(H^1(Y(N), \mathcal{V}_{k/\mathbb{Q}_p}))
 \isom H^0(X(N)_{\mathbb{Q}_p}, \omega^{\otimes (k-2)}\otimes\Omega^1(\mathrm{log}\{\text{cusps}\}))\\
 =M_k(N)_{\mathbb{Q}}\otimes_{\mathbb{Q}}\mathbb{Q}_p,
 \end{multline*}
 where the first isomorphism is induced by the 
 canonical isomorphism $D_{\mathrm{dR}}(\mathbb{Q}_p(1))\isom \mathbb{Q}_p,$ and the second one is from the comparison theorem of $p$-adic Hodge theory (see \S 11 of \cite{Ka04}). 
 Therefore, the dual exponential map 
 for $H^1(Y(N), \mathcal{V}_{k/\mathbb{Q}_p})(1)|_{G_{\mathbb{Q}_p}}$ becomes the following map 
 $$\mathrm{exp}^* : H^1(\mathbb{Q}_p, H^1(Y(N), \mathcal{V}_{k/\mathbb{Q}_p})(1))\rightarrow M_k(N)_{\mathbb{Q}}\otimes_{\mathbb{Q}}\mathbb{Q}_p.$$ 
 Similarly, since one has canonical isomorphisms $$\mathrm{Fil}^0D_{\mathrm{dR}}(H^1(Y(N)\otimes_{\mathbb{Z}}\mu_{n/\mathbb{Q}}^0, \mathcal{V}_{k/\mathbb{Q}_p})(1))
 \isom M_k(N)_{\mathbb{Q}}\otimes_{\mathbb{Q}}\mathbb{Q}(\zeta_{n})\otimes_{\mathbb{Q}}\mathbb{Q}_p,$$ and 
 $$H^1(\mathbb{Q}_p, H^1(Y(N)\otimes_{\mathbb{Z}}\mu_{n/\mathbb{Q}}^0, \mathcal{V}_{k/\mathbb{Q}_p})(1))\isom 
 H^1(\mathbb{Q}(\zeta_{n})\otimes_{\mathbb{Q}}\mathbb{Q}_p, H^1(Y(N), \mathcal{V}_{k/\mathbb{Q}_p})(1))$$ by Shapiro's lemma, the dual exponential for 
 $H^1(Y(N)\otimes_{\mathbb{Z}}\mu_{n/\mathbb{Q}}^0, \mathcal{V}_{k/\mathbb{Q}_p})(1)|_{G_{\mathbb{Q}_p}}$ becomes the following map 
 $$\mathrm{exp}^* : H^1(\mathbb{Q}(\zeta_{n})\otimes_{\mathbb{Q}}\mathbb{Q}_p, H^1(Y(N), \mathcal{V}_{k/\mathbb{Q}_p})(1))
  \rightarrow M_k(N)_{\mathbb{Q}}\otimes_{\mathbb{Q}}\mathbb{Q}(\zeta_{n})\otimes_{\mathbb{Q}}\mathbb{Q}_p.$$
 
Let $\Sigma$ be a finite set of primes. In \S 4.5 of \cite{Ka04}, Kato considers the following operator valued functions
$$Z_{\Sigma}(s)=%\sum_{n\geqq 1, (n, N)=1}T^*(n)\begin{pmatrix}1/n& 0 \\ 0 & 1\end{pmatrix}^*n^{-s}=
 \prod_{l \not\in\Sigma}P_l(l^{-s})^{-1}$$
 acting on the cohomology $H^1(Y(N)(\mathbb{C}), \mathcal{V}_{k/\mathbb{C}})$ for every $N\geqq 3$ such that 
 $\mathrm{prime}(N)\subseteq \Sigma$ and $k\geqq 2$, 
  where one defines
 $$P_{l}(u)=1-T^*_l\begin{pmatrix}l^{-1}& 0 \\ 0 & 1\end{pmatrix}^*u+l^{k-1}\begin{pmatrix}l^{-1}& 0 
 \\ 0 & l^{-1}\end{pmatrix}u^2$$
 for any prime $l\not\in \Sigma$.  We remark that one can also write
 $$P_{l}(u)=1-T'_lu+lS'_lu^2$$
 by Lemma \ref{2.01}.

 For each $n\geqq 1$, we set $\Sigma_n:=\Sigma\cup\mathrm{prime}(n)$.
  We also need the 
  following operator valued zeta function 
  $$Z_{\Sigma, n}(s):=\prod_{l \not\in \Sigma_n}P_l(l^{-s}\otimes\sigma_l^{-1})^{-1}$$
 % (remark that one has $P_l(l^{-s}\otimes\sigma_l^{-1})=1-l^{-s}T'_l\otimes\sigma_l^{-1}+l^{1-2s}S'_l\otimes\sigma_l^{-2}$)
  acting on the cohomology $H^1(Y(N)(\mathbb{C}), \mathcal{V}_{k/\mathbb{C}})\otimes_{\mathbb{C}}
  \mathbb{C}[\mathrm{Gal}(\mathbb{Q}(\zeta_n)/\mathbb{Q})]$ for every $N$ such that $\mathrm{prime}(N)\subset \Sigma_n$. 
  
    It is known that the zeta functions $Z_{\Sigma}(s)$ and $Z_{\Sigma, n}(s)$ converge when $\mathrm{Re}(s)>k$, and have meromorphic analytic continuation to the whole complex plane $\mathbb{C}$ and 
 are holomorphic on $\mathbb{C}$ except at $k=2$ (see \S2.5 and \S4.5 of \cite{Ka04}). 
 
 We remark that, for any $N\geqq 3$ such that $\mathrm{prime}(N)\subset \Sigma$, and any divisors $M\geqq 3$ and $n\geqq 1$ of $N$ (then $\Sigma_n=\Sigma$), one has the equality 
 \begin{equation}\label{d1}
 f_*\circ Z_{\Sigma}(s)=Z_{\Sigma, n}(s)\circ f_*
 \end{equation}
 for arbitrary $s\not=2$, where the map
 \begin{multline*}
 f_* : H^1(Y(N)(\mathbb{C}), \mathcal{V}_{k/\mathbb{C}})\rightarrow H^1((Y(M)\otimes_{\mathbb{Z}}\mu_{n/\mathbb{Q}}^0)(\mathbb{C}), \mathcal{V}_{k/\mathbb{C}})\\
 \isom H^1(Y(M)(\mathbb{C}), \mathcal{V}_{k/\mathbb{C}})\otimes_{\mathbb{C}}\mathbb{C}[\mathrm{Gal}(\mathbb{Q}(\zeta_n)/\mathbb{Q})]
 \end{multline*}
 is the push-forward map with respect to the canonical finite map $f : Y(N)_{\mathbb{Q}}\rightarrow Y(M)\otimes_{\mathbb{Z}}\mu_{n/\mathbb{Q}}^0$.

 We next define local systems $\mathcal{H}^1$ and $\mathcal{H}_1$ on $Y(N)(\mathbb{C})$ by 
 $$\mathcal{H}^1:=R^1f_*(\underline{\mathbb{Z}})\text{ and } \mathcal{H}_1:=\mathcal{H}om(\mathcal{H}^1, 
 \underline{\mathbb{Z}}), $$
 where $f : E^{\mathrm{univ}}(\mathbb{C})\rightarrow Y(N)(\mathbb{C})$ is the canonical structure map, and 
$ \underline{\mathbb{Z}}$ is the constant local system on each spaces 
$E^{\mathrm{univ}}(\mathbb{C})$ and $Y(N)(\mathbb{C})$. 
For each integer $k\geqq 2$, we write  
$$\mathcal{V}_{k/\mathbb{Z}}:=\mathrm{Sym}^{k-2}(\mathcal{H}^1),$$
then its $\mathbb{Z}$-dual is 
$$\mathcal{V}^*_{k/\mathbb{Z}}=\mathrm{Sym}^{k-2}(\mathcal{H}_1).$$

%%and write its cohomology by 
%$$H^i(Y(N)(\mathbb{C}), \mathcal{V}_{k/\mathbb{Z}}):=H^i(Y(N)(\mathbb{C}), \mathcal{V}_{k/\mathbb{Z}})$$ 
%and $$
%H^i(Y(N)(\mathbb{C}), \mathcal{V}_{k/\mathbb{Q}}):=H^i(Y(N)(\mathbb{C}), \mathcal{V}_{k/\mathbb{Z}})\otimes_{\mathbb{Z}}\mathbb{Q}$$
%etc. 

%where $f : E^{\mathrm{univ}}(\mathbb{C})\rightarrow Y(N)(\mathbb{C})$ is the canonical structure map, and 
%$\mathbb{Z}_{E^{\mathrm{univ}}}$ and $\mathbb{Z}_{Y(N)}$ are the constant local system on $E^{\mathrm{univ}}(\mathbb{C})$ and $Y(N)$. 

 For each $N\geqq 3$, we define a continuous map 
 $$\varphi : (0,\infty)\rightarrow Y(N)(\mathbb{C})=\mathrm{GL}_2(\mathbb{Q})\backslash \mathcal{H}^{\pm} \times (\mathrm{GL}_2(\mathbb{A}_f)/K(N)) : 
 y \mapsto [(iy, e)]$$ 
 ($e$ is the unit). We remark that the stalk of $\varphi^{-1}(\mathcal{H}_1)$ at $y\in (0,\infty)$ is identified with 
 $H_1(\mathbb{C}/\mathbb{Z}yi+\mathbb{Z})=\mathbb{Z}yi+\mathbb{Z}$, and the group $\Gamma((0,\infty), \varphi^{-1}(\mathcal{H}_1))$ 
 is a free $\mathbb{Z}$-module of rank two with basis $f_1$ and $f_2$ such that the stalk of $f_1$ (resp. $f_2$) at any $y\in (0,\infty)$ 
 is $yi$ (resp. $1$) in $\mathbb{Z}yi+\mathbb{Z}$. We write $e_1:=x_1^*, e_2:=x_2^*\in (\mathbb{Z}^2)^*$ to denote the dual basis with respect to the canonical basis
 $x_1:=\begin{pmatrix}1\\ 0\end{pmatrix}, x_2:=\begin{pmatrix}0\\1\end{pmatrix}$ 
 of $\mathbb{Z}^2$. We define an isomorphism 
 $$(\mathbb{Z}^2)^*\isom \Gamma((0,\infty), \varphi^{-1}(\mathcal{H}_1)) : e_1\mapsto f_1, e_2\mapsto f_2,$$
 then this also induces an isomorphism 
 $$\mathrm{Sym}^{k-2}(\mathbb{Z})^*\isom  \Gamma((0,\infty), \varphi^{-1}(\mathcal{V}^*_{k/\mathbb{Z}})) : e_1^ie_2^{k-2-i}\mapsto f_1^{i}f_2^{k-2-i}$$
 for each $k\geqq 2$, by  which we identify the both sides. 
 %one has an isomorphism 
% $$\mathbb{Z}^2\isom \Gamma((0,\infty), \varphi^{-1}(\mathcal{H}_1))$$ such that the stalk at $y$ of the image of $e_1\in \mathbb{Z}^2$ 
 %(resp. $e_2\in \mathbb{Z}^2$) is $yi\in \mathbb{Z}yi+\mathbb{Z}$ (resp. $1$). We fix this isomorphism, and identify 
 %both side by this isomorphism. 
 Using this isomorphism, we define a map 
 $$\delta_N(k,-) : \mathrm{Sym}^{k-2}(\mathbb{Z}^2)^* \rightarrow H^1(Y(N)(\mathbb{C}), \mathcal{V}^*_{k/\mathbb{Z}})(1)$$
 as the following composite : 
 \begin{multline*}
 \mathrm{Sym}^{k-2}(\mathbb{Z}^2)^*\isom \Gamma((0,\infty), \varphi^{-1}(\mathcal{V}^*_{k/\mathbb{Z}}))
 \xrightarrow{(a)} H_1([0,\infty], \{0,\infty\},\varphi^{-1}(\mathcal{V}^*_{k/\mathbb{Z}}))\\
 \xrightarrow{\varphi_*} H_1(X(N)(\mathbb{C}), \{\text{cusps}\}, \mathcal{V}^*_{k/\mathbb{Z}})\xrightarrow{(b)} H^1(Y(N)(\mathbb{C}), \mathcal{V}^*_{k/\mathbb{Z}})(1),
% \xrightarrow{x\mapsto (2\pi i)^{-1}x}H^1(Y(N)(\mathbb{C}), \mathcal{V}^*_{k})_{\mathbb{Z}} ,
 \end{multline*}
 where the map $(a)$ is the canonical isomorphism, and $(b)$ is the isomorphism induced from Poincar\'e duality (see \S 4.7 of \cite{Ka04}). 
 
 %we first define the element 
 %$$\{0, \infty\}_N\in H^1(Y(N)(\mathbb{C}), \mathbb{Z}(1))\isom H_1(X(N)(\mathbb{C}), \{\text{cusps}\}, \mathbb{Z})$$ as the image of 
%the class of the path $\gamma : [0,\infty]\rightarrow \mathcal{H}^+\cup\mathbb{P}^1(\mathbb{Q}) : y\mapsto iy$ by the map 
% $$H_1(\mathcal{H}^+\cup\mathbb{P}^1(\mathbb{Q}), \mathbb{P}^1(\mathbb{Q}), \mathbb{Z})\rightarrow H_1(X(N)(\mathbb{C}), \{\text{cusps}\}, \mathbb{Z})$$
% induced by the (natural extension to $\mathcal{H}^+\cup\mathbb{P}^1(\mathbb{Q})$ of the) map
% $$\mathcal{H}^+\rightarrow Y(N)(\mathbb{C})=\mathrm{GL}_2(\mathbb{Q})\backslash \mathcal{H}^{\pm} \times (\mathrm{GL}_2(\mathbb{A}_f)/K(N)) : 
% \tau\mapsto [(\tau, e)].$$ 
We remark that, for any divisors $M\geqq 3$ and $n\geqq 1$ of $N$, one has an equality
 $$f_*\circ\delta_N(k,-)=\delta_{M}(k,-)\otimes 1$$
 as the map from $\mathrm{Sym}^{k-2}(\mathbb{Z}^2)^*$ to $
 H^1(Y(M)(\mathbb{C}), \mathcal{V}^*_{k/\mathbb{Z}})(1)\otimes_{\mathbb{Z}}\mathbb{Z}[\mathrm{Gal}(\mathbb{Q}(\zeta_n)/\mathbb{Q})],$ where the map 
 \begin{multline*}
 f_* : H^1(Y(N)(\mathbb{C}), \mathcal{V}^*_{k/\mathbb{Z}})(1)\rightarrow H^1((Y(M)\otimes_{\mathbb{Z}}\mu_n^0)(\mathbb{C}), \mathcal{V}^*_{k/\mathbb{Z}})(1)\\
 \isom H^1((Y(M)(\mathbb{C}), \mathcal{V}^*_{k/\mathbb{Z}})(1)\otimes_{\mathbb{Z}}\mathbb{Z}[\mathrm{Gal}(\mathbb{Q}(\zeta_n)/\mathbb{Q})]
 \end{multline*}
 is the push-forward map with respect to the canonical finite morphism $f : Y(N)_{\mathbb{Q}}\rightarrow Y(M)\otimes_{\mathbb{Z}}\mu_{n/\mathbb{Q}}^0$. 
 
 For any $N\geqq 3$ and $c\geqq 2$ such that $(c, 6Np)=1$, we define a $\mathbb{Z}$-linear map 
 $${}_c\delta_N(k, -):\mathrm{Sym}^{k-2}(\mathbb{Z}^2)^*\rightarrow H^1(Y(N)(\mathbb{C}), \mathcal{V}^*_{k/\mathbb{Z}})(1)$$
 by 
 $${}_c\delta_N(k, e_1^{j-1}e_2^{k-j-1}):=
 \left(c^2-c^{k-j}\begin{pmatrix}c& 0 \\ 0 & 1\end{pmatrix}^*\right)\left(c^2-c^{j}\begin{pmatrix}1& 0 \\ 0 & c\end{pmatrix}^*\right)
 \delta_N(k, e_1^{j-1}e_2^{k-j-1})
$$ for every $1\leqq j\leqq k-1$, here we regard $\begin{pmatrix}c& 0\\0 & 1\end{pmatrix}$ and $\begin{pmatrix}1& 0 \\ 0 & c\end{pmatrix}$ as elements in 
$\mathrm{GL}_2(\mathbb{Z}/N\mathbb{Z})$. 
 
 For any $N\geqq 3$, $n\geqq 1$ and $c\geqq 2$ such that $(c, 6Nnp)=1$, we similarly define a $\mathbb{Z}$-linear map 
 $$ {}_c\delta_{N, n}(k,-) : \mathrm{Sym}^{k-2}(\mathbb{Z}^2)^*\rightarrow H^1(Y(N)(\mathbb{C}), \mathcal{V}^*_{k/\mathbb{Z}})(1)\otimes_{\mathbb{Z}}\mathbb{Z}[\mathrm{Gal}(\mathbb{Q}(\zeta_n)/\mathbb{Q})]$$
 by 
 \begin{multline*}
 {}_c\delta_{N, n}(k, e_1^{j-1}e_2^{k-j-1})\\
 :=
 \left(c^2-c^{k-j}\begin{pmatrix}c& 0 \\ 0 & 1\end{pmatrix}^*\otimes\sigma_c\right)\left(c^2-c^{j}\begin{pmatrix}1& 0 \\ 0 & c\end{pmatrix}^*\otimes\sigma_c\right)
 \left(\delta_N(k, e_1^{j-1}e_2^{k-j-1})\otimes 1\right)
 \end{multline*}
 for every $1\leqq j\leqq k-1$.

 %\begin{multline*}
% {}_c\delta_{N,\zeta_n}:=c^2\left(c-\begin{pmatrix}c& 0 \\ 0 & 1\end{pmatrix}^*\otimes\sigma_c\right)\left(c-\begin{pmatrix}1& 0 \\ 0 & c\end{pmatrix}^*\otimes\sigma_c\right)
% \left(\frac{1}{2\pi i}\{0,\infty\}_N\otimes 1\right)\\
% \in H^1(Y(N)(\mathbb{C}), \mathbb{Z})\otimes_{\mathbb{Z}}\mathbb{Z}[\mathrm{Gal}(\mathbb{Q}(\zeta_n)/\mathbb{Q})].
% \end{multline*}
 We remark that, for every divisors $M\geqq3$ and $n\geqq 1$ of $N$, one has the equality 
 \begin{equation}\label{d2}
 f_*\circ {}_c\delta_N(k,-)={}_c\delta_{M, n}(k,-)
 \end{equation}
 as the map from $\mathrm{Sym}^{k-2}(\mathbb{Z}^2)^*$ to $H^1(Y(M)(\mathbb{C}), \mathcal{V}^*_{k/\mathbb{Z}})(1)\otimes_{\mathbb{Z}}\mathbb{Z}[\mathrm{Gal}(\mathbb{Q}(\zeta_n)/\mathbb{Q})]$, 
 where the map $$f_* : H^1(Y(N)(\mathbb{C}), \mathcal{V}^*_{k/\mathbb{Z}})(1)\rightarrow H^1(Y(M)(\mathbb{C}), \mathcal{V}^*_{k/\mathbb{Z}})(1)\otimes_{\mathbb{Z}}\mathbb{Z}[\mathrm{Gal}(\mathbb{Q}(\zeta_n)/\mathbb{Q})]$$ is the push-forward map with respect to the canonical finite morphism
 $Y(N)_{\mathbb{Q}}\rightarrow Y(M)\otimes_{\mathbb{Z}}\mu_{n/\mathbb{Q}}^0$. 
 
 We remark that $H^1(Y(N)(\mathbb{C}), \mathcal{V}^*_{k/\mathbb{Z}})$ and $H^1(Y(N)(\mathbb{C}), \mathcal{V}_{k/\mathbb{Z}})$ have natural actions of the Galois group 
 $\mathrm{Gal}(\mathbb{C}/\mathbb{R})=\{\tau, e\}\isom \{\pm 1\}$ induced by its action on $Y(N)(\mathbb{C})$.  It also acts on the ring $\mathbb{Z}[\mathrm{Gal}(\mathbb{Q}(\zeta_{n})/\mathbb{Q})]$ by 
 $\tau\sigma_c=\sigma_{-c}$ for any $c\in (\mathbb{Z}/n\mathbb{Z})^{\times}$. For any $\mathrm{Gal}(\mathbb{C}/\mathbb{R})$-module  $M$, 
 we write $M^+:=M^{\tau=1}$. 
  %write the map $\iota : H^1(Y(N)(\mathbb{C}), \mathbb{Z})$ to denote the map induced by the complex conjugation on $Y(N)(\mathbb{C})$
 
 We now fix a finite set of primes $\Sigma$ containing $p$. 
 %We write 
% $$Z_{\Sigma}(s)=\prod_{l\not\in \Sigma}P_l(l^{-s})$$
% which we identify with $Z_{N}(s)$ for every $N$ such that $\mathrm{prime}(N)=\Sigma$, and 
% write 
% $$Z_{\Sigma, \zeta_{np^m}}(s)=\prod_{l\not\in \Sigma_n}P_l(l^{-s}\otimes\sigma_l^{-1})$$ 
 %which we also identify with $Z_{N, \zeta_{np^m}}(s)$ for every $N, n, m$ such that $\mathrm{prime}(N)=\Sigma$. %We remark that the pairing $\mathrm{Sym}^{k-2}(A^2)\times \mathrm{Sym}(A^2)\rightarrow \mathrm{det}^{\otimes (k-2)}$ induced by 
% the pairing $$(\, ,\, ) : A^2\times A^2\xrightarrow{(x, y)\mapsto x\wedge y} Ae_1\wedge e_2\isom \mathrm{det}$$ such that $(e_1, e_2)=1$ induces an
%% isomorphism $$\mathrm{Sym}^{k-2}(A^2)^*\isom \mathrm{Sym}^{k-2}(A^2)\otimes \mathrm{det}^{\otimes (2-k)}$$ 
 %of $A[\mathrm{GL}_2(\mathbb{Z}_p)]$-modules, 
 %and also induces an isomorphism 
% $$\mathcal{V}_{k/A}^*\isom \mathcal{V}_{k/A}(k-2),$$ 
% by which we identify the both sides.
 We can now state Kato's zeta values formula.

 \begin{thm}\label{2.1}
 Let $N\geqq3, n\geqq 1$ be any integers such that $\mathrm{prime}(Np)=\Sigma$ and $(n, Np)=1$. 
 For every element $v\in \mathrm{Sym}^{k-2}(\mathbb{Z}^2)^*\subseteq \mathrm{Sym}^{k-2}(\mathbb{Z}_p)^*$ and $m\geqq 0$, 
 the image $${}_c\omega_{N, np^m}(k,v)\in M_k(N)_{\mathbb{Q}}\otimes_{\mathbb{Q}}\mathbb{Q}(\zeta_{np^m})\otimes_{\mathbb{Q}}\mathbb{Q}_p$$ of 
 the element
 $${}_cz^{\mathrm{Iw}}_{N, n}(k,v)\in 
 H^1_{\mathrm{Iw}}(\mathbb{Z}[1/\Sigma_n, \zeta_n], H^1(Y(N), \mathcal{V}^*_{k/\mathbb{Z}_p})(2))$$ by the composite
 
 \begin{multline*}
 H_{\mathrm{Iw}}^1(\mathbb{Z}[1/\Sigma_n, \zeta_n], H^1(Y(N), \mathcal{V}^*_{k/\mathbb{Z}_p})(2))\xrightarrow{(a)}
 H_{\mathrm{Iw}}^1(\mathbb{Z}[1/\Sigma_n, \zeta_n], H^1(Y(N), \mathcal{V}_{k/\mathbb{Z}_p})(1))
  \\\xrightarrow{\mathrm{can}}H^1(\mathbb{Z}[1/\Sigma_n, \zeta_{np^m}], H^1(Y(N),\mathcal{V}_{k/\mathbb{Z}_p})(1))
 \xrightarrow{\mathrm{loc}_p}H^1(\mathbb{Q}(\zeta_{np^m})\otimes_{\mathbb{Q}}\mathbb{Q}_p, H^1(Y(N),\mathcal{V}_{k/\mathbb{Z}_p})(1))\\
 \xrightarrow{\mathrm{exp}^*}M_k(N)_{\mathbb{Q}}\otimes_{\mathbb{Q}}\mathbb{Q}(\zeta_{np^m})\otimes_{\mathbb{Q}}\mathbb{Q}_p,
 \end{multline*}
 where the map $(a)$ is the twist by $((\zeta_{p^m})_{m\geqq 1})^{\otimes (1-k)}$ in Iwasawa cohomology, is contained in the subspace $M_k(N)_{\mathbb{Q}}\otimes_{\mathbb{Q}}\mathbb{Q}(\zeta_{np^m})
 $, and its image by the period map $$\mathrm{per} : M_k(N)_{\mathbb{Q}}\otimes_{\mathbb{Q}}\mathbb{Q}(\zeta_{np^m})
 \rightarrow H^1(Y(N)(\mathbb{C}), \mathcal{V}_{k/\mathbb{C}})\otimes_{\mathbb{C}}\mathbb{C}[\mathrm{Gal}(\mathbb{Q}(\zeta_{np^m})/\mathbb{Q})]$$ satisfies the equality 
 \begin{equation}\label{zeta}
 \mathrm{per}({}_c\omega_{N, np^m}(k,v))=\frac{1}{(2\pi i)^{k-1}}Z_{\Sigma, np^m}(k-1)\cdot {}_c\delta_{N, np^m}(k,v)
 \end{equation}
 in the quotient $$H^1(Y(N)(\mathbb{C}), \mathcal{V}_{k/\mathbb{C}})\otimes_{\mathbb{C}[\{\pm1\}]}\mathbb{C}[\mathrm{Gal}(\mathbb{Q}(\zeta_{np^m})/\mathbb{Q})]$$ of 
 $H^1(Y(N)(\mathbb{C}), \mathcal{V}_{k/\mathbb{C}})\otimes_{\mathbb{C}}\mathbb{C}[\mathrm{Gal}(\mathbb{Q}(\zeta_{np^m})/\mathbb{Q})]$.

 \end{thm}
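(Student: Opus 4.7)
The plan is to follow Kato's strategy from Section~4 of \cite{Ka04}, namely reduce the Iwasawa-theoretic statement to a finite-level assertion, apply the explicit reciprocity law for Siegel units, and then match the result with the period / modular symbol computation on the right-hand side.

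\textbf{Reduction to finite level.} By construction, ${}_cz^{\mathrm{Iw}}_{N,n}(k,v) = \varprojlim_m {}_cz^{(p)}_{N,np^m}(k,v)$ via the norm-compatibility (\ref{cccc}). The dual exponential $\mathrm{exp}^*$ commutes with the co-restriction maps (by Shapiro's lemma and the functoriality of $p$-adic Hodge theory), so the projective system of images under $\mathrm{exp}^*$ is exactly the projective system of the desired formulas at finite level. Thus it suffices, for each $m \geq 0$, to compute the $\mathrm{exp}^*$-image of the finite-level element ${}_cz^{(p)}_{N,np^m}(k,v)$ in $M_k(N)_{\mathbb{Q}} \otimes \mathbb{Q}(\zeta_{np^m}) \otimes \mathbb{Q}_p$. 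Unwinding the definition, the latter is built from the Chern class of the Beilinson element ${}_cz_M = \{{}_cg_{1/M,0},{}_cg_{0,1/M}\} \in K_2(Y(M))$ for any $M$ with $Nn \mid M$, $p^m \mid M$, and $\mathrm{prime}(M)=\Sigma_n$, followed by pushforward along $Y(M) \to Y(N) \otimes \mu^0_{np^m}$ and the evaluation map on $\mathrm{Sym}^{k-2}$.

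\textbf{Explicit reciprocity for Siegel units.} The core computation is to identify $\mathrm{exp}^*$ of the étale regulator of $\{{}_cg_{1/M,0},{}_cg_{0,1/M}\}$ with an explicit modular form. For this we use that $\mathrm{dlog}({}_cg_{\alpha,\beta})$ computed on the universal elliptic curve (via Kodaira--Spencer and the comparison isomorphism between étale and de~Rham cohomology) equals a weight-two Eisenstein series with prescribed $q$-expansion, a classical calculation going back to Eisenstein and Siegel. The Chern class / cup product structure on $K_2$, together with the symmetric power $\mathrm{Sym}^{k-2}(\mathcal{H}^1)$, promotes this to an Eisenstein series of weight $k$, matching the explicit element
\[
{}_c F^{(k)}_{N,(1/M,0),(0,1/M)} \in M_k(N)_{\mathbb{Q}} \otimes \mathbb{Q}(\zeta_{np^m})
\]
defined in terms of Eisenstein--Kronecker--Lerch series. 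This is essentially \cite{Ka04}, Theorem~4.6, and gives integrality (i.e.\ the class lies in $M_k(N)_{\mathbb{Q}} \otimes \mathbb{Q}(\zeta_{np^m})$, not only after $\otimes\mathbb{Q}_p$).

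\textbf{Period computation and matching the right-hand side.} Applying the period map $\mathrm{per}$ to the Eisenstein series from the previous step and pairing against the modular symbol $\varphi$ from $0$ to $i\infty$ produces, by a standard Mellin-transform calculation, a Hurwitz-type $L$-value. Collecting these across all divisors $M$ of $Nnp^m$ with $\mathrm{prime}(M)=\Sigma_n$ via the compatibilities (\ref{d1}) and (\ref{d2}), the operator-valued zeta function $Z_{\Sigma,np^m}(k-1)$ emerges as the Euler product of local factors $P_l(l^{1-k} \otimes \sigma_l^{-1})^{-1}$ at $l \notin \Sigma_n$ (Lemma~\ref{2.01} converts Kato's normalization to ours). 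The factor $(2\pi i)^{-(k-1)}$ is forced by the twist in step~(a) of the composite together with the isomorphism (\ref{aa1}); the $c$-smoothing factors in ${}_c\delta_{N,np^m}(k,v)$ match those already built into ${}_c\theta_E$ (and hence into ${}_cg_{\alpha,\beta}$) via the distribution relation $N_a({}_c\theta_E)={}_c\theta_E$.

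\textbf{Main obstacle.} The hard step is the explicit reciprocity of the middle paragraph: identifying the $p$-adic dual exponential of an étale Chern class of a pair of Siegel units with a concrete Eisenstein series of the correct weight, level, and character. This requires a careful use of the comparison isomorphism on $H^1_{\text{\'et}}(Y(M),\mathcal{V}^*_{k/\mathbb{Q}_p})$ together with Coleman integration (or, equivalently, the syntomic regulator) on the Kuga--Sato variety underlying $\mathrm{Sym}^{k-2}(\mathcal{H}^1)$. A secondary technical point is checking that taking the projective limit over $m$ truly commutes with $\mathrm{exp}^*$ on the relevant Iwasawa cohomology; this is where it is essential that each finite-level space $H^1(Y(N),\mathcal{V}^*_{k/\mathbb{Z}_p})$ is $\mathbb{Z}_p$-free modulo torsion, so that the interpolation from $\mathrm{exp}^*$ at each $m$ assembles into a well-defined map on $H^1_{\mathrm{Iw}}$.
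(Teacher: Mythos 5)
Your proposal follows essentially the same route as the paper: reduce to comparison with Kato's finite-level elements ${}_{c,c}z^{(p)}_{Np^m,Np^m}(k,k-1,j)$, then invoke Kato's explicit reciprocity law and his period/zeta-values formula. Two small corrections are worth noting. First, you attribute the identification of the $\mathrm{exp}^*$-image with an explicit modular form to Theorem~4.6 of \cite{Ka04}; that step is actually Theorem~9.5 of \cite{Ka04} (the generalized explicit reciprocity law), whereas Theorem~4.6 is the subsequent period formula $(\mathrm{per}({}_{c,c}z_{Np^m,Np^m}(k,k-1,j)))^+=Z_{\Sigma}(k-1)\cdot\gamma^+$, which you describe as a ``Mellin-transform calculation'' without citing it. Second, your ``Main obstacle'' paragraph frames the Coleman-integration/syntomic computation as something still to be carried out, but in the paper this is entirely off-loaded to Kato's Theorem~9.5 and needs no re-derivation; the only new work here is the bookkeeping (reduction to $n=1$, $m\geqq 1$, $v=e_1^{j-1}e_2^{k-j-1}$ via the compatibilities (\ref{d1}), (\ref{d2}), and the translation $\delta_{Np^m,Np^m}(k,j)=\tfrac{1}{(2\pi i)^{k-1}}\delta_{Np^m}(k,e_1^{j-1}e_2^{k-j-1})$ under the isomorphism (\ref{aa1}), which is what produces the factor $(2\pi i)^{1-k}$). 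With these attributions straightened out, your argument matches the paper's.
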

 \begin{rem}
 We remark that one has a canonical isomorphism 
 $$\mathcal{V}_{k/\mathbb{Z}}\isom \mathcal{V}^*_{k/\mathbb{Z}}(2-k)$$
defined in the same way as in (\ref{aa1}). It induces a canonical isomorphism 
 $$H^1(Y(N)(\mathbb{C}), \mathcal{V}^*_{k/\mathbb{Z}})(2-k)\isom H^1(Y(N)(\mathbb{C}), \mathcal{V}_{k/\mathbb{Z}})$$
 as in (\ref{aa2}), and its $\mathbb{C}$-linearization induces an isomorphism
 $$H^1(Y(N)(\mathbb{C}), \mathcal{V}^*_{k/\mathbb{C}})\isom H^1(Y(N)(\mathbb{C}), \mathcal{V}_{k/\mathbb{C}}),$$
 by which we identify the both side. Hence, the equation (\ref{zeta}) makes sense. 
 \end{rem}
\begin{proof}
By the equations (\ref{d1}) and (\ref{d2}), it suffices to show the theorem for $n=1$ and $m\geqq1$, and for $v=e_1^{j-1}e_2^{k-j-1}$ ($1\leqq j\leqq k-1$). 
By definition of ${}_cz^{\mathrm{Iw}}_{N}(k,-)$, we remark that the image in 
$H^1(\mathbb{Z}[1/\Sigma, \zeta_{p^m}], H^1(Y(N),\mathcal{V}_{k/\mathbb{Z}_p})(1))$ of the element ${}_cz^{\mathrm{Iw}}_{N}(k, e_1^{j-1}e_2^{k-j-1})$
by the composite $\mathrm{can}\circ (a)$ coincides with the image of Kato's element 
$${}_{c,c}z^{(p)}_{Np^m, Np^m}(k,k-1,j)\in H^1(\mathbb{Z}[1/\Sigma], H^1(Y(Np^m),\mathcal{V}_{k/\mathbb{Z}_p})(1))$$
defined in \S 8.4 of \cite{Ka04}, by the map 
$$H^1(\mathbb{Z}[1/\Sigma], H^1(Y(Np^m),\mathcal{V}_{k/\mathbb{Z}_p})(1))\rightarrow 
H^1(\mathbb{Z}[1/\Sigma, \zeta_{p^m}], H^1(Y(N),\mathcal{V}_{k/\mathbb{Z}_p})(1))$$ induced by the canonical finite map $Y(Np^m)_{\mathbb{Q}}\rightarrow Y(N)\otimes \mu^0_{p^m/\mathbb{Q}}$ 
and Shapiro's lemma. By Theorem 9.5 of \cite{Ka04}, the image of ${}_{c,c}z^{(p)}_{Np^m, Np^m}(k,k-1,j)$ by the map 
$$\mathrm{exp}^*\circ \mathrm{loc}_p : H^1(\mathbb{Z}[1/\Sigma], H^1(Y(Np^m),\mathcal{V}_{k/\mathbb{Z}_p})(1))
\rightarrow M_k(Np^m)_{\mathbb{Q}}\otimes_{\mathbb{Q}}\mathbb{Q}_p$$ is in $M_k(Np^m)_{\mathbb{Q}}$, and coincides 
with the element ${}_{c,c}z_{Np^m, Np^m}(k,k-1,j)\in M_k(Np^m)_{\mathbb{Q}}$ defined in \S 4.2 of \cite{Ka04}. 
By Theorem 4.6 of \cite{Ka04}, one has 
$$(\mathrm{per}({}_{c,c}z_{Np^m, Np^m}(k,k-1,j)))^+=Z_{\Sigma}(k-1)\cdot \gamma^+$$
for $$\gamma=\left(c^2-c^{k-j}\begin{pmatrix}c & 0 \\ 0 & 1\end{pmatrix}^*\right)\left(c^2-c^j\begin{pmatrix}1 & 0 \\ 0 & c\end{pmatrix}^*\right)\delta_{Np^m, Np^m}(k,j),$$
where %$Z_{Np^m, Np^m}(k, k-1)$ is the value at $s=k-1$ of Kato's operator values zeta function defined in \S 4.5 of \cite{Ka04} and 
the element $\delta_{Np^m, Np^m}(k,j)\in H^1(Y(Np^m),\mathcal{V}_{k/\mathbb{Z}})$ is defined in \S 4.7 of \cite{Ka04}. 
Since one has %$Z_{Np^m, Np^m}(k, k-1)=Z_{\Sigma}(1):=Z_{\Sigma}(s)|_{s=1}$ and 
$$\delta_{Np^m, Np^m}(k,j)=\frac{1}{(2\pi i)^{k-1}}\delta_{Np^m}(k, e_1^{j-1}e_2^{k-j-1})$$
under the canonical isomorphism $H^1(Y(Np^m),\mathcal{V}_{k/\mathbb{Z}})\isom H^1(Y(Np^m),\mathcal{V}^*_{k/\mathbb{Z}}(2-k))$, 
%induced by the isomorphism $\mathcal{V}^*_{k/\mathbb{Z}}\isom \mathcal{V}_{k/\mathbb{Z}}(k-2)$ defined before the Theorem, 
the element $Z_{\Sigma}( k-1)\cdot \gamma$ coincides with 
$$\frac{1}{(2\pi i)^{k-1}}Z_{\Sigma}(k-1)\cdot {}_c\delta_{Np^m}(k, e_1^{j-1}e_2^{k-j-1}).$$
Hence we obtain the equality 
$$(\mathrm{per}({}_{c,c}z_{Np^m, Np^m}(k,k-1,j)))^+=\left(\frac{1}{(2\pi i)^{k-1}}Z_{\Sigma}(k-1)\cdot {}_c\delta_{Np^m}(k, e_1^{j-1}e_2^{k-j-1})\right)^+.$$
This equality implies the desired equality 
$$\mathrm{per}({}_c\omega_{N, p^m}(k, e_1^{j-1}e_2^{k-j-1}))=\frac{1}{(2\pi i)^{k-1}} Z_{\Sigma, p^m}(k-1)\cdot {}_c\delta_{N, p^m}(k, e_1^{j-1}e_2^{k-j-1})$$ in the quotient $H^1(Y(N)(\mathbb{C}), \mathcal{V}_{k/\mathbb{C}})\otimes_{\mathbb{C}[\{\pm1\}]}\mathbb{C}[\mathrm{Gal}(\mathbb{Q}(\zeta_{np^m})/\mathbb{Q})]$.

\end{proof}

 \subsection{Zeta morphisms for the parabolic cohomology}
 
 Fix $N\geqq 3$ such that $\mathrm{prime}(Np)=\Sigma$ and $k\geqq 2$. 
 We recall that, by a theorem of Drinfeld-Manin, one has a unique $\mathbb{T}_k(N)_{\mathbb{Q}}$-linear splitting
 $$s_N : H^1(Y(N)(\mathbb{C}), \mathcal{V}_{k/\mathbb{Q}})\rightarrow H^1(X(N)(\mathbb{C}), j_*\mathcal{V}_{k/\mathbb{Q}})$$
 of the canonical injection $H^1(X(N)(\mathbb{C}), j_*\mathcal{V}_{k/\mathbb{Q}})\hookrightarrow 
 H^1(Y(N)(\mathbb{C}), \mathcal{V}_{k/\mathbb{Q}})$ (over $\mathbb{Q}$, not over $\mathbb{Z}$), 
 which we call Drinfeld-Manin splitting (see for example \cite{El90}). It is also $\mathbb{T}'_k(N)_{\mathbb{Q}}$-linear by Lemma \ref{2.01}. 
 Since it is defined using Hecke actions, 
 it is $\mathrm{Gal}(\mathbb{C}/\mathbb{R})$-equivariant, and its base change 
  $$s_N: H^1(Y(N), \mathcal{V}_{k/\mathbb{Q}_p})\rightarrow H^1(X(N), j_*\mathcal{V}_{k/\mathbb{Q}_p})$$ to $\mathbb{Q}_p$ is $G_{\mathbb{Q}}$-equivariant. 

 We recall that one has  Hecke ($T_l, S_l$ and $T'_l, S'_l$) equivariant isomorphisms 
 \begin{equation}\label{ES1}
 M_k(N)_{\mathbb{C}}\oplus S_k(N)_{\mathbb{C}}\isom H^1(Y(N)(\mathbb{C}),  \mathcal{V}_{k/\mathbb{C}}) : (x, y)\mapsto \tau(\mathrm{per}(x))+\mathrm{per}(y)
 \end{equation}
 and 
 \begin{equation}\label{ES2}
 S_k(N)_{\mathbb{C}}\oplus S_k(N)_{\mathbb{C}}\isom H^1(X(N)(\mathbb{C}),  j_*\mathcal{V}_{k/\mathbb{C}}) : (x, y)\mapsto \tau(\mathrm{per}(x))+\mathrm{per}(y)
 \end{equation}
defined by the $\mathbb{C}$-linearization of the period map 
 $$\mathrm{per} : M_k(N)_{\mathbb{Q}}\rightarrow H^1(Y(N)(\mathbb{C}),  \mathcal{V}_{k/
 \mathbb{C}}).$$
 %where 
 %$$\iota' : H^1(X(N)(\mathbb{C}),  j_*\mathcal{V}_{k/\mathbb{C}})\isom H^1(X(N)(\mathbb{C}),  j_*\mathcal{V}_{k/\mathbb{C}})$$
%  is defined by $\iota'(x\otimes y):=x\otimes \overline{y}$ for $x\in H^1(X(N)(\mathbb{C}),  j_*\mathcal{V}_{k/\mathbb{Q}})$ and 
  %$y\in \mathbb{C}$.
 
  The existence of these isomorphisms implies that $M_k(N)_{\mathbb{Q}}$ and $S_k(N)_{\mathbb{Q}}$ are both 
 $\mathbb{T}_k(N)_{\mathbb{Q}}$ and $\mathbb{T}'_k(N)_{\mathbb{Q}}$-modules, and 
 the splitting $s_N$ also induces a $\mathbb{T}_k(N)_{\mathbb{Q}}$ and $\mathbb{T}'_k(N)_{\mathbb{Q}}$-linear splitting 
% \isom H^1(Y(N)(\mathbb{C}),  \mathcal{V}^*_{k/\mathbb{C}}), $$
% it also induces a $\mathbb{T}'_k(N)_{\mathbb{Q}}$-linear splitting
 $$s_N : M_k(N)_{\mathbb{Q}}\rightarrow S_k(N)_{\mathbb{Q}}$$
  of the canonical injection $S_k(N)_{\mathbb{Q}}\hookrightarrow M_k(N)_{\mathbb{Q}}$. 
  %and its base change 
 % $$s_N: H^1(Y(N), \mathcal{V}^*_{k/\mathbb{Q}_p})\rightarrow H^1(X(N), j_*\mathcal{V}^*_{k/\mathbb{Q}_p})$$ to $\mathbb{Q}_p$ is $G_{\mathbb{Q}, \Sigma}$-equivariant. 

 Using the canonical isomorphisms 
  $$H^1(Y(N), \mathcal{V}^*_{k/\mathbb{Q}_p})\isom H^1(Y(N), \mathcal{V}_{k/\mathbb{Q}_p})(2-k)$$
   and 
   $$H^1(X(N), j_*\mathcal{V}^*_{k/\mathbb{Q}_p})\isom H^1(X(N), j_*\mathcal{V}_{k/\mathbb{Q}_p})(2-k),$$
   we also regard the splitting $s_N$ as a splitting 
   $$s_N : H^1(Y(N)(\mathbb{C}), \mathcal{V}^*_{k/\mathbb{Q}})\rightarrow H^1(X(N)(\mathbb{C}), j_*\mathcal{V}^*_{k/\mathbb{Q}}).$$

    For each $n\geqq 1$ such that $(n, p)=1$, we write $\Gamma_n:=\mathrm{Gal}(\mathbb{Q}(\zeta_{np^{\infty}})/\mathbb{Q})$, 
   and write $\Lambda_n:=\mathbb{Z}_p[[\Gamma_n]]$ to denote the Iwasawa algebra of $\Gamma_n$ with coefficients in $\mathbb{Z}_p$. 
   %Then, the cyclotomic character gives a canonical isomorphism $\Gamma_n\isom \varprojlim_{m\geqq 1}(\mathbb{Z}/np^m\mathbb{Z})^{\times}$. 
  For each $c\in \mathbb{Z}$ such that $(c, np)=1$, we write $\sigma_c$ to denote the element in $\Gamma_n$ corresponding to 
  $c\in \varprojlim_{m\geqq 1}(\mathbb{Z}/np^m\mathbb{Z})^{\times}$ by the cyclotomic character. 
   We simply write $\Gamma:=\Gamma_1$ and $\Lambda:=\Lambda_1$ for $n=1$. %We write $\Gamma_0:=\mathrm{Gal}(\mathbb{Q}(\zeta_{p^{\infty}})/\mathbb{Q}(\zeta_{p^k}))$ ($k=1$ if $p\not=2$, $k=2$ if $p=2$), and $\Lambda_0:=\mathbb{Z}_p[[\Gamma_0]]$ which is a sub $\mathbb{Z}_p$-algebra of $\Lambda$ (non-canonically) isomorphic to $\mathbb{Z}_p[[T]]$. % (in particular, a regular local ring). 
   
 % We remark that the canonical decomposition 
  % $$\mathrm{Gal}(\mathbb{Q}(\zeta_{np^{\infty}})/\mathbb{Q})\isom \mathrm{Gal}(\mathbb{Q}(\zeta_{n})/\mathbb{Q})\times 
  % \mathrm{Gal}(\mathbb{Q}(\zeta_{p^{\infty}})/\mathbb{Q}):\sigma \mapsto (\sigma|_{\mathbb{Q}(\zeta_n)}, \sigma|_{\mathbb{Q}(\zeta_{p^{\infty}})})$$ induces the  
  % canonical isomorphism $$\Lambda_n\isom \mathbb{Z}_p[\mathrm{Gal}(\mathbb{Q}(\zeta_n)/\mathbb{Q})]\otimes_{\mathbb{Z}_p}\Lambda.$$ We see $\Lambda$ and $\Lambda_0$ as sub $\mathbb{Z}_p$-algebras of $\Lambda_n$ by the map $x\in \Lambda \mapsto 1\otimes x\in \mathbb{Z}_p[\mathrm{Gal}(\mathbb{Q}(\zeta_n)/\mathbb{Q})]\otimes_{\mathbb{Z}_p}\Lambda$. In particular, for any $c\in \mathbb{Z}$ such that $c\equiv 1$ (mod $pn$) (resp. 
%   $c\equiv 1$ (mod $p^2n$) if $p\not=2$ (resp. $p=2$), we can see 
%   the element $[\sigma_c]\in  \Lambda_n$ ($\sigma_c\in \Gamma_n$) as an element in $\Lambda_0$ by this inclusion.

   We fix a finite set of primes $\Sigma$ containing $p$. For any $N\geqq 3$ and $n\geqq 1$ such that $\mathrm{prime}(Np)=\Sigma$ and 
   $(n, \Sigma)=1$, the Iwasawa cohomology $$H^1_{\mathrm{Iw}}(\mathbb{Z}[1/\Sigma_n, \zeta_n], 
   H^1(X(N), j_*\mathcal{V}_{k/\mathbb{Z}_p}(i)))$$ ($i\in \mathbb{Z}$) %and $H^1_{\mathrm{Iw}}(G_{\mathbb{Q}(\zeta_n), \Sigma_n}, 
   %H^1(X(N), j_*\mathcal{V}_{k/\mathbb{Z}_p}(1)))$, 
   and the limit $\varprojlim_{m\geqq 1}S_k(N)\otimes_{\mathbb{Q}}\mathbb{Q}(\zeta_{np^m})\otimes_{\mathbb{Q}}\mathbb{Q}_p$ 
   are naturally equipped with the structures of $\Lambda_n$-modules, where 
   the limit is taken with respect to the transition map induced by the trace $\mathbb{Q}(\zeta_{np^{m+1}})\rightarrow \mathbb{Q}(\zeta_{np^m})$. 
   Then, the map 
   $$\mathrm{exp}^* : H^1_{\mathrm{Iw}}(\mathbb{Z}[1/\Sigma_n, \zeta_n], 
   H^1(X(N), j_*\mathcal{V}_{k/\mathbb{Z}_p}(1)))\rightarrow \varprojlim_{m\geqq 0}S_k(N)\otimes_{\mathbb{Q}}\mathbb{Q}(\zeta_{np^m})\otimes_{\mathbb{Q}}\mathbb{Q}_p$$
   which is defined as the projective limit of the composites
   \begin{multline*}
   H^1_{\mathrm{Iw}}(\mathbb{Z}[1/\Sigma_n, \zeta_n],  H^1(X(N), j_*\mathcal{V}_{k/\mathbb{Z}_p}(1)))\xrightarrow{\mathrm{can}}
   H^1( \mathbb{Z}[1/\Sigma_n, \zeta_{np^m}],  H^1(X(N), j_*\mathcal{V}_{k/\mathbb{Z}_p}(1)))
   \\\xrightarrow{\mathrm{loc}_p}
    H^1(\mathbb{Q}(\zeta_{np^m})\otimes_{\mathbb{Q}}\mathbb{Q}_p,  H^1(X(N), j_*\mathcal{V}_{k/\mathbb{Z}_p}(1)))
    \xrightarrow{\mathrm{exp}^*}
    S_k(N)\otimes_{\mathbb{Q}}\mathbb{Q}(\zeta_{np^m})\otimes_{\mathbb{Q}}\mathbb{Q}_p
    \end{multline*}
    for all $m\geqq 0$, is $\Lambda_n$-linear. 
    
    For any ring $A$ and any $A$-module $M$, we say that 
    $M$ is torsion free if $ax=0$ implies $x=0$ for any non-zero divisor $a\in A$. 
    %$M_{A-\mathrm{tors}}$ to denote the sub $A$-module of $M$ consisting of all the elements $x$ such that $ax=0$ for some non-zero divisor $a\in A$. 
    
   \begin{lemma}\label{2.2}
   
   \begin{itemize}
   \item[]
   \item[(1)]For any $i\in \mathbb{Z}$, the $\Lambda_n$-module $H^1_{\mathrm{Iw}}(\mathbb{Z}[1/\Sigma_n, \zeta_n], 
   H^1(X(N), j_*\mathcal{V}_{k/\mathbb{Z}_p}(i)))$  is torsion free. % $\Lambda_n$-module for any $i\in \mathbb{Z}$.
   \item[(2)]The map $$\mathrm{exp}^* : H^1_{\mathrm{Iw}}(\mathbb{Z}[1/\Sigma_n, \zeta_n], 
   H^1(X(N), j_*\mathcal{V}_{k/\mathbb{Z}_p}(1)))\rightarrow \varprojlim_{m\geqq 1}S_k(N)\otimes_{\mathbb{Q}}\mathbb{Q}(\zeta_{np^m})\otimes_{\mathbb{Q}}\mathbb{Q}_p$$ 
   is an injection.
   
  %coincides with $H^1_{\mathrm{Iw}}(G_{\mathbb{Q}(\zeta_n), \Sigma_n}, 
  % H^1(Y(N))_{\mathbb{Z}_p}(1))_{\Lambda_n-\mathrm{tors}}$.
   %\item[(3)]The sub $\Lambda_n$-module $H^1_{\mathrm{Iw}}(G_{\mathbb{Q}(\zeta_n), \Sigma_n}, 
  % H^1(Y(N))_{\mathbb{Z}_p}(1))_{\Lambda_n-\mathrm{tors}}$ coincides with the following sub $\Lambda_n$-module  
  % \begin{multline*}
  % \{x\in H^1_{\mathrm{Iw}}(G_{\mathbb{Q}(\zeta_n), \Sigma_n}, 
  % H^1(Y(N))_{\mathbb{Z}_p}(1))\,|\, (1-\sigma_a)x=0 \\
  % \text{ for any } a\in \mathbb{Z} \text{ such that } (a, np)=1 \text{ and } a\equiv 1 \text{ mod } N\}.
  % \end{multline*}
   \end{itemize}
   \end{lemma}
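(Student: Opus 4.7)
My strategy for both parts is to reduce to properties of the two-dimensional Galois representations $V_f$ attached to normalized Hecke eigen cusp newforms via the Eichler-Shimura-Deligne decomposition of parabolic cohomology: after tensoring with a sufficiently large finite extension of $\mathbb{Q}_p$, the representation $H^1(X(N), j_*\mathcal{V}_{k/\mathbb{Q}_p})$ becomes a direct sum of two-dimensional absolutely irreducible $G_\mathbb{Q}$-representations $V_f$.

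For (1), I would invoke Jannsen's formula, which identifies the $\Lambda_n$-torsion submodule of $H^1_{\mathrm{Iw}}(\mathbb{Z}[1/\Sigma_n, \zeta_n], T)$ (for $T$ finitely generated and $\mathbb{Z}_p$-free with continuous $G_{\mathbb{Q},\Sigma_n}$-action) with the invariant submodule $T^{G_{\mathbb{Q}(\zeta_{np^\infty}),\Sigma_n}}$. Since the cyclotomic character $\varepsilon$ is trivial on $G_{\mathbb{Q}(\zeta_{np^\infty})}$, this invariant submodule is unchanged by the Tate twist $i$. After the eigenform decomposition, the problem reduces to showing $V_f^{G_{\mathbb{Q}(\zeta_{np^\infty})}}=0$ for each $f$. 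Any nonzero invariant subspace would be $G_\mathbb{Q}$-stable by normality of $G_{\mathbb{Q}(\zeta_{np^\infty})}$ in $G_\mathbb{Q}$, hence equal to all of $V_f$ by irreducibility; but then $V_f$ would factor through the abelian quotient $\Gamma_n$, contradicting the fact that a two-dimensional irreducible representation cannot have abelian image.

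For (2), part (1) guarantees the source is $\Lambda_n$-torsion free, so it embeds into its base change to $\mathrm{Frac}(\Lambda_n)$. Applying the eigenform decomposition to both source and target, it suffices, for each Hecke eigen cusp newform $f$, to prove injectivity of a map of rank-one $\mathrm{Frac}(\Lambda_n) \otimes E_f$-modules: the source component has generic rank $\dim V_f^{c=-1}=1$ by the Euler-Poincar\'e characteristic formula for Iwasawa cohomology of $V_f(1)$ (recalling that $V_f^{c=-1}$ and $V_f^{c=+1}$ are each one-dimensional by the oddness of $\rho_f$), and the target component has generic rank one using the normal basis decomposition of $\varprojlim_m \mathbb{Q}(\zeta_{np^m})\otimes_{\mathbb{Q}}\mathbb{Q}_p$ as a $\Lambda_n\otimes\mathbb{Q}_p$-module. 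Injectivity then reduces to non-vanishing, which is furnished by Theorem \ref{2.1}: the image of Kato's Iwasawa zeta element ${}_cz^{\mathrm{Iw}}_{N,n}(k,v)$ under the dual exponential is, up to an explicit nonzero factor involving the zeta operator $Z_{\Sigma,n}(k-1)$ and periods, the class ${}_c\delta_{N,n}(k,v)$, whose projection to the $f$-component is nonzero for suitable auxiliary choices of $c$ and $v$.

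The main obstacle I anticipate is making precise the rank-one identification on each $f$-component of the target: concretely, one must analyze the $\Lambda_n$-structure of $\varprojlim_m S_k(N)\otimes_\mathbb{Q}\mathbb{Q}(\zeta_{np^m})\otimes_\mathbb{Q}\mathbb{Q}_p$ under the trace maps and verify that, after projecting to the $f$-eigenspace via Hecke operators, the resulting $\Lambda_n$-module is indeed free of rank one over $\Lambda_n\otimes E_f\otimes\mathbb{Q}_p$. Equally delicate is ensuring that Kato's zeta elements provide nonzero images on every eigenform component simultaneously; this uses the flexibility in the auxiliary parameters together with Kato's nontriviality results for his Euler system on each cuspidal constituent.
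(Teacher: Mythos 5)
The paper's proof of this lemma is a one-line citation to Proposition 3.1.3 and Lemma 3.1.4 of \cite{FK}, with the remark that the argument there for $X_1(N)$ and $k=2$ carries over. Your proposal instead supplies a from-scratch argument; its overall strategy (reduce to vanishing of $G_{\mathbb{Q}(\zeta_{np^\infty})}$-invariants via absolute irreducibility of each $V_f$ for (1), then use Kato's explicit reciprocity to furnish nonvanishing for (2)) is indeed the kind of reasoning underlying the cited results, and the Eichler--Shimura decomposition plus the irreducibility/non-abelian-image argument are correct and cleanly stated. The observation that the Tate twist does not change the invariant space because $\varepsilon$ is trivial on $G_{\mathbb{Q}(\zeta_{np^\infty})}$ is a nice shortcut.

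However, there are two concrete gaps. For (1), the assertion that ``Jannsen's formula identifies the $\Lambda_n$-torsion of $H^1_{\mathrm{Iw}}$ with $T^{G_{\mathbb{Q}(\zeta_{np^\infty})}}$'' is not a theorem in that crisp form, and in any case it does not by itself handle the $p$-primary torsion: from the derived control sequence one gets $H^1_{\mathrm{Iw}}(T)[p]\cong H^0_{\mathrm{Iw}}(T/pT)$ (using $H^0_{\mathrm{Iw}}(T)=0$), and the module $(T/pT)^{G_{\mathbb{Q}(\zeta_{np^m})}}$ can be nonzero at each finite level even though the rational invariants vanish. One must add the observation that, once these residual invariants stabilize along the tower, the corestriction becomes multiplication by $p$ and hence is zero on a $\mathbb{Z}/p$-module, so the inverse limit $H^0_{\mathrm{Iw}}(T/pT)$ vanishes. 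Your argument, which only works after tensoring with $\overline{\mathbb{Q}}_p$, cannot see this integral issue. Similarly, for torsion at a height-one prime $(f)$ away from $p$ one should pass through the derived base change $R\Gamma_{\mathrm{Iw}}(T)\otimes^{\mathbb{L}}_{\Lambda_n}\Lambda_n/(f)$ and identify $H^1_{\mathrm{Iw}}(T)[f]$ with a quotient of $H^0(\mathbb{Q},T\otimes\chi)$, which then vanishes by your invariance argument; the bare appeal to a formula is too thin.

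For (2), the ``rank one'' step is not correct as stated. For $N>N_f$ the $f$-eigencomponent of $H^1(X(N),j_*\mathcal{V}_{k/E})$ is $V_f\otimes\pi(f)^{K(N)}$, of multiplicity $\dim_E\pi(f)^{K(N)}>1$, and the target $S_k(N)_E[f]\otimes\Lambda_n[1/p]$ has the same rank $\dim\pi(f)^{K(N)}$. A nonzero map between two free modules of rank $r>1$ need not be injective, so ``nonvanishing implies injectivity'' requires the extra ingredient that $\exp^*$ is $\mathcal{H}(N)$-equivariant and $\pi(f)^{K(N)}$ is an irreducible $\mathcal{H}(N)$-module, so that by Schur's lemma the map factors as (rank-one map) $\otimes\,\mathrm{id}_{\pi(f)^{K(N)}}$. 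You allude to this point in your last paragraph but do not carry it out, and it is exactly the content that turns a nonvanishing assertion into an injectivity assertion. Finally, the nonvanishing itself needs an analytic input (Rohrlich-type nonvanishing of twisted $L$-values) on top of Theorem~\ref{2.1}; the interpolation formula alone only tells you \emph{what} the image is, not that it is nonzero.
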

   \begin{proof}(1) is 
   Proposition 3.1.3 (1) of \cite{FK}, and (2) follows from Lemma 3.1.4. of \cite{FK}. 
   Precisely, they consider $X_1(N)$ and $k=2$ in \cite{FK}, but the same argument works for $X(N)$ and any  $k\geqq 2$.  
   \end{proof}
    \begin{rem}\label{2.3}
   Both the statements (1) and (2) are not true if we replace $X(N)$ with $Y(N)$ 
   (see Proposition 3.1.3 and Lemma 3.1.4 of \cite{FK}). 
   This fact is one of the reasons why we need to consider $X(N)$ instead of $Y(N)$. 
   \end{rem}

   \begin{corollary}\label{2.4}
   Let $N\geqq 3, c\geqq 2,  n\geqq 1$ be integers such that $\mathrm{prime}(Np)=\Sigma$ and $(n, Np)=1$ and $(c, 6Nnp)=1$. 
 For every element $v\in \mathrm{Sym}^{k-2}(\mathbb{Z}^2)^*\subseteq \mathrm{Sym}^{k-2}(\mathbb{Z}_p)^*$, 
 the element
 $$s_N\left({}_cz^{\mathrm{Iw}}_{N, n}(k,v)\right)\in H^1_{\mathrm{Iw}}(\mathbb{Z}[1/\Sigma_n, \zeta_n], H^1(X(N), j_*\mathcal{V}^*_{k/\mathbb{Q}_p}(2)))$$
  is a unique element $z_v$ in $H^1_{\mathrm{Iw}}(\mathbb{Z}[1/\Sigma_n, \zeta_n], H^1(X(N), j_*\mathcal{V}^*_{k/\mathbb{Q}_p}(2)))$ satisfying the following property $:$ 
  for every $m\geqq 0$, 
 the image $\omega_m(z_v)\in S_k(N)_{\mathbb{Q}}\otimes_{\mathbb{Q}}\mathbb{Q}(\zeta_{np^m})\otimes_{\mathbb{Q}}\mathbb{Q}_p$ of 
 the element
 $z_v$ by the following composite
 
 \begin{multline*}
 H_{\mathrm{Iw}}^1(\mathbb{Z}[1/\Sigma_n, \zeta_n], H^1(X(N), j_*\mathcal{V}^*_{k/\mathbb{Q}_p}(2)))\xrightarrow{(a)}
 H_{\mathrm{Iw}}^1(\mathbb{Z}[1/\Sigma_n, \zeta_n], H^1(X(N), j_*\mathcal{V}_{k/\mathbb{Q}_p}(1)))
  \\\xrightarrow{\mathrm{can}}H^1(\mathbb{Z}[1/\Sigma_n, \zeta_{np^m}], H^1(X(N),j _*\mathcal{V}_{k/\mathbb{Q}_p}(1)))\\
 \xrightarrow{\mathrm{loc}_p}
 H^1(\mathbb{Q}(\zeta_{np^m})\otimes_{\mathbb{Q}}\mathbb{Q}_p, H^1(X(N), j_*\mathcal{V}_{k/\mathbb{Q}_p}(1)))
 \xrightarrow{\mathrm{exp}^*}S_k(N)_{\mathbb{Q}}\otimes_{\mathbb{Q}}\mathbb{Q}(\zeta_{np^m})\otimes_{\mathbb{Q}}\mathbb{Q}_p,
 \end{multline*}
 where the map $(a)$ is the twist by $((\zeta_{p^m})_{m\geqq 1})^{\otimes (1-k)}$ in Iwasawa cohomology, is contained in the subspace $S_k(N)_{\mathbb{Q}}\otimes_{\mathbb{Q}}\mathbb{Q}(\zeta_{np^m})
 $, and its image by the period map $$\mathrm{per} : S_k(N)_{\mathbb{Q}}\otimes_{\mathbb{Q}}\mathbb{Q}(\zeta_{np^m})
 \rightarrow H^1(X(N)(\mathbb{C}), j_*\mathcal{V}_{k/\mathbb{C}})\otimes_{\mathbb{C}}\mathbb{C}[\mathrm{Gal}(\mathbb{Q}(\zeta_{np^m})/\mathbb{Q})]$$ satisfies the equality 
 \begin{equation}
 \mathrm{per}(\omega_m(z_v))=\frac{1}{(2\pi i)^{k-1}} Z_{\Sigma, np^m}(k-1)\cdot s_N({}_c\delta_{N, np^m}(k, v))
 \end{equation}
 in the quotient $$H^1(X(N)(\mathbb{C}), j_*\mathcal{V}_{k/\mathbb{C}})\otimes_{\mathbb{C}[\{\pm1\}]}\mathbb{C}[\mathrm{Gal}(\mathbb{Q}(\zeta_{np^m})/\mathbb{Q})]$$ of 
 $H^1(X(N)(\mathbb{C}), j_*\mathcal{V}_{k/\mathbb{C}})\otimes_{\mathbb{C}}\mathbb{C}[\mathrm{Gal}(\mathbb{Q}(\zeta_{np^m})/\mathbb{Q})]$.
 \end{corollary}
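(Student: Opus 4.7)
The strategy is to simply define
$$z_v := s_N\bigl({}_c z^{\mathrm{Iw}}_{N, n}(k, v)\bigr)$$
and verify that it has the asserted property, and then to prove uniqueness. Existence is automatic because the Drinfeld--Manin splitting extends (after inverting $p$) to an Iwasawa-cohomological map; indeed $s_N$ is $G_{\mathbb{Q}}$-equivariant (it is built from Hecke projectors which commute with the Galois action via \'etale realization) and $\mathbb{T}'_k(N)_{\mathbb{Q}}$-linear, so applying it termwise to the Kato element ${}_c z^{\mathrm{Iw}}_{N, n}(k, v)$ yields an element of $H^1_{\mathrm{Iw}}(\mathbb{Z}[1/\Sigma_n, \zeta_n], H^1(X(N), j_* \mathcal{V}^*_{k/\mathbb{Q}_p}(2)))$.

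To verify the period equation, I would show that $s_N$ commutes with every step of the composite $\mathrm{exp}^* \circ \mathrm{loc}_p \circ \mathrm{can} \circ (a)$. The twist $(a)$, the canonical map $\mathrm{can}$, and the localization $\mathrm{loc}_p$ are all natural on Galois cohomology, so they commute with the $G_{\mathbb{Q}}$-equivariant projector $s_N$. The dual exponential is functorial for $G_{\mathbb{Q}_p}$-equivariant morphisms, and under the de Rham comparison the inclusion $H^1(X(N), j_* \mathcal{V}_{k/\mathbb{Q}_p}) \hookrightarrow H^1(Y(N), \mathcal{V}_{k/\mathbb{Q}_p})$ corresponds to $S_k(N) \otimes \mathbb{Q}_p \hookrightarrow M_k(N) \otimes \mathbb{Q}_p$ with $s_N$ inducing its Hecke-linear splitting on both sides. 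Finally, the period map is Hecke-equivariant and so commutes with $s_N$. By Theorem \ref{2.1} applied to ${}_c z^{\mathrm{Iw}}_{N, n}(k, v)$, its image ${}_c \omega_{N, np^m}(k, v)$ lies in $M_k(N)_{\mathbb{Q}} \otimes \mathbb{Q}(\zeta_{np^m})$ and has period $\frac{1}{(2\pi i)^{k-1}} Z_{\Sigma, np^m}(k-1) \cdot {}_c \delta_{N, np^m}(k, v)$ modulo the $\{\pm 1\}$-action; applying $s_N$ transports this to the corresponding statement for $\omega_m(z_v) = s_N({}_c \omega_{N, np^m}(k, v))$, giving exactly the required rationality and period equation.

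For uniqueness, Lemma \ref{2.2}(2) tells me that the limit dual exponential is injective on $H^1_{\mathrm{Iw}}(\mathbb{Z}[1/\Sigma_n, \zeta_n], H^1(X(N), j_* \mathcal{V}_{k/\mathbb{Z}_p}(1)))$, so it suffices to show that the conditions of the corollary pin down each $\omega_m(z_v) \in S_k(N)_{\mathbb{Q}} \otimes \mathbb{Q}(\zeta_{np^m})$ uniquely. By the Eichler--Shimura isomorphism (\ref{ES2}), the period map embeds $S_k(N)_{\mathbb{C}} \otimes \mathbb{C}[\mathrm{Gal}]$ into one of the two $S_k$-summands of $H^1(X(N)(\mathbb{C}), j_* \mathcal{V}_{k/\mathbb{C}}) \otimes \mathbb{C}[\mathrm{Gal}]$, while complex conjugation swaps the two summands; hence the composite with the projection to the $\mathbb{C}[\{\pm 1\}]$-coinvariants remains injective on $S_k(N)_{\mathbb{Q}} \otimes \mathbb{Q}(\zeta_{np^m})$, so the period equation determines $\omega_m(z_v)$.

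The hard part will be making the compatibility of $s_N$ with the dual exponential and the de Rham comparison genuinely rigorous: one must identify the parabolic subspace of \'etale cohomology with the image of $S_k \otimes \mathbb{Q}_p$ in $M_k \otimes \mathbb{Q}_p$ under the $p$-adic comparison isomorphism, and verify that the Hecke-equivariant splittings on the two sides match. This is essentially formal once one invokes Hecke-equivariance of the comparison, but it is the step in which one could most easily introduce an error and so deserves careful bookkeeping.
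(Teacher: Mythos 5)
Your proof is correct and follows the same route as the paper: set $z_v := s_N({}_c z^{\mathrm{Iw}}_{N,n}(k,v))$, transport Kato's period formula (Theorem \ref{2.1}) through $s_N$ to get the existence statement, and derive uniqueness from the injectivity of the limit dual exponential together with the injectivity of the composite $\mathrm{can}\circ\mathrm{per}$ on $S_k(N)_{\mathbb{Q}}\otimes\mathbb{Q}(\zeta_{np^m})$ coming from the Eichler--Shimura decomposition (\ref{ES2}). The paper's one-sentence proof cites Lemma \ref{2.2}(1); you invoke Lemma \ref{2.2}(2), which is the statement actually doing the work for uniqueness (the citation of (1) in the paper looks like a slip for (2), or at most a reference to the torsion-freeness needed to pass freely between $\mathbb{Z}_p$- and $\mathbb{Q}_p$-coefficients). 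You also make explicit the compatibility of $s_N$ with the twist, localization, dual exponential, $p$-adic comparison, and period map, all of which follow from $G_{\mathbb{Q}}$- and Hecke-equivariance of $s_N$ together with functoriality; the paper leaves this verification implicit, so your spelling it out is a genuine improvement in rigor rather than a different argument.
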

 \begin{proof}
 This follows from Theorem \ref{2.1} and Lemma \ref{2.2} (1) since the composite 
 \begin{multline*}
 S_k(N)_{\mathbb{Q}}\otimes_{\mathbb{Q}}\mathbb{Q}(\zeta_{np^m})\xrightarrow{\mathrm{per}}H^1(X(N)(\mathbb{C}), j_*\mathcal{V}_{k/\mathbb{C}})\otimes_{\mathbb{C}}\mathbb{C}[\mathrm{Gal}(\mathbb{Q}(\zeta_{np^m})/\mathbb{Q})]\\
 \xrightarrow{\mathrm{can}}H^1(X(N)(\mathbb{C}), j_*\mathcal{V}_{k/\mathbb{C}})\otimes_{\mathbb{C}[\{\pm1\}]}\mathbb{C}[\mathrm{Gal}(\mathbb{Q}
 (\zeta_{np^m})/\mathbb{Q})]
 \end{multline*}
 is an injection, which follows from the isomorphism (\ref{ES2}). 
 \end{proof}
 \begin{rem}
 The author doesn't know whether the similar injectivity of the compsoite  $\mathrm{can}\circ \mathrm{per}$ in the proof above 
 holds if we replace $S_k(N)$ (resp. $X(N)$) with $M_k(N)$ (resp. $Y(N)$). 
 This is another reason why we consider parabolic cohomology instead of $H^1(Y(N), \mathcal{V}^*_{k/\mathbb{Z}_p})$. 
 \end{rem}

   %Let $Q(\Lambda_0)$ be the fractional ring of $\Lambda_0$. 
   For any integers $k\geqq 2$ and $n\geqq 1$, we write 
   $Q_{k, n}$ to denote the multiplicative set of $\Lambda_n$ generated by the elements of the form
   $(c^2-c^{2-j}\sigma_c)$ ($1\leqq j\leqq k-1$) for some $c\in \mathbb{Z}_{\geqq 2}$ such that $(c, np)=1$. %c\equiv 1$ (mod $nN$) and $(c, p)=1$. 
   For any $\Lambda_n$-module $M$, we write $M_{Q_{k,n}}:=M\otimes_{\Lambda_n}\Lambda_n[1/Q_{k,n}]$. 
   Since every elements $(c^2-c^j\sigma_c)$ are  non-zero divisors in $\Lambda_n$,  Lemma \ref{2.2} (1) implies that 
   the canonical map 
   $$H_{\mathrm{Iw}}^1(\mathbb{Z}[1/\Sigma_n, \zeta_n], H^1(X(N), j_*\mathcal{V}^*_{k/\mathbb{Z}_p}(2)))\rightarrow 
   H_{\mathrm{Iw}}^1(\mathbb{Z}[1/\Sigma_n, \zeta_n], H^1(X(N), j_*\mathcal{V}^*_{k/\mathbb{Z}_p}(2)))_{Q_{k,n}}$$
   ia an injection.

   \begin{prop}\label{2.6}
   For each integers $N\geqq 3$ and $n\geqq 1$ such that $\mathrm{prime}(Np)=\Sigma$ and $(n, Np)=1$, there exists a unique $\mathbb{Z}_p[\mathrm{GL}_2(\mathbb{Z}/N\mathbb{Z})]$-linear map 
   $$z^{\mathrm{Iw}}_{N, n}(k,-) : H^1(Y(N), \mathcal{V}^*_{k/\mathbb{Z}_p})(1)\rightarrow 
   H^1_{\mathrm{Iw}}(\mathbb{Z}[1/\Sigma_n, \zeta_n], 
   H^1(X(N), j_*\mathcal{V}^*_{k/\mathbb{Q}_p}(2)))_{Q_{k,n}}$$ satisfying the following $:$ 
   \begin{itemize}
      \item[(1)]For every $c\geqq 2$ such that $c\equiv 1$ $(\bmod N)$ and $(c, 6pn)=1$, and $1\leqq j\leqq k-1$, one has
      \begin{multline*}
   (c^2-c^{2-j}\sigma_c)(c^2-c^{j-k+2}\sigma_c)\cdot z^{\mathrm{Iw}}_{N, n}(k, \delta_N(k, e_1^{j-1}e_2^{k-1-j}))\\
   =s_N\left({}_cz^{\mathrm{Iw}}_{N, n}(k, e_1^{j-1}e_2^{k-1-j})\right).
   \end{multline*}
     \end{itemize}
   
   Moreover, the map $z^{\mathrm{Iw}}_{N, n}(k,-)$ satisfies the following : 
   \begin{itemize}
   \item[(2)]For every $c\geqq 2$ such that $(c, 6Nnp)=1$ and $1\leqq j\leqq k-1$, one has the following equality
   \begin{multline*}
   \left(c^2-c^{2-j}\begin{pmatrix}c& 0 \\ 0 & 1\end{pmatrix}^*\otimes\sigma_c\right)\left(c^2-c^{j-k+2}\begin{pmatrix}1& 0 \\ 0 & c\end{pmatrix}^*\otimes\sigma_c\right)\cdot z^{\mathrm{Iw}}_{N, n}(k, \delta_N(k, e_1^{j-1}e_2^{k-1-j}))\\=s_N\left({}_cz^{\mathrm{Iw}}_{N, n}(k, e_1^{j-1}e_2^{k-1-j})\right).
   \end{multline*}
\item[(3)]For every $v\in H^1(Y(N), \mathcal{V}^*_{k/\mathbb{Z}_p})(1)$, one has 

     $$z^{\mathrm{Iw}}_{N, n}(k,\tau(v))=\sigma_{-1}(z^{\mathrm{Iw}}_{N, n}(k,v)).$$
     \end{itemize}

   \end{prop}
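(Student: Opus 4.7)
The plan is to build $z^{\mathrm{Iw}}_{N, n}(k,-)$ in two steps: first define it on the generators $\delta_N(k, e_1^{j-1}e_2^{k-1-j})$ via the formula in (1), then extend by $\mathbb{Z}_p[\mathrm{GL}_2(\mathbb{Z}/N\mathbb{Z})]$-linearity. Since Lemma \ref{2.2}(1) gives $\Lambda_n$-torsion freeness of the target, each factor $(c^2 - c^{2-j}\sigma_c)(c^2 - c^{j-k+2}\sigma_c) \in Q_{k,n}$ acts injectively, so one can set
\[
z_c(j) := \frac{s_N\bigl({}_cz^{\mathrm{Iw}}_{N, n}(k, e_1^{j-1}e_2^{k-1-j})\bigr)}{(c^2-c^{2-j}\sigma_c)(c^2-c^{j-k+2}\sigma_c)}
\]
in the $Q_{k,n}$-localization. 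Uniqueness of the resulting map is automatic, since condition (1) forces the values on the $\delta_N(k, e_1^{j-1}e_2^{k-1-j})$ and $\mathrm{GL}_2$-equivariance propagates them.

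The critical step will be to show that $z_c(j)$ is independent of $c$ (subject to $c \equiv 1 \pmod N$ and $(c, 6np)=1$). I plan to exploit the injection in Lemma \ref{2.2}(2): the composite $(a) \circ \mathrm{can} \circ \mathrm{loc}_p \circ \exp^*$ of Corollary \ref{2.4} is still injective after the $\mathcal{V}^*_{k/\mathbb{Q}_p}(2) \to \mathcal{V}_{k/\mathbb{Q}_p}(1)$ twist $(a)$, since this twist is an isomorphism of Iwasawa cohomology, so it suffices to verify independence after $\exp^*$. Corollary \ref{2.4} provides the explicit image under $\mathrm{per}$: it equals $(2\pi i)^{1-k}\, Z_{\Sigma, np^m}(k-1)\cdot s_N({}_c\delta_{N, np^m}(k, v))$. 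For $c \equiv 1 \pmod N$ the matrices $\begin{pmatrix} c & 0 \\ 0 & 1\end{pmatrix}^*$ and $\begin{pmatrix} 1 & 0 \\ 0 & c\end{pmatrix}^*$ inside ${}_c\delta_{N, np^m}$ act as the identity, and tracking the twist by $((\zeta_{p^m})_m)^{\otimes(1-k)}$ together with the identification $\mathcal{V}_{k/\mathbb{Z}_p} \cong \mathcal{V}^*_{k/\mathbb{Z}_p}(2-k)$ converts the $c$-factor into precisely $(c^2 - c^{2-j}\sigma_c)(c^2 - c^{j-k+2}\sigma_c)$ on the source side. Independence of $c$ then follows because $\exp^*(z_c(j))$ reduces to an expression involving only $\delta_N(k, e_1^{j-1}e_2^{k-1-j})$ and the zeta factor, neither of which depends on $c$.

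For the $\mathbb{Z}_p[\mathrm{GL}_2(\mathbb{Z}/N\mathbb{Z})]$-linear extension, the same injection from Lemma \ref{2.2}(2) composed with $s_N$ reduces the well-definedness check to an identity in $\varprojlim_m S_k(N)_{\mathbb{Q}} \otimes \mathbb{Q}(\zeta_{np^m}) \otimes \mathbb{Q}_p$, which will hold because the transformation law $\sigma^*({}_cg_{\alpha,\beta}) = {}_cg_{(\alpha,\beta)\sigma}$ of Siegel units yields the required equivariance of $s_N({}_cz^{\mathrm{Iw}}_{N, n}(k, -))$ under $\mathrm{GL}_2(\mathbb{Z}/N\mathbb{Z})$, matching $\delta_N$'s equivariance. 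Property (2) will then follow from (1) by allowing the diagonal matrices to act nontrivially once $c \not\equiv 1 \pmod N$; the $\mathrm{GL}_2$-equivariant extension naturally absorbs these contributions into the denominator. Property (3) on $\tau$-equivariance follows because Kato's ${}_cz^{\mathrm{Iw}}$ and $\delta_N$ both intertwine complex conjugation on the Betti side with $\sigma_{-1}$ on the Iwasawa side, a compatibility inherited from the Chern class and Shapiro's lemma formation.

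The main obstacle is the careful bookkeeping in the independence step: the $c$-factor appearing on the $\mathcal{V}_k(1)$-side is naturally $(c^2 - c^{k-j}\sigma_c)(c^2 - c^j\sigma_c)$, and converting it to $(c^2 - c^{2-j}\sigma_c)(c^2 - c^{j-k+2}\sigma_c)$ on the $\mathcal{V}^*_k(2)$-side requires correctly handling both the Iwasawa twist by $((\zeta_{p^m})_m)^{\otimes(1-k)}$, which shifts the $\sigma_c$-weights, and the duality $\mathcal{V}_k \cong \mathcal{V}^*_k(2-k)$. Once this accounting is verified, everything else will be a functorial consequence of these compatibilities together with the injectivity and torsion-freeness statements of Lemma \ref{2.2}.
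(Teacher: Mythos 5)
Your overall strategy — define $z^{\mathrm{Iw}}_{N,n}(k,-)$ on the generators $\delta_N(k, e_1^{j-1}e_2^{k-1-j})$ via the formula in (1) with $c\equiv 1\bmod N$, use the injectivity from Lemma \ref{2.2}(2) composed with the explicit image in Corollary \ref{2.4} to verify independence of $c$ and well-definedness of the $\mathrm{GL}_2(\mathbb{Z}/N\mathbb{Z})$-linear extension, and derive uniqueness from the Ash--Stevens generation result — matches the paper's approach in both structure and the key tools. Your account of the $c$-factor bookkeeping (the duality $\mathcal{V}_k\cong\mathcal{V}^*_k(2-k)$ together with the Iwasawa twist converting $(c^2-c^{k-j}\sigma_c)(c^2-c^j\sigma_c)$ into $(c^2-c^{2-j}\sigma_c)(c^2-c^{j-k+2}\sigma_c)$) is likewise correct, and your derivation of (2) from (1) by introducing an auxiliary $c_1\equiv 1\bmod N$ and cancelling is the argument the paper actually runs.

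The one genuine gap is in property (3). You assert that $\tau$-equivariance ``follows because Kato's ${}_cz^{\mathrm{Iw}}$ and $\delta_N$ both intertwine complex conjugation on the Betti side with $\sigma_{-1}$ on the Iwasawa side, a compatibility inherited from the Chern class and Shapiro's lemma formation.'' This is not the structure of the actual argument, and stated literally it is wrong: the Euler system element ${}_cz^{\mathrm{Iw}}$ does not live on the Betti side, and neither object ``intertwines $\tau$ with $\sigma_{-1}$'' on the nose. What happens is that $\tau$ acts on $\delta_N(k,e_1^{j-1}e_2^{k-j-1})$ by $(-1)^{k-j-1}\begin{pmatrix}1&0\\0&-1\end{pmatrix}^*$ (an identity the paper quotes from Lemma 7.19 of Kato), so the required equality unwinds, after cancelling a common $(-1)^{k-j-1}$ from the $\mathrm{Sym}^{k-2}$-coordinate, to the nontrivial assertion that $\begin{pmatrix}1&0\\0&-1\end{pmatrix}^*$ \emph{fixes} ${}_cz^{(p)}_{N_0p^{\infty}}$. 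That in turn reduces to an identity in $K_2(Y(N))$: $\tau_0({}_cz_N)={}_cz_N$ for $\tau_0=\begin{pmatrix}1&0\\0&-1\end{pmatrix}$, which requires the explicit Siegel unit computation $\tau_0({}_cg_{1/N,0})={}_cg_{1/N,0}$ and $\tau_0({}_cg_{0,1/N})={}_cg_{0,-1/N}={}_cg_{0,1/N}$ (the last step using the central action of $\begin{pmatrix}-1&0\\0&-1\end{pmatrix}$). This is Lemma \ref{2.8} in the paper and cannot be waved through as a formal compatibility of Chern classes and Shapiro's lemma; without it, (3) is simply unproved. You should carry out the $\tau(\delta_N)$ transformation law and reduce to this Siegel unit identity explicitly.
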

  % \begin{rem}
  % The equality $z^{\mathrm{Iw}}_{N, n}(k,\tau(x)))=\sigma_{-1}(z^{\mathrm{Iw}}_{N, n}(k,x))$ in (1) implies that the map 
  % $z^{\mathrm{Iw}}_{N, n}(k,-)$ uniquely extends to the following 
  % $\Lambda_n$-linear map which we denote by the same letter
  % \begin{multline*}
  % z^{\mathrm{Iw}}_{N, n}(k,-) : H^1(Y(N), \mathcal{V}^*_{k/\mathbb{Z}_p})(1)\otimes_{\mathbb{Z}_p[\{\pm 1\}]}\Lambda_n\\\rightarrow 
  % H^1_{\mathrm{Iw}}(\mathbb{Z}[1/\Sigma_n, \zeta_n], 
  % H^1(X(N), j_*\mathcal{V}^*_{k/\mathbb{Q}_p}(2)))_{Q_{k,n}}.
  % \end{multline*}
   %\end{rem}
   \begin{proof}(of proposition)
   The proof of this proposition is essentially the same as that of Theorem 3.1.5 of \cite{FK}.  The uniqueness follows from the fact that 
   the elements $\delta_N(k, e_1^{j-1}e_2^{k-1-j})$ ($1\leqq j\leqq k-1$) are generators of the $\mathbb{Z}[\mathrm{GL}_2(\mathbb{Z}/N\mathbb{Z})]$-module 
   $H^1(Y(N)(\mathbb{C}), \mathcal{V}^*_{k/\mathbb{Z}})(1)$ by \cite{AS86}. 
   
   We first show the condition (2) assuming the existence of $z^{\mathrm{Iw}}_{N, n}(k, -)$ satisfying the condition (1). For any $c\geqq 2$ such that $(c, 6Nnp)=1$. We take another $c_1$ such that 
   $c_1\equiv 1$ ($\bmod$ $N$) and $(c_1, 6np)=1$. For each $1\leqq j\leqq k-1$, we set 
   $$d:=\left(c^2-c^{2-j}\begin{pmatrix}c& 0 \\ 0 & 1\end{pmatrix}^*\otimes\sigma_c\right)\left(c^2-c^{j-k+2}\begin{pmatrix}1& 0 \\ 0 & c\end{pmatrix}^*\otimes\sigma_c\right)$$
   and 
   $d_1:= (c_1^2-c_1^{2-j}\sigma_{c_1})(c_1^2-c_1^{j-k+2}\sigma_{c_1})\in Q_{k,n}$. Then, one has 
     $$d_1d\cdot z^{\mathrm{Iw}}_{N, n}(k, \delta_N(k, e_1^{j-1}e_2^{k-1-j}))=d\cdot s_N\left({}_{c_1}z^{\mathrm{Iw}}_{N, n}(k, e_1^{j-1}e_2^{k-1-j})\right)$$
     since we assume the condition (1). 
   Since one also has  equality 
  
   $$d\cdot s_N\left({}_{c_1}z^{\mathrm{Iw}}_{N, n}(k, e_1^{j-1}e_2^{k-1-j})\right)
   =d_1\cdot s_N\left({}_{c}z^{\mathrm{Iw}}_{N, n}(k, e_1^{j-1}e_2^{k-1-j})\right)$$
    by (the same proof as) Corollary \ref{2.4}, one obtains the equality 
    $$d\cdot z^{\mathrm{Iw}}_{N, n}(k, \delta_N(k, e_1^{j-1}e_2^{k-1-j}))=
   s_N\left({}_{c}z^{\mathrm{Iw}}_{N, n}(k, e_1^{j-1}e_2^{k-1-j})\right)$$
    since we have $d_1\in Q_{k,n}$, which shows the condition (2). 
    
    We next define the map $z^{\mathrm{Iw}}_{N, n}(k,-)$. We take  $c_1\geqq 2$ as above. For every $1\leqq j\leqq k-1$, we define the element 
    $$z^{\mathrm{Iw}}_{N, n}(k, \delta_N(k, e_1^{j-1}e_2^{k-1-j})):=d_1^{-1}\cdot s_N\left({}_{c_1}z^{\mathrm{Iw}}_{N, n}(k, e_1^{j-1}e_2^{k-1-j})\right)$$
    in $H^1_{\mathrm{Iw}}(\mathbb{Z}[1/\Sigma_n, \zeta_n], 
   H^1(X(N), j_*\mathcal{V}^*_{k/\mathbb{Q}_p}(2)))_{Q_{k,n}}$. By the same argument as above, we can show that this element does not depend on the choice of such $c_1$, and satisfies the equality 
   $$(c^2-c^{2-j}\sigma_c)(c^2-c^{j-k+2}\sigma_c)\cdot z^{\mathrm{Iw}}_{N, n}(k, \delta_N(k, e_1^{j-1}e_2^{k-1-j}))=s_N\left({}_cz^{\mathrm{Iw}}_{N, n}(k, e_1^{j-1}e_2^{k-1-j})\right)$$
   for every $c\geqq 2$ such that $c\equiv 1$ ($\bmod$ $N$) and $(c, 6np)=1$. For every $v=\sum_{j=1}^{k-1}a_je_1^{j-1}e_2^{k-1-j}\in \mathrm{Sym}^{k-2}(\mathbb{Z}^2)^*$ ($a_j\in \mathbb{Z}$), we define 
   $$z^{\mathrm{Iw}}_{N, n}(k, \delta_N(k, v)):=\sum_{j=1}^{k-1}a_j\cdot z^{\mathrm{Iw}}_{N, n}(k, \delta_N(k, e_1^{j-1}e_2^{k-j-1})),$$ 
   and, for every $v=\sum_{i=1}^dg_i\cdot\delta_N(k,v_i)\in H^1(Y(N), \mathcal{V}^*_{k/\mathbb{Z}})(1)$ ($g_i\in \mathrm{GL}_2(\mathbb{Z}/N\mathbb{Z}))$, we define
   \begin{equation}
   z^{\mathrm{Iw}}_{N, n}(k, v):=\sum_{i=1}^dg_i\cdot z^{\mathrm{Iw}}_{N, n}(k, \delta_N(k, v_i)).
   \end{equation}
   
   We show the well-definedness of $z^{\mathrm{Iw}}_{N, n}(k, v)$. For $c_1$ as above, we set $$d_2:=\prod_{j=1}^{k-1}(c_1^2-c_1^{2-j}\sigma_{c_1})(c_1^2-c_1^{j-k+2}\sigma_{c_1})\in \Lambda_n.$$ By definition, 
   the element $$d_2\cdot\left(\sum_{i=1}^dg_i\cdot z^{\mathrm{Iw}}_{N, n}(k, \delta_N(k, v_i))\right)$$ is in 
   $H^1_{\mathrm{Iw}}(\mathbb{Z}[1/\Sigma_n, \zeta_n], 
   H^1(X(N), j_*\mathcal{V}^*_{k/\mathbb{Q}_p}(2)))$, and, for every $m\geqq 0$, 
 its image $\omega_m\in S_k(N)_{\mathbb{Q}}\otimes_{\mathbb{Q}}\mathbb{Q}(\zeta_{np^m})\otimes_{\mathbb{Q}}\mathbb{Q}_p$ 
 by the composites
 
 \begin{multline*}
 H_{\mathrm{Iw}}^1(\mathbb{Z}[1/\Sigma_n, \zeta_n], H^1(X(N), j_*\mathcal{V}^*_{k/\mathbb{Q}_p}(2)))\xrightarrow{(a)}
 H_{\mathrm{Iw}}^1(\mathbb{Z}[1/\Sigma_n, \zeta_n], H^1(X(N), j_*\mathcal{V}_{k/\mathbb{Q}_p}(1)))
  \\\xrightarrow{\mathrm{can}}H^1(\mathbb{Z}[1/\Sigma_n, \zeta_{np^m}], H^1(X(N), j_*\mathcal{V}_{k/\mathbb{Q}_p}(1)))\\
 \xrightarrow{\mathrm{loc}_p}H^1(\mathbb{Q}(\zeta_{np^m})\otimes_{\mathbb{Q}}\mathbb{Q}_p, H^1(X(N), j_*\mathcal{V}_{k/\mathbb{Q}_p}(1)))
 \xrightarrow{\mathrm{exp}^*}S_k(N)_{\mathbb{Q}}\otimes_{\mathbb{Q}}\mathbb{Q}(\zeta_{np^m})\otimes_{\mathbb{Q}}\mathbb{Q}_p,
 \end{multline*}
 where the map $(a)$ is the twist by $((\zeta_{p^m})_{m\geqq 1})^{\otimes (1-k)}$ in Iwasawa cohomology, is contained in the subspace $S_k(N)_{\mathbb{Q}}\otimes_{\mathbb{Q}}\mathbb{Q}(\zeta_{np^m})
 $, and its image by the period map $$\mathrm{per} : S_k(N)_{\mathbb{Q}}\otimes_{\mathbb{Q}}\mathbb{Q}(\zeta_{np^m})
 \rightarrow H^1(X(N)(\mathbb{C}), j_*\mathcal{V}_{k/\mathbb{C}})\otimes_{\mathbb{C}}\mathbb{C}[\mathrm{Gal}(\mathbb{Q}(\zeta_{np^m})/\mathbb{Q})]$$ satisfies the equality 
 \begin{equation}
 \mathrm{per}(\omega_m)=\frac{1}{(2\pi i)^{k-1}} Z_{\Sigma, np^m}(k-1)(d_3\cdot s_N(v))
 \end{equation}
 for $d_3:=\prod_{j=1}^{k-1}(c_1^2-c_1^{k-j}\sigma_{c_1})(c_1^2-c_1^{j}\sigma_{c_1})\in \Lambda_n$
 in the quotient $$H^1(X(N)(\mathbb{C}), j_*\mathcal{V}_{k/\mathbb{C}})\otimes_{\mathbb{C}[\{\pm1\}]}\mathbb{C}[\mathrm{Gal}(\mathbb{Q}(\zeta_{np^m})/\mathbb{Q})]$$ of 
 $H^1(X(N)(\mathbb{C}), j_*\mathcal{V}_{k/\mathbb{C}})\otimes_{\mathbb{C}}\mathbb{C}[\mathrm{Gal}(\mathbb{Q}(\zeta_{np^m})/\mathbb{Q})]$. %In particular, 
% it only depends on $x$. 
By the same argument as in the proof of Corollary \ref{2.4}, this shows that the element $z^{\mathrm{Iw}}_{N, n}(k, v)$ is well-defined. 
 
 By definition, it is clear that 
  the map 
  $$z^{\mathrm{Iw}}_{N, n}(k, -) : H^1(Y(N)(\mathbb{C}), \mathcal{V}^*_{k/\mathbb{Z}})(1)\rightarrow 
  H^1_{\mathrm{Iw}}(\mathbb{Z}[1/\Sigma_n, \zeta_n], 
   H^1(X(N), j_*\mathcal{V}^*_{k/\mathbb{Q}_p}(2)))_{Q_{k,n}}$$
   is $\mathrm{GL}_2(\mathbb{Z}/N\mathbb{Z})$-equivariant, and satisfies the condition (1) in the proposition. 
   
   To prove the proposition, it remains to show the equality
   \begin{equation}
   z^{\mathrm{Iw}}_{N, n}(k, \tau(v))=\sigma_{-1}\left(z^{\mathrm{Iw}}_{N, n}(k, v)\right)
   \end{equation}
   for any $v\in H^1(Y(N)(\mathbb{C}), \mathcal{V}^*_{k/\mathbb{Z}})(1)$.  Since  the elements $\delta_N(k, e_1^{j-1}e_2^{k-j-1})$ ($1\leqq j\leqq k-1$) are generators 
   of $\mathbb{Z}[\mathrm{GL}_2(\mathbb{Z}/N\mathbb{Z})]$-module $H^1(Y(N)(\mathbb{C}), \mathcal{V}^*_{k/\mathbb{Z}})(1)$, it suffices to show the 
   equality for $v=\delta_N(k, e_1^{j-1}e_2^{k-j-1})$ ($1\leqq j\leqq k-1$). Since one has 
   $$\tau(\delta_N(k, e_1^{j-1}e_2^{k-j-1}))=(-1)^{k-j-1}\begin{pmatrix}1& 0 \\ 0 & -1\end{pmatrix}^*\cdot \delta_N(k, e_1^{j-1}e_2^{k-j-1})$$
   for $\begin{pmatrix}1& 0 \\ 0 & -1\end{pmatrix}\in \mathrm{GL}_2(\mathbb{Z}/N\mathbb{Z})$ by (the proof of) Lemma 7.19 of \cite{Ka04}, it suffices to show, for every $1\leqq j\leqq k-1$, the 
   equality
   \begin{equation}
   \left(\begin{pmatrix}1& 0 \\ 0 & -1\end{pmatrix}^*\otimes \sigma_{-1}\right)\cdot {}_cz^{\mathrm{Iw}}_{N, n}(k, e_1^{j-1}e_2^{k-1-j})
   =(-1)^{k-j-1}{}_cz^{\mathrm{Iw}}_{N, n}(k, e_1^{j-1}e_2^{k-1-j})
\end{equation}
 for any $c\geqq 2$ such that $c\equiv 1$ ($\bmod$ $N$) and $(c, 6np)=1$. By definition of ${}_cz^{\mathrm{Iw}}_{N, n}(k, e_1^{j-1}e_2^{k-1-j})$, 
it suffices to show the equality 
\begin{equation}
\begin{pmatrix}1& 0 \\ 0 & -1\end{pmatrix}^*\left(e_1^{j-1}e_2^{k-1-j}\otimes {}_cz^{(p)}_{N_0p^{\infty}}\right)
   =(-1)^{k-j-1}e_1^{j-1}e_2^{k-1-j}\otimes {}_cz^{(p)}_{N_0p^{\infty}}
   \end{equation}
   for $\begin{pmatrix}1& 0 \\ 0 & -1\end{pmatrix}\in \varprojlim_{m\geqq 0}\mathrm{GL}_2(\mathbb{Z}/N_0p^m\mathbb{Z})$ in the module 
   $$\mathrm{Sym}^{k-2}(\mathbb{Z}_p^2)^*\otimes_{\mathbb{Z}_p}
   H^1(\mathbb{Z}[1/\Sigma_n], \widetilde{H}^{BM}_1(K^p(N_0)))$$ %in p.20. 
   for $N_0$ such that $nN=N_0p^m$ and $(N_0,p)=1$. Since we have 
   $$\begin{pmatrix}1& 0 \\ 0 & -1\end{pmatrix}^*\left(e_1^{j-1}e_2^{k-1-j}\otimes {}_cz^{(p)}_{N_0p^{\infty}}\right)
   =(-1)^{k-1-j}e_1^{j-1}e_2^{k-1-j}\otimes \begin{pmatrix}1& 0 \\ 0 & -1\end{pmatrix}^*{}_cz^{(p)}_{N_0p^{\infty}},$$
   it suffices to show the equality
   $$\begin{pmatrix}1& 0 \\ 0 & -1\end{pmatrix}^*{}_cz^{(p)}_{N_0p^{\infty}}={}_cz^{(p)}_{N_0p^{\infty}}.$$
   
   By definition of ${}_cz^{(p)}_{N_0p^{\infty}}$, it suffices to show the equality 
   $$\begin{pmatrix}1& 0 \\ 0 & -1\end{pmatrix}^*{}_cz_{N}={}_cz_{N}$$
   in $K_2(Y(N))$ for every $N\geqq 3$. Finally, this equality follows from the lemma below. 
     \end{proof}
     \begin{lemma}\label{2.8}
     For every $N\geqq 3$ and $c\geqq 2$ such that $(c, 6N)=1$, one has an equality 
      $$\tau_0\left({}_cz_{N}\right)={}_cz_{N}$$
      in $K_2(Y(N))$ for $\tau_0=\begin{pmatrix}1 & 0 \\0 &-1\end{pmatrix}\in \mathrm{GL}_2(\mathbb{Z}/N\mathbb{Z})$. 
     \end{lemma}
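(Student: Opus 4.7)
The plan is to verify the equality by a direct computation using the characterizing properties of ${}_c g_{\alpha,\beta}$ and the functoriality of the symbol $\{-,-\}$ in $K_2$.

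First, I would apply the transformation formula recalled in \S3.1.3, namely $\sigma^*({}_c g_{\alpha,\beta}) = {}_c g_{\alpha',\beta'}$ with $(\alpha',\beta') = (\alpha,\beta)\sigma$, to compute $\tau_0^*$ on each symbol entry. Since $(1/N,0)\tau_0 = (1/N,0)$ and $(0,1/N)\tau_0 = (0,-1/N)$, I get
$$\tau_0^*({}_c z_N) = \tau_0^*\{{}_c g_{1/N,0},\, {}_c g_{0,1/N}\} = \{{}_c g_{1/N,0},\, {}_c g_{0,-1/N}\}.$$
Thus the lemma reduces to the identity ${}_c g_{0,-1/N} = {}_c g_{0,1/N}$ in $\Gamma(Y(N),\mathcal{O})^\times$.

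Next I would establish the auxiliary fact that $[-1]^* {}_c\theta_E = {}_c\theta_E$ on $E\setminus E[c]$ for any elliptic curve $E$ (equivalently, for the universal elliptic curve over $Y(N)$). This is immediate from Kato's characterization (Proposition 1.3 of \cite{Ka04}) of ${}_c\theta_E$ by its divisor $\mathrm{div}({}_c\theta_E) = E[c] - c^2[0]$ and the norm relation $N_a({}_c\theta_E|_{E\setminus E[ac]}) = {}_c\theta_E$: both the divisor $E[c] - c^2[0]$ and the norm relations are invariant under $[-1]$, so by uniqueness $[-1]^*{}_c\theta_E = {}_c\theta_E$.

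Applying this to the section $\iota_{0,1/N} = e_2$, we have $\iota_{0,-1/N} = -e_2 = [-1]\circ \iota_{0,1/N}$, so
$${}_c g_{0,-1/N} = \iota_{0,-1/N}^*\,{}_c\theta_{E^{\mathrm{univ}}} = \iota_{0,1/N}^*\,[-1]^*\,{}_c\theta_{E^{\mathrm{univ}}} = \iota_{0,1/N}^*\,{}_c\theta_{E^{\mathrm{univ}}} = {}_c g_{0,1/N}.$$
Substituting this into the displayed formula above yields $\tau_0^*({}_c z_N) = {}_c z_N$, completing the proof. There is no real obstacle here once the $[-1]$-invariance of ${}_c\theta_E$ is noted; the only point to be careful about is matching Kato's sign conventions for the right action of $\mathrm{GL}_2(\mathbb{Z}/N\mathbb{Z})$ on the parameters $(\alpha,\beta)$, which is recalled explicitly in \S3.1.3 of the paper.
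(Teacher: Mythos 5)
Your proof is correct and follows the same overall structure as the paper's: compute $\tau_0^*$ on each symbol entry (fixing ${}_cg_{1/N,0}$ and sending ${}_cg_{0,1/N}$ to ${}_cg_{0,-1/N}$), then reduce everything to the single identity ${}_cg_{0,-1/N}={}_cg_{0,1/N}$.

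The one difference is in how that last identity is justified. The paper writes ${}_cg_{0,-1/N}=\left(\begin{smallmatrix}-1&0\\0&-1\end{smallmatrix}\right)^*{}_cg_{0,1/N}={}_cg_{0,1/N}$, implicitly appealing to the fact that $-I\in\mathrm{GL}_2(\mathbb{Z}/N\mathbb{Z})$ acts trivially on $Y(N)$ (the moduli triples $(E,e_1,e_2)$ and $(E,-e_1,-e_2)$ are identified via $[-1]$). You instead derive ${}_cg_{0,-1/N}={}_cg_{0,1/N}$ directly from the $[-1]$-invariance $[-1]^*{}_c\theta_E={}_c\theta_E$, which you correctly justify from the uniqueness in Kato's characterization of ${}_c\theta_E$ by its divisor and norm relations — both visibly $[-1]$-invariant. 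These two observations are two faces of the same symmetry, but yours is somewhat more explicit and self-contained, since it spells out the property of ${}_c\theta_E$ that the paper leaves to the reader.
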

     \begin{proof}Since one has 
     $\tau_0({}_cg_{1/N, 0})={}_cg_{1/N, 0}$ and 
     $$\tau_0({}_cg_{0, 1/N})={}_cg_{0, -1/N}=\begin{pmatrix}-1 & 0 \\0 &-1\end{pmatrix}^*{}_cg_{0, 1/N}
     ={}_cg_{0, 1/N},$$ we obtain the equality
     $\tau_0\left({}_cz_{N}\right)=\{\tau_0({}_cg_{1/N, 0}), \tau_0({}_cg_{0, 1/N})\}=\{{}_cg_{1/N, 0}, {}_cg_{0,1/N}\}={}_cz_N$. 
     \end{proof}
     
     We set 
     $K_{\Sigma}(N):=\mathrm{Ker}(\prod_{l\in\Sigma} \mathrm{GL}_2(\mathbb{Z}_l)\rightarrow \mathrm{GL}_2(\mathbb{Z}/N\mathbb{Z}))$. For any 
     field $F$ of characteristic zero, we 
     write $\mathcal{H}(N)_{F}$ to denote the $F$-vector space
     of $F$-valued locally constant functions on $G_{\Sigma}$ with compact support which are left and right $K_{\Sigma}(N)$-invariant. If we choose 
     the $\mathbb{Q}$-valued Haar measure on $G_{\Sigma}$ such that $\mathrm{vol}(K_{\Sigma}(N))=1$, then the convolution product with respect to this measure makes 
     $\mathcal{H}(N)_{F}$ a $F$-algebra with its unit $e_{N}$ corresponding to the 
     characteristic function on $K_{\Sigma}(N)$. Since the $F$-vector space $H^1(Y(N), \mathcal{V}^*_{k/F})$ is equal to 
     the $K_{\Sigma}(N)$-fixed part of the $G_{\Sigma}$-representation $\varinjlim_{N'}H^1(Y(N'), \mathcal{V}^*_{k/F})$ where 
     $N'$ run through all $N'\geqq 3$ such that $\mathrm{prime}(N')=\Sigma$, 
     $H^1(Y(N), \mathcal{V}^*_{k/F})$ is naturally equipped with a structure of 
      $\mathcal{H}(N)_{F}$-module, and $H^1(X(N), j_*\mathcal{V}^*_{k/F})$ and $S_k(N)_{F}$ and $M_k(N)_{F}$ are similarly equipped with structures of 
      $\mathcal{H}(N)_{F}$-modules.

   \begin{corollary}\label{2.9}
   The map $z^{\mathrm{Iw}}_{N, n}(k,-)$ is $\mathbb{T}'_k(N)_{\mathbb{Z}_p}$-linear, and its base change 
   $$z^{\mathrm{Iw}}_{N, n}(k,-) : H^1(Y(N), \mathcal{V}^*_{k/\mathbb{Q}_p})(1)\rightarrow 
   H^1_{\mathrm{Iw}}(\mathbb{Z}[1/\Sigma_n, \zeta_n], 
   H^1(X(N), j_*\mathcal{V}^*_{k/\mathbb{Q}_p}(2)))_{Q_{k,n}}$$
   to $\mathbb{Q}_p$ is $\mathcal{H}_k(N)_{\mathbb{Q}_p}$-linear. 
   \end{corollary}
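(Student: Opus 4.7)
The plan is to establish the two linearity statements separately, using the zeta-value characterization of $z^{\mathrm{Iw}}_{N,n}(k,-)$ from Corollary \ref{2.4} together with the torsion-freeness results of Lemma \ref{2.2}.

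For the $\mathbb{T}'_k(N)_{\mathbb{Z}_p}$-linearity, fix $T \in \{T'_l, S'_l : l \notin \Sigma\}$. Since $T$ commutes with the $\mathrm{GL}_2(\mathbb{Z}/N\mathbb{Z})$-action on both source and target, the two maps $v \mapsto T \cdot z^{\mathrm{Iw}}_{N,n}(k,v)$ and $v \mapsto z^{\mathrm{Iw}}_{N,n}(k, Tv)$ are both $\mathbb{Z}_p[\mathrm{GL}_2(\mathbb{Z}/N\mathbb{Z})]$-linear. To prove their equality, I would multiply through by a suitable $d \in Q_{k,n}$ (harmless by Lemma \ref{2.2}(1)), landing in integral Iwasawa cohomology, and then apply the composite $\mathrm{per} \circ \mathrm{exp}^* \circ \mathrm{loc}_p \circ \mathrm{can}$ at every cyclotomic level $np^m$. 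By Lemma \ref{2.2}(2) combined with the injectivity of the period map modulo complex conjugation into the image of $s_N$, this collection of composites is injective on the relevant image. On the cusp-form side, Corollary \ref{2.4} identifies the image of $d \cdot z^{\mathrm{Iw}}_{N,n}(k, \delta_N(k, w))$ with $(2\pi i)^{1-k} Z_{\Sigma, np^m}(k-1) \cdot s_N({}_c\delta_{N, np^m}(k,w))$; the required equivariance then follows from three facts: $Z_{\Sigma, np^m}(k-1)$ is a power series in $T'_{l'}, S'_{l'}$ for $l' \notin \Sigma_n$ that commutes with $T$; the splitting $s_N$ is $\mathbb{T}'_k(N)_{\mathbb{Q}}$-linear by Drinfeld--Manin; and the period map is Hecke-equivariant. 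For a general $v = \sum_i g_i \cdot \delta_N(k, v_i)$, one linearizes using $\mathrm{GL}_2(\mathbb{Z}/N\mathbb{Z})$-equivariance and the fact that $T$ commutes with each $g_i$.

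For the $\mathcal{H}_k(N)_{\mathbb{Q}_p}$-linearity after base change to $\mathbb{Q}_p$, I would construct the zeta maps $z^{\mathrm{Iw}}_{N',n}(k,-)$ at all sufficiently divisible levels $N'$ with $\mathrm{prime}(N'p) = \Sigma$ simultaneously, and verify using the norm relations (\ref{e1}), (\ref{e2}) and the cyclotomic relations (\ref{cccc}) that they are compatible under both pullback and trace along the canonical maps $Y(N') \to Y(N)$ for $N \mid N'$. The uniqueness clause of Proposition \ref{2.6}, combined with the $\mathrm{GL}_2(\mathbb{Z}/N'\mathbb{Z})$-equivariance at each level, makes the verification essentially a direct consequence of the corresponding compatibilities for Kato's elements ${}_cz^{\mathrm{Iw}}_{N',n}(k,v)$. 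Since any element of $\mathcal{H}(N)_{\mathbb{Q}_p}$ decomposes as a finite $\mathbb{Q}_p$-linear combination of operators of the form $\mathrm{tr}_* \circ g^* \circ \mathrm{res}$ for suitable change-of-level maps and $g \in G_\Sigma$, this level compatibility upgrades the already established $\mathbb{T}'_k(N)_{\mathbb{Z}_p}$-linearity to the full $\mathcal{H}_k(N)_{\mathbb{Q}_p}$-linearity.

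The main obstacle will be the bookkeeping for Hecke operators at primes $l \in \Sigma$: Proposition \ref{2.6} was formulated at a single level $N$ with all ramification absorbed into $N$, so one must carefully track how the defining relation in condition (1), together with the factors $d \in Q_{k,n}$ used to clear denominators, transforms under the modular correspondences at primes dividing $N$. This is precisely where the base change to $\mathbb{Q}_p$ becomes essential — it allows us to invert the indices $[K_\Sigma(N) : K_\Sigma(N')]$ that arise when passing between levels, and hence to identify $\mathcal{H}(N)$-operators as suitable weighted combinations of the level-change operators constrained by Kato's norm relations.
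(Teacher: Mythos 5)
Your argument for $\mathbb{T}'_k(N)_{\mathbb{Z}_p}$-linearity is essentially the paper's: one clears denominators into the integral Iwasawa cohomology, applies the composite $\mathrm{per}\circ\mathrm{exp}^*$, which is injective on the relevant image (Lemma \ref{2.2} and the argument in Corollary \ref{2.4}), and reads off equivariance from the fact that the cusp-form--side formula is built from operators commuting with $T'_l, S'_l$. That part is fine.

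For the $\mathcal{H}_k(N)_{\mathbb{Q}_p}$-linearity you take a genuinely different and substantially longer route, and it is not clear it closes. The paper's proof is a single uniqueness argument that treats both linearity claims uniformly: since $\mathrm{exp}^*$ and $\mathrm{per}$ are $\mathcal{H}_k(N)_{\mathbb{Q}}$-linear, since $s_N$ is $\mathcal{H}_k(N)_{\mathbb{Q}}$-equivariant (it is the Hecke-equivariant Drinfeld--Manin splitting), and since $Z_{\Sigma, np^m}(k-1)$ and the auxiliary elements $d_3$ are power series in $T'_{l}, S'_{l}$ for $l\notin\Sigma_n$ and hence central, the characterization in (the proof of) Proposition \ref{2.6} immediately forces $z^{\mathrm{Iw}}_{N,n}(k,Tv) = T\,z^{\mathrm{Iw}}_{N,n}(k,v)$ for all $T\in\mathcal{H}_k(N)_{\mathbb{Q}_p}$. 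No level-change machinery is needed.

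Your proposed detour via Lemma \ref{2.11}-type compatibilities has a real gap. Those lemmas give compatibility of the $z^{\mathrm{Iw}}_{N',n}(k,-)$ with corestriction and restriction along maps $Y(N')\to Y(N)$ for $N\mid N'$, and the maps are $\mathrm{GL}_2(\mathbb{Z}/N'\mathbb{Z})$-equivariant; these steps do control the ``tame'' pieces of the Hecke action. But the Hecke operators at primes $l\in\Sigma$ (and at $p$) are built from double cosets $K_\Sigma(N)\,g\,K_\Sigma(N)$ with $g\notin\mathrm{GL}_2(\widehat{\mathbb{Z}})$, so the operator $g^*$ in your decomposition $\mathrm{tr}_*\circ g^*\circ\mathrm{res}$ is a genuine modular correspondence between curves whose level structures are \emph{not} both of the form $K(N')$. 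Establishing the needed $g^*$-equivariance at finite level with the coefficients $\mathcal{V}^*_{k/\mathbb{Q}_p}$ amounts to proving $G_\Sigma$-equivariance of the zeta maps in this generality; the only such result available at this point in the paper is Lemma \ref{2.12}, which is stated only for $k=2$ and only for the pro-/ind-limit maps. The bridge to finite level with general $k$ is Proposition \ref{2.16}, which comes later and itself relies on the present corollary's input (the integral zeta maps). So your route, as written, either leaves the crucial $g^*$-step unproved or risks running the arguments out of order. You should replace the second half of your proof with the paper's direct uniqueness argument, which subsumes both linearity claims at once.
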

   \begin{proof}
   Since both the maps
   $$H^1_{\mathrm{Iw}}(\mathbb{Z}[1/\Sigma_n, \zeta_n], 
   H^1(X(N), j_*\mathcal{V}^*_{k/\mathbb{Q}_p}(2)))\xrightarrow{\mathrm{exp}^*}\varprojlim_{m\geqq 0}S_k(N)\otimes_{\mathbb{Q}}\mathbb{Q}(\zeta_{np^m})\otimes_{\mathbb{Q}}\mathbb{Q}_p$$
   and $$\mathrm{per} : S_k(N)\otimes_{\mathbb{Q}}\mathbb{Q}(\zeta_{np^m})\rightarrow H^1(X(N)(\mathbb{C}),  j_*\mathcal{V}_{k/\mathbb{C}})\otimes_{\mathbb{C}}\mathbb{C}
  [\mathrm{Gal}(\mathbb{Q}(\zeta_{np^m})/\mathbb{Q})]$$ 
  ($m\geqq 0$) are $\mathbb{T}'_k(N)_{\mathbb{Z}}$ and $\mathcal{H}_k(N)_{\mathbb{Q}}$-linear, the corollary immediately follows from the 
  characterization property of the map $z^{\mathrm{Iw}}_{N, n}(k,-)$ in Proposition \ref{2.6}.

   %\xrightarrow{\mathrm{can}}S_2(N)\otimes_{\mathbb{Q}}\mathbb{Q}(\zeta_{np^k})\otimes_{\mathbb{Q}}\mathbb{Q}_p
 % \end{multline*}

   \end{proof}
   By the norm relation (\ref{c1}) for ${}_cz^{\mathrm{Iw}}_{N, n}(k,-)$, we immediately obtain the following corollary. 
   \begin{corollary}\label{2.10}
   For each integer $n\geqq 1$ such that $(n,\Sigma)=1$ and prime $l\not\in \Sigma$, one has the following commutative 
   diagram $:$ 
   \begin{equation*}
\begin{CD}
H^1(Y(N), \mathcal{V}^*_{k/\mathbb{Z}_p})(1)@>z^{\mathrm{Iw}}_{N, nl}(k,-)>> 
H^1_{\mathrm{Iw}}(\mathbb{Z}[1/\Sigma_{nl}, \zeta_{nl}], 
   H^1(X(N), j_*\mathcal{V}^*_{k/\mathbb{Q}_p}(2)))_{Q_{k,nl}} \\
@V \mathrm{id}VV  @ VV \mathrm{Cor} V \\
H^1(Y(N), \mathcal{V}^*_{k/\mathbb{Z}_p})(1)@> (*)>> 
H^1_{\mathrm{Iw}}(\mathbb{Z}[1/\Sigma_{nl}, \zeta_n], 
   H^1(X(N), j_*\mathcal{V}^*_{k/\mathbb{Q}_p}(2)))_{Q_{k,n}},
   %H^1(Y(N)(\mathbb{C}), \mathbb{Z})@>z^{(p)}_{N, mp^{\infty}}>> \widetilde{\mathfrak{Y}}_{N, m}(1)\otimes_{\Lambda(mp^{\infty})}\Lambda(mp^{\infty})' 
\end{CD}
\end{equation*}
where the map $(*)$ is 
\begin{equation*}
(*)=\begin{cases}z^{\mathrm{Iw}}_{N, n}(k,-) & \text{ if } l|n, 
\\ P_l(\sigma_l^{-1})z^{\mathrm{Iw}}_{N, n}(k,-)& \text{ if } (l, n)=1 .
\end{cases}
\end{equation*}
\end{corollary}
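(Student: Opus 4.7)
The plan is to reduce the claim to the norm relation (\ref{c1}) for the Beilinson--Kato classes ${}_c z^{\mathrm{Iw}}_{N,nl}(k,-)$ via the characterization of $z^{\mathrm{Iw}}_{N,n}(k,-)$ provided by Proposition \ref{2.6}(1). Since both horizontal maps in the diagram are $\mathbb{Z}_p[\mathrm{GL}_2(\mathbb{Z}/N\mathbb{Z})]$-linear (and the vertical arrows as well), and since the elements $\delta_N(k, e_1^{j-1}e_2^{k-j-1})$ for $1 \le j \le k-1$ generate $H^1(Y(N), \mathcal{V}^*_{k/\mathbb{Z}_p})(1)$ as a $\mathbb{Z}_p[\mathrm{GL}_2(\mathbb{Z}/N\mathbb{Z})]$-module by the Ash--Stevens result invoked in the proof of Proposition \ref{2.6}, it suffices to verify the commutativity of the diagram when applied to each such generator.

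Fix $1 \le j \le k-1$ and choose an auxiliary integer $c \ge 2$ with $c \equiv 1 \pmod{N}$ and $(c, 6nlp) = 1$; set $d_c := (c^2 - c^{2-j}\sigma_c)(c^2 - c^{j-k+2}\sigma_c)$. This element belongs to $Q_{k,n}$ (hence is a unit in the $Q_{k,n}$-localization) and also lifts, via the canonical projection $\Lambda_{nl} \twoheadrightarrow \Lambda_n$, to an element of $Q_{k,nl}$. Applying $\mathrm{Cor}$ to the defining identity
$$d_c \cdot z^{\mathrm{Iw}}_{N, nl}(k, \delta_N(k, e_1^{j-1}e_2^{k-j-1})) = s_N\bigl({}_c z^{\mathrm{Iw}}_{N, nl}(k, e_1^{j-1}e_2^{k-j-1})\bigr)$$
furnished by Proposition \ref{2.6}(1) at level $nl$, and using that $\mathrm{Cor}$ is $\Lambda$-linear with respect to $\Lambda_{nl} \twoheadrightarrow \Lambda_n$, commutes with the Hecke operators $T'_l$ and $S'_l$ (which act only on the cohomology coefficients, untouched by the Iwasawa corestriction), and commutes with the Drinfeld--Manin splitting $s_N$ (again since $s_N$ operates only on the cohomology factor), I obtain
$$d_c \cdot \mathrm{Cor}\bigl(z^{\mathrm{Iw}}_{N, nl}(k, \delta_N(k, e_1^{j-1}e_2^{k-j-1}))\bigr) = s_N\bigl(\mathrm{Cor} \circ {}_c z^{\mathrm{Iw}}_{N, nl}(k, e_1^{j-1}e_2^{k-j-1})\bigr).$$
By the norm relation (\ref{c1}), the right-hand side equals $s_N({}_c z^{\mathrm{Iw}}_{N, n}(k, \cdot))$ when $l \mid n$, and equals $(1 - T'_l \sigma_l^{-1} + l S'_l \sigma_l^{-2}) \cdot s_N({}_c z^{\mathrm{Iw}}_{N, n}(k, \cdot)) = P_l(\sigma_l^{-1}) \cdot s_N({}_c z^{\mathrm{Iw}}_{N, n}(k, \cdot))$ when $(l, n) = 1$.

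Finally, invoking Proposition \ref{2.6}(1) at level $n$ to rewrite $s_N({}_c z^{\mathrm{Iw}}_{N, n}(k, \cdot)) = d_c \cdot z^{\mathrm{Iw}}_{N, n}(k, \delta_N(k, \cdot))$ and cancelling the unit $d_c$ from both sides of the resulting identity in the $Q_{k,n}$-localization yields the sought equality on generators, completing the verification. I do not foresee a genuine obstacle; all ingredients are already available. The two points requiring care, which I expect to be the only delicate bookkeeping, are (i) tracking that the image of $\sigma_l \in \Gamma_{nl}$ under $\Lambda_{nl} \twoheadrightarrow \Lambda_n$ is $\sigma_l \in \Gamma_n$ precisely in the case $(l,n)=1$ where one expects the Euler factor $P_l(\sigma_l^{-1})$ to appear (when $l \mid n$ there is no corresponding cyclotomic Euler factor because $\mathbb{Q}(\zeta_{nlp^\infty})=\mathbb{Q}(\zeta_{np^\infty})$), and (ii) verifying that $\mathrm{Cor}$ indeed commutes with $s_N$, $T'_l$, and $S'_l$, which is immediate from the fact that these act on the coefficient factor $H^1(X(N), j_*\mathcal{V}^*_{k/\mathbb{Q}_p}(2))$ while $\mathrm{Cor}$ acts on the Iwasawa factor $\Lambda_{nl} \twoheadrightarrow \Lambda_n$.
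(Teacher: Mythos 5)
Your proof is correct and follows the same route the paper takes: the paper states that the corollary "immediately" follows from the norm relation (\ref{c1}), and your argument is the natural unwinding of that claim via the characterization in Proposition \ref{2.6}(1), the $\mathbb{Z}_p[\mathrm{GL}_2(\mathbb{Z}/N\mathbb{Z})]$-linearity reduction to the generators $\delta_N(k, e_1^{j-1}e_2^{k-j-1})$, and the compatibility of $\mathrm{Cor}$ with the $\Lambda_{nl}\twoheadrightarrow\Lambda_n$ action, with $s_N$, and with the Hecke operators. The bookkeeping points you flag at the end are exactly the ones that need to be checked, and you dispose of them correctly.
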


   \begin{lemma}\label{2.11}
There exist the following commutative diagrams $:$
\begin{itemize}
\item[(1)]For each $N|N'$ and $n\geqq 1$ such that $\mathrm{prime}(Np)=\mathrm{prime}(N'p)=\Sigma$ and $(n, \Sigma)=1$, one has the following commutative diagram $:$
\begin{equation*}
\begin{CD}
H^1(Y(N'), \mathcal{V}^*_{k/\mathbb{Z}_p})(1)@>z^{\mathrm{Iw}}_{N', n}(k,-)>> 
H^1_{\mathrm{Iw}}(\mathbb{Z}[1/\Sigma_n, \zeta_n], 
   H^1(X(N'), j_*\mathcal{V}^*_{k/\mathbb{Q}_p}(2)))_{Q_{k,n}} \\
@V \mathrm{Cor}VV  @ VV\mathrm{Cor} V \\
H^1(Y(N), \mathcal{V}^*_{k/\mathbb{Z}_p})(1)@>z^{\mathrm{Iw}}_{N, n}(k,-)>> 
H^1_{\mathrm{Iw}}(\mathbb{Z}[1/\Sigma_n, \zeta_n], 
   H^1(X(N), j_*\mathcal{V}^*_{k/\mathbb{Q}_p}(2)))_{Q_{k,n}}
   %H^1(Y(N)(\mathbb{C}), \mathbb{Z})@>z^{(p)}_{N, mp^{\infty}}>> \widetilde{\mathfrak{Y}}_{N, m}(1)\otimes_{\Lambda(mp^{\infty})}\Lambda(mp^{\infty})' 
\end{CD}
\end{equation*}
\item[(2)] For each $N|N'$ such that $\mathrm{prime}(N)=\mathrm{prime}(N')$ and $\mathrm{ord}_p(N)=\mathrm{ord}_p(N')$, one has the following commutative diagram $:$
\begin{equation*}
\begin{CD}
H^1(Y(N),\mathcal{V}^*_{k/\mathbb{Z}_p})(1)@>z^{\mathrm{Iw}}_{N, n}(k,-)>> 
H^1_{\mathrm{Iw}}(\mathbb{Z}[1/\Sigma_n, \zeta_n], 
   H^1(X(N), j_*\mathcal{V}^*_{k/\mathbb{Q}_p}(2)))_{Q_{k,n}}\\
%H^1(Y(N)(\mathbb{C}), \mathbb{Z})@>z^{(p)}_{N, mp^{\infty}}>> \widetilde{\mathfrak{Y}}_{N,m}(1)\otimes_{\Lambda(mp^{\infty})}\Lambda(mp^{\infty})' \\
@V \mathrm{Res} VV  @ VV\mathrm{Res} V \\
H^1(Y(N'),\mathcal{V}^*_{k/\mathbb{Z}_p})(1)@>z^{\mathrm{Iw}}_{N', n}(k,-)>> 
H^1_{\mathrm{Iw}}(\mathbb{Z}[1/\Sigma_n, \zeta_n], 
   H^1(X(N'), j_*\mathcal{V}^*_{k/\mathbb{Q}_p}(2)))_{Q_{k,n}}
%H^1(Y(N')(\mathbb{C}), \mathbb{Z})@>z^{(p)}_{N', mp^{\infty}}>> \widetilde{\mathfrak{Y}}_{N',m}(1)\otimes_{\Lambda(mp^{\infty})}\Lambda(mp^{\infty})' 
\end{CD}
\end{equation*}
\end{itemize}

\end{lemma}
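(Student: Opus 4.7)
The plan for both diagrams is to invoke the uniqueness characterization of $z^{\mathrm{Iw}}_{N,n}(k,-)$ from Proposition \ref{2.6}(1), reducing the verification to a check on the generators $\delta_N(k, e_1^{j-1}e_2^{k-1-j})$ ($1 \leq j \leq k-1$) and extending by $\mathbb{Z}_p$-linearity together with the appropriate $\mathrm{GL}_2$-equivariance. In each case the check on generators breaks into three sub-compatibilities, one for the Drinfeld-Manin splitting $s_N$, one for Kato's Iwasawa zeta elements ${}_c z^{\mathrm{Iw}}_{N,n}(k,-)$, and one for the tautological classes $\delta_N(k,-)$.

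For part (1), fix $1 \leq j \leq k-1$, set $v_0 = e_1^{j-1} e_2^{k-1-j}$, and choose an auxiliary $c \geq 2$ satisfying $c \equiv 1 \pmod{N'}$ (hence also $c \equiv 1 \pmod{N}$) and $(c, 6np) = 1$. The first sub-compatibility, $\mathrm{Cor}(\delta_{N'}(k, v_0)) = \delta_N(k, v_0)$, follows because the tautological map $\varphi_{N'} \colon (0,\infty) \to Y(N')(\mathbb{C})$, $y \mapsto [(iy, e)]$, projects to $\varphi_N$ under $Y(N')(\mathbb{C}) \to Y(N)(\mathbb{C})$, so its Borel-Moore pushforward---which coincides with cohomology corestriction via Poincar\'e duality---equals $\delta_N(k, v_0)$. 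The second, $\mathrm{Cor} \circ s_{N'} = s_N \circ \mathrm{Cor}$, follows from the Hecke-equivariance and uniqueness of the Drinfeld-Manin splittings over $\mathbb{Q}$. The third, $\mathrm{Cor}({}_c z^{\mathrm{Iw}}_{N', n}(k, v_0)) = {}_c z^{\mathrm{Iw}}_{N, n}(k, v_0)$, is obtained by unravelling the construction through Shapiro's lemma using a common multiple $M$ of $N'n$ and $p^m$: both sides are pushforwards of ${}_c z_M \in K_2(Y(M))$ through compatible maps $Y(M) \to Y(N') \otimes \mu^0_{np^m} \to Y(N) \otimes \mu^0_{np^m}$, and the norm relations \eqref{e1}--\eqref{e2} absorb any intermediate factors. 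Combining these with the defining formula shows both $\mathrm{Cor} \circ z^{\mathrm{Iw}}_{N',n}(k,-)$ and $z^{\mathrm{Iw}}_{N,n}(k,-) \circ \mathrm{Cor}$ send $\delta_{N'}(k, v_0)$ to $d_c^{-1} s_N({}_c z^{\mathrm{Iw}}_{N,n}(k, v_0))$ with $d_c = (c^2 - c^{2-j}\sigma_c)(c^2 - c^{j-k+2}\sigma_c)$, and the claim for general source elements follows by $\mathrm{GL}_2(\mathbb{Z}/N'\mathbb{Z})$-equivariance acting on the target through the quotient $\mathrm{GL}_2(\mathbb{Z}/N'\mathbb{Z}) \twoheadrightarrow \mathrm{GL}_2(\mathbb{Z}/N\mathbb{Z})$.

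For part (2), the hypothesis $\mathrm{prime}(N) = \mathrm{prime}(N')$ and $\mathrm{ord}_p(N) = \mathrm{ord}_p(N')$ makes $\pi \colon Y(N') \to Y(N)$ \'etale and preserves the Iwasawa-cohomology setup. The analogous sub-compatibilities become: $\mathrm{Res}(\delta_N(k, v_0))$ equals the $\mathrm{GL}_2$-sum $\sum_{\bar g \in K(N)/K(N')} \bar g^* \delta_{N'}(k, v_0)$, obtained by lifting $\varphi_N$ to all its preimages in $Y(N')(\mathbb{C})$; $\mathrm{Res} \circ s_N = s_{N'} \circ \mathrm{Res}$, again by Hecke-equivariance of restriction; and $\mathrm{Res}({}_c z^{\mathrm{Iw}}_{N, n}(k, v_0))$ equals the corresponding $\mathrm{GL}_2$-sum of ${}_c z^{\mathrm{Iw}}_{N', n}$-classes, which reduces to the pullback formula $\pi^*({}_c g_{\alpha, \beta}^{Y(N)}) = {}_c g_{\alpha, \beta}^{Y(N')}$ (with $(\alpha, \beta) \in ((1/N)\mathbb{Z}/\mathbb{Z})^2 \subset ((1/N')\mathbb{Z}/\mathbb{Z})^2$), since the defining section $\iota_{\alpha, \beta}$ base-changes along $\pi$. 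These three facts combine with the defining formula of Proposition \ref{2.6}(1) to give commutativity on $\delta_N(k, v_0)$, whence on all of $H^1(Y(N), \mathcal{V}^*_{k/\mathbb{Z}_p})(1)$.

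The main obstacle lies in the third sub-compatibility of part (2), the precise matching between $\mathrm{Res}({}_c z^{\mathrm{Iw}}_{N, n}(k, v_0))$ and a $\mathrm{GL}_2$-sum of ${}_c z^{\mathrm{Iw}}_{N', n}(k, v_0)$-classes: this requires tracking the interaction between the Shapiro isomorphism used to define the Iwasawa classes and the pullback of Kato's symbols under the \'etale cover $\pi$. Once one recognizes that the pullback of ${}_c g_{\alpha, \beta}$ from $Y(N)$ is again ${}_c g_{\alpha, \beta}$ on $Y(N')$ (viewed in the larger index set), the rest is bookkeeping of how the $K(N)/K(N')$-coset decomposition interacts with the double cosets defining the pushforward to $Y(N) \otimes \mu^0_{np^m}$. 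The remainder of the argument follows the pattern of Fukaya-Kato's analogous treatment for $\Gamma_1(N)$-level in \cite{FK}.
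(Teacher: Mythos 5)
Your overall plan—reduce to the generators $\delta_N(k,e_1^{j-1}e_2^{k-1-j})$ via the characterization in Proposition \ref{2.6}, and split each diagram into compatibilities of $\delta_N$, $s_N$, and ${}_c z^{\mathrm{Iw}}_{N,n}$—matches the paper, and your treatment of part (1) is essentially right. The gap is in your third sub-compatibility for part (2). You claim that
\[
\mathrm{Res}\bigl({}_c z^{\mathrm{Iw}}_{N,n}(k,v_0)\bigr)\;=\;\sum_{\gamma\in H}\gamma\cdot {}_c z^{\mathrm{Iw}}_{N',n}(k,v_0)
\]
``reduces to the pullback formula $\pi^*({}_c g_{\alpha,\beta}^{Y(N)})={}_c g_{\alpha,\beta}^{Y(N')}$.'' That pullback formula is true, but it does not give what you need. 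Unwinding to $K_2$, the pullback formula shows $\mathrm{Res}({}_c z_{N_1})=\{{}_c g_{1/N_1,0}^{Y(N_1')}, {}_c g_{0,1/N_1}^{Y(N_1')}\}$, a single Steinberg symbol, whereas the right-hand side $\sum_{\gamma\in H}\gamma\cdot {}_c z_{N_1'}=\sum_{\gamma\in H}\{{}_c g_{(1/N_1',0)\gamma}, {}_c g_{(0,1/N_1')\gamma}\}$ is an $|H|$-fold \emph{sum} of symbols. Steinberg symbols are not additive in this way, so the ``bookkeeping'' you allude to cannot close the gap without an independent input.

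The correct ingredient is the norm relation \eqref{e1}: since $\mathrm{prime}(N_0 n p^m)=\mathrm{prime}(N_0' n p^m)$, one has $\mathrm{Norm}({}_c z_{N_0' n p^m})={}_c z_{N_0 n p^m}$. Combined with the general transfer identity $\pi^*\circ\pi_*=\sum_{\gamma\in H}\gamma^*$ for the finite \'etale Galois cover $\pi\colon Y(N_0' n p^m)\to Y(N_0 n p^m)$ with group $H=\mathrm{Ker}(\mathrm{GL}_2(\mathbb{Z}/N')\to\mathrm{GL}_2(\mathbb{Z}/N))$, this gives
\[
\mathrm{Res}({}_c z_{N_0 n p^m})=\mathrm{Res}\circ\mathrm{Norm}({}_c z_{N_0' n p^m})=\sum_{\gamma\in H}\gamma\cdot {}_c z_{N_0' n p^m},
\]
which is exactly what the paper does. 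Note that (\ref{e1}) is itself proved (in Kato's article) from norm computations of Siegel units, not merely from their pullback, so your reduction conflates two distinct distribution properties. Replacing the ``pullback formula'' by an appeal to (\ref{e1}) plus the transfer identity makes your argument correct and aligns it with the paper's.
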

\begin{proof}
It suffices to show, for every $v\in \mathrm{Sym}^{k-2}(\mathbb{Z}_p^2)^*$,  the equalities 
\begin{equation}\label{a}
z^{\mathrm{Iw}}_{N, n}(\mathrm{Cor}(\delta_{N'}(k,v)))=\mathrm{Cor}\left(z^{\mathrm{Iw}}_{N', n}(\delta_{N'}(k,v))\right)
\end{equation}
for (1), and 
\begin{equation}\label{b}z^{\mathrm{Iw}}_{N', n}(\mathrm{Res}(\delta_{N}(k,v)))=\mathrm{Res}\left(z^{\mathrm{Iw}}_{N, n}(\delta_{N}(k,v))\right)
\end{equation}
 for (2). 

For the equation (\ref{a}),  since one has 
$\mathrm{Cor}(\delta_{N'}(k,v))=\delta_N(k,v)$, the equality follows from 
the equality 
$$\mathrm{Cor}({}_{c}z^{\mathrm{Iw}}_{N', n}(k,v))={}_{c}z^{\mathrm{Iw}}_{N, n}(k,v)$$
in $H^1_{\mathrm{Iw}}(\mathbb{Z}[1/\Sigma_n, \zeta_n], 
   H^1(Y(N), \mathcal{V}^*_{k/\mathbb{Z}_p})(2))$ for any $c\geqq 2$ such that $c\equiv 1$ (mod $N'$) and $(c, 6np)=1$. 
%which follows from the norm compatibility 
%$\mathrm{cor}({}_{c,d}z_{mp^{k+1}N_0, mp^{k+1}N_0})
%={}_{c,d}z_{mp^{k}N_0, mp^{k}N_0}$ for any $k\geqq1$ 
%(we assumed that $p|N$). 

For the equation (\ref{b}), since one has 
$$\mathrm{Res}(\delta_N(k,v))=\sum_{\gamma\in H}\gamma\cdot\delta_{N'}(k,v)$$
 for $H:=\mathrm{Ker}(\mathrm{GL}_2(\mathbb{Z}/N'\mathbb{Z})\xrightarrow{\mathrm{can}} \mathrm{GL}_2(\mathbb{Z}/N\mathbb{Z}))$, 
 it suffices to show the equality
 \begin{equation}
 \mathrm{Res}({}_{c}z^{\mathrm{Iw}}_{N, n}(k,v))=\sum_{\gamma\in H}\gamma\cdot 
 {}_{c}z^{\mathrm{Iw}}_{N', n}(k,v)
 \end{equation}
 in $H^1_{\mathrm{Iw}}(\mathbb{Z}[1/\Sigma_n, \zeta_n], 
   H^1(Y(N'),\mathcal{V}^*_{k/\mathbb{Z}_p})(2))$
 for any $c\geqq 2$ such that $c\equiv 1$ (mod $N'$) and $(c, 6np)=1$. 
 By the assumption that $m_0:=v_p(N)=v_p(N')$, one has $H\isom \mathrm{Ker}(\mathrm{GL}_2(\mathbb{Z}/N'_0n\mathbb{Z})\rightarrow 
 \mathrm{GL}_2(\mathbb{Z}/N_0n\mathbb{Z}))$ for $N=N_0p^{m_0}, N'=N_0'p^{m_0}$. Hence, by definition of ${}_{c}z^{\mathrm{Iw}}_{N, n}(k,v)$, it suffices to show the 
 equality 
 \begin{equation}
 \mathrm{Res}(v\otimes {}_cz^{(p)}_{N_0np^{\infty}})=\sum_{\gamma\in H}\gamma\cdot (v\otimes {}_cz^{(p)}_{N'_0np^{\infty}})
 \end{equation}
 in $\mathrm{Sym}^{k-2}(\mathbb{Z}_p)^*\otimes_{\mathbb{Z}_p[[K_{m}]]}H^1(\mathbb{Z}[1/\Sigma_n], \widetilde{H}^{BM}_1(K^p(N'_0n))(1))$ 
 for every $m\geqq1$. % v_p(N)=v_p(N')$. 
 Since one has $$\mathrm{Res}(v\otimes {}_cz^{(p)}_{N_0np^{\infty}})=v\otimes \mathrm{Res}({}_cz^{(p)}_{N_0np^{\infty}})$$ 
 and $$\gamma\cdot (v\otimes {}_cz^{(p)}_{N'_0np^{\infty}})=v\otimes (\gamma\cdot {}_cz^{(p)}_{N'_0np^{\infty}})$$ for $\gamma\in H$, it suffices to show the equality 
 \begin{equation}
 \mathrm{Res}({}_cz^{(p)}_{N_0np^{\infty}})=\sum_{\gamma\in H}\gamma\cdot {}_cz^{(p)}_{N'_0np^{\infty}}
 \end{equation}
 in $H^1(\mathbb{Z}[1/\Sigma_n], \widetilde{H}^{BM}_1(K^p(N'_0n))(1))$. 
 Therefore, by definition of ${}_{c}z^{(p)}_{N_0np^{\infty}}$, it suffices to show the equality 
\begin{equation}
\mathrm{Res}({}_{c}z_{N_0np^m})=\sum_{\gamma\in H}\gamma\cdot 
{}_{c}z_{N'_0np^m}
\end{equation}
in $K_2(Y(N_0'np^m))$ for every $m\geqq 1$. Since one has
$$\mathrm{Norm}({}_{c}z_{N_0'np^m})={}_{c}z_{N_0np^m}$$
by the norm relation (\ref{e1}), we obtain the equality
$$\mathrm{Res}({}_{c}z_{N_0np^m})=\mathrm{Res}\circ \mathrm{Norm}({}_{c}z_{N_0'np^m})=\sum_{\gamma\in H}\gamma\cdot 
{}_{c}z_{N_0'np^m}.$$

   %$$\mathrm{Res}({}_{c}z^{\mathrm{Iw}}_{N, \zeta_n})=\sum_{\gamma\in H}\gamma\cdot 
% {}_{c}z^{\mathrm{Iw}}_{N', \zeta_n}$$
% in $\varprojlim_{k\geqq 0}K_2(Y(N')\otimes_{\mathbb{Z}}\mu^0_{np^k/\mathbb{Z}[1/np]})$. 
 
% Since one has the following cartesian diagram (here we use the assumption that 
% $\mathrm{ord}_p(N)=\mathrm{ord}_p(N')$) 
% \begin{equation*}
%\begin{CD}
%Y(N'np^k) @> f >>  Y(N')\otimes_{\mathbb{Z}}\mu^0_{np^k/\mathbb{Z}[1/np]} \\
%@VVV  @ VVV \\
%Y(Nnp^k) @>g>>  Y(N)\otimes_{\mathbb{Z}}\mu^0_{np^k/\mathbb{Z}[1/np]} 
%\end{CD}
%\end{equation*}
%(for $k\geqq n:=v_p(N)=v_p(N')$), 
%for every $k\geqq 1$, one also have the following commutative diagram 
% \begin{equation*}
%\begin{CD}
%K_2(Y(Nnp^k)) @>g_* >>  K_2(Y(N)\otimes_{\mathbb{Z}}\mu^0_{np^k/\mathbb{Z}[1/np]}) \\
%@V \mathrm{Res} VV  @ VV\mathrm{Res}V \\
%K_2(Y(N'np^k)) @>f_*>>  K_2(Y(N')\otimes_{\mathbb{Z}}\mu^0_{np^k/\mathbb{Z}[1/np]}).
%\end{CD}
%\end{equation*}
%Therefore, by definition of ${}_{c}z^{\mathrm{Iw}}_{N, \zeta_n}$, it suffices to show the equality 
%$$\mathrm{Res}({}_{c}z_{Nnp^k})=\sum_{\gamma\in H}\gamma\cdot 
%{}_{c}z_{N'np^k}$$
%in $K_2(Y(N'np^k)$ for each $k\geqq 1$. Since one has
%$$\mathrm{Cor}({}_{c}z_{N'np^k})={}_{c}z_{Nnp^k}$$
%by the norm compatibility, one obtains the equality
%$$\mathrm{Res}({}_{c}z_{Nnp^k})=\mathrm{Res}\circ \mathrm{Cor}({}_{c}z_{N'np^k})=\sum_{\gamma\in H}\gamma\cdot 
%{}_{c}z_{N'np^k}.$$

\end{proof}
   
  We next fix an integer $N_0\geqq 1$ such that $\mathrm{prime}(N_0)=\Sigma\setminus \{p\}=:\Sigma_0$. 
  We set 
  $$K_{\Sigma_0}(N_0):=\mathrm{Ker}(\prod_{l\in \Sigma_0}\mathrm{GL}_2(\mathbb{Z}_l)\xrightarrow{\mathrm{can}}\mathrm{GL}_2(\mathbb{Z}/N_0\mathbb{Z})).$$%We remark that the completed Borel-Moore homology $\widetilde{H}^{BM}_1(K(N_0))_{\mathbb{Z}_p}$ with respect to the tame level $K(N_0)$ is equal to 
  %$\widetilde{H}^{BM}_1(K(N_0))_{\mathbb{Z}_p}= \varprojlim_{m\geqq 1}H^1(Y(N_0p^m))_{\mathbb{Z}_p}(1)$. 
  By Lemma \ref{2.11} (1), 
  the projective limit of maps $z^{\mathrm{Iw}}_{N_0p^m, n}(2,-)$ for $k=2$ and $m\geqq 1$ gives the following map 
  \begin{equation}
  z^{\mathrm{Iw}}_{N_0p^{\infty}, n} : \widetilde{H}^{BM}_1(K^p(N_0)) \rightarrow 
  \varprojlim_{m\geqq 1}H^1_{\mathrm{Iw}}(\mathbb{Z}[1/\Sigma_n, \zeta_n], H^1(X(N_0p^m))_{\mathbb{Q}_p}(2))_{Q_{2,n}}
 \end{equation}
  which is $\mathbb{T}'(K_{\Sigma_0}(N_0))[\mathrm{GL}_2(\mathbb{Z}_p)]$-linear. We remark that 
  both the left and the right hand sides of the map are naturally equipped with an action of $G_p$. 
  Moreover, by Lemma \ref{2.11} (2), the injective limit of the maps $z^{\mathrm{Iw}}_{N_0p^{\infty}, n}$ %with respect to  $N_0$ such that 
  %$\mathrm{prime}(N_0)=\Sigma\setminus \{p\}$ 
  gives the following map 
  %\begin{multline*}
  \begin{equation}
  z^{\mathrm{Iw}}_{\Sigma, n} : \widetilde{H}^{BM}_{1,\Sigma}\\
  \rightarrow \varinjlim_{N_0}\left(\varprojlim_{m\geqq 1}H^1_{\mathrm{Iw}}(\mathbb{Z}[1/\Sigma_n, \zeta_n], H^1(X(N_0p^m))_{\mathbb{Q}_p}(2)))_{Q_{2,n}}\right),
  \end{equation}
 % \end{multline*}
  where $N_0$ run through all $N_0$ such that $\mathrm{prime}(N_0)=\Sigma\setminus \{p\}$ and the transition maps
  are the restriction maps for every such $N_0|N_0'$. We remark that 
  both the left and the right hand sides of the map are naturally equipped with the action of $\mathrm{G}_{\Sigma}$. 
  By the characterization property of each maps $z^{\mathrm{Iw}}_{N_0p^m, n}(2,-)$, we immediately obtain the following lemma. 
    
  \begin{lemma}\label{2.12}
  \begin{itemize}
  \item[(1)]For any element $$v\in \varprojlim_{m\geqq 1}H^1(Y(N_0p^m)(\mathbb{C}), \mathbb{Z}(1)) \subseteq 
  \widetilde{H}_1^{BM}(K^p(N_0))$$ and $g\in G_p$, one has the equality 
  $$z^{\mathrm{Iw}}_{N_0p^{\infty}, n}(gv)=g\cdot z^{\mathrm{Iw}}_{N_0p^{\infty}, n}(v).$$
  \item[(2)]For any element $$v\in \varinjlim_{N_0}\left(\varprojlim_{m\geqq 1}H^1(Y(N_0p^m)(\mathbb{C}), \mathbb{Z}(1))\right) \subseteq 
  \widetilde{H}_{1,\Sigma}^{BM}$$ and $g\in \mathrm{G}_{\Sigma}$, one has the equality 
  $$z^{\mathrm{Iw}}_{\Sigma, n}(gv)=g\cdot z^{\mathrm{Iw}}_{\Sigma, n}(v).$$
  \end{itemize}
  \end{lemma}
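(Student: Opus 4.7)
The plan is to reduce the $G_p$-equivariance of $z^{\mathrm{Iw}}_{N_0p^{\infty}, n}$ to finite-level compatibilities of the maps $z^{\mathrm{Iw}}_{N_0p^m, n}(2, -)$, using their characterization in Proposition \ref{2.6} and their behaviour under restriction and corestriction in Lemma \ref{2.11}. Since the source and target of $z^{\mathrm{Iw}}_{N_0p^{\infty}, n}$ are both projective limits over $m \geq 1$ and the $G_p$-action on each is induced levelwise by Hecke-type correspondences between adjacent levels, it suffices to verify the intertwining identity $z^{\mathrm{Iw}}_{N_0p^m, n}(2, (gv)_m) = g \cdot z^{\mathrm{Iw}}_{N_0p^m, n}(2, v_m)$ at each finite level $N_0p^m$. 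The hypothesis that $v$ lies in $\varprojlim_m H^1(Y(N_0p^m)(\mathbb{C}), \mathbb{Z}(1))$ is exactly what places $v_m$ in the integrally-defined domain of $z^{\mathrm{Iw}}_{N_0p^m, n}(2, -)$.

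A general $g \in G_p$ admits an Iwasawa decomposition $g = k_1 \cdot \mathrm{diag}(p^a, p^b) \cdot k_2$ with $k_1, k_2 \in \mathrm{GL}_2(\mathbb{Z}_p)$, so the verification reduces to two cases. For $k \in \mathrm{GL}_2(\mathbb{Z}_p)$, the action at level $N_0p^m$ factors through the image $\bar k \in \mathrm{GL}_2(\mathbb{Z}/p^m\mathbb{Z}) \subseteq \mathrm{GL}_2(\mathbb{Z}/N_0p^m\mathbb{Z})$, and the $\mathbb{Z}_p[\mathrm{GL}_2(\mathbb{Z}/N_0p^m\mathbb{Z})]$-linearity from Proposition \ref{2.6} directly yields equivariance. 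For the diagonal element $d = \mathrm{diag}(p^a, p^b)$, the induced action on the tower corresponds to a Hecke correspondence which I would rewrite, at level $N_0p^m$, as the composite of (i) restriction from $N_0p^m$ to a deeper level $N_0p^M$ with $M \gg m$ large enough for $d$ to normalize $K(p^M) \cap d^{-1} K(p^m) d$, (ii) the action of a suitable element $\gamma \in \mathrm{GL}_2(\mathbb{Z}/N_0p^M\mathbb{Z})$ representing the conjugation of $d$ between the relevant congruence subgroups, and (iii) corestriction from $N_0p^M$ back to $N_0p^m$. By Lemma \ref{2.11}(2), Proposition \ref{2.6}, and Lemma \ref{2.11}(1) respectively, $z^{\mathrm{Iw}}$ commutes with each of these three operations, giving the desired identity for $d$.

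For part (2), the argument is parallel for each factor $G_l$ with $l \in \Sigma_0$: $G_l$ is likewise generated by $\mathrm{GL}_2(\mathbb{Z}_l)$ and $\mathrm{diag}(l, 1)$, and the analogous decomposition holds using Lemma \ref{2.11}(1) now to change the tame level $N_0 \mid N_0'$ with $\mathrm{prime}(N_0 p) = \mathrm{prime}(N_0' p)$. Since the $G_l$-actions for distinct $l \in \Sigma$ commute on $\widetilde{H}^{BM}_{1, \Sigma}$, the individual equivariances combine to yield the full $G_{\Sigma}$-equivariance. The main obstacle is step (ii) above: because $\mathrm{diag}(p^a, p^b)$ (resp.\ $\mathrm{diag}(l, 1)$) does not normalize the principal congruence subgroups $K(p^m)$ (resp.\ $K(l^m)$), matching its action with the claimed composite requires a careful double-coset calculation at level $N_0p^M$, which is the technical heart of the argument.
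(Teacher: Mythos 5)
The approach you take is genuinely different from the paper's, and unfortunately the key step does not go through as described.

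The paper deduces Lemma \ref{2.12} ``immediately'' from the characterization property of Proposition \ref{2.6}: the element $z^{\mathrm{Iw}}_{N,n}(k,v)$ is pinned down uniquely by the fact that, after multiplying by a suitable $d\in Q_{k,n}$, its image under the Galois-theoretic composite (twist, $\mathrm{loc}_p$, $\exp^*$) and then the period map equals $\frac{1}{(2\pi i)^{k-1}}Z_{\Sigma,np^m}(k-1)(d'\cdot s_N(v))$. Every map in this formula --- $\exp^*$, the period map $\mathrm{per}$, the Drinfeld--Manin splitting $s_N$, and the operator $Z_{\Sigma,np^m}$ --- is equivariant for the full Hecke action, and more generally functorial for all natural morphisms between levels (this is exactly the content of Corollary \ref{2.9} and Lemma \ref{2.11}). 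Since the $G_p$- (resp.\ $G_\Sigma$-) action on $\widetilde{H}^{BM}_1(K^p(N_0))$ is built from precisely such natural morphisms, both $z^{\mathrm{Iw}}(gv)$ and $g\cdot z^{\mathrm{Iw}}(v)$ satisfy the same characterizing zeta-value identity attached to $gv$, hence coincide by uniqueness. No decomposition of $g$ into generators is needed.

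Your plan instead decomposes $g=k_1\,\mathrm{diag}(p^a,p^b)\,k_2$ and tries to rewrite the diagonal action at level $N_0p^m$ as ``restriction to $N_0p^M$, then a $\gamma\in\mathrm{GL}_2(\mathbb{Z}/N_0p^M\mathbb{Z})$-action, then corestriction back to $N_0p^m$.'' This is not an accurate description of the $G_p$-action on the Borel--Moore completed homology, and the discrepancy is not merely a double-coset bookkeeping issue. On $\widetilde{H}^{BM}_1(K^p)=\varprojlim_m H^1(K^pK_m)(1)$ (transition maps = corestriction), the level-$m'$ component of $g\cdot v$ is
\[(g\cdot v)_{m'}\;=\;g^*\circ\mathrm{cor}^{K_M}_{\,g^{-1}K_{m'}g}\,(v_M)\qquad(M\gg 0,\ K_M\subseteq g^{-1}K_{m'}g),\]
that is, a corestriction from a deep \emph{standard} level to the \emph{non-standard} congruence subgroup $g^{-1}K_{m'}g$, followed by the $g^*$-isomorphism to $K_{m'}$. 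This is the group action of an invertible element on a projective limit, not the Hecke correspondence $[K_m g K_m]$ at fixed level that your ``res\,+\,$\gamma$\,+\,cor'' picture encodes. In particular your intermediate expression ``$g\cdot z^{\mathrm{Iw}}_{N_0p^m,n}(2,v_m)$'' does not have a meaning at a single finite level, because $g\notin\mathrm{GL}_2(\mathbb{Z}_p)$ does not act there.

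Even setting this aside, step (i) of your decomposition invokes Lemma \ref{2.11}(2) to restrict from $N_0p^m$ to $N_0p^M$; but Lemma \ref{2.11}(2) is stated and proved only under the hypothesis $\mathrm{ord}_p(N)=\mathrm{ord}_p(N')$, so it does not license restriction that deepens the $p$-level. You would need to prove a new compatibility statement there, and it is exactly the kind of statement that the paper avoids proving by working with the zeta-value characterization instead. I would recommend replacing the generator-by-generator argument with the observation that the characterization of $z^{\mathrm{Iw}}$ in Proposition \ref{2.6} is preserved under $g^*$ and corestriction to arbitrary (including non-principal) open compact subgroups, by the functoriality of $\exp^*$ and $\mathrm{per}$; that makes the lemma genuinely immediate.
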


 \subsection{Zeta morphisms for the \'etale cohomology localized at non-Eisenstein ideals}
 We fix $\Sigma$, $\mathcal{O}$, and $\overline{\rho} : G_{\mathbb{Q}}\rightarrow \mathrm{GL}_2(\mathbb{F})$ as in 
 \S 1.2. We set $\Sigma_0:=\Sigma\setminus \{p\}$. 
 
 For each $N_0\geqq 1$ such that $\mathrm{prime}(N_0)=\Sigma_0$, 
 and for each $N=N_0p^m\geqq 3$ and $m\geqq 0$, we recall that one has a canonical Hecke equivariant isomorphism 
 $$\mathrm{Sym}^{k-2}(\mathcal{O}^2)^*\otimes_{\mathcal{O}[[K_m]]}
 \widetilde{H}^{BM}_1(K^p(N_0))[1/p]\isom H^1(Y(N), \mathcal{V}^*_{k/E})(1)$$
 by Lemma \ref{1.10}. 
 Hence, one has a well-defined surjective $\mathcal{O}$-algebra homomorphism $\mathbb{T}'(K_{\Sigma_0}(N_0))\rightarrow \mathbb{T}'_k(N)_{\mathcal{O}} : 
 T'_l\, (\text{resp}. S'_l)\mapsto T'_l\, (\text{resp}. S'_l)$. Since we also have a canonical isomorphism 
 $$\mathbb{T}'(K_{\Sigma_0}(N_0))\isom \mathbb{T}(K_{\Sigma_0}(N_0)) : T'_l\, (\text{resp}. S'_l)\mapsto T_l\, (\text{resp}. S_l)$$ 
 in \S 2.3.2, we regard the ring $\mathbb{T}'_k(N)_{\mathcal{O}}$ as a $\mathbb{T}(K_{\Sigma_0}(N_0))$-algebra by this isomorphism. 
 
 We say that $N_0$ as above is allowable for $\overline{\rho}$ if the subgroup $K_{\Sigma_0}(N_0)$
 %$\mathrm{Ker}(\prod_{l\in \Sigma_0}\mathrm{GL}_2(\mathbb{Z}_l)\rightarrow \mathrm{GL}_2(\mathbb{Z}/N_0\mathbb{Z}))$ 
 %of $G_{\Sigma_0}$
 is allowable with respect to $\overline{\rho}$ in the sense of \S 2.1.2. Then, we recall that there exists a unique maximal ideal $\mathfrak{m}$ of 
 $\mathbb{T}(K_{\Sigma_0}(N_0))$ corresponding to $\overline{\rho}$. Hence, for any allowable $N_0$ and $m\geqq 0$ and $k\geqq 2$, and for any 
 $\mathbb{T}'_k(N_0p^m)_{\mathcal{O}}$-module $M$, we can define its localization at $\mathfrak{m}$ which we denote by 
 $M_{\overline{\rho}}:=M\otimes_{\mathbb{T}(K_{\Sigma_0}(N_0))}\mathbb{T}(K_{\Sigma_0}(N_0))_{\overline{\rho}}$.

 We recall that we assume that $\overline{\rho}$ is absolutely irreducible. Then, for any $N=N_0p^m$ as above, the localization 
 $$\mathrm{can} : H^1(X(N), j_*\mathcal{V}^*_{k/A})_{\overline{\rho}}\rightarrow H^1(Y(N),\mathcal{V}^*_{k/A})_{\overline{\rho}}$$
 (which we also denote by the same letter $\mathrm{can}$) of the canonical inclusion 
 $$\mathrm{can} : H^1(X(N), j_*\mathcal{V}^*_{k/A})\hookrightarrow H^1(Y(N),\mathcal{V}^*_{k/A})$$ is isomorphism for $A=\mathcal{O}, E$, 
 and its inverse for $A=E$ is the localization 
 $$s_N : H^1(Y(N),\mathcal{V}^*_{k/E})_{\overline{\rho}}\isom H^1(X(N), j_*\mathcal{V}^*_{k/E})_{\overline{\rho}}$$ 
 (which we also denote by $s_N$) of the splitting $$s_N : H^1(Y(N),\mathcal{V}^*_{k/E})\rightarrow H^1(X(N), j_*\mathcal{V}^*_{k/E}).$$ 
 
 Since the map $z^{\mathrm{Iw}}_{N, n}(k,-)$ is $\mathbb{T}'_k(N)_{\mathbb{Z}_p}$-linear, we can consider the localization 
 (of its base change from $\mathbb{Z}_p$ to $\mathcal{O}$)
\begin{equation}z^{\mathrm{Iw}}_{N, n, \overline{\rho}}(k,-) : H^1(Y(N),\mathcal{V}^*_{k/\mathcal{O}})_{\overline{\rho}}(1)\rightarrow 
   H^1_{\mathrm{Iw}}(\mathbb{Z}[1/\Sigma_n, \zeta_n], 
   H^1(X(N), j_*\mathcal{V}^*_{k/E})_{\overline{\rho}}(2))_{Q_{k,n}}.
   \end{equation}
   %$$z^{\mathrm{Iw}, (p)}_{N,\zeta_n}(k,-)\otimes \mathrm{id}_{\mathcal{O}} : H^1(Y(N),\mathcal{V}_k^*)_{\mathcal{O}}\otimes_{\mathbb{Z}_p[\{\pm 1\}]}\Lambda_n\rightarrow 
   %H^1_{\mathrm{Iw}}(G_{\mathbb{Q}(\zeta_n), \Sigma_n}, 
   %H^1(X(N),\mathcal{V}_k^*)_{E}(1))\otimes_{\Lambda_0}\Lambda_0[1/S_{k,Nn}]$$ t
   %to $\mathcal{O}$ of 
   %the map $z^{\mathrm{Iw}, (p)}_{N,\zeta_n}(k,-)$. 
   By definition of the map $z^{\mathrm{Iw}}_{N, n}(k,-)$, we immediately obtain the following corollary. 
   \begin{corollary}\label{2.13}
   The image of the composite 
   \begin{multline*}
   H^1(Y(N),\mathcal{V}^*_{k/\mathcal{O}})_{\overline{\rho}}(1)\xrightarrow{z^{\mathrm{Iw}}_{N, n, \overline{\rho}}(k,-)}
   H^1_{\mathrm{Iw}}(\mathbb{Z}[1/\Sigma_n, \zeta_n], 
   H^1(X(N), j_*\mathcal{V}^*_{k/E})_{\overline{\rho}}(2))_{Q_{k,n}}\\
   \xrightarrow{s_N^{-1}} H^1_{\mathrm{Iw}}(\mathbb{Z}[1/\Sigma_n, \zeta_n], 
   H^1(Y(N),\mathcal{V}^*_{k/E})_{\overline{\rho}}(2))_{Q_{k,n}}
   \end{multline*}
   is contained in the submodule $H^1_{\mathrm{Iw}}(\mathbb{Z}[1/\Sigma_n, \zeta_n], 
   H^1(Y(N),\mathcal{V}^*_{k/\mathcal{O}})_{\overline{\rho}}(2))_{Q_{k,n}}.$
 
   \end{corollary}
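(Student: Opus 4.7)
The proof strategy is to leverage the explicit defining equation of $z^{\mathrm{Iw}}_{N,n}(k,-)$ from Proposition \ref{2.6}, combined with two complementary integrality facts: (a) the Beilinson-Kato elements ${}_c z^{\mathrm{Iw}}_{N,n}(k,v)$ are tautologically integral in the $\mathbb{Z}_p$-coefficient cohomology, and (b) after localization at the non-Eisenstein ideal corresponding to $\overline{\rho}$, the Drinfeld-Manin splitting $s_N$ promotes to an $\mathcal{O}$-integral isomorphism. The idea is simply to divide the integral Beilinson-Kato relation by the $Q_{k,n}$-denominator in the correct category.

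More precisely, I would first invoke Proposition \ref{2.6}(1): for each $1\leq j\leq k-1$ and any $c\geq 2$ with $c\equiv 1\pmod{N}$ and $(c,6np)=1$, setting $q_{c,j}:=(c^2-c^{2-j}\sigma_c)(c^2-c^{j-k+2}\sigma_c)\in Q_{k,n}$,
$$q_{c,j}\cdot z^{\mathrm{Iw}}_{N,n,\overline{\rho}}(k, \delta_N(k, e_1^{j-1}e_2^{k-j-1})) = s_N\bigl({}_c z^{\mathrm{Iw}}_{N,n,\overline{\rho}}(k, e_1^{j-1}e_2^{k-j-1})\bigr).$$
By construction (Chern class followed by the Hochschild-Serre edge map, both preserving $\mathbb{Z}_p$-integrality), ${}_c z^{\mathrm{Iw}}_{N,n}(k,v)$ lives in the integral lattice $H^1_{\mathrm{Iw}}(\mathbb{Z}[1/\Sigma_n,\zeta_n], H^1(Y(N),\mathcal{V}^*_{k/\mathbb{Z}_p})(2))$; after base change to $\mathcal{O}$ and localization at $\overline{\rho}$, it lies in $L:=H^1_{\mathrm{Iw}}(\mathbb{Z}[1/\Sigma_n,\zeta_n], H^1(Y(N),\mathcal{V}^*_{k/\mathcal{O}})_{\overline{\rho}}(2))$.

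Next, I would use the opening observation of this subsection: because $\overline{\rho}$ is absolutely irreducible (equivalently non-Eisenstein), the inclusion $\mathrm{can}: H^1(X(N),j_*\mathcal{V}^*_{k/\mathcal{O}})_{\overline{\rho}}\hookrightarrow H^1(Y(N),\mathcal{V}^*_{k/\mathcal{O}})_{\overline{\rho}}$ is already an isomorphism of $\mathcal{O}$-modules, so its inverse is an $\mathcal{O}$-integral refinement of the a priori only rational splitting $s_N$. Applying this integral $s_N^{-1}$ to the displayed equation gives
$$q_{c,j}\cdot s_N^{-1}\bigl(z^{\mathrm{Iw}}_{N,n,\overline{\rho}}(k, \delta_N(k, e_1^{j-1}e_2^{k-j-1}))\bigr) = {}_c z^{\mathrm{Iw}}_{N,n,\overline{\rho}}(k, e_1^{j-1}e_2^{k-j-1}) \in L.$$

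To conclude, I need $L$ to be $\Lambda_n$-torsion free so that multiplication by $q_{c,j}$ is injective on $L\otimes_{\Lambda_n}\mathrm{Frac}(\Lambda_n)$; this is the standard consequence of the vanishing of the relevant $H^0_{\mathrm{Iw}}$ and the $\mathcal{O}$-freeness of $H^1(Y(N),\mathcal{V}^*_{k/\mathcal{O}})_{\overline{\rho}}$, both ultimately reliant on the absolute irreducibility of $\overline{\rho}$. Granted this, I can divide to obtain $s_N^{-1}(z^{\mathrm{Iw}}_{N,n,\overline{\rho}}(k, \delta_N(k, e_1^{j-1}e_2^{k-j-1})))\in L[1/q_{c,j}]\subseteq L_{Q_{k,n}}$. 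Finally, the elements $\delta_N(k, e_1^{j-1}e_2^{k-j-1})$ for $1\leq j\leq k-1$ generate $H^1(Y(N),\mathcal{V}^*_{k/\mathbb{Z}})(1)$ as a $\mathbb{Z}[\mathrm{GL}_2(\mathbb{Z}/N\mathbb{Z})]$-module by \cite{AS86} (as cited in the proof of Proposition \ref{2.6}), so by the $\mathrm{GL}_2(\mathbb{Z}/N\mathbb{Z})$-equivariance of $s_N^{-1}\circ z^{\mathrm{Iw}}_{N,n,\overline{\rho}}(k,-)$ the integrality extends to the full domain. The main (and only essential) obstacle is the $\Lambda_n$-torsion freeness of $L$ in the penultimate step, which is precisely the place where the residual absolute irreducibility hypothesis is indispensable, in accordance with the author's remark in the introduction.
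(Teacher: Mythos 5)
Your argument is correct and is essentially the same route the paper takes: the paper records this corollary with no proof because, as you observe, the defining formula for $z^{\mathrm{Iw}}_{N, n}(k,-)$ in the proof of Proposition \ref{2.6} already reads
$z^{\mathrm{Iw}}_{N, n}(k, \delta_N(k, e_1^{j-1}e_2^{k-1-j}))=d_1^{-1}\cdot s_N\bigl({}_{c_1}z^{\mathrm{Iw}}_{N, n}(k, e_1^{j-1}e_2^{k-1-j})\bigr)$,
so applying $s_N^{-1}$ (which after localizing at the non-Eisenstein maximal ideal is just the canonical inclusion $H^1(X(N),j_*\mathcal{V}^*_{k/\mathcal{O}})_{\overline\rho}\hookrightarrow H^1(Y(N),\mathcal{V}^*_{k/\mathcal{O}})_{\overline\rho}$, hence $\mathcal{O}$-integral) and invoking the integrality of the Kato elements immediately places the generators in $L_{Q_{k,n}}$; $\mathrm{GL}_2(\mathbb{Z}/N\mathbb{Z})$-equivariance and generation via \cite{AS86} then finish it.

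One small correction: the $\Lambda_n$-torsion-freeness of $L:=H^1_{\mathrm{Iw}}(\mathbb{Z}[1/\Sigma_n,\zeta_n], H^1(Y(N),\mathcal{V}^*_{k/\mathcal{O}})_{\overline\rho}(2))$ is not what the division step actually rests on, and it is not ``the main and only essential obstacle.'' You are not deducing the formula by solving $q_{c,j}\cdot x = {}_cz^{\mathrm{Iw}}_{N,n,\overline\rho}(k,\cdot)$ for an unknown $x$ in $L_{Q_{k,n}}$; rather, the element $d_1^{-1}\cdot{}_{c_1}z^{\mathrm{Iw}}_{N,n,\overline\rho}(k,\cdot)$ is already by construction an element of $L_{Q_{k,n}}$. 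The only compatibility you need is that the natural map $L_{Q_{k,n}}\rightarrow (L\otimes_{\mathcal{O}}E)_{Q_{k,n}}$ is injective, which follows from $L$ being $\mathcal{O}$-torsion free (a far cheaper fact than $\Lambda_n$-torsion-freeness, which is the input to the genuinely harder Proposition \ref{2.14} and is proved there via a vanishing of $H^0$). Emphasizing $\Lambda_n$-torsion-freeness here conflates the present (essentially definitional) statement with that deeper integrality result.
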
     
   By this corollary, we obtain the map 
   \begin{multline}
   z^{\mathrm{Iw}}_{N, n, \overline{\rho}}(k,-):=s_N^{-1}\circ z^{\mathrm{Iw}}_{N, n, \overline{\rho}}(k,-) : 
   H^1(Y(N),\mathcal{V}^*_{k/\mathcal{O}})_{\overline{\rho}}(1)\\
   \rightarrow 
   H^1_{\mathrm{Iw}}(\mathbb{Z}[1/\Sigma_n, \zeta_n], 
   H^1(Y(N),\mathcal{V}^*_{k/\mathcal{O}})_{\overline{\rho}}(2))_{Q_{k,n}},
   \end{multline}
   which we also denote by the same letter $z^{\mathrm{Iw}}_{N, n, \overline{\rho}}(k,-)$. 
   
   The following proposition concerning the integrality of the map $z^{\mathrm{Iw}}_{N, n, \overline{\rho}}(k,-)$ is crucial 
   for our purpose. 
    \begin{prop}\label{2.14}
   The image of the map 
   $$z^{\mathrm{Iw}}_{N, n, \overline{\rho}}(k,-) : H^1(Y(N),\mathcal{V}^*_{k/\mathcal{O}})_{\overline{\rho}}(1)\rightarrow 
   H^1_{\mathrm{Iw}}(\mathbb{Z}[1/\Sigma_n, \zeta_n], 
   H^1(Y(N),\mathcal{V}^*_{k/\mathcal{O}})_{\overline{\rho}}(2))_{Q_{k,n}}$$
   is contained in the submodule $H^1_{\mathrm{Iw}}(\mathbb{Z}[1/\Sigma_n, \zeta_n], 
   H^1(Y(N), \mathcal{V}^*_{k/\mathcal{O}})_{ \overline{\rho}}(2)).$
   \end{prop}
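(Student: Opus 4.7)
The strategy is to reduce the desired integrality to Kato's integrality theorem for his zeta morphisms $\bold{z}(f)$ attached to Hecke eigen cusp newforms $f$ with residual representation $\overline{\rho}$. Since the map $z^{\mathrm{Iw}}_{N,n,\overline{\rho}}(k,-)$ is $\mathcal{O}[\mathrm{GL}_2(\mathbb{Z}/N\mathbb{Z})]$-linear and the elements $\delta_N(k, e_1^{j-1}e_2^{k-j-1})$ for $1 \le j \le k-1$ generate $H^1(Y(N),\mathcal{V}^*_{k/\mathcal{O}})(1)$ as $\mathcal{O}[\mathrm{GL}_2(\mathbb{Z}/N\mathbb{Z})]$-module by \cite{AS86}, it suffices to check integrality on the images of these generators in $H^1(Y(N),\mathcal{V}^*_{k/\mathcal{O}})_{\overline{\rho}}(1)$. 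Fix an auxiliary integer $c \geqq 2$ with $c \equiv 1 \pmod N$ and $(c,6pn)=1$, and set $P_{c,j} := (c^2 - c^{2-j}\sigma_c)(c^2 - c^{j-k+2}\sigma_c) \in Q_{k,n}$. By Proposition \ref{2.6}(1), together with the fact that $s_N$ becomes an isomorphism after localizing at the absolutely irreducible $\overline{\rho}$, the target element $z^{\mathrm{Iw}}_{N,n,\overline{\rho}}(k, \delta_N(k, e_1^{j-1}e_2^{k-j-1}))$ equals $P_{c,j}^{-1} \cdot {}_cz^{\mathrm{Iw}}_{N,n,\overline{\rho}}(k, e_1^{j-1}e_2^{k-j-1})$ in the $Q_{k,n}$-localized module. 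Thus Proposition \ref{2.14} is equivalent to the divisibility
\[
{}_cz^{\mathrm{Iw}}_{N,n,\overline{\rho}}(k, e_1^{j-1}e_2^{k-j-1}) \in P_{c,j} \cdot H^1_{\mathrm{Iw}}(\mathbb{Z}[1/\Sigma_n, \zeta_n], H^1(Y(N),\mathcal{V}^*_{k/\mathcal{O}})_{\overline{\rho}}(2))
\]
inside the larger, $Q_{k,n}$-inverted target.

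Next, I would establish that the integral module on the right-hand side is $\Lambda_n$-torsion free and that, after inverting $p$, it embeds into the direct sum $\bigoplus_f H^1_{\mathrm{Iw}}(\mathbb{Z}[1/N_fp], \rho_f^*(1)) \otimes_{\mathcal{O}_f} E_f$ over the finite set of cuspidal newforms $f$ of weight $k$ and level dividing $N$ with $\overline{\rho}_f \cong \overline{\rho}$. The torsion-freeness reduces, via the decomposition $\mathbb{T}'_k(N)_{E,\overline{\rho}} \cong \prod_f E_f$ and Hecke-freeness of the non-Eisenstein modular Galois representation $H^1(Y(N),\mathcal{V}^*_{k/\mathcal{O}})_{\overline{\rho}}$, to Kato's $\Lambda_{\mathcal{O}}$-freeness of rank one of each $H^1_{\mathrm{Iw}}(\mathbb{Z}[1/N_fp], \rho_f^*(1))$ in \cite{Ka04}, which is valid precisely under the residual absolute irreducibility of $\overline{\rho}_f$. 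Since $P_{c,j}$ is a non-zero divisor in $\Lambda_n$, divisibility by $P_{c,j}$ can then be checked componentwise at each such $f$.

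At each newform component, the specialization of ${}_cz^{\mathrm{Iw}}_{N,n,\overline{\rho}}(k, e_1^{j-1}e_2^{k-j-1})$ along $f$ coincides, up to the tame Euler factors $\prod_{l \in \Sigma_0} P_{f,l}(\sigma_l^{-1})$ at primes $l \in \Sigma \setminus \{p\}$ (which are themselves non-zero divisors on the component's Iwasawa cohomology, hence harmless for divisibility), with Kato's auxiliary element ${}_c\bold{z}(f)$ in the notation of \cite{Ka04}. Kato's Theorem 12.5 of \cite{Ka04} gives precisely ${}_c\bold{z}(f) = P_{c,j} \cdot \bold{z}(f)$, with $\bold{z}(f)$ lying in the integral Iwasawa cohomology $H^1_{\mathrm{Iw}}(\mathbb{Z}[1/N_fp], \rho_f^*(1))$; the integrality of $\bold{z}(f)$ is Kato's central output and holds under our standing assumption that $\overline{\rho}_f \cong \overline{\rho}$ is residually absolutely irreducible. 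Combining pointwise divisibility with the torsion-free componentwise embedding of the previous paragraph then yields the required global divisibility.

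The principal obstacle is the second step, namely the torsion-freeness together with the componentwise embedding of the $\overline{\rho}$-localized integral Iwasawa cohomology. It rests on Hecke-freeness of the non-Eisenstein modular Galois representation and on Kato's $\Lambda$-freeness of each $H^1_{\mathrm{Iw}}(\mathbb{Z}[1/N_fp], \rho_f^*(1))$; both inputs require absolute irreducibility of $\overline{\rho}$ in an essential way, which explains why this integrality argument does not extend to the residually reducible case, as emphasized in the introduction.
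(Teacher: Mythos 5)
Your overall strategy --- reduce via the Hecke/newform decomposition after inverting $p$ to the integrality of Kato's $\bold{z}(f)$, which holds for residually absolutely irreducible $\overline{\rho}_f$ --- is the same as the paper's for the second half of the argument, and the identification of the specialization with $\prod_{l\in\Sigma_0}P_{f,l}(\sigma_l^{-1})\bold{z}_n(f)$ via the uniqueness in Corollary~\ref{4.5} is exactly what the paper does. However, there is a genuine gap in the first half: your argument only produces membership in $H^1_{\mathrm{Iw}}(\ldots)_{\overline{\rho}}[1/p]$, not in the integral module itself, and your appeal to ``$\Lambda_n$-torsion-freeness plus componentwise embedding'' does not close it.

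Concretely, write $M$ for the integral Iwasawa cohomology. You show $P_{c,j}\cdot x \in M$ and then, by passing to the newform decomposition of $M\otimes_{\mathcal O}E$ and invoking Kato, you obtain divisibility by $P_{c,j}$ component by component --- but the componentwise data live in $M[1/p]$ (indeed in $\bigoplus_f \rho_f^*\otimes E_f$), and $M\hookrightarrow\bigoplus_f \rho_f^*\otimes\mathcal O_f$ has $p$-power-torsion cokernel in general, since the localized Hecke algebra $\mathbb{T}'_k(N)_{\mathcal O,\overline\rho}$ need not be the full product $\prod_f\mathcal O_f$. So what you deduce is $x\in M[1/p]$, not $x\in M$, and $\Lambda_n$-torsion-freeness of $M$ (which, since $M$ is $\mathcal O$-flat, says nothing new about the $p$-direction) is not enough. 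The paper bridges exactly this gap by a more precise freeness claim: it proves that $M$ is \emph{finite free over the subring $\Lambda_0:=\mathbb Z_p[[\mathrm{Gal}(\mathbb Q(\zeta_{p^\infty})/\mathbb Q(\zeta_p))]]$} (a regular local ring, $\cong\mathbb Z_p[[T]]$, with residue field $\mathbb F_p$), using the vanishing of $H^0$ of the reduction, which is where the absolute irreducibility of $\overline\rho$ enters. It then chooses $c\equiv 1\ (\mathrm{mod}\ pnN)$ so that $\sigma_c\in\Gamma_0$ and hence the auxiliary divisor $d=\prod_j(c^2-c^{2-j}\sigma_c)(c^2-c^{j-k+2}\sigma_c)$ lies in $\Lambda_0$ and is manifestly not divisible by $p$ in $\Lambda_0$. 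Freeness over the UFD $\Lambda_0$, together with $\gcd(d,p)=1$ in $\Lambda_0$, is precisely what lets one conclude $x\in M$ from $dx\in M$ and $x\in M[1/p]$. Your proposal never introduces $\Lambda_0$ (your $\sigma_c\in\Lambda_n$ but you only require $c\equiv1\ \mathrm{mod}\ N$ and $(c,6pn)=1$, which does not put $\sigma_c$ into $\Gamma_0$), and never establishes the freeness or the coprimality; without them the reduction to the $E$-coefficient calculation does not suffice.
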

   \begin{proof}We prove this proposition by reducing it to the integrality of Kato's zeta morphisms 
   proved in Theorem 12.5 \cite{Ka04} and Corollary \ref{4.5} of this article. 
   
   We write $\Gamma_0:=\mathrm{Gal}(\mathbb{Q}(\zeta_{p^{\infty}})/\mathbb{Q}(\zeta_{p}))$, %for $k=1$ if $p\not=2$, $k=2$ if $p=2$, 
   and $\Lambda_0:=\mathbb{Z}_p[[\Gamma_0]]$ which is a sub $\mathbb{Z}_p$-algebra of $\Lambda$ non-canonically isomorphic to $\mathbb{Z}_p[[T]]$, in particular, it is a regular local ring with its residue field $\mathbb{F}_p$. 
    We remark that the canonical decomposition 
   $$\mathrm{Gal}(\mathbb{Q}(\zeta_{np^{\infty}})/\mathbb{Q})\isom \mathrm{Gal}(\mathbb{Q}(\zeta_{n})/\mathbb{Q})\times 
   \mathrm{Gal}(\mathbb{Q}(\zeta_{p^{\infty}})/\mathbb{Q}):\sigma \mapsto (\sigma|_{\mathbb{Q}(\zeta_n)}, \sigma|_{\mathbb{Q}(\zeta_{p^{\infty}})})$$ induces a  
   canonical isomorphism $$\Lambda_n\isom \mathbb{Z}_p[\mathrm{Gal}(\mathbb{Q}(\zeta_n)/\mathbb{Q})]\otimes_{\mathbb{Z}_p}\Lambda.$$ We 
   regard the rings $\Lambda$ and $\Lambda_0$ as sub $\mathbb{Z}_p$-algebras of $\Lambda_n$ by the map 
   $$\Lambda \rightarrow \mathbb{Z}_p[\mathrm{Gal}(\mathbb{Q}(\zeta_n)/\mathbb{Q})]\otimes_{\mathbb{Z}_p}\Lambda : a\mapsto 1\otimes a,$$ by which we regard  $\Lambda_n$-modules also as $\Lambda_0$-modules. In particular, for any $c\in \mathbb{Z}$ such that $c\equiv 1$ ($\bmod$ $pn$),  % (resp. 
   %$c\equiv 1$ (mod $p^2n$) if $p\not=2$ (resp. $p=2$), 
   we can regard
  the element $\sigma_c\in  \Lambda_n$ ($\sigma_c\in \Gamma_n$) 
  as an element in $\Lambda_0$ by this inclusion. 
  
 % Hence, for any 
  %$c\geqq 2$ such that $c\equiv 1$ ($\bmod$ $pnN$) if $p\not=2$ (resp. $c\equiv 1$ ($\bmod$ $p^nN$) if $p=2$) and $(c, 6)=1$, one has 
  %$c^{2}-c^{}$
  %We first recall that the ring $\Lambda_0$ is non-canonically isomorphic to $\mathbb{Z}_p[[T]]$ which is regular local  with its residue field $\mathbb{F}_p$.  
  Since the base change 
   $$\bold{Dfm}_n(H^1(Y(N),\mathcal{V}^*_{k/\mathcal{O}})_{\overline{\rho}}(2))\otimes_{\Lambda_0}\mathbb{F}_p
   =H^1(Y(N),\mathcal{V}^*_{k/\mathcal{O}})_{\overline{\rho}}(2)\otimes_{\mathbb{Z}_p}\widetilde{\Lambda}_n\otimes_{\Lambda_0}\mathbb{F}_p$$ can be written 
    as $\mathcal{O}[G_{\mathbb{Q}}]$-module by a successive extension of the representations of the form
   $\overline{\rho}(\eta)$ for some characters 
   $\eta : \mathrm{Gal}(\mathbb{Q}(\zeta_{np})/\mathbb{Q})\rightarrow \mathbb{F}^{\times}$, % for $t=1$ if $p\not=2$ (resp. $t=2$ if $p=2$), 
   one has $$\mathrm{H}^0(\mathbb{Z}[1/\Sigma_n], \bold{Dfm}_n(H^1(Y(N),\mathcal{V}^*_{k/\mathcal{O}})_{\overline{\rho}})\otimes_{\Lambda_0}\mathbb{F}_p)=0$$ 
   because  $\overline{\rho}$ is absolutely irreducible. 
   Since the ring $\Lambda_0$ is regular local with its residue field $\mathbb{F}_p$, 
   this vanishing implies that $H^1_{\mathrm{Iw}}(\mathbb{Z}[1/\Sigma_n, \zeta_n], 
   H^1(Y(N),\mathcal{V}^*_{k/\mathcal{O}})_{\overline{\rho}}(2))$ is a finite free $\Lambda_0$-module by the same argument as in the proof 
   of Theorem 12.4 (3) of \cite{Ka04}. 
   
   By definition of the map $z^{\mathrm{Iw}}_{N, n, \overline{\rho}}(k,-)$, one has an inclusion 
   $$d\cdot \mathrm{Im}(z^{\mathrm{Iw}}_{N, n, \overline{\rho}}(k,-))
   \subseteq H^1_{\mathrm{Iw}}(\mathbb{Z}[1/\Sigma_n, \zeta_n], 
   H^1(Y(N), \mathcal{V}^*_{k/\mathcal{O}})_{\overline{\rho}})(2))$$ 
   for $d:=\prod_{j=1}^{k-1}(c^2-c^{2-j}\sigma_c)(c^2-c^{j-k+2}\sigma_c)\in \Lambda_n$ defined for 
   any integer $c\geqq 2$ such that $c\equiv 1$ (mod $pnN$) and $(c,6)=1$. 
   % $d:=\prod_{j=1}^{k-1}(c^2-c^{2-j}\sigma_c)(c^2-c^{j-k+2}\sigma_c)$. 
   We remark that such $d\in \Lambda_n$ is 
   an element in $\Lambda_0$ by the remark above
   %canonical inclusion $\Lambda_0\hookrightarrow \Lambda_n$, 
   and can not be divided by $p$ in the ring $\Lambda_0$. By these facts, it suffices to show the inclusion after inverting $p$, i.e. it suffices to show that the image of the map 
   $$z^{\mathrm{Iw}}_{N, n, \overline{\rho}}(k,-) : H^1(Y(N),\mathcal{V}^*_{k/E})_{\overline{\rho}}(1)\rightarrow 
   H^1_{\mathrm{Iw}}(\mathbb{Z}[1/\Sigma_n, \zeta_n], 
   H^1(Y(N),\mathcal{V}^*_{k/E})_{\overline{\rho}}(2))_{Q_{k.n}}$$
   is contained in the subspace $H^1_{\mathrm{Iw}}(\mathbb{Z}[1/\Sigma_n, \zeta_n], 
   H^1(Y(N), \mathcal{V}^*_{k/E})_{\overline{\rho}}(2)).$ We reduce this claim to Theorem 12.5 \cite{Ka04} and Corollary \ref{4.5} of this article as follows. 
   Since replacing $E$ to any finite extension does not affect the validity of the claim, we freely enlarge $E$ below. 
   
   To show the claim, we first remark that one has a well-defined $\mathcal{O}$-algebra morphism 
   $$\mathbb{T}(K_{\Sigma_0}(N_0))\rightarrow \mathbb{T}_k(N)_{\mathcal{O}} : T_l \ \ (\text{resp.} S_l)\mapsto T_l \ \ (\text{resp.} S_l)$$
   by $(4.3.4)$ of \cite{Em06a}. By this morphism, we regard any $\mathbb{T}_k(N)_{\mathcal{O}}$-module $M$ also as a $\mathbb{T}(K_{\Sigma_0}(N_0))$-module, and 
   we set $M_{\overline{\rho}}:=M\otimes_{\mathbb{T}(K_{\Sigma_0}(N_0))}\mathbb{T}(K_{\Sigma_0}(N_0))_{\overline{\rho}}$. 
     Then,  the natural map 
   $$H^1(X(N), j_*\mathcal{V}_{k/E})_{\overline{\rho}}\rightarrow H^1(Y(N),\mathcal{V}_{k/E})_{\overline{\rho}}$$
    is isomorphism since $\overline{\rho}$ is absolutely irreducible. 
    % of $\overline{\rho}$ (here, we regard $H^1(X(N), j_*\mathcal{V}_{k/E})$ and 
   % $H^1(Y(N),\mathcal{V}_{k/E})$ as $\mathbb{T}(K^p(N_0))_{\mathbb{Z}_p}$-modules on which 
    %$T_l$ and $S_l$ acts by $T_l$ and $S_l$, i.e. the usual Hecke action), 
    Hence, by the compatibility of the global and the classical local Langlands correspondence (\cite{La73}, \cite{De71}, \cite{Ca86}, \cite{Sc90}, \cite{Sa97}), 
    one has 
   a decomposition 
   \begin{equation}
   H^1(Y(N),\mathcal{V}_{k/E})_{\overline{\rho}}=\bigoplus_{f}\rho_{f}\otimes_E (\pi(f))^{K(N)},
   \end{equation}
   (after replacing $E$ to a suitably large finite extension) where $f=\sum_{n=1}^{\infty}a_n(f)q^n$ run through all the normalized Hecke eigen cusp new forms  with weight $k$ and level $N_f$ dividing $N$ 
   such that the reduction $\overline{\rho}_{f} : G_{\mathbb{Q}}\rightarrow \mathrm{GL}_2(\mathbb{F})$ of its associated representation $\rho_{f,E} : G_{\mathbb{Q}}\rightarrow \mathrm{GL}_2(E)$ (see Appendix A for the definitions of $\rho_{f}$ and $\overline{\rho}_{f}$) is isomorphic to $\overline{\rho}$. 
   Here, $\pi(f)$ is an absolutely irreducible smooth admissible representation of $\mathrm{GL}_2(\mathbb{A}_f)$ defined over $E$ which is isomorphic to 
   the restricted tensor product of $\otimes'_{l}\pi_{l}(f)$ with $\pi_{l}(f)$ corresponding to $\rho_{f}|_{G_{\mathbb{Q}_l}}$ via Tate's normalized local 
   Langlands correspondence. 
     %Let $\varepsilon_f : (\mathbb{Z}/N_f\mathbb{Z})^{\times}\rightarrow \mathbb{C}^{\times}$ be the neben type character of $f$. We regard $a_n(f)$ and the value of $\varepsilon_f$ also as elements in $E (\subset \overline{\mathbb{Q}}_p)$ by the fixed inclusion 
% $\iota_{\infty} : \overline{\mathbb{Q}}\hookrightarrow \mathbb{C}$ and $\iota_p : \overline{\mathbb{Q}}\hookrightarrow \overline{\mathbb{Q}}_p$. 
   
   Since $H^1(Y(N),\mathcal{V}^*_{k/E})_{\overline{\rho}}(1)$ is the dual of 
   $H^1(Y(N),\mathcal{V}_{k/E})_{\overline{\rho}}$ by Poincar\'e duality, one obtains a similar decomposition 
   \begin{equation}
   H^1(Y(N),\mathcal{V}^*_{k/E})_{\overline{\rho}}(1)=\bigoplus_{f}(\rho_{f})^*\otimes_E(\widetilde{\pi}(f))^{K(N)},
   \end{equation}
   where $\widetilde{\pi}(f)$ is the smooth contragradient of $\pi(f)$. 
   
   Take any such $f$. By the strong multiplicity one theorem, the sub $E$-vector space 
   $$H^1(Y(N),\mathcal{V}^*_{k/E})_{\overline{\rho}}(1)[f]$$ consisting of elements $v\in H^1(Y(N),\mathcal{V}^*_{k/E})_{\overline{\rho}}(1)$ satisfying 
    $$T'_lv=a_l(f)v\ \ \text{ and }\ \  S'_lv=l^{k-2}\varepsilon_f(l)v$$ for all the  primes $l \not\in \Sigma$ is isomorphic to $(\rho_{f})^*
    \otimes_E(\widetilde{\pi}(f))^{K(N)}$. We also remark that 
    one has $$H^1(Y(N),\mathcal{V}^*_{k/E})(1)[f]=H^1(Y(N),\mathcal{V}^*_{k/E})_{\overline{\rho}}(1)[f].$$ 
    %since $f$ is a cusp form.
   
    Since the map $z^{\mathrm{Iw}}_{N, n, \overline{\rho}}(k,-)$ is $\mathbb{T}'_k(N)_{\mathbb{Z}_p}$-linear, 
    it naturally induces a map 
    \begin{equation}
    z^{\mathrm{Iw}}_{N, n}(k,-)[f] : H^1(Y(N),\mathcal{V}^*_{k/E})(1)[f]\rightarrow 
   H^1_{\mathrm{Iw}}(\mathbb{Z}[1/\Sigma_n, \zeta_n], 
   H^1(Y(N),\mathcal{V}_{k/E}^*(2)[f])_{Q_{k.n}}
   \end{equation}
   for every such $f$. Therefore, it suffices to show the inclusion 
    \begin{equation}
    \mathrm{Im}(z^{\mathrm{Iw}}_{N, n}(k,-)[f])\subset H^1_{\mathrm{Iw}}(\mathbb{Z}[1/\Sigma_n, \zeta_n], 
   H^1(Y(N),\mathcal{V}_{k/E}^*)(2)[f])
   \end{equation}
   % $ is 
   %contained in $H^1_{\mathrm{Iw}}(G_{\mathbb{Q}(\zeta_n), \Sigma_n}, 
   %H^1(Y(N),\mathcal{V}_{k/E}^*)(2)[f])$ 
   for every such $f$. 
   
   Let $N_f\geqq 1$ be the level of $f$. Then, we remark that the fixed part 
    \begin{equation}
    V'_2(f)_E:=\left(H^1(Y(N),\mathcal{V}^*_{k/E})(1)[f]\right)^{K_1(N_f)}=H^1(Y_1(N_f),\mathcal{V}^*_{k/E})(1)[f]
    \end{equation}
     by the subgroup $$K_1(N_f):=\left\{\left. g\in \mathrm{GL}_2(\widehat{\mathbb{Z}})\right| g\equiv \begin{pmatrix}* & * \\ 0 & 1\end{pmatrix} \bmod N_f\right\}$$
   is isomorphic to $(\rho_{f})^*$ by the theory of new vectors, and the map $z^{\mathrm{Iw}}_{N, n}(k,-)[f]$ 
   naturally induces the following map 
   \begin{equation}(z^{\mathrm{Iw}}_{N, n}(k,-)[f])^{K_1(N_f)} : V'_2(f)_E\rightarrow 
   H^1_{\mathrm{Iw}}(\mathbb{Z}[1/\Sigma_n, \zeta_n], V'_2(f)_E(1))_{Q_{k.n}}
   \end{equation}
   by the $\mathcal{H}_k(N)_{E}$-equivariance of $z^{\mathrm{Iw}}_{N, n}(k,-)$. 
   Since $(\pi(f))^{K(N)}$ is irreducible as $\mathcal{H}_k(N)_{E}$-module, it suffices to show that 
   the image of the map $(z^{\mathrm{Iw}}_{N, n}(k,-)[f])^{K_1(N_f)}$ is 
   contained in $H^1_{\mathrm{Iw}}(\mathbb{Z}[1/\Sigma_n, \zeta_n], V'_p(f)(1))$. 
   
   To show this claim, we similarly define $V'_2(f)_F:=H^1(Y_1(N_f),\mathcal{V}^*_{k/F})(1)[f]$ for $F=E\cap \overline{\mathbb{Q}}$, which is a $\mathrm{Gal}(\mathbb{C}/\mathbb{R})$-stable $F$-lattice of $V'_2(f)_E$, and $V_2(f^*)_A:=H^1(Y_1(N_f),\mathcal{V}_{k/A})[f]$,  $S_2(f^*)_A:=S_k(\Gamma_1(N_f))_A[f]$ for 
   $A=F, E, \mathbb{C}$ using Hecke actions of $T'_l$ and $S'_l$. By the same proof as that of Lemma \ref{4.3}, we can show that 
   one has an equality 
   $$V_2(f^*)_A=\{v\in H^1(Y_1(N_f),\mathcal{V}_{k/A})\mid T_lv=\overline{a_l}v, S_lv=l^{k-2}\overline{\varepsilon_f(l)}v \text{ for any } l\not\in \Sigma\},$$
   and a similar equality for $S_2(f^*)_A$.

   By definition of the map $z^{\mathrm{Iw}}_{N, n}(k,-)$, the induced map $z_f:=(z^{\mathrm{Iw}}_{N, n}(k,-)[f])^{K_1(N_f)}$ 
   is an $E$-linear map satisfying the following property : for each $m\geqq 0$, $\gamma\in V'_2(f)_F\subset V'_2(f)_E$, and  $c\geqq 2$ such that $c\equiv 1$ (mod $N$) and $(c, 6pn)=1$, one has 
   $d\cdot z_f(\gamma)\in H^1_{\mathrm{Iw}}(\mathbb{Z}[1/\Sigma_n, \zeta_n], V'_2(f)_E(1))$ for 
   $d=\prod_{1\leqq j\leqq k-1}(c^2-c^{2-j}\sigma_c)(c-c^{j-k+2}\sigma_c)\in Q_{k,n}$, and 
  the image $\omega_{\gamma, m, d}$ of $d\cdot z_f(\gamma)$ 
  by the following composite
  \begin{multline*}
  H^1_{\mathrm{Iw}}(\mathbb{Z}[1/\Sigma_{n}, \zeta_n], V'_2(f)_E(1))\xrightarrow{v\mapsto v\otimes (\zeta_{p^{n}})_{n\geqq 1}^{\otimes (-k)}} 
    H^1_{\mathrm{Iw}}(\mathbb{Z}[1/\Sigma_{n}, \zeta_n], V_2(f^*)_E)\\
    \xrightarrow{\mathrm{exp}^*_{m,1}}S_2(f^*)_E\otimes_{\mathbb{Q}}\mathbb{Q}(\zeta_{np^m})
  \end{multline*}
  %$$\mathrm{exp}^*_{m,1} : H^1_{\mathrm{Iw}}(G_{\mathbb{Q}(\zeta_n),\Sigma_{f,n}}, V_1(f)_E)\rightarrow S_1(f)_E\otimes_{\mathbb{Q}}\mathbb{Q}(\zeta_{np^m})$$
  belongs to $S_2(f^*)_F\otimes_{\mathbb{Q}}\mathbb{Q}(\zeta_{np^m})$, and the map 
  $$S_2(f^*)_F\otimes_{\mathbb{Q}}\mathbb{Q}(\zeta_{np^m})\rightarrow V_2(f^*)_{\mathbb{C}}^{\pm} : u\otimes v\mapsto \sum_{\sigma\in \mathrm{Gal}(\mathbb{Q}(\zeta_{np^m})/\mathbb{Q})}\chi(\sigma)\sigma(v)\mathrm{per}(u)^{\pm},$$where 
  $\chi : \mathrm{Gal}(\mathbb{Q}(\zeta_{np^m})/\mathbb{Q})\rightarrow \mathbb{C}^{\times}$ is any character and $\pm 1=\chi(-1)$, sends the element 
  $\omega_{\gamma, m, d}$ to 
  $$L_{\Sigma, n}(f,\chi, k-1)\cdot (d'\cdot \gamma)^{\pm}$$
  for $d'=\prod_{j=1}^{k-1}(c^2-c^{k-j}\sigma_c)(c^2-c^j\sigma_c)\in Q_{k,n}$. 
  
   In Appendix A, we define similar objects 
   $V'_1(f)_A, V_1(f^*)_A$ and $S_1(f)_A$ as quotients of $H^1(Y_1(N), \mathcal{V}^*_{k/A}), H^1(Y_1(N), \mathcal{V}^*_{k/A})$ and 
   $S_k(\Gamma_1(N_f))_A$ respectively, which are also equipped with a motivic structure (e.g. $\mathrm{Gal}(\mathbb{C}/\mathbb{R})$-action, period map, comparison 
   isomorphism of $p$-adic Hodge theory, etc.). Then, the composite
   $V'_2(f)_A\rightarrow V'_1(f)_A$ of the inclusion $V'_2(f)_A\subseteq H^1(Y_1(N), \mathcal{V}^*_{k/A})$ and the canonical 
   surjection $H^1(Y_1(N), \mathcal{V}^*_{k/A})\rightarrow V'_1(f)_A$, and the similar composites 
   $V_2(f^*)_A\rightarrow V_1(f^*)_A$ and 
   $S_2(f^*)_A\rightarrow S_1(f^*)_A$ are all isomorphism which are compatible with the corresponding motivic structure, by which 
   we identify these objects. 
   
   By the uniqueness of the map $\bold{z}_{n}(f)$ in Theorem \ref{4.5}, these facts imply that one has 
   equality 
   $$z_f=\prod_{l\in \Sigma_0}P_{f,l}(\sigma_l^{-1})\cdot \bold{z}_{n}(f)$$ 
   under the above identification. In particular, the image of $z_f$ is contained in 
   $H^1_{\mathrm{Iw}}(\mathbb{Z}[1/\Sigma_n, \zeta_n], V'_p(f)(1))$, which 
   shows the claim, hence finally proves the proposition.

    \end{proof}
    
    \subsection{Equivariant zeta morphisms for the completed homology}
   We fix an allowable integer $N_0\geqq 1$ as in \S 3.3 and $n\geqq 1$ such that $(n, \Sigma)=1$. 
    %such that $\mathrm{prime}(N_0)=\Sigma\setminus \{p\}$. 
    We recall that one has a canonical topological isomorphism 
    $$H^1_{\mathrm{Iw}}(\mathbb{Z}[1/\Sigma_n, \zeta_n], \widetilde{H}_1^{BM}(K^p(N_0))_{\overline{\rho}}(1))
    \isom \varprojlim_{m\geqq 1}H^1_{\mathrm{Iw}}(\mathbb{Z}[1/\Sigma_n, \zeta_n], H^1(Y(N_0p^m))_{\overline{\rho}}(2))$$
    of compact $\Lambda_n$-modules.
    %since each $H^1_{\mathrm{Iw}}(G_{\mathbb{Q}(\zeta_n), \Sigma}, H^1(Y(N_0p^m))_{\mathcal{O},\overline{\rho}}(2))$ is finite generated  $\Lambda_n$-module (?).
    By Lemma \ref{2.11} (1) and Proposition \ref{2.14}, one can take the projective limit of the maps 
 $$z^{\mathrm{Iw}}_{N_0p^m, n,\overline{\rho}}(2,-) : H^1(Y(N_0p^m))_{\overline{\rho}}(1)\rightarrow 
   H^1_{\mathrm{Iw}}(\mathbb{Z}[1/\Sigma_n, \zeta_n], 
   H^1(Y(N_0p^m))_{\overline{\rho}}(2))$$
    for all $m\geqq 1$, which we denote by  
    \begin{equation}
    z^{\mathrm{Iw}}_{N_0p^{\infty}, n, \overline{\rho}} : \widetilde{H}_1^{BM}(K^p(N_0))_{\overline{\rho}}
    \rightarrow H^1_{\mathrm{Iw}}(\mathbb{Z}[1/\Sigma_n, \zeta_n], \widetilde{H}_1^{BM}(K^p(N_0))_{\overline{\rho}}(1)).
    \end{equation}
    
    By Lemma \ref{2.11} (2), one can take the inductive limit of the map $z^{\mathrm{Iw}}_{N_0p^{\infty}, n, \overline{\rho}}$ 
    for all such $N_0$ to obtain the following map 
    \begin{equation}
    z^{\mathrm{Iw}}_{\Sigma, n, \overline{\rho}} : \widetilde{H}^{BM}_{1, \overline{\rho},\Sigma}
    \rightarrow H^1_{\mathrm{Iw}}(\mathbb{Z}[1/\Sigma_n, \zeta_n], \widetilde{H}^{BM}_{1, \overline{\rho},\Sigma}(1)).
    \end{equation}
    
    \begin{thm}\label{2.15}
    For each intger $n\geqq 1$ such that $(n, \Sigma)=1$, the map 
    $z^{\mathrm{Iw}}_{\Sigma, n, \overline{\rho}}$ is continuous  and 
    $\mathbb{T}_{\overline{\rho},\Sigma}[G_{\Sigma}]$-linear, and satisfies 
    $$z^{\mathrm{Iw}}_{\Sigma, n, \overline{\rho}}(\tau(v))=\sigma_{-1}(z^{\mathrm{Iw}}_{\Sigma, n, \overline{\rho}}(v))$$ for every 
    $v\in \widetilde{H}^{BM}_{1,  \overline{\rho},\Sigma}$. Moreover, for each 
    prime $l\not\in \Sigma$, one has the following commutative diagram $:$ 
     \begin{equation*}
\begin{CD}
\widetilde{H}^{BM}_{1, \overline{\rho},\Sigma}@>z^{\mathrm{Iw}}_{\Sigma, nl, \overline{\rho}} >> 
H^1_{\mathrm{Iw}}(\mathbb{Z}[1/\Sigma_{nl}, \zeta_{nl}], 
   \widetilde{H}^{BM}_{1, \overline{\rho},\Sigma}(1)) \\
@V \mathrm{id} VV  @ VV \mathrm{Cor} V \\
\widetilde{H}^{BM}_{1, \overline{\rho},\Sigma}@> (*)>> 
H^1_{\mathrm{Iw}}(\mathbb{Z}[1/\Sigma_{nl}, \zeta_n], 
   \widetilde{H}^{BM}_{1,  \overline{\rho},\Sigma}(1)),
   %H^1(Y(N)(\mathbb{C}), \mathbb{Z})@>z^{(p)}_{N, mp^{\infty}}>> \widetilde{\mathfrak{Y}}_{N, m}(1)\otimes_{\Lambda(mp^{\infty})}\Lambda(mp^{\infty})' 
\end{CD}
\end{equation*}
where the map $(*)$ is 
\begin{equation*}
(*)=\begin{cases}z^{\mathrm{Iw}}_{\Sigma, n, \overline{\rho}}  & \text{ if } \quad l|n , 
\\ P_l(\sigma_l^{-1})z^{\mathrm{Iw}}_{\Sigma, n, \overline{\rho}} & \text{ if } \quad (l, n)=1 .
\end{cases}
\end{equation*}
 \end{thm}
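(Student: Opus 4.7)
The plan is to derive each of the claimed properties by checking them at the finite approximations $z^{\mathrm{Iw}}_{N_0p^m, n, \overline{\rho}}(2,-)$ and then passing through the projective limit in $m$ and the inductive limit in $N_0$. Since by construction $z^{\mathrm{Iw}}_{\Sigma, n, \overline{\rho}}$ is the inductive limit in $N_0$ of the projective limits in $m$ of the maps $z^{\mathrm{Iw}}_{N_0p^m, n, \overline{\rho}}(2,-)$, whose existence depends on the integrality established in Proposition \ref{2.14}, each of the asserted properties will reduce to the analogous property at finite level together with a compatibility with the relevant transition morphisms.

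First I would handle continuity and $\mathbb{T}_{\overline{\rho}, \Sigma}$-linearity together. Each $z^{\mathrm{Iw}}_{N_0p^m, n, \overline{\rho}}(2,-)$ is automatically continuous since both its source and its target are finitely generated over a complete Noetherian local ring. By Corollary \ref{2.9} combined with the identification $\mathbb{T}'(K_{\Sigma_0}(N_0)) \cong \mathbb{T}(K_{\Sigma_0}(N_0))$ of \S 2.3.2, the finite-level maps are $\mathbb{T}(K_{\Sigma_0}(N_0))_{\overline{\rho}}$-linear. Continuity is preserved by the projective limit topology on $\widetilde{H}^{BM}_1(K^p(N_0))_{\overline{\rho}}$ and by the inductive limit topology on $\widetilde{H}^{BM}_{1, \overline{\rho}, \Sigma}$, and the Hecke compatibilities pass to $\mathbb{T}_{\overline{\rho}, \Sigma} = \varprojlim_{K_{\Sigma_0}} \mathbb{T}(K_{\Sigma_0})_{\overline{\rho}}$ in the limit.

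Next I would establish the $G_{\Sigma}$-equivariance by splitting it into its $G_p$ and $G_{\Sigma_0}$ parts. The $G_p$-equivariance is obtained from Lemma \ref{2.12} (1) applied after localization at the non-Eisenstein ideal $\mathfrak{m}$, which is harmless since this localization commutes with the $G_p$-action; the resulting equivariance is stable under the projective limit in $m$. The $G_{\Sigma_0}$-equivariance is then obtained by taking the inductive limit in $N_0$ and invoking Lemma \ref{2.12} (2), so that the action of any element of $G_{\Sigma_0}$ transports the zeta morphism at one allowable level to that at another via the restriction maps of Lemma \ref{2.11} (2). Similarly, the complex-conjugation relation $z^{\mathrm{Iw}}_{\Sigma, n, \overline{\rho}}(\tau(v)) = \sigma_{-1}(z^{\mathrm{Iw}}_{\Sigma, n, \overline{\rho}}(v))$ follows by passage to the limit from Proposition \ref{2.6} (3). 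The norm relation for $l \notin \Sigma$ is obtained by forming the commutative diagram of Corollary \ref{2.10} at each finite level and then taking limits, using that corestriction on Iwasawa cohomology commutes with all the transition maps of Lemma \ref{2.11}.

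The main obstacle I anticipate is controlling the target Iwasawa cohomology through the projective limit in $m$: one needs a canonical identification of $H^1_{\mathrm{Iw}}(\mathbb{Z}[1/\Sigma_n, \zeta_n], \widetilde{H}^{BM}_1(K^p(N_0))_{\overline{\rho}}(1))$ with $\varprojlim_m H^1_{\mathrm{Iw}}(\mathbb{Z}[1/\Sigma_n, \zeta_n], H^1(Y(N_0p^m))_{\overline{\rho}}(2))$, which relies on a Mittag-Leffler style vanishing of higher $\varprojlim$ applied to mod-$\varpi^s$ cohomology of finite order, entirely analogous to the identification already invoked in \S 3.1.4 for Kato's element ${}_cz^{(p)}_{N_0p^{\infty}}$. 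Once that identification and the parallel one at the inductive limit stage are in place, every structure in the statement is inherited cleanly from its finite-level counterpart.
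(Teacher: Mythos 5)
Your proposal is broadly along the right lines — continuity, Hecke-linearity, the $\tau$-relation, and the norm relation do all pass through the projective/inductive limit construction from their finite-level analogues (Corollary \ref{2.9}, Proposition \ref{2.6}(3), Corollary \ref{2.10}), and the Mittag--Leffler identification of the Iwasawa cohomology of the limit with the limit of the Iwasawa cohomologies is a real issue that you correctly flag.

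However, there is a genuine gap in your treatment of the $G_\Sigma$-equivariance, and it is precisely the one point the paper's proof does not take for granted. Lemma \ref{2.12} does \emph{not} assert equivariance of $z^{\mathrm{Iw}}_{N_0p^{\infty}, n}$ on all of $\widetilde{H}^{BM}_1(K^p(N_0))$ (nor on its localization). It only asserts it for elements $v$ lying in the $\mathbb{Z}$-submodule $\varprojlim_m H^1(Y(N_0p^m)(\mathbb{C}), \mathbb{Z}(1))$, because those are the only elements for which the characterization in Proposition \ref{2.6}(1) is directly available. Your phrase ``Lemma \ref{2.12}(1) applied after localization, which is harmless since this localization commutes with the $G_p$-action'' treats Lemma \ref{2.12} as if it gave equivariance on the full $\mathcal{O}$-module; it does not, and after localizing at $\mathfrak{m}$ the image of the $\mathbb{Z}$-submodule is not the whole of $\widetilde{H}^{BM}_1(K^p(N_0))_{\overline{\rho}}$. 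Likewise, ``the resulting equivariance is stable under the projective limit in $m$'' misidentifies the structure: Lemma \ref{2.12} is already a statement about the $\varprojlim_m$, not about finite levels, so there is no projective-limit step left to take.

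What is actually needed, and what the paper supplies, is a density argument: for each allowable $N_0$ the image of $\varprojlim_m H^1(Y(N_0p^m)(\mathbb{C}),\mathbb{Z}(1))$ in $\widetilde{H}^{BM}_1(K^p(N_0))_{\overline{\rho}}$ \emph{topologically generates} the latter as a $\mathbb{T}_{\overline{\rho},\Sigma}$-module. Combined with the continuity and $\mathbb{T}_{\overline{\rho},\Sigma}$-linearity of $z^{\mathrm{Iw}}_{\Sigma, n, \overline{\rho}}$ (both of which you establish, and which are established first precisely so they can be used here) and the fact that the Hecke action away from $\Sigma$ commutes with the $G_\Sigma$-action, the equivariance on this dense generating set forces equivariance everywhere. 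Without some statement of this kind, your argument does not reach the full module $\widetilde{H}^{BM}_{1, \overline{\rho}, \Sigma}$ and the proof of equivariance is incomplete.
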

 \begin{proof}
 By definition, the map $z^{\mathrm{Iw}}_{\Sigma, n, \overline{\rho}}$ is clearly continuous and $\mathbb{T}_{\overline{\rho},\Sigma}$-linear and satisfies the commutative diagram above. We note that, for each allowable $N_0\geqq 1$ such that 
 $\mathrm{prime}(N_0)=\Sigma\setminus\{p\}$, 
 the image of the module $\varprojlim_{m\geqq 1}H^1(Y(N_0p^m)(\mathbb{C}), \mathbb{Z}(1))$ in 
 $\widetilde{H}^{BM}_1(K^p(N_0))_{\overline{\rho}}$ topologically generates the latter group as $\mathbb{T}_{\overline{\rho}, \Sigma}$-module. Hence, $G_{\Sigma}$-equivariance of the map 
 $z^{\mathrm{Iw}}_{\Sigma, n, \overline{\rho}}$ follows from Lemma \ref{2.12} because the map $z^{\mathrm{Iw}}_{\Sigma, n, \overline{\rho}}$ is continuous. 

 \end{proof}
 
 We remark that, for each $k\geqq 2$ and $m_1\geqq 0$, one has a canonical isomorphism 
 $$\mathrm{Sym}^{k-2}(\mathcal{O}^2)^*\otimes_{\mathcal{O}[[K_{m_1}]]}\widetilde{H}^{BM}_{1}(K^p(N_0))_{\overline{\rho}}[1/p]
 \isom H^1(Y(N), \mathcal{V}^*_{k/E})_{\overline{\rho}}(1)$$
 for $N=N_0p^{m_1}$ by Lemma \ref{1.10}. %which is the localization at $\mathfrak{m}$ of the isomorphism in prop?.
  By this isomorphism, we regard the 
 map
 \begin{multline*}
 \mathrm{Sym}^{k-2}(\mathcal{O}^2)^*\otimes_{\mathcal{O}[[K_{m_1}]]}\widetilde{H}^{BM}_{1}(K^p(N_0))_{\overline{\rho}}[1/p]
 \xrightarrow{\mathrm{id}\otimes z^{\mathrm{Iw}}_{N_0p^{\infty}, n,\overline{\rho}}}\\
  \mathrm{Sym}^{k-2}(\mathcal{O}^2)^*\otimes_{\mathcal{O}[[K_{m_1}]]}
  H^1_{\mathrm{Iw}}(\mathbb{Z}[1/\Sigma_n, \zeta_n], \widetilde{H}^{BM}_{1}(K^p(N_0))_{\overline{\rho}}(1))[1/p]\\
\xrightarrow{\mathrm{can}}H^1_{\mathrm{Iw}}(\mathbb{Z}[1/\Sigma_n, \zeta_n], 
   \mathrm{Sym}^{k-2}(\mathcal{O}^2)^*\otimes_{\mathcal{O}[[K_{m_1}]]}\widetilde{H}^{BM}_{1}(K^p(N_0))_{ \overline{\rho}}(1)[1/p])
 \end{multline*} as the following map 
 \begin{equation}
 \mathrm{id}\otimes z^{\mathrm{Iw}}_{N_0p^{\infty}, n,\overline{\rho}} : 
 H^1(Y(N), \mathcal{V}^*_{k/E})_{\overline{\rho}}(1)\rightarrow H^1_{\mathrm{Iw}}(\mathbb{Z}[1/\Sigma_n, \zeta_n], H^1(Y(N), \mathcal{V}^*_{k/E})_{ \overline{\rho}}(2)).
 \end{equation}
 \begin{prop}\label{2.16}
 Under the identification above, one has 
 $$\mathrm{id}\otimes z^{\mathrm{Iw}}_{N_0p^{\infty}, n,\overline{\rho}}=z^{\mathrm{Iw}}_{N, n, \overline{\rho}}(k,-).$$
  
 \end{prop}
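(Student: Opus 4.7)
The plan is to invoke the uniqueness statement of Proposition \ref{2.6}. Setting $\Phi := \mathrm{id} \otimes z^{\mathrm{Iw}}_{N_0 p^\infty, n, \overline{\rho}}$, it suffices by that proposition to verify that, after applying the $\overline{\rho}$-localized isomorphism $s_N : H^1(Y(N), \mathcal{V}^*_{k/E})_{\overline{\rho}} \isom H^1(X(N), j_*\mathcal{V}^*_{k/E})_{\overline{\rho}}$, the map $\Phi$ satisfies the characterizing relation: for every admissible $c$ and every $1 \leq j \leq k-1$,
$$(c^2 - c^{2-j}\sigma_c)(c^2 - c^{j-k+2}\sigma_c) \cdot \Phi(\delta_N(k, e_1^{j-1}e_2^{k-1-j})) = {}_c z^{\mathrm{Iw}}_{N, n, \overline{\rho}}(k, e_1^{j-1}e_2^{k-1-j})$$
in the localized Iwasawa cohomology of $H^1(Y(N), \mathcal{V}^*_{k/E})_{\overline{\rho}}(2)$.

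The main idea is that both $\delta_N(k, \cdot)$ and ${}_c z^{\mathrm{Iw}}_{N, n}(k, \cdot)$ are obtained by applying \emph{the same} base-change map of Lemma \ref{1.10} (together with Shapiro's lemma and the cyclotomic twist) to universal weight-two elements living over $\widetilde{H}^{BM}_1(K^p(N_0))_{\overline{\rho}}$. Indeed, the construction of ${}_c z^{\mathrm{Iw}}_{N, n}(k, v)$ in \S 3.1.4 is literally of the form $v \mapsto \text{base-change}(v \otimes {}_c z^{(p)}_{N_0 p^\infty})$, and an analogous description holds for $\delta_N(k, \cdot)$: I will identify $\delta_N(k, e_1^{j-1}e_2^{k-1-j})$ with the image of $e_1^{j-1}e_2^{k-1-j} \otimes \delta_\infty$ under Lemma \ref{1.10}, where $\delta_\infty := \varprojlim_m \delta_{N_0 p^m}(2, 1)$ is the compatible family of Manin--Drinfeld modular symbols, whose existence follows from the geometric definition and the norm-compatibility of the $\delta_{N_0 p^m}$'s with respect to the corestriction maps in the tower.

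With this reformulation, the proposition reduces to the single weight-two equality
$$(c^2 - c\sigma_c)^2 \cdot z^{\mathrm{Iw}}_{N_0 p^\infty, n, \overline{\rho}}(\delta_\infty) = {}_c z^{(p)}_{N_0 p^\infty}$$
in $H^1_{\mathrm{Iw}}(\mathbb{Z}[1/\Sigma_n, \zeta_n], \widetilde{H}^{BM}_1(K^p(N_0))_{\overline{\rho}}(1))$, applied after tensoring with $e_1^{j-1} e_2^{k-1-j}$ and pushing through the canonical map of Lemma \ref{1.10}. This underlying weight-two identity follows from Proposition \ref{2.6} applied at each finite level $N_0 p^m$, together with passage to the inverse limit, which is justified by the continuity of $z^{\mathrm{Iw}}_{N_0 p^\infty, n, \overline{\rho}}$ and by the norm-compatibility (\ref{cc}) of the Kato classes ${}_c z^{(p)}_{N_0 p^m}$ under corestriction.

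The main obstacle I foresee is the careful bookkeeping of twists required to translate the weight-two prefactor $(c^2 - c\sigma_c)^2$ into the weight-$k$ prefactor $(c^2 - c^{2-j}\sigma_c)(c^2 - c^{j-k+2}\sigma_c)$ upon applying Lemma \ref{1.10}. Under the Poincar\'e-duality identification $\mathcal{V}_{k/A} \cong \mathcal{V}^*_{k/A}(2-k)$ from (\ref{aa1}), the diagonal matrices $\mathrm{diag}(c, 1)$ and $\mathrm{diag}(1, c)$ act on $e_1^{j-1} e_2^{k-1-j}$ with additional scalars $c^{k-1-j}$ and $c^{j-1}$ (up to the Tate twist), and these must interact correctly with the cyclotomic twist $\sigma_c$ to reproduce the prefactors in the definitions of ${}_c\delta_N(k, \cdot)$ and ${}_c\delta_{N, np^m}(k, \cdot)$ given in \S 3.1.5. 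This verification, while routine, is the delicate point where one checks that the geometric definitions of the $c$-regularizations on both the ${}_c g$ and ${}_c\delta$ sides match up with the ``tensor on the left'' construction through Lemma \ref{1.10}.
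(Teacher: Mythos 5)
Your overall strategy --- reduce the weight-$k$ statement to a weight-two identity about $z^{\mathrm{Iw}}_{N_0p^{\infty},n,\overline{\rho}}$ applied to the compatible family $\delta_{\infty}=(\delta_{N_0p^m})_{m}$, and then transport it through the base-change map of Lemma \ref{1.10} --- is essentially the paper's strategy. However, the specific weight-two identity you write down,
$$(c^2-c\sigma_c)^2\cdot z^{\mathrm{Iw}}_{N_0p^{\infty},n,\overline{\rho}}(\delta_{\infty})={}_cz^{(p)}_{N_0p^{\infty}},$$
is not correct, and the error is not a matter of routine bookkeeping but the crux of the argument.

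The scalar prefactor $(c^2-c\sigma_c)^2$ is what you get from Proposition \ref{2.6}(1) at a single finite level $N'$ only when $c\equiv 1\bmod N'$. In the inverse limit over the $p$-tower $N'=N_0p^m$, $m\to\infty$, no fixed integer $c\geqq 2$ satisfies $c\equiv 1\bmod N_0p^m$ for all $m$. For a fixed $c$ with $c\equiv 1\bmod N$ (where $N=N_0p^{m_1}$ is fixed), the identity that actually holds at every level $N'=N_0p^m$ with $m\geqq m_1$ is the one from Proposition \ref{2.6}(2), whose prefactor is operator-valued:
$$d_j^0:=\left(c^2-c\begin{pmatrix}c& 0\\0 & 1\end{pmatrix}\sigma_c\right)\left(c^2-c\begin{pmatrix}1& 0\\0 & c\end{pmatrix}\sigma_c\right),$$
with the diagonal matrices acting as nontrivial elements of $G_0=\varprojlim_m\mathrm{GL}_2(\mathbb{Z}/p^mN_0\mathbb{Z})$ on the Iwasawa cohomology of $\widetilde{H}^{BM}_1(K^p(N_0))_{\overline{\rho}}$. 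This operator-valued identity is what passes to the projective limit. The scalars $c^{2-j}$ and $c^{j-k+2}$ appearing in $d_j$ arise only after you tensor with $e_1^{j-1}e_2^{k-j-1}$ and move the matrices across the balanced tensor over $\mathcal{O}[[K_{m_1}]]$ (which is permitted precisely because $c\equiv 1\bmod N$ places these matrices in $K_{m_1}$), picking up the scalar dual action of $\mathrm{diag}(c,1)^{-1}$ and $\mathrm{diag}(1,c)^{-1}$ on the symmetric power. By dropping the matrices from the weight-two prefactor, you have discarded the mechanism that produces the $j$- and $k$-dependence, so tensoring with $e_1^{j-1}e_2^{k-1-j}$ afterwards cannot recover $d_j$. (A smaller imprecision: the right-hand side of your weight-two identity should be the Iwasawa-cohomology class ${}_cz^{\mathrm{Iw}}_{N_0p^{\infty},n}=({}_cz^{\mathrm{Iw}}_{N_0p^m,n})_m$, not the element ${}_cz^{(p)}_{N_0p^{\infty}}$, which lives in $H^1(\mathbb{Z}[1/\Sigma],-)$ rather than $H^1_{\mathrm{Iw}}(\mathbb{Z}[1/\Sigma_n,\zeta_n],-)$.)
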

 \begin{proof}
 For each $N\geqq 3$, we set $\delta_N:=\delta_N(2,x)|_{x=1}\in H^1(Y(N)(\mathbb{C}), \mathbb{Z}(1))$ for $x=1\in \mathbb{Z}=\mathrm{Sym}^{2-2}(\mathbb{Z})^*$.
 Then one has $(\delta_{N_0p^m})_{m\geqq 0} \in \widetilde{H}^{BM}_{1}(K^p(N_0))$.
 By definition of the map $\delta_N(k,-) : \mathrm{Sym}^{k-2}(\mathbb{Z}^2)^*\rightarrow H^1(Y(N)(\mathbb{C}), \mathcal{V}^*_{k/\mathbb{Z}})(1)$ and the isomorphism
 $$f : \mathrm{Sym}^{k-2}(\mathcal{O}^2)^*\otimes_{\mathcal{O}[[K_{m_1}]]}\widetilde{H}^{BM}_{1}(K^p(N_0))[1/p]
 \isom H^1(Y(N), \mathcal{V}^*_{k/E})(1)$$
 which we denote by $f$, one can easily check the equality
 $$\delta_N(k, v)=f(v\otimes (\delta_{N_0p^m})_{m\geqq 0})$$
 for any $v\in \mathrm{Sym}^{k-2}(\mathbb{Z}^2)^*$. Therefore, it suffices to check that, for every $1\leqq j\leqq k-1$, the image 
 of $e_1^{j-1}e_2^{k-j-1}\otimes  (\delta_{N_0p^m})_{m\geqq 0}$ by the following composite which we denote by $g$
 \begin{multline*}
  g : \mathrm{Sym}^{k-2}(\mathcal{O}^2)^*\otimes_{\mathcal{O}[[K_{m_1}]]}\widetilde{H}^{BM}_{1}(K^p(N_0))_{\overline{\rho}}[1/p]
 \xrightarrow{\mathrm{id}\otimes z^{\mathrm{Iw}}_{N_0p^{\infty}, n,\overline{\rho}}}\\
  \mathrm{Sym}^{k-2}(\mathcal{O}^2)^*\otimes_{\mathcal{O}[[K_{m_1}]]}
  H^1_{\mathrm{Iw}}(\mathbb{Z}[1/\Sigma_n, \zeta_n], \widetilde{H}^{BM}_{1}(K^p(N_0))_{\overline{\rho}}(1))[1/p]\\
  \xrightarrow{\mathrm{can}}
  H^1_{\mathrm{Iw}}(\mathbb{Z}[1/\Sigma_n, \zeta_n], 
   \mathrm{Sym}^{k-2}(\mathcal{O}^2)^*\otimes_{\mathcal{O}[[K_{m_1}]]}\widetilde{H}^{BM}_{1}(K^p(N_0))_{\overline{\rho}}(1)[1/p])\\
   \xrightarrow{f}H^1_{\mathrm{Iw}}(\mathbb{Z}[1/\Sigma_n, \zeta_n], H^1(Y(N), \mathcal{V}^*_{k/E})_{\overline{\rho}}(2))
 \end{multline*}
 is equal to $z^{\mathrm{Iw}}_{N, n, \overline{\rho}}(k,\delta_N(k, e_1^{j-1}e_2^{k-j-1}))$ because both the maps $\mathrm{id}\otimes z^{\mathrm{Iw}}_{N_0p^{\infty}, 
 n,\overline{\rho}}$ and $z^{\mathrm{Iw}}_{N, n, \overline{\rho}}(k,-)$ are 
 $\mathbb{T}_{\overline{\rho}, \Sigma}[\mathrm{GL}_2(\mathbb{Z}/N\mathbb{Z})]$-linear and the elements $\delta_N(k, e_1^{j-1}e_2^{k-j-1})$ generate $H^1(Y(N), \mathcal{V}^*_{k/E})_{\overline{\rho}}(1)$. 
 
 To prove this claim, we take an integer $c\geqq 2$ such that $c\equiv 1$ (mod $N$) and $(c,6np)=1$, and 
 set $d_j:=(c^2-c^{2-j}\sigma_c)(c^2-c^{j-k+2}\sigma_c)\in \Lambda_n$. Then, it suffices to show the equality 
 \begin{equation}d_j\cdot g(e_1^{j-1}e_2^{k-j-1}\otimes  (\delta_{N_0p^m})_{m\geqq 0})=d_j\cdot z^{\mathrm{Iw}}_{N, n, \overline{\rho}}(k,\delta_N(k, e_1^{j-1}e_2^{k-j-1}))
 \end{equation}
 since $d_j$ is a non zero divisor of $\Lambda_n$ and 
 $H^1_{\mathrm{Iw}}(\mathbb{Z}[1/\Sigma_n, \zeta_n], H^1(Y(N), \mathcal{V}^*_{k/E})_{ \overline{\rho}}(2))$ is a torsion free $\Lambda_n$ module by 
 Lemma \ref{2.2} (1). 
 
 By definition of $z^{\mathrm{Iw}}_{N, n, \overline{\rho}}(k,-)$, one has an equality 
 \begin{equation}
 d_j\cdot z^{\mathrm{Iw}}_{N, n, \overline{\rho}}(k,\delta_N(k, e_1^{j-1}e_2^{k-j-1}))={}_cz^{\mathrm{Iw}}_{N, n}(k, e_1^{j-1}e_2^{k-j-1})
 \end{equation}
  in $H^1_{\mathrm{Iw}}(\mathbb{Z}[1/\Sigma_n, \zeta_n], H^1(Y(N), \mathcal{V}^*_{k/E})_{\overline{\rho}}(2))$. 
  
  We next compute the left hand side 
  $d_j\cdot g(e_1^{j-1}e_2^{k-j-1}\otimes  (\delta_{N_0p^m})_{m\geqq 0})$. Since the elements 
  $\begin{pmatrix}c& 0\\0 & 1\end{pmatrix}, \begin{pmatrix}1& 0\\0 & c\end{pmatrix}\in \varprojlim_{m\geqq 0}\mathrm{GL}_2(\mathbb{Z}/p^{m}N_0\mathbb{Z})=:G_0$ are in the subgroup 
  $$\mathrm{Ker}(G_0\rightarrow \mathrm{GL}_2(\mathbb{Z}/N\mathbb{Z}))\isom K_{m_1},$$ the element
  $$d_j\cdot(e_1^{j-1}e_2^{k-j-1}\otimes z^{\mathrm{Iw}}_{N_0p^{\infty}, n,\overline{\rho}}((\delta_{N_0p^m})_{m\geqq 0}))$$ in 
  $\mathrm{Sym}^{k-2}(\mathcal{O}^2)^*\otimes_{\mathcal{O}[[K_{m_1}]]}
  H^1_{\mathrm{Iw}}(\mathbb{Z}[1/\Sigma_n, \zeta_n], \widetilde{H}^{BM}_{1}(K^p(N_0))_{\overline{\rho}}(1))$ is equal to 
  \begin{multline*}
  \left(c^2-c^{2-j}\begin{pmatrix}c& 0\\0 & 1\end{pmatrix}\sigma_c\right)\left(c^2-c^{j-k+2} \begin{pmatrix}1& 0\\0 & c\end{pmatrix}\sigma_c\right)\cdot(e_1^{j-1}e_2^{k-j-1}\otimes z^{\mathrm{Iw}}_{N_0p^{\infty}, n,\overline{\rho}}((\delta_{N_0p^m})_{m\geqq 0}))\\
  =e_1^{j-1}e_2^{k-j-1}\otimes d^0_j\cdot z^{\mathrm{Iw}}_{N_0p^{\infty}, n,\overline{\rho}}((\delta_{N_0p^m})_{m\geqq 0}))
  \end{multline*}
  for $d^0_j= \left(c^2-c\begin{pmatrix}c& 0\\0 & 1\end{pmatrix}\sigma_c\right)\left(c^2-c \begin{pmatrix}1& 0\\0 & c\end{pmatrix}\sigma_c\right)$. 
  Since one has 
  $$d^0_j\cdot z^{\mathrm{Iw}}_{N_0p^{\infty}, n,\overline{\rho}}((\delta_{N_0p^m})_{m\geqq 0}))=({}_cz^{\mathrm{Iw}}_{N_0p^m, n})_{m\geqq 0}=
  {}_cz^{\mathrm{Iw}}_{N_0p^{\infty}, n}$$ by definition of $z^{\mathrm{Iw}}_{N_0p^{\infty}, n,\overline{\rho}}$ and Proposition \ref{2.6}, we obtain 
  $$d_j\cdot(e_1^{j-1}e_2^{k-j-1}\otimes z^{\mathrm{Iw}}_{N_0p^{\infty}, n,\overline{\rho}}((\delta_{N_0p^m})_{m\geqq 0}))
  ={}_cz^{\mathrm{Iw}}_{N, n}(k, e_1^{j-1}e_2^{k-j-1})$$
  by definition of ${}_cz^{\mathrm{Iw}}_{N, n}(k,-)$.

 \end{proof}

    \section{Zeta morphisms for rank two universal deformations}
    In this section except in \S 4.3, we assume the following $:$ 
\begin{itemize}
\item[(1)]$p\geqq 5$.
%\item[(2)]$\rho|_{G_{\mathbb{Q}(\zeta_p)}}$ is absolutely irreducible. 
\item[(2)]$\mathrm{End}_{\mathbb{F}[G_{\mathbb{Q}_p}]}(\overline{\rho}_p)=\mathbb{F}$.
\item[(3)]$\overline{\rho}_p$ is not of the form $\begin{pmatrix}1& *\\ 0 & \overline{\varepsilon}^{\pm 1}\end{pmatrix}
\otimes \chi\,  $ for any character $\chi : G_{\mathbb{Q}_p}\rightarrow \mathbb{F}^{\times}$.
\end{itemize}
We need these condition when we use theorems on $p$-adic local Langlands correspondence \cite{Co10}, \cite{Em}, and \cite{Pas13}, \cite{Pas15}.

  \subsection{Definition of zeta morphisms for rank two universal deformations}
  
  We combine the results in previous two sections. We first fix an isomorphism 
  \begin{equation}
  \phi_1 : \widetilde{H}^{BM}_{1,\overline{\rho},\Sigma}\isom (\pi^{\mathfrak{m}}_p)^*\otimes_{\mathbb{T}_{\overline{\rho},\Sigma}}
  (\rho^{\mathfrak{m}})^*\otimes_{\mathbb{T}_{\overline{\rho} ,\Sigma}}\widetilde{\pi}^{\mathfrak{m}}_{\Sigma_0}
  \end{equation}
  in Proposition \ref{1.5}. 
  
  Let $N_0\geqq 1$ be an integer such that $\mathrm{prime}(N_0)=\Sigma_0$ which is allowable with respect to $\overline{\rho}$. Since 
  $(\widetilde{\pi}^{\mathfrak{m}}_{\Sigma_0})^{K^p(N_0)}$ is a finite generated $\mathbb{T}_{\overline{\rho},\Sigma}$-module, 
   the Iwasawa cohomology $H^1_{\mathrm{Iw}}(\mathbb{Z}[1/\Sigma_n, \zeta_n], (\rho^{\mathfrak{m}})^*(1)
  \otimes_{\mathbb{T}_{\overline{\rho} ,\Sigma}}(\widetilde{\pi}^{\mathfrak{m}}_{\Sigma_0})^{K^p(N_0)})$ is a finite generated 
  $\mathbb{T}_{\overline{\rho},\Sigma}\widehat{\otimes}_{\mathbb{Z}_p}\Lambda_n$-module for each $n\geqq 1$ such that $(n,\Sigma)=1$ by 
  the standard finiteness results in the theory of Galois cohomology. In particular, it is naturally a compact $\mathbb{T}_{\overline{\rho},\Sigma}$-module. 
  Then, we write 
  \begin{multline*}
  (\pi_p^{\mathfrak{m}})^*\widehat{\otimes}_{\mathbb{T}_{\overline{\rho},\Sigma}}
  H^1_{\mathrm{Iw}}(\mathbb{Z}[1/\Sigma_n, \zeta_n], 
  (\rho^{\mathfrak{m}})^*(1)
  \otimes_{\mathbb{T}_{\overline{\rho} ,\Sigma}}\widetilde{\pi}^{\mathfrak{m}}_{\Sigma_0})\\
  :=\varinjlim_{N_0}
 (\pi_p^{\mathfrak{m}})^*\widehat{\otimes}_{\mathbb{T}_{\overline{\rho},\Sigma}}
 H^1_{\mathrm{Iw}}(\mathbb{Z}[1/\Sigma_n, \zeta_n], 
  (\rho^{\mathfrak{m}})^*(1)
  \otimes_{\mathbb{T}_{\overline{\rho} ,\Sigma}}(\widetilde{\pi}^{\mathfrak{m}}_{\Sigma_0})^{K^p(N_0)}).
  \end{multline*}

  %Then, the $K^p(N_0)$-fixed part of the isomorphism 

   \begin{lemma}\label{3.1}
   For each $n\geqq 1$ such that $(n,\Sigma)=1$, 
   the isomorphism $\phi_1$ naturally induces a $\mathbb{T}_{\overline{\rho},\Sigma}\widehat{\otimes}_{\mathbb{Z}_p}\Lambda_n[G_{\Sigma}]$-linear isomorphism
   \begin{equation}
   \widetilde{\phi}_1 : H^1_{\mathrm{Iw}}(\mathbb{Z}[1/\Sigma_n, \zeta_n], \widetilde{H}^{BM}_{1, \overline{\rho},\Sigma}(1))\isom 
   (\pi_p^{\mathfrak{m}})^*\widehat{\otimes}_{\mathbb{T}_{\overline{\rho},\Sigma}}
   H^1_{\mathrm{Iw}}(\mathbb{Z}[1/\Sigma_n, \zeta_n],(\rho^{\mathfrak{m}})^*(1)
  \otimes_{\mathbb{T}_{\overline{\rho} ,\Sigma}}\widetilde{\pi}_{\Sigma_0}^{\mathfrak{m}})
  \end{equation}
   \end{lemma}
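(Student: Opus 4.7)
The plan is to define $\widetilde{\phi}_1$ by applying the functor $H^1_{\mathrm{Iw}}(\mathbb{Z}[1/\Sigma_n, \zeta_n], -(1))$ to the topological isomorphism $\phi_1$ from Corollary \ref{1.5}, and then identify the resulting module on the right-hand side. The decomposition on the right of $\phi_1$ is $\mathbb{T}_{\overline{\rho},\Sigma}[G_{\mathbb{Q}}\times G_{\Sigma}]$-equivariant, with the $G_{\mathbb{Q}}$-action supported only on the middle factor $(\rho^{\mathfrak{m}})^*$; the local factors $(\pi_p^{\mathfrak{m}})^*$ and $\widetilde{\pi}_{\Sigma_0}^{\mathfrak{m}}$ carry only $G_p$- and $G_{\Sigma_0}$-actions which commute with the $G_{\mathbb{Q}}$-action. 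The Tate twist therefore lands on $(\rho^{\mathfrak{m}})^*$ alone, and the task becomes to commute $H^1_{\mathrm{Iw}}$ past the two flanking factors.

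First I would take $K^p(N_0)$-invariants of both sides of $\phi_1$ for each sufficiently small allowable level $N_0$ with $\mathrm{prime}(N_0)=\Sigma_0$. This only affects the smooth factor $\widetilde{\pi}_{\Sigma_0}^{\mathfrak{m}}$, and coadmissibility produces a finitely generated $\mathbb{T}_{\overline{\rho},\Sigma}$-module $(\widetilde{\pi}_{\Sigma_0}^{\mathfrak{m}})^{K^p(N_0)}$. Since $\mathbb{T}_{\overline{\rho},\Sigma}$ is Noetherian, the ordinary tensor product of a finitely generated module with the continuous cohomology of a compact Galois module goes through without obstruction; this pulls the factor $(\widetilde{\pi}_{\Sigma_0}^{\mathfrak{m}})^{K^p(N_0)}$ out of $H^1_{\mathrm{Iw}}$ after a routine limit argument.

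The more delicate step is to commute $H^1_{\mathrm{Iw}}$ with the completed tensor product by $(\pi_p^{\mathfrak{m}})^*$. Here I would invoke Corollary \ref{1.6}, which identifies $(\pi_p^{\mathfrak{m}})^* \cong \widetilde{P} \widehat{\otimes}_{R_p}\mathbb{T}_{\overline{\rho},\Sigma}$ with $\widetilde{P}$ projective in $\mathfrak{C}(\mathcal{O})$. Writing $\widetilde{P}$ as a projective limit of compact finite-length quotients and using the projectivity (hence flatness) of $\widetilde{P}$, the completed tensor product commutes with the continuous cochain complex at each finite level; a Mittag-Leffler argument, applicable because all terms are compact Hausdorff and the transition maps are surjective, then yields the desired commutation in the limit.

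Passing to the inductive limit over allowable $N_0$ and using functoriality of $\phi_1$ and of $H^1_{\mathrm{Iw}}$ assembles $\widetilde{\phi}_1$; the $\mathbb{T}_{\overline{\rho},\Sigma}\widehat{\otimes}_{\mathbb{Z}_p}\Lambda_n[G_\Sigma]$-equivariance is automatic from that of $\phi_1$ together with the construction of Iwasawa cohomology. I expect the main obstacle to be the rigorous bookkeeping of topologies in the continuous-cohomology base change: one must verify that the completed tensor product and continuous cohomology interact correctly without producing higher derived corrections, and the crucial input enabling this is the projectivity of $\widetilde{P}$ in Pa\v{s}k\={u}nas' category $\mathfrak{C}(\mathcal{O})$.
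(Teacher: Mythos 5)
Your high-level plan is correct and matches the paper's intent: apply the Iwasawa-cohomology functor to $\phi_1$, peel off the smooth $\Sigma_0$-factor by passing to $K^p(N_0)$-invariants (where coadmissibility makes it finitely generated over the Noetherian $\mathbb{T}_{\overline{\rho},\Sigma}$), and then commute $H^1_{\mathrm{Iw}}$ past the $p$-adic factor using a flatness-type property. You also correctly identify the only delicate step. However, the technical machinery you propose diverges from the paper's in two concrete ways, and the second one is a genuine weak point.

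First, the paper does not work directly with the continuous cochain complex but replaces $C^{\bullet}_{\mathrm{Iw}}(\mathbb{Z}[1/\Sigma_n,\zeta_n],(\rho^{\mathfrak{m}})^*(1))$ by a perfect complex $N^{\bullet}$ over $\mathbb{T}_{\overline{\rho},\Sigma}\widehat{\otimes}_{\mathbb{Z}_p}\Lambda_n$ (the standard finiteness of Galois cohomology), and then invokes Corollary \ref{5.32}, which is stated precisely for this situation. This reduction to a bounded complex of finite projective modules is what makes the subsequent manipulations painless, and your proposal skips it entirely.

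Second, and more substantively: the relevant structural input is not, as you put it, projectivity of $\widetilde{P}$ in $\mathfrak{C}(\mathcal{O})$ together with a Mittag-Leffler argument on finite-length quotients. It is pro-freeness of $(\pi_p^{\mathfrak{m}})^*$ as a $\mathbb{T}_{\overline{\rho},\Sigma}$-module, which the paper gets from orthonormalizability of $\pi_p^{\mathfrak{m}}$ (Proposition B.11). Pro-freeness means $(\pi_p^{\mathfrak{m}})^*\cong \prod_j \mathbb{T}_{\overline{\rho},\Sigma}$ as a topological module, and then Lemmas \ref{5.29}--\ref{5.31} use the elementary but decisive fact that arbitrary direct products commute both with continuous cochains (termwise) and with taking cohomology of a complex. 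That argument is exact and has no correction terms. If instead you write $\widetilde{P}$ as a projective limit of finite-length quotients and try to pass $\varprojlim$ through $H^1$, you do not get commutation for free: the $\varprojlim$--$H^1$ interchange is controlled by an exact sequence involving $R^1\varprojlim$ of the $H^0$-terms, and surjectivity of the transition maps on cohomology groups (not just on cochains) is exactly what would need to be verified. Mittag-Leffler on the cochain level does not immediately propagate to the cohomology level. So as written your "Mittag-Leffler argument" is not sufficient; one either has to additionally argue vanishing of the relevant $R^1\varprojlim H^0$, or, as the paper does, switch to the product decomposition and avoid the problem.

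In summary, your proposal is in the right spirit and identifies the correct key players, but the specific proof of the crucial commutation step is not the one that works directly; you should replace the ML-limit argument with the combination of a perfect complex resolution and the product decomposition of the pro-free module $(\pi_p^{\mathfrak{m}})^*$, as packaged in Corollary \ref{5.32}.
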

    \begin{proof}
    By the standard finiteness result in the theory of Galois cohomology, there exists a perfect complex $N^{\bullet}$ of 
    $\mathbb{T}_{\overline{\rho},\Sigma}\widehat{\otimes}_{\mathbb{Z}_p}\Lambda_n$-module, and 
 a $\mathbb{T}_{\overline{\rho},\Sigma}\widehat{\otimes}_{\mathbb{Z}_p}\Lambda_n$-linear quasi-isomorphism 
 $$\psi : C_{\mathrm{Iw}}^{\bullet}(\mathbb{Z}[1/\Sigma_n, \zeta_n], (\rho^{\mathfrak{m}})^*(1))\rightarrow N^{\bullet}.$$ 
 Applying Corollary \ref{5.32} to $A=\mathbb{T}_{\overline{\rho},\Sigma}\widehat{\otimes}_{\mathbb{Z}_p}\Lambda_n$, 
 $\rho=\bold{Dfm}_n((\rho^{\mathfrak{m}})^*(1))$, $P= (\pi_p^{\mathfrak{m}})^*\widehat{\otimes}_{\mathbb{Z}_p}\Lambda_n$ or
 $P=\mathbb{T}_{\overline{\rho},\Sigma}\widehat{\otimes}_{\mathbb{Z}_p}\Lambda_n$, 
 and $M=\widetilde{\pi}_{\Sigma_0}^{\mathfrak{m}}\otimes_{\mathbb{T}_{\overline{\rho},\Sigma}}
 (\mathbb{T}_{\overline{\rho},\Sigma}\widehat{\otimes}_{\mathbb{Z}_p}\Lambda_n)$, $\psi$ naturally induces quasi-isomorphisms
 
 $$\psi_1 : C_{\mathrm{Iw}}^{\bullet}(\mathbb{Z}[1/\Sigma_n, \zeta_n],  (\pi_p^{\mathfrak{m}})^*\otimes_{\mathbb{T}_{\overline{\rho},\Sigma}}(\rho^{\mathfrak{m}})^*(1)
 \otimes_{\mathbb{T}_{\overline{\rho}, \Sigma}}\widetilde{\pi}_{\Sigma_0}^{\mathfrak{m}} )\rightarrow 
  (\pi_p^{\mathfrak{m}})^*\widehat{\otimes}_{\mathbb{T}_{\overline{\rho},\Sigma}}N^{\bullet}\otimes_{\mathbb{T}_{\overline{\rho}, \Sigma}}\widetilde{\pi}_{\Sigma_0}^{\mathfrak{m}}$$
  and 
   $$\psi_2 : C_{\mathrm{Iw}}^{\bullet}(\mathbb{Z}[1/\Sigma_n, \zeta_n], (\rho^{\mathfrak{m}})^*(1)
 \otimes_{\mathbb{T}_{\overline{\rho}, \Sigma}}\widetilde{\pi}_{\Sigma_0}^{\mathfrak{m}} )\rightarrow 
  N^{\bullet}\otimes_{\mathbb{T}_{\overline{\rho}, \Sigma}}\widetilde{\pi}_{\Sigma_0}^{\mathfrak{m}}.$$
  Therefore, $\phi_1$, $\psi_1$ and $\psi_2$ induce an isomorphism 
  \begin{multline*}
  H^1_{\mathrm{Iw}}(\mathbb{Z}[1/\Sigma_n, \zeta_n], \widetilde{H}^{BM}_{1, \overline{\rho},\Sigma}(1))\isom 
  H^1_{\mathrm{Iw}}(\mathbb{Z}[1/\Sigma_n, \zeta_n], (\pi_p^{\mathfrak{m}})^*\otimes_{\mathbb{T}_{\overline{\rho},\Sigma}}(\rho^{\mathfrak{m}})^*(1)
 \otimes_{\mathbb{T}_{\overline{\rho}, \Sigma}}\widetilde{\pi}_{\Sigma_0}^{\mathfrak{m}} )\\
 \isom H^1( (\pi_p^{\mathfrak{m}})^*\widehat{\otimes}_{\mathbb{T}_{\overline{\rho},\Sigma}}N^{\bullet}\otimes_{\mathbb{T}_{\overline{\rho}, \Sigma}}\widetilde{\pi}_{\Sigma_0}^{\mathfrak{m}})
 \isom (\pi_p^{\mathfrak{m}})^*\widehat{\otimes}_{\mathbb{T}_{\overline{\rho},\Sigma}}H^1( N^{\bullet}\otimes_{\mathbb{T}_{\overline{\rho}, \Sigma}}\widetilde{\pi}_{\Sigma_0}^{\mathfrak{m}})\\
 \isom  (\pi_p^{\mathfrak{m}})^*\widehat{\otimes}_{\mathbb{T}_{\overline{\rho},\Sigma}}
  H^1_{\mathrm{Iw}}(\mathbb{Z}[1/\Sigma_n, \zeta_n], (\rho^{\mathfrak{m}})^*(1)
 \otimes_{\mathbb{T}_{\overline{\rho}, \Sigma}}\widetilde{\pi}_{\Sigma_0}^{\mathfrak{m}} )
\end{multline*}
where the third isomorphism follows from the pro-freeness of $(\pi_p^{\mathfrak{m}})^*$.

   \end{proof}
   By the isomorphisms $\phi_1$ and $\widetilde{\phi}_1$, the homomorphism 
     $$z^{\mathrm{Iw}}_{\Sigma, n, \overline{\rho}} : \widetilde{H}^{BM}_{1, \overline{\rho},\Sigma}
    \rightarrow H^1_{\mathrm{Iw}}(\mathbb{Z}[1/\Sigma_n, \zeta_n], \widetilde{H}^{BM}_{1, \overline{\rho},\Sigma}(1))$$ can be regarded as 
    a continuous $\mathbb{T}_{\overline{\rho},\Sigma}[G_{\Sigma}]$-linear homomorphism 
   \begin{multline}z^{\mathrm{Iw}}_{\Sigma, n, \overline{\rho}} :(\pi^{\mathfrak{m}}_p)^*\otimes_{\mathbb{T}_{\overline{\rho},\Sigma}}
  (\rho^{\mathfrak{m}})^*\otimes_{\mathbb{T}_{\overline{\rho} ,\Sigma}}\widetilde{\pi}^{\mathfrak{m}}_{\Sigma_0}\\
  \rightarrow  (\pi_p^{\mathfrak{m}})^*\widehat{\otimes}_{\mathbb{T}_{\overline{\rho},\Sigma}}
  H^1_{\mathrm{Iw}}(\mathbb{Z}[1/\Sigma_n, \zeta_n], (\rho^{\mathfrak{m}})^*(1)
 \otimes_{\mathbb{T}_{\overline{\rho}, \Sigma}}\widetilde{\pi}_{\Sigma_0}^{\mathfrak{m}} ).
 \end{multline}
 We remark that this homomorphism is independent of the choice of $\phi_1$ by Lemma \ref{1.7}. 
 
 Since one has 
 $$(\pi^{\mathfrak{m}}_p)^*\otimes_{\mathbb{T}_{\overline{\rho},\Sigma}}
  (\rho^{\mathfrak{m}})^*\otimes_{\mathbb{T}_{\overline{\rho} ,\Sigma}}\widetilde{\pi}^{\mathfrak{m}}_{\Sigma_0}
  =(\widetilde{P}\widehat{\otimes}_{R_p} (\rho^{\mathfrak{m}})^*)\otimes_{\mathbb{T}_{\overline{\rho} ,\Sigma}}\widetilde{\pi}^{\mathfrak{m}}_{\Sigma_0}$$
  and 
  \begin{multline*}
   (\pi_p^{\mathfrak{m}})^*\widehat{\otimes}_{\mathbb{T}_{\overline{\rho},\Sigma}}
  H^1_{\mathrm{Iw}}(\mathbb{Z}[1/\Sigma_n, \zeta_n], (\rho^{\mathfrak{m}})^*(1)
 \otimes_{\mathbb{T}_{\overline{\rho}, \Sigma}}\widetilde{\pi}_{\Sigma_0}^{\mathfrak{m}} )\\
 =\varinjlim_{N_0}\widetilde{P}\widehat{\otimes}_{R_p}
  H^1_{\mathrm{Iw}}(\mathbb{Z}[1/\Sigma_n, \zeta_n], (\rho^{\mathfrak{m}})^*(1)
 \otimes_{\mathbb{T}_{\overline{\rho}, \Sigma}}(\widetilde{\pi}_{\Sigma_0}^{\mathfrak{m}})^{K^p(N_0)} ),
 \end{multline*}
 both of which are  inductive limits of the objects in $\mathfrak{C}(\mathcal{O})$, Corollary \ref{5.28} implies that 
 there exists a unique continuous $\mathbb{T}_{\overline{\rho},\Sigma}[G_{\Sigma_0}]$-linear homomorphism 
 \begin{equation}
 \phi_2 :  (\rho^{\mathfrak{m}})^*\otimes_{\mathbb{T}_{\overline{\rho} ,\Sigma}}\widetilde{\pi}^{\mathfrak{m}}_{\Sigma_0}
 \rightarrow H^1_{\mathrm{Iw}}(\mathbb{Z}[1/\Sigma_n, \zeta_n], (\rho^{\mathfrak{m}})^*(1)
 \otimes_{\mathbb{T}_{\overline{\rho}, \Sigma}}\widetilde{\pi}_{\Sigma_0}^{\mathfrak{m}} )
 \end{equation}
  satisfying the equality
  \begin{equation}
  \mathrm{id}_{  (\pi_p^{\mathfrak{m}})^*}\otimes \phi_2=z^{\mathrm{Iw}}_{\Sigma, n, \overline{\rho}}.
  \end{equation}
  Applying the functor $\Psi_{\Sigma_0}$ to the map $\phi_2$, we obtain a $\mathbb{T}_{\overline{\rho}, \Sigma}$-linear homomorphism 
  \begin{equation}
    \phi_3:=\Psi_{\Sigma_0}(\phi_2) :  (\rho^{\mathfrak{m}})^*\otimes_{\mathbb{T}_{\overline{\rho} ,\Sigma}}
  \Psi_{\Sigma_0}(\widetilde{\pi}^{\mathfrak{m}}_{\Sigma_0})\rightarrow 
  H^1_{\mathrm{Iw}}(\mathbb{Z}[1/\Sigma_n, \zeta_n], (\rho^{\mathfrak{m}})^*(1)
 \otimes_{\mathbb{T}_{\overline{\rho}, \Sigma}}\Psi_{\Sigma_0}(\widetilde{\pi}_{\Sigma_0}^{\mathfrak{m}}) )
 \end{equation}
 since one has canonical isomorphisms
 $$ \Psi_{\Sigma_0}((\rho^{\mathfrak{m}})^*\otimes_{\mathbb{T}_{\overline{\rho} ,\Sigma}}\widetilde{\pi}^{\mathfrak{m}}_{\Sigma_0})
 \isom (\rho^{\mathfrak{m}})^*\otimes_{\mathbb{T}_{\overline{\rho} ,\Sigma}}
  \Psi_{\Sigma_0}(\widetilde{\pi}^{\mathfrak{m}}_{\Sigma_0})$$
  and 
  \begin{multline*}\Psi_{\Sigma_0}(H^1_{\mathrm{Iw}}(\mathbb{Z}[1/\Sigma_n, \zeta_n], (\rho^{\mathfrak{m}})^*(1)
 \otimes_{\mathbb{T}_{\overline{\rho}, \Sigma}}\widetilde{\pi}_{\Sigma_0}^{\mathfrak{m}} )
 \isom H^1_{\mathrm{Iw}}(\mathbb{Z}[1/\Sigma_n, \zeta_n], \Psi_{\Sigma_0}((\rho^{\mathfrak{m}})^*(1)
 \otimes_{\mathbb{T}_{\overline{\rho}, \Sigma}}\widetilde{\pi}_{\Sigma_0}^{\mathfrak{m}}) )\\
\isom H^1_{\mathrm{Iw}}(\mathbb{Z}[1/\Sigma_n, \zeta_n], (\rho^{\mathfrak{m}})^*(1)
 \otimes_{\mathbb{T}_{\overline{\rho}, \Sigma}}\Psi_{\Sigma_0}(\widetilde{\pi}_{\Sigma_0}^{\mathfrak{m}}) ).
 \end{multline*}
 
We recall that one has a $\mathbb{T}_{\overline{\rho}, \Sigma}$-linear isomorphism 
$\Psi_{\Sigma_0}(\widetilde{\pi}_{\Sigma_0}^{\mathfrak{m}})\isom \mathbb{T}_{\overline{\rho}, \Sigma}$ which we fix, 
 then the map $\phi_3$ can be regarded as a $\mathbb{T}_{\overline{\rho}, \Sigma}$-linear homomorphism which we denote by 
 \begin{equation}
 \bold{z}_{\Sigma, n}(\rho^{\mathfrak{m}}) :  (\rho^{\mathfrak{m}})^*
 \rightarrow H^1_{\mathrm{Iw}}(\mathbb{Z}[1/\Sigma_n, \zeta_n], (\rho^{\mathfrak{m}})^*(1)).
 \end{equation}
  It is clear that this map is independent of the choice of the isomorphism $\Psi_{\Sigma_0}(\widetilde{\pi}_{\Sigma_0}^{\mathfrak{m}})\isom \mathbb{T}_{\overline{\rho}, \Sigma}$. 
  
  \begin{corollary}\label{3.11}
  For every integer $n\geqq 1$ such that $(n, \Sigma)=1$, the map $ \bold{z}_{\Sigma, n}(\rho^{\mathfrak{m}})$ satisfies the equality 
  $$ \bold{z}_{\Sigma, n}(\rho^{\mathfrak{m}})(\tau(v))=\sigma_{-1} (\bold{z}_{\Sigma, n}(\rho^{\mathfrak{m}})(v))$$
  for every $v\in  (\rho^{\mathfrak{m}})^*$. Moreover, for every prime $l\not\in \Sigma$, one has 
  $$\mathrm{Cor}\circ  \bold{z}_{\Sigma, nl}(\rho^{\mathfrak{m}})=\begin{cases} \bold{z}_{\Sigma, n}(\rho^{\mathfrak{m}})& \text{ if } l|n\\ 
  P_l(\sigma_l^{-1}) \bold{z}_{\Sigma, n}(\rho^{\mathfrak{m}})& \text{ if } (l,n)=1\end{cases}$$
  for the corestriction map 
  $$\mathrm{Cor} : H^1_{\mathrm{Iw}}(\mathbb{Z}[1/\Sigma_{nl}, \zeta_{nl}], (\rho^{\mathfrak{m}})^*(1))\rightarrow 
  H^1_{\mathrm{Iw}}(\mathbb{Z}[1/\Sigma_{nl}, \zeta_n], (\rho^{\mathfrak{m}})^*(1)).$$
  
  \end{corollary}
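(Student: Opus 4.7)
The plan is to deduce both assertions from the corresponding properties of the $G_\Sigma$-equivariant zeta morphism $z^{\mathrm{Iw}}_{\Sigma,n,\overline{\rho}}$ already established in Theorem \ref{2.15}, by propagating them through the three-step construction of $\bold{z}_{\Sigma,n}(\rho^{\mathfrak{m}})$: transport along $\widetilde{\phi}_1 \circ \phi_1^{-1}$, factoring out the $(\pi_p^{\mathfrak{m}})^*$-part via Corollary \ref{5.28}, and application of $\Psi_{\Sigma_0}$ combined with the identification $\Psi_{\Sigma_0}(\widetilde{\pi}_{\Sigma_0}^{\mathfrak{m}}) \isom \mathbb{T}_{\overline{\rho},\Sigma}$.

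First I would observe that $\phi_1$ is $G_{\mathbb{Q}}\times G_\Sigma$-equivariant and $\mathbb{T}_{\overline{\rho},\Sigma}$-linear by construction, and that the induced isomorphism $\widetilde{\phi}_1$ from Lemma \ref{3.1} is $\Lambda_n$-linear and natural in $n$ (since it is built from a perfect complex $N^\bullet$ computing Iwasawa cohomology via the fixed quasi-isomorphism $\psi$). Consequently, the transported map
$$\tilde{z}\colon (\pi^{\mathfrak{m}}_p)^*\otimes_{\mathbb{T}_{\overline{\rho},\Sigma}}(\rho^{\mathfrak{m}})^*\otimes_{\mathbb{T}_{\overline{\rho},\Sigma}}\widetilde{\pi}^{\mathfrak{m}}_{\Sigma_0} \longrightarrow (\pi_p^{\mathfrak{m}})^*\widehat{\otimes}_{\mathbb{T}_{\overline{\rho},\Sigma}} H^1_{\mathrm{Iw}}(\mathbb{Z}[1/\Sigma_n,\zeta_n],(\rho^{\mathfrak{m}})^*(1)\otimes_{\mathbb{T}_{\overline{\rho},\Sigma}}\widetilde{\pi}_{\Sigma_0}^{\mathfrak{m}})$$
inherits the $\tau/\sigma_{-1}$-equivariance and the corestriction compatibility stated in Theorem \ref{2.15} verbatim.

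Next I would invoke the uniqueness part of Corollary \ref{5.28}: the map $\phi_2$ is characterized as the unique continuous $\mathbb{T}_{\overline{\rho},\Sigma}[G_{\Sigma_0}]$-linear map with $\mathrm{id}_{(\pi_p^{\mathfrak{m}})^*}\otimes \phi_2 = \tilde{z}$. The two candidates $\sigma_{-1}\circ \phi_2\circ \tau^{-1}$ and $\phi_2$ both yield $\tilde{z}$ after tensoring with $\mathrm{id}_{(\pi_p^{\mathfrak{m}})^*}$ (using the $\tau/\sigma_{-1}$-equivariance of $\tilde{z}$ together with the fact that $\tau$ acts trivially on the $(\pi_p^{\mathfrak{m}})^*$-factor), hence they coincide. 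The same manipulation, applied to the two sides of the corestriction square of Theorem \ref{2.15} for $n\mid nl$, yields the analogous compatibility for $\phi_2$.

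Finally I would apply the exact functor $\Psi_{\Sigma_0}$, which commutes with arbitrary tensor products over $\mathbb{T}_{\overline{\rho},\Sigma}$ by \S 3.1 of \cite{EH14}, and use the fixed isomorphism $\Psi_{\Sigma_0}(\widetilde{\pi}_{\Sigma_0}^{\mathfrak{m}})\isom \mathbb{T}_{\overline{\rho},\Sigma}$ to identify $\phi_3$ with $\bold{z}_{\Sigma,n}(\rho^{\mathfrak{m}})$; both compatibilities descend, since $\tau$, $\sigma_{-1}$, and $\mathrm{Cor}$ all act on the $(\rho^{\mathfrak{m}})^*$- and Galois-cohomology factors and commute with $\Psi_{\Sigma_0}$ trivially. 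The main obstacle I expect is the last step, namely verifying that $\Psi_{\Sigma_0}$ genuinely commutes with continuous Iwasawa cohomology for modules of the shape $(\rho^{\mathfrak{m}})^*(1)\otimes_{\mathbb{T}_{\overline{\rho},\Sigma}}(\widetilde{\pi}_{\Sigma_0}^{\mathfrak{m}})^{K^p(N_0)}$; the fix is to work with a perfect complex of $\mathbb{T}_{\overline{\rho},\Sigma}\widehat{\otimes}_{\mathbb{Z}_p}\Lambda_n$-modules representing the cohomology (exactly as in the proof of Lemma \ref{3.1}), and then exploit the exactness of $\Psi_{\Sigma_0}$ and its compatibility with $-\otimes_{\mathbb{T}_{\overline{\rho},\Sigma}}(\widetilde{\pi}_{\Sigma_0}^{\mathfrak{m}})^{K^p(N_0)}$.
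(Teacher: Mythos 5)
Your proposal is correct and follows exactly the route the paper takes (the paper's proof is a single line: the corollary follows immediately from Theorem \ref{2.15} by the definition of $\bold{z}_{\Sigma,n}(\rho^{\mathfrak{m}})$). You have simply unpacked the implicit steps — transport along $\widetilde{\phi}_1$, uniqueness in Corollary \ref{5.28} to descend the properties to $\phi_2$, and application of the exact functor $\Psi_{\Sigma_0}$ — that the author leaves as routine bookkeeping.
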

  \begin{proof}
  By definition of the map $\bold{z}_{\Sigma, n}(\rho^{\mathfrak{m}})$, the corollary immediately follows from Theorem 
  \ref{2.15}.
  \end{proof}
  
  %As a corollary of Theorem \ref{2.15}, we immediately 
  %By the norm relation for $z^{\mathrm{Iw}}_{\Sigma, n, \overline{\rho}}$, one also 
   % obtains the similar norm relation for $\bold{z}_{\Sigma, n}(\rho^{\mathfrak{m}})$ : for any prime $l$ such that $(l, \Sigma)=1$, one has
   % \begin{equation}
   % \mathrm{Cor}\circ \bold{z}_{\Sigma, nl}(\rho^{\mathfrak{m}})
   % =\begin{cases}  \bold{z}_{\Sigma, n}(\rho^{\mathfrak{m}}) & \text{ if } l|n, 
   % \\P_l(\sigma_l^{-1})  \bold{z}_{\Sigma, n}(\rho^{\mathfrak{m}}) & \text{ if } (l,n)=1.
   % \end{cases}
   % \end{equation} for the co-restriction map 
   % $$\mathrm{Cor} : H^1_{\mathrm{Iw}}(\mathbb{Z}[1/\Sigma_{nl}, \zeta_{nl}], (\rho^{\mathfrak{m}})^*(1))\rightarrow 
   % H^1_{\mathrm{Iw}}(\mathbb{Z}[1/\Sigma_{nl}, \zeta_n], (\rho^{\mathfrak{m}})^*(1)).$$

  %\subsection{Comparison with Kato's zeta element}
  \subsection{Statement of the main theorem}
  Let $x_f\in \mathrm{Spec}(\mathbb{T}_{\overline{\rho},\Sigma})(E)$ be an $E$-valued point which is modular, i.e. 
  the base change $\rho_{f}:=\rho^{\mathfrak{m}}\otimes_{\mathbb{T}_{\overline{\rho}, \Sigma}, x_f^*}E$ by the map 
  $x_f^* : \mathbb{T}_{\overline{\rho},\Sigma}\rightarrow E$ corresponding to $x_f$ is the representation associated to a normalized Hecke eigen cusp new form 
  $f(\tau)=\sum_{n=1}^{\infty}a_nq^n\in S_k(\Gamma_1(N_f))^{\mathrm{new}}$ of weight $k\in \mathbb{Z}_{\geqq 2}$ and level $N_f$ with neben character 
  $\varepsilon_f : (\mathbb{Z}/N_f\mathbb{Z})^{\times}\rightarrow \mathbb{C}^{\times}$ (see Appendix A). Then, as the base change of the map $\bold{z}_{\Sigma, n}(\rho^{\mathfrak{m}})$ with respect to the map $x_f^*$, we obtain an $E$-linear map 
 \begin{equation}
 \bold{z}_{\Sigma, n}(f) : 
 \rho_{f}^*\rightarrow H^1_{\mathrm{Iw}}(\mathbb{Z}[1/\Sigma_n, \zeta_n], \rho_f^*(1))
 \end{equation}
 for every $n\geqq 1$ such that $(n, \Sigma)=1$.

   In Appendix A, we define $V'_1(f)_A, V_1(f^*)_A$ and $S_1(f^*)_A$ for $A=F, E, \mathbb{C}$ with a motivic structure, and 
   an $E$-linear map 
   $$\bold{z}_{n}(f) : V'_1(f)_E\rightarrow H^1_{\mathrm{Iw}}(\mathbb{Z}[1/\Sigma_{f,n}, \zeta_n], V'_1(f)_E(1)) $$
    for every $n\geqq 1$ such that $(n, \Sigma_f)=1$ for $\Sigma_f:=\mathrm{prime}(N_f)\cup\{p\}$ (we remark that one has 
 $\Sigma_f \subseteq \Sigma$). We recall that one has an $E[G_{\mathbb{Q}}]$-linear isomorphism 
 \begin{equation}\label{2a}
 V'_1(f)_E\isom \rho_f^*.
 \end{equation}
%such that $V'_1(f)_E\isom (\rho_f)^*\isom \rho(x_f)^*$. We fix such an isomorphism, and identify the both side by this isomorphism. 
  %recall that one has a motive $M^{Ka}(f)=(V_B(f), V_{\mathrm{dR}}(f))$
% such that $V_p(f):=V_B(f)\otimes_FE\isom x^*(\rho^{\mathfrak{m}})$, by which we identify the both sides. 
%Then, as the base change of the map $z^{\mathrm{Iw}, (p)}_{\Sigma, \zeta_{n}}(\rho^{\mathfrak{m}})$ with respect to the map $x_f^*$, we obtain an $E$-linear map 
% $$z^{\mathrm{Iw}, (p)}_{\Sigma, \zeta_{n}}(\rho^{\mathfrak{m}})(x_f) : 
% V'_1(f)_E\rightarrow H^1_{\mathrm{Iw}}(G_{\mathbb{Q}(\zeta_n), \Sigma_{n}}, V'_1(f)_E(1))$$
% for every $n\geqq 1$ such that $(n, \Sigma)=1$. On the other hands, we also have Kato's zeta homomorphism 
% $$\bold{z}_n^{(p)}(f) : V'_1(f)_E\rightarrow H^1_{\mathrm{Iw}}(G_{\mathbb{Q}(\zeta_n),\Sigma_{f,n}}, V'_1(f)_E(1)) $$
% for every such $n$ and $\Sigma_f:=\mathrm{prime}(N_f)\cup\{p\}$ (we remark that one has 
 %$\Sigma_f \subseteq \Sigma$). 
 For every prime $l\not=p$, we set 
 $$P_{f,l}(u):=\mathrm{det}(1-\mathrm{Frob}_l \cdot u\mid (\rho_f)^{I_l})=\begin{cases}1-a_{l}(f)u+l^{k-1}\varepsilon_f(l)u^2& \text{ if } l\not\in \Sigma_f \\
 1-a_l(f)u & \text{ if } l\in \Sigma_f\end{cases}$$ %which we also regard as 
 %an element in $E[u]$ by the fixed inclusions $\iota_{\infty} : \overline{\mathbb{Q}}\hookrightarrow \mathbb{C}$ and $\iota_p : \overline{\mathbb{Q}}\hookrightarrow \overline{\mathbb{Q}}_p$.
 
 The main result of this article is the following.
 \begin{thm}\label{3.2}%\item[(5)]$\overline{\rho}_p$ is not of the form $\begin{pmatrix}1& *\\ 0 & 1\end{pmatrix}
%\otimes \chi\,  $  for any character $\chi : G_{\mathbb{Q}_p}\rightarrow \mathbb{F}^{\times}$.
%\item[(4)]$\mathrm{End}_{\mathbb{F}[G_{\mathbb{Q}_p}]}(\overline{\rho}_p)=\mathbb{F}$. 
For each integer $n\geqq 1$ such that $(n, \Sigma)=1$, 
 one has an equality 
 $$ \bold{z}_{\Sigma, n}(f)=\prod_{l\in \Sigma_0}
 P_{f, l}(\sigma_l^{-1})\cdot  \bold{z}_{n}(f)$$
 under the isomorphism $(\ref{2a})$
 as a map from $V'_1(f)_E$ to $H^1_{\mathrm{Iw}}(\mathbb{Z}[1/\Sigma_n, \zeta_n], V'_1(f)_E(1))$ 
 
 \end{thm}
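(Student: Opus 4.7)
The plan is to reduce Theorem \ref{3.2} to the uniqueness characterization of Kato's zeta morphism $\bold{z}_n(f)$ recalled as Theorem \ref{4.5} in Appendix A, by checking that the specialization $\bold{z}_{\Sigma,n}(f)$ and the candidate $\prod_{l\in\Sigma_0}P_{f,l}(\sigma_l^{-1})\cdot\bold{z}_n(f)$ satisfy the same zeta values formula after projection onto the $f$-isotypic component. The argument will proceed in three steps, using as inputs Theorem \ref{2.15}, Proposition \ref{2.16}, Corollary \ref{1.6}, and the zeta values formula of Theorem \ref{2.1}.

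First, I would unwind the definition of $\bold{z}_{\Sigma,n}(\rho^{\mathfrak{m}})$: it is obtained from the $G_\Sigma$-equivariant morphism $z^{\mathrm{Iw}}_{\Sigma,n,\overline{\rho}}$ by (a) factoring through the $(\rho^{\mathfrak{m}})^*$-part via the identification $(\pi_p^{\mathfrak{m}})^*\cong\widetilde{P}\widehat{\otimes}_{R_p}\mathbb{T}_{\overline{\rho},\Sigma}$ of Corollary \ref{1.6} and the projectivity of $\widetilde{P}$ in $\mathfrak{C}(\mathcal{O})$, (b) applying $\Psi_{\Sigma_0}$ together with the fixed isomorphism $\Psi_{\Sigma_0}(\widetilde{\pi}_{\Sigma_0}^{\mathfrak{m}})\cong\mathbb{T}_{\overline{\rho},\Sigma}$, and (c) specializing along $x_f^*$. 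The crucial input for (a) after specialization is Paškūnas' reconstruction of the potentially semistable deformation rings of $\overline{\rho}_p$ in terms of $\widetilde{P}$ (to be recalled in §4.3): this identifies $\widetilde{P}\widehat{\otimes}_{R_p,x_f^*}E$ with the appropriate locally algebraic vectors in the $p$-adic Banach representation attached to $\rho_f|_{G_{\mathbb{Q}_p}}$, allowing one to compute the effect of (a) on explicit classes coming from Beilinson--Kato elements.

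Second, for a level $N=N_0p^m$ with $\mathrm{prime}(N_0p)=\Sigma$ large enough so that $f$ already appears in $H^1(Y(N),\mathcal{V}^*_{k/E})_{\overline{\rho}}$, I would use Proposition \ref{2.16} to identify the specialization of $z^{\mathrm{Iw}}_{\Sigma,n,\overline{\rho}}$ on the $(K^p(N_0),K_m)$-fixed part with the finite-level map $z^{\mathrm{Iw}}_{N,n,\overline{\rho}}(k,-)$, which is characterized via the dual exponential by the zeta values formula of Theorem \ref{2.1}. Projecting from $H^1(Y(N),\mathcal{V}^*_{k/E})_{\overline{\rho}}$ to the $f$-isotypic component $V'_2(f)_E\cong V'_1(f)_E\cong\rho_f^*$, the Euler factors $P_{f,l}(\sigma_l^{-1})$ for $l\in\Sigma_0$ will emerge from two distinct sources: at primes $l\in\Sigma_0\setminus\mathrm{prime}(N_f)$, from the Euler system norm relation of Corollary \ref{2.10}, used to pass from level $N$ to level prime to $l$; at primes $l\in\Sigma_0\cap\mathrm{prime}(N_f)$, from the discrepancy between the Whittaker-style functional $\Psi_l$ on $\widetilde{\pi}_l^{\mathfrak{m}}$ specialized at $x_f$ and the classical new-vector normalization on $\pi_l(f)$, which is governed by Helm's characterization of the family-side representation together with the classical local Langlands correspondence.

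Finally, since the Iwasawa cohomology $H^1_{\mathrm{Iw}}(\mathbb{Z}[1/\Sigma_n,\zeta_n],V'_1(f)_E(1))$ is $\Lambda_n$-torsion-free (Lemma \ref{2.2}), it suffices to verify the equality after multiplying by nonzero elements $(c^2-c^{2-j}\sigma_c)(c^2-c^{j-k+2}\sigma_c)$ and applying the dual exponential map; by the combination of Theorem \ref{2.1} and Theorem \ref{4.5}, both sides produce the same collection of zeta values $\frac{1}{(2\pi i)^{k-1}}Z_{\Sigma,np^m}(k-1)\cdot{}_c\delta_{N,np^m}(k,v)$, forcing the desired equality. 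The hard part will be step two at primes $l\in\Sigma_0$ dividing $N_f$: one must carefully reconcile the Whittaker-style family normalization implicit in $\Psi_l(\widetilde{\pi}_l^{\mathfrak{m}})\cong\mathbb{T}_{\overline{\rho},\Sigma}$ with the classical new-vector normalization built into $\bold{z}_n(f)$, and track the Hecke normalizations of §3.1.2 so as to produce exactly $P_{f,l}(\sigma_l^{-1})=1-a_l(f)\sigma_l^{-1}$ rather than a twist thereof. Once this local comparison is pinned down, the global equality follows from the characterization in Theorem \ref{4.5}.
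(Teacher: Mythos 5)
Your overall plan—reduce to the uniqueness characterization of Kato's zeta morphism (Corollary \ref{4.5}) by checking that both sides satisfy the same zeta-values formula after specialization, using Paškūnas' theorem and $\sigma^{\diamond}(\tau)$ to cut out the $f$-isotypic piece—is indeed the strategy of the paper, and your identification of Proposition \ref{2.16} and Corollary \ref{1.6} as the bridge between the abstract zeta morphism and the finite-level maps is on target. The paper's proof also constructs explicit realizations $V_3'(f)_A$, $V_3(f^*)_A$, $S_3(f^*)_A$ carrying a motivic structure, produces a comparison isomorphism $V_1'(f)_E\cong V_2'(f)_E\cong V_3'(f)_E$ compatible with those structures (through an intermediate $V_4'$, choosing elements $a\in F[G_p]$ and $b\in F[G_{\Sigma_0}]$ relating new vectors to the Whittaker/Hecke-type data), and then invokes uniqueness. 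So far so good.

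However, your account of \emph{where the Euler factors $\prod_{l\in\Sigma_0}P_{f,l}(\sigma_l^{-1})$ come from} is incorrect, and this is a genuine conceptual gap. You claim the factors at $l\in\Sigma_0\setminus\mathrm{prime}(N_f)$ emerge from Euler-system norm relations (Corollary \ref{2.10}), and the factors at $l\in\Sigma_0\cap\mathrm{prime}(N_f)$ from a discrepancy between the Whittaker normalization $\Psi_l(\widetilde{\pi}_l^{\mathfrak{m}})\cong\mathbb{T}_{\overline{\rho},\Sigma}$ and the classical new-vector normalization. Neither is what happens. Corollary \ref{2.10} concerns primes $l\not\in\Sigma$ that enter the auxiliary level $n$; it is irrelevant to primes in $\Sigma_0$, which are fixed throughout. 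And the Whittaker/new-vector comparison in the paper produces only a scalar ambiguity, which is absorbed when one demands compatibility of the two motivic structures on $V_2'(f)$ and $V_3'(f)$; it does \emph{not} produce Euler factors. The actual source of $\prod_{l\in\Sigma_0}P_{f,l}(\sigma_l^{-1})$ is purely a mismatch between the incomplete $L$-functions appearing in the two characterizations: the zeta-values formula governing $z^{\mathrm{Iw}}_{\Sigma_0,n,\overline{\rho}}(k,\sigma^{\diamond}(\tau),-)$ (ultimately via Theorem \ref{2.1}, Corollary \ref{2.4}, Proposition \ref{2.6}) uses the operator $Z_{\Sigma,np^m}(k-1)$, whose Euler product omits the factors at \emph{all} primes of $\Sigma$, whereas Kato's characterization of $\bold{z}_n(f)$ in Corollary \ref{4.5} involves $L_{\{p\},n}(f,\chi,k-1)$, which removes Euler factors only at $p$ (and $n$). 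The quotient of these two incomplete $L$-functions is exactly $\prod_{l\in\Sigma_0}P_{f,l}(\chi(l)l^{-(k-1)})$, which is the specialization of $\prod_{l\in\Sigma_0}P_{f,l}(\sigma_l^{-1})$. So the factor appears \emph{uniformly} for all $l\in\Sigma_0$ from this single mechanism; there is no division into two cases. Fixing this in your write-up would bring it in line with the paper's argument and close the gap.

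One smaller remark: you should cite Proposition \ref{3.7} explicitly rather than only Proposition \ref{2.16}. Proposition \ref{3.7} is precisely the statement that, under the isomorphism furnished by Theorem \ref{3.5}, the map $\mathrm{id}_{M^{\diamond}(k,\tau)}\otimes\bold{z}_{\Sigma,n}(\rho^{\mathfrak{m}})$ is identified with $\Psi_{\Sigma_0}(z^{\mathrm{Iw}}_{\Sigma_0,n,\overline{\rho}}(k,\sigma^{\diamond}(\tau),-))$; Proposition \ref{2.16} alone only compares $\mathrm{id}\otimes z^{\mathrm{Iw}}_{N_0p^{\infty},n,\overline{\rho}}$ with the finite-level $z^{\mathrm{Iw}}_{N,n,\overline{\rho}}(k,-)$ and does not incorporate the passage to $\bold{z}_{\Sigma,n}(\rho^{\mathfrak{m}})$ via $\widetilde{P}$ and $\Psi_{\Sigma_0}$.
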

 \begin{rem}
 Since one has $\mathrm{End}_{E[G_{\mathbb{Q}}]}(\rho_f)=E$, the validity of the equality in this theorem 
 does not depend on the choice of the isomorphism $(\ref{2a})$. 
 \end{rem}
  To prove this theorem, we need Pa\v{s}k\={u}nas' another result, which we recall in the next subsection.
  
  %\subsubsection{Recall of Kato's zeta element}
 % Let $F\subseteq \overline{\mathbb{Q}}$ be a finite extension of $\mathbb{Q}$. We see $F$ as a subfield of $\mathbb{C}$ (resp. $\overline{\mathbb{Q}}_p$) via the fixed inclusion $\iota_{\infty} : \overline{\mathbb{Q}}\hookrightarrow \mathbb{C}$ (resp. 
 % $\iota_p : \overline{\mathbb{Q}}\hookrightarrow \overline{\mathbb{Q}}_p$). We set $E$ to denote the closure of $F$ in $\overline{\mathbb{Q}}_p$. 
  
  %For our purpose, it is convenient to consider motives just as a collection of realizations. In this article, we define an $F$-motive $M$ over $\mathbb{Q}$ to be
 % a pair $M=(M_B, M_{\mathrm{dR}})$ of finite dimensional $F$-vector spaces having the following structures : 
 %   \begin{itemize}
% \item[(1)]$M_B$ is equipped with a $F$-linear action of $\mathrm{Gal}(\mathbb{C}/\mathbb{R})$. 
% \item[(2)]$M_{\mathrm{dR}}$ is equipped with a decreasing filtration $\{\mathrm{Fil}^{i}M_{\mathrm{dR}}\}_{i\in \mathbb{Z}}$ by sub $F$-vector spaces.
% \item[(3)]$M_p:=M_B\otimes_{F}E$ is equipped with a continuous $E$-linear action of $G_{\mathbb{Q}}$ such that 
% the inclusion $M_B\hookrightarrow M_p|_{G_{\mathbb{R}}} : x\mapsto x\otimes 1$ is $G_{\mathbb{R}}$-equivariant.
% \item[(4)]
%  \end{itemize}

  \subsection{Pa\v{s}k\={u}nas' result on potentially semi-stable deformation rings}
  Let $k\geqq 2$ be an integer, and fix an inertia type $\tau : I_p\rightarrow\mathrm{GL}_2(E)$, i.e. a continuous homomorphism with an open kernel. For a $2$-dimensional potentially semi-stable $E$-
  representation $V$ of $G_{\mathbb{Q}_p}$, we let $WD(V)$ be the associated $2$-dimensional $E$-representation of the 
  Weil-Deligne group of $\mathbb{Q}_p$. 
  We say that $V$ is of Galois type $\tau$ if $\tau\isom WD(V)|_{I_p}$. 
  \begin{thm}\label{3.3}
  There is a unique $($possibly trivial$)$ quotient $R^{\mathrm{st}}_{p}(k,\tau)$ $($resp. $R^{\mathrm{cr}}_{p}(k,\tau)$$)$ of $R_{p}$ with the following properties $:$ 
  \begin{itemize}
  \item[(1)]$R^{\mathrm{st}}_{p}(k,\tau)$ $($resp. $R^{\mathrm{cr}}_{p}(k,\tau)$$)$ is $p$-torsion free,
  $R^{\mathrm{st}}_{p}(k,\tau)[1/p]$ $($resp. $R^{\mathrm{cr}}_{p}(k,\tau)[1/p]$$)$ is 
  reduced.
  \item[(2)]if $E'$ is a finite extension of $E$, then an $\mathcal{O}$-algebra homomorphism $x : R_{p}\rightarrow E'$ 
  factors through $R^{\mathrm{st}}_{p}(k,\tau)$ $($resp. $R^{\mathrm{cr}}_{p}(k,\tau)$$)$ if and only if the corresponding 
  $E'$-representation $V_x$ is potentially semi-stable $($resp. potentially crystalline$)$ of Galois type $\tau$ with Hodge-Tate weights $\{-(k-1), 0\}$. 
  \end{itemize}
    
  %$p$-torsion free and reduced quotient $R_{\overline{\rho}}(k,\tau)$ of $R_{\overline{\rho}}$, 
 % such that for any finite local $E$ algebra $B$, and any $\mathcal{O}$-algebra homomorphism $x : R_{\overline{\rho}}\rightarrow B$, 
  %the representation of $G_{\mathbb{Q}_p}$ induced by $x$ is potentially semi-stable of Galois type $\tau$ with Hodge-Tate weights $-(k-1), 0$, 
  %if and only if $x$ factors through $R_{\overline{\rho}}(k,\tau)$ (resp. $R^{\mathrm{cr}}_{p}(k,\tau)$). 
  
  \end{thm}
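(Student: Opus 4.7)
The result is due to Kisin, with an alternative construction by Pa\v{s}k\={u}nas in \cite{Pas15} that will be the relevant one in the sequel.

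Uniqueness will be formal. Suppose $R_1$ and $R_2$ are two quotients of $R_p$ satisfying (1) and (2). Each $R_i[1/p]$ is a reduced Jacobson finite-type $E$-algebra, hence equals the intersection of its maximal ideals. By (2), these maximal ideals correspond in both cases to the same set of isomorphism classes of pairs $(E', V_x)$ with $V_x$ potentially semi-stable (resp. potentially crystalline) of Galois type $\tau$ and Hodge-Tate weights $\{-(k-1),0\}$. Hence $R_1[1/p] = R_2[1/p]$ as quotients of $R_p[1/p]$, and $p$-torsion freeness forces each $R_i$ to equal the image of $R_p$ in $R_i[1/p]$, so $R_1 = R_2$.

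For existence I would follow Kisin's construction: over a framed deformation ring $R_p^{\square}$, one constructs a projective resolution $\mathrm{Spec}(R_p^{?,\square}(k,\tau)) \to \mathrm{Spec}(R_p^{\square})$ parametrizing weakly admissible filtered $(\varphi,N)$-modules (with descent data) of the prescribed Hodge-Tate weights and Galois type, and takes its scheme-theoretic image in $\mathrm{Spec}(R_p^{\square})$; descent from the framed to the unframed ring is standard because the framing variables contribute smoothly. Alternatively, following Pa\v{s}k\={u}nas, one attaches to $(k,\tau)$ a locally algebraic $\mathrm{GL}_2(\mathbb{Z}_p)$-representation $\sigma^{\mathrm{la}}(k,\tau) := \mathrm{Sym}^{k-2}(E^2)\otimes \sigma(\tau)$, with $\sigma(\tau)$ the type attached to $\tau$ by the inertial local Langlands correspondence (and passed to the Henniart-Breuil type in the crystalline case), forms the $R_p$-module
\[
M(\sigma^{\mathrm{la}}) := \mathrm{Hom}^{\mathrm{cts}}_{\mathcal{O}[[\mathrm{GL}_2(\mathbb{Z}_p)]]}(\widetilde{P}, (\sigma^{\mathrm{la}})^d)^d,
\]
and defines $R_p^{?}(k,\tau)$ to be the image of $R_p$ in $\mathrm{End}_{R_p}(M(\sigma^{\mathrm{la}}))$. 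Agreement with condition (2) is then checked on $E'$-points via the $p$-adic local Langlands correspondence: a deformation is potentially semi-stable of the prescribed type iff the associated $\mathrm{GL}_2(\mathbb{Q}_p)$-representation contains $\sigma^{\mathrm{la}}(k,\tau)$ in its space of locally algebraic vectors in a prescribed way.

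The main obstacle I expect is reducedness of the generic fibre. In Kisin's framework this rests on a local model analysis for the moduli of $(\varphi,N)$-modules together with density of formally smooth points of prescribed type; in Pa\v{s}k\={u}nas' framework it reduces to showing that $M(\sigma^{\mathrm{la}})[1/p]$ is locally free of the expected rank over its support, which in turn uses the compatibility of the Banach-space families of $\mathrm{GL}_2(\mathbb{Q}_p)$-representations attached to $\widetilde{P}$ with the Emerton-Kisin characterization of potentially semi-stable points through their locally algebraic vectors. This compatibility on the $\mathrm{GL}_2(\mathbb{Z}_p)$-socle is the step that decisively uses the running assumptions on $\overline{\rho}_p$.
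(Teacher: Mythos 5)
Your proposal matches the paper's proof, which simply cites Kisin \cite{Ki08} (for the fixed-determinant case) together with \cite{CEG${}^{+}$18}, \S 6.1, for removing the determinant condition; your uniqueness argument and sketch of Kisin's construction are correct and essentially what those references establish. The Pa\v{s}k\={u}nas description you offer as an alternative construction is what the paper then recalls separately as Theorem \ref{3.5}, where it is used to re-express Kisin's ring via $\widetilde{P}$ rather than to establish its existence, so it is not part of the proof of the present statement.
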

  \begin{proof}
  This is proved by Kisin in \cite{Ki08}. Precisely, in \cite{Ki08}, he treated the deformations 
  with a fixed determinant. However, we can easily obtain the same result for general deformation 
  without any determinant condition. See, for example, \S 6.1 of \cite{CEG${}^{+}$18}.
 % he treated in this article 
 % Similar result of the deformations with a fixed determinant was defined by Kisin in \cite{Ki08}. 
 % For our deformation ring $R_{p}$ without any determinant condition, it is easily proved from the 
 % fixed determinatn case by the argument in \S
  \end{proof}
  \begin{rem}\label{3.4}
  By the characterization, $R^{\mathrm{cr}}_{p}(k,\tau)$ is a quotient of $R^{\mathrm{st}}_{p}(k,\tau)$. It is isomorphism except when 
  $\tau=\chi\oplus \chi$ for some character $\chi$. It is known that both $R^{\mathrm{cr}}_{p}(k,\tau)[1/p]$ and the complement 
  $\mathrm{Spec}(R^{\mathrm{st}}_{p}(k,\tau)[1/p])\setminus \mathrm{Spec}(R^{\mathrm{cr}}_{p}(k,\tau)[1/p])$ are regular by 
  Theorem 3.3.8 of \cite{Ki08} for $R^{\mathrm{cr}}_{p}(k,\tau)[1/p]$ and (the 
  proof of) Theorem A.2 of \cite{Ki09b} for $\mathrm{Spec}(R^{\mathrm{st}}_{p}(k,\tau)[1/p])\setminus \mathrm{Spec}(R^{\mathrm{cr}}_{p}(k,\tau)[1/p])$. 
  
  \end{rem}
  
  For any inertial type $\tau : I_p\rightarrow\mathrm{GL}_2(E)$, Henniart \cite{He02} shows that there exists a unique finite dimensional absolutely irreducible smooth $E$-representation
  $\sigma^{\mathrm{st}}(\tau)$ (resp. $\sigma^{\mathrm{cr}}(\tau)$) of $K_0:=\mathrm{GL}_2(\mathbb{Z}_p)$ satisfying the following property : 
   if $W$ is any Frobenius semi-simple, 
  $2$-dimensional $E$-representation of the Weil-Deligne group of $\mathbb{Q}_p$, and 
  $\pi(W)$ is the smooth $E$-representation of $G_p$ associated to $W$ by Tate's normalized
  local Langlands correspondence, then $\pi(W)|_{K_0}$% ($\widetilde{\pi}(W)$ is the smooth contragradient of $\pi(W)$) 
  contains $\sigma^{\mathrm{st}}(\tau)$ (resp. $\sigma^{\mathrm{cr}}(\tau)$) if and only if 
  $W|_{I_p}\isom \tau$ (resp. $W|_{I_p}\isom \tau$ and the monodromy on $W$ is zero). Moreover, it is proved that, for all $W$, one has 
  \begin{equation}
  \mathrm{dim}_{E}\mathrm{Hom}_{E[K_0]}(
  \sigma^{\diamond}(\tau), \pi(W)|_{K_0})\leqq 1
  \end{equation} for each $\diamond\in \{\mathrm{st}, \mathrm{cr}\}$.
  
  In \cite{Pas15}, Pa\v{s}k\={u}nas defined the quotients $R^{\mathrm{cr}}_{p}(k,\tau)$ and $R^{\mathrm{st}}_{p}(k,\tau)$ using the projective envelope 
  $\widetilde{P}\in \mathfrak{C}(\mathcal{O})$. For each $\diamond\in \{\mathrm{st}, \mathrm{cr}\}$, we fix a $K_0$-stable 
  $\mathcal{O}$-lattice $\sigma^{\diamond}_0(\tau)$ of $\sigma^{\diamond}(\tau)$. Then, we set 
  $$\sigma^{\diamond}_0(k, \tau):=\mathrm{Sym}^{k-2}(\mathcal{O}^2)^*\otimes_{\mathcal{O}}\sigma_0^{\diamond}(\tau)$$ which is naturally 
  a compact $\mathcal{O}[[K_0]]$-module with respect to the $\varpi$-adic topology on $\sigma^{\diamond}_0(k, \tau)$, 
  and $\sigma^{\diamond}(k, \tau):=\mathrm{Sym}^{k-2}(E^2)^*\otimes_{E}\sigma^{\diamond}(\tau)$. We set 
  $$M_0^{\diamond}(k,\tau):=\sigma_0^{\diamond}(k,\tau)\otimes_{\mathcal{O}[[K_0]]}\widetilde{P}$$
  on which the ring $R_p$ acts by $a(u\otimes v):=u\otimes av$ ($a\in R_p, u\in \sigma_0^{\diamond}(k,\tau), v\in \widetilde{P})$, where 
  we regard $\sigma_0^{\diamond}(k,\tau)$ as a right $\mathcal{O}[[K_0]]$-module by 
  $v\cdot g:=g^{-1}\cdot v$ ($g\in K_0, v\in \sigma_0^{\diamond}(k,\tau)$). 
  Then, $M_0^{\diamond}(k,\tau)$ is a compact $R_p$-module. We also 
  set $M^{\diamond}(k,\tau):=\sigma^{\diamond}(k,\tau)\otimes_{\mathcal{O}[[K_0]]}\widetilde{P}$.
  The crucial result for the proof of our main theorem is the following theorem. 
  
  \begin{thm}\label{3.5}
  Assume that $p\geqq 5$, $\mathrm{End}_{\mathbb{F}[G_{\mathbb{Q}_p}]}(\overline{\rho}_p)=\mathbb{F},$ and 
  $$\overline{\rho}_p\not\isom \begin{pmatrix} \overline{\varepsilon} & *\\ 0 & 1\end{pmatrix}\otimes \eta$$ for any charatcter $\eta$. 
  Then, as a quotient of $R_p$, the quotient $$R_p/\mathrm{Ann}_{R_p}(M_0^{\diamond}(k,\tau))$$ is equal to 
  the quotient $R_p^{\diamond}(k,\tau)$ for each $\diamond\in \{\mathrm{st}, \mathrm{cr}\}$. 
  
  Moreover, $M^{\mathrm{cr}}(k,\tau)$ is an 
  $R_p^{\mathrm{cr}}(k,t)[1/p]$-module which is locally free of rank one, and 
  $M^{\mathrm{st}}(k,\tau)$ is an $R^{\mathrm{st}}_p(k,t)[1/p]$-module which locally free of rank one over 
  $\mathrm{Spec}(R^{\mathrm{st}}_p(k,t)[1/p])\setminus \mathrm{Spec}(R^{\mathrm{cr}}_p(k,t)[1/p])$.
  \end{thm}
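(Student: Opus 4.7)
The plan is to follow Pa\v{s}k\={u}nas' strategy in \cite{Pas15} by computing the fibers of $M^{\diamond}(k,\tau)$ at the closed points of $\mathrm{Spec}(R_p[1/p])$ via the $p$-adic local Langlands correspondence. Under the three standing hypotheses, Pa\v{s}k\={u}nas' deformation theory (Proposition \ref{5.26}) identifies $\widetilde{P}$ with the $R_p$-linear dual $\pi_p^*$ of the universal deformation $\pi_p$ of $\overline{\pi}_p$. Specializing at a closed point $x : R_p \to E'$ with associated Galois representation $V_x$ therefore gives $\widetilde{P} \otimes_{R_p, x} E' \cong \pi_x^d$, the Schikhof dual of the unitary $E'$-Banach space representation $\pi_x$ attached to $V_x$.

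First I would compute the fiber
$$
M^{\diamond}(k,\tau)_x := M^{\diamond}(k,\tau) \otimes_{R_p, x} E' \cong \sigma^{\diamond}(k,\tau) \otimes_{E'[[K_0]]} \pi_x^d,
$$
and identify it with the $E'$-linear dual of $\mathrm{Hom}_{K_0}(\sigma^{\diamond}(k,\tau), \pi_x^{\mathrm{l.alg}})$, where $\pi_x^{\mathrm{l.alg}}$ denotes the subspace of locally algebraic vectors. By the compatibility results of Berger-Breuil, Colmez, and Emerton, $\pi_x^{\mathrm{l.alg}}$ is nonzero precisely when $V_x$ is potentially semi-stable with Hodge-Tate weights $\{-(k-1), 0\}$, and in that case it is isomorphic, as an $E'[G_p]$-module, to $\mathrm{Sym}^{k-2}((E')^2)^* \otimes_{E'} \pi(\mathrm{WD}(V_x))^{\mathrm{sm}}$. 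Combining this with Henniart's characterization of $\sigma^{\diamond}(\tau)$ and the multiplicity-one bound, I conclude that $\dim_{E'} M^{\diamond}(k,\tau)_x$ equals $1$ when $x$ factors through $R_p^{\diamond}(k,\tau)$ and vanishes otherwise.

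From this pointwise computation the closed points of $R_p/\mathrm{Ann}_{R_p}(M_0^{\diamond}(k,\tau))$ inside $\mathrm{Spec}(R_p[1/p])$ coincide with those of $R_p^{\diamond}(k,\tau)[1/p]$. Since $\widetilde{P}$ is pro-free over $\mathcal{O}$ (being the projective envelope in $\mathfrak{C}(\mathcal{O})$), so is $M_0^{\diamond}(k,\tau)$, hence $R_p/\mathrm{Ann}_{R_p}(M_0^{\diamond}(k,\tau))$ is $p$-torsion free. Using the reducedness of $R_p^{\diamond}(k,\tau)[1/p]$ from Theorem \ref{3.3} together with the Jacobson property, the agreement of closed points upgrades to equality of ideals after inverting $p$; the $p$-torsion freeness on both sides then yields the claimed equality as quotients of $R_p$ itself. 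For the local freeness statement, $M^{\diamond}(k,\tau)$ is finitely generated over its support and has constant fiber dimension one at every closed point of the relevant locus, so it is locally free of rank one on the regular loci $\mathrm{Spec}(R_p^{\mathrm{cr}}(k,\tau)[1/p])$ and $\mathrm{Spec}(R_p^{\mathrm{st}}(k,\tau)[1/p]) \setminus \mathrm{Spec}(R_p^{\mathrm{cr}}(k,\tau)[1/p])$ recorded in Remark \ref{3.4}, by the standard criterion that a finitely generated module of constant fiber rank over a regular noetherian ring is locally free.

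The main obstacle is the pointwise identification carried out in the second paragraph: the precise compatibility of Colmez's functor with classical local Langlands on locally algebraic vectors is the technical heart of the argument, and is also the source of the hypothesis excluding $\overline{\rho}_p$ of the form $\begin{pmatrix} \overline{\varepsilon} & * \\ 0 & 1 \end{pmatrix} \otimes \eta$, since for such $\overline{\rho}_p$ the category $\mathfrak{C}(\mathcal{O})$ has a more intricate block structure and the clean duality between $\widetilde{P}$ and $\pi_p$ underlying the fiber calculation breaks down. Granted this input from \cite{Pas13} and \cite{Pas15}, the remaining ring-theoretic arguments are essentially formal.
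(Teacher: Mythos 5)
Your proof takes a genuinely different route from the paper. The paper's proof of Theorem \ref{3.5} is a citation argument: it first translates $M_0^{\diamond}(k,\tau)$ into the form used by Pa\v{s}k\={u}nas via Remark 5.1.7 of \cite{GN16}, then quotes Corollary 6.5 of \cite{Pas15} for the fixed-determinant version, Proposition 6.12 of \cite{CEG${}^{+}$18} to drop the determinant condition, and Proposition 6.14 of \cite{CEG${}^{+}$18} together with Remark \ref{3.4} for local freeness. You instead attempt to re-derive the result from scratch by a fiber computation via $p$-adic local Langlands, which is a legitimate strategy but leaves a genuine gap in the commutative algebra.

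The problem is the step ``the agreement of closed points upgrades to equality of ideals after inverting $p$.'' Agreement of closed points, combined with the Jacobson property of $R_p[1/p]$ and the reducedness of $R_p^{\diamond}(k,\tau)[1/p]$, only shows that the kernel of $R_p[1/p]\rightarrow R_p^{\diamond}(k,\tau)[1/p]$ coincides with the \emph{radical} of $\mathrm{Ann}_{R_p[1/p]}(M^{\diamond}(k,\tau))$, not with the annihilator itself. To conclude equality you must also know that $R_p[1/p]/\mathrm{Ann}$ is reduced, and this is precisely the hard content of Corollary 6.5 of \cite{Pas15}. A toy example: take $R=E[x]$ and $M=E[x]/(x^2)$; the closed points of $\mathrm{Spec}(R/\mathrm{Ann}(M))$ coincide with those of the reduced quotient $E[x]/(x)$, and yet $\mathrm{Ann}(M)=(x^2)\neq(x)$. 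Pro-freeness of $\widetilde{P}$ over $\mathcal{O}$, which you invoke, gives only $p$-torsion freeness of the quotient and says nothing about reducedness after inverting $p$. What Pa\v{s}k\={u}nas actually uses is the much stronger projectivity of $\widetilde{P}$ as a module over the completed group algebra of a pro-$p$ congruence subgroup, leading to a Cohen--Macaulay structure on $M_0^{\diamond}(k,\tau)$ and to the Hilbert--Samuel multiplicity comparisons behind the Breuil--M\'ezard conjecture; none of this is formal. Relatedly, your closing remark that ``the remaining ring-theoretic arguments are essentially formal'' misidentifies where the difficulty lies: the pointwise fiber computation you flag as the obstacle is by now the better-understood half, whereas the infinitesimal (non-reduced) structure along the support is what makes Corollary 6.5 a theorem.
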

  \begin{proof}
  We first remark that one has a natural $R_p$-linear topological isomorphism 
  $$M_0^{\diamond}(k,\tau)\isom \mathrm{Hom}_{\mathcal{O}[[K_0]]}^{\mathrm{cont}}(\widetilde{P}, \sigma_0^{\diamond}(k,\tau)^*)^*$$
  by Remark 5.1.7 of \cite{GN16}. Then, the equality 
  $$R_p/\mathrm{Ann}_{R_p}(M_0^{\diamond}(k,\tau))=R_p^{\diamond}(k,\tau)$$
  for each $\diamond  \in \{\mathrm{st}, \mathrm{cr}\}$ follows from Corollary 6.5 of \cite{Pas15}. 
  Precisely, this corollary is also proved for the deformation rings with a fixed determinant condition. The general case 
  follows from this special case by Proposition 6.12 of \cite{CEG${}^{+}$18} 
  (precisely, this proposition is also proved when $\tau$ is trivial, but the proof is applied to any $\tau$). 
  
  The latter statement in the theorem is proved in Proposition 6.14 \cite{CEG${}^{+}$18} when $\tau$ is trivial. This case follows from 
  Proposition 4.14 and Corollary 6.5 of \cite{Pas15} and the fact that the ring $R_p^{\mathrm{cr}}(k,\tau)[1/p]$ is regular. Therefore, 
  the general case also follows from Remark \ref{3.4}. 
  
  \end{proof}
    
  \subsection{Proof of the main theorem}
  
  Let $\sigma$ be a smooth $E$-representation of $K_0$ on a finite dimensional $E$-vector space, and 
  $\mathcal{V}_{\sigma}$ be the local system on $Y(N)(\mathbb{C})$ (and $Y(N)_{\overline{\mathbb{Q}}}$) corresponding to $\sigma$ for each $N\geqq 3$. 
  We set $\mathcal{V}^*_{k/E}(\sigma):=\mathcal{V}^*_{k/E}\otimes_E\mathcal{V}_{\sigma}$, and 
  \begin{equation}
  H^1(K^p(N_0), \mathcal{V}^*_{k/E}):=\varinjlim_{m\geqq 0}H^1(Y(N_0p^m), \mathcal{V}^*_{k/E})
  \end{equation}
   for every $N_0\geqq 1$ such that $\mathrm{prime}(N_0)=\Sigma_0$. 
  
  We take a sufficiently large $m\geqq 1$ such that 
  $K_m$ acts trivially on $\sigma$. Then, we remark that one has a canonical isomorphism 
  \begin{multline}
  H^1(Y(N_0), \mathcal{V}^*_{k/E}(\sigma))\isom \sigma\otimes_{E[K_0/K_m]}H^1(Y(N_0p^m), \mathcal{V}^*_{k/E})\\
  \isom \mathrm{Hom}_{E[K_0]}(\sigma^*, H^1(K^p(N_0), \mathcal{V}^*_{k/E}))
  \end{multline}
 for every such $N_0\geqq 3$, by which we identify the both sides. If such $N_0$ is equal to $1$ or $2$, then we also 
 define 
 \begin{multline}
 H^1(Y(N_0), \mathcal{V}^*_{k/E}(\sigma)):=  \sigma\otimes_{E[K_0/K_m]}H^1(Y(N_0p^m), \mathcal{V}^*_{k/E})\\
 \isom 
 \mathrm{Hom}_{E[K_0]}(\sigma^*, H^1(K^p(N_0), \mathcal{V}^*_{k/E}))
 \end{multline}
 for some $m\geqq1$ such that $N^0p^m\geqq 3$. 
 The module $H^1(Y(N_0), \mathcal{V}^*_{k/E}(\sigma))$ is naturally a 
 $\mathbb{T}'(K_{\Sigma_0}(N_0))$-module, and we set 
 $$H^1(Y(N_0), \mathcal{V}^*_{k/E}(\sigma))_{\overline{\rho}}:=H^1(Y(N_0), \mathcal{V}^*_{k/E}(\sigma))\otimes_{\mathbb{T}'(K_{\Sigma_0}(N_0))}
 \mathbb{T}'(K_{\Sigma_0}(N_0))_{\overline{\rho}}.$$
  for each allowable $N_0$ with respect to  $\overline{\rho}$. Then, we define an $E$-linear map 
  \begin{equation}
  z^{\mathrm{Iw}}_{N_0, n, \overline{\rho}}(k, \sigma, -) : 
  H^1(Y(N_0), \mathcal{V}^*_{k/E}(\sigma))_{\overline{\rho}}(1)
  \rightarrow H^1_{\mathrm{Iw}}(\mathbb{Z}[1/\Sigma_n, \zeta_n], 
  H^1(Y(N_0), \mathcal{V}^*_{k/E}(\sigma))_{\overline{\rho}}(2))
  \end{equation}
  for each $n\geqq 1$ such that $(n, \Sigma)=1$ as the following composite : 
  \begin{multline}
  \sigma\otimes_{E[K_0/K_m]}H^1(Y(N_0p^m), \mathcal{V}^*_{k/E})_{\overline{\rho}}(1)\\
  \xrightarrow{\mathrm{id}_{\sigma}\otimes z^{\mathrm{Iw}}_{N_0p^m, n, \overline{\rho}}(k, -)}
  \sigma\otimes_{E[K_0/K_m]}H^1_{\mathrm{Iw}}(\mathbb{Z}[1/\Sigma_n, \zeta_n], 
  H^1(Y(N_0p^m), \mathcal{V}^*_{k/E})_{\overline{\rho}}(2))\\
  \xrightarrow{\mathrm{can}}
  H^1_{\mathrm{Iw}}(\mathbb{Z}[1/\Sigma_n, \zeta_n], 
   \sigma\otimes_{E[K_0/K_m]}H^1(Y(N_0p^m), \mathcal{V}^*_{k/E})_{\overline{\rho}}(2)).
  \end{multline}
  We remark that it is independent of the choice of $m$ by Lemma \ref{2.11} (1).
  
  We next set $H^1_{\Sigma}(\mathcal{V}^*_{k/E}):=\varinjlim_{N_0}H^1(K^p(N_0), \mathcal{V}^*_{k/E})$, and 
  $$H^1_{\Sigma_0}(\mathcal{V}^*_{k/E}(\sigma)):=\varinjlim_{N_0}H^1(Y(N_0), \mathcal{V}^*_{k/E}(\sigma))=\mathrm{Hom}_{E[K_0]}(\sigma\*, H^1_{\Sigma}(\mathcal{V}^*_{k/E}))
,$$
  where $N_0$ run through all the allowable ones with respect to  $\overline{\rho}$ such that $\mathrm{prime}(N_0)=\Sigma_0$. We also set 
  $$H^1_{\Sigma_0}(\mathcal{V}^*_{k/E}(\sigma))_{\overline{\rho}}:=\varinjlim_{N_0}H^1(Y(N_0), \mathcal{V}^*_{k/E}(\sigma))_{\overline{\rho}}
  =\mathrm{Hom}_{E[K_0]}(\sigma\*, H^1_{\Sigma}(\mathcal{V}^*_{k/E})_{\overline{\rho}})
,$$
  which is a $\mathbb{T}_{\overline{\rho}, \Sigma}[1/p]$-module with a smooth 
  $\mathbb{T}_{\overline{\rho}, \Sigma}$-linear $G_{\Sigma_0}$-action. By Lemma \ref{2.11} (2), we can take the inductive limit of the map 
  $z^{\mathrm{Iw}}_{N_0, n, \overline{\rho}}(k, \sigma, -)$ to obtain a map 
  \begin{equation}
  z^{\mathrm{Iw}}_{\Sigma_0, n, \overline{\rho}}(k, \sigma, -) : 
  H^1_{\Sigma_0}(\mathcal{V}^*_{k/E}(\sigma))_{\overline{\rho}}(1)
 \rightarrow H^1_{\mathrm{Iw}}(\mathbb{Z}[1/\Sigma_n, \zeta_n], H^1_{\Sigma_0}(\mathcal{V}^*_{k/E}(\sigma))_{\overline{\rho}}(2)).
 \end{equation}
  We can show that this map is $\mathbb{T}_{\overline{\rho}, \Sigma}[G_{\Sigma_0}]$-linear by the similar proof as that of Theorem \ref{2.15}. 
  In particular, we can apply the functor $\Psi_{\Sigma_0}$ to this map to obtain the following $\mathbb{T}_{\overline{\rho},\Sigma}$-linear map 
  \begin{multline}
  \Psi_{\Sigma_0}(z^{\mathrm{Iw}}_{\Sigma_0, n, \overline{\rho}}(k, \sigma, -)) : \Psi_{\Sigma_0}(H^1_{\Sigma_0}(\mathcal{V}^*_{k/E}(\sigma))_{\overline{\rho}})(1)\\
  \rightarrow \Psi_{\Sigma_0}(H^1_{\mathrm{Iw}}(\mathbb{Z}[1/\Sigma_n, \zeta_n], H^1_{\Sigma_0}(\mathcal{V}^*_{k/E}(\sigma))_{\overline{\rho}}(2)))
  \isom H^1_{\mathrm{Iw}}(\mathbb{Z}[1/\Sigma_n, \zeta_n],  \Psi_{\Sigma_0}(H^1_{\Sigma_0}(\mathcal{V}^*_{k/E}(\sigma))_{\overline{\rho}})(2)). 
  \end{multline}
  
  Now, let $\tau : I_p\rightarrow \mathrm{GL}_2(E)$ be an inertia type. We %(defined over $E\subseteq \overline{\mathbb{Q}}_p$), and 
  apply the definitions above to the $E$-representation $\sigma^{\diamond}(\tau)$ of $K_0$ for $\diamond\in \{\mathrm{st},\mathrm{crys}\}$. 
  As a corollary of Pa\v{s}k\={u}nas' result which we recall in the preceding subsection, we obtain the following corollary. 
  For each such $\tau$, $k\in \mathbb{Z}_{\geqq 2}$ and $\diamond\in \{\mathrm{st},\mathrm{crys}\}$, we set 
  $$\mathbb{T}^{\diamond}_{\overline{\rho}, \Sigma}(k, \tau):=\mathbb{T}_{\overline{\rho}, \Sigma}\otimes_{R_p}R^{\diamond}_p(k, \tau)[1/p].$$
  \begin{corollary}\label{3.6}
  For each allowable $N_0$ with respect to $\overline{\rho}$ such that $\mathrm{prime}(N_0)=\Sigma_0$, the action of $\mathbb{T}_{\overline{\rho}, \Sigma}[1/p]$ on 
  $H^1(Y(N_0), \mathcal{V}^*_{k/E}(\sigma^{\diamond}(\tau)))_{\overline{\rho}}$ factors through the quotient 
  $\mathbb{T}^{\diamond}_{\overline{\rho}, \Sigma}(k, \tau)$, 
  and there is a canonical 
  $\mathbb{T}^{\diamond}_{\overline{\rho}, \Sigma}(k, \tau)[G_{\mathbb{Q}}]$-linear isomorphism 
  \begin{equation}
  \sigma^{\diamond}(k,\tau)\otimes_{\mathcal{O}[[K_0]]}\widetilde{H}^{BM}_{1}(K^p(N_0))_{\overline{\rho}}\isom H^1(Y(N_0), \mathcal{V}^*_{k/E}(\sigma^{\diamond}(\tau)))_{\overline{\rho}}(1).
  \end{equation}
  Moreover, the inductive limit of this isomorphism with respect to all the allowable $N_0$ induces a 
  $\mathbb{T}^{\diamond}_{\overline{\rho}, \Sigma}(k, \tau)[G_{\mathbb{Q}}\times G_{\Sigma_0}]$-linear isomorphism 
  \begin{equation}
  \sigma^{\diamond}(k,\tau)\otimes_{\mathcal{O}[[K_0]]}\widetilde{H}^{BM}_{1, \overline{\rho}, \Sigma}
  \isom H^1_{\Sigma_0}(\mathcal{V}^*_{k/E}(\sigma^{\diamond}(\tau)))_{\overline{\rho}}(1).
  \end{equation}
  
  \end{corollary}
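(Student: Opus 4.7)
The plan is to combine the coefficient-homology identification of Lemma 1.11 with the dual refined local--global compatibility of Corollary 1.6 and Pa\v{s}k\={u}nas' description of the potentially semi-stable deformation ring via $\widetilde{P}$ (Theorem 3.5). Since $\sigma^{\diamond}(k,\tau) = \mathrm{Sym}^{k-2}(E^2)^* \otimes_E \sigma^{\diamond}(\tau)$ is of the form ``algebraic $\otimes$ smooth'' to which Lemma 1.11 applies, the first step is to take $K_p = K_0$ and $K^p = K^p(N_0)$ in that lemma to obtain a canonical $E[G_{\mathbb{Q}}]$-linear, Hecke-equivariant isomorphism
\begin{equation*}
\sigma^{\diamond}(k,\tau) \otimes_{\mathcal{O}[[K_0]]} \widetilde{H}^{BM}_1(K^p(N_0))[1/p] \isom H^1(Y(N_0), \mathcal{V}^*_{k/E}(\sigma^{\diamond}(\tau)))(1).
\end{equation*}
Localizing at the maximal ideal of $\mathbb{T}(K_{\Sigma_0}(N_0))$ attached to $\overline{\rho}$ --- which is picked out by an idempotent that commutes with $\sigma^{\diamond}(k,\tau) \otimes_{\mathcal{O}[[K_0]]} (-)$ since this idempotent acts trivially on $\sigma^{\diamond}(k,\tau)$ --- yields exactly the canonical isomorphism asserted in the corollary, without yet exploiting that the action of $\mathbb{T}_{\overline{\rho},\Sigma}[1/p]$ factors through $\mathbb{T}^{\diamond}_{\overline{\rho},\Sigma}(k,\tau)$.

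To prove that factoring, I would first take $K_{\Sigma_0}(N_0)$-invariants of the isomorphism in Corollary 1.6. The functor $(-)^{K_{\Sigma_0}(N_0)}$ is exact on smooth $G_{\Sigma_0}$-representations and commutes with the tensor factors $\widetilde{P} \widehat{\otimes}_{R_p} (\rho^{\mathfrak{m}})^*$ (which carry no $G_{\Sigma_0}$-action), so it produces
\begin{equation*}
\widetilde{H}^{BM}_1(K^p(N_0))_{\overline{\rho}} \isom \bigl(\widetilde{P} \widehat{\otimes}_{R_p} (\rho^{\mathfrak{m}})^*\bigr) \otimes_{\mathbb{T}_{\overline{\rho}, \Sigma}} (\pi^{\mathfrak{m}}_{\Sigma_0})^{K_{\Sigma_0}(N_0)}.
\end{equation*}
Applying $\sigma^{\diamond}(k,\tau) \otimes_{\mathcal{O}[[K_0]]}(-)$ and interchanging the order of tensoring --- legitimate because $K_0$ acts only on the $\widetilde{P}$-factor and commutes there with the $R_p$-action --- identifies the left-hand side with
\begin{equation*}
M^{\diamond}(k,\tau) \otimes_{R_p} (\rho^{\mathfrak{m}})^* \otimes_{\mathbb{T}_{\overline{\rho}, \Sigma}} (\pi^{\mathfrak{m}}_{\Sigma_0})^{K_{\Sigma_0}(N_0)}.
\end{equation*}
By Theorem 3.5 the $R_p$-action on $M^{\diamond}(k,\tau)$ factors through $R_p^{\diamond}(k,\tau)[1/p]$, so the annihilator $\mathrm{Ann}_{R_p}(M^{\diamond}(k,\tau))$ kills the entire tensor product; combined with the $\mathbb{T}_{\overline{\rho},\Sigma}[1/p]$-structure this exhibits the module as a $\mathbb{T}^{\diamond}_{\overline{\rho},\Sigma}(k,\tau)$-module, proving the factoring claim together with the first step.

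For the inductive-limit statement I would take $\varinjlim_{N_0}$ over allowable $N_0$ with $\mathrm{prime}(N_0) = \Sigma_0$. Compatibility of the isomorphisms of Lemma 1.11 and of Corollary 1.6 with the restriction transition maps is essentially built into their constructions; passing to the limit replaces $(\pi^{\mathfrak{m}}_{\Sigma_0})^{K_{\Sigma_0}(N_0)}$ by $\pi^{\mathfrak{m}}_{\Sigma_0}$ and equips the resulting module with the natural smooth $G_{\Sigma_0}$-action coming from that factor, yielding the required $G_{\Sigma_0}$-equivariance.

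The main technical obstacle will be the interchange of tensor operations in the second paragraph, namely the rigorous identification
\begin{equation*}
\sigma^{\diamond}(k,\tau) \otimes_{\mathcal{O}[[K_0]]} \bigl(\widetilde{P} \widehat{\otimes}_{R_p} (\rho^{\mathfrak{m}})^*\bigr) \isom M^{\diamond}(k,\tau) \otimes_{R_p} (\rho^{\mathfrak{m}})^*,
\end{equation*}
which requires controlling the interaction of the completed tensor product over $R_p$ with the $\mathcal{O}[[K_0]]$-coinvariants. I expect this to reduce to standard continuity/compactness arguments once one passes to a sufficiently small open pro-$p$ subgroup $K'_0 \subseteq K_0$ over which $\widetilde{P}$ becomes pro-free as $\mathcal{O}[[K'_0]]$-module (so that the coinvariant tensor becomes a finite, non-completed operation), together with Pontryagin duality converting the coinvariant tensor back into the Hom-presentation that Pa\v{s}k\={u}nas uses to define $M^{\diamond}(k,\tau)$.
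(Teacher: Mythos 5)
Your proposal is correct and follows essentially the same route as the paper: Lemma 1.11 gives the canonical isomorphism (and the localization at $\overline{\rho}$ commutes with $\sigma^{\diamond}(k,\tau)\otimes_{\mathcal{O}[[K_0]]}(-)$), then Corollary 1.6 and the identification $\sigma^{\diamond}(k,\tau)\otimes_{\mathcal{O}[[K_0]]}\widetilde{P}\isom M^{\diamond}(k,\tau)$ reduce the factoring claim to Theorem 3.5, and the inductive limit recovers the $G_{\Sigma_0}$-equivariance. Your closing paragraph flags a technical interchange of completed tensor and $\mathcal{O}[[K_0]]$-coinvariants that the paper passes over in silence, and your sketch via a small pro-$p$ subgroup and Pontryagin duality is a reasonable way to make it precise; otherwise the argument matches the paper's.
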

  \begin{proof}
  The existence of a canonical isomorphism follows from Lemma \ref{1.11}, whose $G_{\mathbb{Q}}$ and 
  $G_{\Sigma_0}$ equivariance also immediately follows from the definition of the isomorphism in Lemma \ref{1.11}. 
  Hence,  it remains to show that 
  $\sigma^{\diamond}(k,\tau)\otimes_{\mathcal{O}[[K_0]]}\widetilde{H}^{BM}_{1}(K^p(N_0))_{\overline{\rho}}$ is 
  $\mathbb{T}^{\diamond}_{\overline{\rho}, \Sigma}(k, \tau)$-module. 
  As a base change of a $\mathbb{T}_{\overline{\rho}, \Sigma}[G_{\mathbb{Q}}\times G_p]$-linear isomorphism 
  $$\widetilde{H}^{BM}_{1}(K^p(N_0))_{\overline{\rho}}\isom \widetilde{P}\widehat{\otimes}_{R_p}(\rho^{\mathfrak{m}})^*
  \otimes_{\mathbb{T}_{\overline{\rho}, \Sigma}}(\widetilde{\pi}^{\mathfrak{m}}_{\Sigma_0})^{K^p(N_0)}$$ which is obtained by 
  Corollary \ref{1.6}, we obtain a $\mathbb{T}_{\overline{\rho}, \Sigma}[1/p][G_{\mathbb{Q}}]$-linear isomorphism 
  \begin{multline*}
  \sigma^{\diamond}(k,\tau)\otimes_{\mathcal{O}[[K_0]]}\widetilde{H}^{BM}_{1}(K^p(N_0))_{\overline{\rho}}\isom 
  \sigma^{\diamond}(k,\tau)\otimes_{\mathcal{O}[[K_0]]}\widetilde{P}\widehat{\otimes}_{R_p}(\rho^{\mathfrak{m}})^*
  \otimes_{\mathbb{T}_{\overline{\rho}, \Sigma}}(\widetilde{\pi}^{\mathfrak{m}}_{\Sigma_0})^{K^p(N_0)}\\
  \isom M^{\diamond}(k,\tau)\otimes_{R_p}(\rho^{\mathfrak{m}})^*
  \otimes_{\mathbb{T}_{\overline{\rho}, \Sigma}}(\widetilde{\pi}^{\mathfrak{m}}_{\Sigma_0})^{K^p(N_0)}.
  \end{multline*}
  The last term is $\mathbb{T}^{\diamond}_{\overline{\rho}, \Sigma}(k,\tau)$-module by Theorem \ref{3.5}, 
  which proves the corollary.

  \end{proof}
  
  \begin{prop}\label{3.7}
  There exists a $\mathbb{T}^{\diamond}_{\overline{\rho}, \Sigma}(k,\tau)[G_{\mathbb{Q}}]$-linear isomorphism 
  $$M^{\diamond}(k,\tau)\otimes_{R_p}(\rho^{\mathfrak{m}})^*\isom 
   \Psi_{\Sigma_0}(H^1_{\Sigma_0}(\mathcal{V}^*_{k/E}(\sigma^{\diamond}(\tau)))_{\overline{\rho}})(1)$$ such that, under this isomorphism, the map 
  $\Psi_{\Sigma_0}(z^{\mathrm{Iw}}_{\Sigma_0, n, \overline{\rho}}(k, \sigma^{\diamond}(\tau), -))$ 
  corresponds to the following composite $:$ 
  \begin{multline}
  \mathrm{id}_{M^{\diamond}(k,\tau)}\otimes \bold{z}_{\Sigma, n}(\rho^{\mathfrak{m}}) : 
  M^{\diamond}(k,\tau)\otimes_{R_p}(\rho^{\mathfrak{m}})^* \\
   \xrightarrow{\mathrm{id}_{M^{\diamond}(k,\tau)}\otimes \bold{z}_{\Sigma, n}(\rho^{\mathfrak{m}})} M^{\diamond}(k,\tau)\otimes_{R_p}
   H^1_{\mathrm{Iw}}(\mathbb{Z}[1/\Sigma_n, \zeta_n], (\rho^{\mathfrak{m}})^*(1))\\
   \xrightarrow{\mathrm{can}}
   H^1_{\mathrm{Iw}}(\mathbb{Z}[1/\Sigma_n, \zeta_n], M^{\diamond}(k,\tau)\otimes_{R_p}(\rho^{\mathfrak{m}})^*(1)),
   \end{multline}
   which we also denote by  the same letter $\mathrm{id}_{M^{\diamond}(k,\tau)}\otimes \bold{z}_{\Sigma, n}(\rho^{\mathfrak{m}})$. 
  \end{prop}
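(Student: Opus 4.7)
The plan is to build the required isomorphism as a composite of two previously established identifications, then verify compatibility of the two morphisms by tracing through their definitions and applying the exact functor $\Psi_{\Sigma_0}$.

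First I would combine the inductive limit version of Corollary~\ref{3.6} with Corollary~\ref{1.6}. Corollary~\ref{3.6} supplies a $\mathbb{T}^{\diamond}_{\overline{\rho},\Sigma}(k,\tau)[G_{\mathbb{Q}}\times G_{\Sigma_0}]$-linear isomorphism
$$\sigma^{\diamond}(k,\tau)\otimes_{\mathcal{O}[[K_0]]}\widetilde{H}^{BM}_{1,\overline{\rho},\Sigma}\isom H^1_{\Sigma_0}(\mathcal{V}^*_{k/E}(\sigma^{\diamond}(\tau)))_{\overline{\rho}}(1),$$
while Corollary~\ref{1.6} identifies $\widetilde{H}^{BM}_{1,\overline{\rho},\Sigma}$ with $(\widetilde{P}\widehat{\otimes}_{R_p}(\rho^{\mathfrak{m}})^*)\otimes_{\mathbb{T}_{\overline{\rho},\Sigma}}\widetilde{\pi}^{\mathfrak{m}}_{\Sigma_0}$. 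Composing and using the definition $M^{\diamond}(k,\tau)=\sigma^{\diamond}(k,\tau)\otimes_{\mathcal{O}[[K_0]]}\widetilde{P}$ would produce
$$H^1_{\Sigma_0}(\mathcal{V}^*_{k/E}(\sigma^{\diamond}(\tau)))_{\overline{\rho}}(1)\isom M^{\diamond}(k,\tau)\otimes_{R_p}(\rho^{\mathfrak{m}})^*\otimes_{\mathbb{T}_{\overline{\rho},\Sigma}}\widetilde{\pi}^{\mathfrak{m}}_{\Sigma_0}.$$
Applying the exact functor $\Psi_{\Sigma_0}$, which commutes with tensor products over $\mathbb{T}_{\overline{\rho},\Sigma}$, and using the co-Whittaker identification $\Psi_{\Sigma_0}(\widetilde{\pi}^{\mathfrak{m}}_{\Sigma_0})\cong \mathbb{T}_{\overline{\rho},\Sigma}$ recorded in \S 2.2, yields the desired $\Psi_{\Sigma_0}(H^1_{\Sigma_0}(\mathcal{V}^*_{k/E}(\sigma^{\diamond}(\tau)))_{\overline{\rho}})(1)\isom M^{\diamond}(k,\tau)\otimes_{R_p}(\rho^{\mathfrak{m}})^*$.

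Next I would verify compatibility of the maps. By Proposition~\ref{2.16}, at each finite level $N=N_0p^m$ the map $z^{\mathrm{Iw}}_{N,n,\overline{\rho}}(k,-)$ coincides with $\mathrm{id}\otimes z^{\mathrm{Iw}}_{N_0p^{\infty},n,\overline{\rho}}$ under the identification of Lemma~\ref{1.10}; taking the inductive limit over allowable $N_0$ shows that $z^{\mathrm{Iw}}_{\Sigma_0,n,\overline{\rho}}(k,\sigma^{\diamond}(\tau),-)$ corresponds, via the isomorphism constructed above, to $\mathrm{id}_{\sigma^{\diamond}(k,\tau)}\otimes z^{\mathrm{Iw}}_{\Sigma,n,\overline{\rho}}$. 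Now I invoke the construction of $\bold{z}_{\Sigma,n}(\rho^{\mathfrak{m}})$ from \S 4.1: the map $z^{\mathrm{Iw}}_{\Sigma,n,\overline{\rho}}$ factors as $\mathrm{id}_{(\pi^{\mathfrak{m}}_p)^*}\otimes \phi_2$ for a unique continuous $\mathbb{T}_{\overline{\rho},\Sigma}[G_{\Sigma_0}]$-linear $\phi_2$, and $\bold{z}_{\Sigma,n}(\rho^{\mathfrak{m}})$ is then obtained from $\phi_3=\Psi_{\Sigma_0}(\phi_2)$ after identifying $\Psi_{\Sigma_0}(\widetilde{\pi}^{\mathfrak{m}}_{\Sigma_0})\cong \mathbb{T}_{\overline{\rho},\Sigma}$. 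Applying $\Psi_{\Sigma_0}$ to the equality $\mathrm{id}_{\sigma^{\diamond}(k,\tau)}\otimes z^{\mathrm{Iw}}_{\Sigma,n,\overline{\rho}}=\mathrm{id}_{\sigma^{\diamond}(k,\tau)}\otimes \mathrm{id}_{(\pi^{\mathfrak{m}}_p)^*}\otimes \phi_2$ and invoking $\sigma^{\diamond}(k,\tau)\otimes_{\mathcal{O}[[K_0]]}\widetilde{P}=M^{\diamond}(k,\tau)$ would give precisely $\mathrm{id}_{M^{\diamond}(k,\tau)}\otimes \bold{z}_{\Sigma,n}(\rho^{\mathfrak{m}})$.

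The main obstacle I anticipate is the bookkeeping in the compatibility step: I must confirm that $\Psi_{\Sigma_0}$ commutes both with the tensor products over $\mathcal{O}[[K_0]]$ appearing in the source and with the Iwasawa cohomology functor appearing in the target, and that the several identifications furnished by Corollary~\ref{3.6}, Corollary~\ref{1.6}, Lemma~\ref{1.10}, and the uniqueness assertion of Lemma~\ref{1.7}/the definition of $\phi_2$ are mutually coherent (i.e.\ no unaccounted-for twist is introduced when passing through Emerton's isomorphism versus the direct identification of completed homology with coefficients). Once these coherences are verified, the proposition follows as a formal consequence of the two isomorphisms above and the uniqueness property characterizing $\phi_2$ and hence $\bold{z}_{\Sigma,n}(\rho^{\mathfrak{m}})$.
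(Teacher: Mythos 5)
Your proposal follows essentially the same route as the paper's proof: you assemble the isomorphism from Corollary~\ref{1.6} (Emerton's isomorphism in the $\widetilde P$-form), Corollary~\ref{3.6}, the base-change compatibility of $\Psi_{\Sigma_0}$, and the co-Whittaker identification $\Psi_{\Sigma_0}(\widetilde{\pi}^{\mathfrak{m}}_{\Sigma_0})\cong\mathbb{T}_{\overline{\rho},\Sigma}$, then verify compatibility of the two zeta morphisms by combining Proposition~\ref{2.16} (to pass from $z^{\mathrm{Iw}}_{\Sigma_0,n,\overline{\rho}}(k,\sigma^{\diamond}(\tau),-)$ to $\mathrm{id}_{\sigma^{\diamond}(k,\tau)}\otimes z^{\mathrm{Iw}}_{\Sigma,n,\overline{\rho}}$) with the unwinding of $\bold z_{\Sigma,n}(\rho^{\mathfrak m})$ through $\phi_2$ and $\Psi_{\Sigma_0}$. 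This matches the paper's decomposition of the isomorphism into steps $(a)$--$(d)$ and its two-sided comparison via $z^{\mathrm{Iw}}_{\Sigma,n,\overline{\rho}}$, so the argument is correct and not a genuinely different proof.
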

  \begin{proof}
  We first define an isomorphism $$M^{\diamond}(k,\tau)\otimes_{R_p}(\rho^{\mathfrak{m}})^*\isom 
   \Psi_{\Sigma_0}(H^1_{\Sigma_0}(\mathcal{V}^*_{k/E}(\sigma^{\diamond}(\tau)))_{\overline{\rho}})(1)$$ as the composite of the following isomorphisms : 
   \begin{multline*}
   M^{\diamond}(k,\tau)\otimes_{R_p}(\rho^{\mathfrak{m}})^*\xrightarrow{(a)}  M^{\diamond}(k,\tau)\otimes_{R_p}(\rho^{\mathfrak{m}})^*\otimes_{\mathbb{T}_{\overline{\rho}, \Sigma}} \Psi_{\Sigma_0}(\widetilde{\pi}^{\mathfrak{m}}_{\Sigma_0})
   \\ \xrightarrow{(b)}\Psi_{\Sigma_0}(M^{\diamond}(k,\tau)\otimes_{R_p}(\rho^{\mathfrak{m}})^*\otimes_{\mathbb{T}_{\overline{\rho}, \Sigma}}
   \widetilde{\pi}^{\mathfrak{m}}_{\Sigma_0})
   =\Psi_{\Sigma_0}((\sigma^{\diamond}(k, \tau)
   \otimes_{\mathcal{O}[[K_0]]}\widetilde{P})\otimes_{R_p}(\rho^{\mathfrak{m}})^*\otimes_{\mathbb{T}_{\overline{\rho}, \Sigma}}
   \widetilde{\pi}^{\mathfrak{m}}_{\Sigma_0})\\
   =\Psi_{\Sigma_0}(\sigma^{\diamond}(k,\tau)
   \otimes_{\mathcal{O}[[K_0]]}(\widetilde{P}\widehat{\otimes}_{R_p}(\rho^{\mathfrak{m}})^*)\otimes_{\mathbb{T}_{\overline{\rho}, \Sigma}}
   \widetilde{\pi}^{\mathfrak{m}}_{\Sigma_0})
   \xrightarrow{(c)}
   \Psi_{\Sigma_0}(\sigma^{\diamond}(k,\tau)\otimes_{\mathcal{O}[[K_0]]}\widetilde{H}^{BM}_{1,\overline{\rho}, \Sigma})\\
   \xrightarrow{(d)}\Psi_{\Sigma_0}(H^1_{\Sigma_0}(\mathcal{V}^*_{k/E}(\sigma^{\diamond}(\tau)))_{\overline{\rho}}(1)),
   \end{multline*}
   where the isomorphism $(a)$ is induced by a $\mathbb{T}_{\overline{\rho}, \Sigma}$-linear isomorphism 
   $\mathbb{T}_{\overline{\rho}, \Sigma}\isom \Psi_{\Sigma_0}(\widetilde{\pi}^{\mathfrak{m}}_{\Sigma_0})$, $(b)$ is induced by 
   the canonical base change isomorphism for the functor $\Psi_{\Sigma_0}$, $(c)$ is induced by 
   a $\mathbb{T}_{\overline{\rho}, \Sigma}[G_{\mathbb{Q}}\times G_{\Sigma}]$-linear topological isomorphism 
   $$\widetilde{H}^{BM}_{1,\overline{\rho}, \Sigma}\isom (\widetilde{P}\widehat{\otimes}_{R_p}(\rho^{\mathfrak{m}})^*)\otimes_{\mathbb{T}_{\overline{\rho}, \Sigma}}
   \widetilde{\pi}^{\mathfrak{m}}_{\Sigma_0},$$ and $(d)$ is induced by Corollary \ref{3.6}. 
   
   %the canonical $\mathbb{T}_{\overline{\rho}, \Sigma}[G_{\Sigma_0}]$-lineaar isomorphism 
  % $$\sigma^{\diamond}(k,\tau)\otimes_{\mathcal{O}[[K_0]]}
 % \widetilde{H}^{BM}_{1,\overline{\rho}, \Sigma}\isom H^1_{\Sigma_0}(\mathcal{V}^*_{k/E}(\sigma^{\diamond}(\tau)))_{\overline{\rho}}(1)$$
 % obtained by Lemma \ref{1.11}. 
   
  By definition of the map $\bold{z}_{\Sigma, n}(\rho^{\mathfrak{m}})$, 
  under the composite of $(a), (b)$ and $(c)$, the map 
  $\mathrm{id}_{M^{\diamond}(k,\tau)}\otimes \bold{z}_{\Sigma, n}(\rho^{\mathfrak{m}})$ corresponds to the following composite : 
  \begin{multline*}
 \Psi_{\Sigma_0}(\sigma^{\diamond}(k,\tau)\otimes_{\mathcal{O}[[K_0]]}\widetilde{H}^{BM}_{1,\overline{\rho}, \Sigma})\\
   \xrightarrow{\Psi_{\Sigma_0}(\mathrm{id}_{\sigma^{\diamond}(k,\tau)}\otimes z^{\mathrm{Iw}}_{\Sigma, n, \overline{\rho}})}
  \Psi_{\Sigma_0}(\sigma^{\diamond}(k,\tau)\otimes_{\mathcal{O}[[K_0]]}
  H^1_{\mathrm{Iw}}(\mathbb{Z}[1/\Sigma_n, \zeta_n], \widetilde{H}^{BM}_{1,\overline{\rho}, \Sigma})(1))\\
  \xrightarrow{\mathrm{can}} H^1_{\mathrm{Iw}}(\mathbb{Z}[1/\Sigma_n, \zeta_n], \Psi_{\Sigma_0}
  (\sigma^{\diamond}(k,\tau)\otimes_{\mathcal{O}[[K_0]]}
  \widetilde{H}^{BM}_{1,\overline{\rho}, \Sigma})(1)). 
  \end{multline*}
  
  Finally, by Proposition \ref{2.16}, the map $z^{\mathrm{Iw}}_{\Sigma_0, n, \overline{\rho}}(k, \sigma^{\diamond}(\tau), -)$ corresponds to the following composite : 
  \begin{multline*}
   \sigma^{\diamond}(k,\tau)\otimes_{\mathcal{O}[[K_0]]}\widetilde{H}^{BM}_{1,\overline{\rho}, \Sigma}\xrightarrow{\mathrm{id}_{\sigma^{\diamond}(k,\tau)}\otimes z^{\mathrm{Iw}}_{\Sigma, n, \overline{\rho}}} 
    \sigma^{\diamond}(k,\tau)\otimes_{\mathcal{O}[[K_0]]} H^1_{\mathrm{Iw}}
    (\mathbb{Z}[1/\Sigma_n, \zeta_n], \widetilde{H}^{BM}_{1,\overline{\rho}, \Sigma})(1))\\
    \xrightarrow{\mathrm{can}}
    H^1_{\mathrm{Iw}}
    (\mathbb{Z}[1/\Sigma_n, \zeta_n], \sigma^{\diamond}(k,\tau)\otimes_{\mathcal{O}[[K_0]]} \widetilde{H}^{BM}_{1,\overline{\rho}, \Sigma})(1))
    \end{multline*}
    under the canonical isomorphism 
    $\sigma^{\diamond}(k,\tau)\otimes_{\mathcal{O}[[K_0]]}
  \widetilde{H}^{BM}_{1,\overline{\rho}, \Sigma}\isom H^1_{\Sigma_0}(\mathcal{V}^*_{k/E}(\sigma^{\diamond}(\tau)))_{\overline{\rho}}(1)$, which 
  proves the proposition.

  %By definition of the map $z^{\mathrm{Iw}, (p)}_{\Sigma, \zeta_{n}}(\rho^{\mathfrak{m}})$, the map $\mathrm{id}_{M^{\diamond}(k,\tau)}\otimes z^{\mathrm{Iw}, (p)}_{\Sigma, \zeta_{n}}(\rho^{\mathfrak{m}})$ corresponds to the following map 
 % \begin{multline*}
 % \mathrm{id}_{M^{\diamond}(k,\tau)}\otimes \Psi_{\Sigma_0}(\phi_2) : 
 % M^{\diamond}(k,\tau)\otimes_{R_p}\Psi_{\Sigma_0}((\rho^{\mathfrak{m}})^*(1)\otimes_{\mathbb{T}_{\overline{\rho}, \Sigma}}
 % \widetilde{\pi}^{\mathfrak{m}}_{\Sigma_0})\\
 % \rightarrow{} H^1_{\mathrm{Iw}}(G_{\mathbb{Q}(\zeta_n), \Sigma_n}, M^{\diamond}(k,\tau)\otimes_{R_p}\Psi_{\Sigma_0}((\rho^{\mathfrak{m}})^*(2)\otimes_{\mathbb{T}_{\overline{\rho}, \Sigma}}
 % \widetilde{\pi}^{\mathfrak{m}}_{\Sigma_0}))
 % \end{multline*}
 % under an isomorphism $$\Psi_{\Sigma_0}((\rho^{\mathfrak{m}})^*\otimes_{\mathbb{T}_{\overline{\rho}, \Sigma}}
 % \widetilde{\pi}^{\mathfrak{m}}_{\Sigma_0})\isom (\rho^{\mathfrak{m}})^*\otimes_{\mathbb{T}_{\overline{\rho}, \Sigma}}
 % \Psi_{\Sigma_0}(\widetilde{\pi}^{\mathfrak{m}}_{\Sigma_0})\isom (\rho^{\mathfrak{m}})^*.$$  
  \end{proof}
  We finally prove our main theorem. 
  \begin{proof} (of Theorem \ref{3.2})
  Let $x_f\in \mathrm{Spec}(\mathbb{T}_{\overline{\rho},\Sigma})(E)$ be a modular point as in \S 4.2.1. 
  We write $\tau : I_p\rightarrow \mathrm{GL}_2(E)$ to denote the Galois type of $\rho_f|_{G_{\mathbb{Q}_p}}$. 
  We take $\diamond=\mathrm{cr}$ if $\rho_f|_{G_{\mathbb{Q}_p}}$ is potentially crystalline, and 
  $\diamond=\mathrm{st}$ if not. Then, the map 
  $x_f^* : \mathbb{T}_{\overline{\rho},\Sigma}[1/p]\rightarrow E$ corresponding to $x_f$ factors through 
  the quotient $\mathbb{T}^{\diamond}_{\overline{\rho},\Sigma}(k,\tau)$ by Theorem \ref{3.3}. We write 
  $\mathfrak{m}_f\subset \mathbb{T}_{\overline{\rho},\Sigma}[1/p]$ to denote the kernel 
  of the induced map $x^*_f$. Then, the localization $(M^{\diamond}(k,\tau)\otimes_{R_p}
  \mathbb{T}^{\diamond}_{\overline{\rho}, \Sigma})_{\mathfrak{m}_f}$ of $M^{\diamond}(k,\tau)\otimes_{R_p}\mathbb{T}^{\diamond}_{\overline{\rho}, \Sigma}$ 
  at $\mathfrak{m}_f$ is a free $\mathbb{T}^{\diamond}_{\overline{\rho},\Sigma}(k,\tau)_{\mathfrak{m}_f}$-module of rank one 
  by Theorem \ref{3.5}, and we fix a $\mathbb{T}^{\diamond}_{\overline{\rho},\Sigma}(k,\tau)_{\mathfrak{m}_f}$-linear isomorphism 
  $$(M^{\diamond}(k,\tau)\otimes_{R_p}
  \mathbb{T}^{\diamond}_{\overline{\rho}, \Sigma})_{\mathfrak{m}_f}\isom \mathbb{T}^{\diamond}_{\overline{\rho},\Sigma}(k,\tau)_{\mathfrak{m}_f}.$$ 
  
  By this isomorphism, the map 
  $$\bold{z}_{\Sigma, n}(f) : 
 \rho_f^*\rightarrow H^1_{\mathrm{Iw}}(\mathbb{Z}[1/\Sigma_n, \zeta_n], \rho_f^*(1))$$ corresponds 
  to the base change by the map $x_f^* : \mathbb{T}_{\overline{\rho}, \Sigma}[1/p]\rightarrow E$ 
  of the map 
 \begin{equation}\label{3a}  \mathrm{id}_{M^{\diamond}(k,\tau)}\otimes \bold{z}_{\Sigma, n}(\rho^{\mathfrak{m}}) : 
  M^{\diamond}(k,\tau)\otimes_{R_p}(\rho^{\mathfrak{m}})^*\rightarrow   H^1_{\mathrm{Iw}}(\mathbb{Z}[1/\Sigma_n, \zeta_n], M^{\diamond}(k,\tau)\otimes_{R_p}(\rho^{\mathfrak{m}})^*(1)).
  \end{equation}
  
  By Proposition \ref{3.7}, the map (\ref{3a}) also corresponds to the base change by $x^*_f$
  of the following map 
  \begin{multline}
  \Psi_{\Sigma_0}(z^{\mathrm{Iw}}_{\Sigma_0, n, \overline{\rho}}(k, \sigma^{\diamond}(\tau), -)) :  \Psi_{\Sigma_0}(H^1_{\Sigma_0}(\mathcal{V}^*_{k/E}(\sigma^{\diamond}(\tau)))_{\overline{\rho}})(1)\\
  \rightarrow H^1_{\mathrm{Iw}}(\mathbb{Z}[1/\Sigma_n, \zeta_n],  \Psi_{\Sigma_0}(H^1_{\Sigma_0}(\mathcal{V}^*_{k/E}(\sigma^{\diamond}(\tau)))_{\overline{\rho}})(2)). 
   \end{multline}
   
   Therefore, it suffices to compare this base change with Kato's zeta morphism 
   $$\prod_{l\in \Sigma_0}
 P_{f, l}(\sigma_l^{-1})\cdot \bold{z}_{n}(f) : V'_1(f)_E\rightarrow H^1_{\mathrm{Iw}}(\mathbb{Z}[1/\Sigma_{f,n}, \zeta_n], V'_1(f)_E(1)).$$
 
 For this purpose, %we first equip $\Psi_{\Sigma_0}(H^1_{\Sigma_0}(\mathcal{V}^*_{k/E}(\sigma^{\diamond}(\tau)))_{\overline{\rho}})$ with a motivic structure. 
 we first remark that the composite 
 $$ H^1_{\Sigma_0}(\mathcal{V}^*_{k/E}(\sigma^{\diamond}(\tau)))[f]\rightarrow  
 H^1_{\Sigma_0}(\mathcal{V}^*_{k/E}(\sigma^{\diamond}(\tau)))_{\overline{\rho}}\otimes_{\mathbb{T}_{\overline{\rho}, \Sigma}, x^*_f}E$$
 of the inclusion $H^1_{\Sigma_0}(\mathcal{V}^*_{k/E}(\sigma^{\diamond}(\tau)))[f]\hookrightarrow H^1_{\Sigma_0}(\mathcal{V}^*_{k/E}(\sigma^{\diamond}(\tau)))$
  with the quotient map $H^1_{\Sigma_0}(\mathcal{V}^*_{k/E}(\sigma^{\diamond}(\tau)))\rightarrow 
  H^1_{\Sigma_0}(\mathcal{V}^*_{k/E}(\sigma^{\diamond}(\tau)))_{\overline{\rho}}\otimes_{\mathbb{T}_{\overline{\rho}, \Sigma}, x^*_f}E$ 
  is isomorphism, 
  where we set 
 \begin{multline}
 H^1_{\Sigma_0}(\mathcal{V}^*_{k/E}(\sigma^{\diamond}(\tau)))[f]:=\{v\in H^1_{\Sigma_0}(\mathcal{V}^*_{k/E}(\sigma^{\diamond}(\tau))) \mid \\
 T'_lv=a_lv, \ \ S'_lx=l^{k-2}\varepsilon(l)v \text{ for any } l\not\in \Sigma\}.
 \end{multline}
 %since the cuspidal part of $H^1_{\Sigma_0}(\mathcal{V}^*_{k/E}(\sigma^{\diamond}(\tau)))$ is a 
 %sum of irreducible Hecke modules by Langlands' theorem ?.
Applying the functor $\Psi_{\Sigma_0}$, we obtain the following isomorphism 
 \begin{multline}
\Psi_{\Sigma_0}(H^1_{\Sigma_0}(\mathcal{V}^*_{k/E}(\sigma^{\diamond}(\tau)))[f])
 \isom  \Psi_{\Sigma_0}(H^1_{\Sigma_0}(\mathcal{V}^*_{k/E}(\sigma^{\diamond}(\tau)))_{\overline{\rho}}\otimes_{\mathbb{T}_{\overline{\rho}, \Sigma}, x^*_f}E)\\
\isom \Psi_{\Sigma_0}(H^1_{\Sigma_0}(\mathcal{V}^*_{k/E}(\sigma^{\diamond}(\tau)))_{\overline{\rho}})\otimes_{\mathbb{T}_{\overline{\rho}, \Sigma}, x^*_f}E.
\end{multline}

By this isomorphism, the base change of  $\Psi_{\Sigma_0}(z^{\mathrm{Iw}}_{\Sigma_0, n, \overline{\rho}}(k, \sigma^{\diamond}(\tau), -))$ by the map 
$x_f^*$ corresponds to the following map : 
\begin{multline}
 \Psi_{\Sigma_0}(z^{\mathrm{Iw}}_{\Sigma_0, n, \overline{\rho}}(k, \sigma^{\diamond}(\tau), -)[f]) : \Psi_{\Sigma_0}(H^1_{\Sigma_0}(\mathcal{V}^*_{k/E}(\sigma^{\diamond}(\tau)))[f])(1)\\
\rightarrow H^1_{\mathrm{Iw}}(\mathbb{Z}[1/\Sigma_n, \zeta_n], \Psi_{\Sigma_0}(H^1_{\Sigma_0}(\mathcal{V}^*_{k/E}(\sigma^{\diamond}(\tau)))[f])(2)).
\end{multline}

We finally equip $\Psi_{\Sigma_0}(H^1_{\Sigma_0}(\mathcal{V}^*_{k/E}(\sigma^{\diamond}(\tau)))[f])$ with a motivic structure, then 
compare the map $ \Psi_{\Sigma_0}(z^{\mathrm{Iw}}_{\Sigma_0, n, \overline{\rho}}(k, \sigma^{\diamond}(\tau), -)[f]) $ with Kato's zeta morphism.

For this, we take a sufficiently large $m\geqq 1$ such that $K_m$ acts trivially on $\sigma^{\diamond}(\tau)$, then 
$\sigma^{\diamond}(\tau)$ is an aboslutely irreducible $E$-representation of the finite group $\mathrm{GL}_2(\mathbb{Z}/p^m\mathbb{Z})$. 
Hence, we can take a $\mathrm{GL}_2(\mathbb{Z}/p^m\mathbb{Z})$-stable $F$-lattice $\sigma^{\diamond}(\tau)_F$ of $\sigma^{\diamond}(\tau)$ 
for $F=E\cap \overline{\mathbb{Q}}$. 

We set 
\begin{equation}
H^1_{\Sigma_0}(\mathcal{V}_A(\sigma^{\diamond}(\tau))):=\mathrm{Hom}_{F[K_0]}(\sigma^{\diamond}(\tau)_F, 
H^1_{\Sigma}(\mathcal{V}_A))
\end{equation}
for $A=F, E, \mathbb{C}$ and $\mathcal{V}_A=\mathcal{V}^*_{k/A}, \mathcal{V}_{k/A}$. 
We also set $S_{k,\Sigma, A}:=\varinjlim_{N}S_k(N)_A$, where $N$ run through all the $N\geqq 3$ such that $\mathrm{prime}(N)=\Sigma$, and 
$$S_{k, \Sigma_0}(\sigma^{\diamond}(\tau))_A:=\mathrm{Hom}_{F[K_0]}(\sigma^{\diamond}(\tau)_F, S_{k,\Sigma, A}).$$
We remark that these are naturally equipped with an action of $G_{\Sigma_0}$ and Hecke actions of $T'_l, S'_l$ for any prime 
$l\not\in \Sigma$. Then, we define 
\begin{equation}
V_3'(f)_A:=\Psi_{\Sigma_0}(H^1_{\Sigma_0}(\mathcal{V}^*_{k/A}(\sigma^{\diamond}(\tau)))[f])(1), \ \ 
 V_3(f^*)_A:=\Psi_{\Sigma_0}(H^1_{\Sigma_0}(\mathcal{V}_{k/A}(\sigma^{\diamond}(\tau)))[f]),
 \end{equation}
 and 
 \begin{equation}
 S_3(f^*)_A:=\Psi_{\Sigma_0}(S_{k, \Sigma_0}(\sigma^{\diamond}(\tau))_A[f]). 
 \end{equation}
 
 By (the proof of) Lemma \ref{4.3}, $H^1_{\Sigma_0}(\mathcal{V}_{k/A}(\sigma^{\diamond}(\tau)))[f]$ and $S_{k, \Sigma_0}(\sigma^{\diamond}(\tau))_A[f]$ 
 are also the subspaces of $H^1_{\Sigma_0}(\mathcal{V}_{k/A}(\sigma^{\diamond}(\tau)))$ and $S_{k, \Sigma_0}(\sigma^{\diamond}(\tau))_A[f]$ respectively 
 consisting of elements $v$ on which $T_l$ and $S_l$ act by $\overline{a_l}$ and $l^{k-2}\overline{\varepsilon_f(l)}$ for any 
 prime $l\not\in \Sigma$.

%we first remark that one has a canonical  isomorphism
%$$H^1_{\Sigma_0}(\mathcal{V}^*_{k/E}(\sigma^{\diamond}(\tau))\isom \mathrm{Hom}_{E[K_0]}(\sigma^{\diamond}(\tau)^*, 
%H^1_{\Sigma}(\mathcal{V}^*_{k/E})).$$
As in the proof of Proposition \ref{2.14}, the compatibility of the global and the classical local Langlands correspondence (\cite{La73}, \cite{De71}, \cite{Ca86}, \cite{Sc90}, \cite{Sa97}) and 
 the strong multiplicity one theorem imply that there exists an isomorphism 
$$H^1_{\Sigma_0}(\mathcal{V}^*_{k/E}(\sigma^{\diamond}(\tau)))(1)[f]\isom (\rho_f)^*\otimes_E
\widetilde{\pi}_{\Sigma_0}(f)\otimes_E\mathrm{Hom}_{E[K_0]}(\sigma^{\diamond}(\tau)^*, \widetilde{\pi}_{p}(f)) $$
of $E[G_{\mathbb{Q}}\times G_{\Sigma_0}]$-modules, where $\widetilde{\pi}_{\Sigma_0}(f)$ is the smooth contragradient of $\pi_{\Sigma_0}(f):=\otimes_{l\in \Sigma_0}\pi_l(f)$. Therefore, one also has an isomorphism 
\begin{equation}
V'_3(f)_E
\isom(\rho_f)^*\otimes_E
\Psi_{\Sigma_0}(\widetilde{\pi}_{\Sigma_0}(f))\otimes_E\mathrm{Hom}_{E[K_0]}(\sigma^{\diamond}(\tau)^*, \widetilde{\pi}_{p}(f))\isom (\rho_f)^*
\end{equation}
of $E[G_{\mathbb{Q}}]$-modules since one has 
$$\mathrm{dim}_E\left(\Psi_{\Sigma_0}(\widetilde{\pi}_{\Sigma_0}(f))\right)=1$$ and 
$$\mathrm{dim}_E\left(\mathrm{Hom}_{E[K_0]}(\sigma^{\diamond}(\tau)^*, \widetilde{\pi}_{p}(f))\right)= \mathrm{dim}_E\left(\mathrm{Hom}_{E[K_0]}(\sigma^{\diamond}(\tau), \pi_{p}(f))\right)=1.$$ 
Similarly, one can show the equalities
$$\mathrm{dim}_A(V'_3(f)_A)=\mathrm{dim}_A(V_3(f^*)_A)=2$$
and 
$$\mathrm{dim}_A(S_3(f^*)_A)=1$$
for any $A=E, F, \mathbb{C}$. 
The $\mathrm{Gal}(\mathbb{C}/\mathbb{R})$-action on $H^1(Y(N), \mathcal{V}^*_{k/F})$ for each $N$ induces its action on 
 $V'_3(f)_F$, and the map $\mathrm{per} : S_k(N)_F\rightarrow H^1(Y(N), \mathcal{V}_{k/\mathbb{C}})$ induces a map 
 $\mathrm{per} : V_3(f^*)_F\rightarrow S_3(f^*)_{\mathbb{C}},$ etc. Therefore, one can naturally equip $V'_3(f)_E$ with a motivic structure. 
 
 We next define an isomorphism $V'_1(f)_E\isom V'_3(f)_E$ compatible with its motivic structures. Since one has such an isomorphism 
 $V'_1(f)_E\isom V'_2(f)_E$ defined in the proof of Proposition \ref{2.14}, it suffices to define such an isomorphism 
 $V'_2(f)_E\isom V'_3(f)_E$. Since we can freely enlarge $E$, it suffices to define such an isomorphism when 
 $E=\overline{\mathbb{Q}}_p$ (and $F=\overline{\mathbb{Q}}$).  
 
 Let $\pi_{\Sigma_0}(f)_F$ (resp. $\pi_p(f)_F$) be an $G_{\Sigma_0}$ (resp. $G_p$) stable $F$-lattice of $\pi_{\Sigma_0}(f)$ (resp. $\pi_{p}(f)$). 
 Then, one has $\mathrm{dim}_F\mathrm{Hom}_{F[K_0]}(\sigma^{\diamond}(\tau)^*_F, \widetilde{\pi}_p(f)_F)=1$, 
 and we take a non-zero element $\varphi_0\in \mathrm{Hom}_{F[K_0]}(\sigma^{\diamond}(\tau)^*_F, \widetilde{\pi}_p(f)_F)$ and 
 $y_1\in \sigma^{\diamond}(\tau)^*_F$ such that $y_2:=\varphi_0(y_1)\not=0$. Since $\widetilde{\pi}_p(f)_F$ is irreducible, 
 there exists some $a\in F[G_p]$ such that $y_2=ay_3$ for some new vector $y_3\in \widetilde{\pi}_p(f)_F$. 
 We next remark that one has $\mathrm{dim}_F(\Psi_{\Sigma_0}(\widetilde{\pi}_{\Sigma_0}(f)_F))=1$ and 
 that $\Psi_{\Sigma_0}(\widetilde{\pi}_{\Sigma_0}(f)_F)$ is a quotient of $\widetilde{\pi}_{\Sigma_0}(f)_F$ 
 (since we assume $F=\overline{\mathbb{Q}}$). Then, we take some $w_1\in \widetilde{\pi}_{\Sigma_0}(f)_F$ such that 
 $\overline{w_1}\in \Psi_{\Sigma_0}(\widetilde{\pi}_{\Sigma_0}(f)_F)$ is non-zero, and some $b\in F[G_{\Sigma_0}]$ such that 
 $w_1=b w_2$ for some product $w_2=\otimes_{l\in \Sigma_0}v_l$ of new vectors $v_l\in \widetilde{\pi}_l(f)_F$. 
 Then, we write $V_4'(f)_A\subset \Psi_{\Sigma_0}(H^1_{\Sigma}(\mathcal{V}^*_{k/A})(1)[f])$ 
 to denote the image of $ab\cdot V_2'(f)_A\subseteq H^1_{\Sigma}(\mathcal{V}^*_{k/A})(1)[f]$ by 
 the quotient map $H^1_{\Sigma}(\mathcal{V}^*_{k/A})(1)[f]\rightarrow \Psi_{\Sigma_0}(H^1_{\Sigma}(\mathcal{V}^*_{k/A})(1)[f])$, and similarly 
 define submodules $V_4(f^*)_A\subset \Psi_{\Sigma_0}(H^1_{\Sigma}(\mathcal{V}_{k/A})[f])$ and 
 $S_4(f^*)_A\subset  \Psi_{\Sigma_0}(S_{k,\Sigma,A}[f])$. Then, $V'_4(f)_E$ is equipped with a motivic structure 
 naturally defined using $V_4'(f)_A$, $V_4(f^*)_A$ and $S_4(f^*)_A$, and the maps
 $H^1_{\Sigma}(\mathcal{V})(1)[f]\rightarrow \Psi_{\Sigma_0}(H^1_{\Sigma}(\mathcal{V})(1)[f])$ for 
 $\mathcal{V}=\mathcal{V}^*_{k/A}, \mathcal{V}_{k/A}$, and 
 $S_{k,\Sigma, A}[f]\rightarrow \Psi_{\Sigma_0}(S_{k,\Sigma, A}[f])$ defined by 
 $x\mapsto \overline{abx}$ induce isomorphisms 
 $V'_2(f)_A\isom V'_4(f)_A$, $V_2(f^*)_A\isom V_4(f^*)_A$ and $S_2(f^*)_A\isom 
 S_4(f^*)_A$ compatible with motivic structures. Moreover, the map
 \begin{multline}
 \Psi_{\Sigma_0}(H^1_{\Sigma_0}(\mathcal{V}(\sigma^{\diamond}(\tau))[f])=\Psi_{\Sigma_0}(\mathrm{Hom}_{A[K_0]}(\sigma^{\diamond}(\tau)_F^*, 
 H^1_{\Sigma}(\mathcal{V})[f]))\\
 \xrightarrow{\varphi\mapsto \varphi(y_1)}\Psi_{\Sigma_0}(H^1_{\Sigma}(\mathcal{V})[f])
 \end{multline}
 for $\mathcal{V}=\mathcal{V}^*_{k/A}, \mathcal{V}_{k/A}$, and the similar map 
 $$ \Psi_{\Sigma_0}(S_{k, \Sigma_0}(\sigma^{\diamond}(\tau))_A[f])\rightarrow \Psi_{\Sigma_0}(S_{k, \Sigma, A}[f])$$ 
 induce an isomorphisms 
 $V'_3(f)_A\isom V'_4(f)_A$, $V_3(f^*)_A\isom V_4(f^*)_A$ and $S_3(f^*)_A\isom 
 S_4(f^*)_A$ compatible with motivic structures. Therefore, one obtains an isomorphism 
 $V'_2(f)_E\isom V'_3(f)_E$ compatible with motivic structures. 
 
 Finally, by definition (especially, by Proposition \ref{2.6}), the map 
 $$z_3:=\Psi_{\Sigma_0}(z^{\mathrm{Iw}}_{\Sigma_0, n, \overline{\rho}}(k, \sigma^{\diamond}(\tau), -)[f]) : V'_3(f)_E\rightarrow 
 H^1_{\mathrm{Iw}}(\mathbb{Z}[1/\Sigma_n, \zeta_n], V'_3(f)(1))$$
    is an $E$-linear map satisfying the following property : for each $m\geqq 0$, $\gamma\in V'_3(f)_F\subset V'_3(f)_E$, and  $c\geqq 2$ such that $c\equiv 1$ (mod $N$) and $(c, 6pn)=1$, one has 
   $d\cdot z_3(\gamma)\in H^1_{\mathrm{Iw}}(\mathbb{Z}[1/\Sigma_n, \zeta_n], V'_3(f)_E(1))$ for 
   $d=\prod_{1\leqq j\leqq k-1}(c^2-c^{2-j}\sigma_c)(c-c^{j-k+2}\sigma_c)\in Q_{k,n}$, and 
  the image $\omega_{\gamma, m, d}$ of $d\cdot z_f(\gamma)$ 
  by the following composite
  \begin{multline*}
  H^1_{\mathrm{Iw}}(\mathbb{Z}[1/\Sigma_{f,n}, \zeta_n], V'_3(f)_E(1))\xrightarrow{x\mapsto x\otimes x\otimes (\zeta_{p^{n}})_{n\geqq 1}^{\otimes (-k)}} 
    H^1_{\mathrm{Iw}}(\mathbb{Z}[1/\Sigma_{f,n}, \zeta_n], V_3(f^*)_E)\\
    \xrightarrow{\mathrm{exp}^*_{m,1}}S_3(f^*)_E\otimes_{\mathbb{Q}}\mathbb{Q}(\zeta_{np^m})
  \end{multline*}
  %$$\mathrm{exp}^*_{m,1} : H^1_{\mathrm{Iw}}(G_{\mathbb{Q}(\zeta_n),\Sigma_{f,n}}, V_1(f)_E)\rightarrow S_1(f)_E\otimes_{\mathbb{Q}}\mathbb{Q}(\zeta_{np^m})$$
  belongs to $S_3(f^*)_F\otimes_{\mathbb{Q}}\mathbb{Q}(\zeta_{np^m})$, and the map 
  $$S_3(f^*)_F\otimes_{\mathbb{Q}}\mathbb{Q}(\zeta_{np^m})\rightarrow V_3(f^*)_{\mathbb{C}}^{\pm} : x\otimes y\mapsto \sum_{\sigma\in \mathrm{Gal}(\mathbb{Q}(\zeta_{np^m})/\mathbb{Q})}\chi(\sigma)\sigma(y)\mathrm{per}(x)^{\pm},$$where 
  $\chi : \mathrm{Gal}(\mathbb{Q}(\zeta_{np^m})/\mathbb{Q})\rightarrow \mathbb{C}^{\times}$ is any character and $\pm 1=\chi(-1)$, sends the element 
  $\omega_{\gamma, m, d}$ to 
  $$L_{\Sigma, n}(f,\chi, k-1)\cdot (d'\cdot \gamma)^{\pm}$$
  for $d'=\prod_{j=1}^{k-1}(c^2-c^{k-j}\sigma_c)(c^2-c^j\sigma_c)\in Q_{k,n}$. 

 By all these arguments, we finally obtain the desired equality 
  $$\bold{z}_{\Sigma, n}(f)=\prod_{l\in \Sigma_0}
 P_{f, l}(\sigma_l^{-1})\cdot  \bold{z}_{n}(f)$$
 by Corollary \ref{4.5}.

% \isom  \Psi_{\Sigma_0}(H^1_{\Sigma_0}(\mathcal{V}^*_{k/E}(\sigma^{\diamond}(\tau)))_{\overline{\rho}}\otimes_{\mathbb{T}_{\overline{\rho}, \Sigma}, x^*_f}E). \end{multline*}
% where we set 
% \begin{multline*}
% H^1_{\Sigma_0}(\mathcal{V}^*_{k/E}(\sigma^{\diamond}(\tau)))[f]:=\{x\in H^1_{\Sigma_0}(\mathcal{V}^*_{k/E}(\sigma^{\diamond}(\tau))) \mid \\
% T'_lx=a_lx, \ \ S'_lx=l^{k-2}\varepsilon(l) \text{ for any } l\not\in \Sigma\},
 %\end{multline*}
 %and the second isomorphism follows from the fact that the cuspidal part of $H^1_{\Sigma_0}(\mathcal{V}^*_{k/E}(\sigma^{\diamond}(\tau)))$ is a 
 %sum of irreducible Hecke modules by Langlands' theorem ?.

  \end{proof}
  \section{An application to Kato's main conjecture}
  
  \subsection{Recall of Kato's main conjecture}
  Let $f=\sum_{n=1}^{\infty}a_nq^n$ be a normalized Hecke eigen cusp newform. We take a $G_{\mathbb{Q}}$-stable $\mathcal{O}$-lattice $\rho^*_f$
  of $V'_1(f)$. 
  We set 
  $$H^2_{\mathrm{Iw}}(\mathbb{Z}[1/p], \rho_f^*(1)):=\mathrm{Ker}(H^2_{\mathrm{Iw}}(\mathbb{Z}[1/p], \rho_f^*(1))
  \rightarrow \oplus_{l\in \Sigma_{f,0}}H^2_{\mathrm{Iw}}(\mathbb{Q}_l, \rho_f^*(1))),$$
  and $\Lambda_{n, A}:=\Lambda_n\otimes_{\mathbb{Z}_p}A$ for $A=\mathbb{F}, \mathcal{O}, E$. As a consequence of Kato's Euler system, he showed in Theorem 12.4 of \cite{Ka04} that 
  $H^2_{\mathrm{Iw}}(\mathbb{Z}[1/p], \rho_f^*(1))$ is a finite generated torsion $\Lambda_{\mathcal{O}}$-module and 
  $H^1_{\mathrm{Iw}}(\mathbb{Z}[1/\Sigma_f], \rho_f^*(1))$ is a torsion free $\Lambda_{\mathcal{O}}$-module generically of rank one. 
  
  From now on, we assume for simplicity that $\overline{\rho}_f$ is absolutely irreduceble, and $p$ is odd. Then, $\bold{z}_n(f)$ preserved integral structure, i.e.  $\bold{z}_n(f)$ induces 
    an $\mathcal{O}$-linear map 
    $$\bold{z}_n(f) : \rho_f^*\rightarrow \mathrm{H}^1_{\mathrm{Iw}}(\mathbb{Z}[1/\Sigma_{f,n}, \zeta_n], \rho_f^*(1)) $$
    by Remark (\ref{integer}). By Remark \ref{pm1}, this map $\bold{z}_n(f)$ naturally induces a $\Lambda_{n, \mathcal{O}}$-linear map 
    $$\bold{z}_n(f) : \rho_f^*\otimes_{\mathcal{O}[\{\pm 1\}]}\Lambda_{n, \mathcal{O}}\rightarrow \mathrm{H}^1_{\mathrm{Iw}}(\mathbb{Z}[1/\Sigma_{f,n}, \zeta_n], \rho_f^*(1)) $$ which we denote by the same letter. 
    
    As a version of Iwasawa main conjecture, Kato proposed in Conjecture 12.10 of \cite{Ka04} the following conjecture.
    \begin{conjecture}\label{Katoconj}
    One has an equality 
    $$\mathrm{Char}_{\Lambda_{\mathcal{O}}}(H^1_{\mathrm{Iw}}(\mathbb{Z}[1/\Sigma_f], \rho_f^*(1))/\mathrm{Im}(\bold{z}_1(f)))=
    \mathrm{Char}_{\Lambda_{\mathcal{O}}}(H^2_{\mathrm{Iw}}(\mathbb{Z}[1/p], \rho_f^*(1)))$$
    of characteristic ideals of torsion $\Lambda_{\mathcal{O}}$-modules.
    \end{conjecture}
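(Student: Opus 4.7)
The plan is to establish the conjecture for $f$ by combining Kato's original Euler system divisibility with the congruence techniques enabled by the main theorem of this article (Theorem \ref{3.2}) and the arguments of \cite{KLP19}. Recall that Kato already proved the inclusion
$$\mathrm{Char}_{\Lambda_{\mathcal{O}}}\bigl(H^1_{\mathrm{Iw}}(\mathbb{Z}[1/\Sigma_f], \rho_f^*(1))/\mathrm{Im}(\bold{z}_1(f))\bigr)\subset \mathrm{Char}_{\Lambda_{\mathcal{O}}}\bigl(H^2_{\mathrm{Iw}}(\mathbb{Z}[1/p], \rho_f^*(1))\bigr)$$
as a formal consequence of the Euler system machinery (assuming $\mathrm{Im}(\rho_f)$ contains a conjugate of $\mathrm{SL}_2(\mathbb{Z}_p)$). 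The missing ingredient is the reverse inclusion, which is the content of Conjecture \ref{Katoconj}.

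To obtain the reverse inclusion, the strategy is to invoke Theorem \ref{NNN}. Concretely, one would seek three newforms $g_1, g_2, g_3$ congruent to $f$ modulo $\varpi$ such that: (i) there exists $\tau \in G_{\mathbb{Q}(\zeta_{p^{\infty}})}$ with $\rho_{g_1}/(\tau-1)\rho_{g_1}$ free of rank one over $\mathcal{O}$, which is a mild genericity condition; (ii) the $\mu$-invariant of $H^1_{\mathrm{Iw}}(\mathbb{Z}[1/N_{g_2}p], \rho_{g_2}^*(1))/\Lambda_{\mathcal{O}}\cdot\mathrm{Im}(\bold{z}(g_2))$ vanishes, which is often checkable when $g_2$ can be chosen of weight two on a suitable tame level; and (iii) Conjecture \ref{Katoconj} is already known for $g_3$, for instance in the ordinary case via combining Kato's divisibility with the Skinner–Urban-type reverse divisibility. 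The residual class of $\overline{\rho}_f$ is typically large enough (after raising the level if necessary) to produce such a triple by Hida theory or level-raising congruences.

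The conceptual engine that makes the transfer from $g_i$ back to $f$ possible is precisely our interpolating zeta morphism: Theorem \ref{3.2} states that $\bold{z}_{\Sigma,1}(\rho^{\mathfrak{m}})$ specializes at the point $x_f$ to $\prod_{l\in\Sigma_0}P_{f,l}(\sigma_l^{-1})\cdot \bold{z}(f)$, and similarly at each $x_{g_i}$. Consequently, the $\mathbb{T}_{\overline{\rho},\Sigma}\widehat{\otimes}\Lambda$-module
$$H^1_{\mathrm{Iw}}(\mathbb{Z}[1/\Sigma, \zeta_{p^{\infty}}], (\rho^{\mathfrak{m}})^*(1))\big/\mathrm{Im}(\bold{z}_{\Sigma,1}(\rho^{\mathfrak{m}}))$$
provides a universal source whose specializations at $x_f$ and $x_{g_i}$ are controlled by the same Fitting-type ideal in the universal deformation ring, and the extra Euler factors $\prod_{l\in\Sigma_0}P_{\bullet,l}(\sigma_l^{-1})$ differ by a unit after taking $\mu$-invariants, so they do not obstruct the comparison. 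Feeding this into the argument of \cite{KLP19} — where the rank-one hypothesis ensures torsion-freeness of the relevant Iwasawa cohomology and the $\mu=0$ hypothesis converts the characteristic ideal equality into a congruence of ideals that can be propagated — yields the equality for $f$.

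The main obstacle, and the reason Conjecture \ref{Katoconj} is stated as a conjecture rather than a theorem, is securing the base case (iii): one needs an \emph{unconditional} instance of the main conjecture for at least one $g_3$ in each residual class, and currently such instances are available chiefly in ordinary or nearly-ordinary situations via Skinner–Urban, or in residually reducible settings (which are excluded by our standing assumption). For a general supersingular or irregular residual type with no accessible congruent ordinary form, the strategy stalls. A secondary obstacle is removing the $\mu=0$ hypothesis in (ii): while for any single $g$ one expects $\mu=0$ generically, proving it requires either deep analytic input or a substantial strengthening of the control on $\mathrm{Im}(\bold{z}_{\Sigma,1}(\rho^{\mathfrak{m}}))$ at the full universal deformation ring, rather than on specializations. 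Resolving either of these is where the genuine difficulty lies.
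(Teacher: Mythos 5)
This statement is labeled a conjecture in the paper (it is Kato's Conjecture 12.10 from \cite{Ka04}, restated), and the paper contains no proof of it; there is nothing to compare your attempt against. Your discussion correctly recognizes this: you accurately describe what the paper \emph{does} establish --- namely Theorem~\ref{NNN}, which together with Theorem~\ref{thmD} and Corollary~\ref{coro} shows that the validity of the conjecture propagates across a residual congruence class once one knows a $\mu=0$ input and one unconditional instance --- and you correctly identify why this does not yield an unconditional theorem: securing a base case $g_3$ in an arbitrary residual class (especially outside the ordinary range where Skinner--Urban applies) and removing the $\mu=0$ hypothesis are both genuinely open. One small calibration worth flagging: you frame the mechanism as ``the extra Euler factors differ by a unit after taking $\mu$-invariants,'' but in the paper's actual bookkeeping (proof of Theorem~\ref{thmD}) one does not discard these factors; rather, their $\lambda$-invariants are tracked explicitly through the exact sequences relating the $\Sigma$-imprimitive and primitive groups, and they cancel between the $H^1$ and $H^2$ sides. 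The net conclusion is the same, but the accounting is additive in $\lambda$, not a matter of units. With that adjustment, your assessment of the state of affairs matches the paper's.
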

   In this article, we call this conjecture Kato's main conjecture. 
   Concerning this conjecture, Kato (essentially) proved the following.
   \begin{thm}\label{Kato}
  Assume  furthermore the following $:$ 
  \begin{itemize}
  \item[(a)] there exists an element 
   $\sigma\in \mathrm{Gal}(\overline{\mathbb{Q}}/\mathbb{Q}(\zeta_{p^{\infty}}))$ such that 
   $\rho_{f}/(\sigma-1)\rho_f$ is a free $\mathcal{O}$-module of rank one. 
   \end{itemize}
   Then, one has an inclusion 
   $$\mathrm{Char}_{\Lambda_{\mathcal{O}}}(H^1_{\mathrm{Iw}}(\mathbb{Z}[1/\Sigma_f], \rho_f^*(1))/\mathrm{Im}(\bold{z}_1(f)))
   \subset 
    \mathrm{Char}_{\Lambda_{\mathcal{O}}}(H^2_{\mathrm{Iw}}(\mathbb{Z}[1/p], \rho_f^*(1)))$$
   \end{thm}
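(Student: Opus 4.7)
The plan is to deduce the theorem from Kato's general Euler system machine, as developed in \S12--13 of \cite{Ka04}, applied to the family $\{\bold{z}_n(f)\}_{(n,\Sigma_f)=1}$ of zeta morphisms recalled in the appendix. First, I would assemble an Euler system in the sense of Kato--Perrin-Riou--Rubin: for $\xi\in \rho_f^*$, the classes $c_n:=\bold{z}_n(f)(\xi)\in H^1_{\mathrm{Iw}}(\mathbb{Z}[1/\Sigma_{f,n},\zeta_n],\rho_f^*(1))$ satisfy, by the characterization in Corollary \ref{4.5}, the distribution relation
\[
\mathrm{Cor}(c_{n\ell}) = P_{f,\ell}(\sigma_\ell^{-1})\cdot c_n \qquad (\ell\nmid n\Sigma_f)
\]
together with the complex conjugation equivariance $c_n\circ\tau = \sigma_{-1}\cdot c_n$. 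These are exactly the axioms of an Euler system of rank one for $\rho_f^*(1)$ along the cyclotomic tower.

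Next, I would invoke the non-triviality of this Euler system. This is the content of the zeta values formula (Theorem \ref{2.1}, specialized to $f$): the images of the classes $c_n$ under Bloch--Kato's dual exponential map compute twists of $L(f,s)$ at $s=k-1$, and these twisted $L$-values are known to be non-zero for infinitely many characters by Rohrlich's theorem. Consequently, the Euler system is non-trivial in the sense required to drive the Kolyvagin derivative argument.

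Third, I would apply the Euler system divisibility theorem (Theorem 13.4 of \cite{Ka04}) to these classes. The role of hypothesis (a) is precisely to supply the Galois-theoretic largeness condition needed for the Chebotarev step in Kolyvagin's derivative construction: the existence of $\sigma\in\mathrm{Gal}(\overline{\mathbb{Q}}/\mathbb{Q}(\zeta_{p^\infty}))$ with $\rho_f/(\sigma-1)\rho_f$ free of rank one over $\mathcal{O}$ ensures that one can find infinitely many Kolyvagin primes $\ell$ (those whose Frobenius conjugacy class meets $\sigma$) at which the local conditions cut out derivative classes of maximal size, and simultaneously guarantees the $\Lambda_{\mathcal{O}}$-freeness of $H^1_{\mathrm{Iw}}(\mathbb{Z}[1/\Sigma_f],\rho_f^*(1))$ used to pass to the Iwasawa-theoretic statement. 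Running Kato's argument then yields, for each $\xi$, the inclusion $\mathrm{Char}_{\Lambda_{\mathcal{O}}}(H^1_{\mathrm{Iw}}/\Lambda_{\mathcal{O}}\cdot c_1)\subset\mathrm{Char}_{\Lambda_{\mathcal{O}}}(H^2_{\mathrm{Iw}}(\mathbb{Z}[1/p],\rho_f^*(1)))$. Letting $\xi$ range over a generating set of $\rho_f^*$ and taking the sum of the submodules $\Lambda_{\mathcal{O}}\cdot c_1$ (which equals $\mathrm{Im}(\bold{z}_1(f))$ by definition) gives the final inclusion for $\mathrm{Im}(\bold{z}_1(f))$.

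The main obstacle is really a matter of identification rather than fresh argument: one must verify that the Euler system $(c_n)$ produced by the appendix-level characterization of $\bold{z}_n(f)$ coincides with the Euler system Kato constructs from Beilinson elements, so that Theorem 13.4 of \cite{Ka04} applies without modification. This compatibility is forced by the uniqueness in Corollary \ref{4.5} (which pins down $\bold{z}_n(f)$ by its dual exponential image on the plus/minus parts) together with the fact that Kato's classes satisfy the same zeta value formula. Once this identification is in hand, the divisibility is a direct citation.
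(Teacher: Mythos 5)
Your proposal follows essentially the same route as the paper, which at this point is a pure citation: the paper invokes Kato's Theorem 12.4(4) of \cite{Ka04} together with Skinner's remark (after Theorem 2.5.4 of \cite{Sk16}) that hypothesis (a) suffices in place of the original ``big image'' hypothesis, and your argument is just the unpacking of what sits behind that citation (Euler system axioms, non-triviality via the zeta-value formula and Rohrlich, Kolyvagin derivatives, with (a) feeding the Chebotarev step). One small misattribution: the $\Lambda_{\mathcal{O}}$-freeness of $H^1_{\mathrm{Iw}}(\mathbb{Z}[1/\Sigma_f],\rho_f^*(1))$ is not a consequence of hypothesis (a) but of the standing assumption that $\overline{\rho}_f$ is absolutely irreducible (see 13.8 of \cite{Ka04}); (a) is used only to produce the Kolyvagin primes needed in the Chebotarev argument.
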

   \begin{proof}
   Kato proved this theorem in Theorem 12.4 (4) of \cite{Ka04} under the stronger assumption that 
   $\mathrm{Im}(\rho_f)$ contains a conjugate of $\mathrm{SL}_2(\mathbb{Z}_p)$. However, Skinner pointed out in a comment after Theorem 2.5.4 of \cite{Sk16} 
   that the weaker assumption (a)  (plus the absolutely irreducibility of $\overline{\rho}_f$) is enough for the inclusion. 
   \end{proof}
   
   We set $\Delta:=\mathrm{Gal}(\mathbb{Q}(\zeta_p)/\mathbb{Q})$. Let $\eta : \Delta\rightarrow \mathbb{Z}_p^{\times}$ be a character. 
   For a $\mathbb{Z}_p[\Delta]$-module $M$, we write $M^{\eta}$ to denote the $\eta$-component of $M$, i.e. the sub module of $M$ consisting of 
   elements on which $\Delta$ acts by $\eta$. Since we assume that $p$ is odd, every such $M$ is written as a sum of the $\eta$-components 
   $M=\oplus_{\eta\in \widehat{\Delta}}M^{\eta}$. By this decomposition, Conjecture \ref{Kato} is also 
   decomposed into the $\eta$-component for each character $\eta$, which we call the $\eta$-part of Kato's main conjecture.
   \begin{conjecture}\label{eta}
    $$\mathrm{Char}_{\Lambda^{\eta}_{\mathcal{O}}}(H^1_{\mathrm{Iw}}(\mathbb{Z}[1/\Sigma_f], \rho_f^*(1))^{\eta}/\mathrm{Im}(\bold{z}_1(f)^{\eta}))=
    \mathrm{Char}_{\Lambda^{\eta}_{\mathcal{O}}}(H^2_{\mathrm{Iw}}(\mathbb{Z}[1/p], \rho_f^*(1))^{\eta})$$
    \end{conjecture}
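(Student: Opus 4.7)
The plan is to prove Conjecture \ref{eta} under the hypotheses of Theorem \ref{NNN}; summing over characters $\eta$ of $\Delta$ then yields Conjecture \ref{Katoconj} for $f$. For $h \in \{f, g_1, g_2, g_3\}$, write $X^\eta_h := H^1_{\mathrm{Iw}}(\mathbb{Z}[1/\Sigma_h], \rho_h^*(1))^\eta / \mathrm{Im}(\bold{z}_1(h)^\eta)$ and $Y^\eta_h := H^2_{\mathrm{Iw}}(\mathbb{Z}[1/p], \rho_h^*(1))^\eta$. The crucial input, supplied by Theorem \ref{3.2} (equivalently, condition (iii) of Theorem \ref{0.1}), is that the modified classes $\prod_{l\in \Sigma\setminus\{p\}} P_{h,l}(\sigma_l^{-1}) \cdot \bold{z}_1(h)$ for all four $h$ are obtained by specializing the single universal morphism $\bold{z}_{\Sigma,1}(\rho^{\mathfrak{m}})$ at the modular points $x_h$; this yields congruences among the $X^\eta_h$ as $\Lambda^\eta_{\mathcal{O}}$-modules, since the four forms are all congruent to $f$ modulo $\varpi$.

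First I would invoke Theorem \ref{Kato}, applied to $g_1$ under hypothesis (1), to obtain the Euler-system divisibility $\mathrm{Char}_{\Lambda^\eta_{\mathcal{O}}}(X^\eta_{g_1}) \subset \mathrm{Char}_{\Lambda^\eta_{\mathcal{O}}}(Y^\eta_{g_1})$. Since the congruence of universal zeta morphisms respects the module structure at the level of residue fields, this one-sided divisibility transfers to $f$, yielding $\mathrm{Char}(X^\eta_f) \subset \mathrm{Char}(Y^\eta_f)$ modulo controlled factors coming from the $P_{h,l}(\sigma_l^{-1})$ at primes $l \in \Sigma \setminus (\Sigma_f \cap \Sigma_{g_1})$. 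Because the polynomials $P_{h,l}(u)$ have coefficients given by traces of Frobenius on inertia-invariants of $\rho_h$, the isomorphism $\overline{\rho}_f \cong \overline{\rho}_{g_1}$ ensures these factors coincide modulo $\varpi$ and thus do not obstruct the divisibility at the mod-$\varpi$ level.

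Second, I would use hypothesis (3) --- the full equality $\mathrm{Char}(X^\eta_{g_3}) = \mathrm{Char}(Y^\eta_{g_3})$ --- together with the congruence of zeta morphisms between $f$ and $g_3$ to deduce
\[
\mathrm{Char}_{\Lambda^\eta_{\mathcal{O}}}(X^\eta_f) \equiv \mathrm{Char}_{\Lambda^\eta_{\mathcal{O}}}(Y^\eta_f) \pmod{\varpi}.
\]
Finally I would use hypothesis (2) and the congruence between $f$ and $g_2$ to propagate $\mu(X^\eta_{g_2}) = 0$ to $\mu(X^\eta_f) = 0$; combined with the one-sided divisibility from Step 1, this forces $\mu(Y^\eta_f) = 0$ as well. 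With both $\mu$-invariants vanishing, the mod-$\varpi$ equality of characteristic ideals together with one-sided divisibility upgrades by a standard argument (e.g.\ the $\mu = 0$ upgrade used in \cite{EPW06} and \cite{KLP19}) to honest equality in $\Lambda^\eta_{\mathcal{O}}$, which is Conjecture \ref{eta}.

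The main obstacle will be the bookkeeping of the Euler factors $P_{h,l}(\sigma_l^{-1})$ when the levels $N_f, N_{g_i}$ differ and introduce different sets of tame primes in $\Sigma$: a priori the ``optimized'' classes $\prod P_{h,l}(\sigma_l^{-1})\bold{z}_1(h)$ for different $h$'s live in Iwasawa cohomologies twisted by different Euler factor products. The resolution, following \cite{KLP19}, is to enlarge $\Sigma$ uniformly so that it contains every relevant prime and to observe that, thanks to the congruence of residual representations, the discrepancies between these Euler factors are units in $\Lambda_{\mathcal{O}}/\varpi$ generically, hence invisible at the level of characteristic ideals modulo $\varpi$. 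Carrying this out cleanly on each $\eta$-component, in particular verifying that no individual $\eta$-component acquires a spurious factor, is the technical crux of the argument.
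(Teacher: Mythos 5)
The statement you are proving is a conjecture in the paper; what you are actually attempting to reconstruct is the proof of Theorem \ref{NNN}, i.e.\ that the three hypotheses on $g_1, g_2, g_3$ imply Conjecture \ref{eta} for $f$. Your identification of the roles of $g_1, g_2, g_3$ (divisibility input, $\mu$-vanishing input, equality input) matches the paper's strategy in \S5.3, and your final remark about uniformly enlarging $\Sigma$ and tracking the tame Euler factors $P_{h,l}(\sigma_l^{-1})$ is exactly the bookkeeping the paper resolves via the compatibility formula $\bold{z}_{\Sigma',n}(\rho^{\mathfrak{m}}_{\Sigma'})\otimes\mathrm{id}=\prod_{l\in\Sigma'\setminus\Sigma}P_l(\sigma_l^{-1})\cdot\bold{z}_{\Sigma,n}(\rho^{\mathfrak{m}}_{\Sigma})$.

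However, Step 1 contains a genuine gap. You claim that the Euler-system divisibility
$\mathrm{Char}(X^\eta_{g_1}) \subset \mathrm{Char}(Y^\eta_{g_1})$, obtained from Theorem \ref{Kato}, ``transfers to $f$'' through the congruence of zeta morphisms. Divisibility of characteristic ideals in $\Lambda^\eta_{\mathcal{O}}$ is an integral statement, and a congruence $z_\Sigma(\rho_f)^\eta \equiv z_\Sigma(\rho_{g_1})^\eta \bmod \varpi$ does not by itself let you compare $\mathrm{Char}(X^\eta_f)$ with $\mathrm{Char}(Y^\eta_f)$ over $\Lambda^\eta_{\mathcal{O}}$: if $\mu(X^\eta_{g_1})>0$ you learn essentially nothing mod $\varpi$. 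In the paper, $g_1$'s only job is to guarantee $\mathfrak{X}(\overline{\rho})^{\mathrm{mod}}_a\ne\emptyset$; the divisibility for $f$ is then established \emph{independently} (Theorem \ref{thmC}(4)) from Kato's Euler-system theorem applied directly to $f$, which becomes available once $\mu$-vanishing is in hand. This forces the logical ordering you cannot avoid: the $g_2$ input ($\mu=0$) must come first, because both Theorem \ref{thmA} (nonvanishing of $\overline{z}_\Sigma(\overline{\rho})^\eta$) and the finite-dimensionality statements of Theorem \ref{thmB} are gated on it, and those in turn underlie both the freeness/torsionness results and the Euler-system divisibility for $f$.

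Your Step 2 is also imprecise: ``$\mathrm{Char}(X^\eta_f) \equiv \mathrm{Char}(Y^\eta_f) \pmod{\varpi}$'' is not a well-defined relation between ideals of $\Lambda^\eta_{\mathcal{O}}$ unless one first knows $\mu=0$ on both sides, at which point what one is really comparing are $\lambda$-invariants. The paper's Theorem \ref{thmD} does this cleanly: it shows $\lambda(X^\eta_x)-\lambda(Y^\eta_x)$ is the same for every $x\in\mathfrak{X}(\overline{\rho})$ and equals the corresponding $\mathbb{F}$-dimension difference for $\overline{\rho}$. The mechanism behind this is the short exact sequence (\ref{ggg}) relating $H^i_{\mathrm{Iw}}(\rho_x^*(1))/\varpi$, $H^i_{\mathrm{Iw}}(\overline{\rho}^*(1))$, and $H^{i+1}_{\mathrm{Iw}}(\rho_x^*(1))[\varpi]$, combined with the standard identity $\lambda(M)=\lambda_{\mathbb{F}}(M/\varpi)-\lambda_{\mathbb{F}}(M[\varpi])$ for torsion $\Lambda^\eta_{\mathcal{O}}$-modules with $\mu=0$. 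Your proposal gestures at ``the congruence of universal zeta morphisms'' but does not supply this exact-sequence mechanism, which is the actual technical engine that lets the equality for $g_3$ propagate to $f$.

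In short: your global architecture is the paper's, but (i) Step 1's divisibility transfer fails and should be replaced by ``$g_1$ ensures the hypotheses of Theorem \ref{thmB} hold; divisibility for $f$ then follows from Kato once $\mu$-vanishing is established,'' (ii) the $g_2$/$\mu=0$ input must precede all the rest, and (iii) the mod-$\varpi$ comparison of characteristic ideals must be phrased as constancy of $\lambda(X^\eta_x)-\lambda(Y^\eta_x)$ via the exact sequence (\ref{ggg}), not as a congruence of ideals.
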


 Since the ring $\Lambda^{\eta}_{\mathcal{O}}$ is (non-canonically) isomorphic to $\mathcal{O}[[T]]$, one can define the $\mu$- and $\lambda$- invariants for each finite generated torsion $\Lambda^{\eta}_{\mathcal{O}}$-modules, which we recall now. Each element $f(T)\in \mathcal{O}[[T]]$ can be written of the form $f(T)=\varpi^mg(T)h(T)$ with $m\in \mathbb{Z}_{\geqq 0}$, $g(T)=T^n+c_1T^{n-1}+\cdots +c_n$ ($c_i\in (\varpi)$)
   and $h(T)\in \mathcal{O}[[T]]^{\times}$. Then, we set $\mu(f(T)):=m$ and $\lambda(f(T)):=n$. For each finite generated torsion 
   $\Lambda_{\mathcal{O}}^{\eta}$-module $M$, we define $\mu(M):=\mu(f_M)$ and $\lambda(M):=\lambda(f_M)$ using any generator $f_M$ of the characteristic ideal $\mathrm{Char}_{\Lambda_{\mathcal{O}}^{\eta}}(M)$ of $M$. 
      
   By Theorem \ref{Kato}, we can rephrase the $\eta$-part of Kato's main conjecture using $\lambda$- and $\mu$- invariants. 
   \begin{corollary}\label{mu} Let $\eta : \Delta\rightarrow \mathbb{Z}_p^{\times}$ be a character. 
   Assume that $f$ satisfies the assumption $(a)$ in Theorem $\ref{Kato}$. Then the following conditions $(1)$ and $(2)$ are equivalent. 
   \begin{itemize}
   \item[(1)]Conjecture $\ref{eta}$ holds for $f$ and $\eta$. 
   \item[(2)]One has $$\mu(H^1_{\mathrm{Iw}}(\mathbb{Z}[1/\Sigma_f], \rho_f^*(1))^{\eta}/\mathrm{Im}(\bold{z}_1(f)^{\eta}))=
   \mu(H^2_{\mathrm{Iw}}(\mathbb{Z}[1/p], \rho_f^*(1))^{\eta})$$ and 
   $$\lambda(H^1_{\mathrm{Iw}}(\mathbb{Z}[1/\Sigma_f], \rho_f^*(1))^{\eta}/\mathrm{Im}(\bold{z}_1(f)^{\eta}))=
   \lambda(H^2_{\mathrm{Iw}}(\mathbb{Z}[1/p], \rho_f^*(1))^{\eta}).$$% for every characters $\eta : \Delta\rightarrow \mathbb{Z}_p^{\times}$. 
   
   \end{itemize}
   \end{corollary}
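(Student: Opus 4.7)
My plan is to reduce the equivalence to a formal statement about principal ideals in the regular local ring $\Lambda_{\mathcal{O}}^\eta \cong \mathcal{O}[[T]]$. First I would record that both characteristic ideals appearing in Conjecture \ref{eta} are non-zero on each $\eta$-component; this is the case because $H^2_{\mathrm{Iw}}(\mathbb{Z}[1/p], \rho_f^*(1))$ is $\Lambda_{\mathcal{O}}$-torsion, and $H^1_{\mathrm{Iw}}(\mathbb{Z}[1/\Sigma_f], \rho_f^*(1))/\Lambda_{\mathcal{O}}\cdot \mathrm{Im}(\bold{z}_1(f))$ is also $\Lambda_{\mathcal{O}}$-torsion (both proved in Theorem 12.4 of \cite{Ka04} together with the non-triviality of $\bold{z}_1(f)$, noting that the complex conjugation relation (i) of Theorem \ref{0.1} forces $\mathrm{Im}(\bold{z}_1(f)^\eta)$ to be generated by the $\tau$-eigenvector matching the sign $\eta(-1)$, so it is generically of $\Lambda_{\mathcal{O}}^\eta$-rank one inside the generically rank one module $H^1_{\mathrm{Iw}}(\mathbb{Z}[1/\Sigma_f], \rho_f^*(1))^\eta$). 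So each $\eta$-component has genuine non-zero generators $f_1, f_2 \in \Lambda_{\mathcal{O}}^\eta$, and after possibly enlarging $E$, the Weierstrass preparation theorem writes each $f_i$ uniquely as $\varpi^{\mu(f_i)} P_i(T) u_i$ with $P_i$ distinguished of degree $\lambda(f_i)$ and $u_i \in \Lambda_{\mathcal{O}}^{\eta,\times}$.

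Then the direction (1) $\Rightarrow$ (2) is essentially by definition: if $(f_1) = (f_2)$, then $f_1/f_2 \in \Lambda_{\mathcal{O}}^{\eta,\times}$, so $\mu(f_1) = \mu(f_2)$ and $\lambda(f_1) = \lambda(f_2)$. For (2) $\Rightarrow$ (1), I would invoke Theorem \ref{Kato}, whose global inclusion of characteristic ideals decomposes under the splitting $\Lambda_{\mathcal{O}} \cong \prod_{\eta \in \widehat{\Delta}} \Lambda_{\mathcal{O}}^\eta$ to give $(f_1) \subset (f_2)$ in $\Lambda_{\mathcal{O}}^\eta$; equivalently $f_2 \mid f_1$, so write $f_1 = f_2 h$ with $h \in \Lambda_{\mathcal{O}}^\eta$. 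Additivity of $\mu$ and $\lambda$ under multiplication in $\mathcal{O}[[T]]$ gives $\mu(h) = \mu(f_1) - \mu(f_2) = 0$ and $\lambda(h) = \lambda(f_1) - \lambda(f_2) = 0$ under hypothesis (2), forcing $h$ to be a unit by Weierstrass preparation, and hence $(f_1) = (f_2)$, which is Conjecture \ref{eta}.

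There is no substantive obstacle here once Theorem \ref{Kato} is available; the argument is a routine manipulation with Weierstrass preparation. The one point that warrants care is the reduction of the global divisibility in Theorem \ref{Kato} to the $\eta$-component, which works because $|\Delta| = p-1$ is prime to $p$, so after enlarging $E$ if necessary to contain the values of $\eta$, the group algebra $\mathcal{O}[\Delta]$ splits as a product, inducing the product decomposition of $\Lambda_{\mathcal{O}}$ into the rings $\Lambda_{\mathcal{O}}^\eta$ and making the inclusion of characteristic ideals coordinate-wise.
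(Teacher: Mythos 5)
Your proposal is correct and matches the paper's intended (implicit) argument: the paper simply says the corollary follows "by Theorem \ref{Kato}," and your Weierstrass-preparation argument is exactly the standard way to unpack that assertion. One small imprecision: the sentence "noting that the complex conjugation relation (i) of Theorem \ref{0.1} forces $\mathrm{Im}(\bold{z}_1(f)^\eta)$ to be generated by the $\tau$-eigenvector matching the sign $\eta(-1)$, so it is generically of $\Lambda_{\mathcal{O}}^\eta$-rank one" reads as if the complex-conjugation compatibility alone yields rank one, but of course the rank-one-ness requires Kato's non-vanishing theorem (Theorem 12.5/12.6 of \cite{Ka04}) applied $\eta$-component-wise; you cite the non-triviality of $\bold{z}_1(f)$ in the same parenthetical, so the citation is there, but the wording conflates two separate inputs. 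The rest — the inclusion from Theorem \ref{Kato} decomposed over $\widehat{\Delta}$ (using $(p-1,p)=1$ so $\mathcal{O}[\Delta]$ is split semisimple), followed by the observation that divisibility together with equality of $\mu$ and $\lambda$ forces the cofactor to be a unit in $\mathcal{O}[[T]]$ — is exactly right.
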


  % We furthermore assume that there exists an element 
   %$f\in \mathrm{Gal}(\overline{\mathbb{Q}}/\mathbb{Q}(\zeta_{p^{\infty}}))$ such that 
   %$\rho_{f}/(g-1)\rho_f$ is a free $\mathcal{O}$-module of rank one. 
   
   \subsection{Congruences of zeta morphisms}
  Let $f=\sum_{n=1}^{\infty}a_nq^n$ be a Hecke eigen cusp newform such that $\overline{\rho}_f$ is absolutely irreducible. 
  %Then, by Remark \ref{integer}, 
 % the zeta morphism $\bold{z}_n(f)$ preserves integral structures, i,e, if $\rho_f$ is a $G_{\mathbb{Q}}$-stable $\mathcal{O}$-lattice of 
  %$V'_1(f)$, then the map $\bold{z}_n(f)$ also induces $\mathcal{O}$-linear morphism
  %$$\bold{z}_n(f) : \rho_f^*\rightarrow H^1_{\mathrm{Iw}}(\mathbb{Z}[1/\Sigma_{f,n}, \zeta_n], \rho_f^*(1)).$$
  For each integer $m\geqq 1$, one can consider the mod $\varpi^m$-reduction of the map $\bold{z}_n(f) : \rho_f^*\rightarrow \mathrm{H}^1_{\mathrm{Iw}}(\mathbb{Z}[1/\Sigma_{f,n}, \zeta_n], \rho_f^*(1)),$ which we denote by
  $$\bold{z}_n(f) \bmod \varpi^m : \rho_{f, m}^*\rightarrow 
  H^1_{\mathrm{Iw}}(\mathbb{Z}[1/\Sigma_{f,n}, \zeta_n], \rho_{f, m}^*(1))$$
  for $\rho_{f, m}:=\rho_f\otimes_{\mathcal{O}}\mathcal{O}/\varpi^m$. 
  Since  $\overline{\rho}_f$ is absolutely irreducible, all the $G_{\mathbb{Q}}$-stable $\mathcal{O}$-lattices of 
  $V'_1(f)$ are of the form $\varpi^k\rho_f^*$ for some $k\in \mathbb{Z}$. Hence, we note that $\rho_{f,m}$ and $\bold{z}_n(f) \bmod \varpi^m$ (up to isomorphism) are independent of the choice of $\rho_f$. 
  
  We recall that we fixed embeddings $\iota_{\infty} : \overline{\mathbb{Q}}\hookrightarrow \mathbb{C}$ and 
  $\iota_p : \overline{\mathbb{Q}}\hookrightarrow \overline{\mathbb{Q}}_p$, by which we regard the Fourier coefficients $a_n$ ($n\geqq 1$) of $f$ 
  as elements in $E$, in fact in its integer ring $\mathcal{O}$. Let $f=\sum_{n=1}^{\infty}a_nq^n$ and $g=\sum_{n=1}^{\infty}b_nq^n$ are normalized Hecke eigen cusp new forms. 
  We take a finite extension $E\subset \overline{\mathbb{Q}}_p$ of $\mathbb{Q}_p$ containing all $a_n, b_n$ ($n\geqq 1$). In this situation, for $m\geqq1$, 
  we say that $f$ and $g$ are congruent modulo $\varpi^m$ if $a_l\equiv b_l\bmod \varpi^m$ for all but finitely many primes $l$. 
  %For each $m\geqq1$, we set $\rho_{f,m}:=\rho_f\otimes_{\mathcal{O}}\mathcal{O}/\varpi^m$. 
  \begin{lemma}\label{7.1}
  Assume that $\overline{\rho}_f$ is absolutely irreducible. Then the following are equivalent :
  \begin{itemize}
  \item[(1)]$f$ and $g$ are congruent module $\varpi^m$. 
  \item[(2)]$\rho_{f,m}\isom \rho_{g,m}$.
  \end{itemize}
  \end{lemma}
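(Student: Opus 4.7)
The plan is to reduce the lemma to a standard ``trace determines the representation'' theorem in the style of Carayol--Serre--Nyssen--Rouquier, after checking that the hypotheses of such a theorem hold mod $\varpi^m$.

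The direction $(2)\Rightarrow(1)$ is the easy one. If $\rho_{f,m}\cong \rho_{g,m}$ as $\mathcal{O}/\varpi^m[G_{\mathbb{Q}}]$-modules, then in particular $\mathrm{tr}(\rho_{f,m}(\mathrm{Frob}_l))=\mathrm{tr}(\rho_{g,m}(\mathrm{Frob}_l))$ for every prime $l$ at which both $\rho_f$ and $\rho_g$ are unramified, i.e. for every $l\notin \Sigma_f\cup\Sigma_g$. Since $\mathrm{tr}(\rho_f(\mathrm{Frob}_l))=a_l$ and $\mathrm{tr}(\rho_g(\mathrm{Frob}_l))=b_l$ for such $l$, we obtain $a_l\equiv b_l\bmod \varpi^m$ for all but finitely many primes $l$.

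For the direction $(1)\Rightarrow(2)$, I would first reduce mod $\varpi$ to promote the congruence into a statement about residual representations. Since $a_l\equiv b_l\bmod\varpi$ for all but finitely many $l$, the semisimple traces of $\overline{\rho}_f$ and $\overline{\rho}_g$ agree on a set of Frobenius elements which is dense by Chebotarev; using that $\overline{\rho}_f$ is absolutely irreducible together with the Brauer--Nesbitt theorem, one concludes $\overline{\rho}_g\cong \overline{\rho}_f$. In particular $\overline{\rho}_g$ is also absolutely irreducible and, after conjugating $\rho_g$ by a suitable element of $\mathrm{GL}_2(\mathcal{O})$, we may assume $\rho_{g,m}$ and $\rho_{f,m}$ have the same residual representation.

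Next I would upgrade this mod $\varpi$ comparison to a mod $\varpi^m$ comparison of trace functions. For every $\sigma\in G_{\mathbb{Q}}$, Chebotarev density supplies a sequence of Frobenius elements $\mathrm{Frob}_{l_i}$ whose images in $\mathrm{Gal}(\mathbb{Q}_{\Sigma_f\cup\Sigma_g}/\mathbb{Q})$ converge to the image of $\sigma$; by continuity of $\rho_{f,m}$ and $\rho_{g,m}$ (which factor through a finite quotient), one gets $\mathrm{tr}(\rho_{f,m}(\sigma))=\mathrm{tr}(\rho_{g,m}(\sigma))$. For $p$ odd (which we have), the Cayley--Hamilton identity
\[
\det(\rho(\sigma))=\tfrac{1}{2}\bigl(\mathrm{tr}(\rho(\sigma))^{2}-\mathrm{tr}(\rho(\sigma^{2}))\bigr)
\]
shows the determinants agree mod $\varpi^m$ as well. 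Finally I would invoke Carayol's theorem (Th\'eor\`eme~1 of Carayol, \emph{Formes modulaires et repr\'esentations galoisiennes \`a valeurs dans un anneau local complet}; equivalently Nyssen--Rouquier): two continuous representations $G\to \mathrm{GL}_2(\mathcal{O}/\varpi^m)$ with the same trace function, one of which has absolutely irreducible residual representation, are conjugate. This gives $\rho_{f,m}\cong \rho_{g,m}$.

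The only real obstacle is the mod $\varpi^m$ step: the input from the hypothesis only gives trace congruences at Frobenius elements at \emph{almost all} primes, and we must pass to \emph{all} $\sigma\in G_{\mathbb{Q}}$. This is routine once one notes that $\rho_{f,m}$ and $\rho_{g,m}$ both factor through the same finite Galois group (the compositum of the fixed fields of their kernels), so Chebotarev density inside that finite quotient suffices; thereafter one is simply quoting the Carayol/Nyssen--Rouquier theorem, whose hypothesis of absolute irreducibility of the residual representation is exactly our standing assumption on $\overline{\rho}_f$.
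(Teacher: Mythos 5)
Your proof is correct and follows essentially the same route as the paper: the paper cites Nyssen's theory of pseudo-representations (to get that the isomorphism class of $\rho_{f,m}$ is determined by its trace when the residual representation is absolutely irreducible) together with Chebotarev density, exactly as you do via Carayol/Nyssen--Rouquier. You supply more detail — the reduction mod $\varpi$ to identify the residual representations, and the recovery of the determinant from the trace when $p$ is odd — but the underlying ingredients and structure are the same as the paper's two-line proof.
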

  \begin{proof}
  Since we assume that $\overline{\rho}_f$ is absolutely irreducible, the isomorphism class of $\rho_{f, m}$ is determined by its trace by 
  the theory of pseudo representations by \cite{Ny96}. Then the equivalence of (1) and (2) follows from Chevotarev density theorem since 
  one has $a_l=\mathrm{tr}(\rho_f(\mathrm{Frob}_l))$ and $b_l=\mathrm{tr}(\rho_g(\mathrm{Frob}_l))$ for all but finitely many primes $l$. 
  \end{proof}
  
  As an immediate corollary of our main theorem, we obtain the following congruence of zeta morphisms $\bold{z}_n(f)$. 
  \begin{corollary}\label{cong}
  Let $f=\sum_{n=1}^{\infty}a_nq^n$ and $g=\sum_{n=1}^{\infty}b_nq^n$ be normalized Hecke eigen cusp new forms such that 
  $a_n, b_n\in E$ for all $n\geqq 1$. Assume that $\overline{\rho}_f$ is absolutely irreducible, and all the assumptions $(1), (2), (3)$ in the beginning of $\S 4$
  are satisfied for $\overline{\rho}:=\overline{\rho}_f$. 
  If $f$ and $g$ are congruent module $\varpi^m$, 
  %then  there exists an isomorphism 
 % $$\rho_{f,m}\isom \rho_{g,m}.$$
  %Assume furthermore that $\overline{\rho}_f$ satisfies all the assumptions in Threom ?. 
  then one has 
  $$\prod_{l\in \Sigma_0}P_{f, l}(\sigma_l^{-1})\cdot \bold{z}_n(f) \bmod \varpi^m=\prod_{l\in \Sigma_0}P_{g, l}(\sigma_l^{-1})\cdot\bold{z}_n(g) \bmod \varpi^m$$
  under the isomorphism $\rho_{f,m}\isom \rho_{g,m}$ above for $\Sigma_0:=\Sigma_f\cup \Sigma_g\setminus\{p\}$.
  \end{corollary}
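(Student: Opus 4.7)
The plan is to deduce the corollary directly from Theorem \ref{3.2} by factoring both $f$ and $g$ through the universal zeta morphism over a common deformation ring. First, I would set $\Sigma := \Sigma_f\cup\Sigma_g$; this still contains $p$, and the standing hypotheses (1), (2), (3) on $\overline{\rho}=\overline{\rho}_f$ are local at $p$ and hence insensitive to enlarging $\Sigma$. Since $f\equiv g\pmod{\varpi}$ forces $\overline{\rho}_f\isom\overline{\rho}_g$, both $f$ and $g$ correspond to modular points $x_f,x_g\in\mathrm{Spec}(\mathbb{T}_{\overline{\rho},\Sigma})(E)$, with associated $\mathcal{O}$-algebra homomorphisms $x_f^*,x_g^*:\mathbb{T}_{\overline{\rho},\Sigma}\to\mathcal{O}$ sending $T_l$ and $S_l$ for $l\notin\Sigma$ to the Hecke eigenvalues of $f$ and $g$, respectively.

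The key input is the implication $f\equiv g\pmod{\varpi^m}\Rightarrow x_f^*\equiv x_g^*\pmod{\varpi^m}$. Since $\mathbb{T}_{\overline{\rho},\Sigma}$ is topologically generated over $\mathcal{O}$ by the operators $T_l$ and $S_l$ for $l\notin\Sigma$, and since the Fourier coefficients $a_l$ and nebentypus values agree modulo $\varpi^m$ for almost all, hence by Chebotarev and continuity for all such $l$, the two maps agree modulo $\varpi^m$ on a topologically generating set and therefore as $\mathcal{O}$-algebra homomorphisms $\mathbb{T}_{\overline{\rho},\Sigma}\to \mathcal{O}/\varpi^m$. Consequently, the specializations of the universal representation $\rho^{\mathfrak{m}}$ along $x_f^*$ and $x_g^*$ coincide \emph{as the same} $\mathcal{O}/\varpi^m[G_{\mathbb{Q}}]$-module, providing a canonical identification $\rho_{f,m}\isom\rho_{g,m}$ that refines the abstract isomorphism of Lemma \ref{7.1}.

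Specializing the universal zeta morphism
$$\bold{z}_{\Sigma,n}(\rho^{\mathfrak{m}}):(\rho^{\mathfrak{m}})^*\to H^1_{\mathrm{Iw}}(\mathbb{Z}[1/\Sigma_n,\zeta_n],(\rho^{\mathfrak{m}})^*(1))$$
along $x_f^*$ and $x_g^*$ then yields, by naturality of base change in Iwasawa cohomology, the mod $\varpi^m$ congruence
$$\bold{z}_{\Sigma,n}(f)\equiv\bold{z}_{\Sigma,n}(g)\pmod{\varpi^m}$$
as maps $\rho_{f,m}^*\to H^1_{\mathrm{Iw}}(\mathbb{Z}[1/\Sigma_n,\zeta_n],\rho_{f,m}^*(1))$ under the above identification. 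Applying Theorem \ref{3.2} separately to $f$ and to $g$ rewrites each side as $\prod_{l\in\Sigma_0}P_{\cdot,l}(\sigma_l^{-1})\cdot\bold{z}_n(\cdot)$, and reducing modulo $\varpi^m$ gives the corollary. Note that this argument simultaneously implies the nonobvious fact that the two Euler products, which may have Euler factors of different degrees at primes in $(\Sigma_f\cup\Sigma_g)\setminus(\Sigma_f\cap\Sigma_g)$, nevertheless agree mod $\varpi^m$ once multiplied into Kato's zeta morphisms.

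The one point deserving care, rather than being a true obstacle, is to check that the identification $\rho_{f,m}\isom\rho_{g,m}$ furnished by universal deformation theory agrees with the abstract one used to formulate the statement. Because $\overline{\rho}_f$ is absolutely irreducible, Nyssen's theorem (invoked in Lemma \ref{7.1}) pins down the deformation uniquely up to conjugation by some element of $\mathrm{GL}_2(\mathcal{O}/\varpi^m)$, and this conjugation is unique up to scalar; since both the source and the Iwasawa cohomology of the Tate-dual twist transform equivariantly under such scalars, the congruence is well-posed and independent of the choice, so the two identifications are interchangeable for the purposes of the statement.
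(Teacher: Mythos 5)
Your proof is correct and follows essentially the same route as the paper's: form $\Sigma=\Sigma_f\cup\Sigma_g$, view $f$ and $g$ as modular points $x_f,x_g$ of $\mathrm{Spec}(\mathbb{T}_{\overline{\rho},\Sigma})$, observe that $\rho_{f,m}\isom\rho_{g,m}$ forces $x_f\equiv x_g\pmod{\varpi^m}$ because $\mathbb{T}_{\overline{\rho},\Sigma}$ is topologically generated by the $T_l,S_l$ and these are determined by traces and determinants of the deformation, then specialize the universal zeta morphism and invoke Theorem \ref{3.2}. The only addition relative to the paper is your explicit remark that the identification $\rho_{f,m}\isom\rho_{g,m}$ is canonical up to scalar and that this ambiguity cancels in the transported morphism; this point is implicit in the paper but worth having spelled out.
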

 \begin{proof}Since both $\rho_f$ and $\rho_g$ are deformations of $\overline{\rho}=\overline{\rho}_f\isom \overline{\rho}_g$, 
 %Since one has $a_l=\mathrm{Tr}(\rho_f)$ and $b_l=\mathrm{Tr}(\rho_f)$ for all but finitely many primes $l$, one has 
% $$\mathrm{Tr}(\rho_f)\equiv \mathrm{Tr}(\rho_g) \bmod \varpi^m$$ by Chebotarev density theorem. Then, one also has 
% $$\rho_{f,m}\isom \rho_{g,m}$$ since $\overline{\rho}_f$ is absolutely irreducible by Carayol's theorem (?) on pseudo representation, which shows the first statement. We set $\overline{\rho}:=\overline{\rho}_f$. 
one has $\mathcal{O}$-valued points 
 $$x_f, x_g\in \mathrm{Spec}(\mathbb{T}_{\overline{\rho}, \Sigma})(\mathcal{O})$$ corresponding to $\rho_f$ and $\rho_g$ respectively 
 for $\Sigma:=\Sigma_f\cup \Sigma_g$. By
 Theorem \ref{3.2}, one has 
 $$\bold{z}_{\Sigma, n}(f)=\prod_{l\in \Sigma_0}P_{f, l}(\sigma_l^{-1})\cdot \bold{z}_n(f)$$
 and 
  $$\bold{z}_{\Sigma, n}(g)=\prod_{l\in \Sigma_0}P_{g, l}(\sigma_l^{-1})\cdot \bold{z}_n(g).$$
  The existence of isomorphism $\rho_{f,m}\isom \rho_{g,m}$ implies the equality 
  $$x_f\bmod \varpi^m=x_g\bmod \varpi^n : \mathbb{T}_{\overline{\rho},\Sigma}\rightarrow \mathcal{O}/\varpi^m.$$
  Therefore, one obtains an equality
  $$\bold{z}_{\Sigma, n}(f)\bmod \varpi^m=\bold{z}_{\Sigma, n}(g) \bmod \varpi^m$$
  under the isomorphism $\rho_{f,m}\isom \rho_{g,m}$, 
  which shows the corollary.

 \end{proof}
 \subsection{An application to Kato's main conjecture}
  Let $\overline{\rho} : G_{\mathbb{Q}}\rightarrow \mathrm{GL}_2(\mathbb{F})$  be as in \S 4. We write $\Sigma_{\overline{\rho}}$ to denote the union of $\{p\}$ and the set of primes on which $\overline{\rho}$ is ramified. Then, for each finite set of primes $\Sigma$ containing $\Sigma_{\overline{\rho}}$, and $n\geqq 1$ such that $(n, \Sigma)=1$, one has the zeta morphism
 $$\bold{z}_{\Sigma, n}(\rho^{\mathfrak{m}}_{\Sigma}) : (\rho^{\mathfrak{m}}_{\Sigma})^*\rightarrow H^1_{\mathrm{Iw}}(\mathbb{Z}[1/\Sigma_n, \zeta_n], 
 (\rho^{\mathfrak{m}}_{\Sigma})^*(1))$$ for the universal promodular deformation 
 $ \rho^{\mathfrak{m}}_{\Sigma}$ over $\mathbb{T}_{\overline{\rho}, \Sigma}$, which is simply denoted by $\rho^{\mathfrak{m}}$ in the previous section since 
 we fix $\Sigma$ there. For each finite sets of primes $\Sigma_{\overline{\rho}}\subset\Sigma\subset \Sigma'$, one has 
 a canonical ring homomorphism $\mathbb{T}_{\overline{\rho}, \Sigma'}\rightarrow \mathbb{T}_{\overline{\rho}, \Sigma}$ sending $T_l$ and $S_l$ for $l\not\in \Sigma$ in the source to $T_l$ and $S_l$ in the target, and this induces an isomorphism 
 $$\rho^{\mathfrak{m}}_{\Sigma'}\otimes_{\mathbb{T}_{\overline{\rho}, \Sigma'}}\mathbb{T}_{\overline{\rho}, \Sigma}\isom \rho^{\mathfrak{m}}_{\Sigma}.$$
 By definition of the map $\bold{z}_{\Sigma, n}(\rho^{\mathfrak{m}}_{\Sigma})$, or by Theorem \ref{3.2} and the density of modular points in $\mathbb{T}_{\overline{\rho}, \Sigma}$, one has 
 \begin{equation}\label{77}\bold{z}_{\Sigma', n}(\rho^{\mathfrak{m}}_{\Sigma'})\otimes \mathrm{id}_{\mathbb{T}_{\overline{\rho}, \Sigma'}}
 =\prod_{l\in \Sigma'\setminus \Sigma}P_l(\sigma_l^{-1})\cdot \bold{z}_{\Sigma, n}(\rho^{\mathfrak{m}}_{\Sigma})
 \end{equation} under the isomorphism 
 $\rho^{\mathfrak{m}}_{\Sigma'}\otimes_{\mathbb{T}_{\overline{\rho}, \Sigma'}}\mathbb{T}_{\overline{\rho}, \Sigma}\isom \rho^{\mathfrak{m}}_{\Sigma}$ above.

 For each $\Sigma$ and $n\geqq 1$ as above, we write 
 $$\bold{z}_{\Sigma, n}(\overline{\rho}) : \overline{\rho}^*\rightarrow H^1_{\mathrm{Iw}}(\mathbb{Z}[1/\Sigma_n, \zeta_n], 
 \overline{\rho}^*(1))$$ 
 to denote the base change of $\bold{z}_{\Sigma, n}(\rho^{\mathfrak{m}}_{\Sigma})$ by the canonical quotient map
 $\mathbb{T}_{\overline{\rho}, \Sigma}\rightarrow \mathbb{T}_{\overline{\rho}, \Sigma}/\mathfrak{m}=\mathbb{F}$, which we call the zeta morphism 
 for $\overline{\rho}$. By the equality (\ref{77}), one has 
 $$\bold{z}_{\Sigma', n}(\overline{\rho})=\prod_{l\in \Sigma'\setminus \Sigma}P_l(\sigma_l^{-1})\cdot\bold{z}_{\Sigma, n}(\overline{\rho})$$ for each 
 finite set of primes $\Sigma_{\overline{\rho}}\subset\Sigma\subset \Sigma'$. We remark that the map $\bold{z}_{\Sigma, n}(\overline{\rho})$ naturally induces a
 $\Lambda_{n, \mathbb{F}}$-linear map 
  $$\bold{z}_{\Sigma, n}(\overline{\rho}) : \overline{\rho}^*\otimes_{\mathbb{F}[\{\pm 1\}]}\Lambda_{n, \mathbb{F}}\rightarrow H^1_{\mathrm{Iw}}(\mathbb{Z}[1/\Sigma_n, \zeta_n], 
 \overline{\rho}^*(1))$$ which we denote by the same letter.

 We set $\mathfrak{X}(\overline{\rho}):=\bigcup_{\Sigma_{\overline{\rho}}\subset \Sigma}\mathrm{Spec}(\mathbb{T}_{\overline{\rho}, \Sigma})(\overline{\mathbb{Z}}_p)$. For any point $x\in \mathrm{Spec}(\mathbb{T}_{\overline{\rho}, \Sigma})(\mathcal{O}')\subset\mathfrak{X}(\overline{\rho})$, we write $\rho_x$  to denote 
the specialization of $\rho^{\mathfrak{m}}_{\Sigma}$ at $x$, and write 
$$\bold{z}_{\Sigma, n}(\rho_x) : \rho_x^*\otimes_{\mathcal{O}'[\{\pm 1\}]}\Lambda_{n, \mathcal{O}'}\rightarrow H^1_{\mathrm{Iw}}(\mathbb{Z}[1/\Sigma_n,\zeta_n], 
\rho_x^*)$$
to denote the specialization of the map $\bold{z}_{\Sigma, n}(\rho^{\mathfrak{m}}_{\Sigma})$ at $x$. To simplify the notation, we always assume that such a point $x$ is contained in $\mathrm{Spec}(\mathbb{T}_{\overline{\rho}, \Sigma})(\mathcal{O})$ by freely extending scalar $E$ suitably.

 We write $\mathfrak{X}(\overline{\rho})^{\mathrm{mod}}\subset \mathfrak{X}(\overline{\rho})$ to denote the subset consisting of 
 all the modular points. 
  
 %We say that $g$ is congruent to $f$ if one has $a_l\equiv b_l \bmod \varpi$ for all but finitely many primes $l$. 
 %We recall that we set 
 %$$H^2_{\mathrm{Iw}}(\mathbb{Z}[1/p], \rho_g^*(1)):=\mathrm{Ker}(H^2_{\mathrm{Iw}}(\mathbb{Z}[1/p], \rho_g^*(1))
% \rightarrow \oplus_{l\in \Sigma_{g,0}}H^2_{\mathrm{Iw}}(\mathbb{Q}_l, \rho_g^*(1))).$$

As a consequence of (the proof of) Corollary \ref{cong} (for $m=n=1$) and the proof of Theorem 2.1 of \cite{KLP19}, we immediately obtain the following theorems. 
The author would like to Chan-Ho Kim for sending him this article, and discussing applications of our main theorem to Iwasawa main conjecture. 
We give the proof of the theorems after Corollary \ref{coro} below. 

We fix a character $\eta : \Delta\rightarrow \mathbb{Z}_p^{\times}$.
\begin{thm}\label{thmA}
The following conditions $(1), (2), (3)$ are equivalent.
\begin{itemize}
\item[(1)]The $\Lambda_{\mathbb{F}}^{\eta}$-linear map $\bold{z}_{\Sigma_{\overline{\rho}}, 1}(\overline{\rho})^{\eta}$ is non zero. 
\item[(2)]One has
$$\mu(H^1_{\mathrm{Iw}}(\mathbb{Z}[1/\Sigma_f], \rho_f^*(1))^{\eta}/\mathrm{Im}(\bold{z}_1(f)^{\eta}))=0$$ 
for some $x_f\in \mathfrak{X}(\overline{\rho})^{\mathrm{mod}}$. 
\item[(3)]
One has
$$\mu(H^1_{\mathrm{Iw}}(\mathbb{Z}[1/\Sigma_f], \rho_f^*(1))^{\eta}/\mathrm{Im}(\bold{z}_1(f)^{\eta}))=0$$ 
for all $x_f\in \mathfrak{X}(\overline{\rho})^{\mathrm{mod}}$. 
%\item[(3)]One has
%$$\mu(H^1_{\mathrm{Iw}}(\mathbb{Z}[1/\Sigma_f], \rho_f^*(1))^{\eta}/\mathrm{Im}(\bold{z}_1(f)^{\eta}))=0$$ 
%for all $x_f\in \mathfrak{X}(\overline{\rho})_a^{\mathrm{mod}}$. 
\end{itemize}
\end{thm}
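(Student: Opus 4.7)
The plan is to reduce all three conditions to the single assertion that $\bold{z}_1(f)^{\eta} \bmod \varpi \neq 0$ for some (equivalently, every) modular $x_f$, and then to observe that this non-vanishing is intrinsic to $\overline{\rho}$ (essentially via Corollary \ref{cong}).

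First, for any modular $x_f \in \mathfrak{X}(\overline{\rho})^{\mathrm{mod}}$ with associated level set $\Sigma_f \supseteq \Sigma_{\overline{\rho}}$, I would establish an identification. Theorem \ref{3.2} applied at $\Sigma = \Sigma_f$ gives $\bold{z}_{\Sigma_f, 1}(\rho_f) = \prod_{l \in \Sigma_{f,0}} P_{f,l}(\sigma_l^{-1}) \cdot \bold{z}_1(f)$; reducing mod $\varpi$ and using that the specialization of $\rho^{\mathfrak{m}}_{\Sigma_f}$ modulo its maximal ideal is $\overline{\rho}$ (so the left-hand side becomes $\bold{z}_{\Sigma_f, 1}(\overline{\rho})$, which in turn equals $\prod_{l \in \Sigma_{f,0} \setminus \Sigma_{\overline{\rho},0}} P_{\overline{\rho},l}(\sigma_l^{-1}) \cdot \bold{z}_{\Sigma_{\overline{\rho}}, 1}(\overline{\rho})$ by the compatibility relation \ref{77} applied to $\Sigma_{\overline{\rho}} \subset \Sigma_f$) yields the identity
\[
\Bigl(\prod_{l \in \Sigma_{f,0} \setminus \Sigma_{\overline{\rho},0}} P_{\overline{\rho},l}(\sigma_l^{-1})\Bigr) \cdot \bold{z}_{\Sigma_{\overline{\rho}}, 1}(\overline{\rho})^{\eta} \;=\; \Bigl(\prod_{l \in \Sigma_{f,0}} \overline{P_{f,l}}(\sigma_l^{-1})\Bigr) \cdot \bigl(\bold{z}_1(f)^{\eta} \bmod \varpi\bigr)
\]
in $H^1_{\mathrm{Iw}}(\mathbb{Z}[1/\Sigma_f, \zeta_1], \overline{\rho}^*(1))^{\eta}$. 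Each $P_{\overline{\rho},l}(\sigma_l^{-1})$ and $\overline{P_{f,l}}(\sigma_l^{-1})$ is a polynomial in $\sigma_l^{-1}$ with constant term $1$, hence a non-zero divisor in the domain $\Lambda^{\eta}_{\mathbb{F}} \cong \mathbb{F}[[T]]$; cancelling on both sides gives $\bold{z}_{\Sigma_{\overline{\rho}}, 1}(\overline{\rho})^{\eta} \neq 0$ if and only if $\bold{z}_1(f)^{\eta} \bmod \varpi \neq 0$.

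Second, I would interpret the vanishing of the $\mu$-invariant. By the proof of Proposition \ref{2.14} (an analogue of Theorem 12.4(3) of \cite{Ka04}, using absolute irreducibility of $\overline{\rho}$), the module $N^{\eta} := H^1_{\mathrm{Iw}}(\mathbb{Z}[1/\Sigma_f], \rho_f^*(1))^{\eta}$ is free of rank one over $\Lambda^{\eta}_{\mathcal{O}}$. Since $p$ is odd and $\rho_f$ is odd, complex conjugation splits $\rho_f^*$ as a direct sum of two free rank-one $\mathcal{O}$-modules, so the source $(\rho_f^* \otimes_{\mathcal{O}[\{\pm 1\}]} \Lambda_{\mathcal{O}})^{\eta}$ is also free of rank one over $\Lambda^{\eta}_{\mathcal{O}}$. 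Fixing generators of source and target, $\bold{z}_1(f)^{\eta}$ is multiplication by some scalar $z_f^{\eta} \in \Lambda^{\eta}_{\mathcal{O}}$, and $\mu(N^{\eta}/\mathrm{Im}(\bold{z}_1(f)^{\eta})) = \mu(\Lambda^{\eta}_{\mathcal{O}}/(z_f^{\eta})) = v_{\varpi}(z_f^{\eta})$; hence $\mu = 0$ if and only if $z_f^{\eta} \not\equiv 0 \pmod{\varpi}$, i.e., if and only if $\bold{z}_1(f)^{\eta} \bmod \varpi \neq 0$.

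Combining the two steps, $\bold{z}_{\Sigma_{\overline{\rho}}, 1}(\overline{\rho})^{\eta} \neq 0$ if and only if $\bold{z}_1(f)^{\eta} \bmod \varpi \neq 0$ if and only if $\mu = 0$; since the left-hand condition depends only on $\overline{\rho}$ and not on the modular lift $x_f$, this holds uniformly over all $x_f \in \mathfrak{X}(\overline{\rho})^{\mathrm{mod}}$ and simultaneously yields $(1) \Leftrightarrow (2) \Leftrightarrow (3)$. The main technical burden is the careful bookkeeping of the $\eta$-decomposition through the twisted $\mathcal{O}[\{\pm 1\}]$-action on the source of $\bold{z}_1(f)$, together with verifying the rank-one freeness of $N^{\eta}$ over $\Lambda^{\eta}_{\mathcal{O}}$ in the generality needed; both are consequences of well-established results in \cite{Ka04} and Proposition \ref{2.14}, but require explicit verification in the $\eta$-graded setting.
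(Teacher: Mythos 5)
Your proof is correct and follows essentially the same strategy as the paper's: reduce all three conditions to the nonvanishing of the mod-$\varpi$ zeta element, using the freeness of rank one of $H^1_{\mathrm{Iw}}(\mathbb{Z}[1/\Sigma_f],\rho_f^*(1))^\eta$ over $\Lambda_{\mathcal{O}}^\eta$ (which comes from Kato's Theorem 12.4 and absolute irreducibility, not from Proposition \ref{2.14}), the universal specialization compatibility $\bold{z}_{\Sigma,1}(\rho_f)\bmod\varpi=\bold{z}_{\Sigma,1}(\overline{\rho})$, and cancellation of Euler factors in the domain $\Lambda^\eta_{\mathbb{F}}$. One small imprecision: the assertion that $P_{\overline{\rho},l}(\sigma_l^{-1})^\eta$ is a non-zero-divisor because ``it is a polynomial in $\sigma_l^{-1}$ with constant term $1$'' is not by itself a valid reason (such an evaluation could a priori vanish in the group ring); the correct justification is that it generates $\mathrm{Char}_{\Lambda_{\mathbb{F}}}$ of the finite module $H^2_{\mathrm{Iw}}(\mathbb{Q}_l,\overline{\rho}^*(1))$, or equivalently that $\sigma_l$ has infinite order in $\Gamma$ so each linear factor $1-\overline{\alpha}\sigma_l^{-1}$ is nonzero in $\Lambda^\eta_{\mathbb{F}}\cong\mathbb{F}[[T]]$.
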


 We write $\mathfrak{X}(\overline{\rho})_a^{\mathrm{mod}}\subset \mathfrak{X}^{\mathrm{mod}}(\overline{\rho})$ to denote the subset consisting of all the points $x_f$ such that $\rho_f$ satisfies the condition (a) in Theorem \ref{Kato}. 

From now on, we assume that $\mathfrak{X}(\overline{\rho})_a^{\mathrm{mod}}\not=\phi$ and the equivalent conditions (1), (2), (3) in Theorem \ref{thmA} are satisfied for a fixed $\eta : \Delta\rightarrow \mathbb{Z}_p^{\times}$. 
\begin{thm}\label{thmB}One has the following.
\begin{itemize}
\item[(1)]$H^1_{\mathrm{Iw}}(\mathbb{Z}[1/\Sigma_{\overline{\rho}}], \overline{\rho}^*(1))^{\eta}$ is a free $\Lambda_{\mathbb{F}}^{\eta}$-module of rank one.
\item[(2)] Both $H^1_{\mathrm{Iw}}(\mathbb{Z}[1/\Sigma_{\overline{\rho}}], \overline{\rho}^*(1))^{\eta}/\mathrm{Im}(\bold{z}_{\Sigma_{\overline{\rho}}, 1}(\overline{\rho})^{\eta})$ and 
$H^2_{\mathrm{Iw}}(\mathbb{Z}[1/\Sigma_{\overline{\rho}}], \overline{\rho}^*(1))^{\eta}$ are finite dimensional $\mathbb{F}$-vector spaces. 
\end{itemize}
\end{thm}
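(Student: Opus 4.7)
The strategy is to choose a modular point $x_f\in\mathfrak{X}(\overline{\rho})^{\mathrm{mod}}_a$ and transfer Kato's structural results from $H^i_{\mathrm{Iw}}(\mathbb{Z}[1/\Sigma_f],\rho_f^*(1))^{\eta}$ to $H^i_{\mathrm{Iw}}(\mathbb{Z}[1/\Sigma_{\overline{\rho}}],\overline{\rho}^*(1))^{\eta}$ by reduction modulo $\varpi$, in the spirit of the proof of Theorem~2.1 of \cite{KLP19}. I fix such an $x_f$, which exists by hypothesis. By Theorem~\ref{Kato} combined with condition (a), $H^1_{\mathrm{Iw}}(\mathbb{Z}[1/\Sigma_f],\rho_f^*(1))$ is a free $\Lambda_{\mathcal{O}}$-module of rank one and one has the characteristic-ideal inclusion $\mathrm{Char}_{\Lambda_{\mathcal{O}}}(H^1/\mathrm{Im}(\bold{z}_1(f)))\subset\mathrm{Char}_{\Lambda_{\mathcal{O}}}(H^2_{\mathrm{Iw}}(\mathbb{Z}[1/p],\rho_f^*(1)))$. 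The standing assumption together with Theorem~\ref{thmA}(3) forces the $\mu$-invariant of the left-hand side to vanish on the $\eta$-component, hence $\mu(H^2_{\mathrm{Iw}}(\mathbb{Z}[1/p],\rho_f^*(1))^{\eta})=0$; the local $H^2$-contributions at the primes of $\Sigma_f$ being finitely generated $\Lambda_{\mathcal{O}}^{\eta}$-modules with vanishing $\mu$, the same $\mu$-vanishing propagates to the $\Sigma_f$-cohomology.

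\textbf{Transfer to $\overline{\rho}$.} The long exact sequence attached to $0\to\rho_f^*(1)\xrightarrow{\varpi}\rho_f^*(1)\to\overline{\rho}^*(1)\to 0$, combined with $H^0_{\mathrm{Iw}}(\overline{\rho}^*(1))^{\eta}=0$ (from absolute irreducibility of $\overline{\rho}|_{G_{\mathbb{Q}(\zeta_{p^\infty})}}$, which I assume as part of the residual hypothesis) and $H^{\geq 3}_{\mathrm{Iw}}=0$, produces a short exact sequence
\begin{equation*}
0\to H^1_{\mathrm{Iw}}(\mathbb{Z}[1/\Sigma_f],\rho_f^*(1))^{\eta}/\varpi \to H^1_{\mathrm{Iw}}(\mathbb{Z}[1/\Sigma_f],\overline{\rho}^*(1))^{\eta} \to H^2_{\mathrm{Iw}}(\mathbb{Z}[1/\Sigma_f],\rho_f^*(1))^{\eta}[\varpi]\to 0
\end{equation*}
together with the identification $H^2_{\mathrm{Iw}}(\mathbb{Z}[1/\Sigma_f],\overline{\rho}^*(1))^{\eta}=H^2_{\mathrm{Iw}}(\mathbb{Z}[1/\Sigma_f],\rho_f^*(1))^{\eta}/\varpi$. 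The $\mu$-vanishing established above makes both $H^2[\varpi]$ and $H^2/\varpi$ finite-dimensional over $\mathbb{F}$. Descending from $\Sigma_f$ to $\Sigma_{\overline{\rho}}$ via Poitou--Tate at each $l\in\Sigma_f\setminus\Sigma_{\overline{\rho}}$, and using that each local Euler factor $P_l(\sigma_l^{-1})$ is a non-zero-divisor in $\Lambda_{\mathbb{F}}^{\eta}$, transports these finiteness statements to the $\Sigma_{\overline{\rho}}$-cohomology. This already yields the finiteness of $H^2_{\mathrm{Iw}}(\mathbb{Z}[1/\Sigma_{\overline{\rho}}],\overline{\rho}^*(1))^{\eta}$ asserted in (2).

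\textbf{Freeness of $H^1$ and finiteness of the cokernel.} Because $\Lambda_{\mathbb{F}}^{\eta}\cong\mathbb{F}[[T]]$ is a discrete valuation ring, any submodule of a finitely generated free $\Lambda_{\mathbb{F}}^{\eta}$-module is itself free. The continuous Iwasawa cochain complex $C^\bullet_{\mathrm{Iw}}(\mathbb{Z}[1/\Sigma_{\overline{\rho}}],\overline{\rho}^*(1))^{\eta}$ is quasi-isomorphic to a perfect complex $[P^1\to P^2]$ of finite free $\Lambda_{\mathbb{F}}^{\eta}$-modules concentrated in degrees $[1,2]$ (since $H^0_{\mathrm{Iw}}=H^{\geq 3}_{\mathrm{Iw}}=0$), so $H^1_{\mathrm{Iw}}(\mathbb{Z}[1/\Sigma_{\overline{\rho}}],\overline{\rho}^*(1))^{\eta}=\ker(P^1\to P^2)$ is free over the DVR. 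Its rank equals one by the Euler--Poincar\'e identity $\mathrm{rank}(P^1)-\mathrm{rank}(P^2)=\dim_{\mathbb{F}}(\overline{\rho}^*(1))^{\tau=-1}=1$ together with the torsion-ness of $H^2$ proved above. This proves (1). For the remaining half of (2), Theorem~\ref{thmA}(1) asserts that $\bold{z}_{\Sigma_{\overline{\rho}},1}(\overline{\rho})^{\eta}$ is a non-zero $\Lambda_{\mathbb{F}}^{\eta}$-linear map whose source $(\overline{\rho}^*)^{\tau=\eta(-1)}\otimes_{\mathbb{F}}\Lambda_{\mathbb{F}}^{\eta}$ is free of rank one; its image in the now free rank-one target is therefore a non-zero principal ideal in a DVR, and the cokernel has finite $\mathbb{F}$-length.

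\textbf{Main obstacle.} The most delicate step is the change-of-$\Sigma$ argument: I must verify that removing Euler factors at primes $l\in\Sigma_f\setminus\Sigma_{\overline{\rho}}$ preserves both finite generation and the Euler--Poincar\'e rank count. This is classical through local Tate duality, but requires care since we work in equicharacteristic $p$, and in particular one must check that each $P_l(\sigma_l^{-1})\in\Lambda_{\mathbb{F}}^{\eta}$ is a non-zero-divisor, which follows from $\sigma_l$ having infinite image in $\Gamma_0$. A secondary point is to set up the perfect-complex presentation of $C^\bullet_{\mathrm{Iw}}$ with $\mathbb{F}$-coefficients in the $\eta$-eigenspace so that the DVR argument applies cleanly. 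Once these are in place, the remainder is a streamlined residual analogue of Kato's Iwasawa descent, generalising \cite{KLP19} to arbitrary weights and levels via the congruences of Corollary~\ref{cong}.
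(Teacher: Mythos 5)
Your proposal follows the paper's proof strategy: fix $x_f\in\mathfrak{X}(\overline{\rho})^{\mathrm{mod}}_a$, extract $\mu$-vanishing from Theorem \ref{thmA} and Theorem \ref{Kato}, and push the finiteness through the short exact sequence (\ref{ggg}) relating $\rho_f$-cohomology mod $\varpi$ to $\overline{\rho}$-cohomology. Two places where you diverge slightly. First, for the freeness of $H^1_{\mathrm{Iw}}(\mathbb{Z}[1/\Sigma_{\overline{\rho}}],\overline{\rho}^*(1))^{\eta}$ the paper simply invokes 13.8 of Kato (absolute irreducibility of $\overline{\rho}$ kills $H^0$ at every finite layer, whence no torsion), whereas you reprove it via the perfect-complex representation; the imprecision is that $H^0_{\mathrm{Iw}}=H^{\geq 3}_{\mathrm{Iw}}=0$ alone does not let you discard $P^0$ — you need $H^0$ of the derived base change of $C^\bullet_{\mathrm{Iw}}$ to the residue field $\mathbb{F}$ of $\Lambda^{\eta}_{\mathbb{F}}$ to vanish, which is exactly the layer-wise $H^0$-vanishing furnished by absolute irreducibility of $\overline{\rho}$ over $\mathbb{Q}$ (you do not need the stronger irreducibility over $\mathbb{Q}(\zeta_{p^\infty})$ you mention). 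Second, you compute the rank by the Euler--Poincar\'e characteristic, while the paper obtains it more directly from the fact that a free module with a nonzero cyclic submodule of finite-dimensional cokernel must have rank one; the Euler--Poincar\'e route works as well, once one fixes the $\eta$-dependent sign of the complex-conjugation eigenspace (for an odd $2$-dimensional $\overline{\rho}$ the dimension is $1$ either way, so no harm). Your change-of-$\Sigma$ descent via local Euler factors is the same bookkeeping the paper carries out at the start of the joint proof of Theorems \ref{thmA}--\ref{thmD}, so the core argument is sound.
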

By this theorem, we can formulate the following conjecture, which we call the $\eta$-part of Kato's main conjecture for $\overline{\rho}$. 
\begin{conjecture}\label{conjmodp}
$$\mathrm{dim}_{\mathbb{F}}\left(H^1_{\mathrm{Iw}}(\mathbb{Z}[1/\Sigma_{\overline{\rho}}], \overline{\rho}^*(1))^{\eta}/\mathrm{Im}(\bold{z}_{\Sigma_{\overline{\rho}}, 1}(\overline{\rho})^{\eta})\right)
=\mathrm{dim}_{\mathbb{F}}(H^2_{\mathrm{Iw}}(\mathbb{Z}[1/\Sigma_{\overline{\rho}}], \overline{\rho}^*(1))^{\eta})$$
\end{conjecture}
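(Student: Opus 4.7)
The plan is to deduce Conjecture \ref{conjmodp} from Kato's main conjecture for a single lift of $\overline{\rho}$ by reducing modulo $\varpi$, using the congruences of zeta morphisms in Corollary \ref{cong} as the bridge between characteristic zero and characteristic $p$. Under the standing assumptions ($\mathfrak{X}(\overline{\rho})_a^{\mathrm{mod}} \neq \emptyset$ and the equivalent conditions of Theorem \ref{thmA}), one should regard the conjecture as conditional on knowing Conjecture \ref{Katoconj} for at least one modular point $x_g \in \mathfrak{X}(\overline{\rho})_a^{\mathrm{mod}}$, since otherwise no characteristic-zero anchor is available; this mirrors the logic leading to Theorem \ref{NNN}.

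First, fix such a point $x_g$. Combining Conjecture \ref{Katoconj} for $g$ with Corollary \ref{mu} yields the equality of $\lambda$-invariants
$$\lambda\!\left(H^1_{\mathrm{Iw}}(\mathbb{Z}[1/\Sigma_g], \rho_g^*(1))^{\eta}/\mathrm{Im}(\bold{z}_1(g)^{\eta})\right) = \lambda\!\left(H^2_{\mathrm{Iw}}(\mathbb{Z}[1/p], \rho_g^*(1))^{\eta}\right),$$
and the vanishing condition in Theorem \ref{thmA} forces both $\mu$-invariants to vanish, so these $\lambda$-invariants coincide with the $\mathbb{F}$-dimensions of the mod $\varpi$ reductions. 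Second, I would exploit the Kummer sequence $0 \to \rho_g^* \xrightarrow{\varpi} \rho_g^* \to \overline{\rho}^* \to 0$: since Theorem \ref{thmB}(1) (applied at the lifted level, via the freeness used in the proof of Theorem 12.4 of \cite{Ka04}) gives $H^1_{\mathrm{Iw}}(\mathbb{Z}[1/\Sigma_g], \rho_g^*(1))^{\eta}$ free of rank one over $\Lambda_{\mathcal{O}}^{\eta}$, the induced long exact sequence in Iwasawa cohomology identifies $H^1_{\mathrm{Iw}}(\mathbb{Z}[1/\Sigma_g], \rho_g^*(1))^{\eta}/\varpi$ with $H^1_{\mathrm{Iw}}(\mathbb{Z}[1/\Sigma_g], \overline{\rho}^*(1))^{\eta}$ and places $H^2_{\mathrm{Iw}}(\mathbb{Z}[1/\Sigma_g], \rho_g^*(1))^{\eta}/\varpi$ into a controlled comparison with $H^2_{\mathrm{Iw}}(\mathbb{Z}[1/\Sigma_g], \overline{\rho}^*(1))^{\eta}$. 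Meanwhile, Corollary \ref{cong} with $m = n = 1$ and Theorem \ref{3.2} give
$$\bold{z}_{\Sigma_g, 1}(\rho_g) \bmod \varpi \; = \; \bold{z}_{\Sigma_g, 1}(\overline{\rho})$$
as maps into the Iwasawa cohomology of $\overline{\rho}^*(1)$, so the cokernel on the $\rho_g$-side after reduction matches the cokernel appearing in the conjecture (at level $\Sigma_g$).

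Third, I would descend from the auxiliary set $\Sigma_g$ to the minimal set $\Sigma_{\overline{\rho}}$. The relation $\bold{z}_{\Sigma_g, 1}(\overline{\rho}) = \prod_{l \in \Sigma_g \setminus \Sigma_{\overline{\rho}}} P_l(\sigma_l^{-1}) \cdot \bold{z}_{\Sigma_{\overline{\rho}}, 1}(\overline{\rho})$, together with the analogous product appearing in the definition of $H^2_{\mathrm{Iw}}$ when enlarging the ramification set, should show that the Euler factors $P_l(\sigma_l^{-1})$ contribute identical contributions to both the numerator and the denominator of the dimension equality, so they cancel. The main obstacle I expect is precisely this Euler-factor bookkeeping: one must verify that the $P_l(\sigma_l^{-1})$ are not zero-divisors on the relevant $\eta$-components of $\Lambda_{\mathbb{F}}$ (which requires ruling out $\alpha \sigma_l^{-1} = 1$ on that component for eigenvalues $\alpha$ of $\mathrm{Frob}_l$ on $\overline{\rho}$), and that the enlargement $\Sigma_{\overline{\rho}} \subset \Sigma_g$ induces matching $P_l$-contributions on the Selmer/$H^2$ side. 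A secondary subtlety is controlling the kernel in the snake-lemma step comparing $H^2 \otimes_{\mathcal{O}} \mathbb{F}$ with $H^2$ for $\overline{\rho}$, which should follow from the torsion-freeness in Theorem \ref{thmB} but requires care.
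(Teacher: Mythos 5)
The statement you are asked about is formulated in the paper as a \emph{conjecture}, and the paper does not prove it unconditionally; what the paper does establish (Theorem \ref{thmD} and Corollary \ref{coro}) is that its validity is equivalent to the weak Kato main conjecture for any lift $\rho_x$. You correctly read the statement as conditional on Kato's main conjecture for one modular point, so your framing is compatible with the paper. Your strategy — Kummer sequence, Corollary \ref{cong} as the characteristic-zero/characteristic-$p$ bridge, Euler-factor bookkeeping to descend from $\Sigma_g$ to $\Sigma_{\overline{\rho}}$ — is also broadly the same as the paper's.

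However, there is a genuine gap at the step you call a ``secondary subtlety.'' You assert that the Kummer sequence ``identifies $H^1_{\mathrm{Iw}}(\mathbb{Z}[1/\Sigma_g],\rho_g^*(1))^{\eta}/\varpi$ with $H^1_{\mathrm{Iw}}(\mathbb{Z}[1/\Sigma_g],\overline{\rho}^*(1))^{\eta}$''; this is not an isomorphism. The long exact sequence gives a short exact sequence
$0 \to H^1_{\mathrm{Iw}}(\rho_g^*(1))^{\eta}/\varpi \to H^1_{\mathrm{Iw}}(\overline{\rho}^*(1))^{\eta} \to H^2_{\mathrm{Iw}}(\rho_g^*(1))^{\eta}[\varpi] \to 0$,
and the term $H^2_{\mathrm{Iw}}(\rho_g^*(1))^{\eta}[\varpi]$ is \emph{not} generally zero — $H^2_{\mathrm{Iw}}$ is a torsion $\Lambda_{\mathcal{O}}^{\eta}$-module and can have $\varpi$-torsion. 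There is no torsion-freeness statement about $H^2$ in Theorem \ref{thmB}, so the resolution you are hoping for is not available.

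The paper's actual mechanism, which you are missing, is a cancellation. Using $\lambda(M)=\lambda_{\mathbb{F}}(M/\varpi)-\lambda_{\mathbb{F}}(M[\varpi])$ for finitely generated torsion $\Lambda_{\mathcal{O}}^{\eta}$-modules with $\mu=0$, one computes: the cokernel $H^1_{\mathrm{Iw}}(\rho_g^*(1))^{\eta}/\Lambda_{\mathcal{O}}^{\eta}z$ has trivial $\varpi$-torsion (because $H^1_{\mathrm{Iw}}$ is $\Lambda_{\mathcal{O}}^{\eta}$-free of rank one and $z\not\equiv 0\bmod\varpi$ by Theorem \ref{thmA}), so its $\lambda$ equals $\lambda_{\mathbb{F}}$ of its reduction; meanwhile the reduction of the cokernel sits in the exact sequence (\ref{ggg}), producing the correction term $\lambda_{\mathbb{F}}(H^2[\varpi])$. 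The same correction term $\lambda_{\mathbb{F}}(H^2[\varpi])$ appears in $\lambda(H^2_{\mathrm{Iw}}(\rho_g^*(1))^{\eta}) = \lambda_{\mathbb{F}}(H^2_{\mathrm{Iw}}(\overline{\rho}^*(1))^{\eta}) - \lambda_{\mathbb{F}}(H^2[\varpi])$. The two occurrences of $\lambda_{\mathbb{F}}(H^2[\varpi])$ cancel in the difference, which is precisely why Theorem \ref{thmD} holds without any vanishing hypothesis on $H^2[\varpi]$. Without this cancellation your argument stalls: you cannot transport the $\lambda$-invariant equality from $\rho_g$ to the $\mathbb{F}$-dimension equality for $\overline{\rho}$. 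As a minor point, your worry about $P_l(\sigma_l^{-1})^{\eta}$ being a zero-divisor on $\Lambda_{\mathbb{F}}^{\eta}$ is unnecessary: $\Lambda_{\mathbb{F}}^{\eta}\cong\mathbb{F}[[T]]$ is a domain, and $P_l(\sigma_l^{-1})^{\eta}$ is nonzero, so it is automatically a non-zero divisor.
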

\begin{thm}\label{thmC}
For any point $x\in \mathrm{Spec}(\mathbb{T}_{\overline{\rho}, \Sigma})(\mathcal{O})\subset\mathfrak{X}(\overline{\rho})$, %we write $\rho_x$ $($resp. $\bold{z}_{\Sigma,1}(x)$$)$ to denote 
%the specialization of $\rho^{\mathfrak{m}}_{\Sigma}$ $($resp. $\bold{z}_{\Sigma,1}(\rho^{\mathfrak{m}}_{\Sigma})$$)$ at $x$. 
one has the following.
\begin{itemize}
\item[(1)]
$H^1_{\mathrm{Iw}}(\mathbb{Z}[1/\Sigma], \rho_x^*(1))^{\eta}$ is a free $\Lambda_{\mathcal{O}}^{\eta}$-module of rank one.
\item[(2)]Both 
$H^1_{\mathrm{Iw}}(\mathbb{Z}[1/\Sigma], \rho_x^*(1))^{\eta}/\mathrm{Im}(\bold{z}_{\Sigma, 1}(\rho_x)^{\eta})$ and 
$H^2_{\mathrm{Iw}}(\mathbb{Z}[1/\Sigma], \rho_x^*(1))^{\eta}$ are torsion $\Lambda_{\mathcal{O}}^{\eta}$-modules.
\item[(3)]One has $$\mu(H^1_{\mathrm{Iw}}(\mathbb{Z}[1/\Sigma], \rho_x^*(1))^{\eta}/\mathrm{Im}(\bold{z}_{\Sigma,1}(\rho_x)^{\eta}))=\mu(H^2_{\mathrm{Iw}}(\mathbb{Z}[1/\Sigma], \rho_x^*(1))^{\eta})=0.$$ 
\item[(4)]If $x$ is in $\mathfrak{X}(\overline{\rho})^{\mathrm{mod}}$, or if $\rho_x$ satisfies the following condition 
\begin{itemize}
\item[(b)]There exists an element $\tau\in \mathrm{Gal}(\overline{\mathbb{Q}}/\mathbb{Q}(\zeta_{p^{\infty}}))$ such that 
$$\left(\rho_x/(\tau-1)\rho_x\right)[1/p]$$ is one dimensional $E$-vector space. 
\end{itemize}
Then, one has an inclusion 
$$\mathrm{Char}_{\Lambda^{\eta}_{\mathcal{O}}}(H^1_{\mathrm{Iw}}(\mathbb{Z}[1/\Sigma], \rho_x^*(1))^{\eta}/\mathrm{Im}(\bold{z}_{\Sigma,1}(\rho_x)^{\eta}))
\subset 
    \mathrm{Char}_{\Lambda^{\eta}_{\mathcal{O}}}(H^2_{\mathrm{Iw}}(\mathbb{Z}[1/\Sigma], \rho_x^*(1))^{\eta})$$

\end{itemize}

\end{thm}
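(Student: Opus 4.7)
The plan is to deduce Theorem~\ref{thmC} from the mod-$\varpi$ information furnished by Theorem~\ref{thmB}, propagated to characteristic zero via the compatibility of the universal zeta morphism with specialization. Throughout, set $T := \rho_x^*(1)$ and $\overline{T} := \overline{\rho}^*(1) \cong T/\varpi T$. The critical observation is that, by construction of $\bold{z}_{\Sigma,1}(\rho^{\mathfrak{m}}_\Sigma)$ and the universality of $\rho^{\mathfrak{m}}_\Sigma$, the specialization of the universal zeta morphism at $x$ is $\bold{z}_{\Sigma,1}(\rho_x)$, and its reduction modulo $\varpi$ coincides with $\bold{z}_{\Sigma,1}(\overline{\rho})$; moreover, by (\ref{77}), $\bold{z}_{\Sigma,1}(\overline{\rho})^\eta$ differs from $\bold{z}_{\Sigma_{\overline{\rho}},1}(\overline{\rho})^\eta$ only by the product of the non-zero-divisors $P_l(\sigma_l^{-1}) \in \Lambda^\eta_{\mathbb{F}}$ for $l \in \Sigma \setminus \Sigma_{\overline{\rho}}$.

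For part~(1), assumption~(2) of Theorem~\ref{0.1} yields $H^0_{\mathrm{Iw}}(\mathbb{Z}[1/\Sigma], \overline{T}) = 0$, so the Bockstein sequence attached to $0 \to T \xrightarrow{\varpi} T \to \overline{T} \to 0$ produces an injection $H^1_{\mathrm{Iw}}(\mathbb{Z}[1/\Sigma], T)^\eta / \varpi \hookrightarrow H^1_{\mathrm{Iw}}(\mathbb{Z}[1/\Sigma], \overline{T})^\eta$. The target is free of rank one over $\Lambda^\eta_{\mathbb{F}}$ by Theorem~\ref{thmB}(1), so topological Nakayama forces $H^1_{\mathrm{Iw}}(\mathbb{Z}[1/\Sigma], T)^\eta$ to be cyclic over $\Lambda^\eta_{\mathcal{O}}$. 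Its generic rank is one by Tate's global Euler--Poincar\'e formula applied to the odd representation $\rho_x$, and the vanishing of $H^0_{\mathrm{Iw}}(\overline{T})$ eliminates all $\Lambda^\eta_{\mathcal{O}}$-torsion; hence $H^1_{\mathrm{Iw}}(\mathbb{Z}[1/\Sigma], T)^\eta \cong \Lambda^\eta_{\mathcal{O}}$. Part~(2) then follows: $H^2_{\mathrm{Iw}}$ is torsion by Euler--Poincar\'e, and torsionness of the cokernel of $\bold{z}_{\Sigma,1}(\rho_x)^\eta$ requires only that its image be generically non-zero, which is established next.

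For~(3), the non-vanishing of $\bold{z}_{\Sigma_{\overline{\rho}},1}(\overline{\rho})^\eta$ (condition~(1) of Theorem~\ref{thmA}) together with the non-zero-divisor property of the Euler factors show that $\bold{z}_{\Sigma,1}(\rho_x)^\eta \bmod \varpi$ is non-zero. Since $H^1_{\mathrm{Iw}}(\mathbb{Z}[1/\Sigma], T)^\eta$ is free of rank one by~(1), its image is not contained in $\varpi \cdot H^1_{\mathrm{Iw}}(\mathbb{Z}[1/\Sigma], T)^\eta$, forcing $\mu\bigl(H^1_{\mathrm{Iw}}/\mathrm{Im}(\bold{z}_{\Sigma,1}(\rho_x)^\eta)\bigr) = 0$. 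To get $\mu(H^2_{\mathrm{Iw}}(\mathbb{Z}[1/\Sigma], T)^\eta) = 0$, I would first apply Theorem~\ref{Kato} at some $x_f \in \mathfrak{X}(\overline{\rho})_a^{\mathrm{mod}}$ (non-empty by hypothesis): combined with $\mu(H^1_{\mathrm{Iw}}/\mathrm{Im}(\bold{z}_1(f)^\eta)) = 0$ from Theorem~\ref{thmA}, this yields $\mu(H^2_{\mathrm{Iw}}(\rho_{x_f}^*(1))^\eta) = 0$, and a control-theorem argument analogous to that of part~(1), applied now to $H^2$, propagates the vanishing to general $x$.

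Finally, for~(4), the modular case with~(a) is immediate from Theorem~\ref{Kato}. For an arbitrary $x$ satisfying~(b), I will follow the strategy of~\cite{KLP19}: by~(3), the characteristic ideals of $H^1_{\mathrm{Iw}}/\mathrm{Im}(\bold{z}_{\Sigma,1}(\rho_x)^\eta)$ and $H^2_{\mathrm{Iw}}$ both have vanishing $\mu$-invariant, so they are determined by their $\lambda$-invariants; these $\lambda$-invariants can be read off from the mod-$\varpi$ reductions, which depend only on $\overline{\rho}$. Comparing with any $x_f \in \mathfrak{X}(\overline{\rho})_a^{\mathrm{mod}}$ where the inclusion is already known then transfers it to $x$; hypothesis~(b) is what guarantees the relevant $\lambda$-invariants match through specialization. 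The principal obstacle is the uniform control of $H^2_{\mathrm{Iw}}$ along the universal deformation, i.e.\ the bijectivity of the specialization map $H^2_{\mathrm{Iw}}(\mathbb{Z}[1/\Sigma], T)^\eta \otimes_{\mathcal{O}} \mathbb{F} \to H^2_{\mathrm{Iw}}(\mathbb{Z}[1/\Sigma], \overline{T})^\eta$; this ultimately rests on the freeness in~(1) together with the vanishing of $H^3_{\mathrm{Iw}}$ (from strict cohomological dimension~$2$), and is the technical core of the argument.
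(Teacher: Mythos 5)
The proposal tracks the paper's strategy for parts~(1)--(3), though by a slightly longer route: the paper reads off freeness of $H^1_{\mathrm{Iw}}(T)^\eta$ and $\mu(H^2)=0$ directly from the short exact sequence
\[
0\to \bigl(H^1_{\mathrm{Iw}}(T)^\eta/\mathrm{Im}\bigr)/\varpi\to H^1_{\mathrm{Iw}}(\overline{T})^\eta/\mathrm{Im}\to H^2_{\mathrm{Iw}}(T)^\eta[\varpi]\to 0
\]
and Theorem~\ref{thmB}, whereas you re-derive~(1) via Nakayama and Euler--Poincar\'e and get $\mu(H^2)=0$ by detouring through a modular point and an unspecified ``control-theorem argument.'' That detour is unnecessary: since $H^2_{\mathrm{Iw}}(T)^\eta[\varpi]$ is a quotient of the finite-dimensional $\mathbb{F}$-vector space $H^1_{\mathrm{Iw}}(\overline{T})^\eta/\Lambda_{\mathbb{F}}^\eta\cdot\overline{z}$, the vanishing of $\mu(H^2)$ is immediate. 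These are differences in route rather than gaps.

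Part~(4), however, contains a genuine flaw. You write that because both modules have $\mu=0$ their characteristic ideals ``are determined by their $\lambda$-invariants,'' and that comparing $\lambda$-invariants with a modular point $x_f\in\mathfrak{X}(\overline{\rho})^{\mathrm{mod}}_a$ transfers the inclusion to $x$. This is false: a distinguished polynomial in $\mathcal{O}[[T]]$ is not determined by its degree, so $\mu=0$ does not make $\mathrm{Char}$ a function of $\lambda$, and the inequality $\lambda(H^1/\mathrm{Im})\geq\lambda(H^2)$ does not imply the inclusion $\mathrm{Char}(H^1/\mathrm{Im})\subset\mathrm{Char}(H^2)$. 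What \emph{can} be transferred via the $\lambda$-invariant independence of Theorem~\ref{thmD} is the \emph{equality} (i.e., the weak main conjecture), not the one-sided divisibility. The paper proves the inclusion for $x$ satisfying~(b) by a completely different mechanism: the specializations $z_{\Sigma,n}(\rho_x)^\eta$ form an Euler system for $\rho_x^*(1)$ whose first layer is nonzero by Theorem~\ref{thmA}, and condition~(b) is precisely the hypothesis needed to run the Euler system machinery (Kato/Perrin-Riou/Rubin, with Skinner's refinement) at $x$ itself; combined with $\mu=0$ this yields the inclusion. Your appeal to Theorem~\ref{Kato} in the modular case also quietly assumes condition~(a), while the statement claims the inclusion for \emph{all} modular $x$; the paper instead invokes Kato's Theorem~12.4 directly together with the local hypothesis~(3) of~\S4 and the $\mu$-vanishing just established.

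To repair~(4), discard the $\lambda$-transfer idea, note that the $\{z_{\Sigma,n}(\rho_x)^\eta\}_n$ inherit the Euler system relation from Corollary~\ref{3.11}, verify the first layer is nonzero (your part~(3) already gives this), and then cite the general Euler system divisibility theorem, for which condition~(b) is the required hypothesis on the image of $G_{\mathbb{Q}(\zeta_{p^\infty})}$.
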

By this theorem, one can also formulate the following conjecture, which we call the $\eta$-part of Kato's main conjecture for $\rho_x$.
\begin{conjecture} \label{conjx}
$$\mathrm{Char}_{\Lambda^{\eta}_{\mathcal{O}}}(H^1_{\mathrm{Iw}}(\mathbb{Z}[1/\Sigma], \rho_x^*(1))^{\eta}/\mathrm{Im}(\bold{z}_{\Sigma,1}(\rho_x)^{\eta}))=
    \mathrm{Char}_{\Lambda^{\eta}_{\mathcal{O}}}(H^2_{\mathrm{Iw}}(\mathbb{Z}[1/\Sigma], \rho_x^*(1))^{\eta}).$$
    \end{conjecture}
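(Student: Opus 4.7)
The plan is to deduce Conjecture \ref{conjx} from the mod-$p$ variant Conjecture \ref{conjmodp} by combining the vanishing of $\mu$-invariants in Theorem \ref{thmC}(3) with the one-sided divisibility in Theorem \ref{thmC}(4). Since both $\mu$-invariants vanish, the two characteristic ideals in Conjecture \ref{conjx} are generated by distinguished polynomials (up to units in $\Lambda_{\mathcal{O}}^{\eta}$), and equality reduces to equality of $\lambda$-invariants. Theorem \ref{thmC}(4) already supplies $\lambda(M_x)\geq \lambda(H^2_{\mathrm{Iw}}(\mathbb{Z}[1/\Sigma],\rho_x^*(1))^{\eta})$ for $M_x:=H^1_{\mathrm{Iw}}(\mathbb{Z}[1/\Sigma],\rho_x^*(1))^{\eta}/\mathrm{Im}(\bold{z}_{\Sigma,1}(\rho_x)^{\eta})$, so the remaining task is the reverse inequality, which will come from a mod-$\varpi$ control argument.

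First I would form the short exact sequence $0\to \rho_x^*\xrightarrow{\varpi}\rho_x^*\to \overline{\rho}^*\to 0$ and pass to the associated long exact sequence of Iwasawa cohomology. The freeness of $H^1_{\mathrm{Iw}}(\mathbb{Z}[1/\Sigma],\rho_x^*(1))^{\eta}$ from Theorem \ref{thmC}(1) and the vanishing of $H^{\geq 3}_{\mathrm{Iw}}$ yield the short exact sequence
\begin{equation*}
0\to H^1_{\mathrm{Iw}}(\mathbb{Z}[1/\Sigma],\rho_x^*(1))^{\eta}/\varpi\to H^1_{\mathrm{Iw}}(\mathbb{Z}[1/\Sigma],\overline{\rho}^*(1))^{\eta}\to H^2_{\mathrm{Iw}}(\mathbb{Z}[1/\Sigma],\rho_x^*(1))^{\eta}[\varpi]\to 0
\end{equation*}
together with an isomorphism $H^2_{\mathrm{Iw}}(\mathbb{Z}[1/\Sigma],\rho_x^*(1))^{\eta}/\varpi\isom H^2_{\mathrm{Iw}}(\mathbb{Z}[1/\Sigma],\overline{\rho}^*(1))^{\eta}$. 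By the specialization compatibility of the universal zeta morphism, $\bold{z}_{\Sigma,1}(\rho_x)^{\eta}\bmod \varpi$ coincides with $\bold{z}_{\Sigma,1}(\overline{\rho})^{\eta}=\prod_{l\in \Sigma\setminus \Sigma_{\overline{\rho}}}P_l(\sigma_l^{-1})\cdot \bold{z}_{\Sigma_{\overline{\rho}},1}(\overline{\rho})^{\eta}$. Counting $\mathbb{F}$-lengths in both sequences then produces the key identities
\begin{equation*}
\lambda(M_x)+\mathrm{length}_{\mathcal{O}}H^2_{\mathrm{Iw}}(\mathbb{Z}[1/\Sigma],\rho_x^*(1))^{\eta}[\varpi]=\dim_{\mathbb{F}}\overline{M}_\Sigma,\quad \lambda(H^2_{\mathrm{Iw}}(\mathbb{Z}[1/\Sigma],\rho_x^*(1))^{\eta})=\dim_{\mathbb{F}}H^2_{\mathrm{Iw}}(\mathbb{Z}[1/\Sigma],\overline{\rho}^*(1))^{\eta},
\end{equation*}
where $\overline{M}_\Sigma:=H^1_{\mathrm{Iw}}(\mathbb{Z}[1/\Sigma],\overline{\rho}^*(1))^{\eta}/\mathrm{Im}(\bold{z}_{\Sigma,1}(\overline{\rho})^{\eta})$.

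Conjecture \ref{conjmodp}, together with an Euler-factor comparison relating $\overline{M}_\Sigma$ to the level-$\Sigma_{\overline{\rho}}$ quotient $\overline{M}$ (and similarly for $H^2_{\mathrm{Iw}}$) in the spirit of \cite{GV00}, \cite{EPW06}, \cite{KLP19}, gives $\dim_{\mathbb{F}}\overline{M}_\Sigma=\dim_{\mathbb{F}}H^2_{\mathrm{Iw}}(\mathbb{Z}[1/\Sigma],\overline{\rho}^*(1))^{\eta}$. Feeding this into the displayed identities yields $\lambda(M_x)\leq \lambda(H^2_{\mathrm{Iw}})$, which combined with Theorem \ref{thmC}(4) forces equality of $\lambda$-invariants and hence of characteristic ideals in Conjecture \ref{conjx}; as a byproduct one also recovers $H^2_{\mathrm{Iw}}(\mathbb{Z}[1/\Sigma],\rho_x^*(1))^{\eta}[\varpi]=0$. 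The hard part is therefore Conjecture \ref{conjmodp} itself, which the paper leaves open in general: in practice one verifies it by producing an auxiliary Hecke eigen cusp newform $g$ congruent to a specialization of $\rho^{\mathfrak{m}}_\Sigma$ for which Kato's main conjecture is already established, and transporting the equality modulo $\varpi$ via the congruence Corollary \ref{cong}---this is precisely the strategy of Theorem \ref{NNN}.
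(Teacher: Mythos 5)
The statement under discussion is labelled in the paper as a \emph{Conjecture} and is not proved there: the paper establishes $\mu$-vanishing (Theorem \ref{thmC}(3)), a one-sided inclusion of characteristic ideals (Theorem \ref{thmC}(4)), and the identity of $\lambda$-defects between $\rho_x$ and $\overline{\rho}$ (Theorem \ref{thmD}), and then deduces in Corollary \ref{coro} that Conjecture \ref{conjmodp} is equivalent to the weak Conjecture \ref{wconjx} for one, hence all, $x$. Your proposal is accordingly not a proof but a reduction of Conjecture \ref{conjx} to Conjecture \ref{conjmodp}, and you say so explicitly. That reduction is essentially the content of Theorem \ref{thmD} and Corollary \ref{coro}, so the overall strategy is the paper's.

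Two concrete slips in the details should be corrected. First, your second displayed ``key identity'' is wrong: the length count $\lambda(M)=\mathrm{dim}_{\mathbb{F}}(M/\varpi)-\mathrm{dim}_{\mathbb{F}}(M[\varpi])$, applied to $M=H^2_{\mathrm{Iw}}(\mathbb{Z}[1/\Sigma],\rho_x^*(1))^{\eta}$ together with $M/\varpi\isom H^2_{\mathrm{Iw}}(\mathbb{Z}[1/\Sigma],\overline{\rho}^*(1))^{\eta}$, gives
\begin{equation*}
\lambda\bigl(H^2_{\mathrm{Iw}}(\mathbb{Z}[1/\Sigma],\rho_x^*(1))^{\eta}\bigr)=\mathrm{dim}_{\mathbb{F}}H^2_{\mathrm{Iw}}(\mathbb{Z}[1/\Sigma],\overline{\rho}^*(1))^{\eta}-\mathrm{dim}_{\mathbb{F}}H^2_{\mathrm{Iw}}(\mathbb{Z}[1/\Sigma],\rho_x^*(1))^{\eta}[\varpi],
\end{equation*}
with the extra torsion term that you dropped. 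Once it is restored, combining it with your (correct) first identity and the input from Conjecture \ref{conjmodp} gives $\lambda(M_x)=\lambda(H^2_{\mathrm{Iw}}(\mathbb{Z}[1/\Sigma],\rho_x^*(1))^{\eta})$ directly. Second, as a consequence the claimed ``byproduct'' $H^2_{\mathrm{Iw}}(\mathbb{Z}[1/\Sigma],\rho_x^*(1))^{\eta}[\varpi]=0$ does not follow from this count: the torsion term cancels tautologically on both sides and is left undetermined, so you should not assert it. Finally, since equality of $\mu$- and $\lambda$-invariants alone does not force equality of characteristic ideals, the step from $\lambda$-equality to $\mathrm{Char}$-equality genuinely needs the divisibility of Theorem \ref{thmC}(4); that divisibility is proved only for $x\in\mathfrak{X}(\overline{\rho})^{\mathrm{mod}}\cup\mathfrak{X}(\overline{\rho})_b$, so your conditional conclusion, like the paper's, is restricted to those $x$.
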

      We will show in the proof below that the validity of this equality for $x$ is independent of the choice of $\Sigma$, and 
    equivalent to the $\eta$-part of Kato's main conjecture for $f$ when $x=x_f$. 

    We also formulate a weak version of the conjecture, which we call the $\eta$-part of weak Kato's main conjecture for $\rho_x$. 
    \begin{conjecture} \label{wconjx}
   $$ \lambda(H^1_{\mathrm{Iw}}(\mathbb{Z}[1/\Sigma], \rho_x^*(1))^{\eta}/\mathrm{Im}(\bold{z}_{\Sigma,1}(\rho_x)^{\eta}))=
\lambda(H^2_{\mathrm{Iw}}(\mathbb{Z}[1/\Sigma], \rho_x^*(1))^{\eta}.$$
    \end{conjecture}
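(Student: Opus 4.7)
Since Theorem \ref{thmC}(3) gives $\mu=0$ for both modules appearing in Conjecture \ref{wconjx}, the weak conjecture is in fact equivalent to the full Kato main conjecture \ref{conjx} for $\rho_x$. Theorem \ref{thmC}(4) already supplies the inequality $\lambda(H^1_{\mathrm{Iw}}/\mathrm{Im}(\bold{z}_{\Sigma,1}(\rho_x)^{\eta})) \geq \lambda(H^2_{\mathrm{Iw}})$ under the stated hypotheses, so the remaining task is the reverse inequality. My plan is to descend the $\lambda$-computation to the residual zeta morphism $\bold{z}_{\Sigma,1}(\overline{\rho})^{\eta}$, reducing the problem to Conjecture \ref{conjmodp}, and then to establish the latter by transferring from a modular point where Kato's main conjecture (Conjecture \ref{eta}) is unconditionally known.

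The first reduction is to fix $\Sigma = \Sigma_{\overline{\rho}}$. The relation (\ref{77}) shows that enlarging $\Sigma$ to $\Sigma'$ multiplies the zeta morphism by $\prod_{l \in \Sigma'\setminus\Sigma} P_l(\sigma_l^{-1})$, while a Poitou--Tate argument comparing the two $H^2_{\mathrm{Iw}}$'s introduces matching local Euler factors at the same primes, so the two $\lambda$-changes cancel and the validity of Conjecture \ref{wconjx} is independent of $\Sigma$. Next, since $\mu=0$ on both sides, the $\lambda$-invariants agree, up to pseudo-null corrections that match via global duality, with the $\mathbb{F}$-dimensions of the reductions modulo $\varpi$. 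The family construction of $\bold{z}_{\Sigma,1}(\rho^{\mathfrak{m}}_{\Sigma})$ through the projective envelope $\widetilde{P}$ and the Pa\v{s}k\={u}nas isomorphism $R_p \cong \mathrm{End}_{\mathfrak{C}(\mathcal{O})}(\widetilde{P})$ is functorial in the coefficient ring, so it commutes with base change $\mathbb{T}_{\overline{\rho},\Sigma} \twoheadrightarrow \mathbb{F}$; together with standard control theorems for the Iwasawa cohomology, this identifies both mod-$\varpi$ reductions with the analogous residual modules attached to $\overline{\rho}$ that feature in Conjecture \ref{conjmodp}. Thus Conjecture \ref{wconjx} for $\rho_x$ reduces to Conjecture \ref{conjmodp}.

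To attack Conjecture \ref{conjmodp}, I would exhibit a point $x_f \in \mathfrak{X}(\overline{\rho})^{\mathrm{mod}}_a$ at which Conjecture \ref{eta} is already established---for instance an ordinary modular deformation of $\overline{\rho}$ falling within the reach of Skinner--Urban, or one produced by Theorem \ref{NNN} from a congruent triple $(g_1, g_2, g_3)$ satisfying its three hypotheses. The congruence of zeta morphisms provided by Corollary \ref{cong}, combined with the mod-$\varpi$ descent of the preceding paragraph, transports the $\lambda$-equality at $\rho_f$ down to $\overline{\rho}$, yielding Conjecture \ref{conjmodp} and hence Conjecture \ref{wconjx} for $\rho_x$.

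The principal obstacle is the final transfer step: the entire plan rests on locating, inside the family $\mathfrak{X}(\overline{\rho})$, at least one modular point where the full Kato main conjecture is unconditionally known. For residual representations $\overline{\rho}$ admitting a nice ordinary modular lift---one meeting the Skinner--Urban or Emerton--Pollack--Weston hypotheses---this is within reach of existing technology, but for purely supersingular $\overline{\rho}$ or residual representations with no accessible ordinary deformation the chain of reductions terminates at an unproved seed and the conjecture remains out of reach by this route.
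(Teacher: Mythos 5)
The statement you are addressing is Conjecture \ref{wconjx}, which the paper explicitly labels and treats as a \emph{conjecture}; there is no proof of it in the paper, so there is no ``paper's own proof'' to compare against. What the paper actually proves about it is a family of \emph{equivalences}: Theorem \ref{thmD} shows that the quantity
$$\lambda\big(H^1_{\mathrm{Iw}}(\mathbb{Z}[1/\Sigma], \rho_x^*(1))^{\eta}/\mathrm{Im}(\bold{z}_{\Sigma,1}(\rho_x)^{\eta})\big)-\lambda\big(H^2_{\mathrm{Iw}}(\mathbb{Z}[1/\Sigma], \rho_x^*(1))^{\eta}\big)$$
is independent of $x$ and equals the residual difference appearing in Conjecture \ref{conjmodp}, and Corollary \ref{coro} records that Conjecture \ref{wconjx} holds for one $x\in\mathfrak{X}(\overline{\rho})$ if and only if it holds for all $x$ if and only if Conjecture \ref{conjmodp} holds. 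Your first two paragraphs essentially reproduce these reductions: the $\Sigma$-independence via (\ref{77}) and the matching Euler factors, the $\mu=0$ input from Theorem \ref{thmC}(3), and the passage to the residual object. The one place you are imprecise is the phrase ``up to pseudo-null corrections that match via global duality'': the paper's actual mechanism is the short exact sequence (\ref{ggg}) together with the elementary identity $\lambda(M)=\lambda_{\mathbb{F}}(M/\varpi)-\lambda_{\mathbb{F}}(M[\varpi])$ for torsion $\Lambda_{\mathcal{O}}^{\eta}$-modules with $\mu(M)=0$; this is a direct bookkeeping argument, not a duality argument, and you should not invoke global duality where the paper uses a control exact sequence. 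Also note that the inclusion you cite from Theorem \ref{thmC}(4) is established only for $x\in\mathfrak{X}(\overline{\rho})^{\mathrm{mod}}\cup\mathfrak{X}(\overline{\rho})_b$, not for arbitrary $x$; for a general point one has to route through Theorem \ref{thmD} and a modular seed.

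Your final paragraph correctly identifies the genuine gap: the chain of reductions bottoms out at the existence of a point where Kato's main conjecture is already known, and this cannot be supplied internally. That is exactly why the paper formulates \ref{wconjx} as a conjecture rather than a theorem — Theorem \ref{NNN} in the introduction (the congruence-transfer result) carries precisely this ``seed point'' requirement as an explicit hypothesis, namely the existence of a congruent $g_3$ for which Conjecture \ref{conj01} holds. So your proposal does not prove the statement (nor does the paper), and the honest acknowledgment at the end is the correct conclusion. A cleaner framing of your text would be to drop the rhetoric of ``proof'' and state the reduction as a conditional result, in the style of Theorem \ref{NNN}.
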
 
    
    We write $\mathfrak{X}(\overline{\rho})_b$ to denote the subset of $\mathfrak{X}(\overline{\rho})$ consisting of all the points $x$ satisfying the 
    condition (b). By Theorem \ref{thmC}, Conjecture \ref{conjx} and Conjecture \ref{wconjx} are equivalent for $\rho_x$ such that 
    $x\in \mathfrak{X}(\overline{\rho})^{\mathrm{mod}}\cup \mathfrak{X}(\overline{\rho})_b$.

    Concerning the relation between these conjectures, we also prove the following.
    \begin{thm}\label{thmD}For any point $x\in \mathrm{Spec}(\mathbb{T}_{\overline{\rho}, \Sigma})(\mathcal{O})\subset\mathfrak{X}(\overline{\rho})$, one has 
    \begin{multline*}
\lambda(H^1_{\mathrm{Iw}}(\mathbb{Z}[1/\Sigma], \rho_x^*(1))^{\eta}/\mathrm{Im}(\bold{z}_{\Sigma,1}(\rho_x)^{\eta}))
-\lambda(H^2_{\mathrm{Iw}}(\mathbb{Z}[1/\Sigma], \rho_x^*(1))^{\eta})\\
=\mathrm{dim}_{\mathbb{F}}(H^1_{\mathrm{Iw}}(\mathbb{Z}[1/\Sigma_{\overline{\rho}}], 
\overline{\rho}^*(1))^{\eta}/\mathrm{Im}(\bold{z}_{\Sigma_{\overline{\rho}}, 1}(\overline{\rho})^{\eta}))
-\mathrm{dim}_{\mathbb{F}}H^2_{\mathrm{Iw}}(\mathbb{Z}[1/\Sigma_{\overline{\rho}}], 
\overline{\rho}^*(1))^{\eta}).
\end{multline*}
In particular, it is independent of $x\in \mathfrak{X}(\overline{\rho})$. 
    \end{thm}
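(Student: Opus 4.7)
The plan is to reduce the Iwasawa-theoretic difference $\lambda(A_x) - \lambda(B_x)$ on the left to its mod-$\varpi$ shadow via an Euler characteristic computation, exploiting the compatibility $\bold{z}_{\Sigma,1}(\rho_x) \bmod \varpi = \bold{z}_{\Sigma,1}(\overline{\rho})$ built into the construction of the zeta morphism. Abbreviate $N_x := H^1_{\mathrm{Iw}}(\mathbb{Z}[1/\Sigma], \rho_x^*(1))^\eta$, $M_x := (\rho_x^* \otimes_{\mathcal{O}[\{\pm 1\}]} \Lambda_{\mathcal{O}})^\eta$, $A_x := N_x/\mathrm{Im}(\bold{z}_{\Sigma,1}(\rho_x)^\eta)$, $B_x := H^2_{\mathrm{Iw}}(\mathbb{Z}[1/\Sigma], \rho_x^*(1))^\eta$, and write $\bar A_{\Sigma'}, \bar B_{\Sigma'}$ for the corresponding quotients built from $\overline{\rho}$ over $\mathbb{Z}[1/\Sigma']$, so $\bar A = \bar A_{\Sigma_{\overline{\rho}}}$ and $\bar B = \bar B_{\Sigma_{\overline{\rho}}}$. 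By Theorem \ref{thmC}(3), $\mu(A_x) = \mu(B_x) = 0$, so both modules are finitely generated over $\mathcal{O}$ and the structure theorem for finitely generated $\Lambda_{\mathcal{O}}^\eta \cong \mathcal{O}[[T]]$-modules gives
$$\lambda(M) = \dim_{\mathbb{F}}(M/\varpi M) - \dim_{\mathbb{F}}(M[\varpi]) \quad \text{for } M \in \{A_x, B_x\}.$$

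The second step is to show $A_x[\varpi] = 0$. By Theorem \ref{thmC}(1), $N_x$ is free of rank one over $\Lambda_{\mathcal{O}}^\eta$; a direct check using oddness of $\rho_x$ and $\eta(-1) = \pm 1$ shows $M_x$ is also free of rank one. Hence $\bold{z}_{\Sigma,1}(\rho_x)^\eta$ corresponds to multiplication by a single element $f \in \Lambda_{\mathcal{O}}^\eta$ and $A_x \cong \Lambda_{\mathcal{O}}^\eta/(f)$. The nonvanishing of $\bold{z}_{\Sigma_{\overline{\rho}},1}(\overline{\rho})^\eta$ from Theorem \ref{thmA}(1), together with (\ref{77}), forces $f \not\equiv 0 \bmod \varpi$, so $\mu(f) = 0$ and $f$ is associated to a distinguished polynomial; thus $A_x$ is $\mathcal{O}$-free, giving $A_x[\varpi] = 0$ and $\lambda(A_x) = \dim_{\mathbb{F}}(A_x/\varpi A_x)$.

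The third step is a snake lemma. The sequence $0 \to \rho_x^*(1) \xrightarrow{\varpi} \rho_x^*(1) \to \overline{\rho}^*(1) \to 0$ together with the vanishing $H^0_{\mathrm{Iw}}(\mathbb{Z}[1/\Sigma], \rho_x^*(1))^\eta = 0$ (from absolute irreducibility of $\overline{\rho}$) yields
$$0 \to N_x/\varpi N_x \to H^1_{\mathrm{Iw}}(\mathbb{Z}[1/\Sigma], \overline{\rho}^*(1))^\eta \to B_x[\varpi] \to 0,\quad B_x/\varpi B_x \isom H^2_{\mathrm{Iw}}(\mathbb{Z}[1/\Sigma], \overline{\rho}^*(1))^\eta.$$
Combined with $\bold{z}_{\Sigma,1}(\rho_x)^\eta \bmod \varpi = \bold{z}_{\Sigma,1}(\overline{\rho})^\eta$, the snake lemma applied to multiplication by $\varpi$ on the zeta map produces a further short exact sequence $0 \to A_x/\varpi A_x \to \bar A_\Sigma \to B_x[\varpi] \to 0$. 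Dimension counting, using $A_x[\varpi] = 0$ from the second step, gives
$$\lambda(A_x) - \lambda(B_x) = \dim_{\mathbb{F}}(A_x/\varpi) + \dim_{\mathbb{F}}(B_x[\varpi]) - \dim_{\mathbb{F}}(B_x/\varpi) = \dim_{\mathbb{F}}(\bar A_\Sigma) - \dim_{\mathbb{F}}(\bar B_\Sigma).$$

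It remains to show $\Sigma$-independence, i.e.\ that $\dim_{\mathbb{F}}(\bar A_{\Sigma'}) - \dim_{\mathbb{F}}(\bar B_{\Sigma'})$ does not depend on $\Sigma' \supseteq \Sigma_{\overline{\rho}}$, and therefore equals $\dim_{\mathbb{F}}(\bar A) - \dim_{\mathbb{F}}(\bar B)$. By induction on $|\Sigma' \setminus \Sigma_{\overline{\rho}}|$, one reduces to adjoining a single prime $l \notin \Sigma_{\overline{\rho}}$. On the cohomology side, the Poitou--Tate localization sequence interpolates $H^i_{\mathrm{Iw}}(\mathbb{Z}[1/(\Sigma' \setminus \{l\})], \overline{\rho}^*(1))^\eta$ and $H^i_{\mathrm{Iw}}(\mathbb{Z}[1/\Sigma'], \overline{\rho}^*(1))^\eta$ by local terms $H^i_{\mathrm{Iw}}(\mathbb{Q}_l, \overline{\rho}^*(1))^\eta$; on the zeta side, by (\ref{77}) the image picks up the Euler factor $P_l(\sigma_l^{-1})$. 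A local computation for $l \neq p$ unramified in $\overline{\rho}$, using local Tate duality $H^2_{\mathrm{Iw}}(\mathbb{Q}_l, \overline{\rho}^*(1))^\vee \cong H^0_{\mathrm{Iw}}(\mathbb{Q}_l, \overline{\rho})$, shows that $P_l(\sigma_l^{-1})$ encodes exactly the local Galois-cohomological contribution, so the increments to $\dim_{\mathbb{F}}(\bar A)$ and $\dim_{\mathbb{F}}(\bar B)$ caused by adjoining $l$ agree and cancel in the difference. The main obstacle is precisely this last matching: one must verify that the $\mathbb{F}$-dimension of the Euler-factor cokernel on the zeta side lines up term-by-term with the local $H^i_{\mathrm{Iw}}(\mathbb{Q}_l)$ contributions in the Poitou--Tate sequence; the earlier steps are formal consequences of Theorems \ref{thmA}, \ref{thmB}, and \ref{thmC}.
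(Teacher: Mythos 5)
Your proposal follows essentially the same route as the paper: both exploit the identity $\lambda(M)=\dim_{\mathbb F}(M/\varpi)-\dim_{\mathbb F}(M[\varpi])$ for $\mu=0$ modules together with the snake-lemma exact sequence comparing $A_x/\varpi$ and $\bar A_\Sigma$ (the sequence (\ref{ggg}) in the paper), and both dispose of $\Sigma$-dependence by matching the Euler-factor increment $\lambda_{\mathbb F}(P_{\overline{\rho},l}(\sigma_l^{-1})^\eta)$ on the zeta side against the local contribution $H^2_{\mathrm{Iw}}(\mathbb{Q}_l,\overline{\rho}^*(1))^\eta$ via the Poitou--Tate sequence (the paper quotes Corollary 3.7 of \cite{KLP19} for this). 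The only cosmetic differences are that you spell out $A_x[\varpi]=0$ via the Weierstrass factorization, which the paper takes as read, and that the paper consolidates the $\Sigma$-independence reduction at the start of its combined proof of Theorems \ref{thmA}--\ref{thmD} rather than treating it as a separate final step.
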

    As a corollary of this theorem, we immediately obtain the following corollary. 
\begin{corollary}\label{coro}
The following conditions $(1), (2), (3)$ are equivalent.
\begin{itemize}
\item[(1)]The $\eta$-part of Kato's main conjecture holds for $\overline{\rho}$. 
\item[(2)]The $\eta$-part of weak Kato's main conjecture for $\rho_x$ holds for some 
$x\in \mathfrak{X}(\overline{\rho})$.
\item[(3)]The $\eta$-part of weak Kato's main conjecture for $\rho_x$ holds for all
$x\in \mathfrak{X}(\overline{\rho})$.

\end{itemize}

\end{corollary}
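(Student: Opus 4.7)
The plan is to derive Corollary \ref{coro} as an immediate formal consequence of Theorem \ref{thmD}, once one unpacks the two conjectures into the invariants that appear in that theorem.

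First I would rewrite the conjectural statements in terms of difference invariants. Unpacking Conjecture \ref{conjmodp}, the $\eta$-part of Kato's main conjecture for $\overline{\rho}$ is equivalent to the vanishing of
\begin{equation*}
A(\overline{\rho}) := \mathrm{dim}_{\mathbb{F}}\bigl(H^1_{\mathrm{Iw}}(\mathbb{Z}[1/\Sigma_{\overline{\rho}}], \overline{\rho}^*(1))^{\eta}/\mathrm{Im}(\bold{z}_{\Sigma_{\overline{\rho}}, 1}(\overline{\rho})^{\eta})\bigr) - \mathrm{dim}_{\mathbb{F}}\bigl(H^2_{\mathrm{Iw}}(\mathbb{Z}[1/\Sigma_{\overline{\rho}}], \overline{\rho}^*(1))^{\eta}\bigr),
\end{equation*}
while, for any $x \in \mathrm{Spec}(\mathbb{T}_{\overline{\rho}, \Sigma})(\mathcal{O}) \subseteq \mathfrak{X}(\overline{\rho})$, Conjecture \ref{wconjx} for $\rho_x$ is equivalent to the vanishing of
\begin{equation*}
B(x, \Sigma) := \lambda\bigl(H^1_{\mathrm{Iw}}(\mathbb{Z}[1/\Sigma], \rho_x^*(1))^{\eta}/\mathrm{Im}(\bold{z}_{\Sigma,1}(\rho_x)^{\eta})\bigr) - \lambda\bigl(H^2_{\mathrm{Iw}}(\mathbb{Z}[1/\Sigma], \rho_x^*(1))^{\eta}\bigr).
\end{equation*}
Here the $\lambda$-invariants are well defined because Theorem \ref{thmC}(1), (2) guarantees that the two $\Lambda_{\mathcal{O}}^{\eta}$-modules in question are torsion.

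Next I would invoke Theorem \ref{thmD}, which asserts exactly the identity $B(x, \Sigma) = A(\overline{\rho})$ for every such $x$ and every admissible $\Sigma$. Two facts follow at once. First, $B(x, \Sigma)$ depends neither on the auxiliary finite set $\Sigma$ containing the bad primes of $\rho_x$ nor on the choice of $x$ itself, so Conjecture \ref{wconjx} is well-posed uniformly on $\mathfrak{X}(\overline{\rho})$ and has the same truth value for all points. Second, every $x \in \mathfrak{X}(\overline{\rho})$ lies in $\mathrm{Spec}(\mathbb{T}_{\overline{\rho}, \Sigma})(\mathcal{O})$ after possibly enlarging $E$, so Theorem \ref{thmD} applies to every point of $\mathfrak{X}(\overline{\rho})$.

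The equivalences (1) $\Leftrightarrow$ (2) $\Leftrightarrow$ (3) are then purely formal: statement (2) asserts $B(x, \Sigma) = 0$ for some $x$, which via the identity $B(x, \Sigma) = A(\overline{\rho})$ is equivalent to $A(\overline{\rho}) = 0$, that is, to (1); and the same identity forces $B(y, \Sigma') = 0$ for every other point $y$, which is (3). No substantive obstacle arises here, since all the analytic and representation-theoretic content has already been absorbed into Theorem \ref{thmD}; the corollary amounts to a bookkeeping translation between the two sides of that identity.
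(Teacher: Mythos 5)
Your proposal is correct and is exactly the argument the paper intends: the paper derives Corollary \ref{coro} as an "immediate" consequence of Theorem \ref{thmD}, and your unpacking of $A(\overline{\rho})$ and $B(x,\Sigma)$ and the observation that $B(x,\Sigma)=A(\overline{\rho})$ forces all three conditions to be equivalent is precisely the intended bookkeeping. Your side remarks on why the $\lambda$-invariants are defined (Theorem \ref{thmC}(1)--(2)) and why every $x\in\mathfrak{X}(\overline{\rho})$ is covered after enlarging $E$ match the conventions already set up in \S 5.3.
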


\begin{proof}(of Theorem \ref{thmA}, Theorem \ref{thmB}, Theorem \ref{thmC}, and Theorem \ref{thmD})
In fact, all these theorems easily follow from Corollary \ref{cong} and the proof of Theorem 2.1 of \cite{KLP19} and some standard calculations of 
Galois cohomology and the general theory of Euler systems. For the convenience of readers, we give a proof by recalling their arguments in \cite{KLP19}. 

We first remark that the restriction map
\begin{equation}
H^1_{\mathrm{Iw}}(\mathbb{Z}[1/\Sigma], \rho^*(1))\isom H^1_{\mathrm{Iw}}(\mathbb{Z}[1/\Sigma'], \rho^*(1))
\end{equation}
is isomorphism, and there exists a short exact sequence
\begin{equation}
0\rightarrow H^2_{\mathrm{Iw}}(\mathbb{Z}[1/\Sigma], \rho^*(1))\rightarrow H^2_{\mathrm{Iw}}(\mathbb{Z}[1/\Sigma'], \rho^*(1))
\rightarrow \oplus_{l\in \Sigma'\setminus \Sigma}H^2_{\mathrm{Iw}}(\mathbb{Q}_l, \rho^*(1))\rightarrow 0
\end{equation}
 both for $\rho=\rho_x$ ($x\in \mathfrak{X}(\overline{\rho})$) and $\rho=\overline{\rho}$, and $p\in \Sigma\subset \Sigma'$ 
 (see for example Proposition 5.1 \cite{KLP19}, where it is proved for $\rho_x$ but the same proof works for $\overline{\rho}$). 
 
 We also remark that 
 $H^1_{\mathrm{Iw}}(\mathbb{Z}[1/\Sigma], \rho^*(1))$ is a free $\Lambda_{\mathcal{O}}$-module (resp. $\Lambda_{\mathbb{F}}$-module) 
 for $\rho=\rho_x$ ($x\in \mathfrak{X}(\overline{\rho})$) (resp. $\rho=\overline{\rho}$) since $\overline{\rho}$ is absolutely irreducible 
 (see for example 13.8 of \cite{Ka04}). 
 
 Moreover, for each prime $l\not=p$, $H^2_{\mathrm{Iw}}(\mathbb{Q}_l, \rho^*(1))$ is a torsion $\Lambda_{\mathcal{O}}$-module 
 (resp. $\Lambda_{\mathbb{F}}$-module) with 
 \begin{equation}
 \mathrm{Char}_{\Lambda_{\mathcal{O}}}(H^2_{\mathrm{Iw}}(\mathbb{Q}_l, \rho_x^*(1)))=(P_{\rho_x, l}(\sigma_l^{-1}))\quad (\text{resp}. 
 \mathrm{Char}_{\Lambda_{\mathbb{F}}}(H^2_{\mathrm{Iw}}(\mathbb{Q}_l, \overline{\rho}^*(1)))=(P_{\overline{\rho}, l}(\sigma_l^{-1})))
 \end{equation}
 for $\rho=\rho_x$ (resp. $\rho=\overline{\rho}$) by (the proof of) Corollary 3.7 of \cite{KLP19}, where we set $P_{\rho, l}(u):=\mathrm{det}(1-\mathrm{Frob}_l\cdot u\mid 
 \rho^{I_l})$.  
 
 By these remarks, the condition that $H^2_{\mathrm{Iw}}(\mathbb{Z}[1/\Sigma], \rho_x^*(1))^{\eta}$ is a torsion $\Lambda_{\mathcal{O}}^{\eta}$-module is independent of $\Sigma$, and 
 one has 
 \begin{equation}
 \lambda(H^2_{\mathrm{Iw}}(\mathbb{Z}[1/\Sigma'], \rho_x^*(1))^{\eta})
 =\lambda(H^2_{\mathrm{Iw}}(\mathbb{Z}[1/\Sigma], \rho_x^*(1))^{\eta})
 +\sum_{l\in \Sigma'\setminus \Sigma}\lambda(P_{\rho_x, l}(\sigma_l^{-1})^{\eta})
 \end{equation}
 and 
 \begin{equation}\mu(H^2_{\mathrm{Iw}}(\mathbb{Z}[1/\Sigma'], \rho_x^*(1))^{\eta})
 =\mu(H^2_{\mathrm{Iw}}(\mathbb{Z}[1/\Sigma], \rho_x^*(1))^{\eta})
 \end{equation}
  for any $\Sigma\subset \Sigma'$ if $H^2_{\mathrm{Iw}}(\mathbb{Z}[1/\Sigma], \rho_x^*(1))^{\eta}$ is a torsion $\Lambda_{\mathcal{O}}^{\eta}$-module. 
  For each $f\in \Lambda_{\mathbb{F}}^{\eta}\isom \mathbb{F}[[T]]$, we write its $T$-adic valuation by 
  $\lambda_{\mathbb{F}}(f)\in \mathbb{Z}_{\geqq 0}$. For each finite generated torsion $\Lambda_{\mathbb{F}}^{\eta}$-module $M$ (which is of course equivalent to that $M$ is a finite dimensional over $\mathbb{F}$), we set $\lambda_{\mathbb{F}}(M):=\mathrm{dim}_{\mathbb{F}}(M)$, which is equal 
  to $\lambda_{\mathbb{F}}(f_M)$ for any generator $f_M$ of $\mathrm{Char}_{\Lambda_{\mathbb{F}}}(M)$. 
   Then, the condition that $H^2_{\mathrm{Iw}}(\mathbb{Z}[1/\Sigma], \overline{\rho}^*(1))$ is finite dimensional over $\mathbb{F}$ is independent of $\Sigma$, and 
   one has 
   \begin{equation}\lambda_{\mathbb{F}}(H^2_{\mathrm{Iw}}(\mathbb{Z}[1/\Sigma'], \overline{\rho}^*(1))^{\eta})
 =\lambda_{\mathbb{F}}(H^2_{\mathrm{Iw}}(\mathbb{Z}[1/\Sigma], \overline{\rho}^*(1))^{\eta})
  +\sum_{l\in \Sigma'\setminus \Sigma}\lambda_{\mathbb{F}}(P_{\overline{\rho}, l}(\sigma_l^{-1})^{\eta})
  \end{equation}
 for any $\Sigma\subset \Sigma'$ if $H^2_{\mathrm{Iw}}(\mathbb{Z}[1/\Sigma], \overline{\rho}^*(1))^{\eta}$ is finite dimensional.

%  is isomorphism by the proof of 
%$$H^1_{\mathrm{Iw}}(\mathbb{Z}[1/\Sigma], \overline{\rho}^*(1))\rightarrow H^1_{\mathrm{Iw}}(\mathbb{Z}[1/\Sigma'], \overline{\rho}^*(1))$$
%are both isomorphism for any 
%is a finite free 
%$\Lambda_{\mathbb{F}}$-module for any $\Sigma_{\overline{\rho}}\subset \Sigma\subset \Sigma'$ by the
% for any $x\in \mathrm{Spec}(\mathbb{T}_{\overline{\rho}, \Sigma})(\mathcal{O})$

%$x\in \mathrm{Spec}(\mathbb{T}_{\overline{\rho}, \Sigma})(\mathcal{O})$

%is isomorphism and 
%for any $x\in \mathrm{Spec}(\mathbb{T}_{\overline{\rho}, \Sigma})(\mathcal{O})\subset \mathrm{Spec}(\mathbb{T}_{\overline{\rho}, \Sigma'})(\mathcal{O})$ 

We next remark that 
$((\rho^{\mathfrak{m}}_{\Sigma})^*\widehat{\otimes}_{\mathbb{Z}_p[\{\pm 1\}]}\Lambda)^{\eta}$ is a free $\mathbb{T}_{\overline{\rho}, \Sigma}\widehat{\otimes}_{\mathbb{Z}_p}\Lambda_{\mathcal{O}}^{\eta}$-module of rank one since $(\rho^{\mathfrak{m}}_{\Sigma})^{*, \pm}$ is free of rank one over $\mathbb{T}_{\overline{\rho}, \Sigma}$. Concretely, if we fix a $\mathbb{T}_{\overline{\rho}, \Sigma}$-base
$e_{\pm}$ of $(\rho^{\mathfrak{m}}_{\Sigma})^{*, \pm}$, then $e_{\pm}\otimes e_{\eta}$ is a $\mathbb{T}_{\overline{\rho}, \Sigma}\widehat{\otimes}_{\mathbb{Z}_p}\Lambda_{\mathcal{O}}^{\eta}$-base of $((\rho^{\mathfrak{m}}_{\Sigma})^*\widehat{\otimes}_{\mathbb{Z}_p[\{\pm 1\}]}\Lambda)^{\eta}$, where we set
$e_{\eta}:=\frac{1}{p-1}\sum_{\sigma\in \Delta}\eta(\sigma)^{-1}[\sigma]\in \Lambda$ and we choose $e_{\pm}$ such that 
$\eta(\sigma_{-1})=\pm 1$. We write $e_{\pm}(x)\in (\rho_x^*)^{\pm}$ for any $x\in \mathrm{Spec}(\mathbb{T}_{\overline{\rho}, \Sigma})(\mathcal{O})\subset \mathfrak{X}(\overline{\rho})$, and 
$\overline{e}_{\pm}\in (\overline{\rho}^*)^{\pm}$ to denote the base changes of $e_{\pm}$. To simplify the notation, we assume that such a point $x$ is in $\mathrm{Spec}(\mathbb{T}_{\overline{\rho}, \Sigma})(\mathcal{O})$ by freely extending the scalar $E$ suitably. Since taking $\pm$-parts commutes with any base change of coefficients (remark $p$ is odd), then $e_{\pm}(x)\otimes e_{\eta}$ (resp. $\overline{e}_{\pm}\otimes e_{\eta}$) is a $\Lambda^{\eta}_{\mathcal{O}}$-base (resp. $\Lambda^{\eta}_{\mathbb{F}}$-base) of $(\rho_x^*\otimes_{\mathbb{Z}_p[\{\pm 1\}]}\Lambda)^{\eta}$ (resp. $(\overline{\rho}^*\otimes_{\mathbb{Z}_p[\{\pm 1\}]}\Lambda)^{\eta}$). Therefore, if we write 
$$z_{\Sigma}(\rho_x)^{\eta}:=\bold{z}_{\Sigma, 1}(\rho_x)(e_{\pm}(x)\otimes e_{\eta}), \ \ \overline{z}_{\Sigma}^{\eta}:=\bold{z}_{\Sigma, 1}(\overline{\rho})(\overline{e}_{\pm}(x)\otimes e_{\eta})$$
and 
$$z(f)^{\eta}:=\bold{z}_1(f)(e_{\pm}(x_f)\otimes e_{\eta}), \quad z_{\Sigma}(f)^{\eta}:=\bold{z}_{\Sigma, 1}(f)(e_{\pm}(x_f)\otimes e_{\eta})$$ then one has 
 \begin{equation}\mathrm{Im}(\bold{z}_{\Sigma, 1}(\rho_x)^{\eta})=\Lambda_{\mathcal{O}}\cdot z_{\Sigma}(\rho_x)^{\eta}, \quad \quad \mathrm{Im}(\bold{z}_{\Sigma, 1}(\overline{\rho})^{\eta})=\Lambda_{\mathbb{F}}\cdot \overline{z}_{\Sigma}^{\eta}\end{equation}
and  \begin{equation}\mathrm{Im}(\bold{z}_{1}(f)^{\eta})=\Lambda_{\mathcal{O}}\cdot z(f)^{\eta}, \quad 
\mathrm{Im}(\bold{z}_{\Sigma, 1}(f)^{\eta})=\Lambda_{\mathcal{O}}\cdot z_{\Sigma}(f)^{\eta}.\end{equation}

By these facts, one has 
\begin{equation}
\lambda(\mathrm{Im}(\bold{z}_{\Sigma', 1}(\rho_x)^{\eta})/\mathrm{Im}(\bold{z}_{\Sigma, 1}(\rho_x)^{\eta}))
=\sum_{l\in \Sigma'\setminus \Sigma}\lambda(P_{\rho_x, l}(\sigma_l^{-1})^{\eta})
\end{equation}
and 
\begin{equation}
\mu(\mathrm{Im}(\bold{z}_{\Sigma', 1}(\rho_x)^{\eta})/\mathrm{Im}(\bold{z}_{\Sigma, 1}(\rho_x)^{\eta}))=0
\end{equation}
for any $\Sigma\subset \Sigma'$ since $H^1_{\mathrm{Iw}}(\mathbb{Z}[1/\Sigma], \rho_x^*(1))$ is a free $\Lambda_{\mathcal{O}}$-module and one has 
$$\bold{z}_{\Sigma', 1}(\rho_x)=\prod_{l\in \Sigma'\setminus\Sigma}P_{\rho_x,l}(\sigma_l^{-1})\bold{z}_{\Sigma, 1}(\rho_x).$$ 
Similarly, one also has 
\begin{equation}
\lambda(\mathrm{Im}(\bold{z}_{\Sigma, 1}(f)^{\eta})/\mathrm{Im}(\bold{z}_{1}(f)^{\eta}))
=\sum_{l\in \Sigma_0}\lambda(P_{f, l}(\sigma_l^{-1})^{\eta}),
\end{equation}
\begin{equation}
\mu(\mathrm{Im}(\bold{z}_{\Sigma, 1}(f)^{\eta})/\mathrm{Im}(\bold{z}_{1}(f)^{\eta}))=0,
\end{equation}
and 
\begin{equation}
\lambda_{\mathbb{F}}(\mathrm{Im}(\bold{z}_{\Sigma', 1}(\overline{\rho})^{\eta})/\mathrm{Im}(\bold{z}_{\Sigma, 1}(\overline{\rho})^{\eta}))
=\sum_{l\in \Sigma'\setminus \Sigma}\lambda_{\mathbb{F}}(P_{\overline{\rho}, l}(\sigma_l^{-1})^{\eta}). 
\end{equation}

By these arguments, the validity of Conjecture \ref{conjx} for $\rho_x$ is independent of $\Sigma$, and 
this conjecture for $\rho_{x_f}$ is equivalent to the $\eta$-parts of Kato's conjecture for $f$. 
Moreover, to show the theorems above, it suffices to show the similar statements in the theorems 
for $\bold{z}_{\Sigma, 1}(\overline{\rho})^{\eta}$, $\bold{z}_{\Sigma, 1}(f)^{\eta}$ and $\bold{z}_{\Sigma, 1}(\rho_x)^{\eta}$ 
for a sufficiently
 large $\Sigma$ instead of 
$\bold{z}_{\Sigma_{\overline{\rho}}, 1}(\overline{\rho})^{\eta}$ and $\bold{z}_1(f)^{\eta}$.

 For any $x\in \mathrm{Spec}(\mathbb{T}_{\overline{\rho}, \Sigma}
(\mathcal{O}))$, one has a canonical short exact sequence
\begin{equation}
0\rightarrow H^1_{\mathrm{Iw}}(\mathbb{Z}[1/\Sigma], \rho_x^*(1))^{\eta}/\varpi \rightarrow H^1_{\mathrm{Iw}}(\mathbb{Z}[1/\Sigma], \overline{\rho}^*(1))^{\eta}
\rightarrow H^2_{\mathrm{Iw}}(\mathbb{Z}[1/\Sigma], \rho_x^*(1))^{\eta}[\varpi]\rightarrow 0.
\end{equation}
By this exact sequence, one can identify the element 
$$z_{\Sigma}(\rho_x)^{\eta} \bmod \varpi\in H^1_{\mathrm{Iw}}(\mathbb{Z}[1/\Sigma], \rho_x^*(1))^{\eta}/\varpi$$
 with the element 
 $\overline{z}_{\Sigma}(\overline{\rho})^{\eta}\in H^1_{\mathrm{Iw}}(\mathbb{Z}[1/\Sigma], \overline{\rho}^*(1))^{\eta},$ 
 and one also obtains the following short exact sequence
 \begin{multline}\label{ggg}
 0\rightarrow (H^1_{\mathrm{Iw}}(\mathbb{Z}[1/\Sigma], \rho_x^*(1))^{\eta}/\Lambda_{\mathcal{O}}^{\eta}\cdot z_{\Sigma}(\rho_x)^{\eta})/\varpi \rightarrow 
 H^1_{\mathrm{Iw}}(\mathbb{Z}[1/\Sigma], \overline{\rho}^*(1))^{\eta}/\Lambda_{\mathbb{F}}^{\eta}\cdot \overline{z}_{\Sigma}(\overline{\rho})^{\eta}\\
\rightarrow H^2_{\mathrm{Iw}}(\mathbb{Z}[1/\Sigma], \rho_x^*(1))^{\eta}[\varpi]\rightarrow 0.
\end{multline}

 Therefore, the condition that the map $\bold{z}_{\Sigma, 1}(\overline{\rho})^{\eta}$ is non zero is equivalent to the condition that 
 $z_{\Sigma}(\rho_x)^{\eta} \bmod \varpi\not=0$. If $x=x_f$ is an element in $\mathfrak{X}(\overline{\rho})^{\mathrm{mod}}$, 
 the latter condition is also equivalent to the condition that 
 $$\mu\left(H^1_{\mathrm{Iw}}(\mathbb{Z}[1/\Sigma], \rho_f^*(1))^{\eta}/\Lambda_{\mathcal{O}}^{\eta}\cdot z_{\Sigma}(f)^{\eta}\right)=0$$ 
 since $H^1_{\mathrm{Iw}}(\mathbb{Z}[1/\Sigma], \rho_f^*(1))^{\eta}$ is a free $\Lambda_{\mathcal{O}}^{\eta}$-module of rank one. 
 Theorem \ref{thmA} follows from these equivalences. 
 
From now on, we assume that $\mathfrak{X}(\overline{\rho})_a^{\mathrm{mod}}$ is not empty and 
$\overline{z}_{\Sigma}(\overline{\rho})^{\eta}\not=0$. Take a point $x_f\in \mathfrak{X}(\overline{\rho})_a^{\mathrm{mod}}$. Then, 
one has $$\mu\left(H^1_{\mathrm{Iw}}(\mathbb{Z}[1/\Sigma], \rho_f^*(1))^{\eta}/\Lambda_{\mathcal{O}}^{\eta}\cdot z_{\Sigma}(f)^{\eta}\right)=0$$ 
by the equivalence in the previous paragraph. Then, one also 
has 
$$\mu(H^2_{\mathrm{Iw}}(\mathbb{Z}[1/\Sigma], \rho_f^*(1))^{\eta})=0$$ by 
Theorem \ref{Kato}. Therefore, 
$$(H^1_{\mathrm{Iw}}(\mathbb{Z}[1/\Sigma], \rho_f^*(1))^{\eta}/\Lambda_{\mathcal{O}}^{\eta}\cdot z_{\Sigma}(f)^{\eta})/\varpi,\ \  
H^2_{\mathrm{Iw}}(\mathbb{Z}[1/\Sigma], \rho_f^*(1))[\varpi],$$
 and 
 $$H^2_{\mathrm{Iw}}(\mathbb{Z}[1/\Sigma], \rho_f^*(1))/\varpi$$ are all
finite dimensional $\mathbb{F}$-vector spaces. Then, the short exact sequence (\ref{ggg}) for $\rho_f$ implies that 
$H^1_{\mathrm{Iw}}(\mathbb{Z}[1/\Sigma], \overline{\rho}^*(1))^{\eta}/\Lambda^{\eta}_{\mathbb{F}}\cdot \overline{z}_{\Sigma}(\overline{\rho})^{\eta}$ is also finite dimensional.
$H^2_{\mathrm{Iw}}(\mathbb{Z}[1/\Sigma], \overline{\rho}^*(1))^{\eta}$ is also finite dimensional since one has 
$$H^2_{\mathrm{Iw}}(\mathbb{Z}[1/\Sigma], \rho_f^*(1))^{\eta}/\varpi \isom H^2_{\mathrm{Iw}}(\mathbb{Z}[1/\Sigma], \overline{\rho}^*(1))^{\eta}.$$
Since $H^1_{\mathrm{Iw}}(\mathbb{Z}[1/\Sigma], \overline{\rho}^*(1))^{\eta}$ is a finite free $\Lambda_{\mathbb{F}}$-module, the finite dimensionality of 
the quotient $H^1_{\mathrm{Iw}}(\mathbb{Z}[1/\Sigma], \overline{\rho}^*(1))^{\eta}/\Lambda^{\eta}_{\mathbb{F}}\cdot \overline{z}_{\Sigma}(\overline{\rho})^{\eta}$ 
implies that $H^1_{\mathrm{Iw}}(\mathbb{Z}[1/\Sigma], \overline{\rho}^*(1))^{\eta}$ is of rank one, which shows Theorem \ref{thmB}. 

We now take any point $x\in \mathrm{Spec}(\mathbb{T}_{\overline{\rho}, \Sigma})(\mathcal{O})\subset \mathfrak{X}(\overline{\rho})$. Since 
 the quotient $$H^1_{\mathrm{Iw}}(\mathbb{Z}[1/\Sigma], \overline{\rho}^*(1))^{\eta}/\Lambda^{\eta}_{\mathbb{F}}\cdot \overline{z}_{\Sigma}(\overline{\rho})^{\eta}$$ is finite dimensional, both 
 $$(H^1_{\mathrm{Iw}}(\mathbb{Z}[1/\Sigma], \rho_x^*(1))^{\eta}/\Lambda_{\mathcal{O}}^{\eta}\cdot z_{\Sigma}(\rho_x)^{\eta})/\varpi$$ and 
 $$H^2_{\mathrm{Iw}}(\mathbb{Z}[1/\Sigma], \rho_x^*(1))^{\eta}[\varpi]$$ are also finite dimensional by the short exact sequence (\ref{ggg}). 
 The former finite dimensionality implies that $H^1_{\mathrm{Iw}}(\mathbb{Z}[1/\Sigma], \rho_x^*(1))^{\eta}/\varpi$ is a free $\Lambda^{\eta}_{\mathbb{F}}$-module of rank one, hence $H^1_{\mathrm{Iw}}(\mathbb{Z}[1/\Sigma], \rho_x^*(1))^{\eta}$ is a free $\Lambda_{\mathcal{O}}^{\eta}$-module of rank one, and 
 the quotient $H^1_{\mathrm{Iw}}(\mathbb{Z}[1/\Sigma], \rho_x^*(1))^{\eta}/\Lambda_{\mathcal{O}}^{\eta}z_{\Sigma}(\rho_x)^{\eta}$ is a torsion $\Lambda_{\mathcal{O}}^{\eta}$-module. Then, 
 the Euler-Poincar\'e characteristic formula implies that $H^2_{\mathrm{Iw}}(\mathbb{Z}[1/\Sigma], \rho_x^*(1))^{\eta}$ is a torsion 
 $\Lambda_{\mathcal{O}}^{\eta}$-module. Then, the equality $\mu\left(H^1_{\mathrm{Iw}}(\mathbb{Z}[1/\Sigma], \rho_x^*(1))^{\eta}/\Lambda_{\mathcal{O}}^{\eta}\cdot z_{\Sigma}(\rho_x)^{\eta}\right)=0$ follows from the proof of Theorem \ref{thmA} above, and one also has the equality 
 $\mu(H^2_{\mathrm{Iw}}(\mathbb{Z}[1/\Sigma], \rho_x^*(1))^{\eta})=0$ since $H^2_{\mathrm{Iw}}(\mathbb{Z}[1/\Sigma], \rho_x^*(1))^{\eta}[\varpi]$
  is finite dimensional. 
  
  We next show the condition (4). If $x$ is in $\mathfrak{X}(\overline{\rho})^{\mathrm{mod}}$, then 
 the condition (4) follows from Theorem 12.4 (3) of \cite{Ka04} and the vanishing of the $\mu$-invariants, and the assumption (3) in the beginning of \S 4 for $\varepsilon^{\pm}
  =\varepsilon^{-1}$.  When $x$ satisfies the condition (b), we define, for each $n\geqq 1$ such that $(n,\Sigma)=1$, 
  $$z_{\Sigma, n}(\rho_x)^{\eta}:=\bold{z}_{\Sigma, n}(\rho_x)(e_{\pm}(x)\otimes e_{\eta})\in H^1_{\mathrm{Iw}}
  (\mathbb{Z}[1/\Sigma_n, \zeta_n], \rho_x^*(1))^{\eta}.$$
  By the Euler system relation of $\bold{z}_{\Sigma, n}(\rho^{\mathfrak{m}})$, the set of these elements $z_{\Sigma, n}(\rho_x)^{\eta}$ forms an Euler system whose 
  first layer is $z_{\Sigma}(\rho_x)^{\eta}\not=0$ by Theorem \ref{thmA}. Therefore, the general theory of Euler system (to apply it, we need the assumption (b)) and the vanishing of 
  $\mu$-invariants which we prove above shows there exists an inclusion 
  $$\mathrm{Char}_{\Lambda^{\eta}_{\mathcal{O}}}(H^1_{\mathrm{Iw}}(\mathbb{Z}[1/\Sigma], \rho_x^*(1))^{\eta}
  /\mathrm{Im}(\bold{z}_{\Sigma,1}(\rho_x)^{\eta}))\subset
    \mathrm{Char}_{\Lambda^{\eta}_{\mathcal{O}}}(H^2_{\mathrm{Iw}}(\mathbb{Z}[1/\Sigma], \rho_x^*(1))^{\eta}),$$
  which shows Theorem \ref{thmC}. 
  
  We finally prove Theorem \ref{thmD}. For this, we first note that, for any finite generated torsion $\Lambda_{\mathcal{O}}^{\eta}$-module $M$ with $\mu(M)=0$, 
  one has 
 \begin{equation}\lambda(M)=\lambda_{\mathbb{F}}(M/\varpi)-\lambda_{\mathbb{F}}(M[\varpi]).
 \end{equation} 
 We apply this formula for $$H^1_{\mathrm{Iw}}(\mathbb{Z}[1/\Sigma], \rho_x^*(1))^{\eta}/\Lambda_{\mathcal{O}}^{\eta}\cdot z_{\Sigma}(\rho_x)^{\eta}$$
  and $$H^2_{\mathrm{Iw}}(\mathbb{Z}[1/\Sigma], \rho_x^*(1))^{\eta},$$ then one obtains
  $$\lambda\left(H^1_{\mathrm{Iw}}(\mathbb{Z}[1/\Sigma], \rho_x^*(1))^{\eta}/\Lambda_{\mathcal{O}}^{\eta}\cdot z_{\Sigma}(\rho_x)^{\eta}\right)=\lambda_{\mathbb{F}}(\left(H^1_{\mathrm{Iw}}(\mathbb{Z}[1/\Sigma], \rho_x^*(1))^{\eta}/\Lambda_{\mathcal{O}}^{\eta}\cdot z_{\Sigma}(\rho_x)^{\eta})/\varpi\right)$$
  since one has $(H^1_{\mathrm{Iw}}(\mathbb{Z}[1/\Sigma], \rho_x^*(1))^{\eta}/\Lambda_{\mathcal{O}}^{\eta}\cdot z_{\Sigma}(\rho_x)^{\eta})[\varpi]=0$, 
  and 
  $$\lambda(H^2_{\mathrm{Iw}}(\mathbb{Z}[1/\Sigma], \rho_x^*(1))^{\eta})=\lambda_{\mathbb{F}}(H^2_{\mathrm{Iw}}(\mathbb{Z}[1/\Sigma], \overline{\rho}^*(1))^{\eta})
  -\lambda_{\mathbb{F}}(H^2_{\mathrm{Iw}}(\mathbb{Z}[1/\Sigma], \rho_x^*(1))^{\eta}[\varpi])$$
  since one has $H^2_{\mathrm{Iw}}(\mathbb{Z}[1/\Sigma], \rho_x^*(1))^{\eta}/\varpi\isom H^2_{\mathrm{Iw}}(\mathbb{Z}[1/\Sigma], \overline{\rho}^*(1))^{\eta}$. 
  These equalities and the short exact sequence (\ref{ggg}) induces the equality 
  \begin{multline*}
  \lambda\left(H^1_{\mathrm{Iw}}(\mathbb{Z}[1/\Sigma], \rho_x^*(1))^{\eta}/\Lambda_{\mathcal{O}}^{\eta}\cdot z_{\Sigma}(\rho_x)^{\eta}\right)-
  \lambda(H^2_{\mathrm{Iw}}(\mathbb{Z}[1/\Sigma], \rho_x^*(1))^{\eta})\\
  =\lambda_{\mathbb{F}}\left(H^1_{\mathrm{Iw}}(\mathbb{Z}[1/\Sigma], \overline{\rho}^*(1))^{\eta}/\Lambda_{\mathbb{F}}\overline{z}_{\Sigma}(\overline{\rho})^{\eta}\right)-\lambda_{\mathbb{F}}(H^1_{\mathrm{Iw}}(\mathbb{Z}[1/\Sigma], \overline{\rho}^*(1))^{\eta}),
  \end{multline*}
  which shows Theorem \ref{thmD}.

%To simplify the notation, we 
%write $$H^1_{\mathrm{Iw}}(\mathbb{Z}[1/\Sigma], \rho_x^*(1))^{\eta}/z_{\Sigma}(x)^{\eta}:=H^1_{\mathrm{Iw}}(\mathbb{Z}[1/\Sigma], \rho_x^*(1))^{\eta}/\Lambda_{\mathcal{O}}\cdot z_{\Sigma}(x)^{\eta},$$ 
%and 

\end{proof}

\addcontentsline{toc}{section}{Appendix}
\setcounter{section}{0}
\renewcommand{\thesection}{\Alph{section}}
 \setcounter{equation}{0} % 式番号を A.1 のようにする
\renewcommand{\theequation}{\Alph{section}.\arabic{equation}}

  % \section{Application 1: zeta element for family of Galois representations over Coleman-Mazur eigencurve}
 % \section{Application 2: rank two case of Kato's $\varepsilon$-conjecture}
  \section{Kato's zeta morphisms associated to Hecke eigen cusp new forms}
  %\subsection{Motives associated to Hecke eigen new forms}
  %Kato's zeta homomorphisms associated to Hecke eigen newforms $f$ which we recall in this section are characterized by not only 
  %the associated $p$-adic Galois representations $\rho_f$ but the motivic structures with its $p$-adic realization isomorphic to $\rho_f$. 
 % Hence, we start this section to recall the notion of motives associated to Hecke eigen new forms. 
    
  For each integer $N\geqq 1$, we set 
 $$\Gamma_1(N):=\left\{\left. g=\begin{pmatrix}a & b \\ c & d\end{pmatrix}\in\mathrm{SL}_2(\mathbb{Z}) \right| g\equiv \begin{pmatrix} *& *\\
 0 & 1\end{pmatrix} (\bmod N)\right\}.$$ 
 For $k\in \mathbb{Z}_{\geqq 2},$ 
  we write $S^{\mathrm{new}}_k(\Gamma_1(N))$ to denote the 
  $\mathbb{C}$-vector space of cusp newforms with level $\Gamma_1(N)$ and weight $k$. Let
  $f(\tau)=\sum_{n=1}^{\infty}a_nq^n\in S^{\mathrm{new}}_k(\Gamma_1(N_f))$ ($\tau\in \mathcal{H}^+, q=\mathrm{exp}(2\pi i \tau)$) be a normalized Hecke eigen cusp newform of level $N_f$. 
  %We set $f^*(\tau)=\sum_{n=1}^{\infty}\overline{a}_nq^n\in S^{\mathrm{new}}_k(\Gamma_1(N))$ the complex conjugation of $f$, which is known to be 
  %a Hecke eigen newform. 
  We regard the field $\mathbb{Q}(\{a_n\}_{n\geqq 0})\subset \mathbb{C}$ as a subfield of $\overline{\mathbb{Q}}$ by the fixed inclusion 
  $\iota_{\infty} : \overline{\mathbb{Q}}\hookrightarrow \mathbb{C}$. We take a number field
  $F\subset \overline{\mathbb{Q}}$ containing $\mathbb{Q}(\{a_n\}_{n\geqq 0})$, and write $E\subset \overline{\mathbb{Q}}_p$ to denote the closure of 
  the image of $F$ by the fixed inclusion $\iota_p : \overline{\mathbb{Q}}\hookrightarrow \overline{\mathbb{Q}}_p$. We set $\Sigma_f:=\mathrm{prime}(N_f)\cup\{p\}$.  
  Let $\rho_f : G_{\mathbb{Q}}\rightarrow \mathrm{GL}_2(E)$ be a continuous representation associated to $f$ defined by Shimura \cite{Sh71} for $k=2$ and Deligne \cite{De71} for $k>2$, i.e. 
  a representation satisfying 
  $$\mathrm{trace}(\rho_f(\mathrm{Frob}_l))=a_l\ \ \text{ and } \ \ \mathrm{det}(\rho_f(\mathrm{Frob}_l))=l^{k-1} \varepsilon_f(l)$$ 
  for any prime $l\not\in \Sigma_f$. One can take such a $\rho_f$ satisfying the additional property $\mathrm{Im}(\rho_f)\subset \mathrm{GL}_2(\mathcal{O})$. 
  For such $\rho_f$, we write $\overline{\rho}_f : G_{\mathbb{Q}}\rightarrow \mathrm{GL}_2(\mathbb{F})$ to denote the 
  semi-simplification of $\rho_f\otimes_{\mathcal{O}}\mathbb{F}$. We remark that the isomorphism class of $\overline{\rho}_f$ is independent of the choice 
  of such $\rho_f$, and $\overline{\rho}_f\isom \rho_f\otimes_{\mathcal{O}}\mathbb{F}$ for arbitrary such $\rho_f$ if $\overline{\rho}_f$ is absolutely irreducible.
 
 We also remark that $\rho_f|_{G_{\mathbb{Q}_p}}$ is de Rham with Hodge-Tate weights $\{-(k-1), 0\}$ (\cite{Sc90}). 
   For each integers $n\geqq 1$ such that $(n,\Sigma_f)=1$, $m\in \mathbb{Z}_{\geqq 0}$ and $r\in \mathbb{Z}$, we write  
 $$\mathrm{exp}^*_{m, r} : H^1_{\mathrm{Iw}}(\mathbb{Z}[1/\Sigma_{f,n}, \zeta_n], \rho_f)\rightarrow \mathrm{Fil}^rD_{\mathrm{dR}}(\rho_f)\otimes_{\mathbb{Q}}\mathbb{Q}(\zeta_{np^m})$$
 to denote  the following composite : 
 \begin{multline*}
 H^1_{\mathrm{Iw}}(\mathbb{Z}[1/\Sigma_{f,n}, \zeta_n], \rho_f)\xrightarrow{(a)}
 H^1_{\mathrm{Iw}}(\mathbb{Z}[1/\Sigma_{f,n}, \zeta_n], \rho_f(r))\\
 \xrightarrow{\mathrm{can}}H^1(\mathbb{Z}[1/\Sigma_{f,n}, \zeta_{np^m}], \rho_f(r))
 \xrightarrow{\mathrm{loc}_p} H^1(\mathbb{Q}_p\otimes_{\mathbb{Q}}\mathbb{Q}(\zeta_{np^m}), \rho_f(r))\\
 \xrightarrow{\mathrm{exp}^*} \mathrm{Fil}^0D_{\mathrm{dR}}(\rho_f(r)|_{G_{\mathbb{Q}_p\otimes_{\mathbb{Q}}\mathbb{Q}(\zeta_{np^m})}})
 \isom\mathrm{Fil}^rD_{\mathrm{dR}}(\rho_f)\otimes_{\mathbb{Q}}\mathbb{Q}(\zeta_{np^m}),
 \end{multline*}
 where the map $(a)$ is the twist by $((\zeta_{p^k})_{k\geqq 1})^{\otimes r}$ in Iwasawa cohomology.
 %, and the last isomorphism follows 
 %from 
 %$$
 
% (define $\overline{\rho}_{f,E}$ , and remark about when $\overline{\rho}_{f,E}$ is irreducible)
 
 To characterize Kato's zeta morphism associated to $f$, we need not only $\rho_f$ but also a motivic structure on it. 
 In \S 6.3 and \S8.3 of  \cite{Ka04}, Kato defined $\rho_f$ as the maximal quotient $V_1(f)_E$ of $H^1(Y_1(N_f), \mathcal{V}_{k/E})$ 
 on which $T_l$ (resp. $S_l$) acts by $a_l$ (resp. $l^{k-2}\varepsilon_f(l)$) for any prime $l\not\in \Sigma_f$. 
 $V_1(f)_E$ has a $\mathrm{Gal}(\mathbb{C}/\mathbb{R})$-stable $F$-lattice $V_1(f)_F$ which is 
 the similar quotient of $H^1(Y_1(N_f), \mathcal{V}_{k/F})$. If we write $S_1(f)_F$ to denote the similar quotient of 
 $S_k(\Gamma_1(N_f))_F$, then one has a canonical $E$-linear isomorphism 
 $$S_1(f)_E:=S_1(f)_F\otimes_FE\isom \mathrm{Fil}^rD_{\mathrm{dR}}(\rho_f)$$
 for any $1\leqq r\leqq k-1$ by the comparison theorem of $p$-adic Hodge theory, by which we regard the map $\mathrm{exp}^*_{m, r}$ as the following map 
 $$\mathrm{exp}^*_{m, r} : H^1_{\mathrm{Iw}}(\mathbb{Z}[1/\Sigma_{f,n}, \zeta_n], V_1(f)_E)\rightarrow S_1(f)_E\otimes_{\mathbb{Q}}\mathbb{Q}(\zeta_{np^m})$$ 
 for every $1\leqq r\leqq k-1$. Moreover, the period map 
  $$\mathrm{per} : S_k(\Gamma_1(N_f))_F\hookrightarrow H^1(Y_1(N_f)(\mathbb{C}), \mathcal{V}_{k/\mathbb{C}})$$ induces an $F$-linear injection 
  $$\mathrm{per} : S_1(f)_F\rightarrow V_1(f)_{\mathbb{C}}:=V_1(f)_F\otimes_F\mathbb{C}.$$
   For each integers $n\geqq 1$ and $m\geqq 0$, and a finite set of primes $\Sigma'$ containing $p$  such that $(n, \Sigma')=1$,  and a character $\chi : (\mathbb{Z}/np^m)^{\times}\rightarrow \mathbb{C}^{\times}$, we set 
  $$L_{\Sigma', n}(f, \chi, s):=\sum_{k=1, (k, \Sigma'_n)=1}^{\infty}\frac{\chi(k)a_k(f)}{k^s}$$
  which is known to absolutely converge in $\mathrm{Re}(s)>\frac{k+1}{2}$, and analytically continued to the whole $\mathbb{C}$ (we mainly apply this definition for $\Sigma'=\{p\}$ or $\Sigma'=\Sigma$).

   We write $f^*(\tau):=\sum_{n=1}^{\infty}\overline{a_n}q^n\in S^{\mathrm{new}}_k(\Gamma_1(N_f))$ to denote the complex conjugation of $f$, which is known to be 
  a Hecke eigen newform.   In Theorem 12.4 of \cite{Ka04}, he (essentially) proves the following.
  \begin{thm}\label{4.2}
  %For the motivic structure $M_1(f)=(V_1(f), S_1(f))$ defined above and 
  For each integer $n\geqq1$ such that $(n, \Sigma_f)=1$, there is a unique $E$-linear map 
  $$\bold{z}_{n}^{Ka}(f) : V_1(f)_E\rightarrow H^1_{\mathrm{Iw}}(\mathbb{Z}[1/\Sigma_{f,n}, \zeta_n], V_1(f)_E)$$
  satisfying the following property : for each $m\geqq 0$ and $\gamma\in V_1(f)_F\subset V_1(f)_E$, the image $\omega_{\gamma, m}$ of $\bold{z}_{n}^{Ka}(f)(\gamma)$ 
  under the map 
  $$\mathrm{exp}^*_{m,1} : H^1_{\mathrm{Iw}}(\mathbb{Z}[1/\Sigma_{f,n}, \zeta_n], V_1(f)_E)\rightarrow S_1(f)_E\otimes_{\mathbb{Q}}\mathbb{Q}(\zeta_{np^m})$$
  belongs to $S_1(f)_F\otimes_{\mathbb{Q}}\mathbb{Q}(\zeta_{np^m})$, and the map 
  $$S_1(f)_F\otimes_{\mathbb{Q}}\mathbb{Q}(\zeta_{np^m})\rightarrow V_1(f)_{\mathbb{C}}^{\pm} : v\otimes x\mapsto \sum_{\sigma\in \mathrm{Gal}(\mathbb{Q}(\zeta_{np^m})/\mathbb{Q})}\chi(\sigma)\sigma(x)\mathrm{per}(v)^{\pm},$$where 
  $\chi : \mathrm{Gal}(\mathbb{Q}(\zeta_{np^m})/\mathbb{Q})\rightarrow \mathbb{C}^{\times}$ is any character and $\pm 1=\chi(-1)$, sends the element 
  $\omega_{\gamma, m}$ to 
  $$L_{\{p\}, n}(f^*,\chi, k-1)\cdot \gamma^{\pm}.$$
  %Moreover, if $\overline{\rho}_f$ is residually absolutely irreducible, then, for any $G_{\mathbb{Q},\Sigma}$ stable 
  %$\mathcal{O}=\mathcal{O}_E$-lattice $T_p(f)\subseteq V_p(f)$
    \end{thm}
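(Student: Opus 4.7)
The plan is to mimic the construction of the maps $z^{\mathrm{Iw}}_{N,n}(k,-)$ in Section 3.2 of the excerpt, but now projecting to the $f$-quotient $V_1(f)_E$ of $H^1(Y_1(N_f),\mathcal{V}_{k/E})$. Concretely, I would first compose Kato's element
$${}_cz^{(p)}_{N_f,np^m}(k,-): \mathrm{Sym}^{k-2}(\mathbb{Z}_p^2)^*\to H^1(\mathbb{Z}[1/\Sigma_{f,n},\zeta_{np^m}], H^1(Y(N_f),\mathcal{V}^*_{k/\mathbb{Z}_p})(2))$$
with the canonical map to $H^1(Y_1(N_f),\mathcal{V}^*_{k/\mathbb{Z}_p})(2)$, and then identify $H^1(Y_1(N_f),\mathcal{V}^*_{k/E})(1)$ with $H^1(Y_1(N_f),\mathcal{V}_{k/E})(2-k+1)$ so as to project onto $V_1(f)_E$ (twisted appropriately). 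Taking the projective limit in $m$ using the Euler-system/norm relation (\ref{cccc}) gives a $c$-dependent ${}_cz^{\mathrm{Iw},Ka}_{n}(f,k,-):\mathrm{Sym}^{k-2}(\mathbb{Z}_p^2)^*\to H^1_{\mathrm{Iw}}(\mathbb{Z}[1/\Sigma_{f,n},\zeta_n], V_1(f)_E)$.

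Next, I would remove the factor ${}_c\delta$ by the same trick as in Proposition \ref{2.6}: the map $\delta_{N_f}(k,-):\mathrm{Sym}^{k-2}(\mathbb{Z}^2)^*\to H^1(Y_1(N_f)(\mathbb{C}),\mathcal{V}^*_{k/\mathbb{Z}})(1)$ projects, after passing to $V_1(f)$, to an element whose image generates $V_1(f)_F$; and the operator $\left(c^2-c^{2-j}\!\!\begin{pmatrix}c&0\\0&1\end{pmatrix}^{\!*}\!\!\otimes\sigma_c\right)\!\!\left(c^2-c^{j-k+2}\!\!\begin{pmatrix}1&0\\0&c\end{pmatrix}^{\!*}\!\!\otimes\sigma_c\right)$ acts on $V_1(f)_E\widehat{\otimes}\Lambda_n$ by multiplication by an element $d_{j,c}\in\Lambda_n[1/p]$ (a product of terms $(c^2-c^{2-j}\varepsilon_f(c)^{\pm1}\sigma_c)$, etc.) which is a non-zero divisor. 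Varying $c$ and using that $\gcd$ of a family of such $d_{j,c}$ is a unit, we can define $\bold{z}_n^{Ka}(f)$ by dividing ${}_cz^{\mathrm{Iw},Ka}_n(f,k,-)$ by $d_{j,c}$; well-definedness is proved as in Proposition \ref{2.6}, using crucially that $H^1_{\mathrm{Iw}}(\mathbb{Z}[1/\Sigma_{f,n},\zeta_n],V_1(f)_E)$ is a torsion-free $\Lambda_n[1/p]$-module because $\overline{\rho}_f$ has no characters factoring through $\mathrm{Gal}(\mathbb{Q}(\zeta_{np})/\mathbb{Q})$ in its composition factors after mod $p$ reduction — or more concretely, because $V_1(f)_E$ is irreducible as $G_{\mathbb{Q}}$-module. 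The specified zeta-value formula is then a direct consequence of the zeta-value formula in Theorem \ref{2.1} combined with the strong multiplicity one fact that, on the $f$-quotient, the operator $Z_{\Sigma,np^m}(k-1)$ collapses to the scalar $L_{\{p\},n}(f^*,\chi,k-1)$ (the Hecke eigen-polynomial $P_{l}$ at $l\in\Sigma_f\setminus\{p\}$ on $V_1(f)$ already carries the removal of bad Euler factors encoded in the definition of $V_1(f)$ via newforms).

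For uniqueness, the argument is that the difference of two candidate maps would take values in the kernel of the product
$$\bigoplus_{m\geqq 0}\mathrm{exp}^*_{m,1}:H^1_{\mathrm{Iw}}(\mathbb{Z}[1/\Sigma_{f,n},\zeta_n],V_1(f)_E)\to\prod_{m\geqq 0}S_1(f)_E\otimes_{\mathbb{Q}}\mathbb{Q}(\zeta_{np^m}),$$
and this kernel is trivial after pairing with all finite-order characters $\chi$ and taking $\pm$-parts, because a class in the kernel gives a bounded $p$-adic distribution on $\Gamma_n$ with values in $V_1(f)_E$ all of whose Mellin transforms vanish; via the period isomorphism together with the non-vanishing/separation property of the twisted $L$-values one concludes the class is zero. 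This is the step I expect to be the main obstacle, since one needs to reduce the kernel statement to the generic rank-one-ness of $H^1_{\mathrm{Iw}}$ (Theorem 12.4(2) of \cite{Ka04}) and combine it with the comparison that $\mathrm{exp}^*_{m,1}$ on cyclotomic twists of the Iwasawa module interpolates the dual exponentials at each level — essentially, one invokes the Perrin-Riou/Coates-Schneider machinery giving the injectivity of the total Mellin transform on $H^1_{\mathrm{Iw}}$. The bulk of the construction is routine once the $c$-removal is carried out; the subtle point that will require care is verifying torsion-freeness of the target Iwasawa cohomology to legitimize division by $d_{j,c}$.
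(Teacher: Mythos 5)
Your proposal takes a genuinely different route from the paper.  The paper's proof of this theorem simply \emph{imports} existence from Kato's Theorem~12.4 of \cite{Ka04} for $n=1$ and extends it to general $n$ by the same computation using the elements $\mu_n(c,d,j)$ and ${}_{c,d}\bold{z}^{(p)}_{np^m}(f,k,j,\alpha,\mathrm{prime}(pnN_f))$ of Kato's \S13.9--13.12; uniqueness is immediate from the injectivity of $\mathrm{exp}^*=(\mathrm{exp}^*_{m,1})_{m}$ (proved as in Lemma~\ref{2.2}(2), i.e.\ Lemma~3.1.4 of \cite{FK}) together with the injectivity of the period map into $V_1(f)^{\pm}_{\mathbb{C}}$ coming from the Eichler--Shimura decomposition, exactly as in Corollary~\ref{2.4}.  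Your approach instead tries to re-derive the whole thing from the Euler-system machinery of \S3 by projecting the level-$N_f$ zeta morphisms onto the $f$-quotient.  This is a legitimately different strategy, but as written it has a gap.

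The gap is the $L$-function normalization.  Theorem~\ref{2.1} and Corollary~\ref{2.4} produce, after projection to the $f$-isotypic quotient, the value $Z_{\Sigma,np^m}(k-1)|_{V_1(f)}=L_{\Sigma_{f,n}}(f^*,\chi,k-1)$, with Euler factors at \emph{all} of $\Sigma_f$ removed.  The characterization in Theorem~\ref{4.2} demands $L_{\{p\},n}(f^*,\chi,k-1)$, i.e.\ only the factors at $p$ and at $\mathrm{prime}(n)$ are stripped.  The two differ by $\prod_{l\in\Sigma_f\setminus\{p\}}P_{f^*,l}(\chi(l)l^{1-k})$, and this is exactly the discrepancy that Theorem~\ref{3.2} records when it writes $\bold{z}_{\Sigma,n}(f)=\prod_{l\in\Sigma_0}P_{f,l}(\sigma_l^{-1})\cdot\bold{z}_n(f)$.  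Your parenthetical that the definition of $V_1(f)$ ``already carries the removal of bad Euler factors'' is not correct: $V_1(f)$ is just a Hecke-isotypic quotient of $H^1(Y_1(N_f),\mathcal V_{k})$ and does not secretly reinsert Euler factors.  So your candidate map is $\prod_{l\in\Sigma_0}P_{f,l}(\sigma_l^{-1})\cdot\bold{z}_n^{Ka}(f)$ rather than $\bold{z}_n^{Ka}(f)$, and to reach the latter you would have to divide by $\prod_{l\in\Sigma_0}P_{f,l}(\sigma_l^{-1})$ inside $H^1_{\mathrm{Iw}}$.  That is not a unit of $\Lambda_n[1/p]$ and the needed divisibility is exactly what Kato's construction in \S13 of \cite{Ka04} is engineered to produce (via the factor $\prod_{l\in\Sigma_f\setminus\{p\}}(1-\overline{a_l}l^{-k}\sigma_l^{-1})$ inside his $\mu_n$ and the careful choice of $S$ in ${}_{c,d}z^{(p)}(\cdot,\cdot,\cdot,\cdot,S)$); bypassing it as you propose leaves a genuine missing step.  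Two secondary remarks: the torsion-freeness of $H^1_{\mathrm{Iw}}(\mathbb Z[1/\Sigma_{f,n},\zeta_n],V_1(f)_E)$ should not be justified via absolute irreducibility of $\overline\rho_f$, which is not a hypothesis of Theorem~\ref{4.2}; the correct input is again the Fukaya--Kato argument (Lemma~\ref{2.2}(1)).  And your uniqueness argument via ``bounded distributions whose Mellin transforms vanish'' overcomplicates matters — the injectivity of $\mathrm{exp}^*$ plus the Eichler--Shimura injectivity of $\mathrm{per}$ already pins down $\bold{z}_n^{Ka}(f)(\gamma)$ uniquely, with no appeal to Perrin-Riou/Coates--Schneider theory or non-vanishing of $L$-values.
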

    \begin{proof}
    The uniqueness of the map $\bold{z}_{n}^{Ka}(f)$ can be proved in the same way as the proof of Corollary \ref{2.4} since one can prove that 
    the map 
    $$\mathrm{exp}^*:=(\mathrm{exp}^*_{m,1})_{m\geqq 1} : H^1_{\mathrm{Iw}}(\mathbb{Z}[1/\Sigma_{f,n}, \zeta_n], V_1(f)_E)\rightarrow \varprojlim_{m\geqq 1}S_1(f)_E\otimes_{\mathbb{Q}}\mathbb{Q}(
    \zeta_{np^m})$$ is an injection in the same way as in (2) of Lemma \ref{2.2}. 
    
    For the existence of the map $\bold{z}_{n}^{Ka}(f)$, Kato defined such a map 
    for $n=1$ in Theorem 12.4 of \cite{Ka04}. % $$\bold{z}^{(p)} : V_1(f)_E\rightarrow H^1_{\mathrm{Iw}}(\mathbb{Z}[1/\Sigma_{f}], V_1(f)_E) : \gamma\mapsto \bold{z}^{(p)}_{\gamma}$$
   % in Theorem 12.4 of \cite{Ka04} which satisifies the similar property except that $\bold{z}^{(p)}$ interpolates the zeta values 
    %$L_{\{p\}}(f^*, \chi, r)$ ($1\leqq r\leqq k-1$), i.e. the $L$-function removing only its $p$-th Euler factor instead of the zeta values $L_{\Sigma_{f,n}}(f^*,\chi, k-1)$. Therefore, if we define 
   % a map 
    %$$\bold{z}_{\Sigma_f, 1}^{Ka}(f) : V_1(f)_E\rightarrow H^1_{\mathrm{Iw}}(\mathbb{Z}[1/\Sigma_{f}], V_1(f)_E)$$
    %by 
   % $$\bold{z}_{\Sigma_f, 1}^{Ka}(f):=\prod_{l\in \Sigma_0}(1-\overline{a}_l l^{-k}\sigma_l^{-1})\cdot \bold{z}^{(p)},$$
   % the map $\bold{z}_{\Sigma_f, 1}^{Ka}(f)$ satisfies the desired property for $n=1$. 
    
    For general $n\geqq 1$ such that $(n, \Sigma_f)=1$, we can similarly define the map 
    $$\bold{z}_n^{Ka}(f) : V_1(f)_E\rightarrow H^1_{\mathrm{Iw}}(\mathbb{Z}[1/\Sigma_{f,n}, \zeta_n], V_1(f)_E)\otimes_{\Lambda_n}Q(\Lambda_n)$$
    as follows. We first recall that Kato defined the element $\bold{z}_{1}^{Ka}(f)(\gamma)$ which he writes $\bold{z}^{(p)}_{\gamma}$ in \cite{Ka04} for 
    $\gamma\in  V_1(f)_F$ using the elements 
    $$\mu_1(c,d,j):=(c^2-c^{k+1-j} \sigma_c)(d^2-d^{j+1} \sigma_d) \prod_{l\in \Sigma_{f}\setminus\{p\}}(1-\overline{a_l}l^{-k}\sigma_l^{-1})\in \Lambda=\Lambda_1$$ 
    ($\sigma_c, \sigma_d\in \mathrm{Gal}(\mathbb{Q}(\zeta_{p^{\infty}})/\mathbb{Q})$) and ${}_{c,d}\bold{z}^{(p)}_{p^m}(f,k,j,\alpha, \mathrm{prime}(pN_f))\in 
    H^1(\mathbb{Z}[1/\Sigma_{f}, \zeta_{p^m}], 
    V_1(f)_E)$ defined for every $c,d\in \mathbb{Z}$ such that $(cd, 6p)=1$, $c\equiv d\equiv 1$ mod $N_f$, $c^2, d^2\not=1$, and 
    $1\leqq j\leqq k-1$, $\alpha\in \mathrm{SL}_2(\mathbb{Z})$ (see \S13.9 of \cite{Ka04}). As a direct generalization of this definition, we define the element 
    $$\bold{z}_n^{Ka}(f)(\gamma)\in H^1_{\mathrm{Iw}}(\mathbb{Z}[1/\Sigma_{f,n}, \zeta_n], V_1(f)_E)\otimes_{\Lambda_n}Q(\Lambda_n)$$
    for each $\gamma\in V_1(f)_F$ 
    using the elements%$c,d\in \mathbb{Z}$ such that $(cd, 6np)=1$, $c\equiv d\equiv 1$ mod $N$, $c^2, d^2\not=1$ and 
    $$\mu_n(c,d,j):=(c^2-c^{k+1-j}\sigma_c)(d-d^{j+1}\sigma_d)\prod_{l\in \Sigma_{f}\setminus\{p\}}(1-\overline{a_l}l^{-k}\sigma_l^{-1})\in \Lambda_n$$
    ($\sigma_c, \sigma_d\in \mathrm{Gal}(\mathbb{Q}(\zeta_{np^{\infty}})/\mathbb{Q})$) instead of $\mu_1(c,d,j)$, 
    and $${}_{c,d}\bold{z}^{(p)}_{np^m}(f,k,j,\alpha, \mathrm{prime}(pnN_f))\in 
    H^1(\mathbb{Z}[1/\Sigma_{f,n}, \zeta_{np^m}], 
    V_1(f)_E)$$ instead of using ${}_{c,d}\bold{z}^{(p)}_{p^m}(f,k,j,\alpha, \mathrm{prime}(pN_f))$, which are defined for every $c,d\in \mathbb{Z}$ such that $(cd, 6np)=1$, $c\equiv d\equiv 1$ mod $N_f$, $c^2, d^2\not=1$. Then, in a similar way as in \S13.12 of \cite{Ka04}, 
    we can show that the element $\bold{z}_n^{Ka}(f)(\gamma)$ is in 
    $H^1_{\mathrm{Iw}}(\mathbb{Z}[1/\Sigma_{f,n}, \zeta_n], V_1(f)_E)$, and 
    the $E$-linearization 
    $$\bold{z}_n^{Ka}(f) : V_1(f)_E\rightarrow H^1_{\mathrm{Iw}}(\mathbb{Z}[1/\Sigma_{f,n}, \zeta_n], V_1(f)_E) $$
    of the map $\gamma\in V_1(f)_F\mapsto \bold{z}_n^{Ka}(f)(\gamma)$ satisfies the desired interpolation property.

    \end{proof}
    \begin{rem}\label{integer}
   If $p$ is odd, and $\overline{\rho}_f : G_{\mathbb{Q}}\rightarrow \mathrm{GL}_2(\mathbb{F})$ is absolutely irreducible, then one can furthermore shows that 
    the map $\bold{z}_n^{Ka}(f)$ preserves the integral structure, i.e. if $V_1(f)_{\mathcal{O}}$ is a $G_{\mathbb{Q}}$-stable $\mathcal{O}$-lattice 
    of $V_1(f)_{\mathcal{O}}$, then the map $\bold{z}_n^{Ka}(f)$ also induces 
    $$\bold{z}_n^{Ka}(f) : V_1(f)_{\mathcal{O}}\rightarrow H^1_{\mathrm{Iw}}(\mathbb{Z}[1/\Sigma_{f,n}, \zeta_n], V_1(f)_{\mathcal{O}}),$$
    which is proved by the same argument as in \S13.12 of \cite{Ka04}. We also remark that all $G_{\mathbb{Q}}$-stable $\mathcal{O}$-lattices are of the form $\varpi^nV_1(f)_{\mathcal{O}}$ 
    for some $n\in \mathbb{Z}$ since $\overline{\rho}_f$ is absolutely irreducible. 
    
    \end{rem}
    
   % \begin{rem}By this construction, one can also show that, for every $n\geqq 1$ such that $(n, \Sigma_f)=1$,  the map $\bold{z}_n^{Ka}(f)$     satisfies the equality 
 % $$ \bold{z}_n^{Ka}(f)(\tau(v))=\sigma_{-1} (\bold{z}_n^{Ka}(f)(v))$$
 % for every $v\in  V_1(f)$. Moreover, for every prime $l\not\in \Sigma$, one has 
 % $$\mathrm{Cor}\circ  \bold{z}_{\Sigma, nl}(\rho^{\mathfrak{m}})=\begin{cases} \bold{z}_{\Sigma, n}(\rho^{\mathfrak{m}})& \text{ if } l|n\\ 
 % P_l(\sigma_l^{-1}) \bold{z}_{\Sigma, n}(\rho^{\mathfrak{m}})& \text{ if } (l,n)=1\end{cases}$$
     
    For our purpose, it is convenient to define a twisted version of $\bold{z}_n^{Ka}(f)$, which is directly related with our zeta morphism $\bold{z}_{\Sigma, n}(\rho^{\mathfrak{m}})$. 
    For this, we consider the maximal quotient $V'_1(f)_A$ of $H^1(Y(N_f), \mathcal{V}^*_{k/A})(1)$, for $A=F, E, \mathbb{C}$, on which 
    $T'_l$ (resp. $S'_l$) acts by $a_l$ (resp. $l^{k-2}\varepsilon_f(l)$) for any prime $l\not\in \Sigma_f$. 
    Then, the pairing $H^1(Y(N_f), \mathcal{V}^*_{k/A})(1)\times H_c^1(Y(N_f), \mathcal{V}_{k/A})\rightarrow A$ of 
    Poincar\'e duality induces an isomorphism 
    $$V'_1(f)_A\isom V_1(f)_A^*,$$
    by which we identify the both side. %We also write $V'_1(f)$ to denote the similar quotient of $H^1(Y(N_f), \mathcal{V}^*_{k/F})$, which is 
    %a $\mathrm{Gal}(\mathbb{C}/\mathbb{R})$-stable $F$-lattice of $V'_1(f)_E$. The isomorphism $V'_1(f)_E\isom V_1(f)_E^*$ induces an isomorphism 
   % $V'_1(f)\isom V_1(f)^*$. 
    
    \begin{lemma}\label{4.3}
    For $A=E, F, \mathbb{C}$, the canonical isomorphism $$H^1(Y(N_f), \mathcal{V}^*_{k/A})(2-k)\isom H^1(Y(N), \mathcal{V}_{k/A})$$ 
    induces an isomorphism 
    $$V'_1(f)_A(1-k)\isom V_1(f^*)_A.$$
    
    \end{lemma}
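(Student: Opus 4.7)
The plan is to trace, under the canonical Galois-equivariant sheaf isomorphism $\mathcal{V}_{k/A}\isom \mathcal{V}^*_{k/A}(2-k)$ of (\ref{aa1}), how the Hecke eigenvalue conditions cutting out $V'_1(f)_A$ translate into Hecke eigenvalue conditions on $H^1(Y_1(N_f),\mathcal{V}_{k/A})$, and to recognize the resulting quotient as $V_1(f^*)_A$. The sheaf iso gives $H^1(Y_1(N_f),\mathcal{V}^*_{k/A})(2-k)\isom H^1(Y_1(N_f),\mathcal{V}_{k/A})$, and after applying the Tate twist $(1-k)$, the quotient $V'_1(f)_A(1-k)$ lands naturally in $H^1(Y_1(N_f),\mathcal{V}_{k/A})$.

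The first step is to transport the conditions $T'_lv=a_lv$ and $S'_lv=l^{k-2}\varepsilon_f(l)v$ across this iso. By Lemma \ref{2.01} one has, on the target, $T'_l=l^{k-2}T_lS_l^{-1}$ and $S'_l=l^{2(k-2)}S_l^{-1}$. Solving gives the equivalent conditions $S_l=l^{k-2}\overline{\varepsilon_f(l)}$ and $T_l=a_l\overline{\varepsilon_f(l)}$. The key input is then the newform identity $\overline{a_l(f)}=a_l(f)\cdot \overline{\varepsilon_f(l)}$, valid for all $l\not|N_f$; this is the adjointness relation for Hecke operators with respect to the twisted Petersson inner product, equivalent on the Galois side to $\rho_{f^*}\isom \rho_f\otimes \varepsilon_f^{-1}$. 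Applied here, the translated conditions become $T_l=\overline{a_l}=a_l(f^*)$ and $S_l=l^{k-2}\varepsilon_{f^*}(l)$, precisely the eigenvalue conditions defining $V_1(f^*)_A$ as the maximal Hecke quotient of $H^1(Y_1(N_f),\mathcal{V}_{k/A})$.

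By maximality of both quotients, the sheaf-level isomorphism therefore descends to a surjection $V'_1(f)_A(1-k)\rightarrow V_1(f^*)_A$. Both sides are two-dimensional over $A$, realizing Galois representations whose Frobenius traces at $l\not\in \Sigma_f$ agree thanks to the very same newform identity, so the surjection is forced to be an isomorphism. The only real bookkeeping hurdle is the careful handling of Tate twists and, should one read the statement with $Y(N_f)$ literally rather than $Y_1(N_f)$, the passage between the two level structures via the new-vector theory and strong multiplicity one; once these identifications are in place, the content of the lemma reduces to the Hecke eigenvalue translation above together with the standard newform conjugation identity.
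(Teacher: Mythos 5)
Your proposal is essentially the paper's proof. The core of both arguments is the same: use Lemma~\ref{2.01} to express $T'_l$ and $S'_l$ in terms of $T_l$ and $S_l$ on $H^1(Y(N),\mathcal{V}_{k/A})$, deduce that the eigenvalue conditions $T'_l=a_l$, $S'_l=l^{k-2}\varepsilon_f(l)$ translate to $T_l=a_l\varepsilon_f(l)^{-1}$ and $S_l=l^{k-2}\varepsilon_f(l)^{-1}$, and then invoke the newform identities $\overline{a_l}=\varepsilon_f(l)^{-1}a_l$ and $\overline{\varepsilon_f(l)}=\varepsilon_f(l)^{-1}$ to recognize the resulting quotient as $V_1(f^*)_A$.

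One small remark: your concluding step passes from a surjection to an isomorphism via a dimension count, but this detour is unnecessary. Since the sheaf isomorphism~(\ref{aa1}) is Hecke-equivariant and the relation between the two families of Hecke operators is an invertible change of generators (Lemma~\ref{2.01}), the isomorphism of cohomology groups carries the ``$T'_l,S'_l$-maximal quotient'' directly and isomorphically onto the ``$T_l,S_l$-maximal quotient'' with the translated eigenvalues; no separate injectivity argument is needed. You were also right to flag the ambiguity between $Y(N_f)$ and $Y_1(N_f)$ in the definitions of $V'_1(f)_A$ and $V_1(f^*)_A$ — the paper is a bit loose on this point, and your note that the reconciliation goes through new-vector theory and strong multiplicity one is the correct resolution, though the paper's own proof does not spell it out.
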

    \begin{proof}
    By Lemma \ref{2.01}, one has equalities 
    $$S_l=l^{2(k-2)}(S'_l)^{-1}$$ 
    and 
    $$T_l=l^{2-k}S_lT'_l=l^{k-2}(S'_l)^{-1}T'_l$$
    as operators on $H^1(Y(N), \mathcal{V}_{k/A})$. Therefore, the isomorphism $$H^1(Y(N_f), \mathcal{V}^*_{k/A})(2-k)\isom H^1(Y(N), \mathcal{V}_{k/A})$$ 
    induces an isomorphism from $V'_1(f)_A(1-k)$ to the maximal quotient of $H^1(Y(N), \mathcal{V}_{k/A})$ on which 
    $T_l$ (resp. $S_l$) acts by 
    $$l^{k-2}(l^{k-2}\varepsilon_f(l))^{-1}a_l=\varepsilon_f(l)^{-1}a_l\ \  (\text{resp}.  l^{2(k-2)}(l^{k-2}\varepsilon_f(l))^{-1}=l^{k-2}\varepsilon_f(l)^{-1}).$$ 
    Since one has $\overline{a_l}=\varepsilon_f(l)^{-1}a_l$ and $\overline{\varepsilon_f(l)}=\varepsilon_f(l)^{-1}$, this quotient is equal to 
    $V_1(f^*)_A$, which proves the lemma.
    
    \end{proof}
    
    Using this lemma, we define the following map 
      \begin{defn}\label{4.4}For each $n\geqq 1$ such that $(n, \Sigma_f)=1$, 
    we define an $E$-linear map 
    $$\bold{z}_n(f) : V'_1(f)_E\rightarrow 
    H^1_{\mathrm{Iw}}(\mathbb{Z}[1/\Sigma_{f,n}, \zeta_n], V'_1(f)_E(1)) $$
    by the following composite : 
    \begin{multline*}
    V'_1(f)_E\xrightarrow{x\mapsto x\otimes ((\zeta_{p^{m}})_{m\geqq 1})^{\otimes (1-k)}}  V_1(f^*)_E\xrightarrow{\bold{z}_n^{Ka}(f^*)} 
    H^1_{\mathrm{Iw}}(\mathbb{Z}[1/\Sigma_{f,n}, \zeta_n], V_1(f^*)_E)\\
    \xrightarrow{x\mapsto x\otimes ((\zeta_{p^{m}})_{m\geqq 1})^{\otimes k}}
     H^1_{\mathrm{Iw}}(\mathbb{Z}[1/\Sigma_{f,n}, \zeta_n], V'_1(f)_E(1))
    \end{multline*}
    %$$\bold{z}_n^{(p)}(f)(x\otimes (\zeta_{p^{n}})_{n\geqq 1}^{\otimes (k-1)}):=\bold{z}_n^{Ka, (p)}(f^*)(x) \otimes(\zeta_{p^{n}})_{n\geqq 1}^{\otimes k}$$
    \end{defn}
    By definition, this map is characterized as follows : 
    \begin{corollary}\label{4.5}
    $\bold{z}_n(f)$ is a unique $E$-linear map satisfying  the following property $:$ 
     for each $m\geqq 0$ and $\gamma\in V'_1(f)_F\subset V'_1(f)_E$, the image $\omega_{\gamma, m}$ of $\bold{z}_n(f)(\gamma)$ 
  under the following composite
  \begin{multline*}
  H^1_{\mathrm{Iw}}(\mathbb{Z}[1/\Sigma_{f,n}, \zeta_n], V'_1(f)_E(1))\xrightarrow{x\mapsto x\otimes x\otimes ((\zeta_{p^{m}})_{m\geqq 1})^{\otimes (-k)}} 
    H^1_{\mathrm{Iw}}(\mathbb{Z}[1/\Sigma_{f,n}, \zeta_n], V_1(f^*)_E)\\
    \xrightarrow{\mathrm{exp}^*_{m,1}}S_1(f^*)_E\otimes_{\mathbb{Q}}\mathbb{Q}(\zeta_{np^m})
  \end{multline*}
  %$$\mathrm{exp}^*_{m,1} : H^1_{\mathrm{Iw}}(G_{\mathbb{Q}(\zeta_n),\Sigma_{f,n}}, V_1(f)_E)\rightarrow S_1(f)_E\otimes_{\mathbb{Q}}\mathbb{Q}(\zeta_{np^m})$$
  belongs to $S_1(f)_F\otimes_{\mathbb{Q}}\mathbb{Q}(\zeta_{np^m})$, and the map 
  $$S_1(f)_F\otimes_{\mathbb{Q}}\mathbb{Q}(\zeta_{np^m})\rightarrow V_1(f)_{\mathbb{C}}^{\pm} : v\otimes x\mapsto \sum_{\sigma\in \mathrm{Gal}(\mathbb{Q}(\zeta_{np^m})/\mathbb{Q})}\chi(\sigma)\sigma(x)\mathrm{per}(v)^{\pm},$$where 
  $\chi : \mathrm{Gal}(\mathbb{Q}(\zeta_{np^m})/\mathbb{Q})\rightarrow \mathbb{C}^{\times}$ is any character and $\pm 1=\chi(-1)$, sends the element 
  $\omega_{\gamma, m}$ to 
  $$L_{\{p\}, n}(f,\chi, k-1)\cdot \gamma^{\pm}.$$

    \end{corollary}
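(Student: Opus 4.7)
The plan is to prove both uniqueness and the interpolation property by unwinding Definition \ref{4.4} and reducing everything to Theorem \ref{4.2} applied to the dual form $f^*$. The decisive observation is that the two Tate twists in Definition \ref{4.4} are designed to cancel against the Tate twist appearing in the characterizing composite of Corollary \ref{4.5}. More precisely, if I write $T_r$ for the twist by $((\zeta_{p^m})_{m\geq 1})^{\otimes r}$ in Iwasawa cohomology, then Definition \ref{4.4} reads $\bold{z}_n(f) = T_k \circ \bold{z}_n^{Ka}(f^*) \circ T_{1-k}$, while the composite in the characterizing property post-composes with $T_{-k}$. Hence the characterizing composite becomes $\mathrm{exp}^*_{m,1} \circ \bold{z}_n^{Ka}(f^*) \circ T_{1-k}$.

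For the existence part (verification of the interpolation formula), I would apply Theorem \ref{4.2} to $f^*$ in place of $f$: for any $\gamma \in V'_1(f)_F$, the $(1-k)$-twisted element $T_{1-k}(\gamma)$ lies in $V_1(f^*)_F$ via the isomorphism of Lemma \ref{4.3}, and Theorem \ref{4.2} guarantees that $\mathrm{exp}^*_{m,1}(\bold{z}_n^{Ka}(f^*)(T_{1-k}\gamma))$ lies in $S_1(f^*)_F \otimes_{\mathbb{Q}} \mathbb{Q}(\zeta_{np^m})$ and, after summing against $\chi$ and taking periods, equals $L_{\{p\},n}((f^*)^*, \chi, k-1) \cdot \gamma^{\pm}$. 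Since $(f^*)^* = f$, this is precisely the formula in Corollary \ref{4.5}.

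For uniqueness, I would follow the pattern of the proof of Corollary \ref{2.4} (and the uniqueness argument in Theorem \ref{4.2}). Any two $E$-linear maps with the same interpolation property differ by a map whose image, post-composed with $T_{-k}$ and all $\mathrm{exp}^*_{m,1}$'s, is zero in $\varprojlim_m S_1(f^*)_E \otimes_{\mathbb{Q}} \mathbb{Q}(\zeta_{np^m})$. The required injectivity of $(\mathrm{exp}^*_{m,1})_{m\geq 0}$ on $H^1_{\mathrm{Iw}}(\mathbb{Z}[1/\Sigma_{f,n}, \zeta_n], V_1(f^*)_E)$ reduces to the analogue of Lemma \ref{2.2}(2): since $\overline{\rho}_f$ being residually absolutely irreducible or (more generally, since $f$ is a cusp form) $V_1(f^*)_E$ is the $f^*$-isotypic component of the parabolic cohomology of $X(N_f)$, Drinfeld--Manin splitting propagates the injectivity statement of Lemma \ref{2.2}(2) to this $f^*$-part. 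Combined with injectivity of the period map $\mathrm{per}\colon S_1(f^*)_F \hookrightarrow V_1(f^*)_{\mathbb{C}}$, the composite characterizing map has trivial kernel on $V'_1(f)_F$ (hence on $V'_1(f)_E$ after taking $\pm$-parts separately and summing over characters $\chi$).

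The main obstacle is really bookkeeping rather than substantive: one must verify that the identifications provided by $p$-adic Hodge theory ($S_1(f^*)_E \cong \mathrm{Fil}^rD_{\mathrm{dR}}(\rho_{f^*})$), by Poincar\'e duality ($V'_1(f)_A \cong V_1(f)_A^*$), and by the Tate-twist isomorphism of Lemma \ref{4.3} ($V'_1(f)_A(1-k) \cong V_1(f^*)_A$) are mutually compatible, so that the two occurrences of $\gamma^{\pm}$ in Theorem \ref{4.2} (applied to $f^*$) and in Corollary \ref{4.5} really correspond to one another. Once this compatibility is checked, both statements follow formally.
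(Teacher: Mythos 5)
Your proposal is correct and takes exactly the approach the paper intends: the paper itself gives no proof beyond the words ``By definition, this map is characterized as follows,'' and your argument is the honest unwinding of those words. You correctly observe that the two Tate twists in Definition \ref{4.4} cancel against the twist $T_{-k}$ in the characterizing composite of Corollary \ref{4.5}, reduce the interpolation formula to Theorem \ref{4.2} applied to $f^*$ together with the identity $(f^*)^*=f$, and reduce uniqueness to the injectivity statement already invoked in the proof of Theorem \ref{4.2}. Your final paragraph correctly identifies the one genuinely non-formal point, namely the compatibility of the Poincar\'e-duality isomorphism $V'_1(f)_A\cong V_1(f)^*_A$, the Tate-twist isomorphism $V'_1(f)_A(1-k)\cong V_1(f^*)_A$ of Lemma \ref{4.3}, and the $p$-adic Hodge-theoretic identification $S_1(f^*)_E\cong \mathrm{Fil}^r D_{\mathrm{dR}}$, which is needed to match the two occurrences of $\gamma^{\pm}$; the paper sweeps this under the rug, so you have not missed anything the paper actually supplies. (One small remark: as displayed in the paper, Corollary \ref{4.5} reads $S_1(f)_F$ and $V_1(f)_{\mathbb{C}}^{\pm}$ where the argument naturally produces $S_1(f^*)_F$ and $V_1(f^*)_{\mathbb{C}}^{\pm}$; your proof correctly produces the latter and implicitly exposes what appears to be a typographical slip in the statement rather than an error in your reasoning.)
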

     \begin{rem}\label{pm1}By the construction of the maps $\bold{z}^{Ka}_n(f)$ and $\bold{z}_n(f)$, one can also show that the map $\bold{z}_n(f)$ satisfies the equality 
 $$ \bold{z}_n(f)(\tau(v))=\sigma_{-1} (\bold{z}_n(f)(v))$$
  for every $v\in  V'_1(f)_E$, and, for every prime $l\not\in \Sigma_f$, 
    $$\mathrm{Cor}\circ  \bold{z}_{nl}(f)=\begin{cases} \bold{z}_n(f)& \text{ if } l|n\\ 
  P_l(\sigma_l^{-1}) \bold{z}_n(f)& \text{ if } (l,n)=1\end{cases}$$
  for the corestriction map 
  $$\mathrm{Cor} : H^1_{\mathrm{Iw}}(\mathbb{Z}[1/\Sigma_{f,nl}, \zeta_{nl}], V'_1(f)_E(1))\rightarrow 
  H^1_{\mathrm{Iw}}(\mathbb{Z}[1/\Sigma_{f,nl}, \zeta_n], V'_1(f)_E(1)).$$ One can also prove these properties as an immediate corollary of Theorem \ref{3.2}.

    \end{rem}

   \section{Preliminaries on the $p$-adic local Langlands correspondence for $\mathrm{GL}_2(\mathbb{Q}_l)$}
   In this appendix, we collect some basic notions and facts 
   which we need to state Emerton's refined local-global compatibility.
   To state this compatibility, we need to recall the theories of the 
   $p$-adic local Langlands correspondences in families both for $G_p$  \cite{Co10}, \cite{Pas13} 
   and $G_l$ ($l\not=p)$  \cite{EH14}. 
   To recall these correspondences, we need some notions on topological modules. 
   We first recall these notions in this appendix. In fact, we need the dual version 
   of  the refined local-global compatibility for our application. For this, we also recall the duality theory of 
   these topological modules, and the notions of topological tensor products of these modules. 
   Finally, we also recall the theory of Galois (and Iwasawa) cohomology of some Galois representations whose 
   underlying spaces are these topological modules, 
   
  % In this appendix, we collect the basic notions and facts which we use throughout the article. 
 %We first recall the basic notions on topological modules and its duals which appear in these 
 %correspondences following the appendix of \cite{Em}. Then, we recall the local Langlands 
% correspondences in families for $G_l$ \cite{EH14}, and recall the $p$-adic local Langlands 
 %correspondence for $G_p$ \cite{Co10}, \cite{Pas13}. In the final subsection, 
% we recall the basic facts about Galois cohomology of compact modules, which we apply 
% the completed homology of modular curves in the main body of the article. We prove some 
% lemmas and propositions in this appendix, almost all of which are seem to be well-known to 
% the expert, and, in fact, are easily proved from the established results in the former works. But, 
% we prove all these things since we can't find the references.
 
 Let $A$ be a commutative ring, $G$ be a topological group. A left $A[G]$-module $V$ is called smooth, or a smooth $A$-representation of $G$ if 
 the stabilizer $\{g\in G \mid gv=v\}$ is open for any $v\in V$. If $A$ is Noetherian, a smooth $A$-representation $V$ of $G$ is called admissible, or a smooth admissible $A$-representation of $G$ if the $K$-fixed part $V^K$ is a finite generated $A$-module for any open subgroup $K$ of $G$.
 
 \subsection{Recall of some notions of topological modules}

    Let $A$ be an object in $\mathrm{Comp}(\mathcal{O})$ with its maximal ideal $\mathfrak{m}$. 
 \begin{defn}\label{5.1} We say that an $A$-module $V$ is orthonormalizable if $V$ is 
$\mathfrak{m}$-adically complete and separated, and if the quotient $V/\mathfrak{m}^iV$ is free over $A/\mathfrak{m}^i$ for 
every $i\geqq 1$. 
\end{defn}\label{5.2}
We remark that orthonormalizable $A$-modules are flat over $A$.
\begin{defn}
We say that a topological $A$-module $M$ is called a compact $A$-module if 
$M$ is compact Hausdorff, and has a fundamental system of open neighborhood of $0$ consisting of 
sub $A$-modules. 
\end{defn}
By definition, one has a canonical isomorphism $M\isom \varprojlim_{N}M/N$ of topological $A$-modules for any 
compact $A$-module $M$, where the limit is taken over all the open sub $A$-modules $N$ of $M$. For compact $A$-modules $M_1$ and $M_2$, one can define 
the tensor product $M_1\widehat{\otimes}_AM_2$ in the category of compact $A$-module by 
$M_1\widehat{\otimes}_AM_2=\varprojlim_{N_1, N_2}(M_1/N_1)\otimes_A(M_2/N_2)$, where 
$N_1$ (resp. $N_2$) runs through all the open sub $A$-modules of $M_1$ (resp. $M_2$). 
\begin{defn}\label{5.3}
We say that a compact $A$-module $M$ is pro-free if it is topologically isomorphic to a direct product of copies of $A$, each factor 
being equipped with its $\mathfrak{m}$-adic topology. 
\end{defn}
For any orthonormalizable $A$-module $V$, we set $V^*:=\mathrm{Hom}_A(V, A)$ and equip it with the pointwise convergence toplogogy. 
By Proposition B. 11 of \cite{Em}, $V^*$ is a pro-free $A$-module, and the functor $V\mapsto V^*$ induces an anti-equivalence of categories 
between the category of orthonormalizable $A$-modules and the category of pro-free $A$-modules. 
We remark that the inverse functor is given by $M\mapsto \mathrm{Hom}_A^{\mathrm{cont}}(M, A)$. 

From here until the end of this subsection, we assume that $A$ is $\mathcal{O}$-torsion free. 
For any $A$-module $M$ and any ideal $I$ of $A$, we set $M[I]:=\{v\in M\mid av=0 \text{ for any } a\in I\}$.

\begin{defn}\label{5.4}
We say that an $A$-module $X$ is cofinitely generated if it satisfies the following conditions :
 \begin{itemize}
 \item[(1)]$X$ is $\varpi$-adically completed and separated.
 \item[(2)]$X$ is $\mathcal{O}$-torsion free.
 \item[(3)]The action map $A\times X\rightarrow X$ induced by the $A$-module structure on $X$ is continuous, when 
 $A$ is given its $\mathfrak{m}$-adic topology, and $X$ is given its $\varpi$-adic topology.
 \item[(4)]$(X/\varpi X)[\mathfrak{m}]$ is finite-dimensional over $\mathbb{F}$. 
 
 \end{itemize}
\end{defn}
We recall that, if $X$ is any $\mathcal{O}$-torsion free, $\varpi$-adically complete and separated $\mathcal{O}$-module, 
then its dual $\mathrm{Hom}_{\mathcal{O}}(X, \mathcal{O})$ has a natural topology, namely the topology of the pointwise convergence, 
and the functor $X\mapsto \mathrm{Hom}_{\mathcal{O}}(X, \mathcal{O})$ induces an anti-equivalence of categories between the category of 
$\mathcal{O}$-torsion free, $\varpi$-adically complete and separated $\mathcal{O}$-modules and the category of 
$\mathcal{O}$-torsion free profinite $\mathcal{O}$-modules, with  a quasi-inverse functor given by 
$M\mapsto \mathrm{Hom}_{\mathcal{O}}^{\mathrm{cont}}(M, \mathcal{O})$. By Proposition C.5 of \cite{Em}, 
this functor $X\mapsto \mathrm{Hom}_{\mathcal{O}}(X, \mathcal{O})$ also induces an anti-equivalence 
of categories between the category of cofinitely generated $A$-modules and the category of $\mathcal{O}$-torsion free finite 
generated $A$-modules.

Let $V$ be an orthonormalizable $A$-module, and $X$ be a cofinitely generated $A$-module. 
We write $V\widehat{\otimes}_AX:=\varprojlim_nV\otimes_A(X/\varpi^nX)$ to denote the 
the $\varpi$-adic completion of $V\otimes_AX$. This is the same notation as that of the tensor product of compact $A$-modules. 
However, the author thinks that readers can easily recognize which tensor product is used in each situation. 
\begin{lemma}\label{5.5}
$V\widehat{\otimes}_AX$ is $\mathcal{O}$-torsion free.
\end{lemma}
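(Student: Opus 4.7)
The plan is to reduce the statement to two clean ingredients: the flatness of orthonormalizable $A$-modules, and the preservation of $\mathcal{O}$-torsion freeness under $\varpi$-adic completion.

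First, I would unwind the definition: since $X$ is $\mathcal{O}$-torsion free (so $X/\varpi^n X = X \otimes_{\mathcal{O}} \mathcal{O}/\varpi^n$), one has
$$V\widehat{\otimes}_AX = \varprojlim_n V\otimes_A(X/\varpi^nX) = \varprojlim_n (V\otimes_A X)/\varpi^n(V\otimes_A X),$$
so $V\widehat{\otimes}_AX$ is just the $\varpi$-adic completion $\widehat{N}$ of the plain tensor product $N := V\otimes_A X$.

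Next, I would invoke the fact (recorded immediately after Definition 5.1) that orthonormalizable $A$-modules are $A$-flat. Since $X$ is $\mathcal{O}$-torsion free, multiplication by $\varpi$ is injective on $X$; flatness of $V$ preserves this injection, showing that $N = V\otimes_A X$ is again $\varpi$-torsion free, equivalently $\mathcal{O}$-torsion free.

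The final step is the elementary observation that the $\varpi$-adic completion of a $\varpi$-torsion free module is itself $\varpi$-torsion free. I would argue directly: take $(m_n)_n \in \widehat{N}$ with $\varpi(m_n)_n = 0$, lift each $m_n$ to $\tilde{m}_n \in N$, so that $\varpi\tilde{m}_n \in \varpi^n N$ and hence $\varpi\tilde{m}_n = \varpi^n y_n$ for some $y_n \in N$; by $\varpi$-torsion freeness of $N$ this forces $\tilde{m}_n = \varpi^{n-1} y_n$. Consequently the image of $m_n$ in $N/\varpi^{n-1}N$ vanishes, and compatibility of the system gives $m_{n-1} = 0$ for every $n$, i.e.\ $(m_n)_n = 0$. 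This completes the argument.

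I do not foresee a serious obstacle; the potentially delicate point is merely reconciling the notation $V\widehat{\otimes}_A X$ used here (a $\varpi$-adic completion after tensoring with the $\mathfrak{m}$-adically complete $V$) with the compact-module tensor product introduced earlier, but this identification is built into the definition given just before the lemma.
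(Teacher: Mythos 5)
Your proof is correct and rests on the same two ingredients as the paper's — $A$-flatness of the orthonormalizable module $V$, and the $\mathcal{O}$-torsion freeness of $X$ — so the approaches are essentially equivalent. The only cosmetic difference is in how the reduction is packaged: the paper tensors the short exact sequence $0 \to X/\varpi^{n-1}X \xrightarrow{\cdot\varpi} X/\varpi^n X \to X/\varpi X \to 0$ with $V$ and passes to the inverse limit (exactness there relying on Mittag-Leffler, since the transition maps are surjective), whereas you identify $V\widehat{\otimes}_A X$ with the $\varpi$-adic completion of $V\otimes_A X$ and then give a direct hands-on check that the completion of a $\varpi$-torsion free module is $\varpi$-torsion free. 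Both routes are valid and roughly the same length; your version avoids invoking exactness of inverse limits, the paper's avoids the explicit lifting argument.
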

\begin{proof}
Since $X$ is $\mathcal{O}$-torsion free, we have the following short exact sequence 
$$0\rightarrow X/\varpi^{n-1}X\xrightarrow{x\mapsto \varpi x}X/\varpi^nX\rightarrow X/\varpi X\rightarrow 0$$
of $A$-modules for any $n\geqq 1$. Since $V$ is flat over $A$, we obtain the short exact sequence 
$$0\rightarrow V\otimes_AX/\varpi^{n-1}X\xrightarrow{x\mapsto \varpi x} V\otimes_AX/\varpi^nX\rightarrow V\otimes_AX/\varpi X\rightarrow 0.$$
Taking the limit, we obtain the short exact sequence
$$0\rightarrow V\widehat{\otimes}_AX\xrightarrow{x\mapsto \varpi x} V\widehat{\otimes}_AX\rightarrow V\otimes_AX/\varpi X\rightarrow 0,$$
hence $V\widehat{\otimes}_AX$ is $\mathcal{O}$-torsion free. 
\end{proof}
By this lemma, the $A$-module $\mathrm{Hom}_{\mathcal{O}}(V\widehat{\otimes}_AX, \mathcal{O})$ becomes an 
$\mathcal{O}$-torsion free profinite $\mathcal{O}$-module. 
 \begin{lemma}\label{5.6}
 Let $V$ be an orthonormalizable $A$-module, and $X$ be a cofinitely generated $A$-module. 
 %Consider the $\varpi$-adically completion $V\widehat{\otimes}_AX$ of $V\otimes_AX$. 
 Then, one has a natural isomorphism 
 $$V^*\otimes_A \mathrm{Hom}_{\mathcal{O}}(X, \mathcal{O})\isom \mathrm{Hom}_{\mathcal{O}}(V \widehat{\otimes}_AX, \mathcal{O}): 
 f\otimes g\mapsto [v\widehat{\otimes}x\mapsto g(f(v)x)]$$
 of topological $A$-modules. 
 % is finite generated $A$-module
 %(here $\pi^*:=\mathrm{Hom}_A(\pi, A)$ which is a pro-free profinite $A$-module. remark that 
% the left had sides is equal to the completion $\pi^*\widehat{\otimes}_A \mathrm{Hom}_{\mathcal{O}}(M, \mathcal{O})$ (in the categroy of compact $A$-modules) since $\mathrm{Hom}_{\mathcal{O}}(M, \mathcal{O})$ 
% is finite generated $A$-module (with $\mathfrak{m}$-adic topology) by the assumption that $M$ is cofinitely generated)
 \end{lemma}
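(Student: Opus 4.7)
The plan is to identify both sides with the same functor of $\mathrm{Hom}$'s by means of a tensor-Hom adjunction, and then to verify concretely via an orthonormal basis of $V$ that this adjunction is an isomorphism in this topological setting. First I would check that the formula $f\otimes g\mapsto[v\widehat{\otimes}x\mapsto g(f(v)x)]$ gives a well-defined map: the assignment $(f,g)\mapsto[v\otimes x\mapsto g(f(v)x)]$ is $A$-bilinear, hence factors through $V^*\otimes_AY$; for fixed $(f,g)$ the resulting $\mathcal{O}$-linear map $V\otimes_AX\to\mathcal{O}$ is $\varpi$-adically continuous in $x$ (because $g$ is continuous and multiplication by $f(v)\in A$ is continuous on $X$), so it extends to $V\widehat{\otimes}_AX$. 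Since $V\widehat{\otimes}_AX$ is $\mathcal{O}$-torsion free, $\varpi$-adically complete and separated by Lemma~\ref{5.5}, its $\mathcal{O}$-linear dual is automatically continuous, so the target is the correct object.

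Next I would set $Y:=\mathrm{Hom}_{\mathcal{O}}(X,\mathcal{O})$, which is a finitely generated (hence finitely presented, as $A$ is Noetherian) $\mathcal{O}$-torsion free $A$-module by Proposition~C.5 of \cite{Em}, and I would fix an orthonormal basis $(e_j)_{j\in J}$ of $V$ as permitted by Definition~\ref{5.1}. This identifies $V\cong\widehat{\bigoplus}_{j\in J}Ae_j$ (the $\mathfrak{m}$-adic completion of the direct sum) and, by Proposition~B.11 of \cite{Em}, $V^*\cong\prod_{j\in J}A$ with the pointwise topology. Using finite presentation of $Y$, the standard fact that tensor product with a finitely presented module commutes with arbitrary direct products gives
\[
V^*\otimes_AY\ \cong\ \Bigl(\prod_{j\in J}A\Bigr)\otimes_AY\ \cong\ \prod_{j\in J}Y.
\]

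For the right-hand side, I would use the expression $V\widehat{\otimes}_AX=\varprojlim_{i,n}(V/\mathfrak{m}^iV)\otimes_A(X/\varpi^nX)$, together with the basis, to compute
\[
\mathrm{Hom}_{\mathcal{O}}(V\widehat{\otimes}_AX,\mathcal{O})\ \cong\ \mathrm{Hom}_A^{\mathrm{cont}}(V,\,\mathrm{Hom}_{\mathcal{O}}(X,\mathcal{O}))\ =\ \mathrm{Hom}_A^{\mathrm{cont}}(V,Y),
\]
where $Y$ is endowed with its $\mathfrak{m}$-adic topology (which is Hausdorff because $Y$ is finitely generated over the Noetherian complete local ring $A$). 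Continuous $A$-linear maps out of $\widehat{\bigoplus}_JAe_j$ are in turn determined by the images of the $e_j$ in the $\mathfrak{m}$-adically complete target $Y$, yielding $\mathrm{Hom}_A^{\mathrm{cont}}(V,Y)\cong\prod_{j\in J}Y$. Both identifications send the map of the lemma to $(f\otimes g)\mapsto(g(f(e_j)\,\cdot))_{j\in J}$, so they agree on nose, proving bijectivity; topological comparison is then straightforward since both sides carry the product topology indexed by $J$ with entries in $Y$.

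The main obstacle I anticipate is the middle step, namely establishing the adjunction $\mathrm{Hom}_{\mathcal{O}}(V\widehat{\otimes}_AX,\mathcal{O})\cong\mathrm{Hom}_A^{\mathrm{cont}}(V,Y)$ in a way that respects all the topologies. The delicate point is that one must simultaneously handle the $\mathfrak{m}$-adic completion on the $V$-side and the $\varpi$-adic completion on the $X$-side, and show that limits and colimits in the two dualities (Proposition~B.11 and Proposition~C.5 of \cite{Em}) interact coherently. Everything else is then a routine matching of the basis-indexed descriptions.
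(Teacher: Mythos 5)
Your route is genuinely different from the paper's. The paper never invokes a basis or a tensor-Hom adjunction; instead it computes
\[
\mathrm{Hom}_{\mathcal O}(V\widehat\otimes_AX,\mathcal O)
=\varprojlim_n\mathrm{Hom}_{\mathcal O}\bigl(V\otimes_A(X/\varpi^nX),\,\mathcal O/\varpi^n\bigr)
=\varprojlim_{n,k}\mathrm{Hom}_{\mathcal O}\bigl((V/\mathfrak m^kV)\otimes_A(X/\varpi^nX)[\mathfrak m^k],\,\mathcal O/\varpi^n\bigr)
\]
using the continuity of the $A$-action to write $X/\varpi^n X=\varinjlim_k(X/\varpi^nX)[\mathfrak m^k]$, and then it untangles each finite-level term by the elementary fact that $V/\mathfrak m^kV$ is free over $A/\mathfrak m^k$ and the other factor has finite length. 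This identifies the right-hand side with the completed tensor product $V^*\widehat\otimes_A\mathrm{Hom}_{\mathcal O}(X,\mathcal O)$, which equals the ordinary tensor product since $\mathrm{Hom}_{\mathcal O}(X,\mathcal O)$ is finitely generated. Your approach instead fixes an orthonormal basis $(e_j)_{j\in J}$ and reduces both sides to $\prod_{j\in J}Y$, using that tensoring with a finitely presented module commutes with products.

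Your outline is workable, but the "delicate middle step" you flag is a real gap in what you have written, though it can be filled without the paper's double-limit machinery. The point is that both completions dissolve into automatic continuity: an $\mathcal O$-linear $\phi:V\otimes_AX\to\mathcal O$ satisfies $\phi(\varpi^n(V\otimes_AX))\subseteq\varpi^n\mathcal O$, hence kills $\bigcap_n\varpi^n(V\otimes_AX)$ and extends uniquely to the $\varpi$-adic completion; this gives $\mathrm{Hom}_{\mathcal O}(V\widehat\otimes_AX,\mathcal O)=\mathrm{Hom}_{\mathcal O}(V\otimes_AX,\mathcal O)=\mathrm{Hom}_A(V,Y)$ by the ordinary tensor-Hom adjunction. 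Likewise any $A$-linear $\psi:V\to Y$ satisfies $\psi(\mathfrak m^kV)\subseteq\mathfrak m^kY$ and is therefore continuous for the $\mathfrak m$-adic topologies, so $\mathrm{Hom}_A(V,Y)=\mathrm{Hom}^{\mathrm{cont}}_A(V,Y)$; since $Y$ is $\mathfrak m$-adically complete and separated (finitely generated over the Noetherian complete local $A$), restriction to $\bigoplus_JAe_j$ then gives $\prod_JY$. This closes the gap at the level of modules. What you would still owe — and what the paper gets for free by exhibiting both sides as the \emph{same} double limit — is the verification that the two product descriptions carry matching topologies; your claim that both are "the product topology indexed by $J$ with entries in $Y$" is plausible but asserted rather than checked, whereas the paper's identification is literal and requires no further reconciliation. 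So: a valid alternative, slightly more concrete, but with the topological comparison left as an exercise that the paper's formulation sidesteps.
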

 \begin{proof}We first remark that the usual tensor product $V^*\otimes_A \mathrm{Hom}_{\mathcal{O}}(X, \mathcal{O})$ is 
 equal to the completed tensor product $V^*\widehat{\otimes}_A \mathrm{Hom}_{\mathcal{O}}(X, \mathcal{O})$  in the category of 
 compact $A$-modules %is equal to the usual tensor product $V^*\otimes_A \mathrm{Hom}_{\mathcal{O}}(X, \mathcal{O})$   
 since $\mathrm{Hom}_{\mathcal{O}}(X, \mathcal{O})$ is a finite generated $A$-module. %On the other hands, the right hand side of the lemma is 
 %$\mathcal{O}$-torsion free profinite $\mathcal{O}$-module since $V \widehat{\otimes}_AX$ is $\mathcal{O}$-torsion free by lemma 
 
 Since $V \widehat{\otimes}_AX$ is the $\varpi$-adic completion of $V\otimes_AX$, one has 
 \begin{multline*}
 \mathrm{Hom}_{\mathcal{O}}(V \widehat{\otimes}_AX, \mathcal{O})=\varprojlim_{n}\mathrm{Hom}_{\mathcal{O}}(V\widehat{\otimes}_AX, \mathcal{O}/\varpi^n \mathcal{O})=\varprojlim_{n}\mathrm{Hom}_{\mathcal{O}}(V \otimes_A(X/\varpi^nX), \mathcal{O}/\varpi^n\mathcal{O}). 
 \end{multline*}
 Since $X$ with its $\varpi$-adic topology is equipped with a continuous action of $A$, one has $X/\varpi^nX=\varinjlim_k(X/\varpi^nX)[\mathfrak{m}^k]$ for any $n$. Hence, we obtain
$$ \mathrm{Hom}_{\mathcal{O}}(V\otimes_A(X/\varpi^nX), \mathcal{O}/\varpi^n\mathcal{O})=\varprojlim_k\mathrm{Hom}_{\mathcal{O}}((V/\mathfrak{m}^k V) \otimes_A(X/\varpi^nX)[\mathfrak{m}^k], \mathcal{O}/\varpi^n\mathcal{O}).$$

Since $V/\mathfrak{m}^kV$ is a free $A/\mathfrak{m}^k$-module and $(X/\varpi^nX)[\mathfrak{m}^k]$ is a finite length $A$-module, there exists a natural $A$-linear 
isomorphism
\begin{multline*}
\mathrm{Hom}_A(V/\mathfrak{m}^k V, A/\mathfrak{m}^k)\otimes_A\mathrm{Hom}_{\mathcal{O}}((X/\varpi^nX)[\mathfrak{m}^k], \mathcal{O}/\varpi^n\mathcal{O})\\
\isom \mathrm{Hom}_{\mathcal{O}}((V/\mathfrak{m}^k V) \otimes_A(X/\varpi^nX)[\mathfrak{m}^k], \mathcal{O}/\varpi^n\mathcal{O}).
\end{multline*}
Since the projective limit of the left hand side with respect to $k, n$ is 
equal to the completion $(V^*)\widehat{\otimes}_A \mathrm{Hom}_{\mathcal{O}}(X, \mathcal{O})$ in the category of compact $A$-modules, 
we obtain a natural isomorphism 
$$(V^*)\widehat{\otimes}_A \mathrm{Hom}_{\mathcal{O}}(X, \mathcal{O})\isom \mathrm{Hom}_{\mathcal{O}}(V \widehat{\otimes}_AX, \mathcal{O})$$
 of topological $A$-modules. By the remark in the beginning of the proof, we also obtain an isomorphism 
 $$(V^*)\otimes_A \mathrm{Hom}_{\mathcal{O}}(X, \mathcal{O})\isom \mathrm{Hom}_{\mathcal{O}}(V \widehat{\otimes}_AX, \mathcal{O}),$$
 which is easily seen to be the same map in the statement of the lemma. 

%, which is also equal to 
%$\pi^*\otimes_A \mathrm{Hom}_{\mathcal{O}}(M, \mathcal{O})$, we obtain the $A$-linear isomorphism 
%$$\pi^*\otimes_A \mathrm{Hom}_{\mathcal{O}}(M, \mathcal{O})\isom \mathrm{Hom}_{\mathcal{O}}(\pi \widehat{\otimes}_AM, \mathcal{O}).$$
 \end{proof}

 \subsection{The local Langlands correspondence for $\mathrm{GL}_2(\mathbb{Q}_l)$ in families}
 This theory interpolates a generic version of the usual local Langlands correspondence for $G_l=\mathrm{GL}_2(\mathbb{Q}_l)$. 
 Since the genericity is very important for our application, we start to recall the notion of generic representations 
 of $G_l$. 
 
 %Let $A$ be a commutative ring, and $G=G_l$ or $G=G_{\Sigma_0}$. A left $A[G]$-module $V$ is called smooth, or a smooth $A$-representation of $G$ if 
 %the stabilizer $\{g\in G |gv=v\}$ is open for any $v\in V$. If $A$ is Noetherian, a smooth $A$-representation $V$ of $G$ is called admissible, or a smooth admissible $A$-representation of $G$ if the $K$-fixed part $V^K$ is a finite generated $A$-module for any open subgroup $K$ of $G$.

 \subsubsection{Generic representations of $G_l$}
 Let $l$ be a prime distinct from $p$. Let 
  $\psi_l : \mathbb{Q}_l\rightarrow \overline{\mathbb{Q}}^{\times}$ be the fixed additive character defined in Notation in Introduction . We freely regard this character as characters 
  $\psi_l : \mathbb{Q}_l\rightarrow \overline{\mathbb{Q}}_p^{\times}$ and 
  $\psi_l : \mathbb{Q}_l\rightarrow \mathbb{C}^{\times}$ by 
  the fixed inclusions $\iota_p : \overline{\mathbb{Q}}\hookrightarrow \overline{\mathbb{Q}}_p$ 
  and $\iota_{\infty} : \overline{\mathbb{Q}}\hookrightarrow \mathbb{C}$. Let $W(\overline{\mathbb{F}}_p)$ be the ring of Witt vectors of $\overline{\mathbb{F}}_p$. We remark that 
the image of $\psi_l : \mathbb{Q}_l\rightarrow \overline{\mathbb{Q}}_p^{\times}$ is in $W(\overline{\mathbb{F}}_p)^{\times}$, 
hence obtain the character $\psi_l:\mathbb{Q}_l\rightarrow W(\overline{\mathbb{F}}_p)^{\times}$ which we also denote by the same letter $
\psi_l$. 

We set $N_l:=\left\{\left.\begin{pmatrix}1 & b\\0 & 1\end{pmatrix} \right| b\in \mathbb{Q}_l\right\}\subset P_l:=
\left\{\left.\begin{pmatrix}a & b\\0 & 1\end{pmatrix} \right| a\in \mathbb{Q}^{\times}_l, b\in \mathbb{Q}_l  \right\}\subset G_l$. 
We identify $N_l$ with $\mathbb{Q}_l$ by the isomorphism 
$\mathbb{Q}_l\isom N_l : b\mapsto \begin{pmatrix}1 & b\\0 & 1\end{pmatrix}$, by which we also regard $\psi_l$ as a character 
$\psi_l : N_l\rightarrow \overline{\mathbb{Q}}^{\times}$. 
%We see $\psi_l$ as a character of $N_l:=\left\{\left.\begin{pmatrix}1 & a\\0 & 1\end{pmatrix} \right| a\in \mathbb{Q}_l\right\}$ 
%by identifying $\mathbb{Q}_l\isom N_l:a\mapsto \begin{pmatrix}1 & a\\0 & 1\end{pmatrix}$. 
 
 Let $\widetilde{A}$ be a $W(\overline{\mathbb{F}}_p)$-algebra or a $\overline{\mathbb{Q}}$-algebra. For any smooth 
 $\widetilde{A}$-representation 
 $V$ of $P_l$, we write $\Phi_{l}(V)$ to denote the largest quotient of $V$ on which $N_l$
 %:=\left\{\left.\begin{pmatrix}1 & a\\0 & 1\end{pmatrix} \right| a\in \mathbb{Q}_l\right\}$ 
 acts by $\psi_l$. It is known that the functor $\Phi_{l}$ is an exact functor from the category of smooth 
 $\widetilde{A}$-representations of $P_l$ to the category of $\widetilde{A}$-modules.  

 In Proposition 3.1.4 of \cite{EH14}, it is shown that the functor $\Phi_{l}$ naturally descends to 
 any $\mathbb{Z}_p$-algebra, i.e. for any $\mathbb{Z}_p$-algebra $A$, there exists an exact functor $\Phi_{l}$ from the category of 
 smooth $A$-representations of $P_l$  to the category of $A$-modules with the property that there exists a canonical isomorphism 
 $$\Phi_{l}(V)\otimes_{\mathbb{Z}_p}W(\overline{\mathbb{F}}_p)\isom \Phi_{l}(V\otimes_{\mathbb{Z}_p}W(\overline{\mathbb{F}}_p))$$ 
 for any smooth $A$-representation $V$ of $P_l$. By the same proof, we can show that the functor $\Phi_{l}$ naturally descends also to 
 any $\mathbb{Q}$-algebra, i.e. for any $\mathbb{Q}$-algebra $A$, there exists an exact functor $\Phi_{l}$ from the category of 
 smooth $A$-representations of $P_l$  to the category of $A$-modules with the similar property. By definition, there exists a canonical isomorphism 
 $$\Phi_{l}(V)\otimes_AM\isom \Phi_{l}(V\otimes_AM)$$
 for any $\mathbb{Z}_p$- or $\mathbb{Q}$-algebra $A$, smooth $A$-representation $V$ of $P_l$, and $A$-module $M$. %, which can be proved in the same way 
 
   Let $\kappa$ be a $\mathbb{Z}_p$-or $\mathbb{Q}$-algebra which is a field. 
 \begin{defn}\label{5.7}
 An absolutely irreducible smooth admissible representation $V$ of $G_l$ over 
 $\kappa$ is called generic if $\Phi_{l}(V):=\Phi_{l}(V|_{P_l})\not=0$. We remark that this condition is equivalent to 
 $\mathrm{dim}_{\kappa}\Phi_{l}(V)=1$ by Theorem 3.1.15 of \cite{EH14}. 
 \end{defn}
 For any smooth $\kappa$-representation $V$ of $G_l$, we write $\mathrm{soc}(V)$ to denote the sum of 
 all the irreducible subrepresentations of $V$.

 \begin{defn}\label{5.8}
 A smooth $\kappa$-represntation $V$ of $G_l$ is 
 called essentially AIG if :
 \begin{itemize}
 \item[(1)]$\mathrm{soc}(V)$ is absolutely irreducible and generic.
 \item[(2)]$\Phi_{l}(V/\mathrm{soc}(V))=0$. 
 \item[(3)]$V$ is the sum of its finite length subrepresentations.
 \end{itemize}
 \end{defn}
 %(remark that essentially AIG representation of $\mathrm{GL}_2$ is known to be finite length (EH3.2.18)
 Let $A$ be a Noether $\mathbb{Z}_p$-algebra. The following notion is defined in \cite{He16}, which we need to state the main theorem of \cite{EH14}. This notion
  is also important 
 for our application. 
 \begin{defn}\label{5.9}
 A smooth $A$-representation $V$ of $G_l$ is called a co-Whittaker $A[G_l]$-module 
 if the following hold : 
 \begin{itemize}
 \item[(1)]$V$ is an admissible $A$-representation of $G_l$.
 \item[(2)]$\Phi_{l}(V)$ is free of rank one over $A$. 
 \item[(3)]If $\mathfrak{p}$ is a prime ideal of $A$ with 
 field of fraction $\kappa(\mathfrak{p})$ of $A/\mathfrak{p}$, then the smooth 
 $\kappa(\mathfrak{p})$-dual of $V\otimes_A\kappa(\mathfrak{p})$ is essentially AIG. 
 \end{itemize}
 \end{defn}
 
 We need the following lemma, which is an easy generalization of Proposition 6.2 of \cite{He16}.
\begin{lemma}\label{5.10}
For any co-Whittaker $A[G_l]$-module $V$ and 
any $A$-module $M$,  the natural map
$$\mathrm{End}_A(M)\rightarrow\mathrm{End}_{A[G_l]}(M\otimes_AV) : f\mapsto f\otimes\mathrm{id}_V$$
%and the natural functorial map 
%$$\mathrm{End}_{A[\mathrm{GL}_2(\mathbb{Q}_l)]}(M\otimes_AV)\rightarrow \mathrm{End}_A(\Phi_{l,A}(M\otimes_AV))$$
is isomorphism.
\end{lemma}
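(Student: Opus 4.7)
The strategy is to construct an explicit two-sided inverse to $f \mapsto f \otimes \mathrm{id}_V$ via the Whittaker-quotient functor $\Phi_l$. Since $\Phi_l$ is exact and commutes with $A$-linear base change, the defining isomorphism $\Phi_l(V) \cong A$ of a co-Whittaker module gives, for every $A$-module $M$, a canonical $A$-linear identification $\Phi_l(M \otimes_A V) \cong M \otimes_A \Phi_l(V) \cong M$. Applying $\Phi_l$ to endomorphisms then yields an $A$-linear map
\[
\Phi_l : \mathrm{End}_{A[G_l]}(M \otimes_A V) \longrightarrow \mathrm{End}_A(M),
\]
and a direct functoriality check shows $\Phi_l(f \otimes \mathrm{id}_V) = f \otimes \mathrm{id}_{\Phi_l(V)} = f$ for $f \in \mathrm{End}_A(M)$. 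In particular, the natural map in the lemma is already known to be a split injection, and the entire content is to show $\Phi_l$ is also a right inverse.

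For this, I would use the tensor-hom adjunction
\[
\mathrm{End}_{A[G_l]}(M \otimes_A V) \cong \mathrm{Hom}_A\bigl(M,\ \mathrm{Hom}_{A[G_l]}(V,\ M \otimes_A V)\bigr)
\]
to reduce the whole claim to showing that the canonical $A$-linear map $\alpha_M : M \to \mathrm{Hom}_{A[G_l]}(V, M \otimes_A V)$, $m \mapsto (v \mapsto m \otimes v)$, is an isomorphism. Composing with $\Phi_l$ again shows $\alpha_M$ is split injective, so the real content is surjectivity: given $\phi \in \mathrm{Hom}_{A[G_l]}(V, M \otimes_A V)$ with $\Phi_l(\phi) = 0$, one must have $\phi = 0$. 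The base case $M = A$ is precisely Proposition 6.2 of \cite{He16}, whose proof uses the essentially AIG hypothesis on the smooth dual of $V \otimes_A \kappa(\mathfrak{p})$ at each residue field to get the uniqueness (up to scalar) of the Whittaker functional, and this extends immediately to $M$ finite free by the additivity of Hom.

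The main obstacle is the passage from $M$ finite free to arbitrary $M$, since $M \otimes_A V$ need not be admissible and Helm's theorem does not apply directly to targets of that shape. My plan is to proceed in two stages. First I would write $M = \varinjlim_i M_i$ as a filtered colimit of finitely presented $A$-modules; because $V$ is finitely generated as an $A[G_l]$-module (being admissible, it is generated by its $K$-fixed vectors for any sufficiently small compact open $K$, and these form a finitely generated $A$-module), the functor $\mathrm{Hom}_{A[G_l]}(V, -)$ commutes with filtered colimits and reduces the statement to finitely presented $M_i$. Second, for such $M_i$, I would take a presentation $A^a \to A^b \to M_i \to 0$, apply the right-exact functor $-\otimes_A V$, and chase the natural transformation $\alpha$ in the resulting commutative diagram, where the key technical input — reconciling the failure of $\mathrm{Hom}_{A[G_l]}(V,-)$ to be right exact with the isomorphism $\alpha_{A^a}, \alpha_{A^b}$ already in hand — amounts to showing that any $A[G_l]$-linear $\psi : V \to M_i \otimes_A V$ with $\Phi_l(\psi) = 0$ lifts (possibly after localization at each $\mathfrak{p}$) to a map $V \to A^b \otimes_A V$ landing in the image of $V^a$, a lifting I expect to obtain by combining Nakayama-style reduction to residue fields with the generation of $V \otimes_A \kappa(\mathfrak{p})$ by any nonzero Whittaker vector guaranteed by the essentially AIG structure.
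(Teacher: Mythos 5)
Your split-injectivity observation via $\Phi_l$ is correct and matches the paper's first half, but your route to surjectivity has a genuine gap, and it is precisely the step you flag yourself with ``a lifting I expect to obtain.'' After applying $-\otimes_A V$ to a presentation $A^a \to A^b \to M_i \to 0$, the functor $\mathrm{Hom}_{A[G_l]}(V,-)$ is not right exact, so knowing $\alpha_{A^a}$ and $\alpha_{A^b}$ are isomorphisms does not let you deduce $\alpha_{M_i}$ is surjective by a diagram chase; the cokernel term obstructs exactly the conclusion you need. Reducing to residue fields also does not close this, because the vanishing $\Phi_l(\psi)=0$ does not a priori descend along $V^b \to M_i\otimes V$ in the way the lifting requires, and ``essentially AIG at $\kappa(\mathfrak{p})$'' controls the fibers, not the integral structure of the cokernel. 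As written this is a conjecture, not a proof.

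The paper sidesteps the whole reduction to finitely presented modules by using a different tool than $\mathrm{Hom}_{A[G_l]}(V,-)$: the Schwartz-function subfunctor $\mathfrak{I}(-)\subseteq(-)$ from \S 3.1 of \cite{EH14}, which is a $P_l$-subrepresentation functor that, \emph{unlike} $\mathrm{Hom}(V,-)$, commutes with $M\otimes_A(-)$ by construction, so $\mathfrak{I}(M\otimes_A V)\cong M\otimes_A\mathfrak{I}(V)$ for every $A$-module $M$ with no finiteness hypothesis. One then has the chain
\[
\mathrm{End}_A(M)\longrightarrow \mathrm{End}_{A[G_l]}(M\otimes_AV)\longrightarrow \mathrm{End}_{A[P_l]}\bigl(\mathfrak{I}(M\otimes_AV)\bigr)\cong \mathrm{End}_A\bigl(\Phi_l(M\otimes_AV)\bigr)\cong \mathrm{End}_A(M),
\]
where the third arrow is the adjunction isomorphism between $\Phi_l$ and $\mathfrak{I}$ (Proposition 3.1.16 of \cite{EH14}), and the fourth uses $\Phi_l(V)\cong A$ and exactness of $\Phi_l$. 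The composite is the identity, so it only remains to see that the restriction map $\mathrm{End}_{A[G_l]}(M\otimes_AV)\to \mathrm{End}_{A[P_l]}(\mathfrak{I}(M\otimes_AV))$ is injective; this is immediate because $V$ is generated as a $G_l$-module by $\mathfrak{I}(V)$ (Lemma 6.3.2 of \cite{EH14}), hence $M\otimes_A V$ is generated by $M\otimes_A\mathfrak{I}(V)=\mathfrak{I}(M\otimes_A V)$. This is the structural fact your proposal needed and did not locate: rather than black-boxing Proposition 6.2 of \cite{He16} as a statement and trying to bootstrap it through presentations, the paper transports the \emph{proof mechanism} of that proposition (Schwartz functions plus adjunction) to $M\otimes_A V$ directly. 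If you want to repair your proof, replacing the tensor-hom reduction with the $\mathfrak{I}$-generation argument is the missing idea.
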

 \begin{proof}
 By localizing at each prime ideal of $A$, it suffices to consider the case in which $A$ is a local ring. 
 For any smooth $A$-representation $W$ of $G_l$, we write $\mathfrak{I}(W)\subseteq W$ to denote the 
 space of Schwartz  functions in $W$ which is defined in \S 3.1 of \cite{EH14}, which is stable by the action of the subgroup 
 $P_l$.
 By the adjoint property between
 the functors $\Phi_{l}(-)$ and $\mathfrak{I}(-)$, there exists a natural isomorphism 
 $$\mathrm{End}_{A[P_l]}(\mathfrak{I}(M\otimes_AV))\isom \mathrm{End}_{A}(\Phi_{l}(M\otimes_AV))$$
 (see for example the proof of Proposition 3.1.16 of \cite{EH14}). 
 Since $V$ is co-Whittaker, we have $\Phi_{l}(V)\isom A$ and $V$ is generated as $A[\mathrm{GL}_2(\mathbb{Q}_l)]$-module by 
 $\mathfrak{I}(V)$ by Lemma 6.3.2 of \cite{EH14}. Therefore, there exists an isomorphism 
  $$\mathrm{End}_{A}(\Phi_{l}(M\otimes_AV))\isom \mathrm{End}_{A}(M\otimes_A\Phi_{l}(V))\isom\mathrm{End}_A(M),$$
  and the natural map 
  $$\mathrm{End}_{A[G_l]}(M\otimes_AV)\rightarrow 
  \mathrm{End}_{A[P_l]}(\mathfrak{I}(M\otimes_AV))$$ is 
  injective since one has $\mathfrak{I}(M\otimes_AV)\isom M\otimes_A\mathfrak{I}(V)$ by definition of $\mathfrak{I}(-)$ 
  (see \S 3.1 of \cite{EH14}), from which the lemma follows.
  
 \end{proof}
 \subsubsection{The generic local Langlands correspondence for $G_l$}
Here, we recall the generic version of the local Langlands correspondence for $G_l$-representations with coefficients in 
 some fields $\mathcal{K}$ over $\mathbb{Q}_p$ following \S 4.3 of \cite{Em} and \S 4.2 of \cite{EH14}. We first recall it in the case $\mathcal{K}=\overline{\mathbb{Q}}_p$. 
 Let $\rho : G_{\mathbb{Q}_l}\rightarrow \mathrm{GL}_2(\overline{\mathbb{Q}}_p)$ be a continuous representation. 
 Let $W(\rho)$ be the associated $\overline{\mathbb{Q}}_p$-representation of Weil-Deligne group of $\mathbb{Q}_l$, and 
 $W(\rho)^{\mathrm{ss}}$ be its Frobenius semi-simplification. Let $\pi'(\rho):=\pi'(W(\rho)^{\mathrm{ss}})$ be the absolutely irreducible 
 smooth admissible $\overline{\mathbb{Q}}_p$-representation of $G_l$ corresponding to $W(\rho)^{\mathrm{ss}}$ via the Tate's normalized local Langlands correspondence. Finally, we write $\pi(\rho)$ to denote the essentially AIG 
 $\overline{\mathbb{Q}}_p[G_l]$-module associated to $\rho$ by the generic local Langlands correspondence, i.e. 
 $\pi(\rho):=\mathrm{Ind}_{B}^{G_l}(\chi_1|\quad|_l\otimes \chi_2)$ if $\rho=\chi_1\oplus \chi_2$ with $\chi_1\chi_2^{-1}=|\quad|_l^{-1}$, and 
 $\pi(\rho)=\pi'(\rho)$ otherwise. We remark that, in the former case, $\pi(\rho)$ is not equal to $\pi'(\rho)= (\chi_1\circ \mathrm{det})$, 
 and is a non-trivial extension 
 $$0\rightarrow (\chi_1\circ \mathrm{det})\otimes \mathrm{St}\rightarrow \pi(\rho) \rightarrow \pi'(\rho)\rightarrow 0,$$
 in particular, $\pi(\rho)$ is not irreducible but generic.

 %unless $\rho=\chi_1\oplus \chi_2$ with $\chi_1\chi_2^{-1}=|\quad|_l^{-1}$, 
 %and $\pi(\rho):=\mathrm{Ind}_{B}^{G_l}(\chi_1|\quad|_l\otimes \chi_2)$ if $\rho=\chi_1\oplus \chi_2$ with $\chi_1\chi_2^{-1}=|\quad|_l^{-1}$
 
 % defined as follows : 
% \begin{itemize}
% \item[(1)]If $\rho=\chi_1\oplus \chi_2$, labelled so that $\chi_1\chi_2^{-1}\not=|\quad|_l^{-1}$, then 
 %$$\pi(\rho):=\mathrm{Ind}_{B}^{G_l}(\chi_1|\quad|_l\otimes \chi_2),$$
 %where the map $|\quad|_l : \mathbb{Q}_l^{\times}\rightarrow \mathbb{Q}^{\times}$ is the normalized absolute value such that $|l|_l=1/l$, and
 %$B\subset G_l$ is the upper triangler Borel subgroup. We remark that this representation is irreducible if $\chi_1\chi_2^{-1}\not=|\quad|_l$, and is a non-trivial extension 
 %$$0\rightarrow (\chi_1\circ \mathrm{det})\otimes \mathrm{St}\rightarrow \mathrm{Ind}_B^{\mathrm{GL}_2(\mathbb{Q}_l)}(\chi_1|\quad|_l\otimes \chi_2)
% \rightarrow (\chi_1\circ \mathrm{det})\rightarrow 0$$
% if $\chi_1\chi_2^{-1}=|\quad|_l$.
 
% \item[(2)]If $\rho$ is a non split extension of $\chi_2$ by $\chi_1$, then either $\chi_1=\chi_2=\chi$, in which case
% $$\pi(\rho)=\pi'(\rho)=\mathrm{Ind}_B^{\mathrm{GL}_2(\mathbb{Q}_l)}(\chi|\quad|_l\otimes \chi),$$
 %or else $\chi_1\chi_2^{-1}=|\quad|_l$, in which case 
 %$$\pi(\rho)=\pi'(\rho)= (\chi_1\circ \mathrm{det})\otimes \mathrm{St}.$$
 
 %\item[(3)]If $\rho$ is irreducible, then $\pi(\rho)=\pi'(\rho)$ is a cuspidal representation with 
 %central character equal to $\mathrm{det}(\rho) |\quad|_l$. 
% \end{itemize}
 Since we choose Tate's normalization, we can extend this correspondence to representations over more general coefficient fiields.
 Let $A$ be a complete Noetherian local $\mathbb{Z}_p$-algebra with finite residue field. Suppose that $A$ is a domain, with 
 field of fractions $\mathcal{K}$. Let $\rho : G_{\mathbb{Q}_l}\rightarrow \mathrm{GL}_2(\mathcal{K})$ be a continuous representation, 
 here ``continuous" means that $\rho$ is conjugate to a continuous representation $G_{\mathbb{Q}_l}\rightarrow \mathrm{GL}_2(A)$. 
 For any such $\rho$, one can canonically attach an essentially AIG $\mathcal{K}[G_l]$-module $\pi(\rho)$,
 which  is compatible with the correspondence above, and also with arbitrary field extentsions $\mathcal{K}\rightarrow \mathcal{K}'$.
  See \S 4.2 of \cite{Em} and \S 4 of \cite{EH14} for more details.
 
 \subsubsection{Local Langlands correspondence in families for $G_l$} 
 Here, we assume that $A$ is a reduced complete Noetherian local 
 $\mathbb{Z}_p$-algebra with finite residue field. For each prime ideal $\mathfrak{p}$ of $A$, 
 we write $\kappa(\mathfrak{p})$ to denote the field of fractions of $A/\mathfrak{p}$. 
 \begin{thm}%Assume that $p\not=2$. Let $A$ be a reduced complete Noetherian local $\mathbb{Z}_p$-algebra with a finite residue field, and 
\label{5.11} Let $\rho : G_{\mathbb{Q}_l}\rightarrow \mathrm{GL}_2(A)$ be a 
 continuous representation. Then there is, up to isomorphism, at most one smooth admissible $A$-representation $\pi(\rho)$ of $G_l$ such that : 
 \begin{itemize}
 \item[(1)]$\pi(\rho)$ is $A$-torsion free, 
 \item[(2)]$\pi(\rho)$ is a co-Whittaker $A[G_l]$-module,
 \item[(3)]for each minimal prime ideal $\mathfrak{a}$ of $A$, the tensor product 
 $\kappa(\mathfrak{a})\otimes_A\pi(\rho)$ is $\kappa(\mathfrak{a})[G_l]$-linearly isomorphic to 
 the smooth contragradient $\widetilde{\pi}(\rho(\mathfrak{a}))$ of $\pi(\rho(\mathfrak{a}))$, where 
 $\rho(\mathfrak{a}) : G_{\mathbb{Q}_l}\rightarrow \mathrm{GL}_2(\kappa(\mathfrak{a}))$ is the base change of 
 $\rho$ by the canonical map $A\rightarrow A/\mathfrak{a}\hookrightarrow \kappa(\mathfrak{a})$. 
 
 \end{itemize}
 
 \end{thm}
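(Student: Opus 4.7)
The plan is to fix two candidates $\pi_1, \pi_2$ both satisfying (1)--(3) and to realize each as a canonical integral structure inside a common generic fiber, thereby forcing $\pi_1 \cong \pi_2$.

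First, I would rigidify using condition (2) by choosing trivializations $\Phi_l(\pi_i) \cong A$ for $i = 1, 2$. Condition (3) then supplies, for each minimal prime $\mathfrak{a}$ of $A$, an isomorphism
\begin{equation*}
\kappa(\mathfrak{a}) \otimes_A \pi_i \xrightarrow{\sim} \widetilde{\pi}(\rho(\mathfrak{a})),
\end{equation*}
and because $\widetilde{\pi}(\rho(\mathfrak{a}))$ is essentially AIG with one-dimensional Whittaker quotient, these isomorphisms are unique up to $\kappa(\mathfrak{a})^{\times}$ and can be normalized so as to match the fixed trivializations of $\Phi_l(\pi_i)$. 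Reducedness of $A$ means its total ring of fractions is $\prod_{\mathfrak{a}} \kappa(\mathfrak{a})$, so the $A$-torsion freeness of (1) yields compatible $A[G_l]$-linear injections $\pi_i \hookrightarrow \prod_\mathfrak{a} \widetilde{\pi}(\rho(\mathfrak{a}))$.

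Second, I would prove that the images of $\pi_1$ and $\pi_2$ in $\prod_\mathfrak{a} \widetilde{\pi}(\rho(\mathfrak{a}))$ coincide as $A$-submodules. For this I would exploit the adjunction between $\mathfrak{I}$ and $\Phi_l$ together with Lemma $6.3.2$ of \cite{EH14}, which says that a co-Whittaker module is generated over $A[G_l]$ by its Schwartz space $\mathfrak{I}(\pi_i)$. Any $A[G_l]$-linear map $\pi_1 \to \pi_2$ respecting the chosen $\Phi_l$-trivializations is determined by its restriction to $\mathfrak{I}(\pi_1)$, and that restriction is in turn pinned down by its induced map on $\Phi_l$, which is the identity by our normalization --- this is an analog of the argument already used in the proof of Lemma \ref{5.10}. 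Combined with the generic identifications from the first paragraph, this allows me to glue the local-at-$\mathfrak{a}$ isomorphisms into a single $A[G_l]$-linear isomorphism $\pi_1 \xrightarrow{\sim} \pi_2$.

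The hard part will be the coincidence claim in the second paragraph: that a co-Whittaker $A[G_l]$-module satisfying (1)--(3) is determined, as an $A$-submodule of the common generic fiber $\prod_\mathfrak{a} \widetilde{\pi}(\rho(\mathfrak{a}))$, by its Whittaker datum. This is essentially the content of Helm's characterization of co-Whittaker modules in \cite{He16}: one must show that the integral $A$-structure cannot enlarge beyond what is specified by the trivialization of $\Phi_l$, and cannot shrink without violating the co-Whittaker condition at some minimal prime. Once this rigidity is in hand, the torsion-freeness and reducedness inputs carry the uniqueness through at the level of the embedded submodules, and $\pi_1 \cong \pi_2$ follows.
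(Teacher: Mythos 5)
The paper does not prove Theorem~\ref{5.11}; Remark~\ref{5.12} explicitly defers to Theorem~6.2.1 of \cite{EH14} and Theorem~7.2.1 of \cite{He16} for both the statement and the proof, so there is no internal argument to compare against. Your sketch is therefore best read as a reconstruction of the Emerton--Helm uniqueness argument, and the overall shape --- trivialize $\Phi_l(\pi_i)\cong A$, use the normalized generic-fiber identifications at each minimal prime to embed both candidates into $\prod_\mathfrak{a}\widetilde{\pi}(\rho(\mathfrak{a}))$ (via torsion-freeness plus the fact that the total fraction ring of the reduced ring $A$ is $\prod_\mathfrak{a}\kappa(\mathfrak{a})$), and then show the two images coincide --- is the right one.

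The gap you flag is nevertheless genuine, and the way you describe filling it (``the integral $A$-structure cannot enlarge\ldots cannot shrink\ldots'') does not point at the mechanism that actually closes it. What is needed is the following chain. First, the Schwartz subfunctor $\mathfrak{I}$ is compatible with base change, and once the trivialization $\Phi_l(\pi_i)\cong A$ is fixed it identifies $\mathfrak{I}(\pi_i)$ canonically --- not merely abstractly, but as a sub-$A[P_l]$-module of $\prod_\mathfrak{a}\mathfrak{I}(\widetilde{\pi}(\rho(\mathfrak{a})))$ under the normalized fiber-by-fiber isomorphisms --- with the same universal object for $i=1,2$; this is the precise sense in which $\Phi_l$ ``pins down'' $\mathfrak{I}(\pi_i)$, and it is a base-change-compatibility statement about $\mathfrak{I}$, not a consequence of the co-Whittaker condition at nonminimal primes. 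Second, Lemma~6.3.2 of \cite{EH14} (equivalently the argument used in Lemma~\ref{5.10}) says $\pi_i$ is generated over $A[G_l]$ by $\mathfrak{I}(\pi_i)$, so the images of $\pi_1,\pi_2$ are each the $A[G_l]$-span in $\prod_\mathfrak{a}\widetilde{\pi}(\rho(\mathfrak{a}))$ of the same subset, and hence coincide. Your second paragraph cites both of these inputs but frames them as a uniqueness-of-maps argument (``any map $\pi_1\to\pi_2$ respecting the trivializations is determined by\ldots''), which establishes rigidity of a map that is presumed to exist rather than producing it; as written, the existence of the $A[G_l]$-linear isomorphism is never constructed. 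Spelling out the two steps above, in place of the ``cannot enlarge / cannot shrink'' heuristic, would turn the sketch into a complete argument.
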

 
 \begin{rem}\label{5.12}
 This theorem is proved in Theorem 6.2.1 of \cite{EH14} 
 (even for any continuous representation $\rho : G_{F}\rightarrow \mathrm{GL}_n(A)$ 
 for any prime $p$, any finite extension $F$ of $\mathbb{Q}_l$, and any $n\geqq 1$). We follow Theorem 
 7.2.1 of \cite{He16} for the statement (1), (2), (3) which characterize $\pi(\rho)$. For the existence of $\pi(\rho)$, it is announced in 
 (the proof of) Theorem 6.1.2 of \cite{Em} and Remark 7.10 of \cite{He16} that such a $\pi(\rho)$ exists for any 
 $\rho$ as above under the assumption $p\not=2$. Moreover, for the representations $\rho$ which are considered in \cite{Em}, 
 the existence of $\pi(\rho)$ is proved in Theorem 6.1.2 of \cite{Em}, which is also enough for our application. 
 
  \end{rem}
 \subsubsection{Coadmissible representation}
% Let $G$ be a topological group which contains a profinite open subgroup whose pro-order is prime-to-p (e.g. $G_l$, $G_{\Sigma_0}$). 
 Let $G=G_l$ or $G=G_{\Sigma_0}$.
 
 \begin{defn}\label{5.13}
 We say that a smooth $A$-representation of $G$ on an $\mathcal{O}$-torsion free $A$-module $X$ is coadmissible if 
 for each open compact subgroup $K$ of $G$, the space of invariants $X^K$ is a cofinitely generated $A$-module.
 \end{defn}
 
  \begin{defn}\label{5.14}
  If $X$ is a smooth $A$-representation of $G$ on an $\mathcal{O}$-torsion free $A$-module, then we write $\widetilde{X}$ 
  to denote the smooth contragradient of $X$, i.e. 
  $$\widetilde{X}:=\{\phi\in \mathrm{Hom}_{\mathcal{O}}(X, \mathcal{O})\mid \phi \text{ is fixed by some compact open subgroup of }G\}$$
  with the natural contragradient action of $G$. 
% We say that a smooth $\Gamma$-representation on an $\mathcal{O}$-torsion free $A$-module $X$ is coadmissible if 
 %for each open compact subgroup $K$ of $\Gamma$, the space of invariants $X^K$ is a cofinitely generated $A$-module.
 \end{defn}
 
  Let $V$ be a smooth $A$-representation of $G$. For any open compact subgroup $K$ of $G$ whose pro-order is prime to $p$ 
  (e.g. $K=1+l^nM_2(\mathbb{Z}_l)$ ($n\geqq 1$) for $G=G_l$), 
  one has a projector $p_K : V\rightarrow V^K$ defined by 
  $$p_K(v):=\int_{K}kv d\mu(k) \quad (v\in V), $$
  where $d\mu$ is the $\mathbb{Z}_p$-valued Haar measure on $K$ normalized so that $\int_{K}d\mu(k)=1$ (see \S2.1 of \cite{EH14}). In particular, $V^K$ is an $A$-direct summand of $V$, and the canonical map $M\otimes_A V^K\rightarrow (M\otimes_AV)^K : m\otimes v \mapsto m\otimes v$ is isomorphism for any $A$-module $M$. 
One also has a canonical $A$-linear isomorphism 
   $$\mathrm{Hom}_{\mathcal{O}}(V, \mathcal{O})^K\isom \mathrm{Hom}_{\mathcal{O}}(V^K, \mathcal{O}) : f\mapsto f|_{V^K}.$$
   for any such $K$ with the inverse $f\mapsto f\circ p_K$.
   
  Let $X$ be a coadmissible $A$-representation  of $G$. Then, for any open compact subgroup $K$ of $G$ whose pro-order is prime to $p$, 
  one has an $A$-linear isomorphism 
  $$\widetilde{X}^K=\mathrm{Hom}_{\mathcal{O}}(X, \mathcal{O})^K\isom \mathrm{Hom}_{\mathcal{O}}(X^K, \mathcal{O}).$$
  In particular, $\widetilde{X}^K$ is finite generated $A$-module for any such $K$. Since such $K$ are cofinal in the open subgroups of $G$, 
  this implies that $\widetilde{X}$ is a smooth admissible $A$-representation of $G$ on an $\mathcal{O}$-torsion free $A$-module. In fact, 
  it is shown that 
  the functor $X\mapsto \widetilde{X}$ induces an anti-equivalence of the category of coadmissible 
 smooth $A$-representations of $G$ on $\mathcal{O}$-torsion free $A$-modules, and the category of 
 admissible smooth $A$-representations of $G$ on $\mathcal{O}$-torsion free $A$-modules (Lemma C.26 of \cite{Em}).

 \begin{defn}\label{5.15}
 If $V$ is an orthonormalizable $A$-module, and $X$ is a coadmissible smooth $A$-representation of $G$, then we write 
 $$V\overset{\curlywedge}{\otimes}_AX :=\varinjlim_{K}V\widehat{\otimes}_AX^K$$
 where $K$ runs over all the open compact subgroups of $G$, and $V\widehat{\otimes}_AX^K$
  denotes the $\varpi$-adic completion of the tensor product $V\otimes_AX^K$. 
 \end{defn}
 We remark that $V\widehat{\otimes}_AX^K$ is $\mathcal{O}$-torsion free for each $K$ by lemma \ref{5.5}, hence 
 $V\overset{\curlywedge}{\otimes}_AX$ is also $\mathcal{O}$-torsion free. 
 We also remark that the natural map 
 $V\widehat{\otimes}_AX^K\rightarrow V\overset{\curlywedge}{\otimes}_AX$ is an injection for 
 any $K$ by Lemma C.44 of \cite{Em}, hence one has
 $$V\widehat{\otimes}_AX^K=(V\overset{\curlywedge}{\otimes}_AX)^K.$$
 
 \begin{lemma}\label{5.16}
 If $V$ is an orhthonormalizable $A$-module, and $X$ is a coadmissible smooth $A$-representation of $G$, 
 then there exists a natural $A[G]$-linear isomorphism
 $$V^*\otimes_A\widetilde{X}\isom \widetilde{(V\overset{\curlywedge}{\otimes}_AX)}.$$
  
 \end{lemma}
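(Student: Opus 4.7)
The plan is to reduce the isomorphism to an application of Lemma \ref{5.6} at each open compact subgroup $K$ of $G$, and then pass to the filtered colimit over such $K$. First I would unwind both sides via the definition of the smooth contragradient (Definition \ref{5.14}). For the right hand side, set $Y := V\overset{\curlywedge}{\otimes}_A X$; the identity $(V\overset{\curlywedge}{\otimes}_A X)^K = V\widehat{\otimes}_A X^K$ (noted after Definition \ref{5.15}) combined with the projector isomorphism $\mathrm{Hom}_{\mathcal{O}}(Y,\mathcal{O})^K \isom \mathrm{Hom}_{\mathcal{O}}(Y^K,\mathcal{O})$, $\phi\mapsto \phi|_{Y^K}$, valid whenever $K$ has pro-order prime to $p$, yields
\[
\widetilde{Y}\ \isom\ \varinjlim_K \mathrm{Hom}_{\mathcal{O}}(V\widehat{\otimes}_A X^K,\mathcal{O}),
\]
the limit running over such $K$, which are cofinal. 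The analogous projector identification applied to $X$ itself gives $V^*\otimes_A \widetilde{X} \isom \varinjlim_K V^*\otimes_A \mathrm{Hom}_{\mathcal{O}}(X^K,\mathcal{O})$, using that $V^*\otimes_A(-)$ commutes with filtered colimits.

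Since $X$ is coadmissible, $X^K$ is a cofinitely generated $A$-module for every open compact $K$, so Lemma \ref{5.6} supplies an isomorphism
\[
\alpha_K\colon V^*\otimes_A \mathrm{Hom}_{\mathcal{O}}(X^K,\mathcal{O})\ \isom\ \mathrm{Hom}_{\mathcal{O}}(V\widehat{\otimes}_A X^K,\mathcal{O}),\qquad f\otimes \phi\mapsto \bigl[v\,\widehat{\otimes}\, x\mapsto \phi(f(v)\cdot x)\bigr],
\]
for every such $K$. To take the colimit I need the family $\{\alpha_K\}$ to be compatible with the transition maps for $K'\subseteq K$. On both sides, after the projector identification, the transition is pre-composition with $p^X_K|_{X^{K'}}$ (resp. with $\mathrm{id}_V\widehat{\otimes}\, p^X_K|_{X^{K'}}$), and the check reduces to the identity $p^X_K(f(v)\cdot x)=f(v)\cdot p^X_K(x)$, which holds because the $A$-action on $X$ commutes with the $G$-action that defines $p^X_K$. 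For the $G$-equivariance, a direct computation from the formula for $\alpha_K$ shows that the image of $f\otimes g\phi$ sends $v\widehat{\otimes} x$ to $\phi(g^{-1}(f(v)x))=\phi(f(v)\cdot g^{-1}x)$, which matches the contragradient action of $g$ on $\widetilde{Y}$ via $(gh)(v\widehat{\otimes}x)=h(v\widehat{\otimes} g^{-1}x)$.

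The main obstacle I anticipate is bookkeeping for the inductive limit: one must correctly juggle three distinct pieces of structure simultaneously -- the $A$-module structure entering the tensor product $V^*\otimes_A(-)$, the $\mathcal{O}$-linear duality (not $A$-linear) used in both $\widetilde{X}$ and in Lemma \ref{5.6}, and the smoothing projectors $p_K$ which are only defined when $p\nmid |K|$. Once one fixes a cofinal system of $K$ with pro-order prime to $p$ and verifies the identity $p^X_K(f(v)\cdot x)=f(v)\cdot p^X_K(x)$ as above, the isomorphism of the lemma is realised as the filtered colimit $\varinjlim_K \alpha_K$, and its $A[G]$-linearity is automatic from the level-$K$ computation.
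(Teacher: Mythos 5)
Your argument is correct and reduces to the same key ingredient as the paper's proof, namely Lemma \ref{5.6} applied to the cofinitely generated $A$-module $X^K$ at each open compact subgroup $K$ of pro-order prime to $p$. The only difference is organizational: the paper first defines the global map $V^*\otimes_A\widetilde{X}\to \mathrm{Hom}_{\mathcal{O}}(V\overset{\curlywedge}{\otimes}_AX,\mathcal{O})$ by the formula $f\otimes g\mapsto[v\widehat{\otimes}x\mapsto g(f(v)x)]$, observes that it is $A[G]$-linear and hence factors through the smooth contragradient, and then verifies that it is an isomorphism on $K$-invariants; whereas you assemble the map as a filtered colimit $\varinjlim_K\alpha_K$ of the level-$K$ isomorphisms and verify transition-map compatibility and $G$-equivariance directly. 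These are equivalent, and your compatibility and equivariance checks are correct as written.
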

 \begin{proof}
 For any $f\otimes g\in V^*\otimes_A\widetilde{X}$, the map 
 $V\otimes_A X^K\rightarrow \mathcal{O}:v\otimes x\mapsto g(f(v)x)$ uniquely extends to an $\mathcal{O}$-linear map 
 $V\widehat{\otimes}_A X^K\rightarrow \mathcal{O}$ for any open compact subgroup $K$ of $G$. These maps for all $K$ induce an
 $\mathcal{O}$-linear map $V\overset{\curlywedge}{\otimes}_AX\rightarrow \mathcal{O}$. 
 Therefore, we obtain a map 
 $$V^*\otimes_A\widetilde{X}\rightarrow \mathrm{Hom}_{\mathcal{O}}(V\overset{\curlywedge}{\otimes}_AX, \mathcal{O}).$$ Since this map is easily seen 
 to be $A[G]$-linear, this map factors through the smooth contragradient $V^*\otimes_A\widetilde{X}\rightarrow \widetilde{(V\overset{\curlywedge}{\otimes}_AX)}$. To see that the latter map is isomorphism, it suffices to show that this map induces an 
 isomorphism $(V^*\otimes_A\widetilde{X})^K\rightarrow \left(\widetilde{(V\overset{\curlywedge}{\otimes}_AX)}\right)^K$ for any $K$ whose pro-order is prime to $p$. 
 To show this claim, we note that one has
 $(V^*\otimes_A\widetilde{X})^K=V^*\otimes_A\widetilde{X}^K$ since the pro-order of $K$ is prime to $p$. %$V^*$ is a profree $A$-module, hence a flat $A$-module. 
 One also has 
 $\left(\widetilde{(V\overset{\curlywedge}{\otimes}_AX)}\right)^K\isom \mathrm{Hom}_{\mathcal{O}}((V\overset{\curlywedge}{\otimes}_AX)^K, \mathcal{O})
 =\mathrm{Hom}_{\mathcal{O}}(V\widehat{\otimes}_AX^K, \mathcal{O})$. Since $X^K$ is a cofinitely generated  $A$-module, then the claim follows from Lemma \ref{5.6}.
 
 \end{proof}

 \subsection{The $p$-adic local Langlands correspondence for $\mathrm{GL}_2(\mathbb{Q}_p)$}
 \subsubsection{Deformation theoretic formulation of the $p$-adic local Langlands correspondence}
 Here, we recall the $p$-adic local Langlands correspondence for $G_p$ \cite{Co10} following \S3 of \cite{Em}.
 We write $\mathrm{Art}(\mathcal{O})$ to denote the full subcategory of $\mathrm{Comp}(\mathcal{O})$ consisting of Artin rings. 
 For any object $A$ in $\mathrm{Comp}(\mathcal{O})$ with its maximal ideal $\mathfrak{m}$,  we say that an $A[G_p]$-module $V$ is smooth if $V$ satisfies 
 $$V=\cup_{K, n}V^K[\mathfrak{m}^n]$$
 where the union is taken over open compact subgroups $K$ of $G_p$ and positive integers $n$. We write 
 $\mathrm{Mod}_{G_p}^{\mathrm{sm}}(A)$ to denote the category of smooth $A[G_p]$-modules. We say that an object 
 $V\in \mathrm{Mod}_{G_p}^{\mathrm{sm}}(A)$ is admissible if $V^K[\mathfrak{m}^n]$ is finite generated $A$-module 
 for any open compact subgourp $K$ of $G_p$ and $n\geqq 1$. Moreover,  $V\in \mathrm{Mod}_{G_p}^{\mathrm{sm}}(A)$ is 
 called locally admissible if, for every $v\in V$, the sub $A[G_p]$-module $A[G_p] v$ of $V$ is admissible. 
 We write $\mathrm{Mod}_{G_p}^{\mathrm{adm}}(A)$ (resp. $\mathrm{Mod}_{G_p}^{\mathrm{l.adm}}(A)$) to denote the full 
 subcategory of $\mathrm{Mod}_{G_p}^{\mathrm{sm}}(A)$ consisting of admissible (resp. locally admissible) smooth $A[G_p]$-modules.
 
 \begin{defn}\label{5.17}
 We say that an $A[G_p]$-module $\pi$ is an orthonormalizable admissible $A$-representation of $G_p$
 if the following conditions are satisfied : 
 \begin{itemize}
 \item[(1)]$\pi$ is an orthonormalizable $A$-module. 
 \item[(2)]The induced $G_p$-action on $\pi/\mathfrak{m}^n\pi$ makes this quotient an admissible smooth $(A/\mathfrak{m}^n)[G_p]$-module 
 for each $n\geqq 1$. 
 \end{itemize}
 
 \end{defn}

 Let $A$ be an object in $\mathrm{Art}(\mathcal{O})$. Colmez \cite{Co10} has defined a covariant exact functor 
 \begin{multline*}
 \mathrm{MF} : \{\text{admissible smooth $A[G_p]$-modules of finite length }\}\\
 \longrightarrow \{\text{ continuous } A\text{-representations of $G_{\mathbb{Q}_p}$ on finite generated } A\text{-modules}\}.
 \end{multline*}
 
 Precisely, Colmez \cite{Co10} defined a contravariant exact functor $\pi\mapsto D(\pi)$ to the category of \'etale $(\varphi, \Gamma)$-modules 
 with $A$-action. Then, our $\mathrm{MF}(\pi)$ is equal to the Pontryagin dual $\mathrm{Hom}_{\mathcal{O}}(V(D(\pi)), L/\mathcal{O})$ of 
 the $A$-representation $V(D(\pi))$ of $G_{\mathbb{Q}_p}$ associated to $D(\pi)$ via Fontaine's equivalence. On the other hands, Colmez 
 \cite{Co10} used $\bold{V}(\pi):=\mathrm{Hom}_{\mathcal{O}}(V(D(\pi)), L/\mathcal{O})(\varepsilon)$, hence the relation between Emerton's functor $\mathrm{MF}(-)$ and 
 Colmez' functor $\bold{V}(-)$ is 
 $$\bold{V}(\pi)=\mathrm{MF}(\pi)(\varepsilon).$$
 
% (which is compatible with twisting : if $\chi : \mathbb{Q}_p^{\times}\rightarrow A^{\times}$ is a continuous character, and if 
% $\pi$ is an object in $\mathrm{Mod}_{G_p}^{\mathrm{adm}}(A)$ of finite length, then there exists a natural $A[G_{\mathbb{Q}_p}]$-linear isomorphism 
% $$\mathrm{MF}(\chi\circ \mathrm{det}\otimes_A\pi)\isom \chi\otimes_A\mathrm{MF}(\pi).$$
% ($\mathrm{MF}$ is normalized so that the central character of $\Pi$ is equal to $\mathrm{det}_L(\mathrm{MF}(\Pi))\varepsilon$ for 
% absolutely irreducible non-ordinary Banach $L$-representation $\Pi$ of $G$, the relation with the Colmez' original functor which we write
  %$V(\pi)$ is $$V(\pi)=\mathrm{MF}(\pi)(\varepsilon)$$))
 
  Suppose that $A$ is an object in $\mathrm{Comp}(\mathcal{O})$ with maximal ideal $\mathfrak{m}$. 
  If $\pi$ is an orthonormalizable admissible $A$-representation of $G_p$ over $A$ with the additional property that 
  $\pi/\mathfrak{m}\pi$ is of finite length as $A[G_p]$-module, then we define 
  $$\mathrm{MF}(\pi):=\varprojlim_n\mathrm{MF}(\pi/\mathfrak{m}^n\pi).$$
  Since $\mathrm{MF}$ is exact for every $A\in \mathrm{Art}(\mathcal{O})$, $\mathrm{MF}$ for any $A\in \mathrm{Comp}(\mathcal{O})$ also provides an exact functor 
  \begin{multline*}
 \mathrm{MF} : \{\text{orthonormalizable admissible } G\text{-representations }\pi \text{ over } A \\
\text{ such that } 
\pi/\mathfrak{m}\pi \text{ is of finite length as $A[G_p]$-module}\}\\
 \longrightarrow \{\text{ continuous } A\text{-representations of $G_{\mathbb{Q}_p}$ on finite free } A\text{-modules}\}
 \end{multline*}

Let $\mathbb{F}$ be a finite field of characteristic $p$, and let 
$\overline{\rho} : G_{\mathbb{Q}_p}\rightarrow \mathrm{GL}_2(\mathbb{F})$ be a continuous representation. 
The following theorem is due to Colmez \cite{Co10}. We quote it from Theorem 3.3.2 of \cite{Em}. 

\begin{thm}\label{5.18}
Assume that $\overline{\rho}$ satisfies the following : 
\begin{itemize}
\item[(a)] $\overline{\rho}\not\isom \chi\otimes \begin{pmatrix}1& * \\ 0 & \overline{\varepsilon}\end{pmatrix}$
for any character $\chi : G_{\mathbb{Q}_p}\rightarrow \mathbb{F}^{\times}$. 
\end{itemize}
Then, there exists a finite length object $\overline{\pi}$ in $\mathrm{Mod}_{G_p}^{\mathrm{adm}}(\mathbb{F})$, unique up to isomorphism, such that : 
\begin{itemize}
\item[(1)]$\mathrm{MF}(\overline{\pi})\isom \overline{\rho}$.
\item[(2)]$\overline{\pi}$ has central character equal to $\mathrm{det}(\overline{\rho})\overline{\varepsilon}$.
\item[(3)]$\overline{\pi}$ has no sub quotient which is finite dimensional.
\end{itemize}
\end{thm}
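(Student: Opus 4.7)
The plan is to follow Colmez's original approach, splitting into cases based on the reduction type of $\overline{\rho}$ and using the Montreal functor $\mathrm{MF}$ (equivalently $\bold{V}$) as the bridge between Galois representations and $G_p$-representations. First I would exploit Colmez's exact contravariant functor on $(\varphi,\Gamma)$-modules to set up the machinery: for any continuous $\overline{\rho}$, the associated \'etale $(\varphi,\Gamma)$-module over $\mathbb{F}$ is understood via Fontaine's equivalence, and $\mathrm{MF}$ should turn out to be essentially the inverse construction restricted to the appropriate subcategory of $\mathrm{Mod}_{G_p}^{\mathrm{adm}}(\mathbb{F})$.

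Next I would construct a candidate $\overline{\pi}$ case by case using Breuil's classification of irreducible admissible smooth $\mathbb{F}$-representations of $G_p$ with central character. If $\overline{\rho}$ is irreducible, then $\overline{\pi}$ is defined to be the supersingular representation attached by Breuil to the Serre weight(s) of $\overline{\rho}$, twisted so that the central character equals $\det(\overline{\rho})\overline{\varepsilon}$; direct computation on the associated $(\varphi,\Gamma)$-module (as in \cite{Co10}) then gives $\mathrm{MF}(\overline{\pi}) \cong \overline{\rho}$. If $\overline{\rho} \cong \chi_1 \oplus \chi_2$ is split reducible with $\chi_1 \neq \chi_2$, $\chi_1\chi_2^{-1} \neq \overline{\varepsilon}^{\pm 1}$, take $\overline{\pi}$ to be the direct sum of the two corresponding irreducible principal series. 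If $\overline{\rho}$ is a nontrivial extension of $\chi_2$ by $\chi_1$ with $\chi_1 \chi_2^{-1}$ avoiding the bad characters, then $\overline{\pi}$ is built as a specific length-two extension of principal series, and the nontriviality of the extension on the Galois side corresponds to a nontrivial class in the appropriate $\mathrm{Ext}^1$ on the $G_p$ side. Hypothesis (a) is exactly what rules out the unique case where $\mathrm{MF}$ fails to be faithful/essentially surjective on the natural candidate (the Steinberg-like extensions), so in each remaining configuration the candidate exists.

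Property (2) is automatic from the construction since each Breuil building block is chosen with the specified central character, and property (3) follows because the candidates constructed are by design either supersingular, irreducible principal series, or glueings thereof none of which admit finite-dimensional subquotients — this last check uses Barthel--Livn\'e's classification to verify that the only finite-dimensional smooth admissible $\mathbb{F}$-representations of $G_p$ are characters, and none of these embed in or project from the candidate without forcing $\overline{\rho}$ into the excluded shape of (a).

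For uniqueness, the key step is that the functor $\mathrm{MF}$ is faithful on the full subcategory of finite length objects in $\mathrm{Mod}_{G_p}^{\mathrm{adm}}(\mathbb{F})$ having no finite-dimensional subquotient and the prescribed central character, and moreover it reflects isomorphisms on this subcategory (this is the core of Colmez's theorem; it is extracted from the explicit description of $D(\overline{\pi})$ for irreducible $\overline{\pi}$ together with an $\mathrm{Ext}^1$-computation matching extensions on both sides). If $\overline{\pi}'$ is another candidate, one argues that its socle filtration is forced to match that of $\overline{\pi}$ by applying $\mathrm{MF}$ and comparing with the socle filtration of $\overline{\rho}$, then lifts an isomorphism on graded pieces to a global isomorphism. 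The main obstacle I expect is precisely this uniqueness step in the non-split reducible case: controlling that the extension class of $\overline{\pi}'$ is forced to agree with that of $\overline{\pi}$ requires a careful $\mathrm{Ext}^1$ computation in $\mathrm{Mod}_{G_p}^{\mathrm{adm}}(\mathbb{F})$ between the relevant principal series, and showing that $\mathrm{MF}$ induces an injection (in fact bijection under (a)) on these $\mathrm{Ext}^1$-groups, which is where hypothesis (a) is doing real work rather than merely excluding degenerate candidates.
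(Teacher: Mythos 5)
The paper does not prove Theorem \ref{5.18} at all: immediately before the statement it simply says ``The following theorem is due to Colmez \cite{Co10}. We quote it from Theorem 3.3.2 of \cite{Em}.'' There is therefore no paper-internal proof to compare against; your sketch is an attempt to reconstruct Colmez's argument, and as such it is a reasonable high-level outline of the standard mod-$p$ local Langlands construction. The case division (irreducible $\overline{\rho}$ corresponding to a Breuil supersingular, split reducible to a sum of irreducible principal series, non-split reducible to a specific extension), the verification of (2) by construction, the verification of (3) via the Barthel--Livn\'e observation that finite-dimensional smooth $G_p$-representations are characters, and the uniqueness via an $\mathrm{Ext}^1$-matching between the Galois and automorphic sides are all the right ingredients and in the right places.

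Two points of imprecision worth flagging. First, you describe hypothesis (a) as excluding ``the Steinberg-like extensions,'' but the excluded shape $\chi\otimes\begin{pmatrix}1&*\\0&\overline{\varepsilon}\end{pmatrix}$ (sub $\chi$, quotient $\chi\overline{\varepsilon}$) is the case where the would-be $\overline{\pi}$ necessarily picks up a one-dimensional Jordan--H\"older constituent, so that no candidate can satisfy (3) \emph{and} match the extension class; it is not merely that $\mathrm{MF}$ becomes non-faithful there. This matters because the paper later distinguishes (a) from the stronger two-sided condition (f) $\overline{\rho}\not\isom\chi\otimes\begin{pmatrix}\overline{\varepsilon}&*\\0&1\end{pmatrix}$ used for Pa\v{s}k\={u}nas' results, and the two have genuinely different roles. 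Second, the uniqueness claim ``$\mathrm{MF}$ is faithful and reflects isomorphisms on the subcategory of finite-length objects with no finite-dimensional subquotient and fixed central character'' is essentially what must be proved, not something that can be invoked: Colmez' proof establishes it precisely by the explicit $(\varphi,\Gamma)$-module computations and the $\mathrm{Ext}^1$-comparison you allude to in the final paragraph, so a genuine proof cannot treat it as a black box. With those caveats, your outline correctly identifies the shape of the argument that the paper outsources to \cite{Co10} and \cite{Em}.
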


From now, fix $\overline{\rho}$ and $\overline{\pi}$ as above. We consider the following deformation functors.
\begin{defn}\label{5.19}
We let 
$$\mathrm{Def}(\overline{\rho}) : \mathrm{Comp}(\mathcal{O})\rightarrow (\mathrm{Set})$$ 
(resp. 
$$\mathrm{Def}(\overline{\pi}) : \mathrm{Comp}(\mathcal{O})\rightarrow (\mathrm{Set}))$$ 
denote the covariant functor from 
$\mathrm{Comp}(\mathcal{O})$ to the category of sets defined by : for any object $A\in \mathrm{Comp}(\mathcal{O})$ with maximal ideal $\mathfrak{m}$, 
the set $\mathrm{Def}(\overline{\rho})(A)$ (resp. $\mathrm{Def}(\overline{\pi})(A)$) is the isomorphism class of the pairs 
$(V, \iota)$ (resp. $(\pi, \iota)$), where $V$ is a free $A$-module of rank two equipped with a continuous $A$-linear action of 
$G_{\mathbb{Q}_p}$ (resp. $\pi$ is an orthonormalizable admissible $A$-representation of $G_p$), and $\iota$ is an $(A/\mathfrak{m})[G_{\mathbb{Q}_p}]$-linear isomorphism 
$\iota : V/\mathfrak{m}V\isom A/\mathfrak{m}\otimes_{\mathbb{F}}\overline{\rho}$ 
(resp. $(A/\mathfrak{m})[G_p]$-linear isomorphism $\iota : \pi/\mathfrak{m}\pi\isom A/\mathfrak{m}\otimes_{\mathbb{F}}\overline{\pi}$). 

%, and as morphisms the 
%$A[G_{\mathbb{Q}_p}]$-linear isomorphism compatible with the given maps $\iota$. 
%following category fibred in groupoids over 
%$\mathrm{Comp}(\mathcal{O})$ : for any object $A\in \mathrm{Comp}(\mathcal{O})$, with maximal ideal $\mathfrak{m}$, 
%the groupoid $\mathrm{Def}(\overline{\rho})(A)$ has as objects free $A$-modules $V$ of rank two equipped with a continuous action of 
%$G_{\mathbb{Q}_p}$, as well as an $(A/\mathfrak{m})[G_{\mathbb{Q}_p}]$-linear isomorphism 
%$\iota : V/\mathfrak{m}V\isom A/\mathfrak{m}\otimes_{\mathbb{F}}\overline{\rho}$, and as morphisms the 
%$A[G_{\mathbb{Q}_p}]$-linear isomorphism compatible with the given maps $\iota$. 
\end{defn}
%\begin{defn}
%We let $\mathrm{Def}(\overline{\pi})$ denote the following category fibred in groupoids over 
%$\mathrm{Comp}(\mathcal{O})$ : for any object $A\in \mathrm{Comp}(\mathcal{O})$, with maximal ideal $\mathfrak{m}$, 
%the groupoid $\mathrm{Def}(\overline{\pi})(A)$ has as objects orthonormalizable admissible representation $\pi$ of $G$ over $A$, 
%$A$-modules $V$ of rank two equipped with a continuous action of $G_{\mathbb{Q}_p}$, 
%as well as an $(A/\mathfrak{m})[G]$-linear isomorphism 
%$\iota : \pi/\mathfrak{m}\pi\isom A/\mathfrak{m}\otimes_{\mathbb{F}}\overline{\pi}$, and as morphisms the 
%$A[G]$-linear isomorphism compatible with the given maps $\iota$. 
%\end{defn}

Since $\mathrm{MF}$ is exact, this induces a natural transformation of the deformation functors
$$\mathrm{MF} : \mathrm{Def}(\overline{\pi})\rightarrow \mathrm{Def}(\overline{\rho}).$$
The following theorem is the deformation theoretic formulation of the $p$-adic local Langlands correspondence of $G_p$ whose formulation is 
due to Kisin \cite{Ki10}. We also quote it from Theorem 3.3.13 (and Remark 3.314) of \cite{Em}.

%\begin{defn}We let $\mathrm{Def}(\overline{\pi})^*$ to denote the sub groupoid of $\mathrm{Def}(\overline{\pi})$ 
%consisting of those deformations $\pi$ such that the center of $G$ acts on $\pi$ via the $A^{\times}$-valued 
%character $\mathrm{det}(\mathrm{MF}(\pi))\varepsilon$. 
%\end{defn}
%Restricting $\mathrm{MF}$ to $\mathrm{Def}(\overline{\pi})^*$, we obtain a natural transformation
%$$ \mathrm{MF} : \mathrm{Def}(\overline{\pi})^*\rightarrow \mathrm{Def}(\overline{\rho}).$$
\begin{thm}\label{5.20}
Assume that $\overline{\rho}$ satisfies the assumption $(a)$ in Theorem \ref{5.18}, and the 
following : 
\begin{itemize}
\item[(b)]$p\geqq 3$.
\item[(c)]$\mathrm{End}_{\mathbb{F}[G_{\mathbb{Q}_p}]}(\overline{\rho})=\mathbb{F}$.
\end{itemize}
Then, both the functors are representable, and the natural transformation 
$$\mathrm{MF} : \mathrm{Def}(\overline{\pi})\rightarrow \mathrm{Def}(\overline{\rho})$$
is isomorphism. 
\end{thm}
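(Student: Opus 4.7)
The plan is to reduce the theorem to three assertions: pro-representability of $\mathrm{Def}(\overline{\rho})$, an equivalence at the level of residual characters, and the fact that Colmez's Montreal functor is compatible with deformations of both objects.

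First I would establish pro-representability of $\mathrm{Def}(\overline{\rho})$ by Mazur's theory. Under assumption (c), $\mathrm{End}_{\mathbb{F}[G_{\mathbb{Q}_p}]}(\overline{\rho})=\mathbb{F}$ forces vanishing of non-scalar automorphisms on infinitesimal deformations, so Schlessinger's criteria are satisfied. The tangent space $H^1(G_{\mathbb{Q}_p},\mathrm{ad}(\overline{\rho}))$ is finite-dimensional over $\mathbb{F}$ by the local Euler characteristic formula, yielding a universal ring $R_p\in\mathrm{Comp}(\mathcal{O})$ with universal deformation $\rho^u:G_{\mathbb{Q}_p}\to\mathrm{GL}_2(R_p)$.

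Next I would address the functor $\mathrm{MF}$. The exactness of $\mathrm{MF}$ on $\mathrm{Mod}_{G_p}^{\mathrm{adm}}(A)$ for $A\in\mathrm{Art}(\mathcal{O})$, together with Theorem \ref{5.18}, shows that any deformation $\pi$ of $\overline{\pi}$ over $A$ has $\mathrm{MF}(\pi)$ locally free of the correct rank (by induction on the length of $A$, using the long exact sequence coming from $\mathfrak{m}^n/\mathfrak{m}^{n+1}$ and Nakayama). Continuity for $A\in\mathrm{Comp}(\mathcal{O})$ follows by passing to the inverse limit. This yields the natural transformation $\mathrm{MF}:\mathrm{Def}(\overline{\pi})\to\mathrm{Def}(\overline{\rho})$.

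For injectivity of $\mathrm{MF}$ on isomorphism classes, I would argue inductively: if $\pi_1,\pi_2$ are two deformations over $A$ with $\mathrm{MF}(\pi_1)\cong\mathrm{MF}(\pi_2)$, reduce modulo $\mathfrak{m}^n$ and use the uniqueness statement in Theorem \ref{5.18}, together with (c) which (via Colmez) implies $\mathrm{End}_{\mathbb{F}[G_p]}(\overline{\pi})=\mathbb{F}$, so that an isomorphism $\pi_1/\mathfrak{m}^n\pi_1\isom\pi_2/\mathfrak{m}^n\pi_2$ lifts up to scalar. For surjectivity I would invoke Pa\v{s}k\={u}nas' construction: the projective envelope $\widetilde{P}\in\mathfrak{C}(\mathcal{O})$ of the dual of $\mathrm{soc}(\overline{\pi})$ satisfies $R_p\isom \mathrm{End}_{\mathfrak{C}(\mathcal{O})}(\widetilde{P})$ via Colmez's functor, and the Pontryagin dual of $\widetilde{P}$ provides the universal deformation $\pi^u$ of $\overline{\pi}$ over $R_p$ whose image under $\mathrm{MF}$ is $\rho^u$. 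This simultaneously shows that $\mathrm{Def}(\overline{\pi})$ is pro-representable by the same ring $R_p$, and that $\mathrm{MF}$ induces the identity on $R_p$, hence an isomorphism of functors.

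The main obstacle is the surjectivity step: constructing, for a given deformation $\rho$ of $\overline{\rho}$, an actual lift $\pi$ with $\mathrm{MF}(\pi)\cong\rho$. This is not formal—it rests on the deep machinery of Colmez's Montreal functor and Pa\v{s}k\={u}nas' analysis of the block of $\overline{\pi}$, and the exclusion in assumption (a) of the extension $\begin{pmatrix}1&*\\0&\overline{\varepsilon}\end{pmatrix}\otimes\chi$ is precisely what guarantees that $\overline{\pi}$ lies in a block where this construction behaves well. Assumption (b) that $p\geqq 3$ is needed so that the category $\mathfrak{C}(\mathcal{O})$ admits a clean decomposition into blocks in Pa\v{s}k\={u}nas' sense.
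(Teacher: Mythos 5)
The paper does not give a proof of this theorem; it simply cites Theorem 3.3.13 and Remark 3.3.14 of Emerton \cite{Em}, which in turn attribute the result to Kisin \cite{Ki10}. Your proposal therefore offers a proof sketch that the paper itself does not attempt.

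Your high-level decomposition (pro-representability via the endomorphism condition (c), exactness of $\mathrm{MF}$ giving the natural transformation, and then injectivity and surjectivity on points) is a reasonable framework. The pro-representability and injectivity parts are sound. However, the surjectivity step is where your argument diverges from what is actually available under the stated hypotheses. You propose to construct lifts of $\overline{\pi}$ by invoking Pa\v{s}k\={u}nas' projective envelope $\widetilde{P}$ and the isomorphism $R_p \isom \mathrm{End}_{\mathfrak{C}(\mathcal{O})}(\widetilde{P})$. But in this paper, the Pa\v{s}k\={u}nas machinery (Theorem \ref{5.24}) requires the stronger hypotheses $p\geqq 5$ and the exclusion of $\chi\otimes\begin{pmatrix}\overline{\varepsilon}&*\\0&1\end{pmatrix}$ (condition (f)), whereas the present theorem assumes only $p\geqq 3$ and condition (a), which excludes the different representation $\chi\otimes\begin{pmatrix}1&*\\0&\overline{\varepsilon}\end{pmatrix}$. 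So your surjectivity argument is not available under the hypotheses of Theorem \ref{5.20}.

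Moreover, Kisin's approach (the one the paper cites) establishes surjectivity without the projective envelope. Roughly, Kisin works directly with $(\varphi,\Gamma)$-modules and a finiteness/exactness argument for the Colmez functor to show that every deformation of $\overline{\rho}$ arises as $\mathrm{MF}(\pi)$ for a deformation $\pi$ of $\overline{\pi}$; this is the weaker-hypotheses route and is genuinely a different argument from Pa\v{s}k\={u}nas'. Put briefly: your injectivity argument and overall structure are fine, but the surjectivity step is where the real content lies, and the tool you reach for only applies under the hypotheses of Theorem \ref{5.24}, not those of Theorem \ref{5.20}. Your remark that "$p\geqq 3$ is needed for the block decomposition" is also imprecise: the block decomposition used here requires $p\geqq 5$, which is precisely why the theorem as stated cannot rely on it.
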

\begin{rem}\label{5.21}
Under the assumptions in the theorem above, for any $A\in \mathrm{Comp}(\mathcal{O})$ and any 
isomorphism class 
$[(\pi, \iota)]\in \mathrm{Def}(\overline{\rho})(A)$, the center of $G_p$ acts on $\pi$ via 
$A^{\times}$-valued character $\mathrm{det}(\mathrm{MF}(\pi))\varepsilon : \mathbb{Q}_p^{\times}
\rightarrow A^{\times}$ (see Remark 3.3.14 of \cite{Em}). 
\end{rem}
From now on until the end of this subsection, we assume that all the conditions (a), (b), (c) in the theorem above are satisfied. 
We let $R_{\overline{\rho}}$ denote the universal deformation ring of the functor $\mathrm{Def}(\overline{\rho})$, and 
let $\rho_{\overline{\rho}} : G_{\mathbb{Q}_p}\rightarrow \mathrm{GL}_2(R_{\overline{\rho}})$ 
denote the (unique up to isomorphism) universal deformation of $\overline{\rho}$. By the isomorphism 
$\mathrm{MF} : \mathrm{Def}(\overline{\pi})\isom \mathrm{Def}(\overline{\rho})$, $R_{\overline{\rho}}$ is also 
the universal deformation ring for $\mathrm{Def}(\overline{\pi})$, and the universal object 
$[(\pi_{\overline{\rho}}, \iota)]\in  \mathrm{Def}(\overline{\pi})(R_{\overline{\rho}})$ is the unique one satisfying isomorphism 
$$\mathrm{MF}(\pi_{\overline{\rho}})\isom \rho_{\overline{\rho}}.$$

\subsubsection{Recall of Pa\v{s}k\={u}nas' results}
For our application, another description of $\pi_{\overline{\rho}}$ due to Pa\v{s}k\={u}nas \cite{Pas13} ie very important, which we 
recall in this subsection. To recall it, we first need to consider the deformations with a fixed determinant condition.
%(remark that when $\overline{\rho}$ has only trivial endomorphism (then $\overline{\pi}$ also) and $p\geqq 3$, one has 
%$ \mathrm{Def}(\overline{\pi})^*= \mathrm{Def}(\overline{\pi})$ and $\mathrm{MF} : \mathrm{Def}(\overline{\pi})\rightarrow \mathrm{Def}(\overline{\rho})$
%gives an equivalence of deformation functors [E] remark 3.3.14)

Let $\psi : \mathbb{Q}_p^{\times}\rightarrow \mathcal{O}^{\times}$ be a continuous character such that 
$\psi \equiv \mathrm{det}(\overline{\rho})\overline{\varepsilon}$ (mod $\varpi$).
% which we see as a continuous character of $G_{\mathbb{Q}_p}$ 
%by local class field theory. 
%We set $\zeta:=\psi\varepsilon : \mathbb{Q}_p^{\times}\rightarrow \mathcal{O}^{\times}$, which we see
 %as a character of the center $Z$ of $G$ by the isomorphism $\mathbb{Q}_p^{\times}\isom Z : a\mapsto \begin{pmatrix}a & 0\\ 0 & a\end{pmatrix}$. 

\begin{defn}\label{5.22}
We let $\mathrm{Def}(\overline{\rho})^{\psi\varepsilon^{-1}}$ (resp. $\mathrm{Def}(\overline{\pi})^{\psi}$) to denote the subfunctor of $\mathrm{Def}(\overline{\rho})$ (resp. $\mathrm{Def}(\overline{\pi})$) 
consisting of those deformations $[\rho, \iota]\in \mathrm{Def}(\overline{\rho})(A)$ (resp. $[\pi,\iota]\in \mathrm{Def}(\overline{\pi})(A)$) 
such that $\mathrm{det}(\rho)$ (resp. with central character which) is equal to the composite of 
$\psi\varepsilon^{-1}$ (resp. $\psi$) with the natural map $\mathcal{O}^{\times}\rightarrow A^{\times}$. 
\end{defn}

By Remark \ref{5.21}, the isomorphism $\mathrm{MF} : \mathrm{Def}(\overline{\pi})\isom \mathrm{Def}(\overline{\rho})$ induces the isomorphism 
$\mathrm{MF} : \mathrm{Def}(\overline{\pi})^{\psi}\isom \mathrm{Def}(\overline{\rho})^{\psi\varepsilon^{-1}}$. We let 
$R^{\psi\varepsilon^{-1}}_{\overline{\rho}}$ denote the universal deformation ring both for $\mathrm{Def}(\overline{\pi})^{\psi}$ and $\mathrm{Def}(\overline{\rho})^{\psi\varepsilon^{-1}}$, and let $[(\rho^{\psi\varepsilon^{-1}}_{\overline{\rho}},\iota)]\in 
\mathrm{Def}(\overline{\rho})^{\psi\varepsilon^{-1}}(R^{\psi\varepsilon^{-1}}_{\overline{\rho}})$ and 
$[(\pi^{\psi}_{\overline{\pi}},\iota)]\in 
\mathrm{Def}(\overline{\pi})^{\psi}(R^{\psi\varepsilon^{-1}}_{\overline{\rho}})$ denote the universal objects. Then, there is an isomorphism 
$$\mathrm{MF}(\pi^{\psi}_{\overline{\pi}})\isom \rho^{ \psi\varepsilon^{-1}}_{\overline{\rho}}$$
 of $R_{\overline{\rho}}^{\psi\varepsilon^{-1}}[G_{\mathbb{Q}_p}]$-modules.
%\begin{defn}
%We also let 
%$\mathrm{Def}(\overline{\pi})^{\psi}$ to denote the sub groupoid of $\mathrm{Def}(\overline{\pi})^*$ 
%which is the pull-back of  $\mathrm{Def}(\overline{\rho})^{\psi\varepsilon^{-1}}$. Hence, $\mathrm{MF}$ also gives an equivalence
%$$\mathrm{MF} : \mathrm{Def}(\overline{\pi})^{\psi} \rightarrow \mathrm{Def}(\overline{\rho})^{\psi\varepsilon^{-1}}.$$
%\end{defn}
%(remark that $\mathrm{Def}(\overline{\pi})^{\psi}$ is equal to the sub groupoid of $\mathrm{Def}(\overline{\pi})^*$ 
%consisting of those deformations $\pi\in \mathrm{Def}(\overline{\pi})(A)$ with central character which is equal to the composite of 
%$\psi : \mathbb{Q}_p^{\times}\rightarrow \mathcal{O}^{\times}$ with the natural map $\mathcal{O}^{\times}\rightarrow A^{\times}$.)

%If $\overline{\rho}$ has only trivial endomorphisms, then the both functors $\mathrm{Def}(\overline{\pi})^{\psi}$ and $\mathrm{Def}(\overline{\rho})^{\psi\varepsilon^{-1}}$ are representable by a universal deformation ring, which we denote by 
%$R_{\overline{\rho}}^{\psi\varepsilon^{-1}}$, and there exists the universal objects $\pi^{\mathrm{un}, \psi}\in \mathrm{Def}(\overline{\pi})^{\psi}
%(R_{\overline{\rho}}^{\psi\varepsilon^{-1}})$ (resp. $\rho^{\mathrm{un}, \psi\varepsilon^{-1}}\in \mathrm{Def}(\overline{\rho})^{\psi\varepsilon^{-1}}
%(R_{\overline{\rho}}^{\psi\varepsilon^{-1}})$), and there exists an isomorphism 
%$$\mathrm{MF}(\pi^{\mathrm{un}, \psi})\isom \rho^{\mathrm{un}, \psi\varepsilon^{-1}}$$
% of $R_{\overline{\rho}}^{\psi\varepsilon^{-1}}[G_{\mathbb{Q}_p}]$-modules.

We recall that $\mathrm{Mod}_{G_p}^{\mathrm{l.adm}}(\mathcal{O})$ is the category of locally admissible 
$\mathcal{O}[G_p]$-module (for $A=\mathcal{O}$). We write $\mathrm{Mod}_{G_p}^{\mathrm{l.adm},\psi}(\mathcal{O})$ 
to denote the full subcategory of $\mathrm{Mod}_{G_p}^{\mathrm{l.adm}}(\mathcal{O})$ consisting of objects on which the center 
of $G_p$ act by $\psi$. We remark that both categories $\mathrm{Mod}_{G_p}^{\mathrm{l.adm}}(\mathcal{O})$ and $\mathrm{Mod}_{G_p}^{\mathrm{l.adm},\psi}(\mathcal{O})$ are abelian, stable under taking inductive limits, and admit injective envelopes (see Lemma 2.3 of \cite{Pas13} for $\mathrm{Mod}_{G_p}^{\mathrm{l.adm},\psi}(\mathcal{O})$, 
and a remark after Definition 4.5 of \cite{CEG${}^{+}$18} for $\mathrm{Mod}_{G_p}^{\mathrm{l.adm}}(\mathcal{O})$). For a discrete $\mathcal{O}$-module $M$, 
we write $M^{\vee}:=\mathrm{Hom}_{\mathcal{O}}(M, L/\mathcal{O})$ to denote its Pontryagin dual. 
We write $\mathrm{Mod}_{G_p}^{\mathrm{pro}}(\mathcal{O})$ to denote the category of compact $\mathcal{O}[[\mathrm{GL}_2(\mathbb{Z}_p)]]$-modules 
with an action of $\mathcal{O}[G_p]$ such that the two actions coincide when restricted to $\mathcal{O}[\mathrm{GL}_2(\mathbb{Z}_p)]$. Then, 
the Pontryagin dual $M\mapsto M^{\vee}$ induces an anti-equivalence of categories between 
$\mathrm{Mod}^{\mathrm{sm}}_{G_p}(\mathcal{O})$ and $\mathrm{Mod}_{G_p}^{\mathrm{pro}}(\mathcal{O})$. 
We write $\mathfrak{C}(\mathcal{O})$ (resp. $\mathfrak{C}^{\psi}(\mathcal{O})$) to denote the full subcategory of 
$\mathrm{Mod}_{G_p}^{\mathrm{pro}}(\mathcal{O})$ which is the Pontryagin dual 
of $\mathrm{Mod}_{G_p}^{\mathrm{l.adm}}(\mathcal{O})$ (resp. $\mathrm{Mod}_{G_p}^{\mathrm{l.adm},\psi}(\mathcal{O})$ ). 
By definition, $\mathfrak{C}^{\psi}(\mathcal{O})$ is the full sub category of  $\mathfrak{C}(\mathcal{O})$ consisting of objects on which the center 
of $G_p$ acts by $\psi^{-1}$. The categories $\mathfrak{C}(\mathcal{O})$ and $\mathfrak{C}^{\psi}(\mathcal{O})$ are abelian, 
stable under taking projective limits
and admit projective envelopes. 

We note that the fixed representation $\overline{\pi}$ is an object in $\mathrm{Mod}_G^{\mathrm{l.adm},\psi}(\mathcal{O})$, hence 
$\overline{\pi}^{\vee}\in \mathfrak{C}^{\psi}(\mathcal{O})$. Let $\overline{\pi}_1\subseteq \overline{\pi}$ be the socle of $\overline{\pi}$. 
We %write $Q:=\overline{\pi}_1^{\vee}\in  \mathfrak{C}^{\psi}(\mathcal{O})$, and 
write $\widetilde{P}^{\psi}\in \mathfrak{C}^{\psi}(\mathcal{O})$ (resp. $\widetilde{P}\in\mathfrak{C}(\mathcal{O})$) to denote the projective envelope 
of $\overline{\pi}_1^{\vee}\in  \mathfrak{C}^{\psi}(\mathcal{O})$ in the category $\mathfrak{C}^{\psi}(\mathcal{O})$ (resp. $\mathfrak{C}(\mathcal{O})$). 

As a dual version of the functors $\mathrm{MF}$ and $\bold{V}$, Pa\v{s}k\={u}nas defined in \S 5.7 of \cite{Pas13} a covariant functor $\check{\bold{V}}$ from $\mathfrak{C}^{\psi}(\mathcal{O})$ to the category of compact $\mathcal{O}$-modules with $\mathcal{O}$-linear continuous $G_{\mathbb{Q}_p}$-action. For any object $M\in \mathfrak{C}^{\psi}(\mathcal{O})$ of finite length, it is defined by 
$$\check{\bold{V}}(M):=\bold{V}(M^{\vee})^{\vee}(\varepsilon\psi),$$
where $\bold{V}(-)$ is Colmez' functor. Since one has $\bold{V}(\pi)=\mathrm{MF}(\pi)(\varepsilon)$, $\check{\bold{V}}(M)$ is equal to
$$\check{\bold{V}}(M)=\bold{V}(M^{\vee})^{\vee}(\varepsilon\psi)=(\mathrm{MF}(M^{\vee})(\varepsilon))^{\vee}(\varepsilon\psi)
=\mathrm{MF}(M^{\vee})^{\vee}(\psi).$$

%($\check{\bold{V}}(\widetilde{P}^{\psi})=\varprojlim_i\check{V}(\widetilde{P}_i^{\psi})
%=\varprojlim_i\mathrm{MF}((\widetilde{P}_i^{\psi})^{\vee})^{\vee}(\psi)$. note that we have $\mathrm{det}(\mathrm{MF}((\widetilde{P}_i^{\psi})^{\vee}))=
%\psi\varepsilon^{-1}
%$, hence $\mathrm{det}((\mathrm{MF}((\widetilde{P}_i^{\psi})^{\vee}))^{\vee})=
%\psi^{-1}\varepsilon$, and $\mathrm{det}(((\mathrm{MF}((\widetilde{P}_i^{\psi})^{\vee}))^{\vee})(\psi))=\psi\varepsilon$)

\begin{lemma}\label{5.23}
For any $A\in \mathrm{Comp}(\mathcal{O})$ and 
$[(\pi, \iota)]\in \mathrm{Def}(\overline{\pi})^{\psi}(A)$, the $A$-dual $\pi^{*}:=\mathrm{Hom}_A(\pi, A)$ of $\pi$ is an object in $\mathfrak{C}^{\psi}(\mathcal{O})$, 
and there exists an isomorphism 
$$\check{\bold{V}}(\pi^*)\isom \mathrm{MF}(\pi)(\varepsilon)$$
of $A[G_{\mathbb{Q}_p}]$-modules. 
%(In particular, one has 
%$$\check{\bold{V}}(\overline{\pi}^{\vee})\isom \mathrm{MF}(\overline{\pi})(\varepsilon)\isom \overline{\rho}(\varepsilon)$$
%since one has $\overline{\pi}^*=\overline{\pi}^{\vee}$)
\end{lemma}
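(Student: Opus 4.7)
The plan is to analyze $\pi^{*}$ via its $\mathfrak{m}$-adic filtration and reduce the identification to the finite-length setting where $\check{\bold{V}}$ is defined directly in terms of Colmez's functor. First, because $\pi$ is orthonormalizable admissible over $A$, we have $\pi\isom\varprojlim_{n}\pi/\mathfrak{m}^{n}\pi$, each quotient being a smooth admissible $(A/\mathfrak{m}^{n})[G_{p}]$-module with central character $\psi$; since $A/\mathfrak{m}^{n}$ is a finite $\mathcal{O}$-algebra (hence of $\varpi$-power torsion), each $\pi/\mathfrak{m}^{n}\pi$ is in particular of finite length in $\mathrm{Mod}_{G_{p}}^{\mathrm{adm},\psi}(\mathcal{O})$. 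Taking $A$-duals gives $\pi^{*}=\varprojlim_{n}\mathrm{Hom}_{A}(\pi/\mathfrak{m}^{n}\pi,A/\mathfrak{m}^{n})$, which is a pro-free compact $A$-module by the anti-equivalence recalled in \S B.1, hence a compact $\mathcal{O}[[\mathrm{GL}_{2}(\mathbb{Z}_{p})]]$-module with continuous $G_{p}$-action on which the center acts via $\psi^{-1}$.

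Next, to confirm $\pi^{*}\in\mathfrak{C}^{\psi}(\mathcal{O})$, I would identify each $\mathrm{Hom}_{A}(\pi/\mathfrak{m}^{n}\pi,A/\mathfrak{m}^{n})$ with the Pontryagin dual of a locally admissible smooth $\mathcal{O}[G_{p}]$-module with central character $\psi$, by combining the $A/\mathfrak{m}^{n}$-duality with Matlis duality $A/\mathfrak{m}^{n}\leftrightarrow (A/\mathfrak{m}^{n})^{\vee}=\mathrm{Hom}_{\mathcal{O}}(A/\mathfrak{m}^{n},L/\mathcal{O})$; the inverse limit of such objects then lies in $\mathfrak{C}^{\psi}(\mathcal{O})$ because this category is closed under inverse limits.

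For the isomorphism, I would extend $\check{\bold{V}}$ from finite-length objects to $\pi^{*}=\varprojlim_{n}(\pi/\mathfrak{m}^{n}\pi)^{*}$ by continuity, which is justified since Colmez's $\bold{V}$ is exact and commutes with the passage to inverse limits that arise in this construction. Level by level, one has by definition
\[
\check{\bold{V}}\bigl((\pi/\mathfrak{m}^{n}\pi)^{*}\bigr)=\mathrm{MF}\bigl(((\pi/\mathfrak{m}^{n}\pi)^{*})^{\vee}\bigr)^{\vee}(\psi).
\]
Using the Matlis-duality identification above, $((\pi/\mathfrak{m}^{n}\pi)^{*})^{\vee}$ recovers $\pi/\mathfrak{m}^{n}\pi$ as a smooth admissible $\mathcal{O}[G_{p}]$-module. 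Combined with the compatibility of $\mathrm{MF}$ with reduction mod $\mathfrak{m}^{n}$, i.e. $\mathrm{MF}(\pi/\mathfrak{m}^{n}\pi)=\mathrm{MF}(\pi)/\mathfrak{m}^{n}\mathrm{MF}(\pi)$, and the relation $\bold{V}(-)=\mathrm{MF}(-)(\varepsilon)$ between Colmez's and Emerton's normalizations, each level of the limit becomes $\mathrm{MF}(\pi)(\varepsilon)/\mathfrak{m}^{n}\mathrm{MF}(\pi)(\varepsilon)$, and passage to the inverse limit yields $\mathrm{MF}(\pi)(\varepsilon)$ as required.

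The main obstacle will be the bookkeeping of the three competing twists: the central character $\psi$ in the definition of $\check{\bold{V}}$, the cyclotomic twist $\varepsilon$ arising from the $\bold{V}$ versus $\mathrm{MF}$ normalizations, and the Matlis dualizing module of $A/\mathfrak{m}^{n}$ needed to pass between $A$-duality and Pontryagin duality. These twists must cancel or combine correctly, and verifying that the natural isomorphisms at each finite level are compatible with the transition maps (so that the inverse limit of the isomorphisms exists and is $G_{\mathbb{Q}_{p}}$- and $A$-linear) is the delicate step; this should ultimately follow from the fact that $\bold{V}$ is exact and compatible with base change, together with the fact that $\mathrm{MF}(\pi)$ is free of rank two over $A$ so that its reductions mod $\mathfrak{m}^{n}$ behave transparently.
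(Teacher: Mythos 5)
Your overall strategy — reduce to $A\in\mathrm{Art}(\mathcal{O})$ and then unwind the dualities level by level — is the paper's approach. But two of the concrete steps you commit to are false, and they are precisely the steps where the twists you flag as "the delicate step" live, so the proof as written does not go through.

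First, the claim that $((\pi/\mathfrak{m}^{n}\pi)^{*})^{\vee}$ "recovers" $\pi/\mathfrak{m}^{n}\pi$ is incorrect. Over an Artinian local ring $B=A/\mathfrak{m}^{n}$, the relation between the $B$-linear dual and the Pontryagin dual is $N^{*}\isom(N\otimes_{B}B^{\vee})^{\vee}$, so that $(N^{*})^{\vee}\isom N\otimes_{B}B^{\vee}$, \emph{not} $N$. The discrepancy is the Matlis dualizing module $B^{\vee}$, and $B^{\vee}\isom B$ as $B$-modules only when $B$ is Gorenstein — which the quotients $A/\mathfrak{m}^{n}$ need not be (for instance, if $A$ is regular of dimension $\geqq 2$, then $A/\mathfrak{m}^{2}$ has a socle of dimension $>1$). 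So the object you feed into $\mathrm{MF}$ is actually $\pi\otimes_{A}A^{\vee}$, not $\pi$; the extra factor of $A^{\vee}$ does cancel in the end (because the outer Pontryagin dual converts $\mathrm{MF}(\pi)\otimes_{A}A^{\vee}$ back to the $A$-linear dual $\mathrm{MF}(\pi)^{*}$), but one has to carry it through.

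Second, and more importantly, the cyclotomic twist $\varepsilon$ in the statement is \emph{not} produced by the Colmez–Emerton normalization $\bold{V}(-)=\mathrm{MF}(-)(\varepsilon)$, which you invoke to supply it. That relation already cancels out inside the formula $\check{\bold{V}}(M)=\mathrm{MF}(M^{\vee})^{\vee}(\psi)$ and contributes nothing further. What you actually land on after the duality bookkeeping is $\mathrm{MF}(\pi)^{*}(\psi)$, and the $\varepsilon$ emerges from the determinant identity: since $[(\pi,\iota)]\in\mathrm{Def}(\overline{\pi})^{\psi}(A)$, one has $\det(\mathrm{MF}(\pi))=\psi\varepsilon^{-1}$, and since $\mathrm{MF}(\pi)$ is free of rank two over $A$ its $A$-dual is $\mathrm{MF}(\pi)^{*}\isom\mathrm{MF}(\pi)(\det^{-1})=\mathrm{MF}(\pi)(\varepsilon\psi^{-1})$; twisting by $\psi$ gives $\mathrm{MF}(\pi)(\varepsilon)$. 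You mention that $\mathrm{MF}(\pi)$ is free of rank two, but only as a reason that reduction mod $\mathfrak{m}^{n}$ is well-behaved — its real role here is this determinant computation, which is the only place the target twist $\varepsilon$ can come from, and it is absent from your argument.
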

\begin{proof}Let $\mathfrak{m}$ be the maximal ideal of $A$. 
%We first show that $\pi^*$ is an object in $\mathfrak{C}^{\psi}(\mathcal{O})$. 
Since one has $$\pi^*=\varprojlim_n\mathrm{Hom}_{A}(\pi, A/\mathfrak{m}^n)=\varprojlim_n\mathrm{Hom}_{A/\mathfrak{m}^n}(\pi/\mathfrak{m}^n, A/\mathfrak{m}^n)=\varprojlim_n(\pi/\mathfrak{m}^n\pi)^*,$$ 
and both the functors $\check{\bold{V}}$ and $\mathrm{MF}$ commute with projective limits,  it suffices to show the lemma when $A$ is an object of $\mathrm{Art}(\mathcal{O})$. 

Let $A$ be an object in $\mathrm{Art}(\mathcal{O})$. We note that for any $A$-module $N$, the canonical isomorphism 
$A\isom (A^{\vee})^{\vee}= \mathrm{Hom}_{\mathcal{O}}(A^{\vee}, L/\mathcal{O})$ induces an isomorphism of $A$-modules
$$N^*\isom\mathrm{Hom}_A(N, \mathrm{Hom}_{\mathcal{O}}(A^{\vee}, L/\mathcal{O}))
\isom \mathrm{Hom}_{\mathcal{O}}(N\otimes_AA^{\vee}, L/\mathcal{O})=(N\otimes_AA^{\vee})^{\vee}.$$ 
Therefore, one has 
$$\pi^*\isom (\pi\otimes_AA^{\vee})^{\vee} \in \mathfrak{C}^{\psi}(\mathcal{O}).$$ 
%In this case, we have a canonical $A$-linear isomorphism 
%$A\isom (A^{\vee})^{\vee}= \mathrm{Hom}_{\mathcal{O}}(A^{\vee}, L/\mathcal{O})$. Hence
%one has $$\pi^*\isom\mathrm{Hom}_A(\pi, \mathrm{Hom}_{\mathcal{O}}(A^{\vee}, L/\mathcal{O}))
%\isom \mathrm{Hom}_{\mathcal{O}}(\pi\otimes_AA^{\vee}, L/\mathcal{O})=(\pi\otimes_AA^{\vee})^{\vee}\in \mathfrak{C}^{\psi}(\mathcal{O}).$$ 
Then, one also has
$$\check{\bold{V}}(\pi^*)\isom \check{\bold{V}}((\pi\otimes_AA^{\vee})^{\vee})=\mathrm{MF}(((\pi\otimes_AA^{\vee})^{\vee})^{\vee})^{\vee}(\psi)
=\mathrm{MF}(\pi\otimes_AA^{\vee})^{\vee}(\psi)$$
Since $\mathrm{MF}$ is exact, one has 
$$\mathrm{MF}(\pi\otimes_AA^{\vee})\isom \mathrm{MF}(\pi)\otimes_AA^{\vee},$$
therefore one also has 
$$\mathrm{MF}(\pi\otimes_AA^{\vee})^{\vee}(\psi)\isom 
(\mathrm{MF}(\pi)\otimes_AA^{\vee})^{\vee}(\psi)\isom \mathrm{MF}(\pi)^*(\psi).$$
Since one has $\mathrm{det}(\mathrm{MF}(\pi))
=\psi\varepsilon^{-1}$, and $\mathrm{MF}(\pi)$ is a free $A$-module of rank two, one also has 
$$\mathrm{MF}(\pi)^*(\psi)\isom \mathrm{MF}(\pi)((\psi\varepsilon^{-1})^{-1})(\psi)
=\mathrm{MF}(\pi)(\varepsilon).$$
\end{proof}

By the lemma above, the functor $\check{\bold{V}}$ induces a natural transformation from deforamtions of $\overline{\pi}^{\vee}$ in 
$\mathfrak{C}^{\psi}(\mathcal{O})$ to 
deformations of $\overline{\rho}(\varepsilon)$. In \cite{Pas13}, Pa\v{s}k\={u}nas studied the deformations of $\overline{\pi}^{\vee}$ in 
$\mathfrak{C}^{\psi}(\mathcal{O})$ using the projective envelope $\widetilde{P}^{\psi}$ of $\overline{\pi}_1^{\vee}$. In particular, he proved the following theorem. 
\begin{thm}\label{5.24}
Assume the following conditions : 
\begin{itemize}
\item[(d)]$p\geqq 5$. 
\item[(e)]$\mathrm{End}_{\mathbb{F}[G_{\mathbb{Q}_p}]}(\overline{\rho})=\mathbb{F}$.
\item[(f)]$\overline{\rho}\not\isom \chi\otimes \begin{pmatrix}\overline{\varepsilon}& * \\ 0 & 1\end{pmatrix}$
for any character $\chi : G_{\mathbb{Q}_p}\rightarrow \mathbb{F}^{\times}$. \end{itemize}
Then, the functor $\check{\bold{V}}$ naturally induces an 
isomorphism 
$$\mathrm{End}_{\mathfrak{C}^{\psi}(\mathcal{O})}(\widetilde{P}^{\psi})\isom R_{\overline{\rho}(\overline{\varepsilon})}^{\psi\varepsilon},$$
of compact rings, by which $\widetilde{P}^{\psi}$ is topologically flat and compact $R_{\overline{\rho}(\overline{\varepsilon})}^{\psi\varepsilon}$-module, and there is an 
isomorphism 
$$\check{\bold{V}}(\widetilde{P}^{\psi})\isom \rho^{ \psi\varepsilon}_{\overline{\rho}(\overline{\varepsilon})}$$
of $R_{\overline{\rho}(\overline{\varepsilon})}^{\psi\varepsilon}[G_{\mathbb{Q}_p}]$-modules. 

\end{thm}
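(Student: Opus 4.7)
The plan is to follow Pa\v{s}k\={u}nas' strategy in \cite{Pas13}, which realizes $\widetilde{P}^{\psi}$ as the ``object representing the universal deformation problem on the $G_p$-side,'' and then transports this via $\check{\bold{V}}$ to the Galois-theoretic deformation ring.

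First, I would establish that $\check{\bold{V}}(\widetilde{P}^{\psi})$ is a deformation of $\overline{\rho}(\overline{\varepsilon})$ over $E^{\psi}:=\mathrm{End}_{\mathfrak{C}^{\psi}(\mathcal{O})}(\widetilde{P}^{\psi})$. The reduction step comes from Theorem~\ref{5.18}: since $\overline{\pi}_{1}^{\vee}$ is the socle of $\overline{\pi}^{\vee}$ and $\mathrm{MF}$ sends $\overline{\pi}$ to $\overline{\rho}$, applying $\check{\bold{V}}$ to the natural surjection $\widetilde{P}^{\psi}\twoheadrightarrow\overline{\pi}_{1}^{\vee}$ and identifying $\check{\bold{V}}(\overline{\pi}^{\vee})=\mathrm{MF}(\overline{\pi})^{\vee}(\psi)\cong\overline{\rho}(\overline{\varepsilon})$ produces the required framing. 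The flatness of $\widetilde{P}^{\psi}$ over $E^{\psi}$ (needed to guarantee the deformation condition of being a free module of rank $2$) follows from the projectivity of $\widetilde{P}^{\psi}$ in $\mathfrak{C}^{\psi}(\mathcal{O})$ together with the exactness and continuity properties of $\check{\bold{V}}$, reducing modulo any finite-length quotient to the Artinian case, which is Colmez' theorem.

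By universality of $R^{\psi\varepsilon}_{\overline{\rho}(\overline{\varepsilon})}$, the deformation constructed above gives a canonical local $\mathcal{O}$-algebra homomorphism
\[
\Phi : R^{\psi\varepsilon}_{\overline{\rho}(\overline{\varepsilon})}\longrightarrow E^{\psi},
\]
and simultaneously turns $\check{\bold{V}}(\widetilde{P}^{\psi})$ into the pull-back of $\rho^{\psi\varepsilon}_{\overline{\rho}(\overline{\varepsilon})}$ along $\Phi$. To prove $\Phi$ is an isomorphism, I would compute on both sides and match. On the Galois side, the tangent space of $R^{\psi\varepsilon}_{\overline{\rho}(\overline{\varepsilon})}$ is the subspace of $H^{1}(\mathbb{Q}_{p},\mathrm{ad}\,\overline{\rho})$ cut out by the fixed-determinant condition, whose dimension is governed by Tate's local duality and the local Euler characteristic formula (this is where $p\geq 5$ and the hypothesis (f) are needed to rule out pathological extensions). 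On the $G_p$-side, the tangent space of $E^{\psi}$ is $\mathrm{Ext}^{1}_{\mathfrak{C}^{\psi}(\mathcal{O})}(\widetilde{P}^{\psi},\overline{\pi}^{\vee}_{1})$, which by projectivity of $\widetilde{P}^{\psi}$ computes the self-extensions of $\overline{\pi}$ inside $\mathrm{Mod}_{G_{p}}^{\mathrm{l.adm},\psi}(\mathbb{F})$. These extensions are precisely what Pa\v{s}k\={u}nas classified in \cite{Pas13} via a detailed case analysis splitting on whether $\overline{\rho}$ is irreducible, reducible nonsplit, or reducible split, and in each case the exactness of $\check{\bold{V}}$ produces an injection from Ext's on the automorphic side to Galois Ext's.

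The main obstacle is the surjectivity/bijectivity of $\Phi$, which cannot be done uniformly; it requires the case-by-case structural analysis of blocks in $\mathfrak{C}^{\psi}(\mathcal{O})$. The generic irreducible case is relatively clean: Pa\v{s}k\={u}nas shows the block of $\overline{\pi}$ contains no other irreducibles, so the Ext computation collapses to a comparison of local Galois $H^{1}$'s, and matching dimensions combined with the surjectivity of $\Phi$ (obtained by showing that the induced map on tangent spaces is surjective, via the universality of $\widetilde{P}^{\psi}$ in reverse) closes the argument. In the reducible cases, the block of $\overline{\pi}$ contains finitely many irreducibles and one must work with all of them simultaneously, which is precisely where conditions (d), (e), (f) enter: condition (f) eliminates the exceptional block where $\overline{\rho}(\overline{\varepsilon})$ is an extension of $1$ by $\overline{\varepsilon}$, in which the universal ring and the endomorphism ring genuinely disagree. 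Once surjectivity of $\Phi$ is in hand and both rings are shown to be complete local Noetherian rings with the same residue field and matching tangent spaces, the isomorphism follows from the complete version of Nakayama. Finally, the identification $\check{\bold{V}}(\widetilde{P}^{\psi})\cong\rho^{\psi\varepsilon}_{\overline{\rho}(\overline{\varepsilon})}$ is immediate from the construction, since under $\Phi$ the constructed deformation pulls back to itself.
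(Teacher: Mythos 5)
The paper's own proof is a pure citation: it reduces the claim to Lemma 2.7, Corollary 3.12, and Proposition 6.3 (resp.\ Corollary 8.7, resp.\ Theorem 10.71 and Corollary 10.17) of \cite{Pas13}, split into three cases according to whether $\overline{\rho}$ is irreducible, reducible nonsplit, or split, after observing via Lemma~\ref{5.23} that $\check{\bold{V}}(\overline{\pi}^{\vee})\isom\overline{\rho}(\overline{\varepsilon})$. Your proposal instead tries to reconstruct the \emph{content} of Pa\v{s}k\={u}nas' argument. The broad outline you give (project via $\check{\bold{V}}$ to obtain a framed deformation of $\overline{\rho}(\overline{\varepsilon})$ over $E^{\psi}:=\mathrm{End}_{\mathfrak{C}^{\psi}(\mathcal{O})}(\widetilde{P}^{\psi})$, then use universality to produce $\Phi:R^{\psi\varepsilon}_{\overline{\rho}(\overline{\varepsilon})}\to E^{\psi}$) is consistent with how \cite{Pas13} proceeds, but your closing step has a genuine gap.

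Specifically, you claim that once $\Phi$ is shown surjective and the two complete local Noetherian rings have matching tangent spaces, ``the isomorphism follows from the complete version of Nakayama.'' This implication is false: a surjection of complete Noetherian local rings inducing an isomorphism on tangent spaces need not be injective, as the quotient $\mathcal{O}[[x]]\twoheadrightarrow\mathcal{O}[[x]]/(x^{n})$ ($n\geqq 2$) already shows. What Pa\v{s}k\={u}nas actually establishes is an \emph{equivalence of deformation problems} (for instance, in the absolutely irreducible case, he proves $\widetilde{P}^{\psi}$ represents the deformation functor of $\overline{\pi}^{\vee}$ in $\mathfrak{C}^{\psi}(\mathcal{O})$ and that $\check{\bold{V}}$ transports this functor isomorphically to $\mathrm{Def}(\overline{\rho}(\overline{\varepsilon}))^{\psi\varepsilon}$, yielding mutually inverse ring maps), which is strictly stronger than a surjection matched on tangent spaces. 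A secondary imprecision: the framing of $\check{\bold{V}}(\widetilde{P}^{\psi})$ comes from identifying its mod-$\mathfrak{m}$ reduction with $\overline{\rho}(\overline{\varepsilon})$, not from applying $\check{\bold{V}}$ to the surjection onto the \emph{socle dual} $\overline{\pi}_{1}^{\vee}$ (these agree only when $\overline{\pi}$ is irreducible); in the reducible cases $\overline{\pi}_{1}\subsetneq\overline{\pi}$ and this step needs the finer block analysis you defer. Given that the heavy lifting is in any case being outsourced to [Pas13], the paper's route of citing the relevant propositions directly is more honest than a sketch that leaves the injectivity of $\Phi$ unproved.
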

\begin{proof}
We remark that one has $$\check{\bold{V}}(\overline{\pi}^{\vee})\isom \mathrm{MF}(\overline{\pi})(\varepsilon)\isom \overline{\rho}(\varepsilon)$$
by Lemma \ref{5.23}
since one has $\overline{\pi}^*=\overline{\pi}^{\vee}$. Then, the theorem follows from \cite{Pas13} Lemma 2.7, Corollary 3.12, and 
Proposition 6.3 (resp. Corollary 8.7, resp. Theorem 10.71 and 
Corollay 10.17) of \cite{Pas13}
if $\overline{\rho}$ is irreducible (resp. $\overline{\rho}$ is reducible and satisfies the condition (a) in Theorem \ref{5.18}, resp. 
$\overline{\rho}$ does not satisfy (a) in Theorem \ref{5.18}). 
\end{proof}

%In particular,
%in [Pa], Pa\v{s}k\={u}nas showed that the functor $\check{V}$ induces a natural ring homomorphism 
%$$\mathrm{End}_{\mathfrak{C}^{\psi}(\mathcal{O})}(\widetilde{P}^{\psi})\rightarrow R^{\psi\varepsilon}_{\overline{\rho}(\varepsilon)}.$$ 
%Moreover, under the assumption that $\overline{\rho}\not\isom \begin{pmatrix}\chi_1& * \\ 0 & \chi_2\end{pmatrix}\otimes \chi $ for some 
%$\chi$ such that $\chi_1\chi_2^{-1}\not\in \{\bold{1}, \overline{\varepsilon}, \overline{\varepsilon}^{-1}\}$, he also proved that this morphism 
%is isomorphism
%$$\mathrm{End}_{\mathfrak{C}^{\psi}(\mathcal{O})}(\widetilde{P}^{\psi})\isom  R^{\psi\varepsilon}_{\overline{\rho}(\varepsilon)}.$$ 

We remark that the map $\rho\mapsto \rho(\varepsilon)$ (twisting by $\varepsilon$) induces an isomorphism 
from the deformations of $\overline{\rho}$ to those of $\overline{\rho}(\overline{\varepsilon})$. Therefore, it also induces isomorphisms of local $\mathcal{O}$-algebras
$$R_{\overline{\rho}(\overline{\varepsilon})}\isom R_{\overline{\rho}}\quad \text{ and }\quad 
R^{\psi\varepsilon}_{\overline{\rho}(\overline{\varepsilon})}\isom R^{\psi\varepsilon^{-1}}_{\overline{\rho}}.$$ As the composite of the latter one and the isomorphism $\mathrm{End}_{\mathfrak{C}^{\psi}(\mathcal{O})}(\widetilde{P}^{\psi})\isom R_{\overline{\rho}(\overline{\varepsilon})}^{\psi\varepsilon}$ in the above theorem, 
we obtain an isomorphism 
$$\mathrm{End}_{\mathfrak{C}^{\psi}(\mathcal{O})}(\widetilde{P}^{\psi})\isom R^{\psi\varepsilon^{-1}}_{\overline{\rho}},$$
by which we also regard $\widetilde{P}^{\psi}$ as a compact $R^{\psi\varepsilon^{-1}}_{\overline{\rho}}$ module. %Then, $\widetilde{P}^{\psi}$ 
%is a compact $R^{\psi\varepsilon^{-1}}_{\overline{\rho}}$ module with a continuous $R^{\psi\varepsilon^{-1}}_{\overline{\rho}}$ linear 
%$G$ action.

\begin{lemma}\label{5.25}
Assume that the conditions $(a)$ of Theorem $\ref{5.18}$, and $(b), (c), (d)$ of Theorem $\ref{5.24}$ hold. 
There exists a $G_p$-equivariant $R^{\psi\varepsilon^{-1}}_{\overline{\rho}}$-linear topological  isomorphism
$$\widetilde{P}^{\psi}\isom (\pi_{\overline{\pi}}^{\psi})^*$$
\end{lemma}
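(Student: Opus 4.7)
The approach is to construct a morphism $\phi : \widetilde{P}^{\psi} \to (\pi_{\overline{\pi}}^{\psi})^*$ in $\mathfrak{C}^{\psi}(\mathcal{O})$ via projectivity of $\widetilde{P}^{\psi}$, verify its $R_{\overline{\rho}}^{\psi\varepsilon^{-1}}$-linearity using $\check{\bold{V}}$, and check that it is a topological isomorphism by reducing modulo the maximal ideal $\mathfrak{m}$ of $R_{\overline{\rho}}^{\psi\varepsilon^{-1}}$. First I would verify that $(\pi_{\overline{\pi}}^{\psi})^*$ is an object of $\mathfrak{C}^{\psi}(\mathcal{O})$ with a canonical surjection onto $\overline{\pi}^{\vee}$ coming from the deformation structure $\pi_{\overline{\pi}}^{\psi} / \mathfrak{m} \pi_{\overline{\pi}}^{\psi} \cong \overline{\pi}$ (compare Lemma \ref{5.23}). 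The injective envelope property of $(\widetilde{P}^{\psi})^{\vee}$ in $\mathrm{Mod}_{G_p}^{\mathrm{l.adm},\psi}(\mathcal{O})$, applied to the essential socle inclusion $\overline{\pi}_1 \hookrightarrow \overline{\pi}$, produces an embedding $\overline{\pi} \hookrightarrow (\widetilde{P}^{\psi})^{\vee}$, which dualizes to a surjection $\widetilde{P}^{\psi} \twoheadrightarrow \overline{\pi}^{\vee}$. Projectivity of $\widetilde{P}^{\psi}$ in $\mathfrak{C}^{\psi}(\mathcal{O})$ then lifts this surjection through $(\pi_{\overline{\pi}}^{\psi})^* \twoheadrightarrow \overline{\pi}^{\vee}$ to yield $\phi$.

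For $R_{\overline{\rho}}^{\psi\varepsilon^{-1}}$-linearity, I would apply $\check{\bold{V}}$. By Theorem \ref{5.24}, $\check{\bold{V}}(\widetilde{P}^{\psi}) \cong \rho_{\overline{\rho}(\overline{\varepsilon})}^{\psi\varepsilon}$ as $R_{\overline{\rho}(\overline{\varepsilon})}^{\psi\varepsilon} \cong R_{\overline{\rho}}^{\psi\varepsilon^{-1}}$-modules, while Lemma \ref{5.23} combined with Theorem \ref{5.20} gives $\check{\bold{V}}((\pi_{\overline{\pi}}^{\psi})^*) \cong \mathrm{MF}(\pi_{\overline{\pi}}^{\psi})(\varepsilon) \cong \rho_{\overline{\rho}}^{\psi\varepsilon^{-1}}(\varepsilon)$; these coincide under the natural twist isomorphism as the universal deformation of $\overline{\rho}(\overline{\varepsilon})$ with determinant $\psi\varepsilon$. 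The map $\check{\bold{V}}(\phi)$ reduces mod $\mathfrak{m}$ to a non-zero, hence scalar endomorphism of $\overline{\rho}(\overline{\varepsilon})$ (using $\mathrm{End}_{\mathbb{F}[G_{\mathbb{Q}_p}]}(\overline{\rho}) = \mathbb{F}$); after rescaling $\phi$ by a unit of $R_{\overline{\rho}}^{\psi\varepsilon^{-1}}$, $\check{\bold{V}}(\phi)$ becomes the canonical identification, from which the $R_{\overline{\rho}}^{\psi\varepsilon^{-1}}$-linearity of $\phi$ itself follows by the endomorphism ring identification of Theorem \ref{5.24} on the source and the universal deformation characterization on the target.

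The main obstacle will be verifying that $\phi$ is a topological isomorphism. Both $\widetilde{P}^{\psi}$ (by Theorem \ref{5.24}) and $(\pi_{\overline{\pi}}^{\psi})^*$ (from orthonormalizability of $\pi_{\overline{\pi}}^{\psi}$) are pro-free over $R_{\overline{\rho}}^{\psi\varepsilon^{-1}}$. The reduction $(\pi_{\overline{\pi}}^{\psi})^*/\mathfrak{m}(\pi_{\overline{\pi}}^{\psi})^* \cong \overline{\pi}^{\vee}$ is immediate from the definition of $\pi_{\overline{\pi}}^{\psi}$. For the analogous statement $\widetilde{P}^{\psi}/\mathfrak{m}\widetilde{P}^{\psi} \cong \overline{\pi}^{\vee}$, I would appeal to the structural results of Pa\v{s}k\={u}nas (Proposition 6.3 in the irreducible case, Corollary 8.7 in the generic reducible case, and Corollary 10.17 together with Theorem 10.71 in the remaining cases of \cite{Pas13}), which describe $\widetilde{P}^{\psi}/\mathfrak{m}\widetilde{P}^{\psi}$ explicitly. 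Since $\phi$ was chosen to lift the canonical surjection onto $\overline{\pi}^{\vee}$, the map $\phi \bmod \mathfrak{m}$ realizes the identification $\widetilde{P}^{\psi}/\mathfrak{m}\widetilde{P}^{\psi} \cong (\pi_{\overline{\pi}}^{\psi})^*/\mathfrak{m}(\pi_{\overline{\pi}}^{\psi})^*$, an isomorphism. Topological Nakayama for pro-free compact $R_{\overline{\rho}}^{\psi\varepsilon^{-1}}$-modules, applied after passing to $\mathfrak{m}^n$-torsion quotients and taking limits, will then conclude that $\phi$ is a topological isomorphism.
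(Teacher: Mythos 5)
Your outline reaches the same conclusion but by a genuinely different route. The paper's proof is a one-step appeal to the universal property: both $\widetilde{P}^{\psi}$ and $(\pi_{\overline{\pi}}^{\psi})^*$ are deformations of $\overline{\pi}^{\vee}$ over $R^{\psi\varepsilon^{-1}}_{\overline{\rho}}$, and after applying $\check{\bold{V}}$ both give the universal deformation of $\overline{\rho}(\overline{\varepsilon})$; since Pa\v{s}k\={u}nas's work (Proposition 6.3 and Corollary 8.7 of \cite{Pas13}) shows $\check{\bold{V}}$ induces an isomorphism between the deformation functor of $\overline{\pi}^{\vee}$ and that of $\overline{\rho}(\overline{\varepsilon})$, the two deformations must be isomorphic. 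You instead construct a candidate map $\phi$ by lifting along projectivity and then attempt to verify $R$-linearity and bijectivity directly. This is workable, but it trades a clean functorial statement for several checkpoints that each require justification; the paper's formulation packages exactly those checkpoints into the single invocation of Pa\v{s}k\={u}nas's equivalence on deformation functors.

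Two points in your argument need to be tightened. First, the $R^{\psi\varepsilon^{-1}}_{\overline{\rho}}$-linearity of $\phi$: your construction via projectivity of $\widetilde{P}^{\psi}$ only gives a morphism in $\mathfrak{C}^{\psi}(\mathcal{O})$, and the $R$-action on $\widetilde{P}^{\psi}$ (via $\mathrm{End}_{\mathfrak{C}^{\psi}}(\widetilde{P}^{\psi})\cong R$) is a priori unrelated to the $R$-action on $(\pi_{\overline{\pi}}^{\psi})^*$ (coming from the module structure of $\pi^{\psi}_{\overline{\pi}}$). You want to deduce $\phi\circ a=a\circ\phi$ from $\check{\bold{V}}(\phi\circ a - a\circ\phi)=0$, but this requires $\check{\bold{V}}$ to be faithful on $\mathrm{Hom}_{\mathfrak{C}^{\psi}(\mathcal{O})}(\widetilde{P}^{\psi}, (\pi^{\psi}_{\overline{\pi}})^*)$ — not just an isomorphism on $\mathrm{End}(\widetilde{P}^{\psi})$ as in Theorem \ref{5.24}. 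That faithfulness is indeed a consequence of Pa\v{s}k\={u}nas's results (it is essentially what the isomorphism of deformation functors encodes), but you should invoke it explicitly rather than rely on ``the endomorphism ring identification on the source and the universal deformation characterization on the target.'' Second, your Nakayama step presupposes that $\widetilde{P}^{\psi}$ is pro-free over $R^{\psi\varepsilon^{-1}}_{\overline{\rho}}$; Theorem \ref{5.24} as stated only gives ``topologically flat and compact,'' so pro-freeness needs its own short argument (for instance, deducing it from the fact that $\widetilde{P}^{\psi}$ is a deformation of $\overline{\pi}^{\vee}$, hence dual to an orthonormalizable admissible representation) before you can run the topological Nakayama lemma.
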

\begin{proof}
We remark that both $\widetilde{P}^{\psi}$ and $(\pi_{\overline{\pi}}^{\psi})^*$ are objects in $\mathfrak{C}^{\psi}(\mathcal{O})$, and are deformations of 
$\overline{\pi}^{\vee}$ over $R^{\psi\varepsilon^{-1}}_{\overline{\rho}}$ such that both $\check{\bold{V}}(\widetilde{P}^{\psi})$ and 
$\check{\bold{V}}((\pi_{\overline{\pi}}^{\psi})^*)$ are isomorphic to 
$\rho_{\overline{\rho}(\overline{\varepsilon})}^{\psi\varepsilon}\isom \rho_{\overline{\rho}}^{\psi\varepsilon^{-1}}(\varepsilon)$ 
by Lemma \ref{5.23} and Theorem \ref{5.24}. Then the lemma follows from the fact that the functor 
$\check{\bold{V}}$ induces an isomorphism between the deformations of $\overline{\pi}^{\vee}$ and those of $\overline{\rho}(\overline{\varepsilon})$ 
by (the proof of) Proposition 6.3 and Corollary 8.7 of \cite{Pas13}.
\end{proof}

From now until the end of this subsection, we assume that all the assumptions in Lemma \ref{5.25} are satisfied. 
We want to generalize the isomorphism $\widetilde{P}^{\psi}\isom (\pi_{\overline{\pi}}^{\psi})^*$ for $\widetilde{P}$ and $\pi_{\overline{\pi}}^*$ over $R_{\overline{\rho}}$. 
Let $\Lambda$ be the universal deformation ring for the deformations over $\mathrm{Comp}(\mathcal{O})$ of the trivial representation
$\bold{1} : G_{\mathbb{Q}_p}\rightarrow \mathbb{F}^{\times}$. Let $\bold{1}^{\mathrm{univ}} : G_{\mathbb{Q}_p}\rightarrow 
\Lambda^{\times}$ be the universal deformation of $\bold{1}$. Under the assumption that $p\geqq 3$, the natural map 
$$R_{\overline{\rho}}\rightarrow R_{\overline{\rho}}^{\psi\varepsilon^{-1}}\widehat{\otimes}_{\mathcal{O}}\Lambda:=\varprojlim_n (R_{\overline{\rho}}^{\psi\varepsilon^{-1}}/\mathfrak{m}_0^n)\otimes_{\mathcal{O}}(\Lambda/\mathfrak{m}_{\Lambda}^n)$$
corresponding to the map $D^{\psi\varepsilon^{-1}}_{\overline{\rho}}(A)\times D_{\bold{1}}(A)\rightarrow D_{\overline{\rho}}(A) : (\rho,\chi)\mapsto \rho(\chi)$ 
($A\in \mathrm{Comp}(\mathcal{O})$) is isomorphism, where $\mathfrak{m}_0$ (resp. $\mathfrak{m}_{\Lambda}$) is the maximal ideal 
of $R_{\overline{\rho}}^{\psi\varepsilon^{-1}}$ (resp. $\Lambda$). 
%the ring $R_{\overline{\rho}}^{\psi\varepsilon^{-1}}\widehat{\otimes}_{\mathcal{O}}\Lambda$ is . 
Therefore, one also has a topological isomorphism
$$\rho_{\overline{\rho}}\isom \rho_{\overline{\rho}}^{\psi\varepsilon^{-1}}\widehat{\otimes}_{\mathcal{O}}\bold{1}^{\mathrm{univ}}\quad \text{ (resp. } \quad \pi_{\overline{\pi}}\isom \pi_{\overline{\pi}}^{\psi}\widehat{\otimes}_{\mathcal{O}}(\bold{1}^{\mathrm{univ}}\circ \mathrm{det}))$$
of $R_{\overline{\rho}}[G_{\mathbb{Q}_p}]$-modules (resp. $R_{\overline{\rho}}[G_p]$-modules). Moreover, it is shown in Theorem 6.18 of \cite{CEG${}^{+}$18} that there exists an isomorphism 
$$\widetilde{P}\isom \widetilde{P}^{\psi}\widehat{\otimes}_{\mathcal{O}}(\bold{1}^{\mathrm{univ}}\circ \mathrm{det}^{-1})$$
in the category $\mathfrak{C}(\mathcal{O})$, and that the natural map 
$$R_{\overline{\rho}}\isom R_{\overline{\rho}}^{\psi\varepsilon^{-1}}\widehat{\otimes}_{\mathcal{O}}\Lambda\rightarrow \mathrm{End}_{\mathfrak{C}(\mathcal{O})}(\widetilde{P}^{\psi}\widehat{\otimes}_{\mathcal{O}}(\bold{1}^{\mathrm{univ}}\circ \mathrm{det}^{-1}))$$
is isomorphism, by which we equip $\widetilde{P}$ with a structure of compact $R_{\overline{\rho}}$-module. Therefore, one obtains a natural 
isomorphism 
$$R_{\overline{\rho}}\isom \mathrm{End}_{\mathfrak{C}(\mathcal{O})}(\widetilde{P})$$
of topological $\mathcal{O}$-algebras.

% (which is independent of the choice of the
%isomorphism $\widetilde{P}\isom \widetilde{P}^{\psi}\widehat{\otimes}_{\mathcal{O}}(\bold{1}^{\mathrm{univ}}\circ \mathrm{det}^{-1})$).
%, and that this action 
%induces a ring isomorphism 
%$$R_{\overline{\rho}}\isom \mathrm{End}_{\mathfrak{C}(\mathcal{O})}(\widetilde{P}).$$

%, which we also see as the universal deformation of 
%the trivial representation $\bold{1} : \mathbb{Q}_

\begin{prop}\label{5.26}
There exists a $G_p$-equivariant $R_{\overline{\rho}}$-linear topological  isomorphism
$$\widetilde{P}\isom \pi_{\overline{\pi}}^*$$
\end{prop}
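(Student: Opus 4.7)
The plan is to reduce the statement to the fixed-determinant case (Lemma \ref{5.25}) by exploiting the decomposition $R_{\overline{\rho}}\isom R_{\overline{\rho}}^{\psi\varepsilon^{-1}}\widehat{\otimes}_{\mathcal{O}}\Lambda$ recalled just before the proposition, together with the multiplicative decompositions of $\pi_{\overline{\pi}}$ and $\widetilde{P}$ by twisting with the universal character $\bold{1}^{\mathrm{univ}}\circ\mathrm{det}$. The strategy is to start from the $R_{\overline{\rho}}[G_p]$-linear topological isomorphism $\pi_{\overline{\pi}}\isom \pi_{\overline{\pi}}^{\psi}\widehat{\otimes}_{\mathcal{O}}(\bold{1}^{\mathrm{univ}}\circ\mathrm{det})$, take its $R_{\overline{\rho}}$-linear dual, and identify the resulting object with $\widetilde{P}^{\psi}\widehat{\otimes}_{\mathcal{O}}(\bold{1}^{\mathrm{univ}}\circ\mathrm{det}^{-1})$, which is precisely $\widetilde{P}$ by Theorem 6.18 of \cite{CEG${}^{+}$18}.

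First I would verify that the $R_{\overline{\rho}}$-dual of a completed tensor product of the form $M\widehat{\otimes}_{\mathcal{O}}N$, where $M$ is an orthonormalizable $R_{\overline{\rho}}^{\psi\varepsilon^{-1}}$-module and $N$ is a free $\Lambda$-module of rank one (equipped with the diagonal $R_{\overline{\rho}}$-action under the above decomposition), factors as $M^{*}\widehat{\otimes}_{\mathcal{O}}N^{*}$, the duals being taken over $R_{\overline{\rho}}^{\psi\varepsilon^{-1}}$ and $\Lambda$ respectively. This is a direct consequence of the discussion in \S B.1 on duals of orthonormalizable modules and completed tensor products (cf.\ Lemma \ref{5.6} applied with $V=M$ and the cofinitely generated side replaced by the pro-free $N$, after dualizing). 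Applied to our situation this gives
\[
\pi_{\overline{\pi}}^{*}\isom (\pi_{\overline{\pi}}^{\psi})^{*}\widehat{\otimes}_{\mathcal{O}}(\bold{1}^{\mathrm{univ}}\circ\mathrm{det})^{*}
\]
as compact $R_{\overline{\rho}}$-modules with continuous $G_p$-action. Next, I would apply Lemma \ref{5.25} to replace $(\pi_{\overline{\pi}}^{\psi})^{*}$ by $\widetilde{P}^{\psi}$ as $R_{\overline{\rho}}^{\psi\varepsilon^{-1}}[G_p]$-modules, and observe that the $\Lambda$-linear dual of the rank-one free $\Lambda$-module on which $G_p$ acts by $\bold{1}^{\mathrm{univ}}\circ\mathrm{det}$ is $\Lambda$ with $G_p$ acting by $\bold{1}^{\mathrm{univ}}\circ\mathrm{det}^{-1}$. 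Combining these yields
\[
\pi_{\overline{\pi}}^{*}\isom \widetilde{P}^{\psi}\widehat{\otimes}_{\mathcal{O}}(\bold{1}^{\mathrm{univ}}\circ\mathrm{det}^{-1})\isom \widetilde{P},
\]
where the last isomorphism is Theorem 6.18 of \cite{CEG${}^{+}$18} as recalled above the proposition. All maps are continuous, $G_p$-equivariant, and compatible with the actions of $R_{\overline{\rho}}^{\psi\varepsilon^{-1}}$ and $\Lambda$ separately, hence with $R_{\overline{\rho}}$ through its decomposition.

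The main technical point will be bookkeeping: the $R_{\overline{\rho}}$-structure on $\widetilde{P}$ recalled above is defined precisely through the identification $R_{\overline{\rho}}\isom R_{\overline{\rho}}^{\psi\varepsilon^{-1}}\widehat{\otimes}_{\mathcal{O}}\Lambda$ together with the twist $\widetilde{P}\isom \widetilde{P}^{\psi}\widehat{\otimes}_{\mathcal{O}}(\bold{1}^{\mathrm{univ}}\circ\mathrm{det}^{-1})$, and similarly for $\pi_{\overline{\pi}}^{*}$; so one must check that the $R_{\overline{\rho}}$-actions induced from the two sides of the isomorphism chain agree. This reduces to matching the two factors separately, which is formal once one is careful that the tensor factor $\bold{1}^{\mathrm{univ}}$ is the universal deformation of the \emph{trivial} character so that its $\Lambda$-dual is canonically its inverse. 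Apart from that, the argument is a string of canonical identifications, and no further input beyond Lemma \ref{5.25}, the cited result from \cite{CEG${}^{+}$18}, and the duality formalism of \S B.1 is needed.
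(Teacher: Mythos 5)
Your argument is correct and is essentially identical to the paper's own proof: both recall the three isomorphisms $\widetilde{P}\isom\widetilde{P}^{\psi}\widehat{\otimes}_{\mathcal{O}}(\bold{1}^{\mathrm{univ}}\circ\mathrm{det}^{-1})$, $\pi_{\overline{\pi}}\isom\pi_{\overline{\pi}}^{\psi}\widehat{\otimes}_{\mathcal{O}}(\bold{1}^{\mathrm{univ}}\circ\mathrm{det})$, and $(\pi_{\overline{\pi}}^{\psi})^{*}\isom\widetilde{P}^{\psi}$ (Lemma \ref{5.25}), then reduce to the compatibility $(\pi_{\overline{\pi}}^{\psi}\widehat{\otimes}_{\mathcal{O}}(\bold{1}^{\mathrm{univ}}\circ\mathrm{det}))^{*}\isom(\pi_{\overline{\pi}}^{\psi})^{*}\widehat{\otimes}_{\mathcal{O}}(\bold{1}^{\mathrm{univ}}\circ\mathrm{det}^{-1})$, which the paper dispatches directly by noting that $\pi_{\overline{\pi}}^{\psi}$ is orthonormalizable and $\bold{1}^{\mathrm{univ}}\circ\mathrm{det}$ is free of rank one over $\Lambda$. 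Your invocation of Lemma \ref{5.6} for this last step is slightly off-target (that lemma concerns a cofinitely generated $X$, not a pro-free or rank-one factor), but the underlying compatibility is indeed elementary and the overall argument matches the paper's.
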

\begin{proof}
Since one has isomorphisms
$$\widetilde{P}\isom \widetilde{P}^{\psi}\widehat{\otimes}_{\mathcal{O}}(\bold{1}^{\mathrm{univ}}\circ \mathrm{det}^{-1}),$$
$$\pi_{\overline{\pi}}\isom \pi_{\overline{\pi}}^{\psi}\widehat{\otimes}_{\mathcal{O}}(\bold{1}^{\mathrm{univ}}\circ \mathrm{det})\quad \text{ and }
\quad (\pi_{\overline{\pi}}^{\psi})^*\isom \widetilde{P}^{\psi},$$ it suffices to check that there exists a $G_p$-equivariant $R_{\overline{\rho}}$-linear isomorphism 
$$ (\pi_{\overline{\pi}}^{\psi}\widehat{\otimes}_{\mathcal{O}}(\bold{1}^{\mathrm{univ}}\circ \mathrm{det}))^*\isom  (\pi_{\overline{\pi}}^{\psi})^*\widehat{\otimes}_{\mathcal{O}}(\bold{1}^{\mathrm{univ}}\circ \mathrm{det}^{-1}).$$
But this is trivial because $\pi_{\overline{\pi}}^{\psi}$ is orthonormalizable $R_{\overline{\rho}}$-module and 
$\bold{1}^{\mathrm{univ}}\circ \mathrm{det}$ is free $\Lambda$-module of rank one. 
\end{proof}

The following easy lemma (and the corollary) is one of the key results for our application. This is crucial for removing 
the $\widetilde{P}$-parts from the (Iwasawa cohomology of) the completed homology of modular curves. 

We remark that, for any object $N\in \mathfrak{C}(\mathcal{O})$, the isomorphism $R_{\overline{\rho}}\isom \mathrm{End}_{\mathfrak{C}(\mathcal{O})}(\widetilde{P})$ naturally equip $\mathrm{Hom}_{\mathfrak{C}(\mathcal{O})}(\widetilde{P}, N)$ with a structure of compact $R_{\overline{\rho}}$-module.
\begin{lemma}\label{5.27}
For any compact $R_{\overline{\rho}}$-module $M$,  %be a compact $R_{\overline{\rho}}$-module. %(i.e. $M$ is a topological Hausdorff compact $R_{\overline{\rho}}$-module which has a systme of 
%open neighborhood of $0$ consisting of sub $R_{\overline{\rho}}$-modules). 
%Then,
 the $R_{\overline{\rho}}$-linear map 
$$M\rightarrow \mathrm{Hom}_{\mathfrak{C}(\mathcal{O})}(\widetilde{P}, \widetilde{P}\,\widehat{\otimes}_{R_{\overline{\rho}}}M) : m\mapsto [v\mapsto v\,\widehat{\otimes}\,m]$$
is topological $R_{\overline{\rho}}$-linear isomorphism. 

\end{lemma}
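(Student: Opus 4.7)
The plan is to reduce the statement to the case $M=R_{\overline{\rho}}$ by the standard devissage: first verify for free modules of finite rank, then for finitely presented $R_{\overline{\rho}}$-modules via the projectivity and flatness of $\widetilde{P}$, and finally pass to arbitrary compact modules by a limit argument. The base case is essentially tautological: when $M=R_{\overline{\rho}}$, there is a canonical identification $\widetilde{P}\,\widehat{\otimes}_{R_{\overline{\rho}}}R_{\overline{\rho}}\cong\widetilde{P}$, and under this identification the map of the lemma is exactly the isomorphism $R_{\overline{\rho}}\isom\mathrm{End}_{\mathfrak{C}(\mathcal{O})}(\widetilde{P})$ deduced above from Theorem \ref{5.24} (together with the twist by $\bold{1}^{\mathrm{univ}}\circ\mathrm{det}^{-1}$ and the isomorphism $R_{\overline{\rho}}\cong R_{\overline{\rho}}^{\psi\varepsilon^{-1}}\widehat{\otimes}_{\mathcal{O}}\Lambda$). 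Both the functors $\widetilde{P}\,\widehat{\otimes}_{R_{\overline{\rho}}}-$ and $\mathrm{Hom}_{\mathfrak{C}(\mathcal{O})}(\widetilde{P},-)$ commute with finite direct sums, so the case $M=R_{\overline{\rho}}^{n}$ follows automatically.

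Next, for a finitely presented compact $R_{\overline{\rho}}$-module $M$, choose a presentation $R_{\overline{\rho}}^{a}\to R_{\overline{\rho}}^{b}\to M\to 0$. Applying $\widetilde{P}\,\widehat{\otimes}_{R_{\overline{\rho}}}-$ and using right exactness of the completed tensor product (together with the topological flatness of $\widetilde{P}$ over $R_{\overline{\rho}}$ coming from Theorem \ref{5.24} and the twist identification) yields an exact sequence $\widetilde{P}^{a}\to\widetilde{P}^{b}\to\widetilde{P}\,\widehat{\otimes}_{R_{\overline{\rho}}}M\to 0$ in $\mathfrak{C}(\mathcal{O})$. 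Applying the exact functor $\mathrm{Hom}_{\mathfrak{C}(\mathcal{O})}(\widetilde{P},-)$ (exact because $\widetilde{P}$ is projective in $\mathfrak{C}(\mathcal{O})$) and comparing with the original presentation via the natural transformation of the lemma gives, by the five lemma and the already-proved free case, the claimed isomorphism for $M$.

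Finally, any compact $R_{\overline{\rho}}$-module $M$ can be written as $M=\varprojlim_{\alpha}M_{\alpha}$ where each $M_{\alpha}=M/N_{\alpha}$ runs over the open sub $R_{\overline{\rho}}$-modules, and each $M_{\alpha}$ has finite length (hence is finitely presented, since $R_{\overline{\rho}}$ is Noetherian). By the very construction of $\widehat{\otimes}$ in the compact category, one has $\widetilde{P}\,\widehat{\otimes}_{R_{\overline{\rho}}}M\cong\varprojlim_{\alpha}\widetilde{P}\,\widehat{\otimes}_{R_{\overline{\rho}}}M_{\alpha}$, and the functor $\mathrm{Hom}_{\mathfrak{C}(\mathcal{O})}(\widetilde{P},-)$ commutes with projective limits in $\mathfrak{C}(\mathcal{O})$ (as Hom out of a fixed object always does). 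Passing to the limit in the isomorphism for each $M_{\alpha}$ yields the desired isomorphism for $M$, and tracking the natural topologies on both sides (the projective limit topology on the right, the given compact topology on the left) shows it is a homeomorphism.

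The main technical point to verify carefully will be the right exactness and compatibility with inverse limits of the completed tensor product $\widetilde{P}\,\widehat{\otimes}_{R_{\overline{\rho}}}-$ in the category $\mathfrak{C}(\mathcal{O})$, and the fact that $\mathrm{Hom}_{\mathfrak{C}(\mathcal{O})}(\widetilde{P},-)$ really is exact in the topological sense (which follows from the projectivity of $\widetilde{P}$ in the abelian category $\mathfrak{C}(\mathcal{O})$). Once these formal properties are in place, the devissage argument is routine.
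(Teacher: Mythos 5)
Your proof is correct and follows essentially the same route as the paper: reduce to finitely generated (finite length) $M$ via the projective-limit description, take a finite presentation, use right exactness of the tensor product and exactness of $\mathrm{Hom}_{\mathfrak{C}(\mathcal{O})}(\widetilde{P},-)$ (projectivity of $\widetilde{P}$), and anchor everything on the base isomorphism $R_{\overline{\rho}}\isom\mathrm{End}_{\mathfrak{C}(\mathcal{O})}(\widetilde{P})$. The only minor stylistic difference is that you invoke the topological flatness of $\widetilde{P}$ over $R_{\overline{\rho}}$ in the finitely presented step, whereas the paper only uses right exactness of the ordinary tensor product (after observing that $\widehat{\otimes}$ and $\otimes$ agree for finitely generated $M$), which is all that is actually needed there.
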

\begin{proof}
%It suffices to show that the composite map 
%$$M\rightarrow \mathrm{Hom}_{\mathfrak{C}(\mathcal{O})}(\widetilde{P}, \widetilde{P}\,\widehat{\otimes}_{R_{\overline{\rho}}}M):m\mapsto [x\mapsto x\,\widehat{\otimes}\,m]$$
%is isomorphism. 
%If we have $M=\varprojlim_{i\in I} M_i$, then we have $$\mathrm{Hom}_{\mathfrak{C}(\mathcal{O})}(\widetilde{P}, \widetilde{P}\,\widehat{\otimes}_{R_{\overline{\rho}}}M)=\varprojlim_{i\in I}
%\mathrm{Hom}_{\mathfrak{C}(\mathcal{O})}(\widetilde{P}, \widetilde{P}\,\widehat{\otimes}_{R_{\overline{\rho}}}M_i).$$ 
Since any such $M$ can be written by $M=\varprojlim_{i\in I} M_i$ so that all the $M_i$ are  finite $R_{\overline{\rho}}$-modules (more strongly, we can take $M_i$ of finite length), 
it suffices to show the lemma when $M$ is a finite generated $R_{\overline{\rho}}$-module. 

Assume that $M$ is finite generated. Then one has $\widetilde{P}\,\widehat{\otimes}_{R_{\overline{\rho}}}M=\widetilde{P}\,\otimes_{R_{\overline{\rho}}}M$. We take a finite presentation of $M$ 
$$R_{\overline{\rho}}^m\rightarrow R_{\overline{\rho}}^n\rightarrow M\rightarrow 0.$$
Then, this induces an exact sequence 
$$\widetilde{P}^m\rightarrow \widetilde{P}^n\rightarrow \widetilde{P}\,\otimes_{R_{\overline{\rho}}}M\rightarrow 0$$
in $\mathfrak{C}(\mathcal{O})$, 
and induces an exact sequence 
$$\mathrm{Hom}_{\mathfrak{C}(\mathcal{O})}(\widetilde{P}, \widetilde{P}^m) \rightarrow \mathrm{Hom}_{\mathfrak{C}(\mathcal{O})}(\widetilde{P}, \widetilde{P}^n) 
\rightarrow \mathrm{Hom}_{\mathfrak{C}(\mathcal{O})}(\widetilde{P},\widetilde{P}\,\otimes_{R_{\overline{\rho}}}M) \rightarrow 0$$
of $R_{\overline{\rho}}$-modules since $\widetilde{P}$ is projective in $\mathfrak{C}(\mathcal{O})$. Since one has a natural isomorphism $$R_{\overline{\rho}}\isom \mathrm{Hom}_{\mathfrak{C}(\mathcal{O})}(\widetilde{P}, \widetilde{P}): 
a\mapsto a\cdot \mathrm{id}_{\widetilde{P}},$$ it is easy to see that the map $M\rightarrow \mathrm{Hom}_{\mathfrak{C}(\mathcal{O})}(\widetilde{P}, \widetilde{P}\widehat{\otimes}_{R_{\overline{\rho}}}M):m\mapsto [v\mapsto v\,\widehat{\otimes}\,m]$ is also isomorphism by the above exact sequences.

 \end{proof}
 By this lemma, we immediately obtain the following corollary.
 \begin{corollary}\label{5.28}
 For any compact $R_{\overline{\rho}}$-modules $M_1, M_2$, the natural map 
 $$\mathrm{Hom}_{R_{\overline{\rho}}}^{\mathrm{cont}}(M_1, M_2)\rightarrow 
 \mathrm{Hom}_{R_{\overline{\rho}}[G_p]}^{\mathrm{cont}}(\widetilde{P}\widehat{\otimes}_{R_{\overline{\rho}}}M_1, \widetilde{P}\widehat{\otimes}_{R_{\overline{\rho}}}M_2) : 
 f\mapsto \mathrm{id}_{\widetilde{P}}\otimes f$$ is 
 $R_{\overline{\rho}}$-linear isomorphism. 
 
 \end{corollary}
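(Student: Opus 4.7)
The plan is to deduce Corollary \ref{5.28} directly from Lemma \ref{5.27} by constructing an explicit two-sided inverse to the natural map $f\mapsto \mathrm{id}_{\widetilde{P}}\otimes f$. The underlying idea is that the functor $\widetilde{P}\widehat{\otimes}_{R_{\overline{\rho}}}-$ from compact $R_{\overline{\rho}}$-modules to $\mathfrak{C}(\mathcal{O})$ is ``fully faithful'' on the appropriate $R_{\overline{\rho}}$-linear level, with quasi-inverse given by $\mathrm{Hom}_{\mathfrak{C}(\mathcal{O})}(\widetilde{P},-)$. Indeed, Lemma \ref{5.27} says precisely that this composite recovers $M$ via the map $m\mapsto \phi_m$, where $\phi_m\colon \widetilde{P}\to \widetilde{P}\widehat{\otimes}_{R_{\overline{\rho}}}M$ denotes the continuous $\mathcal{O}[G_p]$-linear map $v\mapsto v\,\widehat{\otimes}\,m$.

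First I would construct the inverse. Given a continuous $R_{\overline{\rho}}[G_p]$-linear map $F\colon \widetilde{P}\widehat{\otimes}_{R_{\overline{\rho}}}M_1\to \widetilde{P}\widehat{\otimes}_{R_{\overline{\rho}}}M_2$, for each $m\in M_1$ the composite $F\circ \phi_m$ lies in $\mathrm{Hom}_{\mathfrak{C}(\mathcal{O})}(\widetilde{P}, \widetilde{P}\widehat{\otimes}_{R_{\overline{\rho}}}M_2)$, since both $F$ and $\phi_m$ are continuous and $\mathcal{O}[G_p]$-equivariant. By Lemma \ref{5.27}, there is a unique element $\widetilde{F}(m)\in M_2$ with $F\circ\phi_m=\phi_{\widetilde{F}(m)}$, and this construction defines an $R_{\overline{\rho}}$-linear map $\widetilde{F}\colon M_1\to M_2$. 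More conceptually, applying the functor $\mathrm{Hom}_{\mathfrak{C}(\mathcal{O})}(\widetilde{P},-)$ to $F$ and invoking Lemma \ref{5.27} on both sides yields $\widetilde{F}$ directly, and the $R_{\overline{\rho}}$-linearity and continuity of $\widetilde{F}$ then follow from the corresponding properties stated in Lemma \ref{5.27} (both sides are topological $R_{\overline{\rho}}$-modules, and the isomorphism there is $R_{\overline{\rho}}$-linear).

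Next I would verify that $F\mapsto \widetilde{F}$ is a two-sided inverse of $f\mapsto \mathrm{id}_{\widetilde{P}}\otimes f$. In one direction, for $f\colon M_1\to M_2$ and any $m\in M_1$ one has $(\mathrm{id}_{\widetilde{P}}\otimes f)\circ \phi_m=\phi_{f(m)}$, so the construction recovers $f$. In the other direction, for a given $F$ the identity $F(v\,\widehat{\otimes}\,m)=v\,\widehat{\otimes}\,\widetilde{F}(m)=(\mathrm{id}_{\widetilde{P}}\otimes \widetilde{F})(v\,\widehat{\otimes}\,m)$ holds on all pure tensors; by continuity of $F$ and $\mathrm{id}_{\widetilde{P}}\otimes \widetilde{F}$, and the fact that pure tensors topologically generate $\widetilde{P}\widehat{\otimes}_{R_{\overline{\rho}}}M_1$, we conclude $F=\mathrm{id}_{\widetilde{P}}\otimes \widetilde{F}$.

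I do not expect a serious obstacle here; the corollary is essentially a formal consequence of Lemma \ref{5.27}, which does the real work. The only point requiring minor care is the density of pure tensors in the completed tensor product together with the continuity of the maps involved, but these are standard features of the completed tensor product of compact $R_{\overline{\rho}}$-modules (writing $M_1=\varprojlim M_{1,i}$ as an inverse limit of finite $R_{\overline{\rho}}$-modules reduces the density check to the case of ordinary tensor products, where it is tautological).
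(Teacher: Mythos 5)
Your proof is correct and follows the paper's own approach: the paper disposes of the corollary in a single sentence ("By this lemma, we immediately obtain the following corollary"), and your argument is exactly the unfolding of that remark, using Lemma \ref{5.27} as the ``full faithfulness'' statement and conjugating by its isomorphisms to build the inverse. The density/continuity point you flag is the one non-trivial detail and you handle it appropriately.
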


\subsection{Complements on Galois cohomologies}
 In the last subsection, we explain a general method which we use to factor out the Galois (and the Iwasawa) cohomology of $(\rho^{\mathfrak{m}})^*$ from that of 
 $\widetilde{H}^{BM}_{1, \overline{\rho}, \Sigma}$. Let $G_*$ be a topological group which is 
 equal to $G_{\mathbb{Q}, \Sigma}$ for some finite set of primes $\Sigma$ containing $p$, or $G_{\mathbb{Q}_l}$ for some prime $l$. 
\begin{lemma}\label{5.29}
Let $A\in\mathrm{Comp}(\mathcal{O})$, and $\rho:G_*\rightarrow \mathrm{GL}_n(A)$ be a continuous representation over $A$. 
% ($A\in \mathrm{Comp}(\mathcal{O})$). 
For any finite generated $A$-module $M$, 
the natural map 
$$C^{\bullet}(G_*, \rho)\otimes_AM\rightarrow C^{\bullet}(G_*, \rho\otimes_AM)$$
is isomorphism.
\end{lemma}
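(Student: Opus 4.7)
The plan is to reduce the claim, degree by degree, to the elementary statement that $C^k(G_*, A) \otimes_A M \isom C^k(G_*, M)$ for any finitely generated $A$-module $M$, and then to deduce this from a right-exactness property of continuous cochains. It suffices to show that in each degree $k$ the natural map
$$C^k(G_*, \rho) \otimes_A M \longrightarrow C^k(G_*, \rho \otimes_A M)$$
is an $A$-module isomorphism, since compatibility with the cochain differentials is then automatic. As an $A$-module (forgetting the $G_*$-action) we have $\rho \cong A^n$, and the topology on $\rho \otimes_A M$ coincides with the product $\mathfrak{m}$-adic topology on $M^n$. Hence $C^k(G_*, \rho) \cong C^k(G_*, A)^n$ and $C^k(G_*, \rho \otimes_A M) \cong C^k(G_*, M)^n$, and under these identifications the map in question is the $n$-fold product of $C^k(G_*, A) \otimes_A M \to C^k(G_*, M)$. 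Thus the problem reduces to showing the latter is an isomorphism.

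The next step is to handle this via a finite presentation $A^a \to A^b \to M \to 0$. The left-hand functor $C^k(G_*, A) \otimes_A -$ is tautologically right exact, and the map is obviously an isomorphism for $M = A$ (hence for $M = A^b$ by direct sum). So by the usual five-lemma argument comparing the two right-exact sequences attached to the presentation, the isomorphism for general finitely generated $M$ will follow once I establish that $C^k(G_*, -)$ is right exact on short exact sequences of finitely generated $A$-modules (equipped with the $\mathfrak{m}$-adic topology).

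The heart of the argument, and the main obstacle, is thus the following lemma: for any short exact sequence $0 \to N_1 \to N_2 \to N_3 \to 0$ of finitely generated $A$-modules, the sequence
$$0 \longrightarrow C^k(G_*, N_1) \longrightarrow C^k(G_*, N_2) \longrightarrow C^k(G_*, N_3) \longrightarrow 0$$
is exact. To prove this, I would exploit that $G_*$ is profinite (being either $G_{\mathbb{Q}_l}$ or $G_{\mathbb{Q}, \Sigma}$) and that each $N_i$ is profinite via $N_i = \varprojlim_n N_i/\mathfrak{m}^n N_i$, so that $C^k(G_*, N_i) = \varprojlim_n C^k(G_*, N_i/\mathfrak{m}^n N_i)$. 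For each $n$ the quotient sequence
$$0 \longrightarrow N_1/(N_1 \cap \mathfrak{m}^n N_2) \longrightarrow N_2/\mathfrak{m}^n N_2 \longrightarrow N_3/\mathfrak{m}^n N_3 \longrightarrow 0$$
is a short exact sequence of \emph{finite} abelian groups, and continuous cochains from a profinite group to a finite discrete module can always be lifted set-theoretically to any surjection of finite targets (locally constant lifts remain continuous), so the corresponding sequence of $C^k$'s is short exact at each level $n$.

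The final step is then to take the inverse limit over $n$ and check that exactness is preserved. By Artin--Rees the filtration $\{N_1 \cap \mathfrak{m}^n N_2\}_n$ is cofinal with $\{\mathfrak{m}^n N_1\}_n$, so $\varprojlim_n C^k(G_*, N_1/(N_1 \cap \mathfrak{m}^n N_2)) = C^k(G_*, N_1)$. The transition maps in the inverse system of kernels are surjective (they come from surjections of the $A$-modules), so the system satisfies Mittag--Leffler, and taking $\varprojlim$ preserves short exactness. The subtle point here, and what I regard as the main technical difficulty to get right, is the interaction of the $\mathfrak{m}$-adic topologies under the subspace/quotient operations on $N_1 \subseteq N_2 \twoheadrightarrow N_3$; this is precisely where Artin--Rees is essential, and where one must be careful that the natural maps are strict so that continuity is preserved throughout the inverse-limit comparison. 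Once this lemma is in place, applying it to the kernel sequence $0 \to \ker \to A^b \to M \to 0$ finishes the reduction and yields the desired isomorphism.
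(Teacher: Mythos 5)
Your argument is correct, and it takes a genuinely different route from the paper's. Both proofs reduce to the case of free $M$ via a finite presentation, but the way exactness is established differs substantially. The paper's proof, for a presentation $0\to N\to A^n\to M\to 0$, first observes that the induced surjection $\rho\otimes_A A^n\to\rho\otimes_A M$ of profinite $A$-modules admits a continuous set-theoretic section, which immediately yields a short exact sequence of cochain complexes; it then invokes the flatness of $C^{\bullet}(G_*,\rho)$ over $A$ (Theorem 1.1(1) of [Po13]) to get a parallel short exact sequence $0\to C^{\bullet}(G_*,\rho)\otimes_A N\to C^{\bullet}(G_*,\rho)\otimes_A A^n\to C^{\bullet}(G_*,\rho)\otimes_A M\to 0$, and closes with a snake-lemma diagram chase. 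You instead forget the $G_*$-action degree by degree, reduce to the rank-one case $C^k(G_*,A)\otimes_A M\to C^k(G_*,M)$, and prove right-exactness of $C^k(G_*,-)$ on finitely generated $A$-modules directly: pass to the exact sequences of finite quotients $0\to N_1/(N_1\cap\mathfrak{m}^nN_2)\to N_2/\mathfrak{m}^nN_2\to N_3/\mathfrak{m}^nN_3\to 0$, use local constancy of continuous cochains into finite discrete modules to get levelwise exactness, and recombine via Artin--Rees (to identify the subspace filtration with the $\mathfrak{m}$-adic one) and Mittag--Leffler (to pass to the inverse limit). Since tensoring is already right exact and the two right-exact functors agree on free modules, the comparison of presentations gives the isomorphism for all finitely generated $M$. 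What the paper's approach buys is brevity, at the cost of relying on the flatness result from [Po13] and the existence of continuous sections for surjections of profinite modules. What your approach buys is self-containedness: it avoids both external inputs, replacing the flatness input by a more hands-on but entirely elementary Artin--Rees/Mittag--Leffler argument. Note also that, for the cokernel comparison, only right-exactness of $C^k(G_*,-)$ is actually needed; the left-exactness you include (injectivity of $C^k(G_*,N_1)\to C^k(G_*,N_2)$) is automatic and harmless but not required for the conclusion.
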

\begin{proof}
If $M$ is a finite free $A$-module, it is trivial. In general, take a short exact sequence 
$$0\rightarrow N\rightarrow A^n\rightarrow  M\rightarrow 0$$
for some $n\geqq 0$ and a finite generated $A$-module $N$. 
Then, we obtain a short exact sequence 
$$0\rightarrow \rho\otimes_A N\rightarrow \rho\otimes_AA^n\rightarrow  \rho\otimes_AM\rightarrow 0 $$
of topological $A[G_*]$-modules with a continuous section $s : \rho\otimes_AM\rightarrow \rho\otimes_AA^n$ (as topological spaces). Hence we also obtain the following short exact sequence
\begin{equation}\label{B1}
0\rightarrow C^{\bullet}(G_*, \rho\otimes_A N) \rightarrow C^{\bullet}(G_*,\rho\otimes_AA^n) \rightarrow  C^{\bullet}(G_*,\rho\otimes_AM)\rightarrow 0
\end{equation} of $A$-modules from the standard theory of Galois cohomology. 
On the other hand, since the $A$-module $C^{\bullet}(G_*,\rho)$ is flat over $A$ by Theorem 1.1 (1) of \cite{Po13}, we obtain another short exact sequence
\begin{equation}\label{B2}
0\rightarrow C^{\bullet}(G_*,\rho)\otimes_AN\rightarrow C^{\bullet}(G_*,\rho)\otimes_A A^n\rightarrow  C^{\bullet}(G_*,\rho)\otimes_AM\rightarrow 0, 
\end{equation}
and a natural map from the short exact sequence (\ref{B1}) to (\ref{B2}). Then, that the natural map $C^{\bullet}(G_*, \rho)\otimes_AM\rightarrow  C^{\bullet}(G_*, \rho\otimes_AM)$ is isomorphism follows from the snake lemma (precisely, we first obtain surjectivity for any $M$ (hence for $N$), then the injectivity also follows from the snake lemma)

\end{proof}
\begin{lemma}\label{5.30}
Let $P$ be a pro-free $A$-module, $M$ a finite generated $A$-module. % which is an injective limit of 
%finite generated $A$-modules $M_i$ (equipped with locally convex inductive limit topology), 
Then, the natural map 
$$C^{\bullet}(G_*, \rho\otimes_AP)\otimes_AM\rightarrow C^{\bullet}(G_*, \rho\otimes_AP\otimes_AM)$$
is isomorphism 
\end{lemma}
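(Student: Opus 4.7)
The plan is to mimic the proof of Lemma \ref{5.29}, with the coefficient $\rho$ there replaced by the topological $A[G_*]$-module $\rho\otimes_A P$. When $M=A^r$ is free of finite rank the assertion is immediate, since both sides are canonically $C^{\bullet}(G_*,\rho\otimes_A P)^{r}$. Using that $A$ is Noetherian, fix a presentation
\[
0\to N\to A^n\to M\to 0
\]
with $N$ finite generated. Since $\rho$ is finite free and $P$ is pro-free (hence $A$-flat), the module $\rho\otimes_A P$ is flat over $A$, so tensoring the presentation yields a short exact sequence
\[
0\to \rho\otimes_A P\otimes_A N\to \rho\otimes_A P\otimes_A A^n\to \rho\otimes_A P\otimes_A M\to 0.
\]

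I would then upgrade this to a short exact sequence of continuous cochain complexes $(\mathbf{A})$. Writing $P\cong\prod_{i\in I}A$ and using that any finite generated $A$-module is finitely presented, tensor commutes with the product, so each term identifies with $\prod_{i\in I}(\rho\otimes_A -)$, a product of finite $A$-modules with their (profinite) $\mathfrak{m}$-adic topology. The displayed sequence is thus the product, over $i\in I$, of short exact sequences
\[
0\to \rho\otimes_A N\to \rho\otimes_A A^n\to \rho\otimes_A M\to 0
\]
of profinite $A$-modules, each admitting a continuous set-theoretic section by the standard splitting result for surjections of profinite spaces; the product of these sections is a continuous section of the original sequence. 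Applying $C^{\bullet}(G_*,-)$, which is exact on short exact sequences admitting a continuous section, produces $(\mathbf{A})$.

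On the other hand, commutation of continuous cochains with products in the target (a continuous cochain to a product with the product topology is a tuple of continuous cochains) gives
\[
C^{\bullet}(G_*,\rho\otimes_A P)\cong \prod_{i\in I}C^{\bullet}(G_*,\rho),
\]
which is a product of flat $A$-modules by Theorem 1.1(1) of \cite{Po13}; over the Noetherian ring $A$ an arbitrary product of flat modules is flat (tensor commutes with arbitrary products on finitely presented modules). Tensoring the presentation of $M$ with this flat $A$-module yields a second short exact sequence $(\mathbf{B})$, and the natural maps produce a commutative diagram of short exact sequences $(\mathbf{B})\to(\mathbf{A})$ whose vertical arrows are exactly the maps to be shown bijective. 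The map is trivially an isomorphism in the free case $A^n$; a snake-lemma chase then gives surjectivity for arbitrary finite generated $M$, and re-applying this surjectivity to $N$ (which is finite generated) and running the snake lemma once more yields bijectivity for $M$, exactly as in Lemma \ref{5.29}. The main technical point is verifying $(\mathbf{A})$ — that is, the topological reduction $\rho\otimes_A P\otimes_A M\cong \prod_i(\rho\otimes_A M)$ combined with the profinite splitting — but once these product reductions are granted, both the exactness of $(\mathbf{A})$ and the flatness needed for $(\mathbf{B})$ follow formally, and the rest is a diagram chase.
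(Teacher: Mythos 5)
Your proposal is correct, but it takes a genuinely different route from the paper's. The paper's proof is a direct chain of canonical isomorphisms: write $P\cong\prod_j A$, so that $\rho\otimes_AP\cong\prod_j\rho$ and $\rho\otimes_AP\otimes_AM\cong\prod_j(\rho\otimes_AM)$ as topological $A[G_*]$-modules; then $C^{\bullet}(G_*,-)$ commutes with products, $-\otimes_AM$ commutes with products because $M$ is finitely presented over Noetherian $A$, and Lemma \ref{5.29} handles each factor $C^{\bullet}(G_*,\rho)\otimes_AM\isom C^{\bullet}(G_*,\rho\otimes_AM)$, yielding the result immediately without any presentation or snake lemma. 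You instead rerun the internal argument of Lemma \ref{5.29} with $\rho$ replaced by $\rho\otimes_AP$: you take a presentation $0\to N\to A^n\to M\to0$, and to make the argument go through you must re-establish the two inputs that Lemma \ref{5.29} drew from Pottharst and the topological splitting result, namely flatness of $C^{\bullet}(G_*,\rho\otimes_AP)$ over $A$ and the existence of a continuous section of the induced surjection. You obtain both by invoking the same product decomposition $P\cong\prod_jA$ (flatness via Chase's observation that over a coherent, in particular Noetherian, ring products of flat modules are flat; continuous sections factor-by-factor, or directly since the terms remain profinite). Both approaches work and both crucially lean on the same product decomposition; the paper's is shorter because, once the product structure is in hand, Lemma \ref{5.29} can be cited termwise as a black box rather than having its proof replayed. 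Your version is more self-contained in that it does not depend on how Lemma \ref{5.29} was proved, at the cost of re-deriving the flatness and continuous-section inputs for the larger coefficient module — which, in the end, amounts to the same product reduction the paper uses up front.
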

\begin{proof}
%Since we have $\rho\otimes_AP\otimes_AM=\varinjlim_i \rho\otimes_AP\otimes_AM_i$ (equipped with locally convex inductive limit topology), 
%we have 
%$$C^{\bullet}(G_*, \rho\otimes_AP\otimes_AM)=\varinjlim_iC^{\bullet}(G_*, \rho\otimes_AP\otimes_AM_i).$$
Since $M$ and $\rho$ are  finite generated $A$-modules, and $P$ can be written as  $P\isom\prod_{j}A$ (as topological $A$-module), there exist
 isomorphisms
$$\rho\otimes_AP\isom \prod_j\rho,\quad \rho\otimes_AP\otimes_AM\isom \prod_j(\rho\otimes_AM)$$
as topological $A[G_*]$-modules. Hence, we obtain the following 
isomorphisms 
\begin{multline*}
C^{\bullet}(G_*, \rho\otimes _AP)\otimes_AM\isom C^{\bullet}(G_*, \prod_j \rho)\otimes_AM\isom \left(\prod_jC^{\bullet}(G_*, \rho)\right)\otimes_AM\\
\isom  \prod_j\left(C^{\bullet}(G_*, \rho)\otimes_AM\right)\isom \prod_jC^{\bullet}(G_*, \rho\otimes_AM)
\isom C^{\bullet}(G_*, \prod_j (\rho\otimes_AM))\\\isom C^{\bullet}(G_*, \rho\otimes_AP\otimes_AM)\\
%C^{\bullet}(G_*, \rho\otimes_AP\otimes_AM)\isom C^{\bullet}(G_*, \prod_j (\rho\otimes_AM))
%=\prod_jC^{\bullet}(G_*, \rho\otimes_AM)\\
%\isom \prod_j\left(C^{\bullet}(G_*, \rho)\otimes_AM\right)\isom \left(\prod_jC^{\bullet}(G_*, \rho)\right)\otimes_AM
%=C^{\bullet}(G_*, \prod_j \rho)\otimes_AM \\
%\isom C^{\bullet}(G_*, \rho\otimes _AP)\otimes_AM,
\end{multline*}
where the fourth isomorphism follows from the previous lemma. 
%Since the tensor product with a finite generated $A$-module commutes with the product, 
%we obtain 
%$$\prod_j(C^{\bullet}(G_*, \rho)\otimes_AM_i)=(\prod_jC^{\bullet}(G_*, \rho))\otimes_AM_i=C^{\bullet}(G_*, \rho\otimes_AP)\otimes_AM_i$$
%for each $i$. Therefore, we obtain 
%$$\varinjlim_i\prod_j(C^{\bullet}(G_*, \rho)\otimes_AM_i)=\varinjlim_iC^{\bullet}(G_*, \rho\otimes_AP)\otimes_AM_i=C^{\bullet}(G_*, \rho\otimes_AP)\otimes_AM,$$
%which proves the lemma.
\end{proof}
\begin{lemma}\label{5.31}
Assume that the complex $C^{\bullet}(G_*, \rho)$ is a perfect complex of $A$-modules, i.e.  
there exists a complex $N^{\bullet}$ of $A$-modules consisting of finite projective $R$-modules such that $N^{\pm n}=0$ for any sufficiently large $n$, and there exists an $A$-linear quasi-isomorphism 
$$\phi : N^{\bullet}\rightarrow  C^{\bullet}(G_{*}, \rho).$$ 
Let $P$ be a pro-free $A$-module. Then $\phi$ naturally induces a quasi-isomorphism 
$$P\otimes_AN^{\bullet}\otimes_AM\rightarrow C^{\bullet}(G_{*}, P\otimes_A\rho\otimes_AM)$$ 
for every finite generated $A$-module $M$.
%and this induces a quasi-isomorphism 
%$$P\otimes_AN^{\bullet}\otimes_AM\rightarrow C^{\bullet}(G_{*}, P\otimes_A\rho)\otimes_AM$$ 
%for any $A$-module $M$. 

\end{lemma}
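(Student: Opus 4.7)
My plan is to prove the lemma in three steps: construct the map, reduce to a single-factor quasi-isomorphism, and verify the single-factor claim via a spectral sequence argument on the mapping cone of $\phi$. The map is built as the composition
$$P\otimes_AN^{\bullet}\otimes_AM\xrightarrow{\mathrm{id}_P\otimes\phi\otimes\mathrm{id}_M}P\otimes_AC^{\bullet}(G_*,\rho)\otimes_AM\xrightarrow{\sim}C^{\bullet}(G_*,P\otimes_A\rho\otimes_AM),$$
where the second map is the isomorphism obtained by rerunning the chain of identifications in the proof of Lemma \ref{5.30}: finite generation of $\rho$ gives $P\otimes_A\rho\cong\prod_j\rho$ as topological $A[G_*]$-modules, finite presentation of $M$ over the Noetherian ring $A$ lets $\prod_j$ pass through $\otimes_AM$, and continuous cochains commute with direct products of coefficients. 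Applying the same dictionary to the source (now using finite generation of $N^i$), both source and target identify with $\prod_j(N^{\bullet}\otimes_AM)$ and $\prod_j(C^{\bullet}(G_*,\rho)\otimes_AM)$ respectively, so the map is realized as $\prod_j(\phi\otimes_A\mathrm{id}_M)$. Since $H^i$ commutes with direct products of cochain complexes of $A$-modules, the problem reduces to showing the single-factor map $\phi\otimes_A\mathrm{id}_M:N^{\bullet}\otimes_AM\to C^{\bullet}(G_*,\rho)\otimes_AM$ is a quasi-isomorphism.

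For the single-factor claim, I would exploit that $N^i$ is projective and $C^i(G_*,\rho)$ is flat over $A$ by Pottharst's Theorem 1.1(1) of \cite{Po13}, so the mapping cone $\mathrm{Cone}(\phi)$ is a complex of flat $A$-modules, acyclic since $\phi$ is a quasi-isomorphism. As $\mathrm{Cone}(\phi\otimes_A\mathrm{id}_M)=\mathrm{Cone}(\phi)\otimes_AM$, it suffices to prove the latter is acyclic. Fixing a flat resolution $F^{\bullet}\to M$, I would form the double complex $\mathrm{Cone}(\phi)\otimes_AF^{\bullet}$. Its row-filtered spectral sequence has $E_1^{p,q}=H^p(\mathrm{Cone}(\phi))\otimes_AF^q=0$ by acyclicity of $\mathrm{Cone}(\phi)$ and flatness of $F^q$, so the totalization is acyclic; while its column-filtered spectral sequence has $E_1^{p,q}=\mathrm{Tor}^A_{-q}(\mathrm{Cone}(\phi)^p,M)$, which vanishes for $q<0$ by flatness of $\mathrm{Cone}(\phi)^p$ and collapses to $E_2^{p,0}=H^p(\mathrm{Cone}(\phi)\otimes_AM)$. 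Comparing the two computations forces $H^p(\mathrm{Cone}(\phi)\otimes_AM)=0$, as desired.

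The main obstacle will be handling the convergence of these spectral sequences, since $C^{\bullet}(G_*,\rho)$ is generally unbounded above and the flat resolution $F^{\bullet}$ may be unbounded below (as $A$ need not have finite global dimension), so the double complex is biinfinite in support. Careful choice of totalization (direct sum versus direct product) and verification that both induced filtrations on the total complex are exhaustive and Hausdorff are needed. A natural way to sidestep these issues is to first canonically truncate $C^{\bullet}(G_*,\rho)$ at a degree $d$ above the vanishing range of its cohomology (which matches that of the bounded complex $N^{\bullet}$) and pass to a suitable flat model, reducing the double complex to a quadrant in which convergence of both spectral sequences is automatic.
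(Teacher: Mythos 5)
Your map construction and the reduction to a single-factor quasi-isomorphism (via $P\cong\prod_j A$, finite generation of $N^i$, $\rho$, and $M$, and commutation of continuous cochains and cohomology with direct products) follow the paper's argument closely. The difference is in how you establish that $\phi\otimes_A\mathrm{id}_M : N^{\bullet}\otimes_AM\to C^{\bullet}(G_*,\rho)\otimes_AM$ is a quasi-isomorphism. The paper simply invokes the standard fact that a quasi-isomorphism between (bounded-below) complexes of flat $A$-modules stays a quasi-isomorphism after tensoring with any module. You instead reprove this via the mapping cone and a double-complex spectral sequence with a flat resolution $F^{\bullet}\to M$, and you correctly flag the resulting convergence problem since $\mathrm{Cone}(\phi)$ is unbounded above and $F^{\bullet}$ may be unbounded below.

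The gap is in your proposed fix. Truncating $C^{\bullet}(G_*,\rho)$ only bounds $\mathrm{Cone}(\phi)$ above; the flat resolution $F^{\bullet}$ can still extend infinitely to the left, because $A$ need not have finite global dimension (e.g.\ $A=\mathbb{T}_{\overline{\rho},\Sigma}\widehat{\otimes}_{\mathbb{Z}_p}\Lambda_n$ need not be regular). So even after truncation your double complex is not supported in a quadrant, and the phrase \emph{pass to a suitable flat model} papers over the real issue: the canonical truncation $\tau^{\leq d}C^{\bullet}(G_*,\rho)$ has $\mathrm{Ker}(C^d\to C^{d+1})$ in top degree, which is not known to be flat, and if you resolve it you re-introduce unboundedness in the other direction. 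The clean way to close this is to drop the double complex entirely. Observe that $\mathrm{Cone}(\phi)$ is a bounded-below, acyclic complex of flat $A$-modules (flatness of $C^i(G_*,\rho)$ is again Theorem 1.1(1) of \cite{Po13}, and $N^i$ is projective). Then for a bounded-below acyclic complex $K^{\bullet}$ of flat modules one proves directly that $K^{\bullet}\otimes_AM$ stays acyclic: writing $0\to Z^i\to K^i\to Z^{i+1}\to 0$ with $Z^{i_0}=0$ at the bottom degree, ascending induction on $i$ shows each cycle module $Z^i$ is flat (two-out-of-three for flatness in the short exact sequence with $Z^i$ and $K^i$ flat), hence every such short exact sequence remains exact after $\otimes_AM$, and acyclicity of $K^{\bullet}\otimes_AM$ follows. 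This is the content of the sentence the paper dispatches in one line, and it avoids any convergence bookkeeping.
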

\begin{proof}
We first remark that since both $N^{\bullet}$ and $C^{\bullet}(G_*, \rho)$ are complexes of flat $A$-modules, the quasi-isomorphism $\phi$ induces 
a quasi-isomorphism 
$$N^{\bullet}\otimes_AM\rightarrow C^{\bullet}(G_*, \rho)\otimes_AM\isom C^{\bullet}(G_*, \rho\otimes_AM)$$
for any finite generated $A$-module $M$. 
Since $N^i$ ($i\in \mathbb{Z}$) and $\rho$ are finite generated $A$-modules, an isomorphism $P\isom \prod_jA$ induces  the following isomorphisms 
$$P\otimes_AN^{\bullet}\otimes_AM\isom \prod_j N^{\bullet}\otimes_AM, \quad P\otimes_{A}\rho\otimes_AM\isom \prod_j\rho\otimes_AM.$$
Taking the product of the quasi-isomorphism $N^{\bullet}\otimes_AM\rightarrow  C^{\bullet}(G_{*}, \rho\otimes_AM)$ above, we obtain the following quasi-isomorphism 
\begin{multline*}
P\otimes_AN^{\bullet}\otimes_AM\isom \prod_j N^{\bullet}\otimes_AM\rightarrow \prod_jC^{\bullet}(G_*, \rho\otimes_AM)\\
=C^{\bullet}(G_*, \prod_j\rho\otimes_AM )\isom C^{\bullet}(G_*, P\otimes_A\rho\otimes_AM).
\end{multline*}
%Since both $P\otimes_AN^{\bullet}$ and $C^{\bullet}(G_*, P\otimes_A\rho)$ are complexes of flat $A$-modules (for the latter flatness, see the proof of 
%lemma), the quasi-isomorphism $P\otimes_AN^{\bullet}\rightarrow  C^{\bullet}(G_{*}, P\otimes_A\rho)$ also induces a quasi-isomorphism
%$$P\otimes_AN^{\bullet}\otimes_AM\rightarrow C^{\bullet}(G_{*}, P\otimes_A\rho)\otimes_AM$$ 
%for any $A$-module $M$.
 \end{proof}
 Let $\Gamma$ be $G_l$ or $G_{\Sigma_0}$ for a finite set of primes not containing $p$. 
 Let $V$ be a compact $A$-module with a continuous $A$-linear $G_*$-action, $M$ be a smooth 
 admissible $A[\Gamma]$-module. We define a complex of smooth $A[\Gamma]$-modules
 $$C^*(G*, V\otimes_AM):=\varinjlim_{K}C^{\bullet}(G_*, V\otimes_A M^K),$$
where $K$ run through all the open compact subgroup of $\Gamma$. Here, we remark that each $M^K$ is finite generated over $A$, hence $V\otimes_AM^K$ is also
a compact $A$-module with a continuous $A$-linear $G_*$-action.

By this definition, two lemma above immediately imply the following corollary.

\begin{corollary}\label{5.32}
Let $P$ be a pro-free $A$-module, $M$ a smooth admissible $A[\Gamma]$-module. % which is an injective limit of 
%finite generated $A$-modules $M_i$ (equipped with locally convex inductive limit topology), 
\begin{itemize}
\item[(1)]The natural map 
$$C^{\bullet}(G_*, \rho\otimes_AP)\otimes_AM\rightarrow C^{\bullet}(G_*, \rho\otimes_AP\otimes_AM)$$
is isomorphism.
%\item[(2)]The isomorphism in (1) induces isomorphisms of complexes of $A$-modules : 
%$$\Phi_{\Sigma_0,A}(C^{\bullet}(G_*, \rho\otimes_AP\otimes_AM))\isom C^{\bullet}(G_*, \rho\otimes_AP)\otimes_A\Phi_{\Sigma_0,A}(M)$$
%and 
%$$C^{\bullet}(G_*, \rho\otimes_AP\otimes_AM)^{K_{\Sigma_0}}\isom C^{\bullet}(G_*, \rho\otimes_AP)\otimes_AM^{K_{\Sigma_0}}$$
%for any open compact subgroup $K_{\Sigma_0}$ of $G_{\Sigma_0}$. 
\item[(2)]We furthermore assume that there exists a perfect complex $N^{\bullet}$ of $A$-modules, and there exists an $A$-linear quasi-isomorphism 
$$\phi : N^{\bullet}\rightarrow  C^{\bullet}(G_{*}, \rho),$$ then it naturally induces a quasi-isomorphism 
$$P\otimes_AN^{\bullet}\otimes_AM\rightarrow C^{\bullet}(G_{*}, P\otimes_A\rho\otimes_AM)$$
of smooth $A[\Gamma]$-modules. 

\end{itemize}

\end{corollary}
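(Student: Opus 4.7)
The plan is to deduce this corollary directly from Lemmas \ref{5.30} and \ref{5.31} by passing to the filtered colimit over the open compact subgroups $K$ of $\Gamma$. The key observation is that because $M$ is a smooth $A[\Gamma]$-module, one has the canonical identification $M = \varinjlim_{K} M^{K}$, and each $M^{K}$ is a finite generated $A$-module by the admissibility assumption. This is precisely the hypothesis needed to apply the two preceding lemmas to each fixed piece $M^{K}$.

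For part (1), I would fix an open compact subgroup $K$ of $\Gamma$ and apply Lemma \ref{5.30} with the finite generated $A$-module $M^{K}$, obtaining an isomorphism
\[
C^{\bullet}(G_{*},\rho\otimes_{A}P)\otimes_{A}M^{K}\isom C^{\bullet}(G_{*},\rho\otimes_{A}P\otimes_{A}M^{K}).
\]
These isomorphisms are clearly compatible with the transition maps induced by inclusions $K'\subseteq K$. Passing to the filtered colimit over $K$, and using that tensor products commute with filtered colimits together with the definition $C^{\bullet}(G_{*},V\otimes_{A}M):=\varinjlim_{K}C^{\bullet}(G_{*},V\otimes_{A}M^{K})$, yields the asserted isomorphism.

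For part (2), the argument is analogous: I would apply Lemma \ref{5.31} to each finite generated $A$-module $M^{K}$, producing a quasi-isomorphism
\[
P\otimes_{A}N^{\bullet}\otimes_{A}M^{K}\longrightarrow C^{\bullet}(G_{*},P\otimes_{A}\rho\otimes_{A}M^{K}).
\]
Again these are compatible as $K$ shrinks. Since filtered colimits are exact in the category of $A$-modules, they preserve quasi-isomorphisms of complexes. Taking $\varinjlim_{K}$ on both sides therefore yields the desired quasi-isomorphism $P\otimes_{A}N^{\bullet}\otimes_{A}M\to C^{\bullet}(G_{*},P\otimes_{A}\rho\otimes_{A}M)$ in the category of smooth $A[\Gamma]$-modules (smoothness of the target follows from the colimit presentation).

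There is really no main obstacle here: the corollary is a formal consequence of the preceding two lemmas, and the only point to verify carefully is that the formation of $C^{\bullet}(G_{*},-)$ commutes with filtered colimits of coefficient modules in the relevant sense, together with the compatibility of the isomorphisms/quasi-isomorphisms from Lemmas \ref{5.30} and \ref{5.31} with the transition maps as $K$ varies, both of which are routine. The only minor point worth a sentence of care is the $\Gamma$-equivariance of the constructed maps, which is transparent since everything is built functorially from inclusions $M^{K'}\hookrightarrow M^{K}$ compatible with the action of $\Gamma$ on $M$.
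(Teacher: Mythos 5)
Your proof is correct and is precisely the argument the paper has in mind: the paper defines $C^{\bullet}(G_*,V\otimes_A M)$ as $\varinjlim_K C^{\bullet}(G_*,V\otimes_A M^K)$ for smooth admissible $M$, and then states that the corollary follows ``immediately'' from Lemmas~\ref{5.30} and~\ref{5.31}. Your write-up simply makes explicit the filtered-colimit passage, the exactness used to preserve quasi-isomorphisms, and the $\Gamma$-equivariance, all of which match the paper's intended reasoning.
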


\footnote{Kentaro Nakamura

 Mathematical Science Course, Department of Science and Engineering, Saga University, 
 
 1 Honjo-machi, Saga, 840-8502, Japan
 
 nkentaro@cc.saga-u.ac.jp}


\begin{thebibliography}{99}
 \bibitem[AS86]{AS86}
 A. Ash, G. Stevens, Modular forms in characteristic $\ell$ and special values of their $L$-functions, Duke Math. J. 53 (1986), 849-868.
%\bibitem[BelCh09]{BelCh09}
 %J.Bella\"iche, G.Chenevier, Families of Galois representations and Selmer groups, 
% Ast\'erisque 324, Soc. Math. France (2009).
%\bibitem[Ben00]{Ben00}
%D.Benois,  On Iwasawa theory of crystalline representations. Duke Math. J. 104 (2000) 211-267.
%\bibitem[BB08]{BB08}
% D. Benois, L. Berger, Th\'eorie d'Iwasawa des repr\'esentations cristallines. II, Comment. Math. Helv. 83, no. 3, 603-677 (2008)

%\bibitem[Be02]{Be02}
%L. Berger, Repr\'esentations $p$-adiques et \'equations diff\'erentielles, Invent. Math. $\bold{148}$ (2002), 219-284.
%\bibitem[Ber03]{Ber03}
% L. Berger. Bloch and Kato's exponential map: three explicit formulas. Doc. Math, 99-129 (electronic), 2003. Kazuya Kato's fiftieth birthday.
%\bibitem[Ber08a]{Ber08a}
%L.Berger, Construction de ($\varphi,\Gamma$)-modules: repr\'esentations $p$-adiques et $B$-paires,  Algebra and Number Theory, 2 (2008), no. 1, 91--120.
%\bibitem[Ber08b]{Ber08b}
%L. Berger, \'Equations diff\'erentielles p-adiques et ($\varphi$,N)-modules filtres, Asterisque (2008), no. 319, 13-38, Repr\'esentations p-adiques de groupes p-adiques. I. Repr\'esentations galoisiennes et ($\varphi$,N)-modules.
%\bibitem[BF01]{BF01}
%D.Burns, M.Flach, Tamagawa numbers for motives with (non-commutative) coefficients, Doc. Math. 6 (2001), 501-570.
%\bibitem[Bo10]{Bo10}
%G. B\"ockle, Deformation rings for some mod $3$ Galois representations of the absolute Galois group of $\mathbb{Q}_3$, Ast\'erisque 330, 531-544 (2010).
%\bibitem[BJ14]{BJ14}
%G. B\"ockle, A. K. Juschka, Irreducibility of versal deformation rings in the $(p, p)$-case for $2$-dimensional representations, preprint.
%\bibitem[BG03]{BG03}
%D. Burns, C. Greither, On the equivariant Tamagawa number conjecture for Tate motives, Invent. Math. $\bold{153}$ (2003), 303-359.
%\bibitem[Ber09]{Ber09}
%L.Berger, Presque $\mathbb{C}_p$-repr\'esentations et $(\varphi,\Gamma)$-modules, J. Inst. Math. Jussieu 8 (2009), no. 4, 653-668.
%\bibitem[Be-Co08]{Be-Co08}
%L.Berger, P.Colmez, Familles de repr\'esentations de de Rham et monodromie p-adique, Ast\'erisque 319 (2008), 187-212.
%\bibitem[Berk93]{Berk93}
%V.Berkovich, \'Etale cohomology for nonarchimedian analytic spaces, Publications math\'ematiques de l'IHES 78 (1993).
%\bibitem[BLR95]{BLR95}
 %S.Bosch, W.L\"utkebohmert, M.Raynaud, Formal and rigid geometry III. The relative
%maximum principle, Math. Ann. 302, 1-29 (1995).
%\bibitem[Bu07]{Bu07}
% K. Buzzard, Eigenvarieties, in L-functions and Galois representations, 59-120, LMS Lecture
%Note Ser., 320, Cambridge Univ. Press, Cambridge, 2007.
%\bibitem[Ch09a]{Ch09a}
%G.Chenevier, Une application des vari\'et\'es de Hecke des groupe unitaires, preprint.
%\bibitem[Ch09b]{Ch09b}
%G.Chenevier, On the infinite fern of Galois representations of unitary type. preprint arXiv.
%[Ben00] Benois D.: On Iwasawa theory of crystalline representations. Duke Math. J. 104 (2000) 211?267.
%[%Ber02] Berger L.: Repr?esentations p-adiques et ?equations diff?erentielles. Invent. Math. 148 (2002), 219?284.
%\bibitem[BH]{BH}
%C. J. Bushnell, G. Henniart, The Local Langlands Conjecture for $\mathrm{GL}(2)$, Berlin:Springer; 2006. Grundlehren der mathematischen Wissenschaften 335.
%\bibitem[BK90]{BK90}
%S. Bloch, K. Kato, $L$-functions and Tamagawa numbers of motives. The Grothendieck Festschrift, Vol. I, 333-400, Progr. Math. 86, Birkh\"auser Boston, Boston, MA 1990.
\bibitem[Be12]{Be12}
J. Bella\"iche, Critical $p$-adic $L$-functions, Invent.
Math. 189 (2012), no. 3, 1-60.


\bibitem[Bo09]{Bo09}
G. B\"ockle, On the density of modular points in universal deformation spaces, Amer. J. Math 123 (2001), 985-1007.
\bibitem[BE10]{BE10}
C. Breuil, M. Emerton, Repr\'esentations $p$-adiques ordinaires de $\mathrm{GL}_2(\mathbb{Q}_p)$ et compatibilit\'e
local-global, Ast\'erisque 331 (2010), 255-315.
\bibitem[BM02]{BM02}
C. Breuil, A. M\'ezard, Multiplicit\'es modulaires et repr\'esentations de $\mathrm{GL}_2(\mathbb{Z}_p)$ et de
$\mathrm{Gal}(\overline{\mathbb{Q}}_p/\mathbb{Q}_p)$ en $l=p$, Duke Math. J. 115, 2002, 205-310.
\bibitem[CE12]{CE12}
F. Calegari, M. Emerton, Completed cohomology- a survey, in Non-abelian Fundamental Groups and Iwasawa Theory, London Mathematical Society Lecture Note Series, Volume 393, pp. 239-257 (Cambridge University Press, Cambridge, 2012).

%\bibitem[BF01]{BF01}
%D. Burns and M. Flach, Tamagawa numbers for motives with (non-commutative) coef-
%ficients. Documenta Math. 6 (2001) 501-570.
\bibitem[Ca86]{Ca86}
H. Carayol, Sur les repr\'esentations $l$-adiques associ\'ees aux formes modulaires de Hilbert, Ann. Sci. ENS 19, 409-468 (1986).
%\bibitem[CC98]{CC98}
%F. Cherbonnier,P. Colmez, Repr\'esentations $p$-adiques surconvergentes. Invent. Math. 133 (1998), 581-611.
%\bibitem[CC99]{CC99}
%F. Cherbonnier, P. Colmez, Th\'eorie d'Iwasawa des repr\'esentations $p$-adiques dun corps local. J. Amer. Math. Soc. 12 (1999), 241-268.
%\bibitem[Ch13]{Ch13}
%G.Chenevier, Sur la densit\'e des representations cristallines de $\mathrm{Gal}(\overline{\mathbb{Q}}_p/\mathbb{Q}_p)$, Math. Annalen 335, 1469-1525 (2013).
%\bibitem[Col98]{Col98}
%P. Colmez,Th\'eorie d'Iwasawa des repr\'esentations de de Rham d\UTF{00E9}°\UTF{00E9}\UTF{00AB}un corps local. Ann. of Math. 148 (1998), 485-571.
%\bibitem[Co08]{Co08}
%P. Colmez, Repr\'esentations triangulines de dimension 2, Ast\'erisque 319 (2008), 213-258.
%\bibitem[Co10a]{Co10a}
%P. Colmez, $(\varphi,\Gamma)$-modules et repr\'esentations du mirabolique de $\mathrm{GL}_2(\mathbb{Q}_p)$, Ast\'erisque 330 (2010), 61-153.

\bibitem[Co10]{Co10}
P. Colmez, Repr\'esentations de $\mathrm{GL}_2(\mathbb{Q}_p)$ et $(\varphi,\Gamma)$-modules, Ast\'erisque 330 (2010), 281-509.
%\bibitem[CD14]{CD14}
%P. Colmez, G.Dospinescu, Compl\'et\'es universels de repr\'esentations de $\mathrm{GL}_2(\mathbb{Q}_p)$, to
%appear in Algebra and Number Theory.
%\bibitem[CDP14a]{CDP14a}
%The $p$-adic Langlands correspondence for $\mathrm{GL}_2(\mathbb{Q}_p)$, 
%\bibitem[CDP14a]{CDP14a}
%P. Colmez, G. Dospinescu, V.Pa\v{s}k\={u}nas, Irreducible components of deformation
%spaces: wild $2$-adic exercices, to appear in International Mathematics Research Notices.
%\bibitem[CDP14b]{CDP14b}
%P. Colmez, G. Dospinescu, V. Pa\v{s}k\={u}nas, The $p$-adic local Langlands correspondence for $\mathrm{GL}_2(\mathbb{Q}_p)$, Cambridge Journal of Mathematics Vol 2.1, 2014.
%%\bibitem[Dee01]{Dee01}
%J.Dee, $(\varphi,\Gamma)$-modules for families of Galois representations, Journal of Algebra 235
%(2001), 636-664.
\bibitem[CEG${}^{+}$18]{CEG${}^{+}$18}A. Caraiani, M. Emerton, T. Gee, D. Geraghty, V. Pa\v{s}k\={u}nas, S. W. Shin, Patching and the $p$-adic Langlands program for $\mathrm{GL}_2(\mathbb{Q}_p)$, Compos. Math. 154 (2018), no. 3, 503-548.
\bibitem[De71]{De71}
P. Deligne, Formes modulaires et repr\'esentations $l$-adiques, Seminaire Bourbaki \'exp 355, Lecture note in Math., vol. 179, Springer, 1971, pp. 139-172.
\bibitem[El90]{El90}
R. Elkik, Le th\'eor\`eme de Manin-Drinfel'd, S\'eminaire sur les Pinceaux de Courbes Elliptiques (Paris, 1988). Ast\'erisque No. 183 (1990), 59-67.
%\bibitem[De73]{De73}
%P. Deligne, Les constantes des \'equations fonctionelles des fonctions $L$, Modular functions of one variable, II (Proc. Internat. Summer School, Univ. Antwerp, Antwerp, 1972), pp. 501-597. Lecture Notes in Math., Vol. 349, Springer, Berlin, 1973.
%\bibitem[Do11]{Do11}
%G. Dospinescu, Equations diff\'erentielles $p$-adiques et modules de Jacquet analytiques, Proceedings of the LMS Durham Symposium 2011.
\bibitem[Em06a]{Em06a}
M. Emerton, On the interpolation of systems of eigenvalues attached to automorphic Hecke
eigenforms, Invent. Math. 164 (2006), 1-84.
\bibitem[Em06b]{Em06b}
M. Emerton, A local-global compatibility conjecture in the $p$-adic Langlands programme for
$\mathrm{GL}_{2/\mathbb{Q}}$, Pure and Appl. Math. Quarterly 2 (2006), no. 2 (Special Issue: In honor of John
Coates, Part 2 of 2), 1-115.
\bibitem[Em]{Em}
M. Emerton, Local-global compatibility in the $p$-adic Langlands programme for $\mathrm{GL}_{2/\mathbb{Q}}$, 
available at http://www.math.uchicago.edu/~emerton/pdffiles/lg.pdf.
\bibitem[EH14]{EH14}
M. Emerton, D. Helm, The local Langlands correspondence for $\mathrm{GL}_n$ in families, Ann.
Sci. Ec. Norm. Super. (4) 47(4) (2014), 655-722.
\bibitem[EPW06]{EPW06}
M. Emerton, R. Pollack, T. Weston, Variation of Iwasawa invariants in Hida families, Invent. Math. 163 (2006), no. 3, 523-580.
\bibitem[Fo16]{Fo16}
O. Fouquet, The Equivariant Tamagawa Number Conjecture for modular motives with coefficients in Hecke algebras, preprint in arXiv:1401.1715v2.
%\bibitem[FP94]{FP94}
%J.-M. Fontaine, B.Perrin-Riou, Autour des conjectures de Bloch et Kato; Cohomologie galoisienne et valeurs de fonctions $L$. Motives (Seattle, WA, 1991), 599-706, Proc. Sympos. Pure Math., 55, Part 1, Amer. Math. Soc., Providence, RI, 1994.
\bibitem[FK06]{FK06}
T. Fukaya, K. Kato, A formulation of conjectures on $p$-adic zeta functions in non commutative Iwasawa theory, Proceedings of the St. Petersburg Mathematical Society. Vol. XII (Providence, RI), Amer. Math. Soc. Transl. Ser. 2, vol. 219, Amer. Math. Soc., 2006, pp. 1-85. 
\bibitem[FK]{FK}
T. Fukaya, K. Kato, On conjectures of Sharifi, available at  https://www.math.ucla.edu/~sharifi/sharificonj.pdf.
\bibitem[GN16]{GN16}
T. Gee, J. Newton, Patching and the completed homology of locally symmetric spaces, J. Inst. Math. Jussieu, (2020)
\bibitem[GV00]{GV00}
R. Greenberg, V. Vatsal, On the Iwasawa invariants of elliptic curves, Invent. Math. 142 (2000), no. 1, 17-63.
\bibitem[Ha15]{Ha15}
D. Hansen, Iwasawa theory of overconvergent modular forms, I: Critical $p$-adic $L$-functions, preprint in  arXiv:1508.03982v1.
\bibitem[He16]{He16}
D. Helm, Whittaker models and the integral Bernstein center for $\mathrm{GL}_n$, Duke Math. J.
Volume 165, Number 9 (2016), 1597-1628.

\bibitem[He02]{He02}
G. Henniart, Sur lunicit\'e des types pour $\mathrm{GL}_2$, Appendix to \cite{BM02}.

%\bibitem[HK03]{HK03}
%A. Huber, G. Kings, Bloch-Kato conjecture and Main conjecture of Iwasawa theory for Dirichlet characters, Duke Math. J. $\bold{119}$ (2003), 393-464.


%\bibitem[Na09]{Na09}

%J.-M. Fontaine, Sur certains types de repr\'esentations p-adiques du groupe de Galois d'un corps local; construction d'un anneau de Barsotti-Tate, Ann. of Math. (2) 115 (1982), 
%529-577.
%\bibitem[Fo90]{Fo90}
%J.-M. Fontaine, Repr\'esentations p-adiques des corps locaux, I in The Grothendieck Festschrift, Vol. 2, Progr. Math. 87, Birkh\"auser, Boston, 1990, 249-309.
%\bibitem[Fo94]{Fo94}
%J.-M. Fontaine, Le corps des p\'eriodes $p$-adiques,  Ast\'erisque 223 (1994), 59-111.
%\bibitem[Fo03]{Fo03}
%%J.-M. Fontaine,  Presque $\mathbb{C}_p$-repr\'esentations. Kazuya Kato's fifties birthday. Doc. 
%Math. 2003, Extra Vol., 285-385 (electronic). 

%\bibitem[He98]{He98}
%L. Herr, Sur la cohomologie galoisienne des corps $p$-adiques, Bull. Soc. Math. France 126 (1998), no. 4, 563-600.
%\bibitem[Her01]{Her01}
% L.Herr,Une approche nouvelle de la dualit\'e locale de Tate, Math. Ann. 320 (2001), no. 2, 307-337.
%\bibitem[Ke08]{Ke08}
%K.Kedlaya, Slope filtrations for relative Frobenius, Ast\'erisque 319 (2008), 259-301.
%\bibitem[Ki03]{Ki03}
%M.Kisin, Overconvergent modular forms and the Fontaine-Mazur 
%conjecture, Invent. Math. 153 (2003), 373-454.
%\bibitem[Ki08]{Ki08}
%M.Kisin, Potentially semi-stable deformation rings, J.AMS, 21 (2) (2008), 513-546.

%\bibitem[Ki10]{Ki10}
%M.Kisin,  Deformations of $G_{\mathbb{Q}_p}$ and $\mathrm{GL}_2(\mathbb{Q}_p)$ representations, appendix of 
%``P.Colmez, Repr\'esentations de $\mathrm{GL}_2(\mathbb{Q}_p)$ et $(\varphi,\Gamma)$-modules, Ast\'erisque 330 (2010), 281-509." 
%\bibitem[Na09]{Na09}
%K.Nakamura, Classification of two dimensional split trianguline representations of $p$-adic fields, Compositio Math. 145 (2009), 865-914.
%\bibitem[Na10]{Na10}
%K.Nakamura, Deformations of trianguline B-pairs and Zariski density of two dimensional crystalline representations, preprint arXiv:1006.4891 [math.NT].
\bibitem[Ka93a]{Ka93a}
K. Kato, Lectures on the approach to Iwasawa theory for Hasse-Weil $L$-functions via $B_{dR}$. Arithmetic algebraic geometry, Lecture Notes in Mathematics 1553, Springer-
Verlag, Berlin, 1993, 50-63.
\bibitem[Ka93b]{Ka93b}
K. Kato, Lectures on the approach to Iwasawa theory for Hasse-Weil $L$-functions via $B_{dR}$. 
Part II. Local main conjecture, unpublished preprint.
%\bibitem[Kin]{Kin}
%%Dyer conjecture,
\bibitem[Ka04]{Ka04}
K. Kato, $p$-adic Hodge theory and values of zeta functions of modular forms, Ast\'erisque (2004), no. 295, ix, 117-290, Cohomologies $p$-adiques et applications arithm\'etiques. III.
\bibitem[KW09a]{KW09a}
C. Khare, J. P. Wintenberger, Serre's modularity conjecture. I, Invent. Math. 178 (2009),
485-504.
\bibitem[KW09b]{KW09b}
C. Khare, J. P. Wintenberger, Serre's modularity conjecture. II, Invent. Math. 178 (2009),
505-586.
\bibitem[Ki08]{Ki08}
M. Kisin, Potentially semi-stable deformation rings, J.AMS, 21 (2) (2008), 513-546.

\bibitem[Ki09a]{Ki09a}
M. Kisin, Modularity of 2-adic Barsotti-Tate representations, Invent. Math. 178 (2009),
587-634.

%\bibitem[KKT96]{KKT96}
%K. Kato, M.Kurihara, T.Tsuji, Local Iwasawa theory of Perrin-Riou and syntomic complexes. Preprint, 1996.
%\bibitem[Ke04]{Ke04}
%K. Kedlaya, A $p$-adic local monodromy theorem, Ann. of Math. (2) 160 (2004), 93-184.
%\bibitem[KL10]{KL10}
%K.Kedlaya, R.Liu, On families of $(\varphi,\Gamma)$-modules, 
%Algebra and Number Theory 4 (2010), no7, 943-967.
%\bibitem[KPX14]{KPX14}
%K. Kedlaya, J. Pottharst, L. Xiao, Cohomology of arithmetic families of $(\varphi,\Gamma)$-modules, 
%J. Amer. Math. Soc. 27 (2014), 1043-1115.
%\bibitem[Ki03]{Ki03}
%M.Kisin, Overconvergent modular forms and the Fontaine-Mazur 
%conjecture, Invent. Math. 153 (2003), 373-454.
%\bibitem[Ki08]{Ki08}
%M. Kisin. Potentially semi-stable deformation rings. J. Amer. Math. Soc. 21.2 (2008), 513-546.
\bibitem[Ki09b]{Ki09b}
M. Kisin, The Fontaine-Mazur conjecture for $\mathrm{GL}_2$, J. Amer. Math. Soc. 22 (2009), 641-690.
\bibitem[Ki10]{Ki10}
M. Kisin, Deformations of $G_{\mathbb{Q}_p}$ and $GL_2(\mathbb{Q}_p)$ representations, Ast\'erisque 330, 511-528 (2010).
\bibitem[KLP19]{KLP19}
C. H. Kim, J. Lee, G. Ponsinet, On the Iwasawa invariants of Kato's zeta elements for modular forms, preprint in arXiv 909.01764v2. 
%\bibitem[KM76]{KM76}
%F. Knudsen, D. Mumford, The projectivity of the moduli space of stable curves I, Math. Scand. 39 (1976), no. 1, 19-55.
%R. P. Langlands, Modular forms and l-adic representations, Modular functions of one variable, II (Proc. Internat. Summer School, Univ. Antwerp, Antwerp, 1972), pp. 361\UTF{2013} 500. Lecture Notes in Math., Vol. 349, Springer, Berlin, 1973.
\bibitem[La73]{La73}
R. P. Langlands, Modular forms and $l$-adic representations, Modular functions of
one variable II, Lecture note in Math., vol. 349, pp. 361-500 (1973).
%\bibitem[La62]{La62}
%M. Lazard, Les z\'eeros des fonctions analytiques d'une variable sur un corps valu\'e complet. Inst. %Hautes \'Etudes Sci. Publ. Math. No. 14 1962 47-75.
%\bibitem[LVZ13]{LVZ13}
%D. Loeffler, O. Venjakob, S. L. Zerbes, Local $\varepsilon$-isomorphism, to appear in Kyoto J. Math.
%\bibitem[Li08]{Li08}
%R. Liu, Cohomology and duality for ($\varphi,\Gamma$)-modules over the Robba ring, Int. Math. Res. Not. IMRN (2008), no. 3.
% \bibitem[Na09]{Na09}
%K.Nakamura, Classification of two dimensional split trianguline representations of $p$-adic fields, Compositio Math. 145 (2009), 865-914.
%\bibitem[Na10]{Na10}
% K.Nakamura, Deformations of trianguline B-pairs and Zariski density of two dimensional
%crystalline representations, preprint arXiv:1006.4891 [math.NT].
%\bibitem[Na11]{Na11}
%K.Nakamura, Zariski density of crystalline representations for any $p$-adic field, preprint arXiv:
%\bibitem[Na14a]{Na14a}
%K. Nakamura, Iwasawa theory of de Rham $(\varphi,\Gamma)$-modules over the 
%Robba ring, J. Inst. Math. Jussieu, Volume 13, Issue 01, 65-118 (2014).
\bibitem[Na17a]{Na17a}
K. Nakamura, A generalization of Kato's local $\varepsilon$-conjecture for $(\varphi,\Gamma)$-modules over the Robba ring, Algebra and Number Theory, Volume 11, Number 2 (2017), 319-404.
%\bibitem[Na]{Na}
%K.Nakamura, A conjectural definition of the local $\varepsilon$-isomorphisms using Colmez's multiplicative convolution, in preparation.
\bibitem[Na17b]{Na17b}
K. Nakamura, Local $\varepsilon$-isomorphisms for rank two $p$-adic representations of $\mathrm{Gal}(\overline{\mathbb{Q}}_p/\mathbb{Q}_p)$ and a functional equation of Kato's Euler system, Cambridge Journal of Mathematics
Volume 5, Number 3, 281-368, (2017). 
\bibitem[Ny96]{Ny96}
L. Nyssen, Pseudo-repr\'esentations, Math. Ann. 306 (1996), 257-283.
%\bibitem[NY]{NY}
%K. Nakamura, S.Yasuda, A functional equation of Kato's Euler system: the exceptional zero case, in preparation.
\bibitem[Oc06]{Oc06}
T. Ochiai, On the two-variable Iwasawa main conjecture, Compos. Math. 142 (2006) 1157-1200.
\bibitem[Oc18]{Oc18}
T. Ochiai, Iwasawa main conjecture for $p$-adic families of elliptic modular cuspforms, preprint in arXiv:1802.06427v3.
\bibitem[Pan03]{Pan03}
A. Panchishkin, Two variable $p$-adic $L$-functions attached to eigenfamilies of positive slope, Invent.
Math. 154 (2003), no. 3, 551-615
\bibitem[Pas13]{Pas13}
V. Pa\v{s}k\={u}nas, The image of Colmez's Montreal functor, Publ. Math. Inst. Hautes \'Etudes Sci. 118 (2013), 1-191.
\bibitem[Pas15]{Pas15}
V. Pa\v{s}k\={u}nas, On the Breuil-M\'ezard conjecture, Duke Math. J. 164 (2015), no. 2, 297-359.

%\bibitem[Pe95]{Pe95}
%B. Perrin-Riou, Fonctions $p$-adiques des repr\'esentations $p$-adiques, Ast\'erisque 229 (1995).
\bibitem[Po13]{Po13}
J. Pottharst, Analytic families of finite-slope Selmer groups, Algebra and Number Theory 7-7 (2013), 1571-1612.
%\bibitem[Sa97]{Sa97}
%T. Saito, Modular forms and $p$-adic Hodge theory, Inventiones Matj. 129. 607-620 (1997).
%\bibitem[Sc90]{Sc90}
%A. Scholl, Motives for modular forms, Inventiones Math. 100, 419-430 (1990).
%\bibitem[Po13b]{Po13b}
%J. Pottharst, Cyclotomic Iwasawa theory of motives, preprint.
%\bibitem[ST03]{ST03}
%P. Schneider, J. Teitelbaum, Algebras of $p$-adic distributions and admissible representations, Invent. Math. 153(1) (2003), 145-196.
%\bibitem[Ve13]{Ve13}
%O. Venjakob, On Kato's local $\varepsilon$-isomorphism Conjecture for rank one Iwasawa modules, Algebra and Number Theory, 2013.
%\bibitem[Ya09]{Ya09}
%S. Yasuda, Local constants in torsion rings, J. Math. Sci. Univ. Tokyo $\bold{16}$ (2009), no. 2, 125-197.
 
%\bibitem[Per94]{Per94}
%B. Perrin-Riou, Th\'eorie d'Iwasawa des repr\'esentations $p$-adiques sur un corps
%local. Invent. Math. 115 (1994) 81-161.
%\bibitem[Per95]{Per95}
%B. Perrin-Riou, Fonctions L p-adiques des repr\'esentations p-adiques. Ast\'erisque
%No. 229 (1995), 198 pp.
%\bibitem[Per99]{Per99}
%B. Perrin-Riou, Th\'eorie d'Iwasawa et loi explicite de r\'eciprocit\'e. Doc. Math. 4
%1999), 219-273 (electronic).

%\bibitem[Po12a]{Po12a}
%J. Pottharst, Analytic families of finite-slope selmer groups, preprint.
\bibitem[Sa97]{Sa97}
T. Saito, Modular forms and $p$-adic Hodge theory, Invent. Math. 129 (1997), no. 3, 607-620.
\bibitem[Sc90]{Sc90}
A. Scholl, Motives for modular forms, Invent. Math. 100 (1990), no. 2, 419-430.
\bibitem[Sh71]{Sh71}
G. Shimura, Introduction to the arithmetic theory of automorphic functions, Publ. Math.
Soc. of Japan 11 (1971).
\bibitem[Sk16]{Sk16}
C. Skinner, Multiplicative reduction and the cyclotomic main conjecture for $\mathrm{GL}_2$, Pacific Journal of Mathematics, Vol. 283 (2016), No. 1, 171-200
\bibitem[St]{St}
G. Stevens, Family of overconvergent modular symbols, unpublished.
%\bibitem[Po12b]{Po12b}
%J. Pottharst, Cyclotomic Iwasawa theory of motives, preprint.
%\bibitem[Se88]{Se88}
%Sen, S.: The analytic variation of p-adic Hodge structure.Ann.Math. 127, (1988), 647-661.
\bibitem[Wa13]{Wa13}
Shanwen Wang. Le syst\`eme d'Euler de Kato en famille (II). preprint in arxiv.org/pdf/1312.6428




\end{thebibliography}
\end{document}